\documentclass{article}
\usepackage{preamble}

\hypersetup{
    pdfauthor={Shachar Carmeli, Guy Kapon, Noam Nissan}
}
\title{Six functor formalisms via internal higher algebra}
\author{%
    Shachar Carmeli \quad Guy Kapon \quad Noam Nissan\\[0.5em]
    {\small Department of Mathematics}\\
    {\small Weizmann Institute of Science}\\
    {\small Rehovot, Israel}
}
\date{September 29, 2026}

\begin{document}

\maketitle

\begin{abstract}
    We extend a six-functor formalism $D\colon\Span(C,E)\to\catofcats$ to a lax symmetric monoidal functor of $(\infty,2)$-categories $\spantwo(C,E)^P_I\to\twocatofcats$, where $P$ and $I$ are the classes of $D$-proper and $D$-\'etale morphisms, respectively. This proves a conjecture of Mann and generalizes a special case of a theorem of Cnossen--Lenz--Linskens \cite{CLLuniv}.

    To prove this result, we develop a theory of internal $\subuniverse$-monoidal categories and $\subuniverse$-operads, where $\subuniverse$ is a local class of morphisms in a topos. These notions generalize the internal symmetric monoidal categories and operads developed by Martini--Wolf.
\end{abstract}

\begin{center}
    \includegraphics[width=0.55\textwidth]{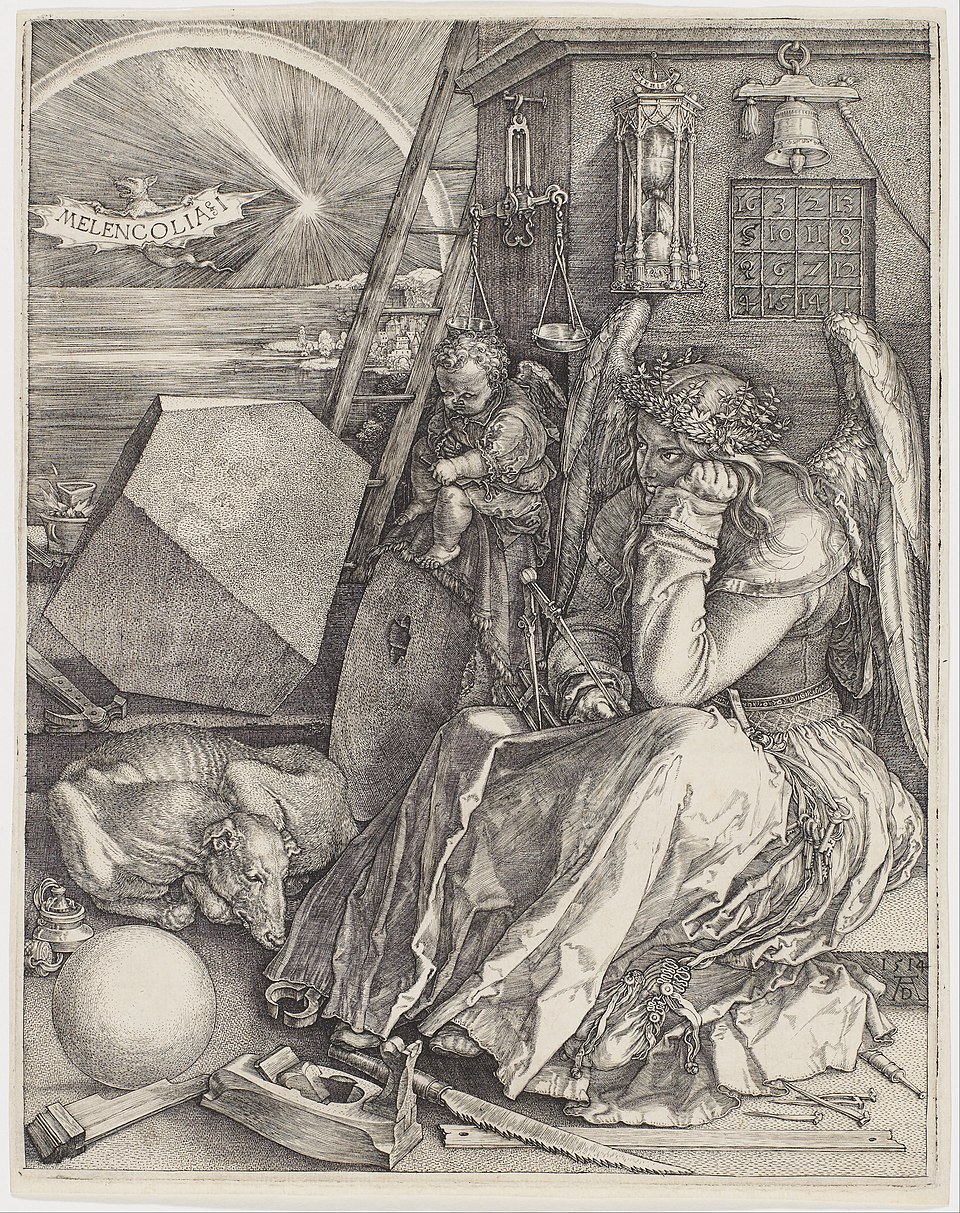}

    Albrecht Dürer, {\bf Melencolia I}, 1514. Engraving, 24 × 18.8 cm.
\end{center}

\tableofcontents
\section{Introduction.}

\subsection{Background}

Six-functor formalisms were first introduced by Grothendieck in the context of \'etale cohomology (see \cite{SGA4XVII}). Since then, they have appeared in many other contexts, including algebraic geometry, representation theory (see \cite{just_6ff}), and analytic geometry (see \cite{Mann6Functors}).

In $\infty$-categorical language, six-functor formalisms are encoded by functors from $\infty$-categories of spans\footnote{Mann and Scholze use the term ``correspondences'' for spans.}. This approach was first considered by Liu--Zheng \cite{LZscary} and Gaitsgory--Rozenblyum \cite{GaitsgoryRozenblyum2017} and was later developed by Mann (see Scholze's notes \cite{Scho6FFpdf}). Let $\varcat{C}$ be an $\infty$-category with finite products, and let $\varcat{E}\subset\varcat{C}$ be a wide subcategory stable under base change in $\varcat{C}$. Then there is a symmetric monoidal $\infty$-category $\Span(\varcat{C},\varcat{E})$ with the same objects as $\varcat{C}$ but with morphisms from $A$ to $B$ given by diagrams of the form
\[
\begin{tikzcd}
    &F\ar["f"',dl]\ar["g",dr,tail]\\
    A&&B
\end{tikzcd}
\]
where the tailed arrow belongs to $\varcat{E}$.

Mann defines a $3$-functor formalism as a lax symmetric monoidal functor into the $\infty$-category of $\infty$-categories
\[
\Span(\varcat{C},\varcat{E})\to\catofcats.
\]
A $3$-functor formalism encodes the operations $(f^*,g_!,\otimes)$. A $6$-functor formalism is a $3$-functor formalism such that the three operations $f^*$, $g_!$, and $X\otimes -$ admit right adjoints $f_*$, $g^!$, and $\underline{\operatorname{Hom}}(X,-)$, respectively.

Motivated by Grothendieck's construction of the six operations, Mann developed a general mechanism for constructing certain $3$-functor formalisms from adjoint functor data. The key input is a \emph{suitable decomposition} of $\varcat{E}$, that is, wide subcategories $\varcat{I},\varcat{P}\subset\varcat{E}$ such that every morphism in $\varcat{E}$ factors as a morphism in $\varcat{I}$ followed by a morphism in $\varcat{P}$. Building on work of Liu--Zheng, Mann showed in his thesis \cite{Mann6Functors} that if a contravariant lax monoidal functor
\[
D\colon\varcat{C}^{\op}\to\catofcats
\]
admits left adjoints along morphisms in $\varcat{I}$ and right adjoints along morphisms in $\varcat{P}$, satisfying suitable base-change and projection formulas, then these adjoints can be assembled into an extension
\[
D\colon\Span(\varcat{C},\varcat{E})\to\catofcats.
\]

In recent work \cite{CLLuniv}, Cnossen, Lenz, and Linskens show that, under the same assumptions, $D$ extends uniquely to a lax symmetric monoidal $2$-functor into the $(\infty,2)$-category of $\infty$-categories
\[
D\colon\spantwo(\varcat{C},\varcat{E})^{\varcat{P}}_{\varcat{I}}\to\twocatofcats.
\]
The source is the $(\infty,2)$-category with the same objects and $1$-morphisms as $\Span(\varcat{C},\varcat{E})$ but with $2$-morphisms given by diagrams of the form
\[
\begin{tikzcd}
    &F\ar["f"',dl]\ar["g",dr,tail]\\
    A&T\ar["p",u]\ar["i"',d]&B\\
    &F'\ar["{f'}",ul]\ar["{g'}"',ur,tail]
\end{tikzcd}
\]
with $i\in\varcat{I}$ and $p\in\varcat{P}$. The $2$-functoriality encodes the counits and units of the adjunctions $i_!\dashv i^*$ and $p^*\dashv p_!$.

In \cite{just_6ff},  Heyer and Mann introduced a procedure for ``sheafifying'' certain six-functor formalisms, producing more general formalisms which do not appear to arise from suitable decompositions.

Even without a suitable decomposition, Mann and Heyer define notions of $D$-\'etale and $D$-proper morphisms for any $3$-functor formalism $D\colon\Span(\varcat{C},\varcat{E})\to\catofcats$. These have properties analogous to those of morphisms in $\varcat{I}$ and $\varcat{P}$, respectively. In particular, if $i$ is $D$-\'etale, then there is a canonical adjunction $i_!\dashv i^*$, and if $p$ is $D$-proper, then there is a canonical adjunction $p^*\dashv p_!$.

$D$-\'etale morphisms are defined by an inductive procedure. Let $i\colon B\to A$ be a morphism in $\varcat{E}$. If $i$ is an isomorphism, then $i$ is automatically $D$-\'etale. If $i^*$ has a left adjoint $i_\sharp$ and the diagonal $\Delta_i\colon B\to B\times_A B$ is $D$-\'etale, then there is a canonically defined norm map $i_\sharp\to i_!$. In this case, $i$ is $D$-\'etale if the norm map is an isomorphism.

In private communications, Mann shared with us the following conjecture.

\begin{conjecture}[Mann]\label{Mann_conjecture}
    Let $D\colon\Span(\varcat{C},\varcat{E})\to\catofcats$ be a $3$-functor formalism. Let $\varcat{I},\varcat{P}\subset\varcat{E}$ denote the wide subcategories of $D$-\'etale and $D$-proper morphisms in $\varcat{E}$, respectively. Then $D$ extends uniquely to a lax symmetric monoidal $2$-functor of $(\infty,2)$-categories
    \[
    \spantwo(\varcat{C},\varcat{E})^{\varcat{P}}_{\varcat{I}}\to\twocatofcats.
    \]
\end{conjecture}

\subsection{Main results}

The goal of this paper is to prove \Cref{Mann_conjecture}. We first prove a version without the monoidal structure. Let
\[
D\colon\Span(\varcat{C},\varcat{E})\to\vartwocat{K}
\]
be a functor into an $(\infty,2)$-category $\vartwocat{K}$.
Let $\varcat{I}\subset\varcat{E}$ be a wide subcategory of truncated morphisms that is stable under base change in $\varcat{C}$ and left cancellable. We say that $D$ is $\varcat{I}$-left adjointed if, for every $i\in\varcat{I}$, there is a canonical adjunction $i_!\dashv i^*$, defined similarly to the notion of a $D$-\'etale morphism described above. Likewise, for a wide subcategory $\varcat{P}\subset\varcat{E}$ satisfying the same properties, we say that $D$ is $\varcat{P}$-right adjointed if the dual construction gives an adjunction $p^*\dashv p_!$ for each $p\in\varcat{P}$. The precise definitions are given in \Cref{def:adjointed_functors}.

Our main theorem is:

\begin{main}\label{main_theorem:free_adjointed_2_cat}
    Let $\varcat{C}$ be an $\infty$-category, and let $\varcat{P},\varcat{I}\subset\varcat{E}\subset\varcat{C}$ be wide subcategories stable under base change in $\varcat{C}$. Assume that $\varcat{P}$ and $\varcat{I}$ are left cancellable and that their morphisms are truncated. Then the canonical functor
    \[
    F\colon\Span(\varcat{C},\varcat{E})\to\spantwo(\varcat{C},\varcat{E})^{\varcat{P}}_{\varcat{I}}
    \]
    is the free $\varcat{I}$-left adjointed and $\varcat{P}$-right adjointed functor. That is, for any $(\infty,2)$-category $\vartwocat{K}$, the restriction functor
    \[
    \Fun(\spantwo(\varcat{C},\varcat{E})^{\varcat{P}}_{\varcat{I}},\vartwocat{K})
    \to
    \Fun(\Span(\varcat{C},\varcat{E}),\vartwocat{K})
    \]
    is fully faithful, and its essential image is spanned by the $\varcat{I}$-left adjointed and $\varcat{P}$-right adjointed functors.
\end{main}

In \Cref{subsec:lax_monoidal}, we deduce a lax symmetric monoidal version of this theorem:

\begin{main}\label{main_theorem:lax_monoidal_free_adjointed_2_cat}
    Let $\varcat{C}$ be an $\infty$-category with finite products, and let $\varcat{P},\varcat{I}\subset\varcat{E}\subset\varcat{C}$ be wide subcategories stable under base change in $\varcat{C}$. Assume that $\varcat{P}$ and $\varcat{I}$ are left cancellable and that their morphisms are truncated. Then the canonical functor
    \[
    F\colon\Span(\varcat{C},\varcat{E})\to\spantwo(\varcat{C},\varcat{E})^{\varcat{P}}_{\varcat{I}}
    \]
    is the free lax symmetric monoidal $\varcat{I}$-left adjointed and $\varcat{P}$-right adjointed functor. That is, for any symmetric monoidal $(\infty,2)$-category $\vartwocat{K}$, the restriction functor
    \[
    \Fun^{\lax-\otimes}(\spantwo(\varcat{C},\varcat{E})^{\varcat{P}}_{\varcat{I}},\vartwocat{K})
    \to
    \Fun^{\lax-\otimes}(\Span(\varcat{C},\varcat{E}),\vartwocat{K})
    \]
    is fully faithful, and its essential image is spanned by the $\varcat{I}$-left adjointed and $\varcat{P}$-right adjointed lax symmetric monoidal functors.
\end{main}

\Cref{main_theorem:lax_monoidal_free_adjointed_2_cat} in particular implies \Cref{Mann_conjecture} (see \Cref{cor:Mann_conj}).

\subsection{Outline of the proof}

Let $\varcat{C}$ be an $\infty$-category, and let $\varcat{E}\subset\varcat{C}$ be a wide subcategory stable under base change in $\varcat{C}$. In the rest of this introduction, we will assume for simplicity that $\varcat{E}$ is also left cancellable, but we do not need this assumption in the body of the paper. This simplifies the definitions considerably.

\subsubsection*{The role of internal higher category theory}

Let us first briefly recall the proof strategy of \cite{CLLuniv}. Let $\varcat{E}=\varcat{P}\circ\varcat{I}$ be a suitable decomposition. That is, we assume that $\varcat{P},\varcat{I}\subset\varcat{E}$ are wide subcategories stable under base change in $\varcat{C}$ and left cancellable, that every morphism in $\varcat{P}\cap\varcat{I}$ is truncated, and that $\varcat{E}=\varcat{P}\circ\varcat{I}$.

In \cite{CLLuniv}, the first step is the ``recognition principle'' (see \cite[Theorem 2.18]{CLLuniv}), which reduces their main theorem \cite[Theorem A]{CLLuniv} to a universal property of the functors
\[
h_A\colon\varcat{C}^{\op}\to\catofcats,
\qquad
X\mapsto\Fun_{\spantwo(\varcat{C},\varcat{E})^{\varcat{P}}_{\varcat{I}}}(A,X)
\]
for $A\in\varcat{C}$. We now want to explain this universal property.

Functors in $\Fun(\varcat{C}^{\op},\catofcats)$ are also called ``$\varcat{C}$-parametrized categories'' (see \cite{barwick2016parametrizedhighercategorytheory}). These are a special case of the notion of \emph{internal} categories developed by Martini--Wolf in their series of papers \cite{MInternalYoneda}, \cite{MInternalStraightenning}, \cite{MWcocomplete}, and \cite{MWPresentabilityAndTopoi}. Let $\vartopos[C]=\presh(\varcat{C})$. Then there is an equivalence of categories
\begin{equation}\label{eq:equivalence_between_parametrized_and_internal}
    \Fun(\varcat{C}^{\op},\catofcats)
    \simeq
    \Fun^R(\vartopos[C]^{\op},\catofcats)
\end{equation}
between $\varcat{C}$-parametrized categories and $\vartopos[C]$-categories in the sense of internal category theory.

Let $\vartopos$ be an $\infty$-topos. Martini--Wolf define $\vartopos$-categories and notions of $\vartopos$-limits and colimits, the $\vartopos$-Yoneda embedding, left and right fibrations, and cartesian and cocartesian fibrations. They also establish the $\vartopos$-straightening and unstraightening equivalences and develop an internal theory of $\vartopos$-presentable $\vartopos$-categories.

Let $\varintcat{D}$ be a $\vartopos$-category. An object $A\in\vartopos$ is called a context, and an object of $\varintcat{D}(A)$ is called an object of $\varintcat{D}$ defined in the context $A$.

Every $A\in\vartopos[C]$ defines a $\vartopos[C]$-category (more specifically, a $\vartopos[C]$-groupoid) by the composition
\[
\vartopos[C]^{\op}\xto{\yo(A)}\catofanima\into\catofcats.
\]
We will denote this $\vartopos[C]$-category by $A$ as well. For $A\in\varcat{C}$, we also write $A$ for its Yoneda image in $\vartopos[C]$.

Let $\spanEPI[A]$ denote the $\vartopos[C]$-category corresponding to $h_A$ under the equivalence \eqref{eq:equivalence_between_parametrized_and_internal}. By \cite[Theorem 3.7 and the proof of Theorem 4.20]{CLLuniv}, $\spanEPI[A]$ is the free $\vartopos[C]$-category on $A$ with $\varcat{P}$-limits and $\varcat{I}$-colimits such that $\varcat{P}$-limits and $\varcat{I}$-colimits commute. That is, if $\varintcat{D}$ is a $\vartopos[C]$-category with $\varcat{P}$-limits and $\varcat{I}$-colimits that commute, then there is an equivalence between the category of $\vartopos[C]$-functors
\[
A\to\varintcat{D}
\]
and the category of $\vartopos[C]$-functors
\[
\spanEPI[A]\to\varintcat{D}
\]
that preserve $\varcat{P}$-limits and $\varcat{I}$-colimits.

\subsubsection*{Internal higher algebra}

Whereas the goal in \cite{CLLuniv} was to endow $\spantwo(\varcat{C},\varcat{E})^{\varcat{P}}_{\varcat{I}}$ with a universal property under $\varcat{C}^{\op}$, our goal is to endow it with a universal property under $\Span(\varcat{C},\varcat{E})$. To do so, we interpret functors
\[
\Span(\varcat{C},\varcat{E})\to\catofcats,
\]
also known as $2$-functor formalisms, as $\vartopos[C]$-categories endowed with a certain \emph{algebraic structure}, which we now explain.

In \cite{HA}, Lurie develops the modern theory of higher algebra, including the theory of symmetric monoidal categories and operads.

In modern accounts of higher algebra (see, for example, \cite{barkan2022envelopes}), the categories $\Fin$ and $\Span(\Fin)$ play pivotal roles. For example, a symmetric monoidal category can be defined as a finite-product-preserving functor
\[
\Span(\Fin)\to\catofcats.
\]

Let $\vartopos$ be an $\infty$-topos. Let $\universe$ be the $\vartopos$-category defined by the formula
\[
\universe(A):=\vartopos_{/A}.
\]
$\universe$ is called the universe, and it plays the role of $\catofanima$ in the theory of $\vartopos$-categories.

An input to our theory is a certain subcategory $\subuniverse\subset\universe$ playing the role of $\Fin$ (see \Cref{def:storngly_regular_universe}). We call such an $\subuniverse$ a context free subuniverse.

$\subuniverse$ is determined by a wide subcategory $\varcat{\mathcal{E}}\subset\vartopos$ that is stable under base change in $\vartopos$ and local on the target.
For simplicity, in this introduction we also assume that $\varcat{\mathcal{E}}$ is left cancellable, but we do not need or assume this in the body of the paper.
Given such an $\varcat{\mathcal{E}}$, we define
\[
\subuniverse(A):=\varcat{\mathcal{E}}_{/A}\subset\vartopos_{/A}.
\]

To define monoidal categories in this setup, we rely on the notion of $\subuniverse$-monoids from \cite{CLLambi}.
Let $\intspan(\subuniverse)$ be defined by applying $\Span$ contextwise to $\subuniverse$. We define an $\subuniverse$-monoidal category to be a functor of $\vartopos$-categories
\[
\intspan(\subuniverse)\to\internalcatofcats
\]
preserving $\subuniverse$-limits, mimicking the above definition of symmetric monoidal categories.

This captures the notion of $2$-functor formalisms in the following sense. Let $\vartopos[C]=\presh(\varcat{C})$ as above. Then we can take $\varcat{\mathcal{E}}\subset\vartopos[C]$ to be the wide subcategory of morphisms in $\vartopos[C]$ that are representable in $\varcat{E}$. We take $\subuniverse\subset\universe$ to be the corresponding context free subuniverse.

\begin{theorem}[\Cref{thm:equiv_between_2ff_and_E_monoidal}]
\label{thm:equiv_between_2ff_and_E_monoidal_intro}
    There is an equivalence of $\infty$-categories
    \[
    \Fun(\Span(\varcat{C},\varcat{E}),\catofcats)
    \simeq
    \intsmon(\internalcatofcats_{\vartopos[C]})(*)
    \]
    between the $\infty$-category of $2$-functor formalisms and the $\infty$-category of $\subuniverse$-monoidal $\vartopos[C]$-categories.
\end{theorem}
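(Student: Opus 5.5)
We prove the equivalence by unwinding the $\subuniverse$-monoidal structure into a limit-preserving functor out of $\presh(\Span(\varcat{C},\varcat{E}))^{\op}$. The three steps are: (1) rewrite $\subuniverse$-monoidal $\vartopos[C]$-categories as $\subuniverse$-limit-preserving functors $\intspan(\subuniverse)\to\internalcatofcats$ in context $*$; (2) identify such functors with functors out of the global sections $\Span(\varcat{C},\varcat{E})$; (3) compare via the equivalence \eqref{eq:equivalence_between_parametrized_and_internal}.

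\textbf{Step 1: reduce to functors on the representable part.} An object of $\intsmon(\internalcatofcats_{\vartopos[C]})(*)$ is a $\vartopos[C]$-functor $\Phi\colon\intspan(\subuniverse)\to\internalcatofcats$ preserving $\subuniverse$-limits. Such a functor sends the object $A\in\intspan(\subuniverse)(*)$, i.e.\ $A\to *$ in $\varcat{\mathcal{E}}$, to the internal limit of the constant diagram indexed by $A$. Evaluated in context $*$, this is $\Phi(*)(A)$.

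\textbf{Step 2: the limit-preservation condition.} By the $\vartopos$-Yoneda lemma and the description of $\subuniverse$-limits in $\internalcatofcats$ as right adjoints to restriction, preserving $\subuniverse$-limits says the following. The value on a span $X\leftarrow F\rightarrowtail Y$ with $F\to Y$ in $\varcat{\mathcal{E}}$ is determined by the values on representable objects of $\varcat{C}$. Moreover, $\Phi$ is determined by the underlying functor on global sections,
\[
\Span(\vartopos[C],\varcat{\mathcal{E}})\to\catofcats,
\]
which sends colimits in $\vartopos[C]$ (sums and, more generally, all colimits, since $\varcat{\mathcal{E}}$ is local on the target) to limits. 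The point is that $\intspan(\subuniverse)(A)=\Span(\vartopos[C]_{/A},\varcat{\mathcal{E}}_{/A})$, and a functor of $\vartopos[C]$-categories out of this is equivalently a functor out of the total category that is compatible with base change. The context-free condition on $\subuniverse$ should make the global-sections functor determine everything.

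\textbf{Step 3: Kan extension along Yoneda.} We show that restriction along $\Span(\varcat{C},\varcat{E})\into\Span(\vartopos[C],\varcat{\mathcal{E}})$ gives an equivalence between
\begin{itemize}
\item functors $\Span(\vartopos[C],\varcat{\mathcal{E}})\to\catofcats$ that send colimits of the backwards leg to limits and are base-change compatible, and
\item all functors $\Span(\varcat{C},\varcat{E})\to\catofcats$.
\end{itemize}
The inverse is right Kan extension. For $X=\colim_i c_i$ in $\vartopos[C]$ we set $\Phi(X)=\lim_i D(c_i)$. Covariant functoriality along a map $f\colon X\to Y$ in $\varcat{\mathcal{E}}$ is defined by writing $Y=\colim c_j$ and pulling back: each $X\times_Y c_j\to c_j$ is representable in $\varcat{E}$, and $X\times_Y c_j$ is a colimit of representables. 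Here we use locality on the target of $\varcat{\mathcal{E}}$ and that $\vartopos[C]$ has universal colimits.

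\textbf{Main obstacle.} The main difficulty is making the covariant functoriality in Step 3 coherent, rather than merely defining it on objects and morphisms. Our first approach is to present $\Span(\vartopos[C],\varcat{\mathcal{E}})$ via a cartesian/cocartesian-fibration description of functors out of span categories, à la Barwick's unfurling. We would then apply the $\vartopos$-straightening of Martini--Wolf internally, so that the extension becomes a right Kan extension along a fully faithful functor of internal categories that preserves the relevant colimits. Alternatively, we would prove that $\intspan(\subuniverse)$ is the free $\vartopos[C]$-category with $\subuniverse$-limits generated by the parametrized category $\Span(\varcat{C},\varcat{E})_{/\bullet}$. This would reduce the claim to the internal universal property of free limit completions, assuming that universal property is available among the earlier results.
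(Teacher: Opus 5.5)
Your overall plan is the paper's: identify $\subuniverse$-monoidal $\vartopos[C]$-categories with functors $\Span(\vartopos[C],\varcat{\mathcal{E}})\to\catofcats$ whose restriction to $\vartopos[C]^{\op}$ preserves limits (\Cref{prop:2FF_are_E_mon}), then right Kan extend from $\Span(\varcat{C},\varcat{E})$ (\Cref{lem:extension_of_formalisms_to_presheaves}). However, Steps 1--2 rest on a false identification, and this leaves a genuine gap.

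\textbf{The gap in Steps 1--2.} $\intspan(\subuniverse)(A)$, more precisely $\spanEfE(A)$, is \emph{not} $\Span(\vartopos[C]_{/A},\varcat{\mathcal{E}}_{/A})$. Its objects are only the $[B\to A]$ with $B\to A$ in $\varcat{\mathcal{E}}$. What you wrote down is $\intspan(\universe,\universe_{\subuniverse})(A)$.
\begin{itemize}
\item Consequently, the global sections of $\Phi$ do not give a functor on $\Span(\vartopos[C],\varcat{\mathcal{E}})$. They only see objects $B$ with $B\to *$ in $\varcat{\mathcal{E}}$, which in general excludes the representables.
\item Extreme case: if $\varcat{E}$ consists of equivalences, then $\varcat{\mathcal{E}}$ consists of equivalences and $\subuniverse=\explicitset{*}$. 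So $\spanEfE(*)$ is a point, while $\Span(\vartopos[C],\varcat{\mathcal{E}})\simeq\vartopos[C]^{\op}$. Your ``underlying functor on global sections'' is then the single $\infty$-category $\Phi(*)(*)$, which does not determine $\Phi$.
\end{itemize}

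\textbf{What is missing.} You must first extend an $\subuniverse$-monoid along $\spanEfE\subset\intspan(\universe,\universe_{\subuniverse})$ to a $\universe$-continuous functor. The paper does this in \Cref{prop:extra_back_functoriality_for_E_monoidal_cats} by right Kan extension. It uses the backward/forward factorization systems and the good-inclusion criterion of Cnossen--Lenz--Linskens, so this is a real input, not a formality.
\begin{itemize}
\item Only after this extension does evaluating at $*$ and taking global sections give a functor on $\Span(\vartopos[C],\varcat{\mathcal{E}})$.
\item Its restriction to $\vartopos[C]^{\op}$ is $X\mapsto\Phi(*)(X)$. This preserves limits because it is a $\vartopos[C]$-category, not because $\varcat{\mathcal{E}}$ is local on the target.
\item The inverse must also be constructed. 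The paper uses the pullback pairing $(\vartopos[C]_{/X})^{\op}\times\Span(\vartopos[C]_{/X},\varcat{\mathcal{E}})\to\Span(\vartopos[C],\varcat{\mathcal{E}})$, natural in $X$.
\end{itemize}
Your sentence ``the context-free condition should make the global-sections functor determine everything'' is exactly this unproved core.

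\textbf{Step 3: the difficulty is misplaced.} Right Kan extension along the fully faithful inclusion $\Span(\varcat{C},\varcat{E})\subset\Span(\vartopos[C],\varcat{\mathcal{E}})$ is already a fully faithful functor between functor categories (HTT, Proposition 4.3.2.17). So no covariant functoriality has to be made coherent by hand, and your ``base-change compatible'' condition is superfluous.

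What does need proof is the essential image: a functor is right Kan extended if and only if its restriction to $\vartopos[C]^{\op}$ preserves limits. This is a cofinality check.
\begin{itemize}
\item A morphism from $X$ to $c\in\varcat{C}$ is a span $X\leftarrow F\rightarrowtail c$, and $F$ is forced to be representable.
\item The inclusion of $(\varcat{C}_{/X})^{\op}$ (spans with identity forward leg) into this comma category has right adjoint $(X\leftarrow F\rightarrowtail c)\mapsto(X\leftarrow F=F)$.
\item Hence the inclusion is initial.
\end{itemize}
The paper settles this by an argument as in Mann's thesis (Proposition A.5.16).

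Neither remedy in your ``main obstacle'' paragraph (unfurling, or a free-completion description of $\intspan(\subuniverse)$) addresses the actual gap in Step 2.
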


\subsubsection*{Cartesian and cocartesian $\subuniverse$-monoidal categories}

Let $\varcat{C}$ be a symmetric monoidal category with unit $1\in\varcat{C}$. Suppose that $\varcat{C}$ admits finite coproducts. Then we have a canonical morphism $\emptyset\to 1$ from the initial object to the unit. If this morphism is an isomorphism, then we can construct a canonical morphism $x\coprod y\to x\otimes y$ for every $x,y\in\varcat{C}$. If this morphism is an equivalence for every $x,y$, then the symmetric monoidal structure on $\varcat{C}$ is cocartesian.

In \cite{HarpazAmbi}, Yonatan Harpaz considered a related procedure. Let $\varcat{C}$ be a category with limits and colimits indexed by $\pi$-finite anima. Then there is a sequence of inductively defined norm maps which measures the difference between the colimits and limits indexed by these anima. A category is called $\infty$-semiadditive if all these norm maps are isomorphisms.
The induction is on the truncation level: the first map is a morphism $\emptyset\to *$ from the initial to the terminal object, and the second compares finite coproducts with cartesian products, similar to what we discussed earlier.
Harpaz's result was later generalized to the internal setting in \cite{CLLambi}, from which we drew inspiration.

Let $\vartopos$ be an $\infty$-topos, let $\subuniverse$ be a context free subuniverse, and let $\varintcat{C}$ be a $\vartopos$-category. If $\varintcat{C}$ admits $\subuniverse$-colimits, we can define a canonical \emph{cocartesian} $\subuniverse$-monoidal structure on $\varintcat{C}$. If the corresponding wide subcategory $\varcat{\mathcal{E}}\subset\vartopos$ is left cancellable and every morphism in $\varcat{\mathcal{E}}$ is locally truncated on the target, then we show (\Cref{thm:uniqueness_of_cartesian_structures}) that $\subuniverse$-cocomplete categories with $\subuniverse$-cocontinuous functors embed fully faithfully into $\subuniverse$-monoidal categories by forming the cocartesian structure.

Suppose that $\subuniverse[I],\subuniverse[P]\subset\subuniverse$ are two context free subuniverses corresponding to truncated, left cancellable wide subcategories $\varcat{\mathcal{I}},\varcat{\mathcal{P}}\subset\vartopos$ as above.
Then we can define the full subcategory of $\subuniverse$-monoidal categories which are $\subuniverse[I]$-cocartesian and $\subuniverse[P]$-cartesian (see \Cref{def:P_I_ambi_E_monoidal_cats}).

We are ready to explain the main idea of our proof.

Let $\varcat{C}$ be an $\infty$-category, and let $\varcat{E}\subset\varcat{C}$ be a wide subcategory stable under base change in $\varcat{C}$.

Let $\varcat{P},\varcat{I}\subset\varcat{E}$ be two smaller wide subcategories, both stable under base change in $\varcat{C}$.
We further assume that every morphism in $\varcat{P}$ and $\varcat{I}$ is truncated and that $\varcat{P}$ and $\varcat{I}$ are left cancellable.
Let $\vartopos=\vartopos[C]=\presh(\varcat{C})$, and let $\subuniverse$ be as in \Cref{thm:equiv_between_2ff_and_E_monoidal_intro}. Let $\subuniverse[P],\subuniverse[I]$ be the context free subuniverses corresponding to $\varcat{P},\varcat{I}$ in the same way that $\subuniverse$ corresponds to $\varcat{E}$.

For $A\in\varcat{C}$, consider
\[
h_A^{\varcat{E}}\colon\Span(\varcat{C},\varcat{E})\to\catofcats,
\qquad
X\mapsto\Fun_{\spantwo(\varcat{C},\varcat{E})^{\varcat{P}}_{\varcat{I}}}(A,X).
\]
Then $h_A^{\varcat{E}}$ corresponds to an $\subuniverse$-monoidal category $\spanEPI[A]$ under the equivalence of \Cref{thm:equiv_between_2ff_and_E_monoidal_intro}.

We prove the following theorem under the extra assumption that there is a single integer $n\geq -2$ such that every morphism in $\varcat{I}$ and $\varcat{P}$ is $n$-truncated.

\begin{main}[\Cref{thm:generalized_main_C}]
\label{main_theorem:free_E_monoidal_semiadditive}
    $\spanEPI[A]$ is the free $\subuniverse$-monoidal $\subuniverse[I]$-cocartesian $\subuniverse[P]$-cartesian category generated by $A\in\vartopos$.

    That is, for any $\subuniverse[P]$-cartesian, $\subuniverse[I]$-cocartesian $\subuniverse$-monoidal category $\varintcat{D}$, the canonical functor
    \[
    \intfun^{\subuniverse-\otimes}(\spanEPI[A],\varintcat{D})
    \xto{\sim}
    \varintcat{D}^A
    \]
    is an equivalence of $\vartopos$-categories.
\end{main}

Then, by the equivalence between $\subuniverse$-monoidal categories and $2$-functor formalisms, we get \Cref{main_theorem:free_adjointed_2_cat} under the extra assumption that every morphism in $\varcat{I}$ and $\varcat{P}$ is $n$-truncated for a fixed $n$. We deduce \Cref{main_theorem:free_adjointed_2_cat} in the general case by taking an appropriate colimit.

\subsection{Linear overview}

In \Cref{sec:recognition_principle}, we show that a $2$-category satisfying the universal property described in \Cref{main_theorem:free_adjointed_2_cat} exists. Then we prove a recognition principle which reduces \Cref{main_theorem:free_adjointed_2_cat} to showing that the functors $h_A^{\varcat{E}}$ above have the required universal property as $2$-functor formalisms.

In \Cref{sec:E_monoidal}, we develop the theory of $\subuniverse$-monoidal categories and $\subuniverse$-operads.

In \Cref{sec:2_cats}, we develop some internal $2$-category theory to build the machinery needed in the next section.

In \Cref{sec:proof_of_C}, we prove \Cref{main_theorem:free_E_monoidal_semiadditive}, which is the technical heart of this article.

In \Cref{sec:proof_of_main}, we prove \Cref{thm:equiv_between_2ff_and_E_monoidal_intro} and deduce \Cref{main_theorem:free_adjointed_2_cat} from it and from \Cref{main_theorem:free_E_monoidal_semiadditive}. Namely, we explain why, under the equivalence between $2$-functor formalisms and $\subuniverse$-monoidal categories, the universal property of $\spanEPI[A]$ gives the required universal property of $h_A^{\varcat{E}}$ as a $2$-functor formalism. We then deduce \Cref{main_theorem:lax_monoidal_free_adjointed_2_cat} from \Cref{main_theorem:free_adjointed_2_cat} by an argument very similar to the deduction of \cite[Theorem B]{CLLuniv} from \cite[Theorem A]{CLLuniv}.

\subsection{Further results}

\subsubsection*{$\subuniverse$-operads}

In addition to $\subuniverse$-monoidal categories, in \Cref{subsec:E_operads} we define the $\vartopos$-category $\intoperads$ of $\subuniverse$-operads, generalizing the classical notion from \cite{HA}. We also prove the existence of an adjunction of $\vartopos$-categories
\[
\intenv\colon\intoperads\fromto\intsmon(\internalcatofcats)\colon(-)^\otimes
\]
between the $\vartopos$-category of $\subuniverse$-operads and the $\vartopos$-category of $\subuniverse$-monoidal categories (see \Cref{thm:env_associated_operad_adjunction}).

The counit of this adjunction, $\intenv(\varintcat{C}^\otimes)\to\varintcat{C}$, is used in the construction of the $\subuniverse$-monoidal functor
\[
\spanEPI[A]\to\varintcat{C}
\]
which exhibits the universal property of $\spanEPI[A]$.

\subsubsection*{The symmetric monoidal category of $\subuniverse$-monoidal categories}

In classical (non-internal) higher algebra, the category $\cmon(\catofcats)$ carries an auxiliary symmetric monoidal structure defined by a localization of the Day convolution structure on $\Fun(\Span(\Fin),\catofcats)$.
This symmetric monoidal structure is neither cartesian nor cocartesian. It was defined in greater generality by Ben-Moshe in \cite[\S 4]{Ben_Moshe_ambi_K}. Commutative algebras in $\cmon(\catofcats)$ are $2$-semirings. That is, they correspond to categories $\varcat{C}$ with two symmetric monoidal structures $\otimes,\oplus$ such that $\otimes$ distributes over $\oplus$.

In \Cref{subsec:symmetric_monoidal_cat_of_E_monoidal_cats}, we construct a similar symmetric monoidal structure on the $\vartopos$-category $\intsmon(\internalcatofcats)$. For the comparison with $2$-functor formalisms, assume that $\varcat{C}$ has finite products. In fact, we show that the equivalence of \Cref{thm:equiv_between_2ff_and_E_monoidal_intro} can be upgraded to a symmetric monoidal equivalence between $\Fun(\Span(\varcat{C},\varcat{E}),\catofcats)$ equipped with Day convolution and $\intsmon(\internalcatofcats)(*)$ equipped with a localization of internal Day convolution.

Taking $\operatorname{CAlg}$ and using that commutative algebras for Day convolution are equivalent to lax symmetric monoidal functors, we obtain an equivalence
\[
\Fun^{\lax-\otimes}(\Span(\varcat{C},\varcat{E}),\catofcats)
\simeq
\operatorname{CAlg}(\intsmon(\internalcatofcats)(*)),
\]
giving an equivalent and completely internal definition of $3$-functor formalisms.

In fact, we can say more. Suppose we are in the setting of \Cref{main_theorem:lax_monoidal_free_adjointed_2_cat}.
Combining \Cref{cor:P_I_ambi_is_a_smashing_localization} with the identification
\[
L_{(\subuniverse[P],\subuniverse[I])-\oplus}
(\subuniverse^{\simeq})
\simeq\spanEPI
\]
established at the end of \Cref{subsec:lax_monoidal}, we obtain the following theorem.

\begin{theorem}
    The category $\spanEPI$ is an idempotent algebra in $\intsmon(\internalcatofcats)(*)$. Moreover, the modules over $\spanEPI$ are the $\subuniverse[P]$-cartesian and $\subuniverse[I]$-cocartesian $\subuniverse$-monoidal categories.
\end{theorem}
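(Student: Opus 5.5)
The plan is to combine \Cref{cor:P_I_ambi_is_a_smashing_localization} with the identification $L_{(\subuniverse[P],\subuniverse[I])-\oplus}(\subuniverse^{\simeq})\simeq\spanEPI$. This rests on the general principle that a smashing localization of a presentably symmetric monoidal category is given by tensoring with an idempotent algebra. Concretely, I will work in the symmetric monoidal $\infty$-category $\intsmon(\internalcatofcats)(*)$ built in \Cref{subsec:symmetric_monoidal_cat_of_E_monoidal_cats}, as a localization of internal Day convolution.

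First I will check that its unit is $\subuniverse^{\simeq}$, the free $\subuniverse$-monoidal category on the point. Since the tensor product is a localization of Day convolution on functors out of $\intspan(\subuniverse)$, the unit is the localization of the representable at $*$, namely $\intspan(\subuniverse)(*,-)$. Under the identification of $\subuniverse$-monoidal categories with $\subuniverse$-commutative monoids, this is the free $\subuniverse$-monoid $\subuniverse^{\simeq}$.

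Next, \Cref{cor:P_I_ambi_is_a_smashing_localization} says that the inclusion of $\subuniverse[P]$-cartesian, $\subuniverse[I]$-cocartesian $\subuniverse$-monoidal categories is a smashing localization $L=L_{(\subuniverse[P],\subuniverse[I])-\oplus}$. That is, $L(\varintcat{C})\simeq L(\mathbb 1)\otimes\varintcat{C}$ naturally, and the local objects form a $\otimes$-ideal. By the standard result on smashing localizations (Lurie, HA 4.8.2), the unit map $\mathbb 1\to L(\mathbb 1)$ makes $L(\mathbb 1)$ an idempotent commutative algebra. The unit map $\eta\colon\mathbb 1\to L\mathbb 1$ becomes an equivalence after tensoring with $L\mathbb 1$, because $L\mathbb 1\otimes\eta\simeq L(\eta)$ and $L(\eta)$ is an equivalence. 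Moreover, the forgetful functor $\mathrm{Mod}_{L\mathbb 1}\to\intsmon(\internalcatofcats)(*)$ is fully faithful with essential image the $L$-local objects, namely those $\varintcat{C}$ for which $\varintcat{C}\to L\mathbb 1\otimes\varintcat{C}$ is an equivalence. Substituting $L(\subuniverse^{\simeq})\simeq\spanEPI$, which was established at the end of \Cref{subsec:lax_monoidal}, gives both claims: $\spanEPI$ is idempotent, and its modules are exactly the $\subuniverse[P]$-cartesian, $\subuniverse[I]$-cocartesian $\subuniverse$-monoidal categories.

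The main obstacle is the smashing property, which is presumably what the corollary packages. For a general localization, $L(\varintcat{C})\simeq L(\mathbb 1)\otimes\varintcat{C}$ requires the local objects to be closed under tensoring with arbitrary objects. If I had to verify this directly, I would use the internal hom adjoint to the localized Day convolution. I would show that $\intfun^{\subuniverse-\otimes}(\varintcat{C},\varintcat{D})$ is $\subuniverse[P]$-cartesian and $\subuniverse[I]$-cocartesian whenever $\varintcat{D}$ is, since the relevant norm maps are computed pointwise in $\varintcat{D}$. This shows that local objects form an ideal. A second, smaller point is to make sure the identification of $\spanEPI$ with $L(\subuniverse^{\simeq})$ is compatible with the unit map, so that the algebra structure is the canonical one. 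This follows from the universal property in \Cref{main_theorem:free_E_monoidal_semiadditive} applied to $A=*$.
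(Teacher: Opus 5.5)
Your proposal is correct and is the paper's own argument: combine \Cref{cor:P_I_ambi_is_a_smashing_localization} with the identification $L_{(\subuniverse[P],\subuniverse[I])-\oplus}(\subuniverse^{\simeq})\simeq\spanEPI$ from the end of \Cref{subsec:lax_monoidal}, and read off idempotence and the description of modules from the standard theory of smashing localizations. One caution about your optional direct check of the smashing property: showing that $\intfun^{\subuniverse-\otimes}(\varintcat{C},-)$ preserves $(\subuniverse[P,I])$-ambidextrous objects only proves that $L_{\oplus}$ is compatible with the tensor product (this is how \Cref{prop:I_cocart_loc_is_sym_monoidal} is proved), not that local objects form a $\otimes$-ideal, which is what smashing requires; the paper instead derives smashing (in \Cref{thm:cocart_is_a_mode}, and dually for $\subuniverse[P]$) from the fact that the inclusion of local objects has a further right adjoint $\intalg^{\subuniverse[I]}\simeq\intfun^{\subuniverse-\otimes}(F^{\subuniverse}_{\subuniverse[I]}(\subuniverse[I]),-)$.
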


Let $*\in\varcat{C}$ denote the terminal object. By the equivalence of symmetric monoidal categories between $\Fun(\Span(\varcat{C},\varcat{E}),\catofcats)$ and $\intsmon(\internalcatofcats)(*)$, we get that the functor
\[
h_*\colon\Span(\varcat{C},\varcat{E})\to\catofcats,
\qquad
B\mapsto\Fun_{\spantwo(\varcat{C},\varcat{E})^{\varcat{P}}_{\varcat{I}}}(*,B),
\]
which corresponds to $\spanEPI$, is an idempotent algebra whose modules are the $\varcat{P}$-right adjointed and $\varcat{I}$-left adjointed $2$-functor formalisms.

\subsection{Future work and applications}

\subsubsection*{Application to the derived mod $p$ Hecke algebra}

This work emerged from thinking about the derived mod $p$ Hecke algebra. Mann and Heyer \cite{just_6ff} construct a six-functor formalism on the category of condensed anima $\operatorname{Cond}(\catofanima)$
\[
D\colon\Span(\operatorname{Cond}(\catofanima),\varcat{\mathcal{E}})\to\catofcats,
\]
where $\varcat{\mathcal{E}}$ is a wide subcategory defined by an inductive sheafification procedure.
For a suitable $p$-adic reductive group $G$, there is a condensed anima $BG$ such that $D(BG)$ is equivalent to the category of smooth $\bar{\mathbb{F}}_p$-representations of $G$. Mann and Heyer \cite[proof of Proposition 5.5.4]{just_6ff} describe the derived mod $p$ Hecke algebra $\mathcal{H}$ in terms of the six-functor formalism $D$ and use this description to lift the anti-involution of $\pi_*(\mathcal{H})$ to a coherent anti-involution of $\mathcal{H}$.

In work in progress, we apply \Cref{main_theorem:lax_monoidal_free_adjointed_2_cat} to $D$ to construct a coherent filtration on $\mathcal{H}$ lifting the length filtration on $\pi_*(\mathcal{H})$. The fact that $D$ is not constructed from a suitable decomposition naturally led us to consider these questions.

\subsubsection*{Norms on $\operatorname{MS}$}

Apart from applications to six-functor formalisms, the theory of $\subuniverse$-monoidal categories is a natural framework for normed structures.

Bachmann--Hoyois \cite{BHNormsInMotivic} define a norm structure on Voevodsky's stable homotopy category $\operatorname{SH}$. In work in progress, the third author uses our theory of internal higher algebra to lift this norm structure to the category of motivic spectra $\operatorname{MS}$ developed by Annala--Iwasa \cite{annala2025motivicspectrauniversalityktheory}.

\subsection{Conventions}

Throughout this paper, ``category'' means $\infty$-category, and ``$2$-category'' means $(\infty,2)$-category.

\begin{itemize}
    \item We use calligraphic capital letters to denote $\infty$-topoi and wide subcategories thereof, for example
    \[
    \vartopos,\varcat{\mathcal{E}}.
    \]

    \item We use bold serif fonts to denote named $(\infty,2)$-categories, for example
    \[
    \spantwo,\spanhalf,\twocatofcats.
    \]

    \item We use blackboard bold capital letters to denote general $(\infty,2)$-categories, for example
    \[
    \vartwocat{K},\vartwocat{C}.
    \]

    \item We use upright serif fonts to denote named $(\infty,1)$-categories, for example
    \[
    \Span,\catofcats.
    \]

    \item We use ordinary italic capital letters to denote general $(\infty,1)$-categories, for example
    \[
    \varcat{C},\varcat{D}.
    \]

    \item We use sans-serif fonts to denote both named and general $\vartopos$-categories, for example
    \[
    \intspan,\internalcatofcats,\universe,\varintcat{C},\varintcat{D}.
    \]

    \item We use bold sans-serif fonts to denote named $\vartopos$-sheaves of $(\infty,2)$-categories, for example
    \[
    \intspantwo,\intspanhalf.
    \]
\end{itemize}

\subsection{Acknowledgments}

We would like to thank Lucas Mann for suggesting his conjecture and for interesting correspondence.
We would like to thank Sebastian Wolf for helpful conversations.
We would like to thank Eitan Sayag and Avraham Aizenbud for useful suggestions.
We would like to thank Bastiaan Cnossen for valuable correspondence and for reading and commenting on an earlier draft of this work.
We would like to thank Yuval Lotenberg, Amos Kaminski, Avital Binyamin and all other members of our research group at the Weizmann Institute of Science for camaraderie and interesting conversations.
Some elements of this work were inspired by discussions with Lior Yanovsky and Beckham Myers regarding idempotent algebras in the category of symmetric monoidal categories.

This research is partially supported by ISF Beresheet grant 4093/25, BSF grant 2024766, Minerva grant 715294, and the Azrieli Foundation.

\subsection{Statement on the use of generative AI}

We used ChatGPT Astra (OpenAI) to assist with checking mathematical arguments and proofreading this work. In particular, it provided substantial assistance in correcting the proofs of \Cref{prop:calg_has_E_colimits}, \Cref{prop:description_of_oplax_slice}, and \Cref{prop:preoperad_premonoidal_adjunction}. The authors take full responsibility for the final statements and proofs.

\section{Recognition principle over \texorpdfstring{$\Span(\varcat{C},\varcat{E})$}{Span(C,E)}.}\label{sec:recognition_principle}

In this section, we prove a recognition principle which reduces our main theorem (\Cref{main_theorem:free_adjointed_2_cat}) to a universal property of the functors
\[
h_A\colon\Span(\varcat{C},\varcat{E})\longrightarrow\catofcats,
\qquad
X\longmapsto
\Fun_{\spantwo(\varcat{C},\varcat{E})^{\varcat{P}}_{\varcat{I}}}(A,X),
\]
for $A\in\varcat{C}$, in the $\infty$-category $\Fun(\Span(\varcat{C},\varcat{E}),\catofcats)$.

In \Cref{subsec:adjointability_and_adjointedness}, we define $(\varcat{P},\varcat{I})$-adjointedness for functors from $\Span(\varcat{C},\varcat{E})$ to an $(\infty,2)$-category (\Cref{def:adjointed_functors}). The definition uses norm maps constructed inductively on the truncation level.

In \Cref{subsec:existence_of_universal_adjointed_functor}, we show that a free $(\varcat{P},\varcat{I})$-adjointed functor exists for abstract reasons (\Cref{cor:existence_of_universal_adjointed_functor}). We then show that its universal property also holds at the $(\infty,2)$-categorical level, i.e., for mapping $\infty$-\emph{categories}  (\Cref{prop:enriched_universal_property}).

In \Cref{subsec:recognition_principle}, we prove the recognition principle (\Cref{recognition_principle}), following the argument of Cnossen--Linskens--Lenz \cite{CLLuniv}: A $(\varcat{P},\varcat{I})$-adjointed functor $h\colon\Span(\varcat{C},\varcat{E})\to\vartwocat{V}$, with $\vartwocat{V}$ an $(\infty,2)$-category, is universal precisely when it is essentially surjective and each functor $\Fun_{\vartwocat{V}}(h(a),h(-))$ is free on the identity of $h(a)$. In \Cref{subsec:reduction_of_main_thm}, we apply this criterion to the canonical functor into $\spantwo(\varcat{C},\varcat{E})^{\varcat{P}}_{\varcat{I}}$ (\Cref{prop:reduction_of_main_theorem}). We describe its mapping $\infty$-categories as $\infty$-categories of spans and formulate the universal property that remains to be proved (\Cref{thm:red_of_main}). This universal property will be established in \Cref{subsec:2_FF_are_just_E_monoidal}.

\subsection{Adjointability and adjointedness}\label{subsec:adjointability_and_adjointedness}

In this subsection, we define $(\varcat{P},\varcat{I})$-adjointedness for functors from $\Span(\varcat{C},\varcat{E})$ to an $(\infty,2)$-category (\Cref{def:adjointed_functors}). We begin with naive adjointability and then define adjointedness using norm maps constructed inductively on the truncation level. We introduce the $\infty$-category of $(\varcat{P},\varcat{I})$-adjointed functors and show that adjointedness is preserved under postcomposition by $2$-functors (\Cref{cor:upward_close_lemma}).

\subsubsection*{Naive adjointability}

First, we need to describe the situation in which we can define $(P,I)$-adjointedness.

\begin{para}{\bf Setup}
    \begin{mylist}
        \item Let $\varcat{C}$ be an $\infty$-category.
        \item Let $\varcat{I},\varcat{P},\varcat{E}$ be wide sub-$\infty$-categories of $\varcat{C}$ that are stable under base change in $\varcat{C}$. Here, a wide sub-$\infty$-category $\varcat{D}\subset\varcat{C}$ is stable under base change in $\varcat{C}$ if pullbacks of morphisms in $\varcat{D}$ along arbitrary morphisms in $\varcat{C}$ exist and again belong to $\varcat{D}$.
        \item Assume that $\varcat{I},\varcat{P}$ are left cancellable and that every morphism in $\varcat{I}$ and $\varcat{P}$ is truncated.
        \item Assume that $\varcat{I},\varcat{P}\subset\varcat{E}$.
        \item Let $\Span(\varcat{C},\varcat{E})$ be the span $\infty$-category.
        \item For a functor $F:\Span(\varcat{C},\varcat{E})\to\catofkernels$ to an $(\infty,2)$-category and a map $f$ in $\varcat{C}$, we denote its left-pointing image by $f^*$ and, if $f\in\varcat{E}$, its right-pointing image by $f_!$.
    \end{mylist}
\end{para}

For a functor $\Span(\varcat{C},\varcat{E})\to\catofkernels$ to an $(\infty,2)$-category to be $(\varcat{P},\varcat{I})$-adjointed, we need to assume that the relevant morphisms $f^*$ admit adjoints. The following definition captures this.

\begin{definition}
    Let $F:\Span(\varcat{C},\varcat{E})\to \catofkernels$ be a functor to an $(\infty,2)$-category. It is called $\varcat{I}$-naively left adjointable if $i^*$ has a left adjoint $i_\sharp$ for every map $i$ in $\varcat{I}$. It is called $\varcat{P}$-naively right adjointable if $p^*$ has a right adjoint $p_*$ for every map $p$ in $\varcat{P}$.
\end{definition}

The word naively is explained by the following remark.

\begin{remark}
    One can compare this with the definition in \cite{CLLuniv}, where a functor $\varcat{C}^{\op}\to \catofkernels$ is defined to be adjointable if it is naively adjointable and also satisfies some Beck--Chevalley conditions. We circumvent the need to consider the Beck--Chevalley conditions by considering functors from $\Span(\varcat{C},\varcat{E})$ together with an adjointedness property that will be defined below.
\end{remark}

\subsubsection*{Adjointed functors}

We are now ready to define adjointed functors.

\begin{definition}{\bf Adjointed functors}\label{def:adjointed_functors}
    \begin{mylist}
        \item Let $F:\Span(\varcat{C},\varcat{E})\to \catofkernels$ be a functor to an $(\infty,2)$-category $\catofkernels$.
        \item We say that this functor is $\varcat{P}^{(-2)}$-right adjointed if it is $\varcat{P}$-naively right adjointable.
        \item Let $F$ be a $\varcat{P}^{(n)}$-right adjointed functor. Let $p$ be an $(n+1)$-truncated map in $\varcat{P}$. We define a morphism $N_p:p_{!}\to p_*$ from the right-pointing image of $p$ to the right adjoint of the left-pointing image of $p$. We then define a $\varcat{P}^{(n+1)}$-right adjointed functor to be a $\varcat{P}^{(n)}$-right adjointed functor for which these maps are isomorphisms.
        \item If $F$ is a $\varcat{P}^{(n)}$-right adjointed functor and $p$ is in $\varcat{P}^{(n)}$, then we have an adjunction $p^*\dashv p_!$ coming from the identification $N_p:p_!\simeq p_*$.
        \item Let $F$ be a $\varcat{P}^{(n)}$-right adjointed functor, and let $p:A\to B$ be an $(n+1)$-truncated morphism in $\varcat{P}$. Consider the diagram:
        \[
        \begin{tikzcd}
            A \\
            & {A\times_BA} & A \\
            & A & B.
            \arrow["\Delta"', from=1-1, to=2-2]
            \arrow[curve={height=-6pt}, equal, from=1-1, to=2-3]
            \arrow[curve={height=6pt}, equal, from=1-1, to=3-2]
            \arrow["{\pi_2}", from=2-2, to=2-3]
            \arrow["{\pi_1}"', from=2-2, to=3-2]
            \arrow["p", from=2-3, to=3-3]
            \arrow["p"', from=3-2, to=3-3]
        \end{tikzcd}
        \]
        Notice that $\Delta$ is an $n$-truncated map in $\varcat{P}$, and so by induction $\Delta_{!}$ is the right adjoint of $\Delta^*$.

        Consider now the following map:
        \[
        p^*p_!\simeq\pi_{1!}\pi_2^*
        \to\pi_{1!}\Delta_!\Delta^*\pi_2^*
        \simeq\id_!\id^*=\id.
        \]
        The first map is part of the functoriality of $F$.
        The second map is given by the unit of the adjunction $\Delta^*\dashv \Delta_!$. This gives the mate $N_p^F:p_{!}\to p_{*}$ as desired. We call this map the norm map of $p$. If $F$ is clear from the context, we write $N_p:=N_p^F$.
        \item We say that $F$ is $\varcat{P}$-right adjointed if it is $\varcat{P}^{(n)}$-right adjointed for all $n$.
        \item We say that $F$ is $\varcat{I}^{(n)}$-left adjointed (resp. $\varcat{I}$-left adjointed) if the corresponding functor $F^{\co}:\Span(\varcat{C},\varcat{E})\to\catofkernels^{\co}$ is $\varcat{I}^{(n)}$-right adjointed (resp. $\varcat{I}$-right adjointed).
    \end{mylist}
\end{definition}

We are interested in the $\infty$-category of $(\infty,2)$-categories equipped with a functor from $\Span(\varcat{C},\varcat{E})$.

\begin{notation}
    By $\catoftwocats$ we denote the $\infty$-category of $(\infty,2)$-categories. We let $\catoftwocats_{\Span(\varcat{C},\varcat{E})/}$ be the slice in which we view the $\infty$-category $\Span(\varcat{C},\varcat{E})$ as an $(\infty,2)$-category via the canonical embedding $\catofcats\into\catoftwocats$.
\end{notation}

The $(\varcat{P},\varcat{I})$-adjointed functors naturally form a particularly nice sub-$\infty$-category.

\begin{definition}
    We define the $\infty$-category of $(\varcat{P},\varcat{I})$-adjointed functors to be the full sub-$\infty$-category of $\catoftwocats_{\Span(\varcat{C},\varcat{E})/}$ spanned by functors which are $\varcat{I}$-left adjointed and $\varcat{P}$-right adjointed, and denote it by $\catoftwocats_{\Span(\varcat{C},\varcat{E})/}^{(\varcat{P},\varcat{I})\adjtd}$.
\end{definition}

\begin{remark}
    In \cite[Definition 2.4]{CLLuniv}, natural transformations between functors $\varcat{C}^{\op}\to\catofkernels$ are called $(\varcat{P},\varcat{I})$-adjointable if they satisfy a suitable Beck--Chevalley condition.
    The commuting squares arising from those conditions are subsumed in the fact that we care about functors from $\Span(\varcat{C},\varcat{E})$.
    In fact, one can show that the commutative squares arising from the functoriality of $F$ agree with the ones provided by the Beck--Chevalley transformation.
    However, we do not need this in our paper.
\end{remark}

\subsubsection*{Closure under postcomposition}

The following lemma shows that the norm maps are natural.

\begin{lemma}\label{lem:nat_of_norms}
    Let $F:\Span(\varcat{C},\varcat{E})\to\catofkernels$ be a functor, and let $G:\catofkernels\to\catofkernels'$ be a $2$-functor of $(\infty,2)$-categories. Suppose that both $F$ and $G\circ F$ are $\varcat{P}^{(n)}$-right adjointed. Let $p$ be an $(n+1)$-truncated map in $\varcat{P}$.

    Then we have a canonical homotopy $G(N_p^F)\simeq N_p^{G\circ F}$ of $2$-morphisms in $\catofkernels'$.
\end{lemma}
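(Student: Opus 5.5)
The norm map $N_p^F$ is built from three ingredients: the coherence isomorphisms $p^*p_!\simeq\pi_{1!}\pi_2^*$ and $\pi_{1!}\Delta_!\Delta^*\pi_2^*\simeq\id$ supplied by the functoriality of $F$; the unit of the adjunction $\Delta^*\dashv\Delta_!$; and the passage to the mate along $p^*\dashv p_*$. I would show that each ingredient is carried by $G$ to the corresponding ingredient for $G\circ F$, and then assemble. The first ingredient needs nothing: $G$ is a $2$-functor, so it carries the functoriality isomorphisms of $F$ to those of $G\circ F$, as $G\circ F$ is by definition the composite functor. It also preserves vertical and horizontal composition of $2$-morphisms.

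The adjunctions come next. A $2$-functor preserves adjunctions, because an adjunction is determined by a unit and counit satisfying triangle identities, and $G$ carries these to a unit and counit satisfying the same identities. Thus $G(p^*)\dashv G(p_*)$ with unit and counit $G(\eta),G(\epsilon)$. Since $G\circ F$ is $\varcat{P}^{(n)}$-right adjointed, $G(p_*)$ is identified with $(Gp)_*:=$ the chosen right adjoint of $G(p^*)$, via the contractible space of such choices. Mates are computed from units and counits by whiskering and composing, so $G$ commutes with taking mates up to canonical homotopy. It therefore remains to treat the unit of $\Delta^*\dashv\Delta_!$.

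That unit is $N_\Delta^{-1}\circ u$, the unit of $\Delta^*\dashv\Delta_*$ transported along the norm isomorphism $N_\Delta\colon\Delta_!\simeq\Delta_*$, which exists because $\Delta$ is $n$-truncated and $F$ is $\varcat{P}^{(n)}$-right adjointed. For $G\circ F$ the unit is transported along $N_\Delta^{G\circ F}$. Comparing them requires $G(N_\Delta^F)\simeq N_\Delta^{G\circ F}$, which is the statement of the lemma one level lower. So I would prove the lemma by induction on $n$, making it slightly stronger: the claim at level $n$ is the homotopy of norms for all $(k+1)$-truncated maps with $k\le n$, together with the resulting identification of the adjunction data $G(p^*\dashv p_!)\simeq (G\circ F)(p^*\dashv p_!)$ for $p\in\varcat{P}$ that are $k$-truncated.

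The base case is $n=-2$, so that $\Delta$ is $(-2)$-truncated, i.e.\ an equivalence. There the adjunction $\Delta^*\dashv\Delta_!$ is the one coming from the inverse-equivalence structure given by functoriality, and $G$ preserves it on the nose. In the inductive step we use the hypothesis for $\Delta$, and then the previous two paragraphs give $G(N_p^F)\simeq N_p^{G\circ F}$.

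I expect the main obstacle to be the bookkeeping of coherence: the homotopies must be canonical, meaning a specified path and not merely existence, so that the induction can be fed back in. I would handle this by phrasing everything in terms of the space of adjunction data, which is contractible given the left adjoint, so that the only choices involved are contractible.
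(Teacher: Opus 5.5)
Your proposal is correct and follows essentially the paper's argument. Both run an induction on $n$ and use that $2$-functors preserve mates to reduce to the composite $p^*p_!\simeq\pi_{1!}\pi_2^*\to\pi_{1!}\Delta_*\Delta^*\pi_2^*\to\pi_{1!}\Delta_!\Delta^*\pi_2^*\simeq\id$; every piece of it (the span isomorphisms, the unit of $\Delta^*\dashv\Delta_*$, and $N_\Delta^{-1}$ via the inductive hypothesis) is preserved by $G$. You are somewhat more explicit than the paper's ``nothing to prove'' about the base case where $\Delta$ is an equivalence, and about what the inductive hypothesis carries.
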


\begin{proof}
    We argue by induction on $n$. For $n=-2$, there is nothing to prove.

    Suppose that we know the result for $n$. Let $p_!^F,p_F^*$ denote the $1$-morphisms arising as the images of morphisms in $\Span(\varcat{C},\varcat{E})$ under the functor $F$.
    Let $p_*^F$ denote the right adjoint of $p_F^*$.
    Use analogous notation for $G\circ F$.

    $2$-functors preserve the formation of mates.
    Thus, to show that $G$ sends $N_p^F$ to $N_p^{G\circ F}$, it is enough to show that it sends the mate of $N_p^F$,
    \[
    p_F^*p_!^F\to\id,
    \]
    to the mate of $N_p^{G\circ F}$,
    \[
    p_{G\circ F}^*p_!^{G\circ F}\to\id.
    \]

    The mate $p_F^*p_!^F\to\id$ is defined as the following composition of $2$-morphisms:
    \[
    p_F^*p_!^F
    \simeq(\pi_1)_!^F(\pi_2)_F^*
    \xto{\eta}(\pi_1)_!^F\Delta_*^F\Delta_F^*(\pi_2)_F^*
    \xto{(N_{\Delta_p}^F)^{-1}}(\pi_1)_!^F\Delta_!^F\Delta_F^*(\pi_2)_F^*
    \simeq\id.
    \]
    The first isomorphism comes from a $2$-isomorphism in $\Span(\varcat{C},\varcat{E})$. The second morphism is a unit of an adjunction in $\catofkernels$. The third is the inverse of $N_{\Delta_p}^F$. The fourth comes from a $2$-isomorphism in $\Span(\varcat{C},\varcat{E})$.

    Any $2$-isomorphism from $\Span(\varcat{C},\varcat{E})$ is sent by $G$ to a $2$-isomorphism defined in the same way in $\catofkernels'$. Units of adjunctions are preserved by $2$-functors. By the induction hypothesis, $(N_{\Delta_p}^F)^{-1}$ is sent to $(N_{\Delta_p}^{G\circ F})^{-1}$. Thus $G$ sends the mate of $N_p^F$ to the mate of $N_p^{G\circ F}$.
\end{proof}

Hence, if $F$ is adjointed, so is $G\circ F$:

\begin{corollary}\label{cor:upward_close_lemma}
    Let $F:\Span(\varcat{C},\varcat{E})\to\catofkernels$ be a $(\varcat{P},\varcat{I})$-adjointed functor, and let $G:\catofkernels\to\catofkernels'$ be a $2$-functor of $(\infty,2)$-categories. Then $G\circ F$ is also $(\varcat{P},\varcat{I})$-adjointed.
\end{corollary}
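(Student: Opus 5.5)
We argue by induction on the truncation level $n$, showing that if $F$ is $\varcat{P}^{(n)}$-right adjointed then so is $G\circ F$. The $\varcat{I}$-left adjointed case then follows by applying the same argument to $F^{\co}$ and $G^{\co}\colon\catofkernels^{\co}\to\catofkernels'^{\co}$, which is again a $2$-functor.

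For the base case $n=-2$, recall that $2$-functors preserve adjunctions: they preserve units, counits and the triangle identities. Hence if $p^*_F$ has a right adjoint $p^F_*$ in $\catofkernels$, then $G(p^*_F)=p^*_{G\circ F}$ has right adjoint $G(p^F_*)$. So $G\circ F$ is $\varcat{P}$-naively right adjointable, and moreover $p_*^{G\circ F}\simeq G(p_*^F)$ canonically, compatibly with the adjunction data.

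For the inductive step, assume that $F$ is $\varcat{P}^{(n+1)}$-right adjointed. By the induction hypothesis, both $F$ and $G\circ F$ are $\varcat{P}^{(n)}$-right adjointed. Thus \Cref{lem:nat_of_norms} applies: for every $(n+1)$-truncated $p\in\varcat{P}$ we have $N_p^{G\circ F}\simeq G(N_p^F)$, under the identification $p_*^{G\circ F}\simeq G(p_*^F)$ from the base case. Since $N_p^F$ is an invertible $2$-morphism and $G$ preserves invertible $2$-morphisms, $N_p^{G\circ F}$ is invertible. Therefore $G\circ F$ is $\varcat{P}^{(n+1)}$-right adjointed. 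Ranging over all $n$ shows that $G\circ F$ is $\varcat{P}$-right adjointed.

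The only point requiring care is the identification of the target of $N_p^{G\circ F}$ with $G$ applied to the target of $N_p^F$. This identification is exactly what the base-case observation provides, and it is implicit in the statement of \Cref{lem:nat_of_norms}. Beyond that, the argument is formal.
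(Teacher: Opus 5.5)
Your proof is correct and follows the same route as the paper. The paper's proof is a single sentence: \Cref{lem:nat_of_norms} combined with the fact that $2$-functors send invertible $2$-morphisms to invertible $2$-morphisms. You spell out what that sentence leaves implicit: the induction on $n$, which is needed so that both $F$ and $G\circ F$ are $\varcat{P}^{(n)}$-right adjointed before the lemma can be invoked, and the reduction of the $\varcat{I}$-case via $G^{\co}$.
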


\begin{proof}
    This follows from the naturality of the norm maps and the fact that $2$-functors send invertible $2$-morphisms to invertible $2$-morphisms.
\end{proof}

\subsection{Existence of the universal adjointed functor}
\label{subsec:existence_of_universal_adjointed_functor}

In this subsection, we show that a free $(\varcat{P},\varcat{I})$-adjointed functor exists (\Cref{cor:existence_of_universal_adjointed_functor}). We first prove that adjointed functors are closed under nonempty colimits and limits. Using these closure properties and the reflection theorem of Ragimov and Schlank \cite{shauly_reflection}, we show that the inclusion of the $\infty$-category of adjointed functors admits a left adjoint (\Cref{prop:adjointed_reflection}). We then construct the universal adjointed functor and show that its universal property also holds at the level of mapping $\infty$-categories (\Cref{prop:enriched_universal_property}).

Let $\varcat{C}$ be an $\infty$-category, and let $\varcat{I},\varcat{P}\subset\varcat{E}\subset\varcat{C}$ be wide sub-$\infty$-categories that are stable under base change in $\varcat{C}$. Assume that $\varcat{P},\varcat{I}$ are truncated and left cancellable.

\subsubsection*{Closure under nonempty colimits}

We first show that adjointed functors are closed under nonempty colimits.

\begin{proposition}\label{prop:adjtd_preserves_nonempty_colimits}
    The inclusion
    \[
    \catoftwocats_{\Span(\varcat{C},\varcat{E})/}^{(\varcat{P},\varcat{I})\adjtd}
    \subset
    \catoftwocats_{\Span(\varcat{C},\varcat{E})/}
    \]
    preserves nonempty colimits.
\end{proposition}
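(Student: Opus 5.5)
Let $J$ be a nonempty $\infty$-category and let $(F_j\colon\Span(\varcat{C},\varcat{E})\to\vartwocat{K}_j)_{j\in J}$ be a diagram of adjointed functors. Write $F\colon\Span(\varcat{C},\varcat{E})\to\vartwocat{K}$ for its colimit in $\catoftwocats_{\Span(\varcat{C},\varcat{E})/}$. I will use two standard facts.

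First, colimits in a slice $\catoftwocats_{X/}$ over a nonempty diagram are computed as follows. Take the colimit in $\catoftwocats$ of the diagram extended by the cone point $X$. Since $J$ is nonempty, that extended diagram's colimit agrees with $\operatorname{colim}_J\vartwocat{K}_j$ after identifying the maps out of $X$. This is the usual reason nonempty colimits in under-categories are simple.

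Second, for each $j$ there is a structure $2$-functor $G_j\colon\vartwocat{K}_j\to\vartwocat{K}$ with $F\simeq G_j\circ F_j$.

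I prove by induction on $n$ that $F$ is $\varcat{P}^{(n)}$-right adjointed; the $\varcat{I}$ case follows by applying $(-)^{\co}$, which is an automorphism of $\catoftwocats$ and hence preserves colimits.

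For $n=-2$, fix $p\in\varcat{P}$ and choose any $j\in J$, which exists because $J$ is nonempty. Then $p^*_{F_j}$ has a right adjoint in $\vartwocat{K}_j$. Since $2$-functors preserve adjunctions, $G_j$ carries it to a right adjoint of $p^*_F=G_j(p^*_{F_j})$. So $F$ is $\varcat{P}$-naively right adjointable.

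For the inductive step, assume $F$ and all $F_j$ are $\varcat{P}^{(n)}$-right adjointed, and let $p$ be $(n+1)$-truncated in $\varcat{P}$. By \Cref{lem:nat_of_norms} applied to $G_j$, we have $N^F_p\simeq G_j(N^{F_j}_p)$. The map $N^{F_j}_p$ is invertible by hypothesis, and $2$-functors preserve invertible $2$-morphisms, so $N^F_p$ is invertible. Hence $F$ is $\varcat{P}^{(n+1)}$-right adjointed, and by induction $F$ is $\varcat{P}$-right adjointed.

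Essentially the same argument as \Cref{cor:upward_close_lemma}, using a single leg $G_j$, does all the work. The only real point to verify is the first fact: that the slice colimit has an underlying structure map $G_j$ through which $F$ factors. That factorization is exactly what fails for the empty colimit, where the colimit is $\Span(\varcat{C},\varcat{E})$ itself with the identity, which need not be adjointed.
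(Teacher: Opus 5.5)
Your argument is correct and is the paper's proof: choose some $j$ in the nonempty index category, write $F\simeq G_j\circ F_j$ for the colimit leg $G_j$, and conclude as in \Cref{cor:upward_close_lemma}. Your induction through \Cref{lem:nat_of_norms} simply re-derives that corollary, and your ``first fact'' about computing slice colimits is not needed (read literally as $\operatorname{colim}_J\vartwocat{K}_j$, it is only right for weakly contractible $J$): the factorization $F\simeq G_j\circ F_j$ holds automatically, because the legs of any cocone in $\catoftwocats_{\Span(\varcat{C},\varcat{E})/}$ are morphisms under $\Span(\varcat{C},\varcat{E})$.
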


\begin{proof}
    Let $D:\varcat{K}\to\catoftwocats_{\Span(\varcat{C},\varcat{E})/}^{(\varcat{P},\varcat{I})\adjtd}$ be a diagram indexed by a nonempty $\infty$-category $\varcat{K}$, and let $\vartwocat{M}$ be its colimit in $\catoftwocats_{\Span(\varcat{C},\varcat{E})/}$. For some $k\in\varcat{K}$, which exists by assumption, we have a factorization
    \[
    \Span(\varcat{C},\varcat{E})\to D(k)\to\vartwocat{M}.
    \]
    Since $D(k)$ is $(\varcat{P},\varcat{I})$-adjointed, \Cref{cor:upward_close_lemma} shows that $\vartwocat{M}$ is also $(\varcat{P},\varcat{I})$-adjointed.
\end{proof}

\begin{remark}
    Notice that the condition that the colimit be nonempty is vital. The inclusion does not preserve the empty colimit unless $\varcat{P},\varcat{I}$ consist only of isomorphisms. Indeed, preservation would imply, by \Cref{cor:upward_close_lemma}, that every $(\infty,2)$-category equipped with a functor from $\Span(\varcat{C},\varcat{E})$ is adjointed, which is false.
\end{remark}

\subsubsection*{Closure under limits}

The following lemma shows that adjoint morphisms in objects of $\catoftwocats$ pass to limits.

\begin{lemma}\label{lem:adj_closed_under_limits}
    Let $D:\varcat{K}\to\catoftwocats$ be a diagram indexed by an $\infty$-category $\varcat{K}$. Suppose that we have a cone $D':\varcat{K}^{\triangleleft}\to\catoftwocats$ such that $D'(-\infty)=\Delta^1$ and, for each $k\in\varcat{K}$, the functor $\Delta^1\to D'(k)$ picks a left adjoint in $D'(k)$. Then the functor $\Delta^1\to\lim_{\varcat{K}}D$ picks a left adjoint in $\lim_{\varcat{K}}D$.
\end{lemma}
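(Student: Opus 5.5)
The plan is to use the fact that an adjunction in an $(\infty,2)$-category is itself corepresented. Consider the walking adjunction $\mathrm{Adj}$, the free $(\infty,2)$-category on an adjunction, with its canonical inclusion $\Delta^1\to\mathrm{Adj}$ picking the left adjoint. The key input is the theorem of Riehl--Verity, in its $(\infty,2)$-categorical form: for every $(\infty,2)$-category $\vartwocat{K}$, the restriction map
\[
\operatorname{Map}_{\catoftwocats}(\mathrm{Adj},\vartwocat{K})\to\operatorname{Map}_{\catoftwocats}(\Delta^1,\vartwocat{K})
\]
is a monomorphism of anima. Its image consists of the components of morphisms that admit a right adjoint, because the space of adjunction data extending a given left adjoint is contractible whenever it is nonempty. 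So a morphism $\Delta^1\to\vartwocat{K}$ is a left adjoint exactly when it lies in the image of this monomorphism.

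Given this, the argument is formal. For each $k\in\varcat{K}$ the composite $\Delta^1\to D'(k)$ is a left adjoint, so it lifts uniquely to a map $\mathrm{Adj}\to D(k)$.

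1. Since the lift is unique, the lifting property is a property rather than extra structure. The cone $D'$ is a point of
\[
\lim_{k\in\varcat{K}}\operatorname{Map}(\Delta^1,D(k))\simeq\operatorname{Map}(\Delta^1,\lim_{\varcat{K}}D).
\]
2. The map
\[
\lim_{k}\operatorname{Map}(\mathrm{Adj},D(k))\to\lim_{k}\operatorname{Map}(\Delta^1,D(k))
\]
is a limit of monomorphisms, hence a monomorphism. Its image is exactly the set of components whose restriction to every $k$ lies in the image of the corresponding map for $D(k)$, since a limit of monomorphisms is computed as the sub-limit of the relevant components.
3. The point given by $D'$ lies in this image by hypothesis. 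Hence it lifts to a point of
\[
\lim_{k}\operatorname{Map}(\mathrm{Adj},D(k))\simeq\operatorname{Map}(\mathrm{Adj},\lim_{\varcat{K}}D),
\]
and therefore $\Delta^1\to\lim_{\varcat{K}}D$ extends to $\mathrm{Adj}$, i.e.\ it picks a left adjoint.

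The main obstacle is step 2, specifically justifying that the image of a limit of monomorphisms is the limit of the images. For anima this holds because monomorphisms are inclusions of unions of path components and limits commute with this description componentwise. A point of the limit whose components all lie in the images lifts, since the fibers of each map are either empty or contractible; the limit of the fibers is then a limit of contractible anima, hence contractible.

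If one prefers to avoid citing the Riehl--Verity contractibility result, there is a hands-on alternative. Write $\vartwocat{L}=\lim_{\varcat{K}}D$ and let $f$ denote the morphism picked out by $\Delta^1\to\vartwocat{L}$. The right adjoints $g_k$ form a compatible family, because right adjoints are functorial along $2$-functors and the transition functors preserve adjunctions. The units and counits are likewise compatible, since $2$-functors preserve them, and the triangle identities hold levelwise. Assembling this compatible data into the limit, however, requires exactly the coherence that the walking-adjunction argument packages. So I would present the corepresentability argument as the proof.
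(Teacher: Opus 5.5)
Your proposal is correct and matches the paper's proof. Both arguments rest on the uniqueness of lifts along $\Delta^1\to\mathbf{Adj}$, and both use it to factor the cone through the walking adjunction $\mathbf{Adj}$. Your fibrewise argument makes explicit the step the paper summarizes as ``$2$-functors preserve adjunctions'': the fibre of the limit map is a limit of contractible anima, hence contractible.
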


\begin{proof}
    Let $\vartwocat{D}$ be an $(\infty,2)$-category.
    Let $\mathbf{Adj}$ be the initial $(\infty,2)$-category with adjoint $1$-morphisms $L\dashv R$. There is a canonical functor $\Delta^1\to\mathbf{Adj}$ picking $L$.

    Recall that a $1$-morphism corresponding to a functor $\Delta^1\to\vartwocat{D}$ is a left adjoint in $\vartwocat{D}$ if and only if there exists a necessarily unique lift
    \[
    \begin{tikzcd}
        \Delta^1\ar[r]\ar[d]
        &\vartwocat{D}\\
        \mathbf{Adj}.\ar[ur, dotted]
    \end{tikzcd}
    \]
    Thus, since $2$-functors preserve adjunctions, the cone from $\Delta^1$ to $D$ factors through a cone from $\mathbf{Adj}$.
\end{proof}

Let us denote by $\catoftwocats_{\Span(\varcat{C},\varcat{E})/}^{(\varcat{P},\varcat{I})^{(n)}\adjtd}$ the full sub-$\infty$-category of $\catoftwocats_{\Span(\varcat{C},\varcat{E})/}$ spanned by functors which are $\varcat{I}^{(n)}$-left adjointed and $\varcat{P}^{(n)}$-right adjointed.

Consider $\catoftwocats_{\Span(\varcat{C},\varcat{E})/}^{(\varcat{P},\varcat{I})^{(-2)}\adjtd}$. This is the full sub-$\infty$-category of $(\varcat{P},\varcat{I})$-naively adjointable functors.

\begin{corollary}\label{cor:naively_adjbl_functors_are_closed_under_limits}
    $\catoftwocats_{\Span(\varcat{C},\varcat{E})/}^{(\varcat{P},\varcat{I})^{(-2)}\adjtd}$ is closed under limits.
\end{corollary}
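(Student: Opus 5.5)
The plan is to reduce the statement directly to \Cref{lem:adj_closed_under_limits}, applied once for each morphism of $\varcat{I}$ and of $\varcat{P}$. Let $D\colon\varcat{K}\to\catoftwocats_{\Span(\varcat{C},\varcat{E})/}^{(\varcat{P},\varcat{I})^{(-2)}\adjtd}$ be a diagram. The first step is to describe the limit in the slice. Limits in $\catoftwocats_{\Span(\varcat{C},\varcat{E})/}$ are computed by taking the limit of the underlying diagram in $\catoftwocats$, with the structure map $\Span(\varcat{C},\varcat{E})\to\vartwocat{M}:=\lim_{\varcat{K}}D$ induced by the cone. For empty $\varcat{K}$ the limit is the terminal object $\Span(\varcat{C},\varcat{E})\to *$, which is trivially naively adjointable, so we may assume $\varcat{K}$ is arbitrary.

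Next we fix $p\in\varcat{P}$. Since $p^*$ corresponds to the image of a morphism of $\Span(\varcat{C},\varcat{E})$, we get a functor $\Delta^1\to\Span(\varcat{C},\varcat{E})$ picking the backward span of $p$. Composing with the structure maps produces a cone $D'\colon\varcat{K}^{\triangleleft}\to\catoftwocats$ with $D'(-\infty)=\Delta^1$ over the underlying diagram. At each $k\in\varcat{K}$, the morphism $p^*$ in $D(k)$ has a right adjoint, because $D(k)$ is $\varcat{P}$-naively right adjointable. In other words, it is a left adjoint. \Cref{lem:adj_closed_under_limits} then shows that $p^*$ is a left adjoint in $\vartwocat{M}$, so it has a right adjoint $p_*$.

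For $i\in\varcat{I}$ we need $i^*$ to be a right adjoint. Here I would apply the same argument to the opposite-on-$2$-cells version. The functor $(-)^{\co}\colon\catoftwocats\to\catoftwocats$ is an equivalence, so it preserves limits. Under it, right adjoints become left adjoints. Applying \Cref{lem:adj_closed_under_limits} to $D^{\co}$ and then passing back gives a left adjoint $i_\sharp$ of $i^*$ in $\vartwocat{M}$.

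The only step needing care is the claim that limits in the slice under $\Span(\varcat{C},\varcat{E})$ are computed underlying. This is standard: the forgetful functor from an undercategory preserves and reflects limits indexed by weakly contractible categories, and for general $\varcat{K}$ one uses the limit of the diagram $\varcat{K}^{\triangleleft}\to\catoftwocats$ with cone point $\Span(\varcat{C},\varcat{E})$. In either description, the relevant $1$-morphisms are detected via the projections to the $D(k)$, so the argument above applies verbatim. I expect no real obstacle beyond this bookkeeping.
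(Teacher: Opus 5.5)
Your argument is correct and is the paper's proof, which simply cites \Cref{lem:adj_closed_under_limits}. You apply that lemma to the cone $\Delta^1\to\Span(\varcat{C},\varcat{E})\to D(k)$ picking $p^*$ for $p\in\varcat{P}$, and to its image under $(-)^{\co}$ (equivalently, the right-adjoint version of the lemma) for $i\in\varcat{I}$. One small cleanup: limits in an undercategory are computed underlying for \emph{every} indexing $\varcat{K}$; weak contractibility is only needed for colimits. So your last paragraph's detour is unnecessary, and the phrase about ``the limit of the diagram $\varcat{K}^{\triangleleft}\to\catoftwocats$'' should be dropped, since that limit is just the cone point.
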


\begin{proof}
    This is immediate from \Cref{lem:adj_closed_under_limits}.
\end{proof}

Next we show that adjointed functors are closed under all limits. We first need another lemma.

Let $\mathbf{O}^{\lax}$ be the $(\infty,2)$-category given by the following diagram:
\[
\begin{tikzcd}
    \bullet && \bullet.
    \arrow[""{name=0, anchor=center, inner sep=0}, shift right=2, curve={height=12pt}, from=1-1, to=1-3]
    \arrow[""{name=1, anchor=center, inner sep=0}, shift left=2, curve={height=-12pt}, from=1-1, to=1-3]
    \arrow[between={0.2}{0.8}, Rightarrow, nfold, from=1, to=0]
\end{tikzcd}
\]
This $(\infty,2)$-category classifies a $2$-morphism in the sense that a functor $\mathbf{O}^{\lax}\to\vartwocat{K}$ to an $(\infty,2)$-category amounts to choosing a $2$-morphism.
There is a canonical functor $\mathbf{O}^{\lax}\to\Delta^1$ choosing the identity $2$-morphism of the nonidentity arrow of $\Delta^1$. A functor $\mathbf{O}^{\lax}\to\vartwocat{K}$ classifies a $2$-isomorphism in $\vartwocat{K}$ if and only if there exists a necessarily unique factorization
\[
\begin{tikzcd}
    \mathbf{O}^{\lax}\ar[r]\ar[d]
    &\vartwocat{K}\\
    \Delta^1.\ar[ur, dotted]
\end{tikzcd}
\]

\begin{lemma}\label{lem:2_isos_closed_under_limits}
    Let $D:\varcat{K}\to\catoftwocats$ be a diagram indexed by an $\infty$-category $\varcat{K}$. Suppose that we have a cone $D':\varcat{K}^{\triangleleft}\to\catoftwocats$ such that $D'(-\infty)=\mathbf{O}^{\lax}$ and, for each $k\in\varcat{K}$, the functor $\mathbf{O}^{\lax}\to D'(k)$ picks a $2$-isomorphism in $D'(k)$. Then the corresponding functor $\mathbf{O}^{\lax}\to\lim_{\varcat{K}}D$ picks a $2$-isomorphism in $\lim_{\varcat{K}}D$.
\end{lemma}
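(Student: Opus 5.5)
The plan is to mirror the proof of \Cref{lem:adj_closed_under_limits}, with the pair $(\mathbf{O}^{\lax}\to\Delta^1)$ taking the place of $(\Delta^1\to\mathbf{Adj})$. The key input is the characterization recalled just before the statement: a functor $\mathbf{O}^{\lax}\to\vartwocat{K}$ classifies a $2$-isomorphism exactly when it factors through the canonical functor $\mathbf{O}^{\lax}\to\Delta^1$, and such a factorization is then essentially unique. In other words, the restriction map
\[
\Map_{\catoftwocats}(\Delta^1,\vartwocat{K})\to\Map_{\catoftwocats}(\mathbf{O}^{\lax},\vartwocat{K})
\]
is a monomorphism of spaces, i.e.\ an inclusion of a union of path components. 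Its image consists of the $2$-isomorphisms.

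The first step is to record this monomorphism property; I will justify it below. The second step uses the universal property of limits in $\catoftwocats$ to write
\[
\Map(X,\lim_{\varcat{K}}D)\simeq\lim_{k\in\varcat{K}}\Map(X,D(k)).
\]
This identification is natural in $X$, and we apply it to $X=\mathbf{O}^{\lax}$ and $X=\Delta^1$. The map
\[
\lim_k\Map(\Delta^1,D(k))\to\lim_k\Map(\mathbf{O}^{\lax},D(k))
\]
is a limit of monomorphisms, hence itself a monomorphism. Its image is computed componentwise: a point of the target lies in the image exactly when each of its components lies in the image of $\Map(\Delta^1,D(k))$. This holds because the limit of subspace inclusions is the subspace of the limit cut out by the pointwise condition, inclusions of components being $(-1)$-truncated maps and limits preserving pullbacks.

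The third step applies this to our situation. The cone $D'$ determines a point of $\lim_k\Map(\mathbf{O}^{\lax},D(k))$, namely the induced functor $\mathbf{O}^{\lax}\to\lim_{\varcat{K}}D$. By hypothesis each component picks a $2$-isomorphism, so each component factors through $\Delta^1$. By the second step the whole point therefore lies in the image of $\Map(\Delta^1,\lim_{\varcat{K}}D)$. This says exactly that $\mathbf{O}^{\lax}\to\lim_{\varcat{K}}D$ factors through $\Delta^1$, so it picks a $2$-isomorphism. Equivalently, in the phrasing of \Cref{lem:adj_closed_under_limits}: the cone out of $\mathbf{O}^{\lax}$ factors, uniquely up to contractible choice, through a cone out of $\Delta^1$.

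The only step needing care is the claim that $\mathbf{O}^{\lax}\to\Delta^1$ induces a monomorphism on mapping spaces, rather than merely the statement ``there exists a necessarily unique factorization''. I would argue as follows. The functor $\mathbf{O}^{\lax}\to\Delta^1$ is the localization of $(\infty,2)$-categories that inverts the single generating $2$-morphism: $\Delta^1$ is $\mathbf{O}^{\lax}$ with its $2$-cell made invertible, and $\mathbf{O}^{\lax}$ with an inverted $2$-cell is equivalent to $\Delta^1$ since parallel $1$-morphisms joined by a $2$-isomorphism are identified. Localizations are epimorphisms in $\catoftwocats$, so restriction along them is fully faithful on mapping spaces. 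Its image is the set of functors inverting the $2$-cell, which is the statement of the first step. This matches the uniqueness assertion already recalled in the paper, so I expect it to be accepted in the same way the analogous fact for $\mathbf{Adj}$ was used in \Cref{lem:adj_closed_under_limits}.
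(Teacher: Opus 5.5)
Your proposal is correct and follows the paper's approach: the paper proves this lemma by repeating the proof of \Cref{lem:adj_closed_under_limits}, where each component factors through $\Delta^1$ and the uniqueness of these factorizations lets them assemble into a cone out of $\Delta^1$, and your mapping-space argument with monomorphisms makes that step explicit. The ``necessarily unique factorization'' the paper recalls is already your monomorphism statement (fibres empty or contractible), so your localization argument justifies that recalled fact rather than adding a new ingredient.
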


\begin{proof}
    The proof is identical to the proof of \Cref{lem:adj_closed_under_limits}.
\end{proof}

\begin{proposition}\label{prop:adjtd_preserves_limits}
    The inclusion
    \[
    \catoftwocats_{\Span(\varcat{C},\varcat{E})/}^{(\varcat{P},\varcat{I})\adjtd}
    \subset
    \catoftwocats_{\Span(\varcat{C},\varcat{E})/}
    \]
    preserves all limits.
\end{proposition}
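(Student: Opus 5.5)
We argue by induction on the truncation level $n$, showing that for every $n\geq -2$ the full subcategory $\catoftwocats_{\Span(\varcat{C},\varcat{E})/}^{(\varcat{P},\varcat{I})^{(n)}\adjtd}$ is closed under limits in $\catoftwocats_{\Span(\varcat{C},\varcat{E})/}$. Since the adjointed subcategory is the intersection over all $n$, this gives the statement. The base case $n=-2$ is \Cref{cor:naively_adjbl_functors_are_closed_under_limits}.

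Limits in the slice $\catoftwocats_{\Span(\varcat{C},\varcat{E})/}$ over a nonempty diagram are computed in $\catoftwocats$. So let $D\colon\varcat{K}\to\catoftwocats^{(\varcat{P},\varcat{I})^{(n+1)}\adjtd}_{\Span(\varcat{C},\varcat{E})/}$ be a diagram, and write $\vartwocat{L}=\lim_{\varcat{K}}D$ with structure functor $F\colon\Span(\varcat{C},\varcat{E})\to\vartwocat{L}$ and projections $G_k\colon\vartwocat{L}\to D(k)$. The empty limit is the terminal object $\Span(\varcat{C},\varcat{E})\to *$, which is adjointed because every $2$-morphism in $*$ is invertible and every $1$-morphism is an adjoint. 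For a general slice limit over $\varcat{K}$ we reduce to the case above by the usual description via $\varcat{K}^{\triangleleft}$: the cone point $\Span(\varcat{C},\varcat{E})$ is joined to the diagram. I will treat this as routine.

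By the induction hypothesis, $F$ is $\varcat{P}^{(n)}$-right adjointed. Each $G_k\circ F$ is $\varcat{P}^{(n+1)}$-right adjointed, and in particular $\varcat{P}^{(n)}$-right adjointed. Now fix an $(n+1)$-truncated $p\in\varcat{P}$. The norm map $N_p^F$ is a $2$-morphism in $\vartwocat{L}$, i.e.\ a functor $\mathbf{O}^{\lax}\to\vartwocat{L}$. By \Cref{lem:nat_of_norms} applied to $G_k$, its image $G_k(N^F_p)$ is identified with $N_p^{G_k\circ F}$, which is invertible.

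These identifications are natural in $k$, since they come from the canonical homotopies of \Cref{lem:nat_of_norms}, which are compatible with composition of $2$-functors. Hence the composites $\mathbf{O}^{\lax}\to\vartwocat{L}\to D(k)$ form a cone over $D$ in which each leg picks a $2$-isomorphism. By \Cref{lem:2_isos_closed_under_limits}, $N^F_p$ is then a $2$-isomorphism in $\vartwocat{L}$. The $\varcat{I}$-side follows by applying the same argument to $(-)^{\co}$, which preserves limits.

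The main obstacle is the coherence point: $G_k(N_p^F)$ must match $N_p^{G_k\circ F}$ functorially in $k$, so that we genuinely have a cone from $\mathbf{O}^{\lax}$. I would handle this by observing that $N_p^F$ is constructed from $F$ alone, and therefore the functor $\mathbf{O}^{\lax}\to\vartwocat{L}$ is a single map. Its composites with the $G_k$ are automatically a cone, since they are just postcompositions of one map. Invertibility then only needs to be checked leg by leg, which is exactly what \Cref{lem:nat_of_norms} provides. So the naturality worry disappears, and no extra coherence input is required.
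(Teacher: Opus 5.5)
Your proposal is correct and is essentially the paper's proof: induction on the truncation level, with \Cref{lem:nat_of_norms} identifying each leg $G_k(N_p^F)$ with the invertible norm $N_p^{G_k\circ F}$ and \Cref{lem:2_isos_closed_under_limits} then forcing $N_p^F$ to be invertible. Your remark that the cone is just the single map $\mathbf{O}^{\lax}\to\vartwocat{L}$ postcomposed with the projections is the right way to read the paper's cone $D'$.

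One small tidy-up: limits in the under-category $\catoftwocats_{\Span(\varcat{C},\varcat{E})/}$ are computed in $\catoftwocats$ for every diagram, including the empty one. So your separate empty case is just an instance of the general one, and the $\varcat{K}^{\triangleleft}$ ``reduction'' is not needed (that is the recipe for colimits in an under-category).
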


\begin{proof}
    We will prove by induction that each full sub-$\infty$-category
    \[
    \catoftwocats_{\Span(\varcat{C},\varcat{E})/}^{(\varcat{P},\varcat{I})^{(n)}\adjtd}
    \subset
    \catoftwocats_{\Span(\varcat{C},\varcat{E})/}^{(\varcat{P},\varcat{I})^{(n-1)}\adjtd}
    \]
    is closed under limits, and that $\catoftwocats_{\Span(\varcat{C},\varcat{E})/}^{(\varcat{P},\varcat{I})^{(-2)}\adjtd}$ is closed under limits in $\catoftwocats_{\Span(\varcat{C},\varcat{E})/}$.

    \Cref{cor:naively_adjbl_functors_are_closed_under_limits} provides the base of the induction.

    Now consider a diagram $D:\varcat{K}\to\catoftwocats_{\Span(\varcat{C},\varcat{E})/}^{(\varcat{P},\varcat{I})^{(n)}\adjtd}$. By induction, its limit in $\catoftwocats_{\Span(\varcat{C},\varcat{E})/}$ belongs to $\catoftwocats_{\Span(\varcat{C},\varcat{E})/}^{(\varcat{P},\varcat{I})^{(n-1)}\adjtd}$. For all objects of this full sub-$\infty$-category, we have naturally defined norm maps $i_\sharp\to i_!$ and $p_!\to p_*$ (see \Cref{lem:nat_of_norms}). Thus, for each $i\in\varcat{I}^{(n)}$ or $p\in\varcat{P}^{(n)}$, we get a cone $D':\varcat{K}^{\triangleleft}\to\catoftwocats$ such that $D'(-\infty)=\mathbf{O}^{\lax}$ and the components $\mathbf{O}^{\lax}\to D'(k)$ for $k\in\varcat{K}$ classify the norm maps of $i$ or $p$.

    By \Cref{lem:2_isos_closed_under_limits}, the limit $\lim_{\varcat{K}}D$ belongs to $\catoftwocats_{\Span(\varcat{C},\varcat{E})/}^{(\varcat{P},\varcat{I})^{(n)}\adjtd}$.

    We have proved that a limit in $\catoftwocats_{\Span(\varcat{C},\varcat{E})/}^{(\varcat{P},\varcat{I})^{(n-1)}\adjtd}$ of objects in $\catoftwocats_{\Span(\varcat{C},\varcat{E})/}^{(\varcat{P},\varcat{I})^{(n)}\adjtd}$ belongs to $\catoftwocats_{\Span(\varcat{C},\varcat{E})/}^{(\varcat{P},\varcat{I})^{(n)}\adjtd}$, and we are done.
\end{proof}

\subsubsection*{Abstract existence}

Hence, there is a left adjoint to the inclusion of adjointed functors.

\begin{proposition}\label{prop:adjointed_reflection}
    The inclusion
    \[
    \catoftwocats_{\Span(\varcat{C},\varcat{E})/}^{(\varcat{P},\varcat{I})\adjtd}
    \into
    \catoftwocats_{\Span(\varcat{C},\varcat{E})/}
    \]
    has a left adjoint $L_{\operatorname{adjointed}}$.
\end{proposition}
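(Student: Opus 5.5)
We apply the reflection theorem of Ragimov--Schlank \cite{shauly_reflection}, as announced at the start of this subsection. Roughly, it says the following. Let $\varcat{X}$ be a presentable $\infty$-category and $\varcat{Y}\subset\varcat{X}$ a full subcategory closed under all limits and under some class of colimits (for instance nonempty colimits, or sufficiently filtered colimits). Then $\varcat{Y}$ is presentable and the inclusion is accessible, so it admits a left adjoint. We therefore check three things: $\catoftwocats_{\Span(\varcat{C},\varcat{E})/}$ is presentable, the adjointed subcategory is closed under limits, and it is closed under a suitable class of colimits.

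First, presentability. The $\infty$-category $\catoftwocats$ is presentable; for example, it is a localization of complete Segal objects in $\catofcats$. Slices of presentable $\infty$-categories under an object are again presentable. Hence $\catoftwocats_{\Span(\varcat{C},\varcat{E})/}$ is presentable.

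Second, closure under limits. \Cref{prop:adjtd_preserves_limits} shows that the full subcategory $\catoftwocats_{\Span(\varcat{C},\varcat{E})/}^{(\varcat{P},\varcat{I})\adjtd}$ is closed under all limits. This includes the empty limit: the terminal object is the functor to the terminal $(\infty,2)$-category, and it is trivially adjointed.

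Third, closure under colimits. \Cref{prop:adjtd_preserves_nonempty_colimits} shows the subcategory is closed under all nonempty colimits. In particular it is closed under $\kappa$-filtered colimits for every regular cardinal $\kappa$, so it is an accessibly embedded subcategory.

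The main obstacle is matching these inputs to the exact hypotheses of the reflection theorem. If the theorem needs closure under limits together with $\kappa$-filtered colimits for some $\kappa$, the three points above suffice. If it also needs accessibility of the subcategory itself, there are two routes. One is to use the version of Ragimov--Schlank showing that closure under limits and under filtered (or nonempty) colimits forces accessibility. The other is to give a direct argument: adjointedness is a small collection of conditions, namely that certain maps lift along $\Delta^1\to\mathbf{Adj}$ and certain maps from $\mathbf{O}^{\lax}$ factor through $\Delta^1$, one for each morphism of $\varcat{P}$ and $\varcat{I}$. Such conditions cut out an accessible subcategory, by the closure of accessible categories under pullbacks and the fact that $\mathbf{Adj}$, $\Delta^1$ and $\mathbf{O}^{\lax}$ are compact. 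Either route gives a left adjoint $L_{\operatorname{adjointed}}$ by the adjoint functor theorem.
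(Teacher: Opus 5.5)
Your proposal is correct and is essentially the paper's proof: it applies the Ragimov--Schlank reflection theorem and the adjoint functor theorem, using closure under all limits (\Cref{prop:adjtd_preserves_limits}) and closure under nonempty, hence $\kappa$-filtered, colimits (\Cref{prop:adjtd_preserves_nonempty_colimits}). Your explicit check that $\catoftwocats_{\Span(\varcat{C},\varcat{E})/}$ is presentable fills in a step the paper leaves implicit, and the alternative accessibility argument via $\mathbf{Adj}$ and $\mathbf{O}^{\lax}$ is not needed.
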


\begin{proof}
    By the reflection theorem of Ragimov and Schlank \cite{shauly_reflection} and the adjoint functor theorem, it suffices to show that the inclusion preserves all limits and is accessible.

    Note that filtered $\infty$-categories are nonempty, for example because their geometric realizations are contractible. Hence we are done by \Cref{prop:adjtd_preserves_nonempty_colimits,prop:adjtd_preserves_limits}.
\end{proof}

In particular, we get the following:

\begin{corollary}\label{cor:existence_of_universal_adjointed_functor}
    There exists a $(\varcat{P},\varcat{I})$-adjointed functor $F:\Span(\varcat{C},\varcat{E})\to\vartwocat{U}$ such that precomposition with $F$ gives an equivalence of $\infty$-categories
    \[
    (-)\circ F\colon
    \catoftwocats_{\vartwocat{U}/}
    \to
    \catoftwocats_{\Span(\varcat{C},\varcat{E})/}^{(\varcat{P},\varcat{I})\adjtd}.
    \]

    Equivalently, $F$ is a universal $(\varcat{P},\varcat{I})$-adjointed functor, in the sense that for any $(\varcat{P},\varcat{I})$-adjointed functor $G:\Span(\varcat{C},\varcat{E})\to\catofkernels$, there is a unique functor $H:\vartwocat{U}\to\catofkernels$ factoring $G$.
\end{corollary}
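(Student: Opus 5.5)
We obtain $F$ by applying the reflector $L_{\operatorname{adjointed}}$ of \Cref{prop:adjointed_reflection} to the initial object of $\catoftwocats_{\Span(\varcat{C},\varcat{E})/}$, namely the identity $\Span(\varcat{C},\varcat{E})\to\Span(\varcat{C},\varcat{E})$, regarded as an $(\infty,2)$-category under itself. Concretely, set $F\colon\Span(\varcat{C},\varcat{E})\to\vartwocat{U}$ to be $L_{\operatorname{adjointed}}(\id)$. By construction it is $(\varcat{P},\varcat{I})$-adjointed.

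For the universal property, take any $(\varcat{P},\varcat{I})$-adjointed $G\colon\Span(\varcat{C},\varcat{E})\to\vartwocat{K}$. The adjunction $L_{\operatorname{adjointed}}\dashv\iota$ gives
\[
\Map_{\catoftwocats_{\Span(\varcat{C},\varcat{E})/}^{(\varcat{P},\varcat{I})\adjtd}}(F,G)\simeq\Map_{\catoftwocats_{\Span(\varcat{C},\varcat{E})/}}(\id,G)\simeq *,
\]
since $\id$ is initial in the slice. Hence $F$ is initial in the full subcategory of adjointed functors. Mapping spaces in a slice $\catoftwocats_{\Span(\varcat{C},\varcat{E})/}$ are fibers of restriction maps, so this says exactly that the space of pairs $(H\colon\vartwocat{U}\to\vartwocat{K},\ H\circ F\simeq G)$ is contractible. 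This is the "equivalently" clause.

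The first formulation also needs the functor $(-)\circ F\colon\catoftwocats_{\vartwocat{U}/}\to\catoftwocats_{\Span(\varcat{C},\varcat{E})/}$ to land in adjointed functors and to be an equivalence there. It lands there by \Cref{cor:upward_close_lemma}, since $H\circ F$ is adjointed whenever $F$ is. It is an equivalence onto the adjointed functors because an initial object $F$ of a full subcategory $\mathcal{A}$ of an undercategory satisfies $\mathcal{A}_{F/}\simeq\mathcal{A}$. Moreover, $\mathcal{A}_{F/}$ is identified with the full subcategory of $\catoftwocats_{\vartwocat{U}/}$ on objects whose composite with $F$ is adjointed, and by the same corollary this is all of $\catoftwocats_{\vartwocat{U}/}$.

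I expect no serious obstacle. The only care needed is the bookkeeping with iterated slices: we must identify $(\catoftwocats_{\Span(\varcat{C},\varcat{E})/})_{F/}\simeq\catoftwocats_{\vartwocat{U}/}$ via the standard equivalence $(\mathcal{C}_{x/})_{(f\colon x\to y)/}\simeq\mathcal{C}_{y/}$.
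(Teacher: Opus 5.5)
Your proposal is correct and follows the paper's proof, which simply sets $\vartwocat{U}:=L_{\operatorname{adjointed}}(\Span(\varcat{C},\varcat{E}))$, i.e.\ the reflection of the initial object $\id$ of the slice. You additionally spell out the bookkeeping the paper leaves implicit: initiality of $F$, the identification $(\mathcal{C}_{x/})_{f/}\simeq\mathcal{C}_{y/}$, and the use of \Cref{cor:upward_close_lemma} to see that $(-)\circ F$ lands in adjointed functors.
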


\begin{proof}
    We take $\vartwocat{U}:=L_{\operatorname{adjointed}}(\Span(\varcat{C},\varcat{E}))$.
\end{proof}

\subsubsection*{The enriched universal property}

Before finishing the subsection, let us explain that the universality of $\vartwocat{U}$ in the $\infty$-category $\catoftwocats_{\Span(\varcat{C},\varcat{E})/}^{(\varcat{P},\varcat{I})\adjtd}$ bootstraps to a universal property in the $(\infty,2)$-category of adjointed $(\infty,2)$-categories.

Let $\twocatoftwocats_{\Span(\varcat{C},\varcat{E})/}^{(\varcat{P},\varcat{I})\adjtd}$ denote the full sub-$(\infty,2)$-category of $\twocatoftwocats_{\Span(\varcat{C},\varcat{E})/}$ spanned by the $(\varcat{P},\varcat{I})$-adjointed functors.

\begin{proposition}\label{prop:enriched_universal_property}
    For any object $F:\Span(\varcat{C},\varcat{E})\to\vartwocat{G}$ of $\twocatoftwocats_{\Span(\varcat{C},\varcat{E})/}^{(\varcat{P},\varcat{I})\adjtd}$, we have an equivalence of $\infty$-categories
    \[
    \Fun(\vartwocat{U},\vartwocat{G})\simeq *,
    \]
    where $\Fun(\vartwocat{U},\vartwocat{G})$ denotes the mapping $\infty$-category in $\twocatoftwocats_{\Span(\varcat{C},\varcat{E})/}^{(\varcat{P},\varcat{I})\adjtd}$.
\end{proposition}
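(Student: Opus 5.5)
The strategy is to bootstrap from the $\infty$-categorical universal property of \Cref{cor:existence_of_universal_adjointed_functor} by using cotensors with $\infty$-categories. The mapping $\infty$-category $\Fun(\vartwocat{U},\vartwocat{G})$ in the slice $\twocatoftwocats_{\Span(\varcat{C},\varcat{E})/}$ is the fiber over $F$ of the restriction functor $\Fun(\vartwocat{U},\vartwocat{G})\to\Fun(\Span(\varcat{C},\varcat{E}),\vartwocat{G})$. So it suffices to show that, for every $\infty$-category $K$, the space of functors $K\to\Fun(\vartwocat{U},\vartwocat{G})$ is contractible; by Yoneda this forces the $\infty$-category to be equivalent to $*$.

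First step. A functor $K\to\Fun(\vartwocat{U},\vartwocat{G})$ in the slice is the same as a functor $\vartwocat{U}\to\vartwocat{G}^{K}$ whose composite with $\Span(\varcat{C},\varcat{E})\to\vartwocat{U}$ is the composite
\[
\Span(\varcat{C},\varcat{E})\xto{F}\vartwocat{G}\xto{c}\vartwocat{G}^{K},
\]
where $c$ is the constant (diagonal) $2$-functor. Here $\vartwocat{G}^K$ denotes the cotensor, i.e.\ the $(\infty,2)$-category of functors $K\to\vartwocat{G}$ with (strong) natural transformations as $1$-morphisms and modifications as $2$-morphisms. This step only uses the exponential adjunction in $\catoftwocats$, together with the fact that the slice is compatible with it via the constant map.

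Second step. Since $c$ is a $2$-functor and $F$ is adjointed, \Cref{cor:upward_close_lemma} shows that $c\circ F$ is $(\varcat{P},\varcat{I})$-adjointed. Hence it is an object of $\catoftwocats_{\Span(\varcat{C},\varcat{E})/}^{(\varcat{P},\varcat{I})\adjtd}$.

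Third step. By \Cref{cor:existence_of_universal_adjointed_functor}, the space of maps $\vartwocat{U}\to\vartwocat{G}^K$ under $\Span(\varcat{C},\varcat{E})$ is contractible. This contractible space is exactly the space of functors $K\to\Fun(\vartwocat{U},\vartwocat{G})$ from the first step, which gives the claim.

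The main obstacle is the first step: making precise that the mapping $\infty$-category in the slice $(\infty,2)$-category is corepresented by the cotensor with its constant structure map. One also has to check that a full sub-$(\infty,2)$-category has the same mapping categories, which is immediate. I would first try to deduce the whole identification from the Gray-free cartesian closed structure on $\catoftwocats$ together with the standard description of mapping objects in slices as fibers.

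Alternatively, one could phrase the argument via the universal property of $\vartwocat{U}$ as a left adjoint. Then $\catoftwocats_{\Span(\varcat{C},\varcat{E})/}^{(\varcat{P},\varcat{I})\adjtd}$ is closed under cotensors by \Cref{prop:adjtd_preserves_limits}-type arguments, and the reflection $L_{\operatorname{adjointed}}$ is enriched.
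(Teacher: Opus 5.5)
Your proposal is correct and takes essentially the same route as the paper. Both test against an arbitrary $\infty$-category $K$, identify $\operatorname{Map}(K,\Fun(\vartwocat{U},\vartwocat{G}))$ with maps $\vartwocat{U}\to\funtwocat(K,\vartwocat{G})$ under $\Span(\varcat{C},\varcat{E})$ (structure map the diagonal composed with $F$), and conclude by the universality of $\vartwocat{U}$ from \Cref{cor:existence_of_universal_adjointed_functor}. Your explicit appeal to \Cref{cor:upward_close_lemma}, showing that the diagonal composite is still adjointed, is a step the paper leaves implicit. It is exactly what is needed to land in the subcategory where $\vartwocat{U}$ is initial.
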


\begin{proof}
    The $\infty$-category $\catoftwocats_{\Span(\varcat{C},\varcat{E})/}^{(\varcat{P},\varcat{I})\adjtd}$ can be endowed with the structure of a $\catofcats$-module in $\prl$. Thus it is both enriched and cotensored over $\catofcats$.

    It is enough to show that, for every $\infty$-category $\varcat{T}$, the anima
    \[
    \operatorname{Map}(\varcat{T},\Fun(\vartwocat{U},\vartwocat{G}))
    \]
    is contractible.

    Let $\funtwocat(\varcat{T},\vartwocat{G})$ denote the $(\infty,2)$-category of functors from $\varcat{T}$ to $\vartwocat{G}$.

    Consider the functor $\Span(\varcat{C},\varcat{E})\to\funtwocat(\varcat{T},\vartwocat{G})$ given by the composition of $F$ with the diagonal functor $\vartwocat{G}\to\funtwocat(\varcat{T},\vartwocat{G})$.

    We have
    \[
    \operatorname{Map}(\varcat{T},\Fun(\vartwocat{U},\vartwocat{G}))
    \simeq
    \operatorname{Map}(\vartwocat{U},\funtwocat(\varcat{T},\vartwocat{G})).
    \]
    The right-hand side is contractible by the universality of $\vartwocat{U}$ in the $\infty$-category $\catoftwocats_{\Span(\varcat{C},\varcat{E})/}^{(\varcat{P},\varcat{I})\adjtd}$.
\end{proof}

\subsection{Characterization of the universal adjointed functor}
\label{subsec:recognition_principle}

In this subsection, we characterize the universal $(\varcat{P},\varcat{I})$-adjointed functor, following \cite[\S 2]{CLLuniv}. We first define what it means for a $(\varcat{P},\varcat{I})$-adjointed functor to $\catofcats$ to be free on an object $a\in\varcat{C}$ (\Cref{def:free_adjointed_functor_on_an_object}). The recognition principle (\Cref{recognition_principle}) states that a $(\varcat{P},\varcat{I})$-adjointed functor $h\colon\Span(\varcat{C},\varcat{E})\to\vartwocat{V}$, with $\vartwocat{V}$ an $(\infty,2)$-category, is universal if and only if it is essentially surjective and each functor $\Fun_{\vartwocat{V}}(h(a),h(-))$ is free on $\id_{h(a)}$. The proof verifies these conditions for the universal adjointed functor (\Cref{lem:universal_adjointed_representables_are_free,lem:universal_adjointed_essentially_surjective}). It then shows that any adjointed functor satisfying them is equivalent to the universal one. 

\begin{definition}\label{def:free_adjointed_functor_on_an_object}
    Let $a\in\varcat{C}$ be an object. A pair consisting of a $(\varcat{P},\varcat{I})$-adjointed functor $F\colon\Span(\varcat{C},\varcat{E})\to\catofcats$ and an object $1_a\in F(a)$ is called the free $(\varcat{P},\varcat{I})$-adjointed functor on $a$ if, for every $(\varcat{P},\varcat{I})$-adjointed functor $G\colon\Span(\varcat{C},\varcat{E})\to\catofcats$, the functor given by evaluation at $1_a$ is an equivalence of $\infty$-categories:
    \[
    \Fun_{\funtwocat(\Span(\varcat{C},\varcat{E}),\twocatofcats)}(F,G)\to G(a).
    \]

    When $a$ is clear, we might say that $F$ is a free $(\varcat{P},\varcat{I})$-adjointed functor on $1_a$ instead.
\end{definition}

Let us recall the $(\infty,2)$-categorical Yoneda embedding:

\begin{notation}
    For an $(\infty,2)$-category $\vartwocat{V}$, we denote by $\yo_{\vartwocat{V}}$ the $\catofcats$-enriched Yoneda functor, i.e. the $2$-functor
    \[
    \yo_{\vartwocat{V}}\colon\vartwocat{V}^{\op}\to\funtwocat(\vartwocat{V},\twocatofcats).
    \]
    In particular, for $a\in\vartwocat{V}$, $\yo_{\vartwocat{V}}(a)$ is a $2$-functor $\vartwocat{V}\to\twocatofcats$ sending $b$ to $\Fun_{\vartwocat{V}}(a,b)$.
\end{notation}

The following theorem gives a characterization of the free $(\varcat{P},\varcat{I})$-adjointed functor.

\begin{theorem}\label{recognition_principle}
    Let $h\colon\Span(\varcat{C},\varcat{E})\to\vartwocat{V}$ be a $(\varcat{P},\varcat{I})$-adjointed functor to an $(\infty,2)$-category $\vartwocat{V}$. Then it is the free $(\varcat{P},\varcat{I})$-adjointed functor if and only if the following two conditions hold.
    \begin{itemize}
        \item $h$ is essentially surjective.
        \item For every object $a\in\varcat{C}$, $\yo_{\vartwocat{V}}(h(a))\circ h\colon\Span(\varcat{C},\varcat{E})\to\catofcats$ is a free $(\varcat{P},\varcat{I})$-adjointed functor on $\id_{h(a)}$.
    \end{itemize}
\end{theorem}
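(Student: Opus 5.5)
We follow the strategy of \cite[\S 2]{CLLuniv}. Let $F\colon\Span(\varcat{C},\varcat{E})\to\vartwocat{U}$ be the universal $(\varcat{P},\varcat{I})$-adjointed functor of \Cref{cor:existence_of_universal_adjointed_functor}. The proof has two halves. First we show that $F$ satisfies both conditions; these are the lemmas on essential surjectivity and freeness of representables announced in the overview. Then we show that any $h$ satisfying both conditions is equivalent to $F$ under $\Span(\varcat{C},\varcat{E})$.

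\emph{Essential surjectivity of $F$.} Let $\vartwocat{U}'\subset\vartwocat{U}$ be the full sub-$(\infty,2)$-category spanned by the essential image of $F$. The functor $F$ factors through $\vartwocat{U}'$. Adjunctions and $2$-isomorphisms between objects of $\vartwocat{U}'$ already live in $\vartwocat{U}'$, since the inclusion is fully faithful on mapping categories. Moreover the norm maps are computed from data in the image of $F$. Hence $\Span(\varcat{C},\varcat{E})\to\vartwocat{U}'$ is still adjointed. By universality we obtain a retraction $\vartwocat{U}\to\vartwocat{U}'$ under $\Span(\varcat{C},\varcat{E})$. Its composite with the inclusion $\vartwocat{U}'\into\vartwocat{U}$ is an endomorphism of $\vartwocat{U}$ under $\Span(\varcat{C},\varcat{E})$, and universality forces it to be the identity. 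It follows that $\vartwocat{U}'=\vartwocat{U}$.

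\emph{Freeness of representables for $F$.} Let $G\colon\Span(\varcat{C},\varcat{E})\to\catofcats$ be adjointed. By universality it extends uniquely to $\tilde G\colon\vartwocat{U}\to\twocatofcats$. By \Cref{prop:enriched_universal_property}, restriction along $F$ gives an equivalence
\[
\Fun_{\funtwocat(\vartwocat{U},\twocatofcats)}(\yo(F a),\tilde G)\simeq\Fun_{\funtwocat(\Span(\varcat{C},\varcat{E}),\twocatofcats)}(\yo(Fa)\circ F,G).
\]
This uses that $\funtwocat(\vartwocat{U},\twocatofcats)\to\funtwocat(\Span(\varcat{C},\varcat{E}),\twocatofcats)$ is fully faithful as a $2$-functor onto adjointed functors, which is exactly the cotensoring argument in that proposition. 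By the enriched Yoneda lemma, the left-hand side is $\tilde G(Fa)=G(a)$, via evaluation at $\id_{Fa}$. This proves freeness.

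\emph{Converse.} Let $h\colon\Span(\varcat{C},\varcat{E})\to\vartwocat{V}$ satisfy both conditions. By universality we get $H\colon\vartwocat{U}\to\vartwocat{V}$ with $H\circ F\simeq h$. Since both $F$ and $h$ are essentially surjective, $H$ is essentially surjective, so it suffices to show that $H$ is fully faithful on mapping $\infty$-categories. Fix $a\in\varcat{C}$ and consider the transformation
\[
\yo_{\vartwocat{U}}(Fa)\circ F\to\yo_{\vartwocat{V}}(ha)\circ h
\]
induced by $H$. It is a morphism of $(\varcat{P},\varcat{I})$-adjointed functors $\Span(\varcat{C},\varcat{E})\to\catofcats$, and it sends $\id_{Fa}$ to $\id_{ha}$. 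Both source and target are free on these identity objects: the source by the previous step, the target by hypothesis. A map between free objects that preserves the generator is an equivalence, by the Yoneda-type uniqueness built into \Cref{def:free_adjointed_functor_on_an_object}. Hence $\Fun_{\vartwocat{U}}(Fa,Fb)\to\Fun_{\vartwocat{V}}(ha,hb)$ is an equivalence for all $a,b$. Essential surjectivity of $F$ then gives that $H$ is fully faithful everywhere.

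The main obstacle is the freeness step for $F$. We must carefully identify $2$-natural transformations out of a representable on $\vartwocat{U}$ with transformations of the restricted functors on $\Span(\varcat{C},\varcat{E})$. This needs the enriched (mapping-category level) universal property, not merely the $\infty$-categorical one. It also needs the check that $\funtwocat(\vartwocat{U},\twocatofcats)\to\funtwocat(\Span(\varcat{C},\varcat{E}),\twocatofcats)$ is locally an equivalence. We would obtain this by applying \Cref{prop:enriched_universal_property} to the cotensors $\funtwocat(\varcat{T},\twocatofcats)$ and to lax arrow objects, which are again adjointed by \Cref{cor:upward_close_lemma} and \Cref{prop:adjtd_preserves_limits}.
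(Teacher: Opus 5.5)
Your argument has the same three parts as the paper's proof, and the essential-surjectivity and converse steps are fine as written. Those parts are: the essential-image retraction, freeness of $\yo_{\vartwocat{U}}(Fa)\circ F$ via extension to $\vartwocat{U}$ plus the enriched Yoneda lemma, and the converse by comparing two free functors along $H$.

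The gap is in the freeness step, specifically in the identification
\[
\Fun_{\funtwocat(\vartwocat{U},\twocatofcats)}(\yo(Fa),\tilde G)\simeq\Fun_{\funtwocat(\Span(\varcat{C},\varcat{E}),\twocatofcats)}(\yo(Fa)\circ F,G).
\]
This needs restriction along $F$ to be fully faithful on natural transformations, and your justification does not give that. \Cref{prop:enriched_universal_property} only ever feeds the \emph{diagonal} functor $\Span(\varcat{C},\varcat{E})\to\funtwocat(\varcat{T},\vartwocat{G})$ into universality. So it only shows that the fibre of $\Fun(\vartwocat{U},\vartwocat{G})\to\Fun(\Span(\varcat{C},\varcat{E}),\vartwocat{G})$ over a single adjointed functor is contractible, and contractible fibres do not imply full faithfulness.

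What you actually need is a separate lemma. Every functor $\Span(\varcat{C},\varcat{E})\to\funtwocat(\Delta^1,\vartwocat{K})$ whose source and target are adjointed must itself be adjointed. Equivalently, a strong natural transformation $\alpha\colon X\Rightarrow Y$ between adjointed functors must automatically satisfy the Beck--Chevalley (mate) conditions along $\varcat{P}$ and $\varcat{I}$. Neither result you cite supplies this:
\begin{itemize}
\item \Cref{cor:upward_close_lemma} does not apply, because such a functor is not a postcomposite of a known adjointed functor.
\item \Cref{prop:adjtd_preserves_limits} does not apply, because $\funtwocat(\Delta^1,\vartwocat{K})$ is a cotensor, not a conical limit in $\catoftwocats$. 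For general $\varcat{T}$, the Segal decomposition reduces you to exactly this $\Delta^1$ case.
\end{itemize}
Also, the transformations in $\funtwocat(\Span(\varcat{C},\varcat{E}),\twocatofcats)$ are strong, so lax arrow objects are the wrong cotensor.

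The missing lemma is true. It can be proved by induction on the truncation level of $p$, in the style of \Cref{lem:nat_of_norms}. Let $\gamma$ and $\beta$ be the naturality isomorphisms of $\alpha$ at $p^*$ and $p_!$. Suppose the counit $\epsilon_!\colon p^*p_!\to\id$ is compatible with $\alpha$. Then, after transport along the norm isomorphisms, a triangle identity shows that the mate of $\gamma$ is
\[
Y(p_!)\epsilon^Y_!\alpha_A\circ\eta^Y_!Y(p_!)\alpha_A\circ\beta=\beta,
\]
which is invertible. Compatibility of $\epsilon_!$ with $\alpha$ holds because it is assembled from $\Span$-isomorphisms, the unit for $\Delta$, and $N_\Delta^{-1}$. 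Each of these is $\alpha$-compatible by the inductive hypothesis, since $\Delta$ has lower truncation level.

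With this lemma your freeness argument goes through and matches the paper's. The paper's proof of that lemma attributes this identification to ``the universal property of $F$'' without spelling out this point.
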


The proof follows almost exactly the analogous proof in \cite{CLLuniv}.

First we show that the conditions hold for the universal $(\varcat{P},\varcat{I})$-adjointed functor.

\begin{lemma}\label{lem:universal_adjointed_representables_are_free}
    If $F\colon\Span(\varcat{C},\varcat{E})\to\vartwocat{U}$ is the free $(\varcat{P},\varcat{I})$-adjointed functor, then, for every $a\in\Span(\varcat{C},\varcat{E})$, $\yo_{\vartwocat{U}}(F(a))\circ F$ is a free $(\varcat{P},\varcat{I})$-adjointed functor on $\id_{F(a)}$.
\end{lemma}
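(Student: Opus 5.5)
We plan to follow the argument of \cite[\S 2]{CLLuniv}, transported to the present setting. Fix $a\in\Span(\varcat{C},\varcat{E})$ and write $h_a:=\yo_{\vartwocat{U}}(F(a))\circ F\colon\Span(\varcat{C},\varcat{E})\to\twocatofcats$.

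\emph{Step 1: $h_a$ is adjointed.} The functor $\yo_{\vartwocat{U}}(F(a))\colon\vartwocat{U}\to\twocatofcats$ is a $2$-functor, and $F$ is $(\varcat{P},\varcat{I})$-adjointed. Hence \Cref{cor:upward_close_lemma} shows that $h_a$ is $(\varcat{P},\varcat{I})$-adjointed.

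\emph{Step 2: the evaluation functor is an equivalence.} Let $G\colon\Span(\varcat{C},\varcat{E})\to\twocatofcats$ be $(\varcat{P},\varcat{I})$-adjointed. By \Cref{cor:existence_of_universal_adjointed_functor}, $G\simeq\tilde G\circ F$ for an essentially unique $2$-functor $\tilde G\colon\vartwocat{U}\to\twocatofcats$. We then aim for a chain of equivalences
\[
\Fun(h_a,G)\simeq\Fun(\yo_{\vartwocat{U}}(F(a))\circ F,\tilde G\circ F)\simeq\Fun(\yo_{\vartwocat{U}}(F(a)),\tilde G)\simeq\tilde G(F(a))\simeq G(a),
\]
where the unadorned $\Fun$ denotes mapping $\infty$-categories in the relevant functor $(\infty,2)$-categories.
\begin{itemize}
\item The second equivalence says that restriction along $F$,
\[
F^*\colon\funtwocat(\vartwocat{U},\twocatofcats)\to\funtwocat(\Span(\varcat{C},\varcat{E}),\twocatofcats),
\]
is fully faithful on mapping $\infty$-categories.
\item The third equivalence is the enriched Yoneda lemma.
\item We will then check that the composite is evaluation at $\id_{F(a)}$. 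This is formal, because the Yoneda equivalence is evaluation at the identity, and $F^*$ carries $\id_{F(a)}$ to $\id_{F(a)}\in h_a(a)$.
\end{itemize}

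\emph{Step 3: full faithfulness of $F^*$.} This is the main obstacle. Natural transformations $\alpha\colon\Phi\to\Psi$ between $2$-functors $\vartwocat{U}\to\twocatofcats$ are classified by $2$-functors into a lax arrow object, for instance
\[
\vartwocat{U}\to\funtwocat(\Delta^1,\twocatofcats)
\]
with appropriate laxness, lying over $(\Phi,\Psi)$. Higher cells are handled similarly, using $\funtwocat(\varcat{T},-)$ as in \Cref{prop:enriched_universal_property}. The difficulty is that the paper's natural transformations are (strong) $2$-natural transformations, so we should use the strong cotensor $\funtwocat(\varcat{T},\twocatofcats)$, in which components are strongly natural.

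With that choice, a map $\varcat{T}\to\Fun(\Phi,\Psi)$ corresponds to a $2$-functor
\[
\vartwocat{U}\to\funtwocat(\varcat{T}^{\triangleright}\text{-type diagram},\twocatofcats)
\]
with fixed restrictions to the endpoints. Concretely, we express $\Fun(\Phi,\Psi)$ as a fiber of
\[
\Map\bigl(\vartwocat{U},\funtwocat(\varcat{T}\times\Delta^1,\twocatofcats)\bigr)\to\Map\bigl(\vartwocat{U},\funtwocat(\varcat{T}\times\partial\Delta^1,\twocatofcats)\bigr).
\]
Every functor out of $\Span(\varcat{C},\varcat{E})$ that factors through $\vartwocat{U}$ is adjointed, so all the targets $\funtwocat(\varcat{T}\times\Delta^1,\twocatofcats)$ receive adjointed functors from $\Span(\varcat{C},\varcat{E})$ once the relevant composite is given. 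Therefore \Cref{cor:existence_of_universal_adjointed_functor} identifies both of these mapping anima with the corresponding anima of maps out of $\Span(\varcat{C},\varcat{E})$. Taking fibers gives the desired equivalence of mapping $\infty$-categories, exactly as in the proof of \Cref{prop:enriched_universal_property}.

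The remaining check for the obstacle is that the identification in Step 3 respects the fiber structure, i.e.\ that it is natural in $\varcat{T}$. This naturality follows because restriction along $F$ is functorial in the target.
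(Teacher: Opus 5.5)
Your outline is the same as the paper's: lift $G$ to $\tilde G$, transport the mapping $\infty$-category along $F$, apply the enriched Yoneda lemma, and identify the composite with evaluation at $\id_{F(a)}$. Your Step~1 also usefully records that $h_a$ is adjointed, which the paper leaves implicit. The paper simply asserts the first equivalence $\Fun(h_a,G)\simeq\Fun(\yo_{\vartwocat{U}}(F(a)),\tilde G)$ from the universal property. Your Step~3, which tries to justify it, does not work as written.

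A minor point first. With strong functor categories and the cartesian product $\varcat{T}\times\Delta^1$, the fibre you write down over the constant pair $(\Phi,\Psi)$ is $\operatorname{Map}(\varcat{T},\Fun(\Phi,\Psi)^{\simeq})$. Strong naturality in the $\varcat{T}$-direction forces the modifications to be invertible. You are one categorical level above \Cref{prop:enriched_universal_property}, where $\varcat{T}$ indexes transformations rather than modifications. This is repairable. The pointwise cotensor gives $\operatorname{Map}(\varcat{T},\Fun(\Phi,\Psi))\simeq\operatorname{Map}(\Phi,\Fun(\varcat{T},-)\circ\Psi)$, and $\Fun(\varcat{T},-)\circ\Psi\circ F$ is still adjointed by \Cref{cor:upward_close_lemma}. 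So it suffices to show that restriction along $F$ is fully faithful on mapping anima.

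The substantive gap is in your next sentence. \Cref{cor:existence_of_universal_adjointed_functor} identifies $\operatorname{Map}(\vartwocat{U},\vartwocat{X})$ only with the anima of \emph{adjointed} functors $\Span(\varcat{C},\varcat{E})\to\vartwocat{X}$. After taking fibres you therefore get only those transformations $\Phi F\Rightarrow\Psi F$ whose classifying functor into the strong arrow $2$-category $\funtwocat(\Delta^1,\twocatofcats)$ is adjointed. \Cref{def:free_adjointed_functor_on_an_object}, however, uses all transformations.

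That functors factoring through $\vartwocat{U}$ are adjointed is the easy direction. You need the converse: every transformation $\alpha\colon G_0\Rightarrow G_1$ between adjointed functors is adjointed as a functor $\Span(\varcat{C},\varcat{E})\to\funtwocat(\Delta^1,\twocatofcats)$. A $1$-morphism of $\funtwocat(\Delta^1,\vartwocat{K})$ has a right (resp.\ left) adjoint iff its components do and its square is adjointable. So the claim is that the naturality squares of $\alpha$ for $p^*$ and $i^*$ satisfy Beck--Chevalley with respect to $p_*$ and $i_\sharp$. This is not formal; for merely naively adjointable endpoints it fails.

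It can be proved by induction on the truncation level, as follows.
\begin{itemize}
\item Show that, under $N^{G_0}_p$ and $N^{G_1}_p$, the mate of $\alpha$ with respect to $p_*$ corresponds to the invertible $p_!$-naturality cell of $\alpha$.
\item In the inductive step, the same statement for the diagonal shows that $\alpha$ intertwines the maps $p^*p_!\to\id$ whose mates define the norms.
\item The norm maps of the resulting functor are then computed componentwise by \Cref{lem:nat_of_norms} applied to $s$ and $t$, so they are invertible.
\end{itemize}
Without this lemma, or an equivalent one, Step~3 does not yield the first equivalence.
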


\begin{proof}
    By the universal property of $F$, any $(\varcat{P},\varcat{I})$-adjointed functor $G\colon\Span(\varcat{C},\varcat{E})\to\catofcats$ can be lifted uniquely to a $2$-functor $\tilde{G}\colon\vartwocat{U}\to\twocatofcats$, and we have:
    \begin{align*}
        \Fun_{\funtwocat(\Span(\varcat{C},\varcat{E}),\twocatofcats)}(\yo_{\vartwocat{U}}(F(a))\circ F,G)
        &\simeq\Fun_{\funtwocat(\vartwocat{U},\twocatofcats)}(\yo_{\vartwocat{U}}(F(a)),\tilde{G})\\
        &\simeq\tilde{G}(F(a))\\
        &\simeq G(a).
    \end{align*}

    It is standard to check that this equivalence comes from evaluation at $\id_{F(a)}$, and so we are done.
\end{proof}

\begin{lemma}\label{lem:universal_adjointed_essentially_surjective}
    If $F\colon\Span(\varcat{C},\varcat{E})\to\vartwocat{U}$ is the free $(\varcat{P},\varcat{I})$-adjointed functor, then it is essentially surjective.
\end{lemma}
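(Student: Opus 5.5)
Let $\vartwocat{U}'\subset\vartwocat{U}$ be the full sub-$(\infty,2)$-category spanned by the objects in the essential image of $F$. Since $\vartwocat{U}'$ is full and contains the image, $F$ factors as
\[
\Span(\varcat{C},\varcat{E})\xto{F'}\vartwocat{U}'\xto{j}\vartwocat{U}.
\]
We will show that $j$ is an equivalence by a retraction argument in $\catoftwocats_{\Span(\varcat{C},\varcat{E})/}$.

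The first step is to check that $F'$ is $(\varcat{P},\varcat{I})$-adjointed. All of the data entering \Cref{def:adjointed_functors} lives among objects of the form $F(a)$:
\begin{itemize}
    \item the $1$-morphisms $p^*$, $p_!$, $i^*$, $i_!$;
    \item the adjoints $p_*$ and $i_\sharp$, together with their units and counits;
    \item the norm maps, which are built from units, counits and $2$-morphisms in the image of $F$.
\end{itemize}
A full sub-$(\infty,2)$-category contains all $1$- and $2$-morphisms between its objects. An adjunction in $\vartwocat{U}$ between objects of $\vartwocat{U}'$ is therefore an adjunction in $\vartwocat{U}'$, since the triangle identities hold there. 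Likewise a $2$-morphism of $\vartwocat{U}'$ is invertible in $\vartwocat{U}'$ if and only if it is invertible in $\vartwocat{U}$. Arguing by induction on the truncation level $n$, exactly as in \Cref{lem:nat_of_norms}, the norm maps of $F'$ are the norm maps of $F$ viewed in $\vartwocat{U}'$, so they are invertible. Hence $F'$ is $(\varcat{P},\varcat{I})$-adjointed. I expect this bookkeeping (that right adjoints computed in $\vartwocat{U}$ lie in $\vartwocat{U}'$ and norms agree) to be the only real point needing care, and it is handled by fullness.

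Next, by \Cref{cor:existence_of_universal_adjointed_functor} there is a unique $r\colon\vartwocat{U}\to\vartwocat{U}'$ in $\catoftwocats_{\Span(\varcat{C},\varcat{E})/}$ with $r\circ F\simeq F'$. The composite $j\circ r\colon\vartwocat{U}\to\vartwocat{U}$ satisfies $j\circ r\circ F\simeq j\circ F'\simeq F$. By uniqueness in the universal property (mapping spaces out of $F$ into adjointed objects are contractible), $j\circ r\simeq\id_{\vartwocat{U}}$.

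Consequently $j$ is a split epimorphism in $\catoftwocats$, and in particular it is essentially surjective on objects: every $x\in\vartwocat{U}$ satisfies $x\simeq j(r(x))$. Thus every object of $\vartwocat{U}$ is equivalent to some object of $\vartwocat{U}'$, that is, to some $F(a)$. This shows that $F$ is essentially surjective.
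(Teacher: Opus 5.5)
Your proposal is correct and is essentially the paper's argument: you pass to the full sub-$(\infty,2)$-category $\vartwocat{U}'$ on the essential image, note that the corestriction of $F$ is still $(\varcat{P},\varcat{I})$-adjointed, use universality to get $r\colon\vartwocat{U}\to\vartwocat{U}'$ with $j\circ r\simeq\id_{\vartwocat{U}}$, and conclude that $j$, and hence $F$, is essentially surjective. Your inductive check via fullness and \Cref{lem:nat_of_norms}, showing that the norm maps of the corestriction agree with those of $F$, spells out the step the paper simply calls clear.
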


\begin{proof}
    Let $\vartwocat{U}'\subseteq\vartwocat{U}$ be the full sub-$(\infty,2)$-category given by the essential image of $F$. It is clear that $F$, regarded as a functor to $\vartwocat{U}'$, is still $(\varcat{P},\varcat{I})$-adjointed. Therefore there is a $2$-functor $\phi\colon\vartwocat{U}\to\vartwocat{U}'$ over $\Span(\varcat{C},\varcat{E})$. Composing with the inclusion, we get a self-map of $\vartwocat{U}$. Since $\vartwocat{U}$ is universal, this self-map must be equivalent to the identity. This implies that the inclusion is essentially surjective, so $\vartwocat{U}'\simeq\vartwocat{U}$ and $F$ is essentially surjective.
\end{proof}

We now show that any functor satisfying the conditions of \Cref{recognition_principle} must be the universal $(\varcat{P},\varcat{I})$-adjointed functor.

\begin{proof}[Proof of \Cref{recognition_principle}]
    In one direction, we already showed in \Cref{lem:universal_adjointed_representables_are_free,lem:universal_adjointed_essentially_surjective} that the free $(\varcat{P},\varcat{I})$-adjointed functor satisfies the conditions.

    To see the other direction, let $h\colon\Span(\varcat{C},\varcat{E})\to\vartwocat{V}$ be a $(\varcat{P},\varcat{I})$-adjointed functor satisfying the conditions. That is, $h$ is essentially surjective and, for every $a\in\varcat{C}$, $\yo_{\vartwocat{V}}(h(a))\circ h$ is the free $(\varcat{P},\varcat{I})$-adjointed functor on $\id_{h(a)}$. Let $F\colon\Span(\varcat{C},\varcat{E})\to\vartwocat{U}$ be the free $(\varcat{P},\varcat{I})$-adjointed functor.

    By the universal property of $\vartwocat{U}$, we have a $2$-functor $\bar{F}\colon\vartwocat{U}\to\vartwocat{V}$ satisfying $\bar{F}\circ F\simeq h$. Since $h$ is essentially surjective, so is $\bar{F}$. It remains to show that it is fully faithful.

    We wish to show that, for every pair of objects $X,Y\in\vartwocat{U}$, we have an equivalence of $\infty$-categories:
    \[
    \Fun_{\vartwocat{U}}(X,Y)\simeq\Fun_{\vartwocat{V}}(\bar{F}(X),\bar{F}(Y)).
    \]
    Since $F$ is essentially surjective, this is the same as showing that, for every $x\in\varcat{C}$,
    \[
    \Fun_{\vartwocat{U}}(F(x),F(-))\simeq\Fun_{\vartwocat{V}}(h(x),h(-))
    \]
    as functors $\Span(\varcat{C},\varcat{E})\to\catofcats$. However, these are simply $\yo_{\vartwocat{U}}(F(x))\circ F$ and $\yo_{\vartwocat{V}}(h(x))\circ h$, respectively. By \Cref{lem:universal_adjointed_representables_are_free} and the assumption, they are free on the respective identities. Since the map between them takes $\id_{F(x)}$ to $\id_{h(x)}$, it must be an equivalence, and we are done.
\end{proof}

\subsection{Reduction of the main theorem}
\label{subsec:reduction_of_main_thm}

In this subsection, we apply the recognition principle to the $(\infty,2)$-category of spans $\spantwo(C,E)^P_I$ appearing in our main theorem (\Cref{main_theorem:free_adjointed_2_cat}). We describe its mapping $\infty$-categories and reduce the main theorem to a universal property of them (\Cref{prop:reduction_of_main_theorem}). We formulate this universal property in \Cref{thm:red_of_main} and defer its proof to \Cref{subsec:2_FF_are_just_E_monoidal}.

\begin{definition}
    For $a,b\in\varcat{C}$, the $\infty$-category $\varcat{E}[a](b)$ is the mapping $\infty$-category from $a$ to $b$ in the $(\infty,2)$-category $\spanhalf(\varcat{C},\varcat{E})^{\varcat{E}}$. Objects of $\varcat{E}[a](b)$ are of the form $a\leftarrow x\to b$, where the right arrow is in $\varcat{E}$. A morphism from $a\leftarrow x\to b$ to $a\leftarrow y\to b$ is a morphism $x\to y$ in $\varcat{E}$ together with the commutation data over $a$ and $b$.

    Let $\varcat{I}\subset\varcat{E}$ be a wide sub-$\infty$-category. We let $\varcat{E}^{\varcat{I}}[a](b)$ denote the wide sub-$\infty$-category of $\varcat{E}[a](b)$ where we allow morphisms between $x$ and $y$ which are in $\varcat{I}$.
\end{definition}

\begin{remark}
    Since $\varcat{E}[a](b)$ was defined as a mapping $\infty$-category, it can be regarded as a functor in both $a$ and $b$ (contravariantly in $a$ and covariantly in $b$).
\end{remark}

\begin{proposition}
    Let $\varcat{P},\varcat{I}$ be as before. Then $(\varcat{E}[a](b),\varcat{E}^{\varcat{P}}[a](b),\varcat{E}^{\varcat{I}}[a](b))$ is an adequate triplet which is functorial in $a$ and $b$.
\end{proposition}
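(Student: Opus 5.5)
Recall that an adequate triple (in the sense of Barwick, as used in \cite{CLLuniv}) $(X, X_\dagger, X^\dagger)$ consists of an $\infty$-category $X$ and two wide subcategories such that pullbacks of a morphism in $X_\dagger$ along a morphism in $X^\dagger$ exist in $X$, and the pulled-back morphisms again lie in $X_\dagger$ and $X^\dagger$ respectively. The plan is to reduce everything to the corresponding statements in $\varcat{C}$, using that $\varcat{E}[a](b)$ is a full subcategory of the slice $\varcat{C}_{/a\times b}$. More precisely, it is a full subcategory of the $\infty$-category of spans $a\leftarrow x\to b$ when $a\times b$ need not exist, with morphisms restricted to lie in $\varcat{E}$.

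\textbf{Step 1: identify morphisms.} Note first that a morphism $x\to y$ over $b$ with $x\to b$ and $y\to b$ both in $\varcat{E}$ is automatically in $\varcat{E}$ only under left cancellability of $\varcat{E}$. This is why the definition builds the condition "morphism in $\varcat{E}$" into $\varcat{E}[a](b)$. Hence $\varcat{E}[a](b)$ is the wide subcategory of the span category $\varcat{C}_{a,b}:=\varcat{C}_{/a}\times_{\varcat{C}}\varcat{C}_{/b}$ on objects with leg to $b$ in $\varcat{E}$ and morphisms in $\varcat{E}$. Pullbacks in $\varcat{C}_{a,b}$ are computed in $\varcat{C}$, as fiber products over the apex. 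So a square in $\varcat{E}[a](b)$ which is a pullback in $\varcat{C}$ gives a pullback in $\varcat{C}_{a,b}$.

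\textbf{Step 2: existence and stability of pullbacks.} Given $x\to z$ in $\varcat{E}^{\varcat{P}}[a](b)$ and $y\to z$ in $\varcat{E}^{\varcat{I}}[a](b)$, form $w=x\times_z y$ in $\varcat{C}$. This exists since $\varcat{P}$ is stable under base change in $\varcat{C}$. Then $w\to y$ lies in $\varcat{P}$, $w\to x$ lies in $\varcat{I}$, and $w\to b$ lies in $\varcat{E}$ as a composite of $\varcat{E}$-morphisms (recall $\varcat{P},\varcat{I}\subset\varcat{E}$). So $w\in\varcat{E}[a](b)$, and the square lives in $\varcat{E}[a](b)$ with the required edges in the required subcategories.

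The main point is to check that this square is a pullback in $\varcat{E}[a](b)$ itself, not merely in $\varcat{C}$. Given a cone $v\to x$, $v\to y$ with $v\to x\in\varcat{E}$ and $v\to y\in\varcat{E}$, the induced map $v\to w$ must lie in $\varcat{E}$. I would deduce this from left cancellability together with truncatedness of $\varcat{P},\varcat{I}$, as follows:
\begin{itemize}
\item $w\to x$ lies in $\varcat{I}\subset\varcat{E}$, and $v\to x$ lies in $\varcat{E}$.
\item It suffices that $\varcat{E}$ satisfies left cancellation against maps in $\varcat{I}$. This follows by factoring $v\to w$ as $v\to v\times_x w\to w$ and using the graph/diagonal trick.
\item In that factorization, $v\to v\times_x w$ is a base change of the diagonal of $w\to x$. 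This diagonal lies in $\varcat{I}$ because $\varcat{I}$ is left cancellable.
\item The map $v\times_x w\to w$ is a base change of $v\to x\in\varcat{E}$.
\end{itemize}
This diagonal argument is the step I expect to need care. I would present it as a small lemma: if $\varcat{I}\subset\varcat{E}$ are both stable under base change and $\varcat{I}$ is left cancellable, then $g\circ f\in\varcat{E}$ and $g\in\varcat{I}$ imply $f\in\varcat{E}$.

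\textbf{Step 3: functoriality in $a$ and $b$.} The functoriality of $\varcat{E}[a](b)$ in $a$ is by pullback along spans, and in $b$ by composition of spans. Pullback functors commute with fiber products, and composition with a span $b\leftarrow c\to b'$ preserves fiber products computed over the apex. Both actions preserve the subcategories $\varcat{E}^{\varcat{P}}$ and $\varcat{E}^{\varcat{I}}$ by base-change stability. Therefore the bifunctor lands in adequate triples and morphisms of adequate triples. Coherently, this bifunctor is obtained by restricting the mapping-category functor of $\spanhalf(\varcat{C},\varcat{E})^{\varcat{E}}$ to the subcategory of triples, which is a full-subcategory condition and hence needs no extra coherence.
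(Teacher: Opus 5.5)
Your proposal is correct and follows the same route as the paper: compute the pullback in $\varcat{C}$, use base-change stability, and inherit functoriality from the mapping-category functor. It also supplies a check the paper dismisses as ``clear'', namely that the $\varcat{C}$-pullback remains a pullback in the non-full subcategory $\varcat{E}[a](b)$, via your graph-factorization cancellation lemma, and that lemma needs only left cancellability of $\varcat{I}$ (to put the diagonal in $\varcat{I}$), not truncatedness.
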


\begin{proof}
    It is clear that pullbacks in $\varcat{E}[a](b)$ are computed in $\varcat{C}$. Thus $(\varcat{E}[a](b),\varcat{E}^{\varcat{P}}[a](b),\varcat{E}^{\varcat{I}}[a](b))$ is an adequate triplet. Functoriality follows immediately from the functoriality of $\varcat{E}[a](b)$.
\end{proof}

To reduce clutter of notation, we will denote $(\varcat{E}[a](b),\varcat{E}^{\varcat{P}}[a](b),\varcat{E}^{\varcat{I}}[a](b))$ by $(\varcat{E}[a](b),\varcat{P},\varcat{I})$.

We will prove the following theorem in \Cref{subsec:2_FF_are_just_E_monoidal}.

\begin{theorem}\label{thm:red_of_main}
    Let $\varcat{C}$ be an $\infty$-category, and let $\varcat{E},\varcat{I},\varcat{P}\subset\varcat{C}$ be wide sub-$\infty$-categories that are stable under base change in $\varcat{C}$. Assume that $\varcat{I},\varcat{P}\subset\varcat{E}$ are left cancellable and that every morphism in $\varcat{I}$ and $\varcat{P}$ is truncated. Let $a\in\varcat{C}$. Let $F\colon\Span(\varcat{C},\varcat{E})\to\catofcats$ denote the functor given by
    \[
    F(b):=\Span(\varcat{E}[a](b),\varcat{P},\varcat{I}),
    \]
    obtained by composing the functor $b\mapsto(\varcat{E}[a](b),\varcat{P},\varcat{I})$ with $\Span$. Then $F$ is the free $(\varcat{P},\varcat{I})$-adjointed functor on $\id_a\in F(a)$.
\end{theorem}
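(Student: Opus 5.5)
Following the outline in the introduction, I translate the statement into internal higher algebra over $\vartopos=\presh(\varcat{C})$ and deduce it from \Cref{main_theorem:free_E_monoidal_semiadditive}. Take $\subuniverse,\subuniverse[P],\subuniverse[I]$ to be the context free subuniverses attached to $\varcat{E},\varcat{P},\varcat{I}$. By \Cref{thm:equiv_between_2ff_and_E_monoidal_intro}, $2$-functor formalisms $\Span(\varcat{C},\varcat{E})\to\catofcats$ are the same as $\subuniverse$-monoidal $\vartopos$-categories. The plan has three steps.

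\textbf{Step 1 (matching structures).} I show that, under this equivalence, a functor $G$ is $\varcat{P}$-right adjointed and $\varcat{I}$-left adjointed exactly when the corresponding $\subuniverse$-monoidal category is $\subuniverse[P]$-cartesian and $\subuniverse[I]$-cocartesian. I argue by induction on the truncation level $n$. On one side, the norm maps $N_p\colon p_!\to p_*$ of \Cref{def:adjointed_functors} are built from the diagonal $\Delta$ and the units of $\Delta^*\dashv\Delta_!$. On the other side, the (co)cartesianness norm maps compare the $\subuniverse$-monoidal pushforward along $p$, which is $p_!$, with the internal $p$-indexed limit, which is $p_*$. I check that the two constructions agree at each level $n$. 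The base case $n=-2$ identifies naive adjointability with the existence of the relevant internal (co)limits.

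\textbf{Step 2 (identifying $F$ and transporting freeness).} I identify the functor $F(b)=\Span(\varcat{E}[a](b),\varcat{P},\varcat{I})$ with the $\subuniverse$-monoidal category $\spanEPI[a]$. Both are computed contextwise by spans of objects over $a$ with $\varcat{E}$-legs, with backward maps in $\varcat{P}$ and forward maps in $\varcat{I}$, so this should be a direct comparison. Under this identification, $\id_a$ corresponds to the generator $a\in\spanEPI[a](a)$. Taking global sections at $*$ of the internal equivalence $\intfun^{\subuniverse-\otimes}(\spanEPI[a],\varintcat{D})\simeq\varintcat{D}^a$ gives a mapping-category equivalence, and $\varintcat{D}^a(*)\simeq\varintcat{D}(a)=G(a)$. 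So evaluation at $\id_a$ is an equivalence, as required by \Cref{def:free_adjointed_functor_on_an_object}. This needs the equivalence of \Cref{thm:equiv_between_2ff_and_E_monoidal_intro} to hold at the level of mapping $\infty$-categories, i.e.\ as an equivalence of $\catofcats$-enriched categories. I expect this to follow from its internal formulation.

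\textbf{Step 3 (removing the bounded truncation hypothesis).} \Cref{main_theorem:free_E_monoidal_semiadditive} is only available when all morphisms of $\varcat{P},\varcat{I}$ are $n$-truncated for a fixed $n$. Let $\varcat{P}_n,\varcat{I}_n$ be the $n$-truncated parts; these are still left cancellable and stable under base change. Being $(\varcat{P},\varcat{I})$-adjointed means being $(\varcat{P}_n,\varcat{I}_n)$-adjointed for all $n$. The categories $\Span(\varcat{E}[a](b),\varcat{P}_n,\varcat{I}_n)$ form a sequence whose colimit in $\catofcats$ is $F(b)$, since any finite diagram of spans involves maps of bounded truncation. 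The mapping category out of a colimit is a limit, so
\[
\Fun(F,G)\simeq\lim_n\Fun(F_n,G)\simeq\lim_n G(a)\simeq G(a).
\]
The transition maps here are compatible with evaluation at $\id_a$, so the limit is the constant one.

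\textbf{Main obstacle.} I expect the hardest step to be Step 1: showing that the span-theoretic inductive norm maps coincide with the internal ambidexterity norm maps at each level, coherently enough to run the induction. I would try this first by reducing to the universal case, namely checking it on $\spanEPI[a]$ itself.
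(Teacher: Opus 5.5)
Your proposal is correct and is essentially the paper's proof: Steps 1 and 2 are \Cref{prop:adjointed_is_just_ambi} combined with \Cref{thm:generalized_main_C} and the direct identification of $\spanEPI[A]$ with $F$, i.e.\ \Cref{thm:red_of_main_globally_trunc}. One caution on Step 1: the levels do not match one by one, because naive adjointability supplies right adjoints but not the Beck--Chevalley conditions that internal $\subuniverse[P]$-limits require, so you should first extract Beck--Chevalley from the invertible norms and the span base-change isomorphisms, and only then compare the inductions.

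The only variation is Step 3. You write $F\simeq\operatorname{colim}_n F_n$ and turn it into a limit of mapping categories; you should also note that $F$ itself is $(\varcat{P},\varcat{I})$-adjointed, e.g.\ via \Cref{prop:span_EPI_A_is_ambidextrous}. The paper instead writes $\spantwo(\varcat{C},\varcat{E})^{\varcat{P}}_{\varcat{I}}$ as the sequential colimit of the $\spantwo(\varcat{C},\varcat{E})^{\varcat{P}_n}_{\varcat{I}_n}$ and proves \Cref{main_theorem:free_adjointed_2_cat} directly (\Cref{prop:main_thm_follows_from_truncated_case}), recovering the unbounded form of this statement only afterwards via \Cref{lem:universal_adjointed_representables_are_free}. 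Both routes rest on the same observation, that finite span data involves only boundedly truncated maps.
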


What we can say now is the following:

\begin{proposition}\label{prop:reduction_of_main_theorem}
    \Cref{thm:red_of_main} implies \Cref{main_theorem:free_adjointed_2_cat}.
\end{proposition}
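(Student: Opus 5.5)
We apply the recognition principle (\Cref{recognition_principle}) to the canonical functor $h\colon\Span(\varcat{C},\varcat{E})\to\spantwo(\varcat{C},\varcat{E})^{\varcat{P}}_{\varcat{I}}$. This requires three things: that $h$ is $(\varcat{P},\varcat{I})$-adjointed, that it is essentially surjective, and that for every $a\in\varcat{C}$ the functor $\yo(h(a))\circ h$ is the free $(\varcat{P},\varcat{I})$-adjointed functor on $\id_{h(a)}$. Given these, $h$ is the universal adjointed functor of \Cref{cor:existence_of_universal_adjointed_functor}. \Cref{main_theorem:free_adjointed_2_cat} then follows: the restriction $\Fun(\spantwo(\varcat{C},\varcat{E})^{\varcat{P}}_{\varcat{I}},\vartwocat{K})\to\Fun(\Span(\varcat{C},\varcat{E}),\vartwocat{K})$ is identified, fiberwise over $\vartwocat{K}$, with precomposition by the universal functor. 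Its image consists of the adjointed functors, and it is fully faithful by the enriched universal property \Cref{prop:enriched_universal_property}, applied with $\funtwocat(\varcat{T},\vartwocat{K})$ to get equivalences on mapping anima.

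Essential surjectivity is immediate, since both $(\infty,2)$-categories have the same objects.

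For the representable condition, we identify the mapping $\infty$-category $\Fun_{\spantwo(\varcat{C},\varcat{E})^{\varcat{P}}_{\varcat{I}}}(a,b)$ with $\Span(\varcat{E}[a](b),\varcat{P},\varcat{I})$. Its objects are spans $a\leftarrow x\to b$, and its morphisms are spans $x\leftarrow t\to x'$ with backward leg in $\varcat{P}$ and forward leg in $\varcat{I}$, compatible over $a$ and $b$. This is exactly the description of $2$-morphisms in $\spantwo$ given in the introduction. We must check that the identification is natural in $b\in\Span(\varcat{C},\varcat{E})$, with postcomposition by a span $b\leftarrow y\to b'$ corresponding to pulling back along $y\to b$ and then composing with $y\to b'$. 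Under this identification, $\yo(h(a))\circ h$ is the functor $F$ of \Cref{thm:red_of_main}, and the identity $1$-morphism of $a$ corresponds to $\id_a\in F(a)$. \Cref{thm:red_of_main} therefore gives the freeness condition.

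The main obstacle is showing that $h$ is $(\varcat{P},\varcat{I})$-adjointed, that is, that the inductively defined norm maps $N_p\colon p_!\to p_*$ are invertible (and dually for $\varcat{I}$). We first exhibit the adjunctions directly. For $p\in\varcat{P}$, the unit of $p^*\dashv p_!$ is the $2$-cell given by the span whose backward leg is $p$. For $i\in\varcat{I}$, the counit of $i_!\dashv i^*$ is the $2$-cell whose forward leg is $i$. The triangle identities follow from composing spans: the relevant pullbacks produce the diagonal $\Delta_p$, and left cancellability together with truncatedness allows induction. We then show by induction on truncation level that the norm map is the mate of this adjunction's counit. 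This reduces to identifying the counit $p^*p_!\to\id$ with the composite $\pi_{1!}\pi_2^*\to\pi_{1!}\Delta_!\Delta^*\pi_2^*\simeq\id$, which is a direct computation of a composite $2$-cell in $\spantwo$ built from the $2$-cell $\Delta$. Once this holds, $N_p$ is an isomorphism, since any two right adjoints of $p^*$ are identified by the canonical comparison.

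Alternatively, adjointedness of $h$ can be detected after applying the conservative family of representables $\yo(h(a))$, using \Cref{lem:nat_of_norms} to transport norms. This reduces the claim to adjointedness of the $F$ of \Cref{thm:red_of_main}, which that theorem already includes.
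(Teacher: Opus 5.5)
Your proposal is correct and follows the paper's route: apply \Cref{recognition_principle} to the canonical functor $h$, note that $h$ is essentially surjective, and identify $\yo_{\spantwo(\varcat{C},\varcat{E})^{\varcat{P}}_{\varcat{I}}}(a)\circ h$ with the functor $F$ of \Cref{thm:red_of_main} using the description of mapping $\infty$-categories from \cite{CLLuniv}. Beyond the paper, you also check that $h$ is $(\varcat{P},\varcat{I})$-adjointed, which is a hypothesis of \Cref{recognition_principle} that the paper treats as evident, and you spell out the passage to the restriction-functor formulation of \Cref{main_theorem:free_adjointed_2_cat}; note that this passage, for non-constant families $\varcat{T}\to\Fun(\Span(\varcat{C},\varcat{E}),\vartwocat{K})$, tacitly uses that natural transformations between adjointed functors automatically satisfy Beck--Chevalley, which is more than \Cref{prop:enriched_universal_property} (a statement about constant families) provides and which the paper likewise leaves implicit.
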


\begin{proof}
    Let
    \[
    h\colon\Span(\varcat{C},\varcat{E})\to\spantwo(\varcat{C},\varcat{E})^{\varcat{P}}_{\varcat{I}}
    \]
    denote the canonical functor. Since $h$ is clearly essentially surjective, \Cref{recognition_principle} shows that \Cref{main_theorem:free_adjointed_2_cat} is equivalent to the restricted Yoneda functor $\yo_{\spantwo(\varcat{C},\varcat{E})^{\varcat{P}}_{\varcat{I}}}(a)\circ h$ being the free $(\varcat{P},\varcat{I})$-adjointed functor on $\id_a$ for every $a\in\varcat{C}$. This functor sends
    \[
    b\mapsto\Fun_{\spantwo(\varcat{C},\varcat{E})^{\varcat{P}}_{\varcat{I}}}(a,b).
    \]
    By the description of the mapping $\infty$-categories in \cite[proof of Theorem 4.20]{CLLuniv}, we have a natural equivalence
    \[
    \Fun_{\spantwo(\varcat{C},\varcat{E})^{\varcat{P}}_{\varcat{I}}}(a,b)
    \simeq\Span(\varcat{E}[a](b),\varcat{P},\varcat{I}).
    \]
\end{proof}
\newpage

\section{\texorpdfstring{$\subuniverse$}{E}-monoidal categories.}\label{sec:E_monoidal}

Let $\vartopos$ be an $\infty$-topos fixed throughout this section. 

In this section, we develop the theory of $\subuniverse$-monoidal $\vartopos$-categories. These generalize symmetric monoidal $\infty$-categories by allowing tensor products indexed by a chosen class $\subuniverse$ of morphisms in $\vartopos$. This will provide the internal language for proving the universal property formulated in \Cref{sec:recognition_principle}.

We begin in \Cref{subsec:internal_spans_and_unfurling} by recalling internal spans and unfurling, including the uniqueness of unfurling under truncation assumptions (\Cref{thm:internal_uniqueness_of_unfurling}). In \Cref{subsec:E_monoids,subsec:E_monoidal_categories}, we define $\subuniverse$-monoids and $\subuniverse$-monoidal $\vartopos$-categories. We construct free examples and prove an analogue of the symmetric monoidal localization theorem (\Cref{thm:monoidal_localization}).

In \Cref{subsec:cartesian_and_cocartesian_structures}, we construct cartesian and cocartesian monoidal structures from internal limits and colimits. In \Cref{subsec:inductible_subuniverses}, we study these structures when the indexing subuniverse is inductible. In this case, cocartesian structures are unique (\Cref{thm:uniqueness_of_cartesian_structures}), and the construction of commutative algebras is right adjoint to equipping a $\vartopos$-category with its cocartesian structure (\Cref{thm:calg_is_right_adjoint}). We also show that formation of cocartesian structures admits \emph{left} adjoint (\Cref{thm:cocart_cats_are_localization_of_monoidal}).

In \Cref{subsec:E_operads}, we introduce $\subuniverse$-operads and their monoidal envelopes. We prove the envelope adjunction (\Cref{thm:env_associated_operad_adjunction}) and use operads to define lax $\subuniverse$-monoidal functors.

In \Cref{sec:partially_cartesian_structures}, we consider monoidal structures that are cartesian or cocartesian along specified smaller classes of morphisms. This gives the notion of ambidexterity used in the rest of the paper. Finally, in \Cref{subsec:symmetric_monoidal_cat_of_E_monoidal_cats}, we construct a tensor product of $\subuniverse$-monoidal $\vartopos$-categories and show that the localizations imposing cocartesian and ambidextrous structures are given by tensoring with idempotent algebras (\Cref{thm:cocart_is_a_mode,cor:P_I_ambi_is_a_smashing_localization}).

\subsubsection*{Conventions for internal category theory}

We use the theory of $\vartopos$-categories developed by Martini and Martini--Wolf in their works on Yoneda's lemma, internal straightening, colimits, and presentability \cite{MInternalYoneda,MInternalStraightenning,MWcocomplete,MWPresentabilityAndTopoi}. For an overview, we refer to \cite[introduction and \S 1.2.2]{MWPresentabilityAndTopoi}. We freely identify a $\vartopos$-category $\varintcat{C}$ with its associated sheaf of $\infty$-categories, writing $\varintcat{C}(A)$ for its $\infty$-category of sections in context $A$. We follow their terminology with the conventions described below.

Our basic notation translates as follows. Subscripts indicating the ambient $\infty$-topos are generally omitted when it is clear from context.

\begin{center}
\small
\begin{tabular}{@{}p{0.28\linewidth}p{0.23\linewidth}p{0.39\linewidth}@{}}
\hline
\textbf{Martini--Wolf} & \textbf{This paper} & \textbf{Meaning} \\[4pt]
\hline
$\mathcal{S}$ & $\catofanima$ & The $\infty$-category of animae \\[4pt]
$\catofcats_\infty,\ \prl_\infty$ & $\catofcats,\ \prl$ & The $\infty$-categories \\[4pt]
$\mathcal{C},\mathcal{D}$ & $\varcat{C},\varcat{D}$ & General $\infty$-categories \\[4pt]
\shortstack[l]{$\Omega_{\vartopos},\ \internalcatofcats_{\vartopos},$\\$\intprl_{\vartopos}$}
& \shortstack[l]{$\universe,\ \internalcatofcats,$\\$\intprl$}
& The corresponding $\vartopos$-categories \\[4pt]
$\intfun_{\vartopos},\ \intpresh_{\vartopos}$ & $\intfun,\ \intpresh$ & Internal functor and presheaf $\vartopos$-categories \\[4pt]
$\map_{\varintcat{C}}$ & $\intmap_{\varintcat{C}}$ & Internal mapping objects \\[4pt]
$h_{\varintcat{C}}$ & $\yo_{\varintcat{C}}$ & The Yoneda embedding \\[4pt]
$1$ & $*$ & A terminal object \\[4pt]
\hline
\end{tabular}
\end{center}

We also use ``terminal'' for what they call a ``final'' object. The superscript $\simeq$ continues to denote the core.

For a morphism $p:B\to A$ in $\vartopos$, we write $p_\sharp$ for the left adjoint of the restriction functor $p^*: \varintcat{C}(A)\to\varintcat{C}(B)$, when it exists. Thus our $p_{\sharp}$ is their $p_!$. This distinguishes indexed colimits from the exceptional pushforward notation used for six functor formalisms. We retain $p_*$ for the right adjoint and $F_{\sharp}$ for left Kan extension along a functor $F$. 

For an internal class $\varintcat{U}\subset\internalcatofcats$, we write
\[
\internalcatofcats^{\varintcat{U}-\sqcap},
\qquad
\internalcatofcats^{\varintcat{U}-\sqcup}
\]
for the $\vartopos$-categories of $\varintcat{U}$-complete and $\varintcat{U}$-cocomplete $\vartopos$-categories, with $\varintcat{U}$-continuous and $\varintcat{U}$-cocontinuous functors, respectively. Likewise,
\[
\intfun^{\varintcat{U}-\sqcap}(\varintcat{C},\varintcat{D}),
\qquad
\intfun^{\varintcat{U}-\sqcup}(\varintcat{C},\varintcat{D})
\]
denote the full $\vartopos$-subcategories of $\varintcat{U}$ (co)continuous functors. In particular, our superscript $\varintcat{U}-\sqcup$ replaces their $\varintcat{U}\text{-}\mathrm{cc}$. The meanings of completeness and continuity are unchanged \cite[\S 1.2.13]{MWPresentabilityAndTopoi}.

Our use of \emph{subuniverse} requires more care. Martini--Wolf associate to a local class $E$ the full subcategory $\Omega_E\subset\Omega$ \cite[\S 1.2.6]{MWPresentabilityAndTopoi}. Our context-free subuniverse $\subuniverse$, defined in \Cref{def:storngly_regular_universe}, also restricts the morphisms: in context $A$, its objects are maps $B\to A$ in $E$, and its morphisms are maps over $A$ that themselves belong to $E$. We write $\subuniverse^{\full}$ for the full subcategory on the same objects; this is the subuniverse denoted $\Omega_E$ by Martini--Wolf. Our context-free subuniverses additionally satisfy the smallness, pointedness, and independence-of-context conditions specified in \Cref{def:storngly_regular_universe}. Expressions such as ``$\subuniverse$-limits'' and ``$\subuniverse$-cocomplete'' refer to the indexing shapes in $\subuniverse^{\full}$.

When we call a full subuniverse $\varintcat{U}\subset\universe$ \emph{right regular}, we mean that it is closed under $\varintcat{U}$-indexed colimits in $\universe$. This is a relative use of the terminology: Martini--Wolf's definition for internal classes in $\internalcatofcats$ requires closure there and also requires the class to contain the simplices \cite[Definition 2.1.3.1]{MWPresentabilityAndTopoi}.

We use a few further abbreviations for constructions recalled later. The free $\subuniverse^{\full}$-cocompletion of $A\in\vartopos$, denoted $\intpresh_{\vartopos}^{\subuniverse^{\full}}(A)$ in Martini--Wolf's notation, is written $\subuniverse^{\full}[A]$ (\Cref{def:free_on_A_with_E_colimits}). For $B\to A$, the notation $B^{A\triangleleft}$ denotes their left cone on $B$, formed internally to $\vartopos_{/A}$; the extra $A$ records the context \cite[Remark 4.1.4]{MWcocomplete}. We also write $\varintcat{C}^{B}$ for the internal functor $\vartopos_{/A}$-category from $B$ to the restriction of $\varintcat{C}$, and $\intunst$ for internal unstraightening.

Finally, ``symmetric monoidal $\vartopos$-category'' retains its meaning as a sheaf of symmetric monoidal $\infty$-categories. The $\subuniverse$-monoidal structures introduced here allow tensor products indexed by $\subuniverse$. We refer to the tensor product on presentable $\vartopos$-categories constructed by Martini--Wolf as the \emph{multilinear tensor product} \cite[\S\S 2.5--2.6]{MWPresentabilityAndTopoi}.

\subsection{Internal categories of spans and unfurling.}\label{subsec:internal_spans_and_unfurling}

In this subsection, we recall how the unfurling construction uses adjoints to extend functors to $\infty$-categories of spans. We begin with adequate triplets and the span construction, then explain how unfurling works internally to $\vartopos$ (\Cref{const:internal_unfurling}). These constructions will later allow us to obtain monoidal structures from internal limits and colimits.

We then establish the uniqueness of extensions of a functor $\varintcat{C}^{\op}\to \internalcatofcats$, under suitable left-cancellation and truncation assumptions. Under these assumptions, unfurling identifies adjointable functors $\varintcat{C}^{\op}\to \internalcatofcats$ with adjointed functors $\intspan(\varintcat{C},\varintcat{C}_R)\to \internalcatofcats$ (\Cref{thm:internal_uniqueness_of_unfurling}). We conclude with the dual of the construction, in which the additional morphisms act by right adjoints.

\subsubsection*{Adequate triplets and unfurling}

We begin by recalling some basic facts about adequate triplets, which provide the input for the construction of $\infty$-categories of spans.

\begin{definition}
    An adequate triplet $(\varcat{C},\varcat{C}_L,\varcat{C}_R)$ is an $\infty$-category $\varcat{C}$ with two wide subcategories $\varcat{C}_L,\varcat{C}_R\subset\varcat{C}$, called the left-pointing and right-pointing maps, respectively, such that for every $Y\to W$ in $\varcat{C}_R$ and $Z\to W$ in $\varcat{C}_L$, the pullback
    \begin{equation}\label{eq:pullback_square_of_left_along_right}
    \begin{tikzcd}
        Z\times_W Y \ar[r]\ar[d]\arrow[rd, "\lrcorner"{anchor=center, pos=0.125}, draw=none] & Z\ar[d]\\
        Y\ar[r] & W
    \end{tikzcd}
    \end{equation}
    exists, and the upper and left morphisms belong to $\varcat{C}_R$ and $\varcat{C}_L$, respectively.
\end{definition}

Adequate triplets form an $\infty$-category with the following notion of functor.

\begin{definition}
    A functor of adequate triplets $(\varcat{C},\varcat{C}_L,\varcat{C}_R)\to(\varcat{D},\varcat{D}_L,\varcat{D}_R)$ is a functor $F:\varcat{C}\to\varcat{D}$ satisfying the following conditions.
    \begin{mylist}
        \item If $f$ belongs to $\varcat{C}_L$, then $F(f)$ belongs to $\varcat{D}_L$. If $g$ belongs to $\varcat{C}_R$, then $F(g)$ belongs to $\varcat{D}_R$.
        \item $F$ preserves pullbacks of right-pointing maps along left-pointing maps, as in \Cref{eq:pullback_square_of_left_along_right}.
    \end{mylist}
    We denote by $\adtrip$ the $\infty$-category of adequate triplets and functors of adequate triplets.
\end{definition}

There are also two $(\infty,2)$-categories of adequate triplets, corresponding to two notions of natural transformation.

\begin{definition}
    Let $F,G:(\varcat{C},\varcat{C}_L,\varcat{C}_R)\to(\varcat{D},\varcat{D}_L,\varcat{D}_R)$ be functors of adequate triplets.

    A right-pointing natural transformation of functors of adequate triplets is a natural transformation $\theta:F\Rightarrow G$ such that each component $\theta_X:F(X)\to G(X)$ belongs to $\varcat{D}_R$ and, for every morphism $f:X\to Y$ in $\varcat{C}_L$, the naturality square
    \[
    \begin{tikzcd}
        F(X)\arrow[rd, "\lrcorner"{anchor=center, pos=0.125}, draw=none]\ar[r, "F(f)"]\ar[d,"\theta_X"'] & F(Y)\ar[d,"\theta_Y"]\\
        G(X)\ar[r, "G(f)"'] & G(Y)
    \end{tikzcd}
    \]
    is cartesian in $\varcat{D}$.

    A left-pointing natural transformation of functors of adequate triplets is a natural transformation $\theta:F\Rightarrow G$ such that each component $\theta_X:F(X)\to G(X)$ belongs to $\varcat{D}_L$ and, for every morphism $f:X\to Y$ in $\varcat{C}_R$, the naturality square
    \[
    \begin{tikzcd}
        F(X)\arrow[rd, "\lrcorner"{anchor=center, pos=0.125}, draw=none]\ar[r, "F(f)"]\ar[d,"\theta_X"'] & F(Y)\ar[d,"\theta_Y"]\\
        G(X)\ar[r, "G(f)"'] & G(Y)
    \end{tikzcd}
    \]
    is cartesian in $\varcat{D}$.

    We denote by $\adtriptwo^R$ the $(\infty,2)$-category of adequate triplets, functors of adequate triplets, and right-pointing natural transformations. Likewise, we denote by $\adtriptwo^L$ the $(\infty,2)$-category defined using left-pointing natural transformations.
\end{definition}

We will frequently use the following special case of an adequate triplet.

\begin{definition}
    A span pair is an adequate triplet with $\varcat{C}_L=\varcat{C}$. Equivalently, it is an $\infty$-category $\varcat{C}$ with a wide subcategory $\varcat{C}_R\subset\varcat{C}$ that is stable under base change in $\varcat{C}$.

    We denote by $\spanpairs\subset\adtrip$ the full subcategory spanned by span pairs.
\end{definition}

Adequate triplets give rise to $\infty$-categories of spans as follows.

\begin{construction}[categories of spans]
    \begin{mylist}
        \item For an adequate triplet $(\varcat{C},\varcat{C}_L,\varcat{C}_R)$, there is an $\infty$-category $\Span(\varcat{C},\varcat{C}_L,\varcat{C}_R)$ with the same space of objects as $\varcat{C}$, morphisms given by correspondences, and composition given by pullbacks. We refer to \cite[\S 2]{HHLNSpansDef} for a precise treatment.
        \item The span construction assembles into a limit-preserving functor
        \[
        \Span:\adtrip\to\catofcats.
        \]
        This functor admits two enhancements to a $2$-functor of $(\infty,2)$-categories:
        \[
        \Span:\adtriptwo^R\to\twocatofcats,
        \qquad
        \Span:(\adtriptwo^L)^{\co}\to\twocatofcats
        \]
        (see \cite[Proposition C.20]{BHNormsInMotivic}).
        \item\label{def:catsofspans:adjuncions}
        Adjunctions in $\adtriptwo^R$ are adjunctions $F:\varcat{C}\fromto\varcat{D}:G$ in which $F$ and $G$ are functors of adequate triplets and the unit and counit are right-pointing natural transformations. Since $\Span:\adtriptwo^R\to\twocatofcats$ is a $2$-functor, it sends such adjunctions to adjunctions of $\infty$-categories.

        Adjunctions in $\adtriptwo^L$ are defined similarly using left-pointing natural transformations. The $2$-functor $\Span:(\adtriptwo^L)^{\co}\to\twocatofcats$ also sends these to adjunctions of $\infty$-categories, with the direction of the adjunction reversed because of the $\co$.
    \end{mylist}
\end{construction}

Unfurling is a construction of functors out of $\infty$-categories of spans, introduced by Barwick and given a more conceptual treatment in \cite{CLRUniversalityOfUnferling}. We recall the construction in the form presented there.

\begin{construction}[unfurling]\label{const:unfurling}
    Following \cite[Definition 3.12(2) and Theorem 3.13(2)]{CLRUniversalityOfUnferling}, let $(\varcat{C},\varcat{C}_R)$ be a span pair.
    \begin{mylist}
        \item Let $F:\varcat{C}^{\op}\to\catofcats$ be a functor. We say that $F$ is \emph{left $R$-adjointable} if it sends morphisms in $\varcat{C}_R^{\op}$ to functors admitting left adjoints and sends every base change square
        \[
        \begin{tikzcd}
            Z\times_W Y\ar[r,"\varcat{C}_R"]\ar[d]\arrow[rd, "\lrcorner"{anchor=center, pos=0.125}, draw=none] & Z\ar[d]\\
            Y\ar[r,"\varcat{C}_R"'] & W
        \end{tikzcd}
        \]
        of a morphism in $\varcat{C}_R$ to a horizontally left adjointable square
        \[
        \begin{tikzcd}
            F(W)\ar[r]\ar[d] & F(Y)\ar[d]\\
            F(Z)\ar[r] & F(Z\times_W Y).
        \end{tikzcd}
        \]
        That is, the horizontal functors admit left adjoints and the square satisfies the Beck--Chevalley condition.
        \item A \emph{morphism of left $R$-adjointable functors} is a natural transformation $\eta:F\Rightarrow G$ such that, for every morphism $f:X\to Y$ in $\varcat{C}_R$, the naturality square
        \[
        \begin{tikzcd}
            F(Y)\ar[r,"F(f)"]\ar[d,"\eta_Y"'] & F(X)\ar[d,"\eta_X"]\\
            G(Y)\ar[r,"G(f)"'] & G(X)
        \end{tikzcd}
        \]
        satisfies the Beck--Chevalley condition. We denote by
        \[
        \Fun^{R\operatorname{-adjointable}}(\varcat{C}^{\op},\catofcats)
        \subset\Fun(\varcat{C}^{\op},\catofcats)
        \]
        the $\infty$-subcategory of left $R$-adjointable functors and their morphisms.
        \item Let $F:\varcat{C}^{\op}\to\catofcats$ be left $R$-adjointable, and let $p:\varcat{E}\to\varcat{C}$ be its cartesian unstraightening. Let $\varcat{E}^{p\dashcart}$ be the wide subcategory of $\varcat{E}$ spanned by $p$-cartesian morphisms, and let $\varcat{E}_R\subset\varcat{E}$ be the wide subcategory of morphisms lying over $\varcat{C}_R$. Then $(\varcat{E},\varcat{E}^{p\dashcart},\varcat{E}_R)$ is an adequate triplet, and $p$ induces a functor of adequate triplets
        \[
        (\varcat{E},\varcat{E}^{p\dashcart},\varcat{E}_R)
        \to(\varcat{C},\varcat{C},\varcat{C}_R).
        \]
        Applying the span construction gives a functor
        \[
        \Span(p):\Span(\varcat{E},\varcat{E}^{p\dashcart},\varcat{E}_R)
        \to\Span(\varcat{C},\varcat{C},\varcat{C}_R).
        \]
        \item The functor $\Span(p)$ is a cocartesian fibration classifying a functor
        \[
        \widetilde{F}:\Span(\varcat{C},\varcat{C}_R)\to\catofcats,
        \]
        called the \emph{unfurling} of $F$. It extends $F$, and for every $f:Y\to W$ in $\varcat{C}_R$, the functor $\widetilde{F}(Y=Y\xto{f}W)$ is left adjoint to $F(f)=\widetilde{F}(W\xleftarrow{f}Y=Y)$.
    \end{mylist}
\end{construction}

\subsubsection*{Internal unfurling}

We now recall the notion of an adequate triplet of $\vartopos$-categories from \cite{CLLambi}.

\begin{definition}\label{def:internal_ad_triplet}
    Following \cite[Definition 4.3]{CLLambi}, an adequate triplet of $\vartopos$-categories is a triplet $(\varintcat{C},\varintcat{C}_L,\varintcat{C}_R)$ consisting of a $\vartopos$-category $\varintcat{C}$ and two wide $\vartopos$-subcategories $\varintcat{C}_L,\varintcat{C}_R\subset\varintcat{C}$ such that, for every $A\in\vartopos$, the triplet $(\varintcat{C}(A),\varintcat{C}_L(A),\varintcat{C}_R(A))$ is adequate and, for every morphism $p:B\to A$ in $\vartopos$, the restriction functor $p^*:\varintcat{C}(A)\to\varintcat{C}(B)$ is a functor of adequate triplets.
\end{definition}

Equivalently, an adequate triplet of $\vartopos$-categories is a limit-preserving functor $\vartopos^{\op}\to\adtrip$.

A $\vartopos$-span pair $(\varintcat{C},\varintcat{C}_R)$ is an internal adequate triplet of the form $(\varintcat{C},\varintcat{C},\varintcat{C}_R)$.

We denote by $\intadtrip$ the $\vartopos$-category of internal adequate triplets, defined by
\[
\intadtrip(A):=\Fun^R((\vartopos_{/A})^{\op},\adtrip).
\]

Since the span construction preserves limits, it also defines internal $\infty$-categories of spans.

\begin{definition}
    The span $\vartopos$-category $\intspan(\varintcat{C},\varintcat{C}_L,\varintcat{C}_R)$ is the composite
    \[
    \vartopos^{\op}\to\adtrip\xto{\Span}\catofcats.
    \]
    This defines a $\vartopos$-functor
    \[
    \intspan:\intadtrip\to\internalcatofcats.
    \]
\end{definition}

Before discussing internal unfurling, we recall internal adjunctions. The $\infty$-category $\catofcats(\vartopos)$ has a natural enhancement to the $(\infty,2)$-category $\twocatofcats(\vartopos)$ of $\vartopos$-categories, in which we can consider adjunctions of $\vartopos$-functors.

Let $\varintcat{C},\varintcat{D}:\vartopos^{\op}\to\catofcats$ be $\vartopos$-categories, and let $F:\varintcat{C}\to\varintcat{D}$ be a $\vartopos$-functor. Then $F$ is a left adjoint in $\twocatofcats(\vartopos)$ if and only if every $F(A):\varintcat{C}(A)\to\varintcat{D}(A)$ is a left adjoint and, for every morphism $p:B\to A$ in $\vartopos$, the square
\[
\begin{tikzcd}
    \varintcat{C}(A)\ar[r,"p^*"]\ar[d,"F(A)"'] & \varintcat{C}(B)\ar[d,"F(B)"]\\
    \varintcat{D}(A)\ar[r,"p^*"'] & \varintcat{D}(B)
\end{tikzcd}
\]
is vertically right adjointable.

Adjunctions are local in the following sense.

\begin{proposition}[{\cite[Remark 3.3.6]{MWcocomplete}}]
    Let $F:\varintcat{C}\to\varintcat{D}$ be a $\vartopos$-functor. The condition that $F|_{\vartopos_{/A}}$ is a left adjoint is local in $A$: for every effective epimorphism $A'\to A$, the functor $F|_{\vartopos_{/A}}$ is a left adjoint if and only if $F|_{\vartopos_{/A'}}$ is a left adjoint.
\end{proposition}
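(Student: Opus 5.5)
The forward direction is formal, and the converse is a descent argument. Throughout we use the pointwise characterization of internal adjunctions recalled just above. Applied to $F|_{\vartopos_{/A}}$, it says: this functor is a left adjoint if and only if
\begin{itemize}
    \item for every $X\to A$, the functor $F(X)\colon\varintcat{C}(X)\to\varintcat{D}(X)$ admits a right adjoint $G(X)$, and
    \item for every map $p\colon Y\to X$ over $A$, the square formed by $p^*$ and $F$ is vertically right adjointable, i.e.\ the mate $p^*G(X)\to G(Y)p^*$ is invertible.
\end{itemize}
Let $q\colon A'\to A$ be the given effective epimorphism.

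\emph{Forward direction.} Suppose $F|_{\vartopos_{/A}}$ is a left adjoint. The objects and morphisms of $\vartopos_{/A'}$ are sent by $q_\sharp$ to objects and morphisms of $\vartopos_{/A}$, so both conditions for $F|_{\vartopos_{/A'}}$ are special cases of the conditions for $F|_{\vartopos_{/A}}$. Equivalently, restriction along $\vartopos_{/A'}\to\vartopos_{/A}$ is a $2$-functor and therefore preserves adjunctions.

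\emph{Converse: existence of right adjoints.} Suppose $F|_{\vartopos_{/A'}}$ is a left adjoint, and fix $X\to A$.
\begin{itemize}
    \item Set $X':=X\times_A A'$. Then $X'\to X$ is an effective epimorphism, since effective epimorphisms are stable under base change in an $\infty$-topos.
    \item Let $X'_\bullet$ be its Čech nerve. Each $X'_n$ lives over $A'$, and $X\simeq\colim X'_\bullet$.
    \item Since $\varintcat{C}$ and $\varintcat{D}$ are sheaves, $\varintcat{C}(X)\simeq\lim_{\Delta}\varintcat{C}(X'_\bullet)$ and similarly for $\varintcat{D}$. Under these identifications, $F(X)$ is the limit of the natural transformation $F(X'_\bullet)$.
    \item By hypothesis each $F(X'_n)$ has a right adjoint $G(X'_n)$. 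Moreover, every naturality square for a face or degeneracy map $X'_m\to X'_n$ is right adjointable, because these maps lie in $\vartopos_{/A'}$.
    \item The key step is to invoke the standard result on limits of adjunctions (e.g.\ \cite[Corollary 4.7.4.18]{HA}). A natural transformation of diagrams of $\infty$-categories which is pointwise a left adjoint and whose naturality squares are right adjointable induces a left adjoint on limits. Its right adjoint is computed levelwise, so $G(X)\simeq\lim G(X'_\bullet)$, and it is compatible with the projections $\varintcat{C}(X)\to\varintcat{C}(X'_n)$.
\end{itemize}
I expect this assembly of the levelwise right adjoints into a coherent right adjoint to be the main obstacle. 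The cited theorem handles it directly, provided one checks carefully that the right-adjointability hypotheses over $A'$ cover all simplicial structure maps.

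\emph{Converse: Beck--Chevalley for arbitrary $p\colon Y\to X$ over $A$.} Set $Y':=Y\times_A A'$, so that $Y'_\bullet\simeq Y\times_X X'_\bullet$ and we obtain a map of Čech nerves $p'_\bullet\colon Y'_\bullet\to X'_\bullet$ over $A'$.
\begin{itemize}
    \item The family of restrictions $\varintcat{C}(Y)\to\varintcat{C}(Y'_n)$ is jointly conservative, since $\varintcat{C}(Y)$ is their limit. Hence it suffices to show that the mate $p^*G(X)\to G(Y)p^*$ becomes invertible after restricting to each $Y'_n$.
    \item Because $G$ is computed levelwise and compatibly with the projections, this restricted mate is identified, up to pasting with the invertible Beck--Chevalley squares for $Y'_n\to Y$ and $X'_n\to X$, with the mate of the square for $p'_n\colon Y'_n\to X'_n$.
    \item That mate is invertible by the hypothesis on $F|_{\vartopos_{/A'}}$.
\end{itemize}
This pasting identification of mates is a routine compatibility of mates with composition of squares. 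Both conditions of the pointwise characterization therefore hold, and $F|_{\vartopos_{/A}}$ is a left adjoint.
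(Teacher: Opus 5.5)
Your argument is correct. The paper gives no proof of this statement; it imports it from \cite[Remark 3.3.6]{MWcocomplete}. So what you have written is an independent, self-contained proof. Via the pointwise characterization you reduce to descent for the sheaves $\varintcat{C}$ and $\varintcat{D}$ along the \v{C}ech nerve of $X\times_A A'\to X$. You then build $G(X)\simeq\lim G(X'_\bullet)$ using the limit-of-adjunctions theorem from \cite{HA}, and verify Beck--Chevalley by pasting mates.

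A shorter route, in the spirit of \Cref{lem:adj_closed_under_limits}, is also available. The assignment $A\mapsto\twocatofcats(\vartopos_{/A})$ is a sheaf of $(\infty,2)$-categories: cores descend because $\internalcatofcats$ is a sheaf, and mapping $\infty$-categories descend because $\Fun_{\twocatofcats(\vartopos_{/A})}(\pi_A^*\varintcat{C},\pi_A^*\varintcat{D})\simeq\intfun(\varintcat{C},\varintcat{D})(A)$. Hence $F|_{\vartopos_{/A}}$ corresponds to the compatible family of the $F|_{\vartopos_{/A'_n}}$ over the \v{C}ech nerve $A'_\bullet$ of $A'\to A$. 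Each of these is a left adjoint by your forward direction, and a $1$-morphism in a limit of $(\infty,2)$-categories is a left adjoint once all its components are. That route avoids constructing right adjoints and mates by hand, but it needs descent at the $(\infty,2)$-level. Yours uses only descent for $\varintcat{C}$ and $\varintcat{D}$, and it produces an explicit formula for the right adjoint.

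One phrase should be tightened. In general $X'_\bullet$ does not lift to a simplicial object of $\vartopos_{/A'}$, since such a lift amounts to a map $X\simeq|X'_\bullet|\to A'$. For example, $d_0\colon X'_1\to X'_0$ does not commute with the projections to the $0$-th copy of $A'$. So the face maps do not ``lie in $\vartopos_{/A'}$'' as a diagram.

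What you actually need, and what holds, is a statement about each map separately. Any single map $f\colon Y\to Z$ in $\vartopos$, with some $z\colon Z\to A'$, becomes a morphism of $\vartopos_{/A'}$ once $Y$ is given the structure map $zf$. Now $\pi_{A'}^*\varintcat{C}$, $\pi_{A'}^*\varintcat{D}$ and $\pi_{A'}^*F$ only see underlying objects, and right adjointability is a property of each square separately. Therefore this covers every simplicial structure map you need. The maps $p'_n$ even commute with all projections to $A'$.
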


We can now describe the internal unfurling construction.

\begin{construction}[internal unfurling]\label{const:internal_unfurling}
    Let $(\varintcat{C},\varintcat{C}_R)$ be a $\vartopos$-span pair.
    \begin{mylist}
        \item Let $F:\varintcat{C}^{\op}\to\internalcatofcats$ be a $\vartopos$-functor. We say that $F$ is \emph{left $R$-adjointable} if it sends morphisms in $\varintcat{C}_R^{\op}$ to internal right adjoints and, for every context $A$, sends each base change square in $\varintcat{C}(A)$
        \[
        \begin{tikzcd}
            Z\times_W Y\ar[r,"\varintcat{C}_R"]\ar[d]\arrow[rd, "\lrcorner"{anchor=center, pos=0.125}, draw=none] & Z\ar[d]\\
            Y\ar[r,"\varintcat{C}_R"'] & W
        \end{tikzcd}
        \]
        of a morphism in $\varintcat{C}_R(A)$ to a horizontally left adjointable square
        \[
        \begin{tikzcd}
            F(W)\ar[r]\ar[d] & F(Y)\ar[d]\\
            F(Z)\ar[r] & F(Z\times_W Y)
        \end{tikzcd}
        \]
        in the $(\infty,2)$-category $\twocatofcats(\vartopos_{/A})$. That is, the horizontal functors admit internal left adjoints and the square satisfies the Beck--Chevalley condition in every context.
        \item A \emph{morphism of internally left $R$-adjointable functors} is a natural transformation $\eta:F\Rightarrow G$ such that, for every context $A$ and every morphism $f:X\to Y$ in $\varintcat{C}_R(A)$, the naturality square
        \[
        \begin{tikzcd}
            F(Y)\ar[r,"F(f)"]\ar[d,"\eta_Y"'] & F(X)\ar[d,"\eta_X"]\\
            G(Y)\ar[r,"G(f)"'] & G(X)
        \end{tikzcd}
        \]
        satisfies the Beck--Chevalley condition in $\twocatofcats(\vartopos_{/A})$. We denote by
        \[
        \intfun^{R\operatorname{-adjointable}}(\varintcat{C}^{\op},\internalcatofcats)
        \subset\intfun(\varintcat{C}^{\op},\internalcatofcats)
        \]
        the $\vartopos$-subcategory of left $R$-adjointable functors and their morphisms. This is a $\vartopos$-subcategory by locality of adjunctions.
        \item\label{const:unst_of_unf}
        Let $F:\varintcat{C}^{\op}\to\internalcatofcats$ be left $R$-adjointable, and let $p:\varintcat{P}\to\varintcat{C}$ be its cartesian unstraightening. Let $\varintcat{P}^{p\dashcart}$ be the wide $\vartopos$-subcategory of $\varintcat{P}$ spanned by $p$-cartesian morphisms, and let $\varintcat{P}_R$ be the wide $\vartopos$-subcategory of morphisms lying over $\varintcat{C}_R$. Then $(\varintcat{P},\varintcat{P}^{p\dashcart},\varintcat{P}_R)$ is an internal adequate triplet, and $p$ induces a functor of internal adequate triplets
        \[
        (\varintcat{P},\varintcat{P}^{p\dashcart},\varintcat{P}_R)
        \to(\varintcat{C},\varintcat{C},\varintcat{C}_R).
        \]
        Applying the internal span construction gives a $\vartopos$-functor
        \[
        \intspan(p):\intspan(\varintcat{P},\varintcat{P}^{p\dashcart},\varintcat{P}_R)
        \to\intspan(\varintcat{C},\varintcat{C}_R).
        \]
    \end{mylist}
\end{construction}

The internal construction has the same properties as unfurling for $\infty$-categories.

\begin{proposition}
    Let $F:\varintcat{C}^{\op}\to\internalcatofcats$ be a left $R$-adjointable $\vartopos$-functor. The $\vartopos$-functor
    \[
    \intspan(p):\intspan(\varintcat{P},\varintcat{P}^{p\dashcart},\varintcat{P}_R)
    \to\intspan(\varintcat{C},\varintcat{C}_R)
    \]
    of \Cref{const:unst_of_unf} is a cocartesian fibration classifying a $\vartopos$-functor
    \[
    \widetilde{F}:\intspan(\varintcat{C},\varintcat{C}_R)\to\internalcatofcats,
    \]
    called the \emph{unfurling} of $F$. It extends $F$, and for every context $A$ and every $f:Y\to W$ in $\varintcat{C}_R(A)$, the functor $\widetilde{F}(Y=Y\xto{f}W)$ is internally left adjoint to $F(f)=\widetilde{F}(W\xleftarrow{f}Y=Y)$.
\end{proposition}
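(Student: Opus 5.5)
We reduce the statement to the non-internal unfurling of \Cref{const:unfurling}, applied contextwise, and then check that the contextwise results assemble into a $\vartopos$-functor. In each context $A\in\vartopos$, the $\vartopos$-category $\intspan(\varintcat{P},\varintcat{P}^{p\dashcart},\varintcat{P}_R)$ evaluates to $\Span(\varintcat{P}(A),\varintcat{P}^{p\dashcart}(A),\varintcat{P}_R(A))$. Since internal unstraightening is computed fibrewise on slices, $p(A)\colon\varintcat{P}(A)\to\varintcat{C}(A)$ is the cartesian unstraightening of the functor $F_A\colon\varintcat{C}(A)^{\op}\to\catofcats$ given by $X\mapsto \Gamma(\vartopos_{/A},F(X))$, i.e.\ the global sections over $A$. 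The left $R$-adjointability of $F$ gives, in particular, that each $F_A$ is left $R$-adjointable in the sense of \Cref{const:unfurling}, since internal left adjoints and Beck--Chevalley squares in $\twocatofcats(\vartopos_{/A})$ induce the same on sections over $A$. Hence \Cref{const:unfurling} shows that $\intspan(p)(A)$ is a cocartesian fibration classifying the unfurling $\widetilde{F_A}$. The same argument applies after replacing $A$ by any object of $\vartopos_{/A}$.

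Next we show that $\intspan(p)$ is an \emph{internal} cocartesian fibration. Following Martini--Wolf, this means two things: each component $\intspan(p)(A)$ is a cocartesian fibration, and the restriction functors $q^*$ along $q\colon B\to A$ preserve cocartesian edges. Restriction is a functor of adequate triplets that preserves pullbacks of right maps along left maps. In a span category, the cocartesian edges over a span $X\leftarrow Z\to Y$ are described explicitly by \Cref{const:unfurling}: they are spans whose backward leg is $p$-cartesian and whose forward leg is a $p$-cocartesian-type morphism coming from the left adjoint. To check that $q^*$ preserves them, we use that $q^*$ commutes with the left adjoints $f_\sharp$, which is exactly the Beck--Chevalley part of being an internal left adjoint. \textbf{This compatibility is the main obstacle}, namely identifying internal cocartesian edges with contextwise ones that are stable under restriction. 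I would handle it by using that $F$ lands in $\internalcatofcats$, so the left adjoints exist in $\twocatofcats(\vartopos_{/A})$, together with the naturality of the unfurling construction in the span pair \cite{CLRUniversalityOfUnferling}. Internal straightening \cite{MInternalStraightenning} then yields $\widetilde F\colon\intspan(\varintcat{C},\varintcat{C}_R)\to\internalcatofcats$.

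Finally, the remaining properties are read off contextwise. Restricting $\intspan(p)$ along $\varintcat{C}^{\op}\to\intspan(\varintcat{C},\varintcat{C}_R)$ recovers the cartesian unstraightening $p$ viewed as a cocartesian fibration over $\varintcat{C}^{\op}$, so $\widetilde F$ extends $F$. For $f\colon Y\to W$ in $\varintcat{C}_R(A)$, the functor $\widetilde F(Y=Y\to W)$ restricts on every slice context to the left adjoint provided by \Cref{const:unfurling}. These adjunctions are compatible with restriction by the Beck--Chevalley condition. By the characterization of internal adjunctions recalled above, which requires left adjoints in every context together with adjointable restriction squares, $\widetilde F(Y=Y\to W)$ is internally left adjoint to $F(f)$.
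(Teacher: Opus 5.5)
Your proposal is correct and follows the same route as the paper. You identify $\intspan(p)(A)$ with the ordinary unfurling of $\Gamma_A\circ F(A)$, then use functoriality of unfurling and the Beck--Chevalley conditions for internal adjunctions to show that restriction along $q\colon B\to A$ preserves cocartesian edges, which by Martini's criterion \cite[Proposition 3.1.7]{MInternalStraightenning} makes $\intspan(p)$ an internal cocartesian fibration; your contextwise checks of the extension and adjunction properties just spell out what the paper treats as immediate.
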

\begin{proof}
    The only nontrivial assertion is that $\intspan(p)$ is a cocartesian fibration. By \cite[Proposition 3.1.7]{MInternalStraightenning}, it suffices to check that $\intspan(p)(A)$ is a cocartesian fibration of $\infty$-categories for every $A\in\vartopos$ and that, for every morphism $q:B\to A$ in $\vartopos$, the square
    \[
    \begin{tikzcd}
        \intspan(\varintcat{P},\varintcat{P}^{p\dashcart},\varintcat{P}_R)(A)\ar[d,"q^*"]\ar[rr,"\intspan(p)(A)"]
        &&\intspan(\varintcat{C},\varintcat{C}_R)(A)\ar[d,"q^*"]\\
        \intspan(\varintcat{P},\varintcat{P}^{p\dashcart},\varintcat{P}_R)(B)\ar[rr,"\intspan(p)(B)"]
        &&\intspan(\varintcat{C},\varintcat{C}_R)(B)
    \end{tikzcd}
    \]
    is a morphism of cocartesian fibrations.

    In each context $A$, the functor $\intspan(p)(A)$ is obtained by applying \Cref{const:unfurling} to the composite
    \[
    \varintcat{C}^{\op}(A)\xto{F(A)}\internalcatofcats(A)\xto{\Gamma_A}\catofcats.
    \]
    Thus it is a cocartesian fibration by the construction for $\infty$-categories.

    Functoriality of unfurling shows that the square induced by $q^*$ is a morphism of cocartesian fibrations. Here we use the Beck--Chevalley conditions for adjunctions of $\vartopos$-categories.
\end{proof}

\subsubsection*{Uniqueness in the truncated case}

Let $(\varcat{C},\varcat{C}_R)$ be a span pair. Suppose that $\varcat{C}_R$ is left cancellable and that, for every morphism $f$ in $\varcat{C}_R$, there exists $n$ such that $f$ is $n$-truncated. We say that a functor
\[
F:\Span(\varcat{C},\varcat{C}_R)\to\catofcats
\]
is \emph{$R$-adjointed} if it is $\varcat{C}_R$-left adjointed in the sense of \Cref{def:adjointed_functors}. We denote by
\[
\Fun^{R\adjtd}(\Span(\varcat{C},\varcat{C}_R),\catofcats)
\subset\Fun(\Span(\varcat{C},\varcat{C}_R),\catofcats)
\]
the full $\infty$-subcategory of $R$-adjointed functors.

Let $\spanpairs'\subset\spanpairs$ denote the full subcategory spanned by span pairs $(\varcat{C},\varcat{C}_R)$ such that $\varcat{C}_R$ is left cancellable and every morphism in $\varcat{C}_R$ is truncated.

\begin{theorem}\label{thm:uniqueness_of_unfurling}
    Restriction along the natural inclusion $\varcat{C}^{\op}\into\Span(\varcat{C},\varcat{C}_R)$ defines a natural transformation between functors $(\spanpairs')^{\op}\to\catoflargecats$:
    \[
    \theta:\Fun^{R\adjtd}(\Span(\varcat{C},\varcat{C}_R),\catofcats)
    \to\Fun^{R\operatorname{-adjointable}}(\varcat{C}^{\op},\catofcats).
    \]
    The unfurling construction assembles into a section of $\theta$. For every $(\varcat{C},\varcat{C}_R)\in\spanpairs'$, this section is an equivalence of $\infty$-categories.
\end{theorem}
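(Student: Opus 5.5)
The plan is to verify three things in order: that $\theta$ is well defined and natural in the span pair, that unfurling lands in $R$-adjointed functors and is a section of $\theta$, and that every $R$-adjointed functor is canonically equivalent to the unfurling of its restriction.

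\textbf{Well-definedness of $\theta$.} Let $G\colon\Span(\varcat{C},\varcat{C}_R)\to\catofcats$ be $R$-adjointed and let $f\in\varcat{C}_R$. By \Cref{def:adjointed_functors}, the norm map identifies $f_!$ with a left adjoint $f_\sharp$ of $f^*$. For a base change square of $f$, I would show that the Beck--Chevalley mate is the composite of this identification with the canonical equivalence $g^*f_!\simeq f'_!g'^*$ coming from composition of spans. The key claim is that the unit and counit of $f_!\dashv f^*$, built inductively from the diagonal, are compatible with base change. I would check this by induction on truncation level, since the diagonal of a base change is the base change of the diagonal.

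For a morphism $\eta\colon G\Rightarrow G'$ in $\Fun(\Span(\varcat{C},\varcat{C}_R),\catofcats)$, the Beck--Chevalley mate of the restricted square is the naturality square for $f_!$ conjugated by the norm identifications. By \Cref{lem:nat_of_norms}, applied levelwise, this mate is invertible. Naturality of $\theta$ in $(\varcat{C},\varcat{C}_R)\in\spanpairs'$ is clear, since restriction commutes with precomposition by functors of span pairs.

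\textbf{Unfurling is a section.} By \Cref{const:unfurling}, $\widetilde{F}$ extends $F$ and has $\widetilde{F}(f_!)$ left adjoint to $f^*$. It remains to see that $\widetilde{F}$ is $R$-adjointed, i.e.\ that the norm maps $f_\sharp\to f_!$ are invertible. I would prove by induction on $n$ that for $n$-truncated $f$ the norm map is the identity of the unfurling adjunction. Here the $n$-truncated diagonal $\Delta_f$ handles the inductive step, and the base case $n=-2$ (equivalences) is trivial. The core computation is that the counit $f^*f_!\simeq\pi_{1!}\pi_2^*\to\pi_{1!}\Delta_!\Delta^*\pi_2^*\simeq\id$ agrees with the unfurling counit. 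This reduces to the standard compatibility of Beck--Chevalley equivalences with the unit of the diagonal, as in the classical identification of the counit via the diagonal. Unfurling is functorial in $F$ and in the span pair by \cite{CLRUniversalityOfUnferling}, so it assembles into a section of $\theta$.

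\textbf{Essential surjectivity and full faithfulness.} I expect this to be the main obstacle: showing that every $R$-adjointed $G$ is canonically equivalent to $\widetilde{\theta(G)}$, naturally in $G$. I would first try to compare cocartesian unstraightenings over $\Span(\varcat{C},\varcat{C}_R)$. The unstraightening of $G$ restricted to backward maps recovers the cartesian unstraightening $p$ of $\theta(G)$. Cocartesian lifts over a forward map $f$ are computed in both fibrations by a left adjoint of $f^*$: in $\widetilde{\theta(G)}$ by construction, and in $G$ via the norm equivalence $f_\sharp\simeq f_!$.

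To make this coherent, I would use the universal property of unfurling in \cite{CLRUniversalityOfUnferling}. That property identifies functors out of $\Span(\varcat{C},\varcat{C}_R)$ whose forward maps are left adjoint to the backward maps, with units determined by the span $2$-cells, with adjointable functors on $\varcat{C}^{\op}$. I would then argue, by induction on truncation using left cancellability (so that $\Delta_f\in\varcat{C}_R$), that for an $R$-adjointed $G$ these adjunction data are uniquely determined by the norm maps. This would make the space of adjunction data contractible, so that $G\mapsto\theta(G)$ is an equivalence inverse to unfurling. If the adjunction data do not match directly, the fallback is to prove that the comparison map $\widetilde{\theta(G)}\to G$, induced by the universal property of unfurling, is an equivalence on forward maps level by level in $n$ using the norm isomorphisms.
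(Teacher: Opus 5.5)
Your Steps 1 and 2 are acceptable sketches. Step 3, which you rightly flag as the main obstacle, carries the entire content of the theorem, and what you write there is not an argument.

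Once unfurling is a section of $\theta$, the theorem is equivalent to: every $R$-adjointed $G$ is naturally equivalent to $\widetilde{\theta(G)}$. Since $\Span(\varcat{C},\varcat{C}_R)$ has the same objects as $\varcat{C}$, any natural transformation $\widetilde{\theta(G)}\to G$ restricting to the identity on $\varcat{C}^{\op}$ is automatically an equivalence. So everything lies in \emph{constructing} such a transformation, i.e.\ coherently matching the forward functoriality of $G$ with that of the unfurling.

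Your fallback presupposes this map. The universal property of \cite{CLRUniversalityOfUnferling} concerns $2$-functors out of $\spanhalf(\varcat{C},\varcat{C}_R)$, while $G$ is only a $1$-functor on $\Span(\varcat{C},\varcat{C}_R)$, so it induces nothing.

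Your main plan is that the ``adjunction data'' are determined by the norm maps and hence form a contractible space. This amounts to claiming that restriction from $2$-functors on $\spanhalf(\varcat{C},\varcat{C}_R)$ to $R$-adjointed functors on $\Span(\varcat{C},\varcat{C}_R)$ is an equivalence. As you describe it, an induction on truncation level produces, one morphism at a time, a unit $\id\to f^*f_!$ compatible with the diagonal of $f$. It does not produce the compatibilities of these units with composition of spans, with whiskering by arbitrary spans, or the higher homotopies that a $2$-functor (or a natural equivalence of functors on $\Span(\varcat{C},\varcat{C}_R)$) requires. That is exactly the hard part, and you assert it rather than prove it.

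Moreover, that claim is the special case of \Cref{main_theorem:free_adjointed_2_cat} with $\varcat{P}$ the equivalences and $\varcat{I}=\varcat{E}=\varcat{C}_R$. In this paper that theorem is proved \emph{using} the present statement (through \Cref{thm:internal_uniqueness_of_unfurling}, in \Cref{prop:core_of_unst_F_is_associated_operad} and in the construction of $\Psi$), so you cannot borrow it.

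The paper makes no attempt at a direct proof. It cites \cite[Theorem 3.3]{uniqueness_of_6ff}, which states precisely that $\theta$ is an equivalence under the left-cancellation and truncation hypotheses. It then concludes that unfurling, being a section of an equivalence, is its inverse. You need to either invoke that result or supply its proof.

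A minor point on Step 1: \Cref{lem:nat_of_norms} concerns postcomposition with one $2$-functor, not a transformation $\eta\colon G\Rightarrow G'$. The Beck--Chevalley property of $\theta(\eta)$ needs its own induction, showing that $\eta$ intertwines the inductively defined units $\id\to f^*f_!$; this is routine.
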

\begin{proof}
    The assertion that $\theta$ is an equivalence under the truncation and left cancellation assumptions is a special case of \cite[Theorem 3.3]{uniqueness_of_6ff}. Since unfurling gives a section of $\theta$, it also gives its inverse.
\end{proof}

To obtain an internal uniqueness theorem, we first need an internal analogue of the condition that every morphism in $\varcat{C}_R$ is truncated.

\begin{definition}
    Let $\varintcat{D}$ be a $\vartopos$-category, and let $f:x\to y$ be a morphism of $\varintcat{D}$ in context $A$. We say that $f$ is \emph{locally truncated} if there exists a cover $(p_i:U_i\to A)_i$ such that each $p_i^*(f)$ is truncated in $\varintcat{D}(U_i)$.

    We say that $\varintcat{D}$ is \emph{truncated} if every morphism of $\varintcat{D}$ is locally truncated.
\end{definition}

To describe the essential image of internal unfurling under this assumption, we use the following subcategories.

\begin{definition}
    Let $(\varintcat{C},\varintcat{C}_R)$ be a $\vartopos$-span pair, with $\varintcat{C}_R$ left cancellable and truncated. For each $A\in\vartopos$, we write
    \[
    \varintcat{C}_R(A)^t\subset\varintcat{C}_R(A)
    \]
    for the wide subcategory spanned by truncated morphisms.
\end{definition}

\begin{proposition}
    Let $(\varintcat{C},\varintcat{C}_R)$ be a $\vartopos$-span pair, with $\varintcat{C}_R$ left cancellable and truncated. For every context $A\in\vartopos$, the pair $(\varintcat{C}(A),\varintcat{C}_R(A)^t)$ is a span pair. Moreover, the inclusion
    \[
    (\varintcat{C}(A),\varintcat{C}_R(A)^t)
    \to(\varintcat{C}(A),\varintcat{C}_R(A))
    \]
    is a morphism of span pairs.
\end{proposition}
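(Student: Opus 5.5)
The claim is that $(\varintcat{C}(A),\varintcat{C}_R(A)^t)$ is a span pair and that the inclusion into $(\varintcat{C}(A),\varintcat{C}_R(A))$ is a morphism of span pairs. I will check the defining conditions directly: $\varintcat{C}_R(A)^t$ is a wide subcategory, and it is stable under base change in $\varintcat{C}(A)$. The morphism-of-span-pairs statement is then formal. The identity functor of $\varintcat{C}(A)$ sends $\varintcat{C}_R(A)^t$ into $\varintcat{C}_R(A)$. The pullbacks in question are the same pullbacks in the same category $\varintcat{C}(A)$, so they are preserved.

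\emph{Wide subcategory.} Identities are $(-2)$-truncated, so they lie in $\varintcat{C}_R(A)^t$. Closure under composition needs care, because a composite of an $n$-truncated and an $m$-truncated map is $\max(n,m)$-truncated in any $\infty$-category with the relevant pullbacks. The standard proof uses the fibre-sequence description of mapping spaces: $\operatorname{Map}(z,x)\to\operatorname{Map}(z,y)$ and $\operatorname{Map}(z,y)\to\operatorname{Map}(z,w)$ both have truncated fibres, hence so does their composite. That argument needs only mapping spaces, not pullbacks, so closure under composition holds in $\varintcat{C}(A)$ without any assumption. Since $\varintcat{C}_R(A)$ is already closed under composition, $\varintcat{C}_R(A)^t$ is a subcategory. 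One should also note that truncatedness of $f$ is tested in $\varintcat{C}(A)$. If the convention is to test it in $\varintcat{C}_R(A)$, left cancellability makes $\varintcat{C}_R(A)$ closed under the diagonals that appear, so the two notions agree; I would remark on this briefly.

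\emph{Base change.} Let $f\colon Y\to W$ be an $n$-truncated morphism in $\varintcat{C}_R(A)$ and $g\colon Z\to W$ arbitrary. Since $(\varintcat{C},\varintcat{C}_R)$ is a span pair in context $A$, the pullback $Z\times_W Y\to Z$ exists and lies in $\varintcat{C}_R(A)$. It remains to see that it is $n$-truncated. For any test object $T$, mapping spaces turn the pullback square into a pullback of anima:
\[
\operatorname{Map}(T,Z\times_WY)\simeq\operatorname{Map}(T,Z)\times_{\operatorname{Map}(T,W)}\operatorname{Map}(T,Y).
\]
Hence the fibres of $\operatorname{Map}(T,Z\times_WY)\to\operatorname{Map}(T,Z)$ are fibres of $\operatorname{Map}(T,Y)\to\operatorname{Map}(T,W)$, which are $n$-truncated.

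The only mild obstacle is making sure that ``truncated'' is characterised via mapping spaces, and not via iterated diagonals, whose existence would need extra pullbacks. With the mapping-space definition, both closure properties are immediate.
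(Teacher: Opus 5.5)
Your proof is correct and takes the same route as the paper, whose entire argument is that truncated morphisms are stable under base change. You also check closure under composition and carry out the base-change step explicitly via mapping spaces; the paper leaves both details implicit.
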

\begin{proof}
    This follows because truncated morphisms are stable under base change.
\end{proof}

We can now define internal adjointedness.

\begin{definition}
    Let $(\varintcat{C},\varintcat{C}_R)$ be a $\vartopos$-span pair, with $\varintcat{C}_R$ left cancellable and truncated. A $\vartopos$-functor
    \[
    F:\intspan(\varintcat{C},\varintcat{C}_R)\to\internalcatofcats
    \]
    is \emph{$R$-adjointed} if, for every context $A\in\vartopos$, the composite
    \[
    \Span(\varintcat{C}(A),\varintcat{C}_R(A)^t)
    \into\intspan(\varintcat{C},\varintcat{C}_R)(A)
    \xto{F(A)}\internalcatofcats(A)
    \simeq\catofcats(\vartopos_{/A}),
    \]
    viewed as a functor into $\twocatofcats(\vartopos_{/A})$, is $\varintcat{C}_R(A)^t$-left adjointed in the sense of \Cref{def:adjointed_functors}.
\end{definition}

We denote by
\[
\intfun^{R\adjtd}(\intspan(\varintcat{C},\varintcat{C}_R),\internalcatofcats)
\subset\intfun(\intspan(\varintcat{C},\varintcat{C}_R),\internalcatofcats)
\]
the full $\vartopos$-subcategory of $R$-adjointed functors.

Our goal is to identify left $R$-adjointable $\vartopos$-functors $\varintcat{C}^{\op}\to\internalcatofcats$ with $R$-adjointed $\vartopos$-functors $\intspan(\varintcat{C},\varintcat{C}_R)\to\internalcatofcats$. The key step is to recover the internal span category by sheafifying the span categories formed using truncated morphisms in each context.

Let $\vartopos[P]=\presh(S)$ be a presheaf $\infty$-topos, and let $i^*:\vartopos[P]\fromto\vartopos:i_*$ be a geometric embedding, with $i_*$ fully faithful. For example, we may present $\vartopos$ as the $\infty$-topos of sheaves on $S$ for a suitable topology. We use the same notation for the induced functors on internal categories. In expressions for $\vartopos[P]$-categories, we write $\varintcat{C}$ for $i_*\varintcat{C}$.

Let $\varintcat{C}_R^t$ denote the $\vartopos[P]$-category corresponding to the presheaf of $\infty$-categories
\[
S^{\op}\to\vartopos^{\op}\to\catofcats,
\qquad A\mapsto\varintcat{C}_R(A)^t.
\]
The left exact localization $i^*:\vartopos[P]\to\vartopos$ induces
\[
i^*:\catofcats(\vartopos[P])\to\catofcats(\vartopos).
\]

\begin{lemma}\label{lem:truncated_sheafify_to_locally_trancated}
    The inclusion of $\vartopos[P]$-categories
    \[
    \intspan(\varintcat{C},\varintcat{C}_R^t)
    \into i_*\intspan(\varintcat{C},\varintcat{C}_R)
    \]
    is adjoint to an equivalence of $\vartopos$-categories
    \[
    i^*\intspan(\varintcat{C},\varintcat{C}_R^t)
    \simeq\intspan(\varintcat{C},\varintcat{C}_R).
    \]
\end{lemma}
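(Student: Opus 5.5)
We must show that the map $i^*\intspan(\varintcat{C},\varintcat{C}_R^t)\to\intspan(\varintcat{C},\varintcat{C}_R)$, adjoint to the inclusion, is an equivalence of $\vartopos$-categories.

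The plan is to check two things: that the map is essentially surjective, and that it is fully faithful, both in the internal sense. Since $i^*$ is left exact and sheafification commutes with the formation of cores and of mapping objects, we can compute both sides through the Segal-object description. Write $\intspan(\varintcat{C},\varintcat{C}_R^t)$ as a complete Segal object in $\vartopos[P]$. Its level $n$ is the presheaf
\[
A\mapsto \bigl(\text{spans } x_0\leftarrow y_{01}\to x_1\leftarrow\cdots\to x_n \text{ in }\varintcat{C}(A)\text{, with right legs in }\varintcat{C}_R(A)^t\bigr)^{\simeq},
\]
or more precisely the groupoid of the corresponding grid diagrams. The levels of $\intspan(\varintcat{C},\varintcat{C}_R)$ are the analogous sheaves with right legs in $\varintcat{C}_R(A)$.

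The key step is to show that, on each level, the inclusion of presheaves of anima becomes an equivalence after applying $i^*$, i.e.\ that it is a local equivalence. Level $0$ is unchanged, since both sides give $\varintcat{C}^{\simeq}$, which is already a sheaf. For level $n$, the inclusion is a monomorphism of presheaves: it is a union of components, cut out by a condition on the right legs. A monomorphism into a sheaf is a local equivalence exactly when every section is locally in the image. Take a section, i.e.\ an $n$-simplex of spans in context $A$. Its right legs are finitely many morphisms of $\varintcat{C}_R(A)$, and by the truncatedness hypothesis each is locally truncated. Refining the finitely many covers to a common cover $(U_i\to A)$, all the right legs become truncated on each $U_i$. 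Here we use that restriction along $U_i\to A$ preserves truncatedness, since restriction functors preserve pullbacks and hence diagonals. The intermediate pullback objects in the grid are then base changes of truncated maps, so they are also truncated. Hence the section lies locally in the sub-presheaf, and the inclusion is a local equivalence on each level.

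It then follows that $i^*$ applied to the Segal object for $\varintcat{C}_R^t$ agrees levelwise with the sheaf Segal object for $\varintcat{C}_R$. Because $i^*$ is left exact, it preserves the Segal and completeness conditions, so $i^*$ of the internal category is computed levelwise. Since the target is already a sheaf and $i_*$ is fully faithful, the adjoint map is an equivalence.

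I expect the main obstacle to be justifying the levelwise computation: namely, that the internal span category corresponds to the complete Segal sheaf whose levels are the cores of span grids, and that $i^*$ on $\vartopos[P]$-categories is levelwise sheafification of Segal objects. The latter is where completeness could in principle fail, but the core of level $0$ is unchanged, so completeness transfers. I would cite Martini's description of $\catofcats(\vartopos)$ as complete Segal objects and the fact that the sheafification of such an object can be computed levelwise when the result is already complete.
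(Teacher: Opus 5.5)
Your proposal is correct and is essentially the paper's argument: view both sides as simplicial objects, note that $i^*$ and $i_*$ are computed levelwise, observe that level $0$ is unchanged, and show that the remaining inclusion is a monomorphism which becomes an effective epimorphism after sheafification because right legs are locally truncated. The only difference is that you check local surjectivity directly in every simplicial degree, refining finitely many covers to a common one, whereas the paper checks it only in degree $1$ and deduces the higher degrees from the Segal condition.
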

\begin{proof}
    Recall that an internal category $\varintcat{C}$ is a simplicial object in the corresponding $\infty$-topos:
    \[
    \varintcat{C}_0
    \;\substack{
    \longleftarrow\\[-0.2em]
    \longrightarrow\\[-0.2em]
    \longleftarrow
    }\;
    \varintcat{C}_1
    \;\substack{
    \longleftarrow\\[-0.2em]
    \longrightarrow\\[-0.2em]
    \longleftarrow\\[-0.2em]
    \longrightarrow\\[-0.2em]
    \longleftarrow
    }\;
    \varintcat{C}_2
    \;\cdots.
    \]
    Both $i^*$ and $i_*$ are computed levelwise on this simplicial object.

    The morphism
    \[
    \intspan(\varintcat{C},\varintcat{C}_R^t)_0
    \to i_*\intspan(\varintcat{C},\varintcat{C}_R)_0
    \]
    is an equivalence because the inclusion in the statement is wide. The morphism
    \[
    \intspan(\varintcat{C},\varintcat{C}_R^t)_1
    \to i_*\intspan(\varintcat{C},\varintcat{C}_R)_1
    \]
    is a monomorphism in $\vartopos[P]$. Since $i^*$ is left exact, it preserves monomorphisms. By local truncation, the induced morphism
    \[
    i^*\intspan(\varintcat{C},\varintcat{C}_R^t)_1
    \to\intspan(\varintcat{C},\varintcat{C}_R)_1
    \]
    is also an effective epimorphism, hence an equivalence. The Segal conditions then give an equivalence in every degree.
\end{proof}

\begin{theorem}\label{thm:internal_uniqueness_of_unfurling}
    Let $(\varintcat{C},\varintcat{C}_R)$ be a $\vartopos$-span pair, with $\varintcat{C}_R$ left cancellable and truncated. The internal unfurling construction defines an equivalence of $\vartopos$-categories
    \[
    \intfun^{R\operatorname{-adjointable}}(\varintcat{C}^{\op},\internalcatofcats)
    \simeq\intfun^{R\adjtd}(\intspan(\varintcat{C},\varintcat{C}_R),\internalcatofcats).
    \]
\end{theorem}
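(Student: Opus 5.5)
We reduce the internal statement to the external \Cref{thm:uniqueness_of_unfurling}, applied contextwise, and then use \Cref{lem:truncated_sheafify_to_locally_trancated} to pass from presheaves to sheaves. The plan has three steps.

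\emph{Step 1: reduce to sections over $*$.} Both sides are $\vartopos$-categories, and their sections in context $A$ are obtained by the same construction over $\vartopos_{/A}$. Both constructions are compatible with restriction along $\vartopos_{/A}\to\vartopos$, and internal unfurling is natural in the context. It therefore suffices to show that unfurling induces an equivalence of $\infty$-categories on global sections,
\[
\Fun^{R\operatorname{-adjointable}}(\varintcat{C}^{\op},\internalcatofcats)
\to
\Fun^{R\adjtd}(\intspan(\varintcat{C},\varintcat{C}_R),\internalcatofcats),
\]
and to apply the same argument over each slice. Since internal unfurling followed by restriction is the identity, unfurling is a section of the restriction map $\theta$, so it suffices to show that $\theta$ is an equivalence.

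\emph{Step 2: pass to presheaves.} Present $\vartopos$ as a left exact localization $i^*\colon\presh(S)=\vartopos[P]\rightleftarrows\vartopos\colon i_*$. By \Cref{lem:truncated_sheafify_to_locally_trancated}, a $\vartopos$-functor $\intspan(\varintcat{C},\varintcat{C}_R)\to\internalcatofcats$ is the same as a $\vartopos[P]$-functor $\intspan(\varintcat{C},\varintcat{C}_R^t)\to i_*\internalcatofcats$. A $\vartopos[P]$-category is a presheaf on $S$, so such a functor is a natural family of functors $\Span(\varintcat{C}(s),\varintcat{C}_R(s)^t)\to\catofcats(\vartopos_{/s})$ indexed by $s\in S$. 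The adjointedness condition is contextwise by definition. The adjointability condition is also contextwise, by locality of internal adjunctions and of the Beck--Chevalley condition, which is the remark in \Cref{const:internal_unfurling}. Both sides then become ends over $s\in S$: on one side of $\Fun^{R\adjtd}(\Span(\varintcat{C}(s),\varintcat{C}_R(s)^t),-)$, on the other of $\Fun^{R\operatorname{-adjointable}}(\varintcat{C}(s)^{\op},-)$. Here the targets are the $(\infty,2)$-categories $\twocatofcats(\vartopos_{/s})$, and the ends are taken together with the coherences in $s$. Transition maps along $t\to s$ are morphisms of span pairs in $\spanpairs'$, since truncated morphisms are stable under base change.

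\emph{Step 3: apply the external theorem levelwise.} Apply \Cref{thm:uniqueness_of_unfurling} to each span pair $(\varintcat{C}(s),\varintcat{C}_R(s)^t)$, which lies in $\spanpairs'$ because left cancellability is inherited and every morphism in it is truncated. The equivalence $\theta$ there is natural in $(\spanpairs')^{\op}$, so it assembles into an equivalence of the ends. This is the main obstacle. \Cref{thm:uniqueness_of_unfurling} is stated for targets $\catofcats$, whereas here the targets are $\catofcats(\vartopos_{/s})$, and these also vary with $s$.

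To handle this, first note that $\catofcats(\vartopos_{/s})$ embeds as a reflective subcategory of $\Fun((\vartopos_{/s})^{\op},\catofcats)$ closed under limits, and that adjointability and adjointedness are checked pointwise after evaluation. The statement therefore follows from the $\catofcats$-valued version with $\Span(\varintcat{C}(s),\varintcat{C}_R(s)^t)$ replaced by its product with $\vartopos_{/s}$, via currying. Second, the naturality of \Cref{thm:uniqueness_of_unfurling} in $(\spanpairs')^{\op}$ supplies the compatibility of these equivalences as $s$ varies, so the levelwise equivalences assemble into an equivalence of the ends.
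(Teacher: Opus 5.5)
Your argument is correct in outline. It uses the same two inputs as the paper: the external \Cref{thm:uniqueness_of_unfurling} with its naturality in $(\spanpairs')^{\op}$, and \Cref{lem:truncated_sheafify_to_locally_trancated} to reduce to $\presh(S)$. The difference is in how you handle the varying targets $\catofcats(\vartopos_{/s})$.

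The paper never curries. It unstraightens the two assignments $(\varcat{C},\varcat{C}_R)\mapsto\Fun^{R\operatorname{-adjointable}}(\varcat{C}^{\op},\catofcats)$ and $(\varcat{C},\varcat{C}_R)\mapsto\Fun^{R\adjtd}(\Span(\varcat{C},\varcat{C}_R),\catofcats)$ into total categories $\varcat{X},\varcat{Y}\to\spanpairs'$. It then identifies internal functors with sheaves valued in these; in the presheaf case these are sections over $S^{\op}$, whose value at $s$ is the $\catofcats$-valued functor $\Gamma_s\circ F(s)$. Naturality of $\theta$ then amounts to an equivalence $\varcat{Y}\simeq\varcat{X}$ of total categories over $\spanpairs'$, which transports sections. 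Restriction to smaller contexts is recorded by the non-cartesian morphisms of $\varcat{X}$, which are Beck--Chevalley transformations. These are exactly the context-direction Beck--Chevalley conditions in internal adjointability, so no separate analysis of $\twocatofcats(\vartopos_{/s})$ is needed.

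Your currying instead applies the external theorem to the product pairs $(\varintcat{C}(s)\times\vartopos_{/s},\varintcat{C}_R(s)^t\times(\vartopos_{/s})^{\simeq})$, which do lie in $\spanpairs'$. You must then use naturality over a twisted-arrow diagram rather than over $S^{\op}$, because the end also contains terms pairing the source at $s$ with the target at $s'$ for $s'\to s$. This is more explicit but requires more checking.

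In particular, your claim that ``adjointability and adjointedness are checked pointwise after evaluation'' is not literally true, for two reasons:
\begin{itemize}
\item Internal adjointability also requires Beck--Chevalley along restrictions $h\colon B\to A$ in $\vartopos_{/s}$. In the curried picture this is supplied by the base-change squares of $(f,\id_A)$ along $(\id,h)$, not by a pointwise check.
\item Internal adjointedness at level $-2$ requires internal left adjoints, not just pointwise ones. In the curried picture these are recovered only by invoking the external theorem for the product pair.
\end{itemize}
With that correction, and noting that the sheaf condition concerns only backward maps and so is respected by unfurling, your route goes through.
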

\begin{proof}
    Let $x:\varcat{X}\to\spanpairs'$ be the cartesian unstraightening of the functor $(\spanpairs')^{\op}\to\catoflargecats$ given by
    \[
    (\varcat{C},\varcat{C}_R)
    \mapsto\Fun^{R\operatorname{-adjointable}}(\varcat{C}^{\op},\catofcats).
    \]
    Similarly, let $y:\varcat{Y}\to\spanpairs'$ be the cartesian unstraightening of the functor $(\spanpairs')^{\op}\to\catoflargecats$ given by
    \[
    (\varcat{C},\varcat{C}_R)
    \mapsto\Fun^{R\adjtd}(\Span(\varcat{C},\varcat{C}_R),\catofcats).
    \]

    Left $R$-adjointable $\vartopos$-functors $\varintcat{C}^{\op}\to\internalcatofcats$ correspond to $\vartopos$-sheaves in $\varcat{X}$, while $R$-adjointed $\vartopos$-functors $\intspan(\varintcat{C},\varintcat{C}_R)\to\internalcatofcats$ correspond to $\vartopos$-sheaves in $\varcat{Y}$. By \Cref{thm:uniqueness_of_unfurling}, restriction defines an equivalence $\varcat{Y}\simeq\varcat{X}$ over $\spanpairs'$.

    By \Cref{lem:truncated_sheafify_to_locally_trancated}, it suffices to consider the case in which $\vartopos=\vartopos[P]=\presh(S)$ is a presheaf $\infty$-topos and the corresponding functor $S^{\op}\to\spanpairs$ takes values in $\spanpairs'$. In this case, the result follows from the equivalence $\varcat{Y}\simeq\varcat{X}$.
\end{proof}

\subsubsection*{Covariant unfurling}

We conclude with the covariant version of the unfurling construction.

\begin{proposition}\label{prop:cov_unfurling}
    Let $(\varintcat{C},\varintcat{C}_L)$ be a $\vartopos$-span pair, and let $F:\varintcat{C}\to\internalcatofcats$ be a right $L$-adjointable $\vartopos$-functor. Let $p:\varintcat{P}\to\varintcat{C}$ be its cocartesian unstraightening, and let $\varintcat{P}^{L\dashcart}$ be the wide $\vartopos$-subcategory of $\varintcat{P}$ spanned by cartesian lifts of morphisms in $\varintcat{C}_L$. Then
    \[
    \intspan(\varintcat{P},\varintcat{P}^{L\dashcart},\varintcat{P})
    \to\intspan(\varintcat{C},\varintcat{C}_L,\varintcat{C})
    \]
    is a cocartesian fibration classifying an extension
    \[
    \widetilde{F}:\intspan(\varintcat{C},\varintcat{C}_L,\varintcat{C})
    \to\internalcatofcats
    \]
    of $F$. This extension sends left-pointing maps to the right adjoints of the corresponding right-pointing maps.

    If $\varintcat{C}_L$ is left cancellable and truncated, there is also a canonical equivalence of $\vartopos$-categories
    \[
    \intfun^{L\operatorname{-adjointable}}(\varintcat{C},\internalcatofcats)
    \simeq\intfun^{L\adjtd}(\intspan(\varintcat{C},\varintcat{C}_L,\varintcat{C}),\internalcatofcats).
    \]
\end{proposition}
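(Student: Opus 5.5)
The plan is to deduce both assertions from the contravariant results already established, by passing to opposite categories rather than redoing the constructions.

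\textbf{Step 1: the cocartesian fibration.} A $\vartopos$-functor $F:\varintcat{C}\to\internalcatofcats$ is the same as a functor $F\circ(-)^{\op}:\varintcat{C}^{\op\op}\to\internalcatofcats$. More usefully, compose with the involution $(-)^{\op}$ of $\internalcatofcats$ and set $F^\vee:=(-)^{\op}\circ F$. Right $L$-adjointability of $F$ means that the images $f_!$ of maps in $\varintcat{C}_L$ have internal right adjoints, with Beck--Chevalley along pullbacks. For $F^\vee$ this says the images acquire left adjoints, so the roles of the two directions are exchanged.

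Concretely, I would follow the proof of the internal unfurling proposition verbatim, applied to the cocartesian unstraightening $p$. Contextwise, $(\varintcat{P}(A),\varintcat{P}^{L\dashcart}(A),\varintcat{P}(A))$ is an adequate triplet. The needed pullbacks are computed by taking the cocartesian transport along the other leg and then the cartesian lift, which exists because of the right adjoints. The Beck--Chevalley condition guarantees that a cartesian lift of a base change of an $L$-map is again a cartesian lift. The functor $\Span(p)(A)$ is then a cocartesian fibration by the non-internal covariant version of \Cref{const:unfurling}, i.e.\ \cite[Theorem 3.13]{CLRUniversalityOfUnferling} in its dual form.

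Compatibility with restrictions $q^*$ uses that the internal right adjoints commute with $q^*$ in the Beck--Chevalley sense. By \cite[Proposition 3.1.7]{MInternalStraightenning}, this gives a cocartesian fibration of $\vartopos$-categories. The extension property and the identification of left-pointing maps with right adjoints are read off from the cocartesian edges over $(W\xleftarrow{f}Y=Y)$: these are cartesian lifts, so the transport functor is the right adjoint $f^*$ of $f_!$.

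\textbf{Step 2: the equivalence.} Via the equivalence $\intspan(\varintcat{C},\varintcat{C}_L,\varintcat{C})\simeq\intspan(\varintcat{C},\varintcat{C},\varintcat{C}_L)^{\op}$ (reversing spans), covariant functors out of the former correspond to functors out of the latter composed with $(-)^{\op}$ on $\internalcatofcats$. It remains to check two compatibilities.
\begin{itemize}
\item Under $(-)^{\op}$, the notion of $L$-adjointed in the covariant sense matches $R$-adjointed in the sense used in \Cref{thm:internal_uniqueness_of_unfurling}. Postcomposing with the $2$-functor $(-)^{\op}:\twocatofcats\to\twocatofcats^{\co}$ swaps left and right adjoints, and by \Cref{lem:nat_of_norms} it carries norm maps to norm maps.
\item Likewise, $L$-adjointability of $F$ matches left adjointability of $F^\vee$ on $\varintcat{C}^{\op\op}$.
\end{itemize}
Applying \Cref{thm:internal_uniqueness_of_unfurling} to the span pair $(\varintcat{C},\varintcat{C}_L)$ with $F^\vee$ then yields the equivalence. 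The same involution identifies the resulting inverse with the covariant unfurling of Step~1.

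\textbf{Main obstacle.} The delicate point is the bookkeeping in Step 2. One must verify that the involution $(-)^{\op}$ on $\internalcatofcats$, which is an equivalence $\twocatofcats(\vartopos)\simeq\twocatofcats(\vartopos)^{\co}$, transforms the norm-map definition of \Cref{def:adjointed_functors} exactly into its $\co$-dual, uniformly in contexts. This is what turns the $L$-adjointed condition into the $R$-adjointed one. I would handle it by induction on the truncation level, exactly as in \Cref{lem:nat_of_norms}, using that the involution preserves mates up to reversing $2$-cells.
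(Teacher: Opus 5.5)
Your Step 1 is essentially the paper's argument: apply the covariant unfurling of \cite[Theorem 3.13(1)]{CLRUniversalityOfUnferling} in each context, then internalize using \cite[Proposition 3.1.7]{MInternalStraightenning} and the Beck--Chevalley conditions. The detour through $F^\vee$ plays no role there.

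The gap is in Step 2. Postcomposing with the involution $(-)^{\op}$ of $\internalcatofcats$ is a covariant autoequivalence. It realizes $\twocatofcats(\vartopos)\simeq\twocatofcats(\vartopos)^{\co}$, so it swaps left and right adjoints, but it never changes the variance of a functor.

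Reversing spans gives $\intspan(\varintcat{C},\varintcat{C}_L,\varintcat{C})\simeq\intspan(\varintcat{C},\varintcat{C},\varintcat{C}_L)^{\op}$. So the functors you must classify are \emph{contravariant} functors on $\intspan(\varintcat{C},\varintcat{C},\varintcat{C}_L)$, whereas \Cref{thm:internal_uniqueness_of_unfurling} classifies covariant ones. Composing with $(-)^{\op}$ leaves them contravariant.

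The same mismatch occurs on the input side. $F^\vee$ is still a functor on $\varintcat{C}$, while \Cref{thm:internal_uniqueness_of_unfurling} for the span pair $(\varintcat{C},\varintcat{C}_L)$ takes functors on $\varintcat{C}^{\op}$. Reading $F^\vee$ as a functor on $(\varintcat{C}^{\op})^{\op}$ does not help: $(\varintcat{C}^{\op},\varintcat{C}_L^{\op})$ is not a span pair, since that would require pushouts and would produce cospans in $\varintcat{C}$. So the norm-map bookkeeping you flag as the main obstacle is not the real issue; the duality itself is the wrong one.

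The duality that does exchange the covariant and contravariant statements is passage to the $1$-opposite target. A right $L$-adjointable $F\colon\varintcat{C}\to\vartwocat{D}$ is the same as a left $L$-adjointable functor $\varintcat{C}^{\op}\to\vartwocat{D}^{\op}$. Likewise, functors $\intspan(\varintcat{C},\varintcat{C},\varintcat{C}_L)\to\vartwocat{D}^{\op}$ are functors $\intspan(\varintcat{C},\varintcat{C}_L,\varintcat{C})\to\vartwocat{D}$.

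However, $\twocatofcats^{\op}$ is not equivalent to $\twocatofcats$, and \Cref{thm:internal_uniqueness_of_unfurling} is only available with target $\internalcatofcats$. So this cannot be fed into the theorem already proved.

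What is needed is what the paper does: ``the same argument as in the contravariant case.'' First obtain the covariant analogue of \Cref{thm:uniqueness_of_unfurling} for $\infty$-categories. You could do this by rerunning the argument of \cite[Theorem 3.3]{uniqueness_of_6ff} covariantly, or by applying a version of it for arbitrary $(\infty,2)$-categorical targets to $\twocatofcats^{\op}$. Then internalize exactly as in the proof of \Cref{thm:internal_uniqueness_of_unfurling}, using \Cref{lem:truncated_sheafify_to_locally_trancated} to pass from contextwise truncated to locally truncated morphisms.
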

\begin{proof}
    It suffices to verify the corresponding statement for $\infty$-categories and then internalize the construction as above. The covariant unfurling construction is given in \cite[Theorem 3.13(1)]{CLRUniversalityOfUnferling}. The uniqueness statement follows by the same argument as in the contravariant case.
\end{proof}

\subsection{\texorpdfstring{$\subuniverse$}{E}-monoids.}\label{subsec:E_monoids}

A commutative monoid encodes a coherent way of adding finite families of elements. In an $\infty$-category $\varcat{C}$ with finite products, this structure can be described by a finite-product-preserving functor
\[
M:\Span(\Fin)\to\varcat{C}.
\]
We generalize this description to $\vartopos$-categories, allowing sums indexed by more general families. The permitted families and summation maps are specified by a \emph{context-free subuniverse} $\subuniverse$.

We begin by introducing context-free subuniverses (\Cref{def:storngly_regular_universe}) and explaining their relation to local classes of morphisms in $\vartopos$ (\Cref{prop:from_local_class_to_subuniverse}). We then construct the corresponding internal categories of spans and use them to define $\subuniverse$-monoids (\Cref{def:E_monoids}). This recovers ordinary commutative monoids when the indexing families are finite sets (\Cref{example:Fin_monoids}).

We show that the $\vartopos$-category of $\subuniverse$-monoids in $\universe$ is presentable (\Cref{prop:cmon_is_presentable}). We also give another formula for $\subuniverse$-monoids in complete $\vartopos$-categories (\Cref{prop:extra_back_functoriality_for_E_monoidal_cats}). Finally, we describe free $\subuniverse$-monoids in terms of free cocompletions (\Cref{prop:free_E_monoid_on_A}), generalizing the description of the free commutative monoid on a space as the space of finite families of its elements.

\subsubsection*{Context-free subuniverses}

\begin{definition}\label{def:storngly_regular_universe}
    A context-free subuniverse is a $\vartopos$-subcategory $\subuniverse\subset\universe$ satisfying the following conditions.
    \begin{mylist}
        \item\label[condition]{cond:srs_is_kappa_small} (Smallness) $\subuniverse$ is a small $\vartopos$-category.
        \item\label[condition]{cond:srs_has_iso} (Pointedness) $\subuniverse$ is closed under empty limits in $\universe$. That is, in every context $A\in\vartopos$, the terminal object $[\id:A\to A]\in\universe(A)$ belongs to $\subuniverse(A)$.
        \item\label[condition]{cond:idependent_of_context} (Independence of context) Let $B\in\subuniverse(A)$. A morphism $p:C\to B$ in $\universe(A)$ belongs to $\subuniverse(A)$ if and only if the corresponding object $[p:C\to B]\in\universe(B)$ belongs to $\subuniverse(B)$.
    \end{mylist}

    A context-free subuniverse $\subuniverse$ is \emph{full} if $\subuniverse\subset\universe$ is a full $\vartopos$-subcategory.

    It is \emph{truncated} if, for every context $A$ and every object $[B\to A]\in\subuniverse(A)$, there exists a cover $(U_\alpha\to A)_\alpha$ such that $[B\times_A U_\alpha\to U_\alpha]$ is a truncated object of $\universe(U_\alpha)$ for every $\alpha$.

    A context-free subuniverse $\subuniverse[I]$ is \emph{inductible} if it is full and truncated.
\end{definition}

Recall that a \emph{local class} of morphisms in $\vartopos$ is a class $E$ that is stable under base change in $\vartopos$ and local on the target. The latter condition means that, for every effective epimorphism $A'\to A$, a morphism $B\to A$ belongs to $E$ whenever its pullback $B\times_A A'\to A'$ belongs to $E$.

Given a local class $E$ that is closed under composition and contains equivalences, define $\subuniverse\subset\universe$ as follows. In context $A$, the objects of $\subuniverse(A)$ are the morphisms $B\to A$ in $E$, and its morphisms are the commutative triangles
\[
\begin{tikzcd}
    C\ar[rd]\ar[rr,"f"]&&D\ar[ld]\\
    &A
\end{tikzcd}
\]
for which $f\in E$.

\begin{warning}
    One can instead take the full $\vartopos$-subcategory of $\universe$ spanned by the morphisms in $E$, as in \cite[beginning of page 9]{MWPresentabilityAndTopoi}. Our construction also restricts the morphisms between these objects. We discuss the full subcategory below.
\end{warning}

\begin{proposition}\label{prop:from_local_class_to_subuniverse}
    Let $E$ be a local class of morphisms in $\vartopos$ that is closed under composition and contains equivalences. If the associated $\vartopos$-subcategory $\subuniverse$ is small, then it is a context-free subuniverse.

    Moreover, $E$ is left cancellable if and only if $\subuniverse$ is full, and every morphism in $E$ is locally truncated if and only if $\subuniverse$ is truncated.
\end{proposition}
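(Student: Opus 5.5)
We check the three conditions of \Cref{def:storngly_regular_universe} one at a time, and then the two equivalences. First we confirm that $\subuniverse$ is a $\vartopos$-subcategory at all, i.e.\ that $A\mapsto\subuniverse(A)$ is a sub-sheaf of $A\mapsto\vartopos_{/A}$. In a fixed context, the objects and morphisms of $\subuniverse(A)$ are closed under composition and identities because $E$ is closed under composition and contains equivalences. Restriction along $q\colon A'\to A$ is pullback; it preserves both the objects and the morphisms, since $E$ is stable under base change and the pullback of a triangle over $A$ is computed as a pullback of the top map.

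For the sheaf condition, a morphism or object of $\universe$ lies in $\subuniverse$ if and only if it does so locally. This follows from locality on the target of $E$: given a cover $\coprod U_\alpha\to A$, a map $f\colon C\to D$ over $A$ whose pullbacks lie in $E$ is a map whose base change along the effective epimorphism $\coprod D\times_A U_\alpha\to D$ lies in $E$. Hence $f\in E$. Smallness is assumed.

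Pointedness is immediate, since $\id_A$ is an equivalence and therefore lies in $E$.

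For independence of context, take $[B\to A]\in\subuniverse(A)$ and $p\colon C\to B$ over $A$. By definition, $p$ is a morphism of $\subuniverse(A)$ iff $p\in E$ and $C\to A$ lies in $E$. On the other hand, $[p]\in\subuniverse(B)$ iff $p\in E$. So the condition to check is that $C\to B\to A$ lies in $E$ whenever $p$ does. This holds because $E$ is closed under composition.

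For fullness, $\subuniverse$ is full iff every map $f\colon C\to D$ over $A$ between objects of $E$ lies in $E$. If $E$ is left cancellable, this is exactly the required statement. Conversely, given $g\colon D\to A$ and $g f$ in $E$, view $f$ as a morphism in $\subuniverse(A)$ between $[gf]$ and $[g]$; fullness then gives $f\in E$. The truncatedness equivalence is a direct restatement, because truncatedness of an object $[B\to A]$ of $\universe(U)$ means truncatedness of the morphism.

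The main subtlety is the locality/sheaf step, specifically ensuring that the morphism-level condition descends. The base-change argument above should handle it.
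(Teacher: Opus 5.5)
Your proof is correct and is essentially the paper's argument, spelled out in more detail: pointedness because $E$ contains equivalences, independence of context because $E$ is closed under composition, fullness versus left cancellation via the triangle over $A$, and truncatedness because truncated objects of $\vartopos_{/U}$ are the same as truncated maps (taken locally on the target). Your extra check that $A\mapsto\subuniverse(A)$ is a sub-sheaf is not needed, since the statement already takes the associated $\vartopos$-subcategory as given. If you keep it, note one step: passing from the pullbacks over each $U_\alpha$ to the pullback along $\coprod_\alpha D\times_A U_\alpha\to D$ uses that local classes are closed under coproducts. That closure is part of the standard definition, but not of the abbreviated one recalled in the paper.
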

\begin{proof}
    Independence of context (\Cref{cond:idependent_of_context}) follows directly from the construction. Smallness (\Cref{cond:srs_is_kappa_small}) holds by assumption, and pointedness (\Cref{cond:srs_has_iso}) follows because $E$ contains equivalences.

    Suppose that $E$ is left cancellable, and consider a commutative triangle
    \[
    \begin{tikzcd}
        C\ar[rd,"g"']\ar[rr,"f"]&&D\ar[ld,"h"]\\
        &A.
    \end{tikzcd}
    \]
    If $g,h\in E$, then $f\in E$ by left cancellation. Thus every morphism in $\universe(A)$ between objects of $\subuniverse(A)$ belongs to $\subuniverse(A)$, so $\subuniverse$ is full. Conversely, fullness applied to such a triangle shows that $E$ is left cancellable.

    Finally, a morphism $f:B\to A$ in $\vartopos$ is truncated if and only if $[f:B\to A]$ is a truncated object of $\vartopos_{/A}\simeq\universe(A)$. Locality on the target corresponds to locality in the context, proving the last assertion.
\end{proof}

\begin{remark}
    This construction gives a one-to-one correspondence between context-free subuniverses and local classes that are closed under composition, contain equivalences, and have small associated $\vartopos$-subcategories of $\universe$.
\end{remark}

\begin{example}\label{example:Fin_in_Ani}
    Let $\vartopos=\catofanima$ be the $\infty$-category of anima, and let $\varintcat{Fin}\subset\universe$ correspond to the local class of finite covering maps. By \Cref{prop:from_local_class_to_subuniverse}, $\varintcat{Fin}$ is inductible: finite covering maps are $0$-truncated and satisfy left cancellation.
\end{example}

\begin{example}
    For any $\infty$-topos $\vartopos$, the smallest context-free subuniverse is $\explicitset{*}\subset\universe$, corresponding to the local class of equivalences.
\end{example}

For a context-free subuniverse $\subuniverse$, let $\subuniverse^{\full}$ denote the full $\vartopos$-subcategory of $\universe$ spanned by the objects of $\subuniverse$.

\begin{remark}
    A context-free subuniverse $\subuniverse$ is full if and only if $\subuniverse=\subuniverse^{\full}$.
\end{remark}

\begin{remark}
    The full $\vartopos$-subcategory $\subuniverse^{\full}$ need not be a context-free subuniverse. It is context-free precisely when $\subuniverse=\subuniverse^{\full}$.
\end{remark}

Let $[B\to A]\in\universe(A)$. A morphism $C\to B$ determines a $B$-shaped diagram in $\universe$ in context $A$, whose colimit is the composite $C\to B\to A$. Consequently, a full context-free subuniverse $\subuniverse$ is right regular: it is closed under $\subuniverse$-colimits in $\universe$. More generally, we have the following.

\begin{proposition}
    Let $\subuniverse$ be a context-free subuniverse. Then $\subuniverse^{\full}$ is right regular. That is, $\subuniverse^{\full}$ is $\subuniverse^{\full}$ cocomplete and the inclusion  $\subuniverse^{\full}\into \universe$  is $\subuniverse^{\full}$-cocontinuous.
\end{proposition}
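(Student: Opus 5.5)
I will reduce the statement to the computation of colimits in the universe $\universe$ and then check, context by context, that the relevant colimits land in $\subuniverse^{\full}$. Recall (Martini--Wolf) that $\universe$ is cocomplete and that for a $\vartopos$-category indexed diagram the colimit can be computed fiberwise after passing to a context. Concretely, a $\subuniverse^{\full}$-indexed diagram in context $A$ is given by an object $[p\colon B\to A]\in\subuniverse(A)$ together with a functor $d\colon B\to \universe$ in context $A$, i.e.\ an object of $\universe(B)=\vartopos_{/B}$, say $[q\colon C\to B]$. Its colimit in $\universe(A)$ is $p_\sharp[q]=[pq\colon C\to A]$, the composite. This is the observation made just before the statement for morphisms $C\to B$, and I will take it as the basic computation, stable under base change in $A$ since $p_\sharp$ satisfies Beck--Chevalley in $\vartopos$.

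The key step is then to show: if the diagram takes values in $\subuniverse^{\full}$, i.e.\ $[q\colon C\to B]\in\subuniverse^{\full}(B)=\subuniverse(B)$ on objects, then $[pq\colon C\to A]\in\subuniverse(A)$. By \Cref{cond:idependent_of_context} (independence of context), $[q]\in\subuniverse(B)$ means that $q\colon C\to B$, viewed as a morphism over $A$ from $[pq]$ to $[p]$, lies in $\subuniverse(A)$; in particular its source $[pq\colon C\to A]$ is an object of $\subuniverse(A)$. Here I need the version of independence of context which lets me conclude membership of the source object; if the condition is read only as a statement about morphisms between given objects, I would instead use the correspondence with a local class $E$ from \Cref{prop:from_local_class_to_subuniverse}: $p,q\in E$ and $E$ closed under composition gives $pq\in E$. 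I expect this bookkeeping of what exactly independence of context provides to be the main (mild) obstacle; closure under composition is the real content.

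Having shown the colimit in $\universe$ of any $\subuniverse^{\full}$-diagram landing in $\subuniverse^{\full}$ lies in $\subuniverse^{\full}$, fullness of $\subuniverse^{\full}\subset\universe$ implies that this object, with its colimit cocone, is also a colimit in $\subuniverse^{\full}$ (a full subcategory closed under a colimit inherits it, with mapping objects computed in $\universe$). The stability under base change $s^*$ for $s\colon A'\to A$ holds because both $\subuniverse$ and the composite construction are stable under pullback. This gives simultaneously that $\subuniverse^{\full}$ is $\subuniverse^{\full}$-cocomplete and that the inclusion is $\subuniverse^{\full}$-cocontinuous.

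Finally, since colimits in internal categories are local (one may check existence of colimits in context $A$ after passing to a cover), and diagrams defined in a general context reduce to the above form via the internal Yoneda/straightening description of $B$-shaped diagrams in $\universe$, no further cases arise.
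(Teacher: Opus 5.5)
Your proposal is correct and is essentially the paper's argument: identify the $B$-shaped diagram with $[q\colon C\to B]\in\subuniverse(B)$, compute its colimit in $\universe(A)$ as the composite $[pq\colon C\to A]$, and conclude by independence of context that it lies in $\subuniverse(A)$. The direct reading of independence of context that you hesitated over is the one the paper uses: $q$ belonging to the subcategory $\subuniverse(A)$ already forces its source $[pq]$ to be an object of $\subuniverse(A)$, so the fallback through closure under composition of the local class is unnecessary.
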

\begin{proof}
    Let $[B\to A]\in\subuniverse(A)$, and let $c:B\to\pi_A^*\subuniverse^{\full}$ be a $B$-shaped diagram in context $A$. Such a diagram corresponds to an object $[C\to B]\in\subuniverse(B)$. Its colimit in $\universe(A)$ is the composite $C\to B\to A$, which belongs to $\subuniverse(A)$ by independence of context.
\end{proof}

Combining this with the formula for free cocompletions in \cite{MWcocomplete} gives the following universal property.

\begin{corollary}\label{prop:S_is_free_with_S_colmitis}
    Let $\subuniverse$ be a context-free subuniverse. Then $\subuniverse^{\full}$ is the free $\subuniverse$-cocompletion, equivalently the free $\subuniverse^{\full}$-cocompletion, of the point. Thus, for every $\subuniverse$-cocomplete $\vartopos$-category $\varintcat{C}$, restriction to the point induces an equivalence of $\vartopos$-categories
    \[
    \intfun^{\subuniverse\text{-}\sqcup}(\subuniverse^{\full},\varintcat{C})
    \simeq\intfun(*,\varintcat{C})\simeq\varintcat{C},
    \]
    where the left-hand side denotes the $\vartopos$-category of $\subuniverse$-cocontinuous functors.
\end{corollary}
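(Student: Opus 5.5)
The plan is to use the Martini--Wolf formula for free cocompletions. The statement has two parts, and I handle them in turn.

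First I identify $\subuniverse^{\full}$ with the free $\subuniverse^{\full}$-cocompletion of $*$. By \cite{MWcocomplete}, for a right regular full subuniverse $\varintcat{U}\subset\universe$, the free $\varintcat{U}$-cocompletion $\intpresh^{\varintcat{U}}(*)$ is the smallest full $\vartopos$-subcategory of $\intpresh(*)\simeq\universe$ that contains the Yoneda image of the point and is closed under $\varintcat{U}$-colimits.

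Taking $\varintcat{U}=\subuniverse^{\full}$, I show this subcategory is exactly $\subuniverse^{\full}$:
\begin{itemize}
    \item It contains the point $[\id\colon A\to A]$ by pointedness.
    \item It is closed under $\subuniverse^{\full}$-colimits by the preceding proposition.
    \item Conversely, every object $[B\to A]\in\subuniverse(A)$ is the colimit in context $A$ of the constant $B$-shaped diagram at the point, since $\colim_B *\simeq B$ in $\vartopos_{/A}$. So it lies in any full subcategory containing $*$ and closed under $\subuniverse^{\full}$-colimits.
\end{itemize}
This minimality is the step needing care. I must check that Martini--Wolf's formula applies with our relative notion of right regularity: closure under $\varintcat{U}$-colimits in $\universe$, without requiring the class to contain simplices. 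If their theorem needs more, I would instead verify the universal property directly. Restriction along $*\to\subuniverse^{\full}$ has an inverse given by left Kan extension. For $x\in\varintcat{C}(A)$, set $[B\to A]\mapsto\colim_B \pi_B^*x$, which exists because $\varintcat{C}$ is $\subuniverse$-cocomplete. Cocontinuity of this functor and uniqueness then follow from the density of $*$, i.e.\ every object being a $\subuniverse^{\full}$-colimit of the point.

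Second, I compare "$\subuniverse$-cocompletion" with "$\subuniverse^{\full}$-cocompletion". By the paper's conventions, $\subuniverse$-colimits mean colimits indexed by shapes in $\subuniverse^{\full}$, so the two notions of cocompleteness and cocontinuity coincide. Hence $\intfun^{\subuniverse\text{-}\sqcup}=\intfun^{\subuniverse^{\full}\text{-}\sqcup}$.

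Finally, the universal property of the free cocompletion gives the equivalence $\intfun^{\subuniverse\text{-}\sqcup}(\subuniverse^{\full},\varintcat{C})\simeq\intfun(*,\varintcat{C})$. The identification $\intfun(*,\varintcat{C})\simeq\varintcat{C}$ is standard.
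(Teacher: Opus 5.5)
Your proposal is correct and is essentially the paper's argument: both invoke Martini--Wolf's free cocompletion theorem to identify the free $\subuniverse^{\full}$-cocompletion of $*$ with the full subcategory of $\universe$ generated by the point under $\subuniverse^{\full}$-colimits. Both then show this subcategory is $\subuniverse^{\full}$ using right regularity and the fact that each $[B\to A]$ is the $B$-indexed colimit of the constant point; the paper applies the cited theorem directly with no extra hypothesis such as containing the simplices, so your fallback via left Kan extension is not needed.
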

\begin{proof}
    By \cite[Theorem 7.1.13]{MWcocomplete}, the free $\subuniverse$-cocompletion of $*$ is the full $\vartopos$-subcategory of $\intpresh(*)\simeq\universe$ generated by $*$ under $\subuniverse^{\full}$-colimits. This is $\subuniverse^{\full}$: right regularity gives closure under these colimits, and every object of $\subuniverse^{\full}$ is the colimit of the constant point indexed by itself.
\end{proof}

\subsubsection*{Span categories and $\subuniverse$-monoids}

We now define monoids that admit summation along morphisms in $\subuniverse$. Restriction will be allowed along every morphism in $\subuniverse^{\full}$, including when $\subuniverse$ is not full. The relevant span category is provided by the following proposition.

\begin{proposition}
    Let $\subuniverse$ be a context-free subuniverse. Then $(\subuniverse^{\full},\subuniverse)$ is a $\vartopos$-span pair.
\end{proposition}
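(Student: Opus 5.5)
Unwinding \Cref{def:internal_ad_triplet} for the triplet $(\subuniverse^{\full},\subuniverse^{\full},\subuniverse)$, there are three things to check. First, $\subuniverse$ is a wide $\vartopos$-subcategory of $\subuniverse^{\full}$. Second, in each context $A$, pullbacks in $\subuniverse^{\full}(A)$ of morphisms of $\subuniverse(A)$ along arbitrary morphisms of $\subuniverse^{\full}(A)$ exist, and the base-changed morphism again lies in $\subuniverse(A)$. Third, the restriction functors $q^*$ for $q\colon A'\to A$ preserve these pullbacks and the two wide subcategories. Wideness is immediate, since $\subuniverse$ and $\subuniverse^{\full}$ have the same objects by definition and $\subuniverse$ is a $\vartopos$-subcategory of $\universe$. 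Compatibility with restriction is likewise automatic: $\subuniverse$ and $\subuniverse^{\full}$ are $\vartopos$-subcategories, and $q^*\colon\vartopos_{/A}\to\vartopos_{/A'}$ is pullback, which preserves pullbacks.

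The main step is the base change condition. Fix a context $A$, a morphism $f\colon C\to D$ in $\subuniverse(A)$, and an arbitrary morphism $g\colon D'\to D$ in $\subuniverse^{\full}(A)$, that is, a map over $A$ between objects of $\subuniverse(A)$. Form the pullback $C'=C\times_D D'$ in $\universe(A)=\vartopos_{/A}$. I will use independence of context (\Cref{cond:idependent_of_context}) twice.

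\begin{enumerate}
\item Since $D\in\subuniverse(A)$, the morphism $f$ lies in $\subuniverse(A)$ if and only if $[f\colon C\to D]\in\subuniverse(D)$.
\item The restriction $g^*\colon\universe(D)\to\universe(D')$ sends $[C\to D]$ to $[C'\to D']$. Because $\subuniverse$ is a $\vartopos$-subcategory, it is stable under restriction, so $[C'\to D']\in\subuniverse(D')$.
\item Since $D'\in\subuniverse(A)$, applying independence of context again shows that $f'\colon C'\to D'$ lies in $\subuniverse(A)$.
\end{enumerate}

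In particular, $C'\to D'\to A$ is an object of $\subuniverse(A)$. This is the closure of $\subuniverse^{\full}$ under $\subuniverse$-colimits shown in the preceding proposition, or simply the first direction of independence of context. So the pullback square lies in $\subuniverse^{\full}(A)$, and it is a pullback there because $\subuniverse^{\full}(A)\subset\universe(A)$ is full.

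The delicate point is step 2: one must be careful that restricting $\subuniverse$ along a map $g$ that need not be in $\subuniverse$ still lands in $\subuniverse$. Here we use that "context" means any object of $\vartopos$, together with the fact that $\subuniverse$ is a $\vartopos$-subcategory, so it is preserved by arbitrary restriction $g^*$ regarded as a map of contexts $D'\to D$. If one works instead with the correspondence to a local class $E$ from \Cref{prop:from_local_class_to_subuniverse}, this becomes simply the statement that $E$ is stable under base change.
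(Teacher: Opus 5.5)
Your proposal is correct and follows essentially the same argument as the paper. You use independence of context to view $f$ as an object of $\subuniverse(D)$, restrict along $g$ to land in $\subuniverse(D')$, apply independence of context again, and observe that restriction functors preserve pullbacks of slices; your remarks on wideness and on the pullback lying in $\subuniverse^{\full}(A)$ only make explicit what the paper leaves implicit.
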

\begin{proof}
    We must show that $\subuniverse(A)$ is stable under base change in $\subuniverse^{\full}(A)$ for every context $A$, and that restriction functors preserve these pullbacks.

    Let $f:C\to D$ belong to $\subuniverse(A)$, and let $g:B\to D$ belong to $\subuniverse^{\full}(A)$. By independence of context, $[f:C\to D]$ is an object of $\subuniverse(D)$. Hence
    \[
    [C\times_D B\to B]\simeq g^*([f:C\to D])\in\subuniverse(B).
    \]
    Applying independence of context again shows that $C\times_D B\to B$ belongs to $\subuniverse(A)$.

    For a morphism $p:B\to A$ in $\vartopos$, the restriction functor $p^*:\universe(A)\to\universe(B)$ preserves pullbacks. Its restriction to $\subuniverse^{\full}$ therefore preserves the pullbacks in question.
\end{proof}

We may thus form the span $\vartopos$-category $\spanEfE$. Recall that finite products in $\Span(\Fin)$ are given by finite coproducts in $\Fin$. The following is the corresponding internal statement.

\begin{proposition}\label{prop:limits_in_span_E}
    Let $\subuniverse$ be a context-free subuniverse. The $\vartopos$-category $\spanEfE$ is $\subuniverse^{\full}$-complete, and the inclusion
    \[
    (\subuniverse^{\full})^{\op}\into\spanEfE
    \]
    is $\subuniverse^{\full}$-continuous.
\end{proposition}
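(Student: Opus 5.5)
We need to show that $\spanEfE$ admits $\subuniverse^{\full}$-indexed limits, and that the inclusion of $(\subuniverse^{\full})^{\op}$ preserves them. Following Martini--Wolf, $\subuniverse^{\full}$-completeness of a $\vartopos$-category $\varintcat{D}$ amounts to two things. First, for every $[p:B\to A]\in\subuniverse(A)$, the restriction $p^*:\varintcat{D}(A)\to\varintcat{D}(B)$ has a right adjoint $p_*$. Second, these right adjoints satisfy the Beck--Chevalley condition along base change squares in $\vartopos$. Continuity of a functor then means it commutes with these right adjoints.

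The plan is to mirror the classical statement that finite products in $\Span(\Fin)$ are coproducts in $\Fin$, contextwise. In context $A$, the $\infty$-category $\spanEfE(B)$ is the span category of $\subuniverse^{\full}(B)\simeq(\text{objects }C\to B)$. The candidate right adjoint of $p^*$ is composition with $p$:
\[
p_\sharp\colon[C\to B]\mapsto[C\to B\to A].
\]
This lands in $\subuniverse(A)$ by independence of context. In $\universe$ itself, $p_\sharp$ is the left adjoint of $p^*$ on slices, which is the reason it should become a \emph{right} adjoint on spans. To realize this cleanly, I will use the $2$-functoriality of $\Span$ from \Cref{def:catsofspans:adjuncions}. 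It suffices to exhibit $p_\sharp\dashv p^*$ as an adjunction in $\adtriptwo^L$ between the span pairs $(\subuniverse^{\full}(B),\subuniverse(B))$ and $(\subuniverse^{\full}(A),\subuniverse(A))$. Applying $\Span\colon(\adtriptwo^L)^{\co}\to\twocatofcats$ then reverses the direction and yields $p^*\dashv p_\sharp$ on span categories, as desired.

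The first check is that both $p_\sharp$ and $p^*$ are functors of adequate triplets. For $p^*$ this is clear, since it preserves pullbacks. For $p_\sharp$ it follows because pullbacks in a slice over $B$ are computed in $\vartopos$ and are thus preserved by postcomposition, and because $p_\sharp$ preserves membership in $E$ for morphisms over $B$ (that class does not depend on the base). The second check is that the unit and counit are left-pointing transformations. The unit $C\to C\times_A B$ and the counit $D\times_A B\to D$ have components in $\subuniverse^{\full}$, which is automatic since every morphism is allowed on the left. Their naturality squares against morphisms of $\subuniverse$ must be cartesian, and this holds by pasting of pullbacks for the counit. For the unit, the relevant square with $C\to C'$ over $B$ is cartesian precisely because $C\to C\times_A B$ is the graph map and $B\to B\times_AB$-type pastings hold. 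I expect this unit cartesianness to be the main obstacle, together with the bookkeeping of $\co$ and of direction conventions. If the unit is not cartesian in general, I would instead verify the adjunction directly via mapping spaces. Concretely, I would compute $\mathrm{Map}_{\Span}(p^*X,[C\to B])$ as spaces of spans $X\times_AB\leftarrow S\to C$ over $B$, and identify them with spans $X\leftarrow S\to C$ over $A$ using the universal property of the pullback.

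Next, Beck--Chevalley for a pullback of contexts $A'\to A$ reduces to the fact that composition commutes with base change in $\vartopos$, i.e.\ $(C\to B\to A)\times_AA'\simeq C\times_AA'\to B'\to A'$. Finally, for continuity of $(\subuniverse^{\full})^{\op}\into\spanEfE$: limits in $(\subuniverse^{\full})^{\op}$ are colimits in $\subuniverse^{\full}$, which by right regularity are also given by postcomposition $p_\sharp$. The inclusion sends the left adjoint $p_\sharp$ of $p^*$ to the right adjoint just constructed, with the mate comparison being the identity. Hence the inclusion is $\subuniverse^{\full}$-continuous.
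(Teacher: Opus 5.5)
Your proposal is correct and is essentially the paper's proof: you realize $p_\sharp\dashv p^*$ as an adjunction in $\adtriptwo^L$, checking that $p_\sharp$ is a functor of adequate triplets by independence of context and that the unit and counit have cartesian naturality squares. You then apply $\Span$ to obtain $p^*\dashv\Span(p_\sharp)$, whose restriction to $(\subuniverse^{\full})^{\op}$ is the colimit functor $p_\sharp$ of $\subuniverse^{\full}$. The unit square you flagged is indeed cartesian: paste it with the cartesian square $C\times_A B\to C$ over $C'\times_A B\to C'$, observe that the composite is the identity square, and apply the pasting lemma. So your mapping-space fallback is not needed.
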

\begin{proof}
    We argue as in \cite[Proposition 4.5]{CLLambi}. For $[p:B\to A]\in\subuniverse(A)$, we verify that the adjunction
    \[
    p_\sharp:\subuniverse^{\full}(B)\fromto\subuniverse^{\full}(A):p^*
    \]
    is an adjunction in $\adtriptwo^L$ (see \Cref{def:catsofspans:adjuncions}). The restriction functor $p^*$ is already a functor of adequate triplets. We must check this for $p_\sharp$ and verify that the unit and counit are left-pointing natural transformations of functors of adequate triplets.

    By independence of context, $p_\sharp$ restricts to a functor $\subuniverse(B)\to\subuniverse(A)$. Moreover, $p_\sharp:\universe(B)\to\universe(A)$ preserves pullbacks, since it is the forgetful functor between slice $\infty$-categories. Thus $p_\sharp$ is a functor of adequate triplets.

    All morphisms in $\subuniverse^{\full}$ are left-pointing, so the components of the unit and counit are left-pointing. To check the remaining condition, we show that their naturality squares are cartesian. In fact, this holds for all morphisms $C\to C'$ in $\subuniverse^{\full}(A)$ and $D\to D'$ in $\subuniverse^{\full}(B)$:
    \[
    \begin{tikzcd}
        p_\sharp p^*C\ar[r]\ar[d] & C\ar[d]
        &D\ar[r]\ar[d] & p^*p_\sharp D\ar[d]\\
        p_\sharp p^*C'\ar[r] & C'
        &D'\ar[r] & p^*p_\sharp D'.
    \end{tikzcd}
    \]
    We check the counit square; the unit square is analogous. Under the inclusion into $\universe(A)\simeq\vartopos_{/A}$, the identification
    \[
    p_\sharp p^*[C\to A]\simeq[B\times_A C\to A]
    \]
    identifies it with the cartesian square
    \[
    \begin{tikzcd}
        B\times_A C\ar[r]\ar[d] & C\ar[d]\\
        B\times_A C'\ar[r] & C'.
    \end{tikzcd}
    \]
    For the unit square, one similarly uses $p^*p_\sharp[D\to B]\simeq[B\times_A D\to B]$.

    The span construction reverses this adjunction, making $\Span(p_\sharp)$ right adjoint to restriction. Its restriction to $(\subuniverse^{\full})^{\op}$ is the corresponding limit functor, proving both assertions.
\end{proof}

\begin{warning}
    If $\subuniverse$ is not full, $\spanEfE$ need not be $\subuniverse^{\full}$-cocomplete. The preceding adjunction need not lie in $\adtriptwo^R$: the unit maps $D\to p^*p_\sharp D$ need not belong to $\subuniverse$.
\end{warning}

Preservation of these limits can be checked after restriction to $(\subuniverse^{\full})^{\op}$.

\begin{proposition}\label{prop:limits_in_span_E_are_computed_in_E_op}
    Let $\subuniverse$ be a context-free subuniverse, and let $\varintcat{C}$ be an $\subuniverse^{\full}$-complete $\vartopos$-category. A $\vartopos$-functor $F:\spanEfE\to\varintcat{C}$ is $\subuniverse^{\full}$-continuous if and only if the composite
    \[
    (\subuniverse^{\full})^{\op}\into\spanEfE\xto{F}\varintcat{C}
    \]
    is $\subuniverse^{\full}$-continuous.
\end{proposition}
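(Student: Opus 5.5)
The forward direction is immediate: by \Cref{prop:limits_in_span_E} the inclusion $(\subuniverse^{\full})^{\op}\into\spanEfE$ is $\subuniverse^{\full}$-continuous, so if $F$ is $\subuniverse^{\full}$-continuous then so is the composite. The content is the converse. I would reduce it to the statement that the $\subuniverse^{\full}$-indexed limit cones in $\spanEfE$ are exactly the images of the corresponding limit cones in $(\subuniverse^{\full})^{\op}$. Then preservation of limits by $F$ can be tested on cones that come from $(\subuniverse^{\full})^{\op}$.

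To make this precise, I would use the description of $\subuniverse^{\full}$-continuity in the internal setting (Martini--Wolf). A functor between $\subuniverse^{\full}$-complete $\vartopos$-categories is $\subuniverse^{\full}$-continuous if and only if, for every context $A$ and every $[p:B\to A]\in\subuniverse(A)$, it commutes with the right adjoints $p_*$ of the restrictions $p^*$. Concretely, the Beck--Chevalley map $F\circ p_*\to p_*\circ F$ must be an equivalence.

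\begin{enumerate}
\item In the proof of \Cref{prop:limits_in_span_E}, the right adjoint $p_*$ on $\spanEfE(B)\to\spanEfE(A)$ is $\Span(p_\sharp)$. On objects, it sends $[D\to B]$ to $[D\to B\to A]$, which is the same as the value of $p_*$ on $(\subuniverse^{\full})^{\op}$.
\item The counit $p^*p_*D\to D$ is represented by the backward span $D\leftarrow B\times_A D = B\times_A D$. Hence the counit is the image of a morphism of $(\subuniverse^{\full})^{\op}$, namely the counit for the adjunction there.
\item The Beck--Chevalley map for $F$ at an object $D$ is built from $F$ applied to this counit, followed by adjunction in $\varintcat{C}$. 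It therefore coincides with the Beck--Chevalley map of the composite $(\subuniverse^{\full})^{\op}\to\varintcat{C}$ at $D$.
\item By hypothesis, the composite's Beck--Chevalley map is an equivalence, so $F$'s is too.
\end{enumerate}

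The objects of $\spanEfE$ and of $(\subuniverse^{\full})^{\op}$ coincide in every context, so checking these maps objectwise suffices. Naturality in $D$ along right-pointing maps is automatic, because the Beck--Chevalley transformation is natural and equivalences are detected objectwise.

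The main obstacle is the identification of units and counits in steps 1 and 2. One must be certain that the adjunction $p^*\dashv p_*$ in $\spanEfE$ obtained from the $2$-functor $\Span\colon(\adtriptwo^L)^{\co}\to\twocatofcats$ restricts along the inclusion to the adjunction in $(\subuniverse^{\full})^{\op}$. It must also be compatible with the internal structure across contexts.

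I would handle this with the adjunction $p_\sharp\dashv p^*$ in $\adtriptwo^L$ constructed in \Cref{prop:limits_in_span_E}. Its unit and counit are left-pointing transformations, so they lie in the image of the inclusion $\subuniverse^{\full}\to(\subuniverse^{\full},\subuniverse)$ viewed as backward maps. The inclusion $(\subuniverse^{\full})^{\op}\into\spanEfE$ is induced by the map of adequate triplets $(\subuniverse^{\full},\subuniverse^{\full},\simeq)\to(\subuniverse^{\full},\subuniverse^{\full},\subuniverse)$, and this map is natural in $2$-morphisms of $\adtriptwo^L$. The adjunction is therefore transported compatibly, which gives the required identification.
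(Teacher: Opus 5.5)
Your argument is correct and is essentially the paper's. Because the indexing shapes are $\vartopos$-groupoids, every $B$-shaped diagram in $\spanEfE$ is an object that already lies in $(\subuniverse^{\full})^{\op}$, and the continuity of the inclusion from \Cref{prop:limits_in_span_E} transports its limit; the paper phrases this with lifted limit cones, while you phrase it with Beck--Chevalley mates. Your explicit identification of the counit is not needed, since pasting the mate of the inclusion (an equivalence by \Cref{prop:limits_in_span_E}) with that of $F$ already gives it; as written it also points the wrong way, and the counit $p^*p_*D\to D$ is the backward span $B\times_A D\leftarrow D = D$, the image of the unit $D\to p^*p_\sharp D$.
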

\begin{proof}
    The forward implication follows from \Cref{prop:limits_in_span_E}.

    Conversely, suppose that the composite is $\subuniverse^{\full}$-continuous. Let $B\in\subuniverse^{\full}(A)$, and let $d:B\to\pi_A^*\spanEfE$ be a diagram in context $A$. The inclusion $(\subuniverse^{\full})^{\op}\into\spanEfE$ is an equivalence on underlying $\vartopos$-groupoids, so $d$ lifts to a diagram
    \[
    \widetilde{d}:B\to\pi_A^*(\subuniverse^{\full})^{\op}.
    \]
    Both the inclusion and its composite with $F$ preserve the limit of $\widetilde{d}$. Hence $F$ preserves the limit of $d$.
\end{proof}

The following definition is due to \cite{CLLambi} when $\subuniverse$ is full.

\begin{definition}\label{def:E_monoids}
    Let $\varintcat{C}$ be an $\subuniverse^{\full}$-complete $\vartopos$-category. An \emph{$\subuniverse$-monoid} in $\varintcat{C}$ in context $A$ is an $\subuniverse^{\full}$-continuous $\vartopos$-functor
    \[
    \spanEfE\to\varintcat{C}
    \]
    defined in context $A$.

    The $\infty$-category of $\subuniverse$-monoids in $\varintcat{C}$ is
    \[
    \cmon^{\subuniverse}(\varintcat{C})
    :=\Fun^{\subuniverse\text{-}\sqcap}(\spanEfE,\varintcat{C}),
    \]
    and the $\vartopos$-category of $\subuniverse$-monoids in $\varintcat{C}$ is
    \[
    \intsmon(\varintcat{C})
    :=\intfun^{\subuniverse\text{-}\sqcap}(\spanEfE,\varintcat{C}).
    \]
    Thus an object of $\intsmon(\varintcat{C})(A)$ is an $\subuniverse$-monoid in context $A$.

    When $\varintcat{C}$ is omitted, it is understood to be $\universe$. When the context is omitted, it is understood to be $A=*$.
\end{definition}

\begin{example}\label{example:Fin_monoids}
    Let $\vartopos=\catofanima$, and let $\varintcat{Fin}\subset\universe$ be the context-free subuniverse of finite covering maps from \Cref{example:Fin_in_Ani}. A $\varintcat{Fin}$-monoid in $\varintcat{C}$ is the same as a commutative monoid in $\varintcat{C}(*)$. Indeed, global sections identify the $\infty$-category of $\catofanima$-categories with $\catofcats$.
\end{example}

We next generalize the familiar description of $\Fin^{\simeq}$ as the free commutative monoid on one generator. For an $\subuniverse$-monoid $F:\spanEfE\to\universe$, evaluation at the distinguished point of $\spanEfE$ gives its underlying object $F(*)$. This defines the forgetful $\vartopos$-functor
\[
\intsmon(\universe)\to\universe,
\qquad F\mapsto F(*).
\]

\begin{proposition}\label{example:free_E_monoid}
    The $\vartopos$-groupoid $\subuniverse^{\simeq}$ carries a natural $\subuniverse$-monoid structure. With this structure, it is the free $\subuniverse$-monoid on one generator: it corepresents the forgetful $\vartopos$-functor $\intsmon(\universe)\to\universe$.
\end{proposition}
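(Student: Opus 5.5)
The plan is to obtain both assertions from the internal Yoneda lemma applied to the corepresentable functor, rather than by building the monoid structure by hand. Consider the $\vartopos$-functor
\[
\intmap_{\spanEfE}(*,-)\colon\spanEfE\to\universe,
\]
corepresented by the distinguished point $*\in\spanEfE$. Its value at $[B\to A]$ is the $\vartopos_{/A}$-groupoid of spans $*\leftarrow C\to B$ with $C\to B$ in $\subuniverse$. By independence of context, this is the core of $\subuniverse(B)$, in the same way that the space of spans $*\leftarrow C\to X$ in $\Span(\Fin)$ is $\Fin_{/X}^{\simeq}$. At $B=*$ the value is $\subuniverse^{\simeq}$.

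The first step is to check that this functor is an $\subuniverse$-monoid, that is, that it is $\subuniverse^{\full}$-continuous. By \Cref{prop:limits_in_span_E_are_computed_in_E_op}, it suffices to check this after restricting to $(\subuniverse^{\full})^{\op}$. There the restricted functor sends $B$ to $\subuniverse(B)^{\simeq}$, with restriction functoriality. It is $\subuniverse^{\full}$-continuous because $\subuniverse$ is a sheaf of categories on $\vartopos$: a $B$-shaped limit in context $A$ of the constant diagram computes sections over $B$. Here one uses independence of context to identify $\subuniverse(B)$ computed in context $A$ with the sheaf value, together with descent for $\subuniverse$, which holds since it is a $\vartopos$-category. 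More concretely, limits in $\spanEfE$ are computed by $p_\sharp$, as in \Cref{prop:limits_in_span_E}, and $\intmap(*,-)$ takes right adjoints in the internal sense to limits. Indeed, the corepresentable functor preserves all limits that exist in $\spanEfE$, by the internal Yoneda lemma of Martini, and this is the cleanest argument. This gives the $\subuniverse$-monoid structure on $\subuniverse^{\simeq}$.

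The second step is corepresentability. For any $\subuniverse$-monoid $F$ in context $A$, the internal Yoneda lemma gives a natural equivalence
\[
\intmap_{\intfun(\spanEfE,\universe)}\bigl(\intmap(*,-),F\bigr)\simeq F(*).
\]
Since $\intsmon(\universe)$ is a full $\vartopos$-subcategory of $\intfun(\spanEfE,\universe)$ and contains $\intmap(*,-)$ by the first step, this is the required corepresentation of the forgetful functor $F\mapsto F(*)$. The universal element is $\id_*$, which corresponds to the point $[*\to *]\in\subuniverse^{\simeq}$; this point exists by pointedness.

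The main obstacle is the identification of the value at $[B\to A]$ with $\subuniverse(B)^{\simeq}$, naturally in spans. This means one has to identify internal mapping groupoids of $\intspan$ contextwise, and check compatibility with base change in $\vartopos$. I would handle it by computing mapping spaces in $\Span(\subuniverse^{\full}(A'),\subuniverse(A'))$ for each $A'\to A$, using the standard description of mapping spaces in span categories. Independence of context then gives the comparison, and stability under pullback gives naturality.
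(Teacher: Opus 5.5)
Your proposal is correct and follows the paper's proof: the corepresentable functor $\intmap(*,-)\colon\spanEfE\to\universe$ preserves all existing limits, so it is an $\subuniverse$-monoid with underlying object $\intmap(*,*)\simeq\subuniverse^{\simeq}$. The internal Yoneda lemma, together with fullness of $\intsmon(\universe)\subset\intfun(\spanEfE,\universe)$, then gives the corepresentation; your alternative continuity arguments and the general computation at $[B\to A]$ that you flag as the main obstacle are not needed, since only the value at $*$ enters.
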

\begin{proof}
    The representable $\vartopos$-functor
    \[
    \intmap(*,-):\spanEfE\to\universe
    \]
    preserves limits and hence defines an $\subuniverse$-monoid. Its underlying object is $\intmap(*,*)\simeq\subuniverse^{\simeq}$.

    By the internal Yoneda lemma, for every $\vartopos$-functor $F:\spanEfE\to\universe$, there is a natural equivalence of $\vartopos$-anima
    \[
    \intmap(\intmap(*,-),F)\simeq F(*).
    \]
    Since $\intsmon(\universe)$ is a full $\vartopos$-subcategory of $\intfun(\spanEfE,\universe)$, this proves the universal property.
\end{proof}

We will also use the following presentability result.

\begin{proposition}\label{prop:cmon_is_presentable}
    The $\vartopos$-category $\intsmon(\universe)$ is $\vartopos$-presentable.
\end{proposition}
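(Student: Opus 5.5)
The plan is to exhibit $\intsmon(\universe)$ as an accessible reflective localization of a presentable $\vartopos$-category. First, $\spanEfE$ is a small $\vartopos$-category: its underlying $\vartopos$-groupoid agrees with that of $(\subuniverse^{\full})^{\op}$, and $\subuniverse^{\full}$ is small by the smallness condition of \Cref{def:storngly_regular_universe}. Hence the internal functor category $\intfun(\spanEfE,\universe)\simeq\intpresh(\spanEfE^{\op})$ is $\vartopos$-presentable by Martini--Wolf \cite{MWPresentabilityAndTopoi}, which identifies $\vartopos$-presentable categories with localizations of internal presheaf categories on small $\vartopos$-categories.

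Next we describe $\intsmon(\universe)$ as the full $\vartopos$-subcategory of functors that are local with respect to a small family of maps. By \Cref{prop:limits_in_span_E_are_computed_in_E_op}, a functor $F$ is an $\subuniverse$-monoid if and only if its restriction to $(\subuniverse^{\full})^{\op}$ preserves $\subuniverse^{\full}$-limits. Concretely, for each context $A$, each $[p\colon B\to A]\in\subuniverse(A)$ and each object $[C\to B]\in\subuniverse^{\full}(B)$, the canonical map
\[
F(p_\sharp C)\to p_*p^*F(C)
\]
must be an equivalence, where the right-hand side is the $B$-indexed limit in $\universe$. The $\vartopos$-Yoneda lemma identifies this map with the map
\[
\intmap\bigl(\yo(p_\sharp C),F\bigr)\to\intmap\bigl(p_\sharp\yo(C),F\bigr)
\]
induced by a map of internal presheaves $s_{p,C}\colon p_\sharp\yo(C)\to\yo(p_\sharp C)$, formed in context $A$. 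Since these maps are parametrized by the small object of pairs $(p,C)$, they form a small internal family $S$. The $\subuniverse$-monoids are therefore exactly the $S$-local objects, and we invoke the internal localization theorem of Martini--Wolf: the $S$-local objects of a presentable $\vartopos$-category form a presentable reflective subcategory.

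The main obstacle is bookkeeping rather than any single hard step. First, $S$ must be a genuinely internal family, compatible with base change in the context, so that locality is checked contextwise and is stable under $q^*$. Second, locality must be internal, phrased through $\intmap$ rather than merely through $\operatorname{Map}$ in each fibre. The naturality of the limit cone under restriction handles the first point; the second follows because $\intmap$ is computed by mapping spaces after all base changes.

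A fallback argument is available: $\intsmon(\universe)$ is closed under internal limits and under $\kappa$-filtered internal colimits for large $\kappa$, since $\subuniverse^{\full}$-limits in $\universe$ commute with such colimits by smallness. Together with accessibility, this gives presentability via the internal adjoint functor theorem.
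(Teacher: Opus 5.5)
Your proposal is correct and follows essentially the same route as the paper: exhibit $\intsmon(\universe)$ as the internally local objects of $\intfun(\spanEfE,\universe)$ with respect to the small family of comparison maps $p_\sharp\yo(C)\to\yo(p_\sharp C)$ (the paper's $p_\sharp\yo(B)(x)\to\yo(A)(p_*x)$). Via the internal Yoneda lemma these maps correspond to the limit comparisons $F(p_*x)\to p_*F(x)$, and one then invokes Martini--Wolf's localization results. One small slip: the target of the comparison map should be $p_*F(C)$, not $p_*p^*F(C)$, since $F(C)$ already lives in context $B$.
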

\begin{proof}
    We show that
    \[
    \intsmon(\universe)\subset\intfun(\spanEfE,\universe)
    \]
    is a Bousfield localization at a small $\vartopos$-category of morphisms. This suffices by \cite[Corollary 2.4.2.9 and Theorem 2.4.2.5]{MWPresentabilityAndTopoi}.

    Let $F:\spanEfE\to\universe$ be a $\vartopos$-functor. Choose $[C\to B]\in\subuniverse(B)$ and $[p:B\to A]\in\subuniverse(A)$, and let $x$ denote $[C\to B]$, viewed as a $B$-shaped diagram in $\spanEfE$ in context $A$. By \Cref{prop:limits_in_span_E}, its limit $p_*x$ is $[C\to A]$.

    Write
    \[
    \yo(A):\spanEfE^{\op}(A)\into\intfun(\spanEfE,\universe)(A)
    \]
    for the internal Yoneda embedding in context $A$. Thus $\yo(A)(D)$ is the internally representable functor associated to $D\in\spanEfE(A)$. By the internal Yoneda lemma and the adjunction for $p_\sharp$, we have
    \[
    F(p_*x)\simeq\intmap(\yo(A)(p_*x),F),
    \qquad
    p_*F(x)\simeq\intmap(p_\sharp\yo(B)(x),F),
    \]
    where the mapping anima are formed in context $A$. The comparison map $F(p_*x)\to p_*F(x)$ is obtained by applying $\intmap(-,F)$ to the canonical morphism
    \[
    p_\sharp\yo(B)(x)\to\yo(A)(p_*x).
    \]

    Consequently, $\intsmon(\universe)$ is the full $\vartopos$-subcategory of objects that are Bousfield local with respect to these morphisms. They generate a small $\vartopos$-category: they lie in the small $\vartopos$-subcategory generated from representables under $\subuniverse^{\full}$-colimits.
\end{proof}

\subsubsection*{Extra functoriality in the complete case}

For complete $\vartopos$-categories, we can describe $\subuniverse$-monoids using a larger span category. We begin by extending the permitted summation maps to a wide $\vartopos$-subcategory of $\universe$.

\begin{definition}
    Let $\subuniverse$ be a context-free subuniverse. Define $\universe_{\subuniverse}\subset\universe$ to be the wide $\vartopos$-subcategory whose morphisms in context $A$ are maps $C\to B$ over $A$ such that $[C\to B]\in\subuniverse(B)$.
\end{definition}

The resulting span category has the same limit properties as the one considered above, now for all indexing objects in $\universe$.

\begin{lemma}\label{prop:groupoid_limits_span_universe_E}
    The pair $(\universe,\universe_{\subuniverse})$ is a $\vartopos$-span pair. The $\vartopos$-category $\intspan(\universe,\universe_{\subuniverse})$ is $\universe$-complete, and the inclusion
    \[
    \universe^{\op}\into\intspan(\universe,\universe_{\subuniverse})
    \]
    is $\universe$-continuous.

    Moreover, for a $\vartopos$-category $\varintcat{C}$, a $\vartopos$-functor $F:\intspan(\universe,\universe_{\subuniverse})\to\varintcat{C}$ is $\universe$-continuous if and only if the composite
    \[
    \universe^{\op}\into\intspan(\universe,\universe_{\subuniverse})\xto{F}\varintcat{C}
    \]
    is $\universe$-continuous.
\end{lemma}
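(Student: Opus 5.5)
We follow the proof of \Cref{prop:limits_in_span_E} and \Cref{prop:limits_in_span_E_are_computed_in_E_op} almost verbatim, replacing $(\subuniverse^{\full},\subuniverse)$ by $(\universe,\universe_{\subuniverse})$ and the indexing objects in $\subuniverse^{\full}$ by arbitrary objects of $\universe$.

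\emph{Span pair.} We first check that $\universe_{\subuniverse}$ is a wide $\vartopos$-subcategory. Identities lie in it by pointedness. For closure under composition, take $D\xto{g}C\xto{f}B$ over $A$ with $[f]\in\subuniverse(B)$ and $[g]\in\subuniverse(C)$. By independence of context, $g$ is a morphism of $\subuniverse(B)$, so the composite is a morphism of $\subuniverse(B)$, i.e.\ an object of $\subuniverse(B)$. Stability under restriction holds because $\subuniverse$ is a $\vartopos$-subcategory.

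For base change, let $f:C\to B$ be in $\universe_{\subuniverse}(A)$ and let $g:B'\to B$ be arbitrary. Then $[C\times_B B'\to B']=g^*[f]$ lies in $\subuniverse(B')$ because $\subuniverse$ is stable under restriction along $g$. Restriction functors preserve these pullbacks since $p^*:\vartopos_{/A}\to\vartopos_{/B}$ does.

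\emph{Completeness.} Let $p:B\to A$ be arbitrary in $\vartopos$, so $[B\to A]\in\universe(A)$. We show that
\[
p_\sharp:\universe(B)\fromto\universe(A):p^*
\]
is an adjunction in $\adtriptwo^L$ for the triplets $(\universe,\universe,\universe_{\subuniverse})$. Three things need checking.
\begin{itemize}
\item $p_\sharp$ preserves $\universe_{\subuniverse}$. A map $C\to D$ over $B$ with $[C\to D]\in\subuniverse(D)$ is still such a map when viewed over $A$, so this is a tautology: membership is measured in context $D$, independent of the base.
\item $p_\sharp$, being the forgetful functor on slices, preserves pullbacks.
\item The unit and counit naturality squares are cartesian for all morphisms. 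This is the same computation as in \Cref{prop:limits_in_span_E}, using $p_\sharp p^*C\simeq B\times_A C$ and $p^*p_\sharp D\simeq B\times_A D$.
\end{itemize}
Applying $\Span:(\adtriptwo^L)^{\co}\to\twocatofcats$ makes $\Span(p_\sharp)$ right adjoint to $p^*$ contextwise. Beck--Chevalley compatibility along base change in $\vartopos$ follows because $\Span$ is a $2$-functor and the relevant squares of slice categories are already adjointable. This gives $\universe$-completeness, with limits computed by $p_\sharp$. Since on $\universe^{\op}$ these are the limits of $\universe^{\op}$, i.e.\ the $\universe$-colimits of $\universe$, the inclusion is $\universe$-continuous.

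\emph{Detecting continuity.} Use the groupoid argument of \Cref{prop:limits_in_span_E_are_computed_in_E_op}. The inclusion $\universe^{\op}\into\intspan(\universe,\universe_{\subuniverse})$ is an equivalence on cores, and each indexing object $B$ is a $\vartopos$-groupoid. Hence every diagram $B\to\pi_A^*\intspan(\universe,\universe_{\subuniverse})$ lifts to $\universe^{\op}$. Both the inclusion and $F\circ\text{inclusion}$ preserve its limit, so $F$ does. The forward implication is immediate from the first part.

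The main obstacle is making sure the adjunction is genuinely internal, i.e.\ compatible with restriction along maps of contexts. The plan is to deduce this from the contextwise statement together with the fact that $p^*$ commutes with $p_\sharp$ base-changed along maps of contexts, since $\vartopos$ has universal colimits. There is also a size issue: $\universe$ is large, so completeness is meant in the large sense, but nothing in the argument uses smallness.
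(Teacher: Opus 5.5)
Your proposal is correct and follows essentially the same approach as the paper, whose proof simply says that the arguments of \Cref{prop:limits_in_span_E} and \Cref{prop:limits_in_span_E_are_computed_in_E_op} carry over. You carry those arguments out for $(\universe,\universe,\universe_{\subuniverse})$, and you also make explicit the Beck--Chevalley compatibility across contexts (via $\Span$ being a $2$-functor on $(\adtriptwo^L)^{\co}$), which the paper leaves implicit.
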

\begin{proof}
    The argument is the same as in \Cref{prop:limits_in_span_E,prop:limits_in_span_E_are_computed_in_E_op}.
\end{proof}

We can now give the promised formula for $\subuniverse$-monoids in a complete $\vartopos$-category.

\begin{proposition}\label{prop:extra_back_functoriality_for_E_monoidal_cats}
    Let $\varintcat{C}$ be a complete $\vartopos$-category. Restriction induces an equivalence of $\vartopos$-categories
    \[
    \intfun^{\universe\text{-}\sqcap}(\intspan(\universe,\universe_{\subuniverse}),\varintcat{C})
    \simeq\intfun^{\subuniverse\text{-}\sqcap}(\spanEfE,\varintcat{C})
    =\intsmon(\varintcat{C}),
    \]
    with inverse given by right Kan extension.
\end{proposition}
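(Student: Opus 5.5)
The plan is to realize $\spanEfE$ as a full $\vartopos$-subcategory of $\intspan(\universe,\universe_{\subuniverse})$ and apply the internal theory of right Kan extensions. First I will check that the inclusion of span pairs $(\subuniverse^{\full},\subuniverse)\into(\universe,\universe_{\subuniverse})$ is a morphism of internal adequate triplets. By independence of context, a morphism of $\subuniverse(A)$ between objects of $\subuniverse^{\full}(A)$ is exactly a morphism of $\universe_{\subuniverse}(A)$ between such objects, and pullbacks in both are computed in $\vartopos_{/A}$. Hence
\[
j\colon\spanEfE\into\intspan(\universe,\universe_{\subuniverse})
\]
is a fully faithful $\vartopos$-functor. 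By \Cref{prop:limits_in_span_E} and \Cref{prop:groupoid_limits_span_universe_E}, $j$ preserves $\subuniverse^{\full}$-limits, since both are computed by $p_\sharp$ on the underlying objects of $\universe$.

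Next I will show that every object of $\intspan(\universe,\universe_{\subuniverse})$ is a $\universe$-limit of objects of $\spanEfE$. An object $[B\to A]$ in context $A$ is the limit, indexed by $B$, of the constant diagram at the point. This is the span analogue of \Cref{prop:groupoid_limits_span_universe_E}: the limit of the constant $B$-diagram at $*$ is $p_\sharp p^*(*)=[B\to A]$. Since $*\in\subuniverse^{\full}$ by pointedness, the essential image of $j$ generates everything under $\universe$-limits, with the point alone sufficing.

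For the equivalence itself, I will consider a complete $\varintcat{C}$ and the right Kan extension $j_*$ along $j$, which exists because $\varintcat{C}$ is complete and $\spanEfE$ is small. Since $j$ is fully faithful, $j^*j_*\simeq\id$. It then suffices to prove two things. First, $j_*G$ is $\universe$-continuous whenever $G$ is $\subuniverse^{\full}$-continuous. Second, a $\universe$-continuous $F$ is right Kan extended from $\spanEfE$. The second follows from the previous paragraph: $F([B\to A])\simeq p_*p^*F(*)$, and by the internal Yoneda lemma this is the pointwise Kan extension formula evaluated through the cofinal subdiagram given by the point.

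For the first claim, I will again use the generation statement. By the pointwise formula, $(j_*G)([B\to A])\simeq p_*p^*G(*)$, so $j_*G$ restricted to $\universe^{\op}$ is the functor $[B\to A]\mapsto p_*p^*G(*)$. This is manifestly $\universe$-continuous, by composition of right adjoints $p_*$ and their Beck--Chevalley compatibility in the complete $\varintcat{C}$. By the criterion in \Cref{prop:groupoid_limits_span_universe_E}, $j_*G$ is then $\universe$-continuous.

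I expect the main obstacle to be the identification of the pointwise Kan extension with $p_*p^*G(*)$. This amounts to showing that the comma $\vartopos$-category of $\spanEfE$ under $[B\to A]$ has $B$, via the point, as an initial subdiagram. Spans out of $[B\to A]$ to objects of $\subuniverse^{\full}$ are not merely maps from $B$. My first approach is to avoid the comma category entirely by a Yoneda argument. Using \Cref{prop:S_is_free_with_S_colmitis}-style reasoning for the limit-completion, I will check that both functors agree on the point and preserve the relevant limits. I will then conclude by the universal property that $\universe^{\op}$ is the free $\universe$-completion of $*$. Once $j_*G$ agrees with the limit-preserving extension on $\universe^{\op}$, agreement on spans follows, because the restriction in \Cref{prop:groupoid_limits_span_universe_E} detects continuity.
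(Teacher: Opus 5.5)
The reduction to your two claims and the full faithfulness of $j$ are fine. The problem is the step that carries all the content of the proposition, the identification $(j_*G)([B\to A])\simeq p_*p^*G(*)$. It is not established, and the mechanism you propose for it is false: the points of $B$, i.e.\ the spans $B\xfrom{b}*=*$, do not form an initial subdiagram of the comma $\vartopos$-category of $\spanEfE$ under $[B\to A]$.

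Already for $\vartopos=\catofanima$ with finite coverings, consider the object $B\from\{z_1,z_2\}\to *$ with $z_1,z_2$ in different components of $B$. A morphism to it from $B\xfrom{b}*=*$ would be a span $*\from W\to *$ of finite sets whose composite with $B\xfrom{b}*=*$ is equivalent to $B\from\{z_1,z_2\}\to *$. That composite is $B\from W\to *$ with $W\to B$ constant at $b$, so no such span exists, and the slice that initiality requires to be contractible is empty. The limit over the comma category reduces to $p_*p^*G(*)$ only because $G$ itself preserves $\subuniverse^{\full}$-limits, not because the point is cofinal.

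Your fallback Yoneda argument is circular. To invoke the universal property of $\universe^{\op}$ as the free $\universe$-completion of $*$, you must already know that $j_*G|_{\universe^{\op}}$ is $\universe$-continuous, and that is your first claim. Your justification of the second claim relies on the same false cofinality.

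The missing idea is the backward/forward factorization of morphisms in span categories. Combine it with the closure property you already verified for full faithfulness: if $Y\in\spanEfE(A)$ and $Z\rightarrowtail Y$ is forward-facing in $\intspan(\universe,\universe_{\subuniverse})(A)$, then $Z\in\spanEfE(A)$. The argument then runs as follows, for $X=[B\to A]$.
\begin{enumerate}
\item Consider the full subcategory of the comma category on the purely backward spans $X\from Z=Z$. Its inclusion has a right adjoint, sending $X\from Z\rightarrowtail Y$ to its backward part $X\from Z=Z$. This is well defined by the closure property and terminal by uniqueness of factorizations, so the subcategory is initial.
\item That subcategory is the indexing category for right Kan extension along $(\subuniverse^{\full})^{\op}\into\universe^{\op}$. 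Hence $j_*G|_{\universe^{\op}}$ is the right Kan extension of $G|_{(\subuniverse^{\full})^{\op}}$.
\item By $\subuniverse^{\full}$-continuity and the dual of \Cref{prop:S_is_free_with_S_colmitis}, $G|_{(\subuniverse^{\full})^{\op}}$ is itself right Kan extended from the point. Transitivity of Kan extensions then gives $p_*p^*G(*)$, which proves your first claim.
\item The second claim follows because $F$ and $j_*j^*F$ are both $\universe$-continuous and the unit $F\to j_*j^*F$ is an equivalence on $\spanEfE$.
\end{enumerate}
This is exactly what the paper obtains by citing \cite[Proposition 5.18]{CLLambi}. There, the same closure condition makes the inclusion of opposites a good inclusion compatible with the factorization systems.
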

\begin{proof}
    We follow the proof of \cite[Lemma 5.21]{CLLambi}. Restriction along the fully faithful inclusion $\spanEfE\into\intspan(\universe,\universe_{\subuniverse})$ gives a $\vartopos$-functor
    \[
    F:\intfun(\intspan(\universe,\universe_{\subuniverse}),\varintcat{C})
    \to\intfun(\spanEfE,\varintcat{C}).
    \]
    By \Cref{prop:limits_in_span_E_are_computed_in_E_op,prop:groupoid_limits_span_universe_E}, this restricts to
    \[
    G:\intfun^{\universe\text{-}\sqcap}(\intspan(\universe,\universe_{\subuniverse}),\varintcat{C})
    \to\intfun^{\subuniverse\text{-}\sqcap}(\spanEfE,\varintcat{C}).
    \]
    Indeed, a $\vartopos$-functor out of $\intspan(\universe,\universe_{\subuniverse})$ is $\universe$-continuous if and only if its restriction to $\universe^{\op}$ is right Kan extended from the point. Its restriction to $(\subuniverse^{\full})^{\op}$ is then also right Kan extended from the point.

    To show that $G$ is an equivalence, we apply \cite[Proposition 5.18]{CLLambi}. Both span categories have factorization systems given by backward-facing and forward-facing morphisms, as discussed after \cite[Convention 5.19]{CLLambi}. Their inclusion is compatible with these factorization systems. Moreover, if $X\in\spanEfE(A)$ and $Y\to X$ is a forward-facing morphism in $\intspan(\universe,\universe_{\subuniverse})(A)$, then $Y$ also belongs to $\spanEfE(A)$. Thus
    \[
    \spanEfE^{\op}\into\intspan(\universe,\universe_{\subuniverse})^{\op}
    \]
    is a good inclusion in the sense of \cite[Definition 5.10]{CLLambi}, by \cite[Example 5.8]{CLLambi}. Proposition 5.18 of that work therefore identifies $G$ as an equivalence, with inverse given by right Kan extension.
\end{proof}

\subsubsection*{Free $\subuniverse$-monoids}

Recall that the free $\infty$-category with finite coproducts on an anima $A$ is the $\infty$-category of finite sets equipped with a map to $A$. Under the straightening equivalence, representable presheaves on $A$ correspond to maps $X\to A$ with contractible source. Their finite coproducts are therefore precisely the maps to $A$ with finite discrete source.

The underlying groupoid of this $\infty$-category is the free commutative monoid on $A$. It is canonically equivalent to the familiar formula
\[
\coprod_{n\in\nats}(A^n)//\Sigma_n.
\]
We now give the corresponding constructions for $\subuniverse$-colimits and $\subuniverse$-monoids.

\begin{definition}\label{def:free_on_A_with_E_colimits}
    For $A\in\vartopos$, let $\subuniverse^{\full}[A]$ denote the full $\vartopos$-subcategory of $\intpresh(A)$ generated by the image of the Yoneda embedding under $\subuniverse^{\full}$-colimits.
\end{definition}

By straightening, $\intpresh(A)(B)\simeq\vartopos_{/A\times B}$. For a morphism $p:B\to B'$ in $\vartopos$, the colimit functor
\[
p_\sharp:\intpresh(A)(B)\to\intpresh(A)(B')
\]
sends $[X\to A\times B]$ to the composite $[X\to A\times B\to A\times B']$.

In context $B$, the Yoneda embedding $A\into\intpresh(A)$ sends a morphism $s:B\to A$ to its graph
\[
B\xto{\Delta}B\times B\xto{s\times\id}A\times B.
\]
It follows that $\subuniverse^{\full}[A](B)$ is the full subcategory of $\vartopos_{/A\times B}$ spanned by maps $C\to A\times B$ for which the composite $p:C\to A\times B\to B$ belongs to $\subuniverse(B)$. Indeed, such an object is the $p$-indexed colimit of the Yoneda image of the composite $C\to A\times B\to A$.

\begin{remark}
    When $\vartopos=\catofanima$ and $\subuniverse=\varintcat{Fin}$, the underlying groupoid of $\varintcat{Fin}[A](*)$ is the anima of maps from finite sets to $A$. Thus it is equivalent to
    \[
    \coprod_{n\in\nats}(A^n)//\Sigma_n,
    \]
    as above.
\end{remark}

\begin{proposition}\label{prop:free_on_A_with_E_colimits}
    Let $A\in\vartopos$, and let $\varintcat{C}$ be an $\subuniverse^{\full}$-cocomplete $\vartopos$-category. Restriction along the Yoneda embedding induces an equivalence of $\vartopos$-categories
    \[
    \intfun^{\subuniverse\text{-}\sqcup}(\subuniverse^{\full}[A],\varintcat{C})
    \simeq\intfun(A,\varintcat{C}).
    \]
\end{proposition}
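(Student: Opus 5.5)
The plan is to reduce this to the universal property of free cocompletions established by Martini--Wolf, as was done in \Cref{prop:S_is_free_with_S_colmitis} for the point.

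\textbf{Step 1: identify $\subuniverse^{\full}[A]$ with Martini--Wolf's free cocompletion.} By \cite[Theorem 7.1.13]{MWcocomplete}, for any $\vartopos$-category $\varintcat{D}$ and any internal class $\varintcat{U}$, the free $\varintcat{U}$-cocompletion of $\varintcat{D}$ is the full $\vartopos$-subcategory of $\intpresh(\varintcat{D})$ generated by the Yoneda image under $\varintcat{U}$-colimits. This free cocompletion has the universal property that restriction along Yoneda gives an equivalence $\intfun^{\varintcat{U}\text{-}\sqcup}(\intpresh^{\varintcat{U}}(\varintcat{D}),\varintcat{C})\simeq\intfun(\varintcat{D},\varintcat{C})$ for every $\varintcat{U}$-cocomplete $\varintcat{C}$. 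Taking $\varintcat{D}=A$ and $\varintcat{U}=\subuniverse^{\full}$, \Cref{def:free_on_A_with_E_colimits} is exactly $\intpresh^{\subuniverse^{\full}}(A)$. So the statement follows once the hypotheses of that theorem are met.

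\textbf{Step 2: check that the hypotheses of the cited theorem hold.} The main point is a closure or ``generation'' condition. Martini--Wolf's description of the free cocompletion as a generated subcategory requires that the generated subcategory actually be closed under $\subuniverse^{\full}$-colimits in $\intpresh(A)$. This is the condition playing the role of right regularity in the point case. I would verify it directly using the explicit description given above: $\subuniverse^{\full}[A](B)$ is spanned by $C\to A\times B$ with $C\to B$ in $\subuniverse(B)$.

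\begin{itemize}
\item Take $[q:B\to B']\in\subuniverse(B')$ and a $B$-shaped diagram in context $B'$ with values in this subcategory. It is an object $[C\to A\times B]$ with $C\to B$ in $\subuniverse(B)$.
\item Its colimit is $q_\sharp$ of it, namely $C\to A\times B\to A\times B'$.
\item The composite $C\to B\to B'$ lies in $\subuniverse(B')$ by independence of context (\Cref{cond:idependent_of_context}).
\end{itemize}

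Smallness of the resulting $\vartopos$-category follows from smallness of $\subuniverse$ (\Cref{cond:srs_is_kappa_small}). The Yoneda image lies in $\subuniverse^{\full}[A]$ by pointedness (\Cref{cond:srs_has_iso}).

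\textbf{Step 3: conclude.} Combining the closure check with the theorem, restriction along $A\into\subuniverse^{\full}[A]$ is an equivalence onto $\intfun(A,\varintcat{C})$. The inverse is given by left Kan extension along Yoneda, which is $\subuniverse^{\full}$-cocontinuous. Since both sides are $\vartopos$-categories and the construction is compatible with restriction to slices $\vartopos_{/B}$ (the explicit description is stable under base change), the equivalence holds as $\vartopos$-categories and not merely on global sections.

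\textbf{Expected obstacle.} The delicate point is matching conventions: confirming that ``$\subuniverse$-cocontinuous'' here means $\subuniverse^{\full}$-cocontinuous in Martini--Wolf's sense, and that their theorem applies to a local class that is not necessarily left cancellable. If a direct citation fails, I would instead compute the left Kan extension pointwise. For an object $C\to A\times B$, the extension should equal the $p$-indexed colimit of $C\to A\to\varintcat{C}$, and I would check its cocontinuity using Step 2.
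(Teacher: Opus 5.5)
Your proposal is correct and matches the paper, whose proof is simply an application of Martini--Wolf's free cocompletion theorem \cite[Theorem 7.1.13]{MWcocomplete} to $\varintcat{D}=A$ and the class $\subuniverse^{\full}$. Your Step 2 is not actually a hypothesis of that theorem. Since \Cref{def:free_on_A_with_E_colimits} defines $\subuniverse^{\full}[A]$ as the subcategory generated under $\subuniverse^{\full}$-colimits, closure holds by definition, so your computation serves only to confirm the explicit description of $\subuniverse^{\full}[A](B)$ stated after that definition.
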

\begin{proof}
    This is an application of \cite[Theorem 7.1.13]{MWcocomplete}.
\end{proof}

The underlying groupoid of this free cocompletion also admits a description by spans.

\begin{proposition}\label{prop:the_underlying_groupoid_of_the_free_cat_with_colims_on_E}
    For $A\in\vartopos$, there is a natural equivalence of $\vartopos$-anima
    \[
    \intmap_{\intspan(\universe,\universe_{\subuniverse})}(A,*)
    \simeq\subuniverse^{\full}[A]^{\simeq}.
    \]
    Thus $\subuniverse^{\full}[A]^{\simeq}$ is the underlying object of the $\subuniverse$-monoid obtained by restricting the representable functor $\intmap_{\intspan(\universe,\universe_{\subuniverse})}(A,-)$.
\end{proposition}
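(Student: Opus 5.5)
We will compute both sides contextwise and then check that the identifications are compatible with restriction along morphisms $q\colon B'\to B$ in $\vartopos$. Both sides are $\vartopos$-anima, so it suffices to produce, naturally in the context $B$, an equivalence of the groupoids of sections in context $B$. On the left, the mapping $\vartopos$-anima $\intmap_{\intspan(\universe,\universe_{\subuniverse})}(A,*)$ in context $B$ is the core of the mapping anima from $\pi_B^*A=[A\times B\to B]$ to the terminal object $[B=B]$ in $\Span(\universe(B),\universe_{\subuniverse}(B))\simeq\Span(\vartopos_{/B},\ldots)$. We compute this directly from the definition of span categories.

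A span from $[A\times B\to B]$ to $[B\to B]$ consists of an object $[C\to B]$ with a backward map $C\to A\times B$ over $B$, which is arbitrary since all maps in $\universe$ are allowed, and a forward map $C\to B$ over $B$ lying in $\universe_{\subuniverse}(B)$. By definition, the latter means that $[C\to B]$, regarded over the target $B$, lies in $\subuniverse(B)$.

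Equivalences of such spans are equivalences of the middle object $C$ compatible with both legs. The forward leg to the terminal object carries no extra data, since the space of maps to the terminal object is contractible. The mapping anima is therefore the core of the full subcategory of $\vartopos_{/A\times B}$ spanned by those $C\to A\times B$ whose composite to $B$ lies in $\subuniverse(B)$. By the description of $\subuniverse^{\full}[A](B)$ given after \Cref{def:free_on_A_with_E_colimits}, this is exactly $\subuniverse^{\full}[A](B)^{\simeq}$.

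For naturality in $B$, we use that both identifications are given by pulling back along $B'\to B$. On the span side, restriction is $q^*$ on $\vartopos_{/B}$, and it preserves the relevant pullbacks. On the presheaf side, $\intpresh(A)(B)\simeq\vartopos_{/A\times B}$ with restriction given by base change along $A\times B'\to A\times B$. These agree under the identification above because base change along $q$ commutes with the forgetful functor to $\vartopos$.

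The main obstacle is making this contextwise computation into a genuine equivalence of $\vartopos$-anima rather than a family of pointwise ones. To handle it, we would construct the map globally. The representable $\intmap_{\intspan(\universe,\universe_{\subuniverse})}(A,-)$ restricted to $\universe^{\op}$ is $\intmap_{\universe}(-,A)$, the presheaf represented by $A$. Via the backward leg, this gives a map of $\vartopos$-anima from the left side to the core of $\intpresh(A)$ sending a span to its middle object over $A$. Equivalently, we use the straightening identification $\vartopos_{/A\times B}\simeq\intpresh(A)(B)$ functorially in $B$, which is part of Martini--Wolf's internal straightening. This comparison map is a monomorphism landing in $\subuniverse^{\full}[A]^{\simeq}$, and by the computation above it is essentially surjective in every context, hence an equivalence.

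The final assertion follows formally. The functor $\intmap_{\intspan(\universe,\universe_{\subuniverse})}(A,-)$ is $\universe$-continuous because it is corepresentable, so its restriction to $\spanEfE$ is $\subuniverse^{\full}$-continuous, using \Cref{prop:limits_in_span_E} and \Cref{prop:groupoid_limits_span_universe_E}. It is therefore an $\subuniverse$-monoid, and its value at $*$ is the object computed above.
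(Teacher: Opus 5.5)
Your proposal is correct and follows the paper's proof: in each context $B$ you identify the mapping anima with the core of the full subcategory of $\vartopos_{/A\times B}$ on objects whose projection to $B$ lies in $\subuniverse(B)$, recognize this as $\subuniverse^{\full}[A](B)^{\simeq}$, and check compatibility with base change in $B$; your extra care about coherence and about the final $\subuniverse$-monoid assertion goes slightly beyond what the paper writes. One side remark is false and should be dropped: the restriction of $\intmap_{\intspan(\universe,\universe_{\subuniverse})}(A,-)$ to $\universe^{\op}$ is the presheaf represented by $\subuniverse^{\full}[A]^{\simeq}$, not by $A$ (already at $X=*$ one gets spans $A\leftarrow C\to *$, not maps $*\to A$), but your argument does not use it, since the backward-leg comparison through $\intpresh(A)(B)\simeq\vartopos_{/A\times B}$ already suffices.
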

\begin{proof}
    In context $B$, a morphism from $A$ to $*$ in $\intspan(\universe,\universe_{\subuniverse})$ is a span over $B$
    \[
    A\times B\longleftarrow X\longrightarrow B
    \]
    such that $[X\to B]\in\subuniverse(B)$. The anima of these spans is the core of the full subcategory of $\vartopos_{/A\times B}$ on objects whose projection to $B$ belongs to $\subuniverse(B)$. By the description above, this is $\subuniverse^{\full}[A](B)^{\simeq}$. These equivalences are compatible with pullback in $B$, and hence give the asserted equivalence of $\vartopos$-anima.
\end{proof}

For the rest of the paper, we write $\subuniverse^{\simeq}[A]:=\subuniverse^{\full}[A]^{\simeq}$, with the $\subuniverse$-monoid structure just described. The following universal property identifies it as the free $\subuniverse$-monoid on $A$.

\begin{proposition}\label{prop:free_E_monoid_on_A}
    Let $A\in\vartopos$, and let $\varintcat{M}$ be an $\subuniverse$-monoid. There is a natural equivalence of $\vartopos$-anima
    \[
    \intmap_{\intsmon(\universe)}(\subuniverse^{\simeq}[A],\varintcat{M})
    \simeq\intmap(A,\varintcat{M}(*)),
    \]
    where $\varintcat{M}(*)$ is the underlying object of $\varintcat{M}$.
\end{proposition}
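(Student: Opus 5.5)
The strategy is to realize $\subuniverse^{\simeq}[A]$ as a restricted representable functor and then reduce to the Yoneda lemma in the larger span category. By \Cref{prop:the_underlying_groupoid_of_the_free_cat_with_colims_on_E}, $\subuniverse^{\simeq}[A]$ is the restriction to $\spanEfE$ of the representable functor $\intmap_{\intspan(\universe,\universe_{\subuniverse})}(A,-)$.

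Since $\universe$ is complete, \Cref{prop:extra_back_functoriality_for_E_monoidal_cats} identifies $\intsmon(\universe)$ with $\intfun^{\universe\text{-}\sqcap}(\intspan(\universe,\universe_{\subuniverse}),\universe)$. The identification is given by restriction, with inverse right Kan extension $\varintcat{M}\mapsto\widetilde{\varintcat{M}}$. This is a full $\vartopos$-subcategory of all $\vartopos$-functors.

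The representable functor $\intmap(A,-)$ preserves all internal limits, so it lies in this subcategory. Its image under restriction is $\subuniverse^{\simeq}[A]$, with the monoid structure described above. We therefore obtain
\[
\intmap_{\intsmon(\universe)}(\subuniverse^{\simeq}[A],\varintcat{M})
\simeq
\intmap_{\intfun(\intspan(\universe,\universe_{\subuniverse}),\universe)}(\intmap(A,-),\widetilde{\varintcat{M}})
\simeq\widetilde{\varintcat{M}}(A),
\]
where the last step is the internal Yoneda lemma, exactly as in the proof of \Cref{example:free_E_monoid}.

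It remains to compute $\widetilde{\varintcat{M}}(A)$. By \Cref{prop:groupoid_limits_span_universe_E}, $\widetilde{\varintcat{M}}$ is $\universe$-continuous, and the inclusion $\universe^{\op}\into\intspan(\universe,\universe_{\subuniverse})$ preserves $\universe$-limits. In $\universe^{\op}$, the object $A$ is the $A$-indexed limit of the constant diagram at $*$, since in $\universe$ it is the $A$-indexed colimit of the point. Hence
\[
\widetilde{\varintcat{M}}(A)\simeq\lim_A\widetilde{\varintcat{M}}(*)\simeq\intmap(A,\varintcat{M}(*)),
\]
using $\widetilde{\varintcat{M}}(*)\simeq\varintcat{M}(*)$ because $*\in\subuniverse^{\full}$ by pointedness. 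Here $\intmap(A,-)$ denotes the cotensor, i.e.\ $\pi_{A*}\pi_A^*$ applied to $\varintcat{M}(*)$ in $\universe$.

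The main point requiring care is naturality in context. Both sides must be treated as $\vartopos$-anima, so each step above has to be performed internally, in every context $B$ and compatibly with restriction. This is automatic, since each ingredient (the equivalence of \Cref{prop:extra_back_functoriality_for_E_monoidal_cats}, the internal Yoneda lemma, and the limit formula) is itself an equivalence of $\vartopos$-categories or $\vartopos$-anima. A second point to check is that the identification of \Cref{prop:the_underlying_groupoid_of_the_free_cat_with_colims_on_E} is really the restriction of the representable as a functor, not merely on underlying objects. This holds by the definition of the monoid structure on $\subuniverse^{\simeq}[A]$ given there.
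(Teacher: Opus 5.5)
Your proposal is correct and is essentially the paper's proof. You extend $\varintcat{M}$ to the $\universe$-continuous $\widetilde{\varintcat{M}}$ via \Cref{prop:extra_back_functoriality_for_E_monoidal_cats}, recognize $\subuniverse^{\simeq}[A]$ as the restricted corepresentable via \Cref{prop:the_underlying_groupoid_of_the_free_cat_with_colims_on_E}, apply internal Yoneda to get $\widetilde{\varintcat{M}}(A)$, and use $\universe$-continuity to identify this with $\intmap(A,\varintcat{M}(*))$. You only spell out steps the paper leaves implicit, namely that the corepresentable lies in the continuous subcategory and that $A$ is the $A$-indexed colimit of the point.
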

\begin{proof}
    By \Cref{prop:extra_back_functoriality_for_E_monoidal_cats}, $\varintcat{M}$ extends to a $\universe$-continuous $\vartopos$-functor
    \[
    \widetilde{\varintcat{M}}:\intspan(\universe,\universe_{\subuniverse})\to\universe.
    \]
    By \Cref{prop:the_underlying_groupoid_of_the_free_cat_with_colims_on_E} and the internal Yoneda lemma, the mapping $\vartopos$-anima on the left is equivalent to $\widetilde{\varintcat{M}}(A)$. Since $\widetilde{\varintcat{M}}$ is $\universe$-continuous, this is $\intmap(A,\varintcat{M}(*))$, as required. This is the same argument as in \Cref{example:free_E_monoid}.
\end{proof}

\subsection{\texorpdfstring{$\subuniverse$}{E}-monoidal categories.}\label{subsec:E_monoidal_categories}

We now apply the construction of $\subuniverse$-monoids to the $\vartopos$-category of $\vartopos$-categories. The resulting $\subuniverse$-monoidal structures allow tensor products indexed by $\subuniverse$, generalizing tensor products over finite sets in symmetric monoidal categories. We begin by describing the underlying category and these indexed tensor products (\Cref{prop:values_of_E_monoidal_categories,notation:exponent_and_lower_otimes}).

We then introduce the internal categories of $\subuniverse$-monoidal functors (\Cref{cor:cmon_is_tensored_and_powered_by_cat}) and describe free $\subuniverse$-monoidal categories on $\vartopos$-anima (\Cref{example:free_E_monoidal_cat}). Finally, we explain when subcategories inherit an $\subuniverse$-monoidal structure (\Cref{prop:monoidal_subcategories_are_monoidal}) and prove an analogue of the symmetric monoidal localization theorem (\Cref{thm:monoidal_localization}).

\subsubsection*{Indexed tensor products}

\begin{example}
    The $\vartopos$-category $\internalcatofcats$ is $\vartopos$-presentable and hence admits all $\vartopos$-limits and colimits. An \emph{$\subuniverse$-monoidal $\vartopos$-category} is an $\subuniverse$-monoid in $\internalcatofcats$. The $\vartopos$-category of these objects is $\intsmon(\internalcatofcats)$.
\end{example}

As with symmetric monoidal categories, we regard an $\subuniverse$-monoidal structure as additional structure on an underlying category.

\begin{definition}
    Let $F:\spanEfE\to\internalcatofcats$ be an $\subuniverse$-monoidal $\vartopos$-category. Its \emph{underlying $\vartopos$-category} is $\varintcat{C}:=F(*)$, obtained by restriction along $\explicitset{*}\into\spanEfE$. We will often say that $\varintcat{C}$ is $\subuniverse$-monoidal or that it is equipped with an $\subuniverse$-monoidal structure.
\end{definition}

For a symmetric monoidal $\infty$-category $F:\Span(\Fin)\to\catofcats$, one has $F(A)\simeq F(*)^A$. The following proposition gives the internal analogue.

\begin{proposition}\label{prop:values_of_E_monoidal_categories}
    Let $F:\spanEfE\to\internalcatofcats$ be an $\subuniverse$-monoidal $\vartopos$-category with underlying category $\varintcat{C}$. For every context $A\in\vartopos$ and every $[s:B\to A]\in\subuniverse(A)$, there are canonical equivalences of $\vartopos_{/A}$-categories
    \[
    F(B)\simeq s_*s^*\varintcat{C}
    \simeq\intfun_{\vartopos_{/A}}(B,\varintcat{C}),
    \]
    where $\varintcat{C}$ on the right is restricted to context $A$.
\end{proposition}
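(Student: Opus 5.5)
The equivalence $F(B)\simeq s_*s^*\varintcat{C}$ comes from $\subuniverse^{\full}$-continuity of $F$, applied to the limit computation in \Cref{prop:limits_in_span_E}. The second equivalence is the standard identification of the right adjoint $s_*$ on internal categories with internal functor categories out of $B$.

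\textbf{Expressing $B$ as a limit.} Work in context $A$, replacing $\vartopos$ by $\vartopos_{/A}$ and restricting $F$. This is harmless, since $F$ is a $\vartopos$-functor and restriction along $\vartopos_{/A}\to\vartopos$ preserves $\subuniverse^{\full}$-continuity. Consider the object $[s:B\to A]\in\subuniverse(A)$, and view the terminal object $[\id:B\to B]\in\subuniverse(B)$ as a $B$-shaped diagram $x$ in $\spanEfE$ in context $A$; pointedness (\Cref{cond:srs_has_iso}) guarantees it lies in $\subuniverse(B)$. Concretely, $x$ is the constant diagram on the point, $x=s^*(*)$. By the proof of \Cref{prop:limits_in_span_E}, the limit functor $s_*\colon\spanEfE(B)\to\spanEfE(A)$ is $\Span(s_\sharp)$, so $s_*x=s_\sharp[\id_B]=[s:B\to A]$. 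In other words, $B$ is the $B$-indexed limit of the constant diagram at $*$ in $\spanEfE$.

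\textbf{Applying continuity.} Since $F$ is $\subuniverse^{\full}$-continuous and $[s]\in\subuniverse^{\full}(A)$, the comparison map $F(s_*x)\to s_*F(x)$ is an equivalence in $\internalcatofcats(A)$. Naturality of $F$ in contexts gives
\[
F(x)=F(s^*(*))\simeq s^*F(*)=s^*\varintcat{C}.
\]
Hence $F(B)\simeq s_*s^*\varintcat{C}$. This identification is canonical, because it is the limit comparison map.

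\textbf{Identifying $s_*s^*$ with an internal functor category.} Here $s_*$ denotes the right adjoint to $s^*\colon\internalcatofcats(A)\to\internalcatofcats(B)$, i.e.\ the $B$-indexed limit in $\internalcatofcats$. For a constant diagram, this limit is the cotensor of $\varintcat{C}$ by $B$. By the internal cotensoring of $\internalcatofcats$ over $\vartopos$-groupoids (Martini--Wolf), this cotensor is $\intfun_{\vartopos_{/A}}(B,\varintcat{C})$. Equivalently, in each context $A'\to A$ the sections are $\varintcat{C}(B\times_A A')$, matching the sections of both sides after base change. I would invoke the Martini--Wolf statement that limits of constant diagrams indexed by a groupoid $B$ in $\internalcatofcats$ are computed by $\intfun(B,-)$, or derive it from the adjunction $s_\sharp\dashv s^*\dashv s_*$ together with $\intfun(B,-)$ being right adjoint to $-\times B$.

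\textbf{Main obstacle.} I expect the main difficulty to be the first step: checking carefully that $s_*$ of the constant point diagram in $\spanEfE$ is really the object $B$, and that the unit/counit data from \Cref{prop:limits_in_span_E} give the canonical comparison. The remaining steps are formal consequences of continuity and the standard cotensor formula.
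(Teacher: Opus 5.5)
Your proposal is correct and follows the same approach as the paper. In both, the limit functor $s_*$ on the span category sends $[\id:B\to B]$ to $[s:B\to A]$, so $\subuniverse^{\full}$-continuity of $F$ gives $F(B)\simeq s_*s^*\varintcat{C}$. The second equivalence is then the standard formula for this $B$-indexed limit in $\internalcatofcats(A)$.
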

\begin{proof}
    The limit functor $s_*:\spanEfE(B)\to\spanEfE(A)$ sends $[\id:B\to B]$ to $[s:B\to A]$. Since $F$ is $\subuniverse^{\full}$-continuous, this gives the first equivalence. The second is the formula for this limit in $\internalcatofcats(A)$.
\end{proof}

This description motivates the following notation for indexed tensor products.

\begin{notation}\label{notation:exponent_and_lower_otimes}
    Let $F:\spanEfE\to\internalcatofcats$ be an $\subuniverse$-monoidal $\vartopos$-category with underlying category $\varintcat{C}$. For $[B\to A]\in\subuniverse(A)$, write
    \[
    \varintcat{C}^{B}:=F(B\to A)
    \simeq\intfun_{\vartopos_{/A}}(B,\varintcat{C})
    \in\internalcatofcats(A).
    \]
    For a morphism $s:C\to B$ in $\subuniverse(A)$, the span $C=C\xto{s}B$ induces a $\vartopos_{/A}$-functor
    \[
    s_\otimes:\varintcat{C}^{C}\to\varintcat{C}^{B}.
    \]
    Evaluating in context $A$ gives a functor of $\infty$-categories, denoted by the same symbol:
    \[
    s_\otimes:\varintcat{C}(C)
    \simeq(\varintcat{C}^{C})(A)
    \to(\varintcat{C}^{B})(A)
    \simeq\varintcat{C}(B).
    \]

    More generally, a span $B\xfrom{t}X\xto{s}C$ in $\spanEfE(A)$ induces the $\vartopos_{/A}$-functor
    \[
    s_\otimes t^*:\varintcat{C}^{B}\to\varintcat{C}^{C}.
    \]
    Its evaluation in context $A$ is the functor
    \[
    s_\otimes t^*:\varintcat{C}(B)\to\varintcat{C}(C).
    \]
\end{notation}

\subsubsection*{Monoidal functors and free examples}

We next equip $\intsmon(\internalcatofcats)$ with internal categories of $\subuniverse$-monoidal functors. These arise from the following module structure.

\begin{proposition}\label{prop:cmon_is_cat_module_in_prl}
    The $\vartopos$-category $\intsmon(\internalcatofcats)$ is a module over $\internalcatofcats$ in $\prl(\vartopos)$.
\end{proposition}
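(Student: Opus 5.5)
The plan is to realize $\intsmon(\internalcatofcats)$ as a localization of a functor category that is already an $\internalcatofcats$-module in $\prl(\vartopos)$, and then to check that this localization is compatible with the module action. First, $\internalcatofcats$ is $\vartopos$-presentable and cartesian closed, so it is a commutative algebra in $\prl(\vartopos)$ under the multilinear tensor product, with tensor product given by the cartesian product. Then the internal functor category $\intfun(\spanEfE,\internalcatofcats)$ is $\vartopos$-presentable, being a functor category out of a small $\vartopos$-category (smallness of $\subuniverse$, \Cref{cond:srs_is_kappa_small}). It is a module over $\internalcatofcats$ via the pointwise action $(\varintcat{K}\otimes F)(x):=\varintcat{K}\times F(x)$. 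Concretely, $\intfun(\spanEfE,\internalcatofcats)\simeq\intfun(\spanEfE,\universe)\otimes\internalcatofcats$ in $\prl(\vartopos)$, and the $\internalcatofcats$-module structure comes from the second factor.

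Next, exactly as in the proof of \Cref{prop:cmon_is_presentable}, I will exhibit $\intsmon(\internalcatofcats)\subset\intfun(\spanEfE,\internalcatofcats)$ as the Bousfield localization at the small family of comparison maps $S=\{p_\sharp\yo(B)(x)\otimes\varintcat{K}\to\yo(A)(p_*x)\otimes\varintcat{K}\}$. Here $\varintcat{K}$ ranges over a small set of generators of $\internalcatofcats$, for instance the simplices $\Delta^n$ placed in each context. The local objects are exactly the $\subuniverse^{\full}$-continuous functors. To see this, note that $\intfun(\spanEfE,\internalcatofcats)(\varintcat{K}\otimes \yo(D),F)\simeq \intfun(\varintcat{K},F(D))$. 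Hence locality against $S$ says that the map $F(p_*x)\to p_*F(x)$ induces an equivalence on maps from every generator, which means it is an equivalence. Equivalently, $\intsmon(\internalcatofcats)\simeq \intsmon(\universe)\otimes\internalcatofcats$, and presentability then follows from \cite[Corollary 2.4.2.9]{MWPresentabilityAndTopoi}.

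The key step, and the one I expect to be the main obstacle, is to show that this localization is compatible with the $\internalcatofcats$-action. Equivalently, local objects must be closed under cotensors $F\mapsto\intfun(\varintcat{K},F(-))$, the right adjoint to $\varintcat{K}\otimes(-)$. This holds because $\intfun(\varintcat{K},-)$ preserves all $\vartopos$-limits, so it preserves $\subuniverse^{\full}$-continuity. Consequently $\varintcat{K}\otimes(-)$ sends $S$-local equivalences to $S$-local equivalences, and the localization $L$ is $\internalcatofcats$-linear. A general lemma then gives the result: a localization of a module in $\prl(\vartopos)$ whose local objects are stable under the cotensoring is itself a module, with action $\varintcat{K}\boxtimes F:=L(\varintcat{K}\otimes F)$. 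The care needed is in making this internal. Closure under cotensors must be checked in every context $A$, meaning for $\varintcat{K}\in\internalcatofcats(A)$ acting on the restriction to $\vartopos_{/A}$. This is routine because both cotensoring and the limit comparison maps commute with the restrictions $q^*$.

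Finally, I will record the consequence used next, as in \Cref{cor:cmon_is_tensored_and_powered_by_cat}: $\intsmon(\internalcatofcats)$ is tensored, cotensored and enriched over $\internalcatofcats$, with cotensor computed pointwise.
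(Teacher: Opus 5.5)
Your proposal is correct and is essentially the paper's argument. Both proofs exhibit $\intsmon(\internalcatofcats)$ as the Bousfield localization of $\intfun(\spanEfE,\internalcatofcats)\simeq\intfun(\spanEfE,\universe)\otimes\internalcatofcats$ coming from \Cref{prop:cmon_is_presentable}, that is, as $\intsmon(\universe)\otimes\internalcatofcats$. The paper reads the module structure directly off this tensor-product identification, whereas you check by hand that local objects are closed under cotensors and then invoke the general lemma on compatible localizations; this is slightly more explicit and also yields the pointwise description of cotensors.
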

\begin{proof}
    There are equivalences
    \begin{align*}
        \intfun(\spanEfE,\internalcatofcats)
        &\simeq\intfun^R(\intpresh(\spanEfE^{\op})^{\op},\internalcatofcats)\\
        &\simeq\intpresh(\spanEfE^{\op})\otimes\internalcatofcats
    \end{align*}
    in $\prl(\vartopos)$, where $\otimes$ is the multilinear tensor product of presentable $\vartopos$-categories developed in \cite[\S 2.6]{MWPresentabilityAndTopoi}. By the universal property of the Bousfield localization from \Cref{prop:cmon_is_presentable}, the $\vartopos$-category
    \[
    \intfun^R(\intsmon(\universe)^{\op},\internalcatofcats)
    \]
    corresponds compatibly to the $\subuniverse$-continuous functors. Thus
    \[
    \intsmon(\internalcatofcats)
    \simeq\intsmon(\universe)\otimes\internalcatofcats,
    \]
    which gives the asserted module structure.
\end{proof}

This module structure supplies mapping categories, cotensors, and tensors.

\begin{corollary}\label{cor:cmon_is_tensored_and_powered_by_cat}
    There are $\vartopos$-functors
    \[
    \intfun^{\subuniverse\text{-}\otimes}(-,-):
    \intsmon(\internalcatofcats)^{\op}\times\intsmon(\internalcatofcats)
    \to\internalcatofcats
    \]
    and
    \[
    (-)^{(-)}:\intsmon(\internalcatofcats)\times\internalcatofcats^{\op}
    \to\intsmon(\internalcatofcats),
    \]
    each preserving limits separately in both variables. There is also a bilinear $\vartopos$-functor
    \[
    -\otimes-:\intsmon(\internalcatofcats)\times\internalcatofcats
    \to\intsmon(\internalcatofcats).
    \]
    For every $\vartopos$-category $\varintcat{C}$, there is a canonical equivalence of $\vartopos$-categories, natural in both remaining variables,
    \[
    \intfun^{\subuniverse\text{-}\otimes}(-\otimes\varintcat{C},-)
    \simeq\intfun^{\subuniverse\text{-}\otimes}(-,(-)^{\varintcat{C}}).
    \]
    Moreover, taking the underlying groupoid of the mapping category recovers the mapping $\vartopos$-anima:
    \[
    \intfun^{\subuniverse\text{-}\otimes}(-,-)^{\simeq}
    \simeq\intmap_{\intsmon(\internalcatofcats)}(-,-).
    \]
\end{corollary}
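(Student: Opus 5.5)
The plan is to deduce everything formally from \Cref{prop:cmon_is_cat_module_in_prl}, which identifies $\intsmon(\internalcatofcats)\simeq\intsmon(\universe)\otimes\internalcatofcats$ as a module over the commutative algebra $\internalcatofcats$ in $\prl(\vartopos)$. For a module $\varintcat{M}$ over $\internalcatofcats$ in $\prl(\vartopos)$, the action
\[
-\otimes-:\varintcat{M}\times\internalcatofcats\to\varintcat{M}
\]
is bilinear, i.e.\ cocontinuous in each variable separately, because the multilinear tensor product of \cite[\S 2.6]{MWPresentabilityAndTopoi} corepresents such functors. This gives the third functor directly, with $\varintcat{M}=\intsmon(\internalcatofcats)$.

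Next I would produce the two right adjoints using the internal adjoint functor theorem for presentable $\vartopos$-categories \cite{MWPresentabilityAndTopoi}. For fixed $\varintcat{N}\in\varintcat{M}$, the functor $-\otimes\varintcat{N}:\internalcatofcats\to\varintcat{M}$ is cocontinuous, so it has a right adjoint; call it $\intfun^{\subuniverse\text{-}\otimes}(\varintcat{N},-)$. For fixed $\varintcat{C}$, the functor $-\otimes\varintcat{C}:\varintcat{M}\to\varintcat{M}$ is cocontinuous, so it has a right adjoint $(-)^{\varintcat{C}}$. To make these assemble into $\vartopos$-functors in both variables simultaneously, I would use the parametrized form of adjoints: a bilinear functor that is a left adjoint in each variable determines right adjoints functorial in the remaining variable. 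Alternatively, one can define the functors by straightening the internal mapping objects $\intmap_{\internalcatofcats}(\varintcat{D},\intfun^{\subuniverse\text{-}\otimes}(\varintcat{N},\varintcat{L}))\simeq\intmap_{\varintcat{M}}(\varintcat{N}\otimes\varintcat{D},\varintcat{L})$. Limit preservation then follows formally. In the second variable, right adjoints preserve limits. In the contravariant variable, the functors are represented through $\intmap(-\otimes\varintcat{D},\varintcat{L})$, and $-\otimes\varintcat{D}$ preserves colimits.

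The equivalence $\intfun^{\subuniverse\text{-}\otimes}(\varintcat{N}\otimes\varintcat{C},\varintcat{L})\simeq\intfun^{\subuniverse\text{-}\otimes}(\varintcat{N},\varintcat{L}^{\varintcat{C}})$ is checked by the Yoneda lemma after mapping in an arbitrary $\varintcat{D}\in\internalcatofcats$. Both sides then become $\intmap_{\varintcat{M}}(\varintcat{N}\otimes\varintcat{C}\otimes\varintcat{D},\varintcat{L})$. This uses associativity and commutativity of the module action, $(\varintcat{N}\otimes\varintcat{C})\otimes\varintcat{D}\simeq(\varintcat{N}\otimes\varintcat{D})\otimes\varintcat{C}$, which holds because $\internalcatofcats$ is commutative and its action on $\varintcat{M}$ is the tensor action. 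The final claim follows by taking $\varintcat{D}=*$, using $\intmap_{\internalcatofcats}(*,\varintcat{X})\simeq\varintcat{X}^{\simeq}$ and that $*$ is the unit of the action.

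The main obstacle is the coherence and naturality bookkeeping, meaning that the right adjoints really assemble into bifunctors of $\vartopos$-categories rather than merely pointwise objects. I would try to handle this by working with the internal mapping-object presentation and invoking the internal Yoneda embedding, so that every required functoriality reduces to that of $\intmap$ and of the bilinear action.
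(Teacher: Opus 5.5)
Your proposal is correct and is essentially the paper's argument: the paper gives no separate proof and deduces the corollary directly from the $\internalcatofcats$-module structure in $\prl(\vartopos)$ of \Cref{prop:cmon_is_cat_module_in_prl}, and your steps spell out what that structure formally supplies. Those steps are the bilinear action, the right adjoints from the internal adjoint functor theorem, the Yoneda check using $(\varintcat{N}\otimes\varintcat{C})\otimes\varintcat{D}\simeq(\varintcat{N}\otimes\varintcat{D})\otimes\varintcat{C}$, and evaluation at $\varintcat{D}=*$; the same representability argument you give for the contravariant variable of $\intfun^{\subuniverse\text{-}\otimes}$ also handles the $\internalcatofcats^{\op}$ variable of $(-)^{(-)}$.
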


Recall that the free commutative monoid on an anima $A$ is also the free symmetric monoidal $\infty$-category on $A$. The corresponding statement for $\subuniverse$-monoidal $\vartopos$-categories is as follows.

\begin{proposition}\label{example:free_E_monoidal_cat}
    Let $A\in\vartopos$, and let $\varintcat{C}$ be an $\subuniverse$-monoidal $\vartopos$-category. There is a natural equivalence of $\vartopos$-categories
    \[
    \intfun^{\subuniverse\text{-}\otimes}(\subuniverse^{\simeq}[A],\varintcat{C})
    \simeq\intfun(A,\varintcat{C}).
    \]
\end{proposition}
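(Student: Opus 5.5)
We reduce to the free $\subuniverse$-monoid statement, \Cref{prop:free_E_monoid_on_A}, using the $\internalcatofcats$-module structure of \Cref{cor:cmon_is_tensored_and_powered_by_cat}. The $\vartopos$-category $\subuniverse^{\simeq}[A]$ is a $\vartopos$-groupoid, so it is an $\subuniverse$-monoid in $\universe$. Via the inclusion $\universe\into\internalcatofcats$, which preserves limits, it becomes an $\subuniverse$-monoidal $\vartopos$-category. We will show that the two $\vartopos$-categories in the statement are naturally equivalent by checking that they corepresent the same functor on $\internalcatofcats$.

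Fix a test $\vartopos$-category $\varintcat{T}$. By the cotensor adjunction of \Cref{cor:cmon_is_tensored_and_powered_by_cat} and its last clause,
\[
\intmap_{\internalcatofcats}\bigl(\varintcat{T},\intfun^{\subuniverse\text{-}\otimes}(\subuniverse^{\simeq}[A],\varintcat{C})\bigr)
\simeq\intmap_{\intsmon(\internalcatofcats)}\bigl(\subuniverse^{\simeq}[A],\varintcat{C}^{\varintcat{T}}\bigr).
\]
Here $\varintcat{C}^{\varintcat{T}}$ is the cotensor, which is computed pointwise as $\spanEfE\to\internalcatofcats$, $X\mapsto\intfun(\varintcat{T},\varintcat{C}(X))$. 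Its underlying category is therefore $\intfun(\varintcat{T},\varintcat{C})$.

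The key step is to compute this mapping $\vartopos$-anima. We use that the inclusion $(-)^{\simeq}$-right adjoint pair $\universe\rightleftarrows\internalcatofcats$, given by the inclusion and the core, consists of a limit-preserving inclusion and a limit-preserving core. Postcomposing with these gives an adjunction
\[
\intsmon(\universe)\rightleftarrows\intsmon(\internalcatofcats).
\]
The left adjoint is postcomposition with the inclusion, because postcomposition by an adjunction of $\vartopos$-functors induces an adjunction on functor categories, and both functors preserve $\subuniverse^{\full}$-limits. Combining this adjunction with \Cref{prop:free_E_monoid_on_A}, we obtain
\[
\intmap(\subuniverse^{\simeq}[A],\varintcat{C}^{\varintcat{T}})
\simeq\intmap_{\intsmon(\universe)}\bigl(\subuniverse^{\simeq}[A],(\varintcat{C}^{\varintcat{T}})^{\simeq}\bigr)
\simeq\intmap\bigl(A,\intfun(\varintcat{T},\varintcat{C})^{\simeq}\bigr).
\]
The right-hand side is $\intmap(A\times\varintcat{T},\varintcat{C})\simeq\intmap(\varintcat{T},\intfun(A,\varintcat{C}))$. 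The Yoneda lemma in $\internalcatofcats$ then gives the claimed equivalence.

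Two points need checking. First, each step must be natural in $\varintcat{T}$, in the contexts, and in $\varintcat{C}$. Second, the equivalence should be induced by evaluating at the generator $A\to\subuniverse^{\simeq}[A]$. We expect the main obstacle to be verifying that the core functor $\intsmon(\internalcatofcats)\to\intsmon(\universe)$ is internally right adjoint to the inclusion, and not merely right adjoint contextwise. For this we plan to use the locality of adjunctions: Beck--Chevalley along restrictions $p^*$ holds because both the inclusion and the core commute with base change. Naturality in $\varintcat{T}$ is automatic, since all the equivalences used are mapping-object adjunctions.
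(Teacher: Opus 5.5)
Your proof is correct and is essentially the paper's argument. The paper simply cites \Cref{prop:free_E_monoid_on_A} together with the identification $\intsmon(\internalcatofcats)\simeq\intsmon(\universe)\otimes\internalcatofcats$ from \Cref{prop:cmon_is_cat_module_in_prl}; your cotensor adjunction and inclusion--core adjunction unpack exactly that module structure. Indeed, tensoring $\intsmon(\universe)$ with the unit $\universe\into\internalcatofcats$ is postcomposition with the inclusion, and its right adjoint is postcomposition with the core. The step you flag as the main obstacle is formal: $\intfun(\spanEfE,-)$ is a $2$-functor, so it carries the internal adjunction between $\universe$ and $\internalcatofcats$ to an internal adjunction. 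This restricts to the full subcategories of $\subuniverse^{\full}$-continuous functors because both functors preserve limits.
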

\begin{proof}
    This follows from \Cref{prop:free_E_monoid_on_A,prop:cmon_is_cat_module_in_prl}.
\end{proof}

\subsubsection*{Monoidal subcategories and localizations}

A subcategory of a symmetric monoidal $\infty$-category inherits a symmetric monoidal structure if it contains the unit and is closed under tensor products. We now formulate the corresponding condition for $\subuniverse$-monoidal $\vartopos$-categories.

\begin{definition}
    Let $\varintcat{C}$ be an $\subuniverse$-monoidal $\vartopos$-category. A $\vartopos$-subcategory $\varintcat{D}\subset\varintcat{C}$ is an \emph{$\subuniverse$-monoidal subcategory} if, for every $[s:B\to A]\in\subuniverse(A)$ and every morphism $f:X\to Y$ in $\varintcat{D}(B)$, the morphism
    \[
    s_\otimes(f):s_\otimes(X)\to s_\otimes(Y)
    \]
    belongs to $\varintcat{D}(A)$.
\end{definition}

\begin{proposition}\label{prop:monoidal_subcategories_are_monoidal}
    Let $F:\spanEfE\to\internalcatofcats$ be an $\subuniverse$-monoidal $\vartopos$-category with underlying category $\varintcat{C}$, and let $\varintcat{D}\subset\varintcat{C}$ be an $\subuniverse$-monoidal subcategory. There is a subfunctor $G\into F$ that equips $\varintcat{D}$ with an $\subuniverse$-monoidal structure.

    Moreover, $G\to F$ is terminal among $\subuniverse$-monoidal functors $T\to F$ whose underlying functor $T(*)\to\varintcat{C}$ factors through $\varintcat{D}$.
\end{proposition}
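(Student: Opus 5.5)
We define $G$ valuewise as a subfunctor of $F$ and then check two things: that it is still $\subuniverse^{\full}$-continuous, and that it has the universal property. A $\vartopos$-subcategory of a $\vartopos$-category $F(X)$, in each context, is determined by a sub-$\vartopos$-groupoid of objects and a sub-$\vartopos$-groupoid of morphisms closed under composition. So for $[X\to A]\in\spanEfE(A)$ we let
\[
G(X)\subset F(X)\simeq\intfun_{\vartopos_{/A}}(X,\varintcat{C})
\]
be the $\vartopos_{/A}$-subcategory of functors $X\to\varintcat{C}$ that land in $\varintcat{D}$, together with the natural transformations all of whose components lie in $\varintcat{D}$. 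Equivalently, using \Cref{prop:values_of_E_monoidal_categories}, we set $G(X):=\intfun_{\vartopos_{/A}}(X,\varintcat{D})$, which is a subcategory of $F(X)$ because $\intfun(X,-)$ preserves monomorphisms of $\vartopos$-categories.

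To see that $G$ is a subfunctor, it suffices to check that every span $B\xfrom{t}Y\xto{s}C$ in $\spanEfE(A)$ sends $G(B)$ into $G(C)$ under $s_\otimes t^*$. Since spans factor into backward and forward maps, we treat the two separately.
\begin{itemize}
    \item Backward maps $t^*$ act by restriction along $t$, which clearly preserves landing in $\varintcat{D}$.
    \item For forward maps $s_\otimes$ with $s\colon Y\to C$ in $\subuniverse(A)$, the condition is pointwise over $C$. By base change along a point $c\colon A'\to C$ and independence of context, the fibre $s_\otimes$ is the indexed tensor $s'_\otimes$ for $s'\colon Y\times_C A'\to A'$ in $\subuniverse(A')$. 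That this preserves $\varintcat{D}$ on objects and morphisms is exactly the hypothesis.
\end{itemize}
Subobjects in the $\vartopos$-category $\internalcatofcats$ are monomorphisms, and $\intspan$ is a $\vartopos$-category. Hence these checks assemble into a $\vartopos$-functor $G\colon\spanEfE\to\internalcatofcats$ with a monomorphism $G\into F$. The cleanest way to do this is to define $G$ as the image of a suitable factorization, or as a pullback of $F$ along the inclusion of the full subfunctor of monomorphisms. Concretely, I would work with the cocartesian unstraightening of $F$ and take the sub-$\vartopos$-category of its total space spanned by objects and morphisms in $\varintcat{D}$. The checks above show that this is still a cocartesian fibration, with the same cocartesian edges.

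Next we verify $\subuniverse^{\full}$-continuity. By \Cref{prop:limits_in_span_E_are_computed_in_E_op}, it suffices to check it on $(\subuniverse^{\full})^{\op}$. For $[p\colon B\to A]\in\subuniverse(A)$ and $X\in\subuniverse^{\full}(B)$, the comparison map $G(p_*X)\to p_*G(X)$ is the map
\[
\intfun_{\vartopos_{/A}}(X,\varintcat{D})\to p_*\intfun_{\vartopos_{/B}}(X,\varintcat{D}),
\]
which is an equivalence by the formula for limits in $\internalcatofcats$.

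Finally, the universal property. Let $T\to F$ be $\subuniverse$-monoidal with $T(*)\to\varintcat{C}$ factoring through $\varintcat{D}$. By continuity, $T(X)\to F(X)$ is identified with $\intfun(X,T(*))\to\intfun(X,\varintcat{C})$, so it factors through $G(X)$. Since $G\to F$ is a monomorphism in $\intsmon(\internalcatofcats)$, a factorization exists iff it exists objectwise, and it is then unique. Thus
\[
\intmap(T,G)\to\intmap(T,F)
\]
is the inclusion of the components consisting of exactly such $T\to F$.

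I expect the main obstacle to be the rigorous internal construction of the subfunctor $G$, which means making coherent the objectwise closure check. I would handle it via the unstraightening argument above, reducing to the statement that a subcategory of a cocartesian fibration which is closed under cocartesian transport is again a cocartesian fibration.
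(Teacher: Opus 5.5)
Your proposal is correct and follows essentially the same route as the paper: set $G(B):=\varintcat{D}^B\subset\varintcat{C}^B\simeq F(B)$, check that restrictions and indexed tensor products preserve it, read off $\subuniverse^{\full}$-continuity from this formula, and obtain the universal property from the fact that $G\into F$ is a monomorphism whose factorizations are detected objectwise. The extra details you supply are sound and fill in points the paper's proof leaves implicit: you reduce a general forward map $s_\otimes$ to an indexed tensor product via base change and independence of context, and you construct $G$ coherently as a sub-cocartesian fibration of the unstraightening of $F$.
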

\begin{proof}
    In context $A$, define
    \[
    G(B):=\varintcat{D}^B\subset\varintcat{C}^B\simeq F(B).
    \]
    To obtain a subfunctor, it suffices to check compatibility with restriction along morphisms in $\subuniverse^{\full}(A)$ and tensor products along morphisms in $\subuniverse(A)$. Restriction preserves $\varintcat{D}$ because it is a $\vartopos$-subcategory of $\varintcat{C}$. The tensor functors preserve $\varintcat{D}$ by assumption. The formula $G(B)=\varintcat{D}^B$ also shows that $G$ is $\subuniverse^{\full}$-continuous, so it defines an $\subuniverse$-monoidal structure on $\varintcat{D}$.

    The inclusion $G\into F$ is a monomorphism in $\intfun(\spanEfE,\internalcatofcats)$. For an $\subuniverse$-monoidal functor $T\to F$, the underlying functor $T(*)\to\varintcat{C}$ factors through $\varintcat{D}$ if and only if, in every context $A$ and for every $B\in\spanEfE(A)$, the functor $T(B)\to F(B)$ factors through $\varintcat{D}^B=G(B)$. This proves the universal property.
\end{proof}

We conclude with an analogue of the symmetric monoidal localization theorem.

\begin{theorem}\label{thm:monoidal_localization}
    Let $F:\spanEfE\to\internalcatofcats$ be an $\subuniverse$-monoidal $\vartopos$-category with underlying category $\varintcat{C}$. Let $i:\varintcat{D}\into\varintcat{C}$ be a reflective (respectively, coreflective) $\vartopos$-subcategory, with reflection (respectively, coreflection) $L:\varintcat{C}\to\varintcat{D}$. Let
    \[
    \varintcat{S}:=L^{-1}(\varintcat{D}^{\simeq})\subset\varintcat{C}
    \]
    be the wide $\vartopos$-subcategory of morphisms inverted by $L$. Suppose that $\varintcat{S}$ is an $\subuniverse$-monoidal subcategory. Then $\varintcat{D}$ carries a canonical $\subuniverse$-monoidal structure for which $L$ is $\subuniverse$-monoidal.

    Moreover, for every $\subuniverse$-monoidal $\vartopos$-category $\varintcat{T}$, precomposition with $L$ defines a fully faithful $\vartopos$-functor
    \[
    \intfun^{\subuniverse\text{-}\otimes}(\varintcat{D},\varintcat{T})
    \to\intfun^{\subuniverse\text{-}\otimes}(\varintcat{C},\varintcat{T}),
    \]
    whose essential image consists of the functors that send every morphism in $\varintcat{S}$ to an equivalence.
\end{theorem}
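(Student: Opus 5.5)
We work with the $\vartopos$-functor $F\colon\spanEfE\to\internalcatofcats$ directly and build the localized structure as a quotient $F\to G$, mirroring the classical proof via Dwyer--Kan localization of each $F(B)$. In context $A$ and for $B\in\subuniverse^{\full}(A)$, we have $F(B)\simeq\varintcat{C}^B$ by \Cref{prop:values_of_E_monoidal_categories}. The wide subcategory $\varintcat{S}^B\subset\varintcat{C}^B$ consists of pointwise morphisms in $\varintcat{S}$. We set $G(B):=\varintcat{C}^B[(\varintcat{S}^B)^{-1}]$, the internal Dwyer--Kan localization, which we identify with $\varintcat{D}^B$ because $L^B\dashv i^B$ (respectively $i^B\dashv L^B$) is again a (co)reflection whose inverted maps are exactly $\varintcat{S}^B$.

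The key step is showing that $B\mapsto\varintcat{S}^B$ is preserved by all the functoriality of $F$, so that $F$ descends to $G$. Restrictions along backward morphisms in $\subuniverse^{\full}(A)$ preserve $\varintcat{S}^B$ because $\varintcat{S}$ is a $\vartopos$-subcategory, and hence stable under restriction of context. Forward morphisms $s\colon C\to B$ in $\subuniverse(A)$ act by $s_\otimes$. Passing to context $B$ via independence of context (\Cref{def:storngly_regular_universe}) reduces this to $s_\otimes$ for $[s]\in\subuniverse(B)$ applied to $\varintcat{S}(C)$, which lands in $\varintcat{S}(B)$ precisely by the hypothesis that $\varintcat{S}$ is an $\subuniverse$-monoidal subcategory. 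To make the descent coherent rather than objectwise, we use the universal property of localization internal to $\internalcatofcats$. The functor sending a pair $(\varintcat{X},\varintcat{W})$ (a category with a wide subcategory, i.e.\ a relative category) to $\varintcat{X}[\varintcat{W}^{-1}]$ is left adjoint to $\varintcat{X}\mapsto(\varintcat{X},\varintcat{X}^{\simeq})$. Hence $B\mapsto(F(B),\varintcat{S}^B)$, which is a subfunctor of $F$ with values in relative categories (built as in \Cref{prop:monoidal_subcategories_are_monoidal}), composes with localization to give a $\vartopos$-functor $G$ and a transformation $F\to G$. In the \emph{coreflective} case $L$ is a right adjoint, but the argument is identical since only the identification of the localization with $\varintcat{D}$ uses the adjunction.

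Next we check that $G$ is $\subuniverse^{\full}$-continuous, so that it is an $\subuniverse$-monoidal category. By \Cref{prop:limits_in_span_E_are_computed_in_E_op}, it suffices to check continuity after restriction to $(\subuniverse^{\full})^{\op}$. There this amounts to the identification $G(B)\simeq\varintcat{D}^B=s_*s^*\varintcat{D}$ compatibly with restrictions. That identification holds because localization at pointwise (co)reflections commutes with cotensors: the fully faithful (co)reflective embedding $i^B$ exhibits $\varintcat{D}^B$ as the localization.

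For the universal property, we note that
\[
\intfun^{\subuniverse\text{-}\otimes}(\varintcat{C},\varintcat{T})\subset\intfun(\spanEfE\times[1],\internalcatofcats)
\]
is a subcategory of natural transformations $F\to T$. A transformation inverting $\varintcat{S}$ on underlying categories inverts $\varintcat{S}^B$ at every $B$, since $T(B)\simeq T(*)^B$. The adjunction between localization and relative categories, applied pointwise and naturally in $B$, then gives uniqueness of factorization through $F\to G$. Full faithfulness follows, together with the stated essential image. This must be done internally, with contexts varying; we handle that by working in $\vartopos_{/A}$ uniformly, using that localization is stable under base change in the context.

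The main obstacle is the coherence of the descent step: making the functor from relative categories to localizations an honest $\vartopos$-functor $\internalcatofcats^{\mathrm{rel}}\to\internalcatofcats$ that commutes with cotensors $(-)^B$ for $B\in\subuniverse^{\full}$. Localizations generally do not commute with limits; here this is rescued only by the (co)reflectivity hypothesis. We would first try to avoid general localizations altogether by realizing $G$ as the subfunctor of $F$ on $\varintcat{D}^B$ with the modified tensor $L\circ s_\otimes\circ i$. Its coherence would then come from identifying $\varintcat{D}^B$ with the full subcategory of $\varintcat{S}^B$-local objects.
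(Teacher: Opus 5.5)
Your proposal is correct and is essentially the paper's argument. The paper applies \Cref{prop:monoidal_subcategories_are_monoidal} to $\varintcat{S}\subset\varintcat{C}$ to get a subfunctor $G\hookrightarrow F$, and forms the pointwise pushout $H=G^{\grpd}\coprod_G F$, which is exactly your pointwise Dwyer--Kan localization. It then identifies $H(B)\simeq\varintcat{D}^B$ using that $\varintcat{S}^B$ is the class of morphisms inverted by $L^B$, and reads off both the $\subuniverse^{\full}$-continuity and the universal property from this. Writing the localization as that pushout in $\intfun(\spanEfE,\internalcatofcats)$ also settles the coherence worry in your last paragraph, so your fallback via $L\circ s_\otimes\circ i$ is not needed.
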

\begin{proof}
    Apply \Cref{prop:monoidal_subcategories_are_monoidal} to $\varintcat{S}\subset\varintcat{C}$ to obtain a subfunctor $G\into F$. Define $H:\spanEfE\to\internalcatofcats$ by the pointwise pushout
    \[
    H(-):=G(-)^{\grpd}\coprod_{G(-)}F(-).
    \]
    For $[B\to A]\in\spanEfE(A)$, we claim that
    \[
    H(B)\simeq(\varintcat{S}^B)^{\grpd}
    \coprod_{\varintcat{S}^B}\varintcat{C}^B
    \simeq\varintcat{D}^B.
    \]
    Indeed, $\varintcat{S}^B$ is the wide subcategory of morphisms inverted by $L^B:\varintcat{C}^B\to\varintcat{D}^B$, so this is the pushout description of localization in \cite[Proposition 3.4.6]{MWcocomplete}.

    The identification $H(B)\simeq\varintcat{D}^B$ shows that $H$ is $\subuniverse^{\full}$-continuous. Thus $H$ equips $\varintcat{D}$ with an $\subuniverse$-monoidal structure, and the natural transformation $F\to H$ equips $L$ with an $\subuniverse$-monoidal structure. The universal property follows from that of the pushout.
\end{proof}

\subsection{Cartesian and cocartesian \texorpdfstring{$\subuniverse$}{E}-monoidal structures.}\label{subsec:cartesian_and_cocartesian_structures}

Finite coproducts and finite products give rise to the cocartesian and cartesian symmetric monoidal structures. In this subsection, we construct their analogues for $\subuniverse$-monoidal $\vartopos$-categories.

Using internal unfurling (\Cref{const:internal_unfurling}), we equip every $\subuniverse^{\full}$-cocomplete $\vartopos$-category with a cocartesian $\subuniverse$-monoidal structure, whose $\subuniverse$-indexed tensor products are given by colimits (\Cref{prop:categories_with_E_colimits_have_E_monoidal_structure}). Dually, indexed limits define a cartesian $\subuniverse$-monoidal structure on every $\subuniverse^{\full}$-complete $\vartopos$-category (\Cref{prop:categories_with_E_limits_have_E_monoidal_structure}). These constructions are functorial with respect to functors preserving the corresponding colimits or limits.

\begin{proposition}\label{prop:categories_with_E_colimits_have_E_monoidal_structure}
    Let $\varintcat{C}$ be an $\subuniverse^{\full}$-cocomplete $\vartopos$-category. There is an $\subuniverse$-monoidal $\vartopos$-category $\equipcocart(\varintcat{C})$ with underlying category $\varintcat{C}$ such that, for every $[p:B\to A]\in\subuniverse(A)$,
    \[
    p_\otimes\simeq p_\sharp:\varintcat{C}(B)\to\varintcat{C}(A).
    \]
    This construction is functorial in $\subuniverse^{\full}$-cocontinuous $\vartopos$-functors.
\end{proposition}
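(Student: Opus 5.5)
We construct $\equipcocart(\varintcat{C})$ by covariant internal unfurling (\Cref{prop:cov_unfurling}), in the same way that the cocartesian symmetric monoidal structure arises from $\Fin\to\catofcats$, $B\mapsto\Fun(B,\varcat{C})$.

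\textbf{Step 1: the input functor.} Consider the $\vartopos$-functor
\[
G:=\varintcat{C}^{(-)}\colon(\subuniverse^{\full})^{\op}\to\internalcatofcats,\qquad [B\to A]\mapsto\intfun_{\vartopos_{/A}}(B,\varintcat{C}).
\]
It is the restriction of the limit-preserving functor $\universe^{\op}\to\internalcatofcats$, $X\mapsto\varintcat{C}^X$, to $(\subuniverse^{\full})^{\op}$. On a morphism $s\colon C\to B$ it acts by restriction $s^*$. Since $\varintcat{C}$ is $\subuniverse^{\full}$-cocomplete and $\subuniverse$ is independent of context, every $s$ in $\subuniverse(A)$ gives an internal left adjoint $s_\sharp\dashv s^*$, computed by internal left Kan extension along $s$.

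We then check the Beck--Chevalley condition for each base change square in $\subuniverse^{\full}(A)$ of a morphism in $\subuniverse(A)$. Such squares are pullbacks in $\vartopos_{/A}$, so this is the standard base-change property of internal left Kan extension along fibrations of $\vartopos$-groupoids in an $\subuniverse^{\full}$-cocomplete $\vartopos$-category. Here we would cite the pointwise formula for Kan extensions in \cite{MWcocomplete}. It is the internal analogue of the fact that colimits over fibers commute with pullback.

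With these checks, $G$ is right $L$-adjointable for the span pair $((\subuniverse^{\full})^{\op},\subuniverse^{\op})$, after the appropriate dualization of \Cref{const:internal_unfurling}. Equivalently, $G$ is left $R$-adjointable for the span pair $(\subuniverse^{\full},\subuniverse)$.

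\textbf{Step 2: unfurling and identifying tensor products.} Unfurling gives
\[
\widetilde G\colon\spanEfE\to\internalcatofcats,
\]
which sends the backward span $B\leftarrow C=C$ to $s^*$ and the forward span $C=C\to B$ to $s_\sharp$. By construction, $\widetilde G(*)=\varintcat{C}$. For $[p\colon B\to A]\in\subuniverse(A)$, the span $B=B\xto{p}A$ is sent to $p_\sharp$, which gives $p_\otimes\simeq p_\sharp$.

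\textbf{Step 3: $\subuniverse^{\full}$-continuity.} By \Cref{prop:limits_in_span_E_are_computed_in_E_op}, it suffices to check that
\[
\widetilde G|_{(\subuniverse^{\full})^{\op}}=G
\]
is $\subuniverse^{\full}$-continuous. This holds because $G$ is the restriction of $\varintcat{C}^{(-)}$, which sends colimits in $\universe$ to limits in $\internalcatofcats$. By \Cref{prop:limits_in_span_E}, limits in $(\subuniverse^{\full})^{\op}$ indexed by $\subuniverse^{\full}$ are colimits in $\subuniverse^{\full}$, and right regularity shows that these are computed in $\universe$. Hence $\widetilde G$ is an $\subuniverse$-monoid in $\internalcatofcats$, and we set $\equipcocart(\varintcat{C}):=\widetilde G$.

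\textbf{Step 4: functoriality.} For an $\subuniverse^{\full}$-cocontinuous functor $f\colon\varintcat{C}\to\varintcat{D}$, postcomposition gives a natural transformation
\[
\varintcat{C}^{(-)}\Rightarrow\varintcat{D}^{(-)}.
\]
Its naturality squares for $s\in\subuniverse$ satisfy Beck--Chevalley, because $f$ commutes with $s_\sharp$; this is exactly cocontinuity. So it is a morphism of adjointable functors, and functoriality of unfurling yields the induced transformation $\equipcocart(\varintcat{C})\to\equipcocart(\varintcat{D})$.

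To make this a $\vartopos$-functor
\[
\internalcatofcats^{\subuniverse^{\full}\text{-}\sqcup}\to\intsmon(\internalcatofcats),
\]
we apply the internal unfurling $\vartopos$-functor to the composite of $X\mapsto\varintcat{C}^X$ with $\internalcatofcats^{\subuniverse^{\full}\text{-}\sqcup}$. We do this contextwise and use naturality in the context.

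\textbf{Main obstacle.} The main obstacle is Step 1. We must verify the internal Beck--Chevalley condition in every context, including compatibility with restriction $q^*$ along arbitrary $q\colon A'\to A$, so that these are genuinely internal adjunctions. The cleanest route is to reduce to $\varintcat{C}$ being internally $\subuniverse^{\full}$-cocomplete, meaning that the colimits $p_\sharp$ exist and satisfy base change along all maps of contexts. Then $\intfun(B,\varintcat{C})$ is identified with $\varintcat{C}$ restricted to $\vartopos_{/B}$, under which the functor $s_\sharp$ on functor categories becomes $s_\sharp$ on $\varintcat{C}$ itself. This reduces the required Beck--Chevalley condition to the defining base-change property of internal colimits.
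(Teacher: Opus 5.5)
Your proposal is correct and follows essentially the paper's own argument. You take the $\subuniverse^{\full}$-continuous functor $B\mapsto\varintcat{C}^B$ on $(\subuniverse^{\full})^{\op}$, observe that it is left $\subuniverse$-adjointable with adjoints $p_\sharp$ and Beck--Chevalley coming from base change of indexed colimits, unfurl it, verify continuity via \Cref{prop:limits_in_span_E_are_computed_in_E_op}, and obtain functoriality from functoriality of unfurling; note that the relevant tool is the contravariant internal unfurling of \Cref{const:internal_unfurling} for the span pair $(\subuniverse^{\full},\subuniverse)$, as your Step 1 ends up saying, rather than the covariant \Cref{prop:cov_unfurling} mentioned in your opening line.
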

\begin{proof}
    Recall that $\internalcatofcats^{\subuniverse\text{-}\sqcup}$ denotes the $\vartopos$-category of $\subuniverse^{\full}$-cocomplete $\vartopos$-categories and $\subuniverse^{\full}$-cocontinuous $\vartopos$-functors. It is closed under all $\vartopos$-limits in $\internalcatofcats$. By the dual of \Cref{prop:S_is_free_with_S_colmitis}, there are equivalences of $\vartopos$-categories
    \[
    \internalcatofcats^{\subuniverse\text{-}\sqcup}
    \simeq\intfun(*,\internalcatofcats^{\subuniverse\text{-}\sqcup})
    \simeq\intfun^{\subuniverse\text{-}\sqcap}
    ((\subuniverse^{\full})^{\op},\internalcatofcats^{\subuniverse\text{-}\sqcup}).
    \]

    Let $F:(\subuniverse^{\full})^{\op}\to\internalcatofcats^{\subuniverse\text{-}\sqcup}$ correspond to $\varintcat{C}$ under this equivalence. We claim that its underlying $\vartopos$-functor to $\internalcatofcats$ is left $\subuniverse$-adjointable in the sense of \Cref{const:internal_unfurling}. In each context, $F$ sends a morphism $p:B\to A$ in $\subuniverse$ to the restriction functor
    \[
    p^*:\varintcat{C}^{A}\to\varintcat{C}^{B}.
    \]
    Since $\varintcat{C}$ is $\subuniverse^{\full}$-cocomplete, this functor admits an internal left adjoint $p_\sharp$. The required Beck--Chevalley conditions are those for indexed colimits in $\varintcat{C}$. Moreover, $\subuniverse^{\full}$-cocontinuous functors commute with these indexed colimits and therefore induce morphisms of left $\subuniverse$-adjointable functors.

    Internal unfurling thus gives a $\vartopos$-functor
    \[
    \intfun^{\subuniverse\text{-}\sqcap}
    ((\subuniverse^{\full})^{\op},\internalcatofcats^{\subuniverse\text{-}\sqcup})
    \to\intfun(\spanEfE,\internalcatofcats).
    \]
    The unfurled functor still takes values in $\internalcatofcats^{\subuniverse\text{-}\sqcup}$: the original restriction functors preserve these colimits, and so do the added left adjoints $p_\sharp$. By \Cref{prop:limits_in_span_E_are_computed_in_E_op}, it is $\subuniverse^{\full}$-continuous because its restriction to $(\subuniverse^{\full})^{\op}$ is. It therefore defines an $\subuniverse$-monoidal structure on $\varintcat{C}$, with $p_\otimes\simeq p_\sharp$ by the construction of unfurling. Functoriality follows from functoriality of unfurling.
\end{proof}

We call this the \emph{cocartesian $\subuniverse$-monoidal structure} and write the resulting $\vartopos$-functor as
\[
\equipcocart:\internalcatofcats^{\subuniverse\text{-}\sqcup}
\to\intsmon(\internalcatofcats).
\]

\begin{remark}
    When the cocartesian structure is understood, we will sometimes omit $\equipcocart$ from the notation.
\end{remark}

Passing to opposite categories gives the cartesian construction.

\begin{proposition}\label{prop:categories_with_E_limits_have_E_monoidal_structure}
    Let $\varintcat{C}$ be an $\subuniverse^{\full}$-complete $\vartopos$-category. There is an $\subuniverse$-monoidal $\vartopos$-category $\equipcart(\varintcat{C})$ with underlying category $\varintcat{C}$ such that, for every $[p:B\to A]\in\subuniverse(A)$,
    \[
    p_\otimes\simeq p_*:\varintcat{C}(B)\to\varintcat{C}(A).
    \]
    This construction is functorial in $\subuniverse^{\full}$-continuous $\vartopos$-functors.
\end{proposition}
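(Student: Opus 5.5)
The plan is to mirror the proof of \Cref{prop:categories_with_E_colimits_have_E_monoidal_structure}, using the covariant form of unfurling (\Cref{prop:cov_unfurling}) in place of the contravariant one. The quicker alternative, applying the cocartesian construction to $\varintcat{C}^{\op}$ and passing to opposites, does not work directly: $\intspan$ is not self-dual under $\op$, and taking opposites contextwise reverses the direction of the added functors. So I would redo the unfurling step with right adjoints rather than left adjoints.

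First I would use the dual of \Cref{prop:S_is_free_with_S_colmitis} again. The $\vartopos$-category $\internalcatofcats^{\subuniverse\text{-}\sqcap}$ of $\subuniverse^{\full}$-complete categories and continuous functors is closed under limits in $\internalcatofcats$. This gives
\[
\internalcatofcats^{\subuniverse\text{-}\sqcap}\simeq\intfun^{\subuniverse\text{-}\sqcap}((\subuniverse^{\full})^{\op},\internalcatofcats^{\subuniverse\text{-}\sqcap}).
\]
Under this equivalence, $\varintcat{C}$ corresponds to the functor $F\colon(\subuniverse^{\full})^{\op}\to\internalcatofcats$ given by $B\mapsto\varintcat{C}^B$, with $p\mapsto p^*$.

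Next I would check that $F$ is adjointable in the dual sense. Each $p^*$ for $p\in\subuniverse$ has an internal right adjoint $p_*$, because $\varintcat{C}$ is $\subuniverse^{\full}$-complete. The base change squares in $\subuniverse$ satisfy the right Beck--Chevalley condition, which is the standard base change for indexed limits. Moreover, $\subuniverse^{\full}$-continuous functors commute with the $p_*$ and so induce morphisms of such adjointable functors.

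I would then apply unfurling in its ``right adjoint'' variant. The cleanest route is to pass to $\co$: $F^{\co}$ lands in $\twocatofcats^{\co}$, where right adjoints become left adjoints. Alternatively, one can take the cartesian unstraightening $q\colon\varintcat{P}\to\subuniverse^{\full}$ and use the dual of \Cref{const:internal_unfurling}. In that version, $\intspan(\varintcat{P},\varintcat{P}^{q\dashcart},\varintcat{P}_R)$ is replaced by the span category whose backward maps lie over $\subuniverse^{\full}$ and whose forward maps are $q$-cartesian lifts of maps in $\subuniverse$. The result should be a cartesian fibration over $\spanEfE$. Straightening it contravariantly, and then composing with the identification of forward and backward legs in spans, yields a functor $\widetilde F\colon\spanEfE\to\internalcatofcats$ extending $F$ with $p_\otimes\simeq p_*$.

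This direction bookkeeping is the step I expect to be the main obstacle. One must verify that the resulting fibration is cartesian contextwise and that restriction along $q^*$ preserves cartesian edges. I would handle this exactly as in the proposition following \Cref{const:internal_unfurling}, invoking \cite[Proposition 3.1.7]{MInternalStraightenning} and the $\infty$-categorical statement \cite[Theorem 3.13]{CLRUniversalityOfUnferling} applied to $\Gamma_A\circ F(A)$.

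Finally, $\widetilde F$ takes values in $\internalcatofcats^{\subuniverse\text{-}\sqcap}$, since both the $p^*$ and the $p_*$ preserve $\subuniverse^{\full}$-limits. By \Cref{prop:limits_in_span_E_are_computed_in_E_op}, $\widetilde F$ is $\subuniverse^{\full}$-continuous, because its restriction to $(\subuniverse^{\full})^{\op}$ is $F$. Functoriality in continuous functors then follows from the functoriality of unfurling, giving $\equipcart\colon\internalcatofcats^{\subuniverse\text{-}\sqcap}\to\intsmon(\internalcatofcats)$.
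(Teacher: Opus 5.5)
Your reason for rejecting the opposite-category argument is wrong, and that argument is exactly the paper's proof. The $\vartopos$-functor $(-)^{\op}\colon\internalcatofcats\to\internalcatofcats$ is a covariant autoequivalence. It sends $G\colon\varintcat{X}\to\varintcat{Y}$ to $G^{\op}\colon\varintcat{X}^{\op}\to\varintcat{Y}^{\op}$, in the same direction, and it reverses only natural transformations. So it turns a left adjoint of $p^*$ into a right adjoint of $p^*$. The source $\spanEfE$ is never dualized, so its failure to be self-dual plays no role.

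Concretely, $\varintcat{C}^{\op}$ is $\subuniverse^{\full}$-cocomplete. Post-composing $\equipcocart(\varintcat{C}^{\op})\colon\spanEfE\to\internalcatofcats$ with $(-)^{\op}$ gives a functor with the following properties:
\begin{itemize}
\item it is still $\subuniverse^{\full}$-continuous, since $(-)^{\op}$ is an equivalence;
\item it has value $\varintcat{C}$ at $*$ and sends backward maps to $p^*$;
\item it sends $C=C\xto{p}B$ to $(p_\sharp)^{\op}$, where $p_\sharp$ is the left adjoint of restriction on $\varintcat{C}^{\op}$, so $(p_\sharp)^{\op}\simeq p_*$.
\end{itemize}
Functoriality follows because $\varintcat{C}\mapsto\varintcat{C}^{\op}$ is a covariant equivalence $\internalcatofcats^{\subuniverse\text{-}\sqcap}\simeq\internalcatofcats^{\subuniverse\text{-}\sqcup}$. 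The paper later uses the same composite with $(-)^{\op}$ to define opposite monoidal structures.

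Your replacement reduces to this same trick, and its central step is misdescribed. The $\co$ route needs an identification of $\twocatofcats^{\co}$ with $\twocatofcats$ before unfurling, because unfurling only produces $\internalcatofcats$-valued functors, and that identification is $(-)^{\op}$. You could instead extend along $\intspanhalf(\subuniverse^{\full},\subuniverse)$ via \Cref{prop:univ_prop_of_span_half}, as the paper does when constructing $\mu$. That imports the $2$-categorical universal property of spans for no gain here.

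In the explicit route, your span category has arbitrary backward legs and $q$-cartesian forward legs over $\subuniverse$. Its fibre over $B$ is $(\varintcat{C}^B)^{\op}$. A morphism from $x\in\varintcat{C}^A$ to $w\in\varintcat{C}^B$ over $A\xfrom{g}D\xto{f}B$ is a map $f^*w\to g^*x$, equivalently $w\to f_*g^*x$.
\begin{itemize}
\item It is therefore a \emph{cocartesian} fibration over $\spanEfE$ classifying $(-)^{\op}\circ\widetilde F$.
\item It is not cartesian in general, since cartesian lifts would need left adjoints to $g^*$ along arbitrary backward maps.
\item There is no leg-swapping identification of $\spanEfE$ with itself: $\spanEfE^{\op}\simeq\intspan(\subuniverse^{\full},\subuniverse,\subuniverse^{\full})$. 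This differs from $\spanEfE$ unless $\subuniverse$ is full, which is exactly the non-self-duality you invoked.
\end{itemize}
The correct repair is to pass to the opposite fibration over $\spanEfE^{\op}$, which is cartesian and straightens to $\widetilde F$. That is the opposite-category trick once more.

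The other steps of your outline are fine: adjointability, landing in complete categories, and continuity via \Cref{prop:limits_in_span_E_are_computed_in_E_op}. The step you flagged as the main obstacle simply disappears once you post-compose with $(-)^{\op}$.
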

\begin{proof}
    Apply \Cref{prop:categories_with_E_colimits_have_E_monoidal_structure} to $\varintcat{C}^{\op}$ and then take opposites of the resulting categories. The indexed colimit functors for $\varintcat{C}^{\op}$ become the indexed limit functors for $\varintcat{C}$.
\end{proof}

We call this the \emph{cartesian $\subuniverse$-monoidal structure} and write the resulting $\vartopos$-functor as
\[
\equipcart:\internalcatofcats^{\subuniverse\text{-}\sqcap}
\to\intsmon(\internalcatofcats).
\]

\subsection{The case of an inductible subuniverse \texorpdfstring{$\subuniverse[I]$}{I}.}\label{subsec:inductible_subuniverses}

We now assume that the indexing subuniverse $\subuniverse[I]$ is inductible. Under this assumption, equipping an $\subuniverse[I]$-cocomplete $\vartopos$-category with its cocartesian structure defines a fully faithful functor (\Cref{thm:uniqueness_of_cartesian_structures})     
    \[
    \equipcocart:\internalcatofcats^{\subuniverse[I]\text{-}\sqcup}
    \into\intsmon[I](\internalcatofcats).
    \]We begin by proving this fully-faithfulness and characterizing the essential images of these embeddings.

We then recall the ambidexterity theorem of Cnossen--Lenz--Linskens (\Cref{Thm:CLL_amby}) and use it to construct pointwise $\subuniverse[I]$-monoidal structures on categories of $\subuniverse[I]$-monoidal functors (\Cref{cor:cmon_is_self_enriched_if_inductible}).

We also define $\subuniverse$-commutative algebras and coalgebras for a general context-free subuniverse $\subuniverse$ (\Cref{def:E_algebras_and_coalgebras}). When $\subuniverse[I]$ is inductible, categories of $\subuniverse[I]$-commutative algebras are $\subuniverse[I]$-cocomplete  (\Cref{prop:calg_has_E_colimits}). Moreover, the construction of commutative algebras is right adjoint to the formation of the cocartesian structure (\Cref{thm:calg_is_right_adjoint}).

Finally, we characterize cocartesian structures by the requirement that the forgetful functor from commutative algebras be an equivalence (\Cref{thm:in_cocart_everything_is_alg}). We also prove the existence of a localization that turns an $\subuniverse[I]$-monoidal category into a cocartesian one (\Cref{thm:cocart_cats_are_localization_of_monoidal}).

\subsubsection*{Uniqueness and recognition of cartesian and cocartesian structures}

Let $\subuniverse[I]$ be an inductible context-free subuniverse. To prove uniqueness of the cocartesian structure, we first check that $\subuniverse[I]$ is truncated as a $\vartopos$-category.

\begin{lemma}\label{lem:every_morphism_in_I_is_locally_truncated_on_the_target}
    Let $f:C\to B$ be a morphism in $\subuniverse[I](A)$. There is a cover $(s_i:A_i\to A)_i$ such that each $s_i^*f$ is truncated.
\end{lemma}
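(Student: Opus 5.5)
The plan is to reduce everything to the truncatedness of objects of $\subuniverse[I]$, using independence of context and fullness. Let $f:C\to B$ be a morphism in $\subuniverse[I](A)$, lying over the structure maps $c:C\to A$ and $b:B\to A$. Both $[c]$ and $[b]$ are objects of $\subuniverse[I](A)$. By independence of context (\Cref{cond:idependent_of_context}), the object $[f:C\to B]$ lies in $\subuniverse[I](B)$. Since $\subuniverse[I]$ is truncated, there is a cover $(V_\beta\to B)_\beta$ such that each $[C\times_B V_\beta\to V_\beta]$ is truncated. This cover lives over $B$, not over $A$, so the real content is to produce a cover of $A$ instead.

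To get a cover of $A$, I would use truncatedness of the objects $[c]$ and $[b]$ in $\subuniverse[I](A)$. Choose covers of $A$ over which $b$ and $c$ become truncated, and take a common refinement $(s_i:A_i\to A)_i$; refinements of covers are again covers. After this base change, $s_i^*b$ and $s_i^*c$ are truncated morphisms, say $n$- and $m$-truncated. A morphism between truncated objects of an $\infty$-topos slice is truncated: if $C'\to A_i$ is $m$-truncated and $B'\to A_i$ is $n$-truncated, then $C'\to B'$ factors through the graph $C'\to C'\times_{A_i}B'$, which is a base change of the diagonal of $B'\to A_i$ and hence $(n-1)$-truncated. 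It is followed by the projection $C'\times_{A_i}B'\to B'$, a base change of $C'\to A_i$ and hence $m$-truncated. So $s_i^*f$ is $\max(m,n-1)$-truncated, using HTT 5.5.6.14 and 5.5.6.15.

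Since restriction $s_i^*$ is computed by pullback, $s_i^*f$ is exactly the map $C\times_A A_i\to B\times_A A_i$, and the argument above applies to it. I expect the only delicate point to be the bookkeeping that truncatedness of objects of $\universe(A)$ and of morphisms in $\vartopos_{/A}$ agree, which is immediate from $\universe(A)\simeq\vartopos_{/A}$. Note that fullness of $\subuniverse[I]$ is not needed beyond guaranteeing that $f$ is a map between objects of $\subuniverse[I](A)$; the whole argument uses only the truncatedness hypothesis on objects.
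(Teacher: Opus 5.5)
Your proof is correct and is the paper's argument: refine to a common cover of $A$ over which both structure maps $C\to A$ and $B\to A$ become truncated, then note that a map between truncated objects of the slice over $A_i$ is truncated, which you justify explicitly with the graph factorization. The opening detour through independence of context and a cover of $B$ is not needed and can be dropped.
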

\begin{proof}
    By assumption, $B$ and $C$ are locally truncated. After taking a common refinement of the corresponding covers of $A$, the morphism $f$ is a morphism between truncated objects and hence is truncated.
\end{proof}

We can therefore apply the uniqueness theorem for internal unfurling.

\begin{theorem}\label{thm:uniqueness_of_cartesian_structures}
    Let $\subuniverse[I]$ be an inductible context-free subuniverse. The $\vartopos$-functor
    \[
    \equipcocart:\internalcatofcats^{\subuniverse[I]\text{-}\sqcup}
    \to\intsmon[I](\internalcatofcats)
    \]
    is fully faithful.
\end{theorem}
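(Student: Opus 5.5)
We factor $\equipcocart$ as a composite of functors that are each known to be fully faithful or equivalences onto their images. Recall from the proof of \Cref{prop:categories_with_E_colimits_have_E_monoidal_structure} that $\equipcocart$ is the composite
\[
\internalcatofcats^{\subuniverse[I]\text{-}\sqcup}
\simeq\intfun^{\subuniverse[I]\text{-}\sqcap}((\subuniverse[I])^{\op},\internalcatofcats^{\subuniverse[I]\text{-}\sqcup})
\to\intfun^{\subuniverse[I]\text{-}\sqcap}((\subuniverse[I])^{\op},\internalcatofcats)^{\mathrm{adj}}
\xto{\text{unfurl}}\intfun(\intspan(\subuniverse[I]),\internalcatofcats).
\]
Here $\subuniverse[I]=\subuniverse[I]^{\full}$ because $\subuniverse[I]$ is full, and the superscript $\mathrm{adj}$ denotes the left $\subuniverse[I]$-adjointable functors together with Beck--Chevalley morphisms. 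The first step is an equivalence. The plan is to show that the second step is fully faithful and that the third is fully faithful onto the $R$-adjointed functors, which form a full subcategory. Since $\intsmon[I](\internalcatofcats)$ is full in $\intfun(\intspan(\subuniverse[I]),\internalcatofcats)$, fully faithfulness of the composite will then follow.

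\textbf{Step 1: unfurling.} $(\subuniverse[I],\subuniverse[I])$ is a $\vartopos$-span pair. $\subuniverse[I]$ is left cancellable by fullness (\Cref{prop:from_local_class_to_subuniverse}), and it is truncated by \Cref{lem:every_morphism_in_I_is_locally_truncated_on_the_target}. Hence \Cref{thm:internal_uniqueness_of_unfurling} applies and identifies the $\vartopos$-category $\intfun^{R\operatorname{-adjointable}}((\subuniverse[I])^{\op},\internalcatofcats)$ with the $R$-adjointed functors $\intspan(\subuniverse[I])\to\internalcatofcats$. This subcategory is full, because adjointedness is a property of a functor. Restricting this equivalence to the $\subuniverse[I]$-continuous functors on both sides is compatible by \Cref{prop:limits_in_span_E_are_computed_in_E_op}. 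So unfurling identifies the $\subuniverse[I]$-continuous left adjointable functors with the $\subuniverse[I]$-monoidal categories whose underlying span functor is $R$-adjointed. This subcategory of $\intsmon[I](\internalcatofcats)$ is full.

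\textbf{Step 2: the middle map.} We must show that for $\subuniverse[I]$-cocomplete $\varintcat{C},\varintcat{D}$, $\vartopos$-functors $\varintcat{C}\to\varintcat{D}$ are $\subuniverse[I]$-cocontinuous exactly when the induced transformation $\varintcat{C}^{(-)}\Rightarrow\varintcat{D}^{(-)}$ satisfies Beck--Chevalley for every $p\in\subuniverse[I]$. Here a natural transformation between $\subuniverse[I]$-continuous functors $(\subuniverse[I])^{\op}\to\internalcatofcats$ is determined by its value at $*$, namely a functor $\varintcat{C}\to\varintcat{D}$. The Beck--Chevalley condition for $p$ says precisely that $p_\sharp$ commutes with the functor, i.e.\ that the functor preserves $p$-indexed colimits. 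So the middle map is fully faithful on mapping $\vartopos$-categories. Equivalently, the inclusion $\internalcatofcats^{\subuniverse[I]\text{-}\sqcup}\subset\internalcatofcats$ is a replete (non-full) subcategory inclusion, and it matches the subcategory of Beck--Chevalley morphisms. One must also check that every left adjointable functor of this shape comes from an $\subuniverse[I]$-cocomplete category, but this is only needed for describing the essential image, not for fully faithfulness.

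\textbf{Main obstacle.} The delicate point is Step 2, upgraded internally. Mapping objects in $\intfun^{R\operatorname{-adjointable}}$ are formed contextwise with the Beck--Chevalley condition imposed in every slice $\vartopos_{/A}$. These must match the internal $\subuniverse[I]$-cocontinuity of \cite{MWcocomplete}, which is also checked in every context. We also need the Beck--Chevalley condition along arbitrary $p$ to reduce to the condition at the point. We would deduce this using $\subuniverse[I]$-continuity, via the identification $\varintcat{C}^{B}\simeq s_*s^*\varintcat{C}$ of \Cref{prop:values_of_E_monoidal_categories}, together with locality of adjunctions.
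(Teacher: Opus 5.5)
Your proposal is correct and follows essentially the same route as the paper. Like the paper, you apply \Cref{thm:internal_uniqueness_of_unfurling} (justified by fullness and \Cref{lem:every_morphism_in_I_is_locally_truncated_on_the_target}), restrict to $\subuniverse[I]$-continuous functors via \Cref{prop:limits_in_span_E_are_computed_in_E_op}, and identify Beck--Chevalley transformations between $\subuniverse[I]$-continuous functors with $\subuniverse[I]$-cocontinuous functors. The only difference is that the paper asserts the middle identification is an equivalence onto the adjointable functors, whereas you observe that full faithfulness suffices.
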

\begin{proof}
    By \Cref{lem:every_morphism_in_I_is_locally_truncated_on_the_target}, the hypotheses of \Cref{thm:internal_uniqueness_of_unfurling} hold. Thus internal unfurling gives an equivalence of $\vartopos$-categories
    \[
    \intfun^{\subuniverse[I]\text{-adjointable}}(\subuniverse[I]^{\op},\internalcatofcats)
    \simeq
    \intfun^{\subuniverse[I]\text{-adjointed}}(\intspan(\subuniverse[I]),\internalcatofcats).
    \]
    By \Cref{prop:limits_in_span_E_are_computed_in_E_op}, it restricts to an equivalence
    \[
    \intfun^{\subuniverse[I]\text{-}\sqcap,\subuniverse[I]\text{-adjointable}}
    (\subuniverse[I]^{\op},\internalcatofcats)
    \simeq
    \intfun^{\subuniverse[I]\text{-}\sqcap,\subuniverse[I]\text{-adjointed}}
    (\intspan(\subuniverse[I]),\internalcatofcats).
    \]
    The right-hand side is a full $\vartopos$-subcategory of $\intsmon[I](\internalcatofcats)$.

    On the left, the equivalence
    \[
    \intfun^{\subuniverse[I]\text{-}\sqcap}(\subuniverse[I]^{\op},\internalcatofcats)
    \simeq\internalcatofcats
    \]
    coming from \Cref{prop:S_is_free_with_S_colmitis} identifies adjointable functors with $\subuniverse[I]$-cocomplete $\vartopos$-categories and adjointable natural transformations with $\subuniverse[I]$-cocontinuous $\vartopos$-functors. It therefore restricts to
    \[
    \intfun^{\subuniverse[I]\text{-}\sqcap,\subuniverse[I]\text{-adjointable}}
    (\subuniverse[I]^{\op},\internalcatofcats)
    \simeq\internalcatofcats^{\subuniverse[I]\text{-}\sqcup}.
    \]
    Under these identifications, unfurling is precisely $\equipcocart$. Hence $\equipcocart$ identifies its source with the full $\vartopos$-subcategory of $\subuniverse[I]$-adjointed objects in $\intsmon[I](\internalcatofcats)$.
\end{proof}

We next describe cartesian structures by an inductive condition on norm maps. Passing to opposites gives the corresponding description of the essential image of $\equipcocart$.

\begin{definition}[$m$-cartesian structures]\label{def:cartesian_S_monoidal_category}
    Let $\subuniverse[I]$ be an inductible context-free subuniverse, and let $\varintcat{C}$ be an $\subuniverse[I]$-complete $\subuniverse[I]$-monoidal $\vartopos$-category.
    \begin{mylist}
        \item For a $(-2)$-truncated morphism $s:A\to B$ in $\subuniverse[I]$, that is, an equivalence, there is a canonical equivalence
        \[
        N_s:s_\otimes\to s_*.
        \]
        We call this the $s$-norm. Every such $\varintcat{C}$ is declared to be $(-2)$-cartesian.
        \item Suppose that $\varintcat{C}$ is $n$-cartesian. For an $(n+1)$-truncated morphism $s:A\to B$ in $\subuniverse[I]$, define the $s$-norm $N_s:s_\otimes\to s_*$ to be the mate of the composite
        \[
        \begin{aligned}
        s^*s_\otimes
        &\simeq(\pi_1)_\otimes\pi_2^*
        \to(\pi_1)_\otimes\Delta_*\Delta^*\pi_2^*\\
        &\xto{N_\Delta^{-1}}
        (\pi_1)_\otimes\Delta_\otimes\Delta^*\pi_2^*
        \simeq\id_\otimes\id^*=\id,
        \end{aligned}
        \]
        where the maps are given by the diagram
        \[
        \begin{tikzcd}
            A\\
            &{A\times_B A}\arrow[rd,"\lrcorner"{anchor=center,pos=0.125},draw=none]&A\\
            &A&B.
            \arrow["\Delta"',from=1-1,to=2-2]
            \arrow[curve={height=-6pt},equal,from=1-1,to=2-3]
            \arrow[curve={height=6pt},equal,from=1-1,to=3-2]
            \arrow["\pi_2",from=2-2,to=2-3]
            \arrow["\pi_1"',from=2-2,to=3-2]
            \arrow["s",from=2-3,to=3-3]
            \arrow["s"',from=3-2,to=3-3]
        \end{tikzcd}
        \]
        The inverse of $N_\Delta$ exists because $\Delta$ is $n$-truncated and $\varintcat{C}$ is $n$-cartesian.
        \item We say that $\varintcat{C}$ is $(n+1)$-cartesian if it is $n$-cartesian and $N_s$ is an equivalence for every $(n+1)$-truncated morphism $s$ in $\subuniverse[I]$.
    \end{mylist}
    We say that $\varintcat{C}$ is \emph{inductively cartesian} if it is $\subuniverse[I]$-complete and is $n$-cartesian for every $n\geq-2$.
\end{definition}

\begin{proposition}
    An $\subuniverse[I]$-monoidal $\vartopos$-category is cartesian if and only if it is inductively cartesian.
\end{proposition}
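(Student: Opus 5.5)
The plan is to dualize \Cref{thm:uniqueness_of_cartesian_structures} and match the inductive norm condition with adjointedness. First, make precise that ``cartesian'' means lying in the essential image of $\equipcart$ restricted to $\subuniverse[I]$-complete categories. Apply \Cref{thm:uniqueness_of_cartesian_structures} to $\varintcat{C}^{\op}$: $\equipcart$ is then fully faithful. Its essential image is the full subcategory of $\subuniverse[I]$-monoidal $\vartopos$-categories $F\colon\intspan(\subuniverse[I])\to\internalcatofcats$ whose opposite $F^{\op}$ is $\subuniverse[I]$-adjointed in the sense used there, i.e.\ is left adjointed in $\twocatofcats(\vartopos_{/A})$ in every context $A$. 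Equivalently, $F$ itself is right adjointed there: each $s^*$ has a right adjoint $s_*$, and the norm maps $s_\otimes\to s_*$ of \Cref{def:adjointed_functors} are invertible inductively.

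The main step is to identify the norm maps of \Cref{def:cartesian_S_monoidal_category} with those of \Cref{def:adjointed_functors}. Take $F$ to be the functor $\Span(\subuniverse[I](A),\subuniverse[I](A)^t)\to\twocatofcats(\vartopos_{/A})$, sending $s$ to the pair $(s^*,s_\otimes)$. Then the formulas agree verbatim: the mate of $s^*s_\otimes\simeq(\pi_1)_\otimes\pi_2^*\to(\pi_1)_\otimes\Delta_*\Delta^*\pi_2^*\simeq\id$, using $N_\Delta^{-1}$.

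The hypothesis of $\subuniverse[I]$-completeness gives naive right adjointability, which is exactly the $(-2)$ level. Moreover, $p_*$ exists internally in every context compatibly with restriction, so the adjoints live in $\twocatofcats(\vartopos_{/A})$. Induction on $n$ then shows that ``$n$-cartesian'' coincides with ``$\subuniverse[I]^{(n)}$-right adjointed in each context''.

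Two points require care and are where I expect the real work to lie. The first is locality. Adjointedness was defined using only the globally truncated morphisms $\subuniverse[I](A)^t$, whereas the $n$-cartesian condition quantifies over $(n+1)$-truncated morphisms in all contexts. By \Cref{lem:every_morphism_in_I_is_locally_truncated_on_the_target}, every morphism is truncated after passing to a cover. Invertibility of a map of internal functors is local, and norms are compatible with restriction $q^*$ (an instance of \Cref{lem:nat_of_norms} applied to the restriction $2$-functor). Together these show that both conditions, imposed over all $n$, coincide.

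The second point is surjectivity onto cartesian objects. A cartesian structure is $\equipcart(\varintcat{C})$, which is the unfurling in which $s_\otimes$ is literally $s_*$. So the adjointedness obtained by unfurling (the essential-image statement in the proof of \Cref{thm:uniqueness_of_cartesian_structures}, dualized) gives inductive cartesianness. Conversely, an inductively cartesian $F$ is right adjointed, hence by \Cref{thm:internal_uniqueness_of_unfurling} (in the covariant form \Cref{prop:cov_unfurling}) it is the unfurling of its restriction to $\subuniverse[I]^{\op}$. That restriction is the $\subuniverse[I]$-continuous functor $B\mapsto\varintcat{C}^B$ with right adjoints $s_*$, so $F\simeq\equipcart(\varintcat{C})$.
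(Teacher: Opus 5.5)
Your proposal is correct and is essentially the paper's argument, which is just \Cref{thm:internal_uniqueness_of_unfurling} applied to the opposite $\subuniverse[I]$-monoidal structure; your matching of the norms of \Cref{def:cartesian_S_monoidal_category} with those of \Cref{def:adjointed_functors}, and your locality remarks, spell out what that one line leaves implicit. One small correction: the covariant form \Cref{prop:cov_unfurling} still makes forward maps left adjoint to backward ones, so in your last paragraph you should cite \Cref{thm:internal_uniqueness_of_unfurling} for $F^{\op}$, as you set things up in your first paragraph, rather than the covariant version.
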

\begin{proof}
    This follows from \Cref{thm:internal_uniqueness_of_unfurling} applied to the opposite $\subuniverse[I]$-monoidal structure.
\end{proof}

The dual definitions describe cocartesian structures.

\begin{para}[$m$-cocartesian structures]
    \begin{mylist}
        \item Let $\subuniverse[I]$ be an inductible context-free subuniverse.
        \item\label{prop:op_is_also_monoidal} If $\varintcat{C}$ is an $\subuniverse[I]$-monoidal $\vartopos$-category, then $\varintcat{C}^{\op}$ carries the opposite $\subuniverse[I]$-monoidal structure, defined by the composite
        \[
        \intspan(\subuniverse[I])\to\internalcatofcats
        \xto{(-)^{\op}}\internalcatofcats.
        \]
        \item We say that $\varintcat{C}$ is $m$-cocartesian if $\varintcat{C}^{\op}$ is $m$-cartesian, and that $\varintcat{C}$ is cocartesian if $\varintcat{C}^{\op}$ is cartesian.
        \item By \Cref{thm:uniqueness_of_cartesian_structures}, forgetting the cocartesian structure induces an equivalence of $\vartopos$-categories
        \[
        \intsmon[I](\internalcatofcats)^{\text{cocart}}
        \simeq\internalcatofcats^{\subuniverse[I]\text{-}\sqcup}.
        \]
    \end{mylist}
\end{para}

There is also a criterion in terms of preservation of indexed limits or colimits.

\begin{proposition}\label{prop:C_is_cartesian_iff_tensor_preserves_limits}
    Let $\subuniverse[I]$ be an inductible context-free subuniverse, and let $\varintcat{C}$ be an $\subuniverse[I]$-monoidal $\vartopos$-category. Then $\varintcat{C}$ is $\subuniverse[I]$-(co)cartesian if and only if, for every context $A$ and every morphism $p:C\to B$ in $\subuniverse[I](A)$, the functor
    \[
    p_\otimes:\varintcat{C}^{C}\to\varintcat{C}^{B}
    \]
    is $\subuniverse[I]$-(co)continuous.
\end{proposition}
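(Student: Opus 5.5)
By duality (passing to $\varintcat{C}^{\op}$ with the opposite structure of \Cref{prop:op_is_also_monoidal}), it suffices to treat the cocartesian case. The forward direction is immediate. If $\varintcat{C}\simeq\equipcocart(\varintcat{C})$, then $p_\otimes\simeq p_\sharp$ on $\varintcat{C}^C\to\varintcat{C}^B$ (\Cref{prop:categories_with_E_colimits_have_E_monoidal_structure}, applied in context $B$). Left adjoints are $\subuniverse[I]$-cocontinuous, since indexed colimits commute with indexed colimits by Beck--Chevalley.

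For the converse, I would first show that $\varintcat{C}$ is $\subuniverse[I]$-cocomplete. The key move is to use the $\subuniverse[I]$-monoidal structure itself to produce the unit maps. For $p:C\to B$ in $\subuniverse[I](A)$, the backward span $B\xleftarrow{p}C=C$ gives $p^*$, and the forward span gives $p_\otimes$. The composite forward-then-backward is computed by the pullback $C\times_B C$, giving $p^*p_\otimes\simeq(\pi_1)_\otimes\pi_2^*$.

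I would then build the adjunction $p_\otimes\dashv p^*$ by induction on truncation level, as in \Cref{def:cartesian_S_monoidal_category} but dually:
\begin{itemize}
\item The unit is $\id\simeq(\pi_1)_\otimes\Delta_\otimes\Delta^*\pi_2^*\to(\pi_1)_\otimes\pi_2^*\simeq p^*p_\otimes$.
\item This map uses the counit $\Delta_\otimes\Delta^*\to\id$ of the already-constructed adjunction for the less truncated diagonal $\Delta$.
\item The base case is the $(-2)$-truncated case, that is, equivalences.
\end{itemize}
Left cancellation (fullness of $\subuniverse[I]$) guarantees that $\Delta$ and $\pi_i$ lie in $\subuniverse[I]$. \Cref{lem:every_morphism_in_I_is_locally_truncated_on_the_target}, together with locality of adjunctions, lets me work locally where $p$ is genuinely $n$-truncated.

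The triangle identities are where the hypothesis enters, and I expect this to be the main obstacle. One triangle identity should reduce to a statement about $p_\otimes$ applied to the unit for the diagonal. Verifying it requires knowing that $p_\otimes$ carries the relevant colimit/unit data coherently, which is exactly what $\subuniverse[I]$-cocontinuity of $p_\otimes$ (for all $p$, including $\pi_1$ and $\Delta$) supplies. The other triangle follows from functoriality of the span functor and base change. If a direct check gets unwieldy, I would instead argue as follows:
\begin{itemize}
\item Cocontinuity of each $p_\otimes$ lets the $\subuniverse[I]$-monoidal functor $F$ factor through $\internalcatofcats^{\subuniverse[I]\text{-}\sqcup}$.
\item One then shows that the restriction $F|_{\subuniverse[I]^{\op}}$ is left $\subuniverse[I]$-adjointable.
\item Beck--Chevalley comes for free from the span relations.
\end{itemize}

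Once the adjunctions exist, the norm-type comparison $p_\sharp=p_\otimes$ holds by construction. So $F$ is $\subuniverse[I]$-left adjointed in the sense used in \Cref{thm:internal_uniqueness_of_unfurling}, equivalently (by the dual of \Cref{def:cartesian_S_monoidal_category}) inductively cocartesian. Hence, by the preceding proposition and \Cref{thm:uniqueness_of_cartesian_structures}, $\varintcat{C}$ is cocartesian, i.e.\ $F\simeq\equipcocart(\varintcat{C})$.
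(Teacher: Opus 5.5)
Your reduction to the cocartesian case and the forward direction are fine; the cleaner reason is simply that $p_\sharp$ is an internal left adjoint and so preserves all existing colimits. Your candidate unit
\[
\id\simeq(\pi_1)_\otimes\Delta_\otimes\Delta^*\pi_2^*\to(\pi_1)_\otimes\pi_2^*\simeq p^*p_\otimes
\]
is also the right map: it is the one whose mate is the dual norm of \Cref{def:cartesian_S_monoidal_category}.

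\textbf{The gap.} The converse consists entirely of showing that this map is the unit of an adjunction $p_\otimes\dashv p^*$. Equivalently, you must show that $p_\sharp$ exists and that the norm $p_\sharp\to p_\otimes$ is invertible. You give no argument for this. You never construct a counit $p_\otimes p^*\to\id$, and the monoidal structure alone does not provide one (classically it would be a multiplication $x\otimes x\to x$). So there are no triangle identities to check, and saying that cocontinuity ``supplies the coherence'' never says which colimit is being preserved.

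Your fallback is circular. Factoring $F$ through $\internalcatofcats^{\subuniverse[I]\text{-}\sqcup}$ presupposes that $\varintcat{C}$ is $\subuniverse[I]$-cocomplete. Left $\subuniverse[I]$-adjointability of $F|_{\subuniverse[I]^{\op}}$ is literally that cocompleteness. Even granting it, you would still need the norm maps to be invertible before \Cref{thm:internal_uniqueness_of_unfurling} identifies $F$ with the unfurling. Beck--Chevalley also does not come for free from the span relations $q^*p_\otimes\simeq p'_\otimes q'^*$, unless $p_\sharp\simeq p_\otimes$ is already known compatibly.

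\textbf{The missing idea.} Apply cocontinuity of $p_\otimes$ to one specific colimit.
\begin{enumerate}
\item By independence of context, work in context $B$ with $p\colon C\to B$ an $(n+1)$-truncated map. Assume the result for the $n$-truncated diagonal $\Delta\colon C\to C\times_B C$.
\item Restricted to context $C$, the $\vartopos_{/B}$-category $\varintcat{C}^C$ becomes $\varintcat{C}^{C\times_BC}$. Let $\pi_1$ be the projection to the indexing factor and $\pi_2$ the projection to the context factor. Then the restriction $p^*$ of $\varintcat{C}^C$ is $\pi_1^*$, and $p_\otimes$ in context $C$ is $(\pi_2)_\otimes$.
\item Since $\Delta^*\pi_1^*\simeq\id$, every $x\in\varintcat{C}(C)$ is the $p$-indexed colimit in $\varintcat{C}^C$ of the ``point-mass'' diagram $\Delta_\sharp x$. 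This diagram exists because $\Delta_\sharp\simeq\Delta_\otimes$ by induction.
\item Cocontinuity of $p_\otimes$ then says that $p_\otimes x$ is the $p$-indexed colimit of $(\pi_2)_\otimes\Delta_\otimes x\simeq x$.
\end{enumerate}
This single step gives everything you need:
\begin{itemize}
\item $p_\sharp$ exists and $p_\sharp\simeq p_\otimes$.
\item The colimit cocone $x\to p^*p_\otimes x$ is $(\pi_2)_\otimes$ applied to the counit $\Delta_\otimes\Delta^*\pi_1^*x\to\pi_1^*x$. This is your unit up to relabelling the projections, so the comparison map is the norm.
\item By naturality in the context, the Beck--Chevalley conditions hold.
\end{itemize}
For monomorphisms the argument is the same with $C\times_BC\simeq C$; classically, for $\emptyset\to *$, it says that the unit is initial. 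So no cocompleteness needs to be assumed in advance. With this step supplied, your conclusion via inductive cocartesianness and \Cref{thm:uniqueness_of_cartesian_structures} goes through.
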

\begin{proof}
    The argument is the same as in \cite[Proposition 3.44]{CLLambi}.
\end{proof}

\subsubsection*{Ambidexterity}

Cnossen--Lenz--Linskens define $\subuniverse[I]$-ambidextrous categories in \cite{CLLambi}. In our terminology, these are $\subuniverse[I]$-monoidal $\vartopos$-categories that are both cartesian and cocartesian. We will use the following theorem from their work.

\begin{theorem}[{\cite[Theorem 7.5]{CLLambi}}]\label{Thm:CLL_amby}
    Let $\varintcat{C}$ be an $\subuniverse[I]$-cartesian $\vartopos$-category. The forgetful functor
    \[
    \intsmon[I](\varintcat{C})\to\varintcat{C},
    \qquad M\mapsto M(*),
    \]
    is terminal among $\subuniverse[I]$-cartesian $\vartopos$-categories over $\varintcat{C}$ that are also cocartesian.
\end{theorem}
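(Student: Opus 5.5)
The plan is to reduce the universal property to one key fact: \emph{for an $\subuniverse[I]$-ambidextrous $\vartopos$-category $\varintcat{E}$, the forgetful functor $\intsmon[I](\varintcat{E})\to\varintcat{E}$ is an equivalence.} Everything else is formal manipulation of the definition $\intsmon[I](\varintcat{C})=\intfun^{\subuniverse[I]\text{-}\sqcap}(\intspan(\subuniverse[I]),\varintcat{C})$. The argument has three steps, carried out in this order.

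\textbf{Step 1: $\intsmon[I](\varintcat{C})$ is an object of the relevant category.} Limits in $\intsmon[I](\varintcat{C})$ are computed pointwise in $\intfun(\intspan(\subuniverse[I]),\varintcat{C})$, so $\intsmon[I](\varintcat{C})$ is $\subuniverse[I]$-complete and the forgetful functor $M\mapsto M(*)$ is $\subuniverse[I]$-continuous. Hence its cartesian structure exists (\Cref{prop:categories_with_E_limits_have_E_monoidal_structure}). For cocartesianness I would check the criterion of \Cref{prop:C_is_cartesian_iff_tensor_preserves_limits} for the opposite structure, and then construct $\subuniverse[I]$-colimits explicitly. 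For $p\colon B\to A$ in $\subuniverse[I]$ and a monoid $M$ in context $B$, the monoid structure supplies transfer maps $p_\otimes$. I would show that the resulting $p_*M$ is also a left adjoint of restriction, using the unit and counit built from the spans $A\leftarrow B=B$ and $B=B\to A$. Equivalently, I would show the norm maps are invertible by induction on truncation level. At the inductive step the norm of $\Delta$ is invertible, and the triangle identities reduce to composites of spans that are evaluated by $M$ itself.

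\textbf{Step 2: the key fact.} Let $\varintcat{E}$ be $\subuniverse[I]$-ambidextrous. I would identify $\intsmon[I](\varintcat{E})$ with the $\subuniverse[I]$-continuous functors $\intspan(\subuniverse[I])\to\varintcat{E}$ that are $\subuniverse[I]$-adjointed, by induction on $n$. Given a monoid $M$ that is $n$-adjointed, and $p$ an $(n+1)$-truncated map, the norm $p_\otimes\to p_\sharp$ on $M$ is identified, through continuity of $M$, with the norm map of $\varintcat{E}$ itself; this is the naturality argument of \Cref{lem:nat_of_norms}. That norm is invertible because $\varintcat{E}$ is ambidextrous. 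Then \Cref{thm:internal_uniqueness_of_unfurling}, together with \Cref{prop:limits_in_span_E_are_computed_in_E_op}, identifies such functors with $\subuniverse[I]$-continuous functors $\subuniverse[I]^{\op}\to\varintcat{E}$. These are determined by their value at $*$ by (the dual of) \Cref{prop:S_is_free_with_S_colmitis}. I expect this step to be the main obstacle. The difficulty is making the identification of norm maps coherent, internally and locally in the context, since truncatedness holds only locally (\Cref{lem:every_morphism_in_I_is_locally_truncated_on_the_target}). I would handle this with the sheafification trick of \Cref{lem:truncated_sheafify_to_locally_trancated}.

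\textbf{Step 3: terminality.} Let $G\colon\varintcat{D}\to\varintcat{C}$ be $\subuniverse[I]$-continuous with $\varintcat{D}$ ambidextrous. Functoriality of $\intsmon[I](-)$ in $\subuniverse[I]$-continuous functors gives the commutative square
\[
\begin{tikzcd}
\intsmon[I](\varintcat{D})\ar[r]\ar[d,"\sim"'] & \intsmon[I](\varintcat{C})\ar[d]\\
\varintcat{D}\ar[r,"G"'] & \varintcat{C},
\end{tikzcd}
\]
whose left vertical map is an equivalence by Step 2. This produces a lift of $G$. For uniqueness, I would compute the space of lifts as the fiber over $G$ of $\operatorname{Map}(\varintcat{D},\intsmon[I](\varintcat{C}))\to\operatorname{Map}(\varintcat{D},\varintcat{C})$, restricted to $\subuniverse[I]$-continuous maps. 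By currying, this fiber is the fiber of $\intsmon[I](\intfun^{\subuniverse[I]\text{-}\sqcap}(\varintcat{D},\varintcat{C}))\to\intfun^{\subuniverse[I]\text{-}\sqcap}(\varintcat{D},\varintcat{C})$ over $G$. It therefore suffices to show that $\intfun^{\subuniverse[I]\text{-}\sqcap}(\varintcat{D},\varintcat{C})$ is ambidextrous and then apply Step 2 again. Limits in this functor category are pointwise, and colimits exist by precomposing with the colimit functors of $\varintcat{D}$, using $p_\sharp\simeq p_*$ in $\varintcat{D}$. The norm maps are therefore invertible because those of $\varintcat{D}$ are. Hence the fiber is contractible.
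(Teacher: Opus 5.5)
The paper does not prove this statement; it imports it from \cite{CLLambi}, so your proposal has to stand on its own. The outer architecture is fine. Step~1 and the ambidexterity of $\intfun^{\subuniverse[I]\text{-}\sqcap}(\varintcat{D},\varintcat{C})$ in Step~3 are the same standard lemma: continuous functors from an ambidextrous category into a complete one form an ambidextrous category, with the adjunction data induced from the unit and counit of $p_\sharp\dashv p^*$ on the source. Your currying argument then correctly reduces terminality to your key fact. The gap is Step~2, which is where all the content lies.

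\textbf{Why Step~2 fails.} In Step~2 you invoke \Cref{thm:internal_uniqueness_of_unfurling}. That theorem, like the notion of adjointedness in \Cref{def:adjointed_functors}, concerns functors into $\internalcatofcats$ (or another $2$-category). There $p^*$ can have a non-invertible left adjoint, and $p_\otimes$ is a functor that a norm can compare with it.
\begin{itemize}
\item A monoid $M\colon\intspan(\subuniverse[I])\to\varintcat{E}$ takes values in a $\vartopos$-category. A forward span $C=C\xto{q}B$ is sent to a single morphism $M(C)\to M(B)$. So there is no functor $q_\otimes$ attached to $M$, and no norm $q_\otimes\to q_\sharp$ to identify with that of $\varintcat{E}$.
\item Taken literally, adjointedness of an $\varintcat{E}$-valued functor would force $M$ to invert backward maps, such as $x\to p_*p^*x$. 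Likewise, left adjointable functors $\subuniverse[I]^{\op}\to\varintcat{E}$ would invert every map.
\item Hence nothing you cite identifies $\intsmon[I](\varintcat{E})$ with $\intfun^{\subuniverse[I]\text{-}\sqcap}(\subuniverse[I]^{\op},\varintcat{E})\simeq\varintcat{E}$.
\item Proving by induction that each transfer $M(q_!)$ equals the canonical map $q_*q^*y\simeq q_\sharp q^*y\to y$ would not suffice either. That is a pointwise statement; it does not make the space of monoid structures on an object contractible.
\end{itemize}

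\textbf{What is actually needed.} Step~2 really asserts that restriction
\[
\intfun^{\subuniverse[I]\text{-}\sqcap}(\intspan(\subuniverse[I]),\varintcat{E})\to\intfun^{\subuniverse[I]\text{-}\sqcap}(\subuniverse[I]^{\op},\varintcat{E})\simeq\varintcat{E}
\]
is an equivalence for every ambidextrous $\varintcat{E}$. Continuous functors between ambidextrous categories preserve norms. So this is equivalent to $\intspan(\subuniverse[I])$ being the free $\subuniverse[I]$-ambidextrous $\vartopos$-category on a point, the parametrized universality of spans established separately in \cite{CLLambi}. It needs a genuine argument of its own. One option is an induction on truncation level showing that each extension step is an equivalence of categories of continuous functors, from forward legs that are $n$-truncated to $(n+1)$-truncated ones, using invertibility of the norms of $\varintcat{E}$.

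Beware of repairing this by passing to $\internalcatofcats$-valued data, for instance by viewing monoids as algebras in the cartesian structure of $\varintcat{E}$. In this paper, results such as \Cref{thm:in_cocart_everything_is_alg} themselves depend on the statement you are proving. As written, your proposal reduces the theorem to an equally deep fact rather than proving it.
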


\begin{corollary}[the doubling functor]\label{cor:doubling}
    The forgetful $\vartopos$-functor
    \[
    \intsmon[I](\intsmon[I](\internalcatofcats))
    \to\intsmon[I](\internalcatofcats)
    \]
    is an equivalence. Its inverse $\delta$ sends an $\subuniverse[I]$-monoidal $\vartopos$-category $F:\intspan(\subuniverse[I])\to\internalcatofcats$ to
    \[
    \delta(F):\intspan(\subuniverse[I])\to\intsmon[I](\internalcatofcats).
    \]
    The underlying functor with values in $\intfun(\intspan(\subuniverse[I]),\internalcatofcats)$ is obtained by currying the composite
    \[
    \intspan(\subuniverse[I])\times\intspan(\subuniverse[I])
    \xto{\times}\intspan(\subuniverse[I])\xto{F}\internalcatofcats.
    \]
\end{corollary}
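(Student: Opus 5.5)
The plan is to apply \Cref{Thm:CLL_amby} with $\varintcat{C}=\intsmon[I](\internalcatofcats)$, after first placing the cartesian structure on this category. By \Cref{prop:cmon_is_presentable} and \Cref{prop:cmon_is_cat_module_in_prl}, $\intsmon[I](\internalcatofcats)$ is presentable, so it is complete and in particular $\subuniverse[I]$-complete. Hence \Cref{prop:categories_with_E_limits_have_E_monoidal_structure} equips it with the cartesian structure $\equipcart(\intsmon[I](\internalcatofcats))$. I will then apply \Cref{Thm:CLL_amby}, which says that $\intsmon[I](\intsmon[I](\internalcatofcats))\to\intsmon[I](\internalcatofcats)$ is terminal among $\subuniverse[I]$-cartesian categories over $\intsmon[I](\internalcatofcats)$ that are also cocartesian.

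The forgetful map will then be an equivalence as soon as $\intsmon[I](\internalcatofcats)$ itself, with its cartesian structure, is also cocartesian. Indeed, the identity is then a terminal object of the relevant category, so the terminal object $\intsmon[I](\intsmon[I](\internalcatofcats))$ must map to it by an equivalence. The key step is therefore semiadditivity: $\subuniverse[I]$-indexed limits in $\intsmon[I](\internalcatofcats)$ should also be $\subuniverse[I]$-indexed colimits. I would first try to deduce this from the general fact, in \cite{CLLambi}, that $\subuniverse[I]$-monoids in an $\subuniverse[I]$-complete category form an $\subuniverse[I]$-ambidextrous (semiadditive) category. That is essentially the content of \cite[Theorem 7.5]{CLLambi} applied to $\internalcatofcats$ with its cartesian structure, together with the statement that the limit functors $p_*$ on monoids admit left adjoints agreeing with them via the norm. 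Concretely, I would verify the norm criterion of \Cref{def:cartesian_S_monoidal_category}, dualized, by induction on the truncation level. At each stage, the norm $p_\sharp\to p_*$ on $\intsmon[I](\internalcatofcats)^B$ is computed pointwise in $\spanEfE$-variables, and there the span $B\leftarrow B\to A$ supplies the inverse. This semiadditivity check is the main obstacle. If it is hard to extract directly, I would instead use \Cref{prop:C_is_cartesian_iff_tensor_preserves_limits}: it suffices to see that $p_*$ preserves $\subuniverse[I]$-colimits, which holds because of the monoid structure maps $p_\otimes$.

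Finally, I will identify the inverse $\delta$. Given $F$, the curried functor $(X,Y)\mapsto F(X\times Y)$ is $\subuniverse[I]$-continuous in each variable separately. This follows because products in $\intspan(\subuniverse[I])$ computed via $\times$ distribute over the $\subuniverse[I]$-limits, which are coproducts in $\subuniverse[I]$ by \Cref{prop:limits_in_span_E}, and because $F$ is continuous. So $\delta(F)$ is an object of $\intsmon[I](\intsmon[I](\internalcatofcats))$. Evaluating at $*$ in the outer variable gives $F(*\times -)\simeq F$, so $\delta(F)$ is a preimage of $F$ under the forgetful equivalence. By uniqueness of preimages, $\delta$ is the inverse, naturally in $F$.
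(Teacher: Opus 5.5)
Your proposal is correct and follows the paper's intended argument. The corollary comes straight from \Cref{Thm:CLL_amby}, whose conclusion already says that $\intsmon[I](\internalcatofcats)$ is both $\subuniverse[I]$-cartesian and cocartesian, since the terminal object must lie in that class. Applying the theorem again with $\varintcat{C}=\intsmon[I](\internalcatofcats)$ therefore makes the forgetful functor an equivalence, and your currying check, using that $X\times-$ commutes with $p_\sharp$ in $\subuniverse[I]$, identifies the inverse $\delta$. The inductive norm verification and the fallback via \Cref{prop:C_is_cartesian_iff_tensor_preserves_limits} are unnecessary; as sketched, they mostly restate the semiadditivity they are meant to prove.
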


\begin{definition}
    Following \cite{CLLambi}, an $\subuniverse[I]$-monoidal $\vartopos$-category is \emph{$\subuniverse[I]$-ambidextrous} if it is both $\subuniverse[I]$-cartesian and $\subuniverse[I]$-cocartesian. We write $\internalcatofcats^{\subuniverse[I]\text{-}\oplus}$ for the $\vartopos$-category of $\subuniverse[I]$-ambidextrous categories.
\end{definition}

\begin{remark}
    Equivalently, \Cref{Thm:CLL_amby} states that $\intsmon[I]$ is right adjoint to the inclusion
    \[
    \internalcatofcats^{\subuniverse[I]\text{-}\oplus}
    \into\internalcatofcats^{\subuniverse[I]\text{-}\sqcap}.
    \]
\end{remark}

The next corollary shows that an endofunctor of $\subuniverse[I]$-monoids that commutes with the forgetful functor is canonically equivalent to the identity.

\begin{corollary}\label{lem:no_endomorphisms_of_CMon}
    Let $\varintcat{C}$ be an $\subuniverse[I]$-cartesian $\vartopos$-category. There is an equivalence of $\vartopos$-categories
    \[
    \intfun_{/\varintcat{C}}(\intsmon[I](\varintcat{C}),\intsmon[I](\varintcat{C}))
    \simeq *.
    \]
\end{corollary}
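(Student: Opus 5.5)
The plan is to deduce this directly from the universal property in \Cref{Thm:CLL_amby}. Write $U\colon\intsmon[I](\varintcat{C})\to\varintcat{C}$ for the forgetful functor, $M\mapsto M(*)$. Since $\varintcat{C}$ is $\subuniverse[I]$-cartesian, $\intsmon[I](\varintcat{C})$ is $\subuniverse[I]$-complete, with limits computed pointwise in $\varintcat{C}$. It is moreover $\subuniverse[I]$-ambidextrous: this is part of the content of \Cref{Thm:CLL_amby}, and can also be seen directly from the fact that $\subuniverse[I]$-monoids in a cartesian category are themselves semiadditive. So $(\intsmon[I](\varintcat{C}),U)$ is an object of the $\vartopos$-category of $\subuniverse[I]$-ambidextrous $\vartopos$-categories over $\varintcat{C}$, and \Cref{Thm:CLL_amby} says it is the terminal object there.

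The key step is to identify $\intfun_{/\varintcat{C}}(\intsmon[I](\varintcat{C}),\intsmon[I](\varintcat{C}))$ with a mapping object in that slice. First I note that an $\subuniverse[I]$-continuous functor between ambidextrous categories is automatically a map of ambidextrous categories: by \Cref{thm:uniqueness_of_cartesian_structures}, the relevant monoidal structures are determined by the limits. Next I check that every functor over $\varintcat{C}$ between these categories is $\subuniverse[I]$-continuous. This holds because $U$ is conservative and creates $\subuniverse[I]$-limits, which are computed pointwise. Hence the internal functor category over $\varintcat{C}$ agrees with the internal mapping category in the slice of $\internalcatofcats^{\subuniverse[I]\text{-}\oplus}$ over $\varintcat{C}$.

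It remains to show that this mapping category is contractible, and not merely that its core is. Here I bootstrap from anima to categories in the same way as in \Cref{prop:enriched_universal_property}. For a $\vartopos$-category $\varintcat{T}$, a functor $\varintcat{T}\to\intfun_{/\varintcat{C}}(X,\intsmon[I](\varintcat{C}))$ corresponds to a functor from $X$ to the cotensor $\intsmon[I](\varintcat{C})^{\varintcat{T}}\times_{\varintcat{C}^{\varintcat{T}}}\varintcat{C}$ over $\varintcat{C}$. I then identify this fiber product with $\intsmon[I]$ of the cartesian category $\varintcat{C}^{\varintcat{T}}$, pulled back along the diagonal, or more simply use that the terminal object is stable under these cotensors in the slice. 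Applying \Cref{Thm:CLL_amby} in the stronger form that $\intsmon[I]$ is right adjoint to the inclusion $\internalcatofcats^{\subuniverse[I]\text{-}\oplus}\into\internalcatofcats^{\subuniverse[I]\text{-}\sqcap}$, and using that this adjunction is $\internalcatofcats$-enriched (both sides are cotensored and the inclusion preserves cotensors), the mapping category from $X$ to $\intsmon[I](\varintcat{C})$ over $\varintcat{C}$ is the fiber of $\intfun(X,\varintcat{C})\to\intfun(X,\varintcat{C})$ over $U$. That fiber is contractible.

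I expect the main obstacle to be this enrichment step, namely upgrading terminality from mapping anima to mapping categories. I would handle it by showing that $\internalcatofcats^{\subuniverse[I]\text{-}\oplus}$ is closed under cotensors $(-)^{\varintcat{T}}$ in $\internalcatofcats^{\subuniverse[I]\text{-}\sqcap}$, since limits in functor categories are computed pointwise.
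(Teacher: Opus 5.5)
Your proposal is correct and is essentially the paper's argument. The paper likewise notes that the forgetful functor $\intsmon[I](\varintcat{C})\to\varintcat{C}$ is $\subuniverse[I]$-continuous and conservative, so every endofunctor over $\varintcat{C}$ is $\subuniverse[I]$-continuous, and then applies \Cref{Thm:CLL_amby}; it leaves implicit the upgrade from mapping anima to mapping categories that you spell out. In that upgrade, the adjunction is $\internalcatofcats$-enriched because the right adjoint commutes with cotensors, $\intsmon[I](\varintcat{C}^{\varintcat{T}})\simeq\intsmon[I](\varintcat{C})^{\varintcat{T}}$ (true since $\subuniverse[I]$-limits in $\varintcat{C}^{\varintcat{T}}$ are computed pointwise), and not because the inclusion preserves cotensors, which alone would not suffice (compare the core adjunction between anima and categories).
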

\begin{proof}
    The forgetful functor $\intsmon[I](\varintcat{C})\to\varintcat{C}$ is $\subuniverse[I]$-continuous and conservative. Hence every endofunctor over $\varintcat{C}$ is $\subuniverse[I]$-continuous. The result now follows from the universal property in \Cref{Thm:CLL_amby}.
\end{proof}

\begin{question}\label{question:no_endomorphism_of_cmon_without_iductiblity}
    For $\varintcat{C}=\internalcatofcats$, does \Cref{lem:no_endomorphisms_of_CMon} hold without assuming that the context-free subuniverse $\subuniverse$ is inductible? We expect that it does, but do not know a proof or a counterexample.
\end{question}

\subsubsection*{Commutative algebras and coalgebras}

For the definitions in this part, let $\subuniverse$ be a general context-free subuniverse. Recall that, for a symmetric monoidal $\infty$-category $\varcat{C}$, one has $\operatorname{CAlg}(\varcat{C})\simeq\Fun^\otimes(\Fin,\varcat{C})$. To give the analogous definition, we first equip $\subuniverse$ with an $\subuniverse$-monoidal structure.

\begin{proposition}
    The $\vartopos$-subcategory $\subuniverse\subset\subuniverse^{\full}$ is an $\subuniverse$-monoidal subcategory of $\equipcocart(\subuniverse^{\full})$.
\end{proposition}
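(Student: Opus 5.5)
We verify directly the defining condition of an $\subuniverse$-monoidal subcategory. By \Cref{prop:categories_with_E_colimits_have_E_monoidal_structure}, $\subuniverse^{\full}$ is $\subuniverse^{\full}$-cocomplete (right regularity), so $\equipcocart(\subuniverse^{\full})$ exists, and for $[s:B\to A]\in\subuniverse(A)$ the tensor functor $s_\otimes$ is $s_\sharp$. Hence we must show: if $f:X\to Y$ is a morphism of $\subuniverse(B)$, then $s_\sharp(f)$ is a morphism of $\subuniverse(A)$.

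The first step is to make $s_\sharp$ explicit on $\subuniverse^{\full}(B)$. An object of $\subuniverse^{\full}(B)$ is a map $[X\to B]$ in $\subuniverse(B)$. By the formula for colimits in $\universe$, its $s$-indexed colimit is the composite $[X\to B\xto{s} A]$, which lies in $\subuniverse(A)$ by independence of context (\Cref{cond:idependent_of_context}), exactly as in the proof of right regularity. On a morphism $f:X\to Y$ over $B$, $s_\sharp(f)$ is the same map $f:X\to Y$, now regarded as a map over $A$.

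The second step is to check that $f$, regarded over $A$, lies in $\subuniverse(A)$. By \Cref{cond:idependent_of_context} applied in context $A$ with $Y\in\subuniverse(A)$, this is equivalent to $[f:X\to Y]\in\subuniverse(Y)$. But $f\in\subuniverse(B)$ with $Y\in\subuniverse(B)$, so again by \Cref{cond:idependent_of_context} in context $B$, $[f:X\to Y]\in\subuniverse(Y)$. Thus membership of $f$ is context-independent, and $s_\sharp(f)\in\subuniverse(A)$.

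The only point requiring care, which I expect to be the main (though mild) obstacle, is identifying $s_\otimes$ on morphisms with the forgetful functor $\vartopos_{/B}\to\vartopos_{/A}$ \emph{naturally}, that is, compatibly with the identification $\universe(A)\simeq\vartopos_{/A}$ and with restriction along contexts, so that the statement about morphisms makes sense for all contexts simultaneously. This follows because the unfurled functor acts by the internal left adjoint $s_\sharp$ of $s^*$, which on $\universe$ is postcomposition; one also notes that the condition is pointwise in contexts, so no further coherence is needed.
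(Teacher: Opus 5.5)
Your proposal is correct and follows essentially the same route as the paper. The paper also identifies $s_\otimes$ with $s_\sharp$, which forgets context by postcomposing with $s$, and then uses that membership of $f$ in $\subuniverse$ does not depend on whether $f$ is viewed over $B$ or over $A$. You make explicit the double application of the independence-of-context condition through $[f:X\to Y]\in\subuniverse(Y)$, which the paper leaves implicit.
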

\begin{proof}
    We must check that morphisms in $\subuniverse$ are closed under the indexed tensor products in $\equipcocart(\subuniverse^{\full})$. These tensor products are $\subuniverse^{\full}$-indexed colimits, computed by forgetting context. Membership in $\subuniverse$ is independent of context, so the result follows.
\end{proof}

Equip $\subuniverse$ with this structure and $\subuniverse^{\op}$ with the opposite structure.

\begin{definition}\label{def:E_algebras_and_coalgebras}
    Let $\varintcat{C}$ be an $\subuniverse$-monoidal $\vartopos$-category. An \emph{$\subuniverse$-commutative coalgebra} in $\varintcat{C}$ is an $\subuniverse$-monoidal functor $\subuniverse^{\op}\to\varintcat{C}$. The $\vartopos$-category of these coalgebras is
    \[
    \intcoalg^{\subuniverse}(\varintcat{C})
    :=\intfun^{\subuniverse\text{-}\otimes}(\subuniverse^{\op},\varintcat{C}).
    \]
    Similarly, an \emph{$\subuniverse$-commutative algebra} in $\varintcat{C}$ is an $\subuniverse$-monoidal functor $\subuniverse\to\varintcat{C}$, and we write
    \[
    \intalg^{\subuniverse}(\varintcat{C})
    :=\intfun^{\subuniverse\text{-}\otimes}(\subuniverse,\varintcat{C}).
    \]
    The underlying object of an algebra or coalgebra $F$ is $F(*)$. We will sometimes refer to this object as the algebra or coalgebra itself.
\end{definition}

\subsubsection*{Pointwise monoidal structures}

We return to an inductible context-free subuniverse $\subuniverse[I]$. For symmetric monoidal $\infty$-categories $\varcat{C}$ and $\varcat{D}$, the $\infty$-category of symmetric monoidal functors $\varcat{C}\to\varcat{D}$ carries a pointwise symmetric monoidal structure. The following is the corresponding internal statement.

\begin{corollary}\label{cor:cmon_is_self_enriched_if_inductible}
    Let $\subuniverse[I]$ be an inductible context-free subuniverse. The $\vartopos$-functor
    \[
    \intfun^{\subuniverse[I]\text{-}\otimes}:
    \intsmon[I](\internalcatofcats)^{\op}\times\intsmon[I](\internalcatofcats)
    \to\internalcatofcats
    \]
    lifts canonically to a $\vartopos$-functor
    \[
    \intfun^{\subuniverse[I]\text{-}\otimes}:
    \intsmon[I](\internalcatofcats)^{\op}\times\intsmon[I](\internalcatofcats)
    \to\intsmon[I](\internalcatofcats).
    \]
\end{corollary}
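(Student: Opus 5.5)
The plan is to build the lift from the doubling functor of \Cref{cor:doubling} together with the cotensor structure of \Cref{cor:cmon_is_tensored_and_powered_by_cat}. Since $\intfun^{\subuniverse[I]\text{-}\otimes}(-,-)$ preserves limits separately in each variable, it suffices to produce, naturally in $\varintcat{C},\varintcat{D}\in\intsmon[I](\internalcatofcats)$, an $\subuniverse[I]$-monoid in $\internalcatofcats$ whose underlying object is $\intfun^{\subuniverse[I]\text{-}\otimes}(\varintcat{C},\varintcat{D})$.

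Concretely, apply $\delta$ to $\varintcat{D}$. This yields an $\subuniverse[I]$-monoid $\delta(\varintcat{D})\colon\intspan(\subuniverse[I])\to\intsmon[I](\internalcatofcats)$ whose value on $[B\to A]$ is $\varintcat{D}^B$ with its pointwise $\subuniverse[I]$-monoidal structure. Postcompose with the limit-preserving functor $\intfun^{\subuniverse[I]\text{-}\otimes}(\varintcat{C},-)\colon\intsmon[I](\internalcatofcats)\to\internalcatofcats$. The composite is again $\subuniverse[I]^{\full}$-continuous, so it is an $\subuniverse[I]$-monoid in $\internalcatofcats$, with underlying object $\intfun^{\subuniverse[I]\text{-}\otimes}(\varintcat{C},\delta(\varintcat{D})(*))\simeq\intfun^{\subuniverse[I]\text{-}\otimes}(\varintcat{C},\varintcat{D})$. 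Naturality in $\varintcat{C}$ comes from the bifunctoriality of $\intfun^{\subuniverse[I]\text{-}\otimes}$. Naturality in $\varintcat{D}$ comes from $\delta$ being a $\vartopos$-functor (it is inverse to the forgetful functor). Currying the construction produces the desired $\vartopos$-functor
\[
\intsmon[I](\internalcatofcats)^{\op}\times\intsmon[I](\internalcatofcats)\to\intsmon[I](\intsmon[I](\internalcatofcats))\simeq\intsmon[I](\internalcatofcats),
\]
and composing with the forgetful functor recovers the original $\intfun^{\subuniverse[I]\text{-}\otimes}$.

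The main point to check is that $\delta(\varintcat{D})$ really lands in $\subuniverse[I]$-monoids in $\intsmon[I](\internalcatofcats)$ and is itself $\subuniverse[I]$-continuous there, with limits computed underlying. This is exactly what \Cref{cor:doubling} provides, and that in turn is where inductibility enters, via \Cref{Thm:CLL_amby} applied to the $\subuniverse[I]$-cartesian category $\intsmon[I](\internalcatofcats)$. So the argument reduces to citing it. The remaining routine checks are that limits in $\intsmon[I](\internalcatofcats)$ are computed in $\intfun(\intspan(\subuniverse[I]),\internalcatofcats)$, and that the canonicity of the lift follows from \Cref{lem:no_endomorphisms_of_CMon}: any two lifts differ by an endofunctor of $\intsmon[I](\internalcatofcats)$ over $\internalcatofcats$, which is trivial.
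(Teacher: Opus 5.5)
Your proposal is correct and is essentially the paper's proof. The paper lifts the $\subuniverse[I]$-continuous functor $\intfun^{\subuniverse[I]\text{-}\otimes}(\varintcat{C},-)$ out of the ambidextrous $\intsmon[I](\internalcatofcats)$ using \Cref{Thm:CLL_amby}; your composite $\intfun^{\subuniverse[I]\text{-}\otimes}(\varintcat{C},-)\circ\delta(\varintcat{D})$ is exactly that lift written out, because $\delta$ is the unit of the adjunction of \Cref{Thm:CLL_amby} at the ambidextrous object $\intsmon[I](\internalcatofcats)$.

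There are two small slips. First, the composite lands directly in $\intsmon[I](\internalcatofcats)$, not in $\intsmon[I](\intsmon[I](\internalcatofcats))$. Second, uniqueness of the lift should be read off from the universal property in \Cref{Thm:CLL_amby} (for each fixed $\varintcat{C}$), not from \Cref{lem:no_endomorphisms_of_CMon}, since two lifts do not themselves produce an endofunctor of $\intsmon[I](\internalcatofcats)$ over $\internalcatofcats$.
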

\begin{proof}
    Apply \Cref{Thm:CLL_amby} in the second variable. The $\vartopos$-category $\intsmon[I](\internalcatofcats)$ is both $\subuniverse[I]$-cartesian and $\subuniverse[I]$-cocartesian, and $\intfun^{\subuniverse[I]\text{-}\otimes}(\varintcat{C},-)$ preserves $\subuniverse[I]$-limits. The canonical lift is also functorial in $\varintcat{C}$.
\end{proof}

This construction gives a coherent self-enrichment, discussed in \Cref{subsec:symmetric_monoidal_cat_of_E_monoidal_cats}. Here we describe its effect on internal functor categories, beginning with some representable endofunctors.

\begin{example}
    By \Cref{example:free_E_monoidal_cat}, there is a natural equivalence
    \[
    \intfun^{\subuniverse[I]\text{-}\otimes}(\subuniverse[I]^{\simeq},\varintcat{C})
    \simeq\varintcat{C}.
    \]
    More generally, for $A\in\vartopos$, both endofunctors
    \[
    (-)^A,\qquad
    \intfun^{\subuniverse[I]\text{-}\otimes}(\subuniverse[I]^{\simeq}[A],-)
    \]
    of $\intsmon[I](\internalcatofcats)$ lift the composite
    \[
    \intsmon[I](\internalcatofcats)\to\internalcatofcats
    \xto{(-)^A}\internalcatofcats.
    \]
    By \Cref{lem:no_endomorphisms_of_CMon}, these lifts are canonically equivalent.
\end{example}

\begin{proposition}\label{prop:evaluation_is_I_monoidal}
    Let $\varintcat{C}$ and $\varintcat{D}$ be $\subuniverse[I]$-monoidal $\vartopos$-categories, and let $x:A\to\varintcat{C}$ be an object in context $A$. The evaluation functor
    \[
    \operatorname{ev}_x:
    \intfun^{\subuniverse[I]\text{-}\otimes}(\varintcat{C},\varintcat{D})
    \to\varintcat{D}^A
    \]
    is canonically $\subuniverse[I]$-monoidal.
\end{proposition}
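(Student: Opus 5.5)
The plan is to realize $\operatorname{ev}_x$ as the image of an $\subuniverse[I]$-monoidal functor under the lift of \Cref{cor:cmon_is_self_enriched_if_inductible}, instead of building monoidal coherences by hand.

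First, reduce to a representable source. By \Cref{example:free_E_monoidal_cat}, the object $x:A\to\varintcat{C}$ corresponds to an $\subuniverse[I]$-monoidal functor
\[
\tilde x:\subuniverse[I]^{\simeq}[A]\to\varintcat{C},
\]
and evaluation at $x$ factors as
\[
\intfun^{\subuniverse[I]\text{-}\otimes}(\varintcat{C},\varintcat{D})
\xto{\tilde x^*}
\intfun^{\subuniverse[I]\text{-}\otimes}(\subuniverse[I]^{\simeq}[A],\varintcat{D})
\simeq\varintcat{D}^A .
\]
\Cref{cor:cmon_is_self_enriched_if_inductible} provides a $\vartopos$-functor into $\intsmon[I](\internalcatofcats)$, contravariant in the first variable. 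Applying it to the morphism $\tilde x$ in the first variable therefore gives $\tilde x^*$ as a morphism in $\intsmon[I](\internalcatofcats)$, that is, as a canonically $\subuniverse[I]$-monoidal functor.

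Second, identify the target. The preceding example, together with \Cref{lem:no_endomorphisms_of_CMon} applied to $\varintcat{C}=\internalcatofcats$, shows that the pointwise $\subuniverse[I]$-monoidal structure on $\intfun^{\subuniverse[I]\text{-}\otimes}(\subuniverse[I]^{\simeq}[A],\varintcat{D})$ agrees canonically with the structure on $\varintcat{D}^A$ given by the cotensor $(-)^A$ in $\intsmon[I](\internalcatofcats)$. Both are lifts of $(-)^A$ applied to the underlying category, so the comparison is an equivalence in $\intsmon[I](\internalcatofcats)$. Composing it with $\tilde x^*$ makes $\operatorname{ev}_x$ $\subuniverse[I]$-monoidal, and its underlying functor is evaluation at $x$ by construction.

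The main obstacle is this identification of structures. \Cref{lem:no_endomorphisms_of_CMon} compares endofunctors of $\intsmon[I](\internalcatofcats)$ lying over $\internalcatofcats$, so both lifts must be shown to be functors over the forgetful map. For the pointwise structure this needs that the underlying category of $\intfun^{\subuniverse[I]\text{-}\otimes}(\subuniverse[I]^{\simeq}[A],-)$ is computed by the equivalence of \Cref{example:free_E_monoidal_cat}, naturally in the second variable. I would get this from the construction in \Cref{cor:cmon_is_self_enriched_if_inductible}, where the lift is produced by \Cref{Thm:CLL_amby} as a factorization through the forgetful functor. A smaller point is that the evaluation equivalence should match the underlying functor of $\tilde x^*$; this holds because both are induced by precomposition with the Yoneda unit $A\to\subuniverse[I]^{\simeq}[A]$.
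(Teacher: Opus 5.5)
Your argument is the paper's: evaluation at $x$ is precomposition with the $\subuniverse[I]$-monoidal functor $\subuniverse[I]^{\simeq}[A]\to\varintcat{C}$ classified by $x$. This precomposition is monoidal by the functoriality of \Cref{cor:cmon_is_self_enriched_if_inductible} in the first variable, and the target is identified with $\varintcat{D}^A$ by the uniqueness of lifts of $(-)^A$ established in the example preceding the statement. One small precision: both lifts sit over $(-)^A\circ U$ rather than over the forgetful functor $U$, so the uniqueness comes most directly from \Cref{Thm:CLL_amby}, using that $\intsmon[I](\internalcatofcats)$ is $\subuniverse[I]$-ambidextrous and $(-)^A\circ U$ is $\subuniverse[I]$-continuous, exactly as in the proof of \Cref{lem:no_endomorphisms_of_CMon}.
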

\begin{proof}
    Under the preceding identification, evaluation is precomposition with the $\subuniverse[I]$-monoidal functor $\subuniverse[I]^{\simeq}[A]\to\varintcat{C}$ corresponding to $x$.
\end{proof}

\begin{definition}
    In view of \Cref{prop:evaluation_is_I_monoidal}, we call the structure on $\intfun^{\subuniverse[I]\text{-}\otimes}(\varintcat{C},\varintcat{D})$ supplied by \Cref{cor:cmon_is_self_enriched_if_inductible} the \emph{pointwise $\subuniverse[I]$-monoidal structure}.
\end{definition}

In particular, this construction equips categories of commutative algebras with monoidal structures.

\begin{proposition}
    For an $\subuniverse[I]$-monoidal $\vartopos$-category $\varintcat{C}$, the pointwise structure makes
    \[
    \intalg^{\subuniverse[I]}(\varintcat{C})
    =\intfun^{\subuniverse[I]\text{-}\otimes}(\subuniverse[I],\varintcat{C})
    \]
    an $\subuniverse[I]$-monoidal $\vartopos$-category.
\end{proposition}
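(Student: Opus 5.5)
This is a direct specialization of \Cref{cor:cmon_is_self_enriched_if_inductible}. I would take $\varintcat{C}_1=\subuniverse[I]$, equipped with the $\subuniverse[I]$-monoidal structure making it an $\subuniverse[I]$-monoidal subcategory of $\equipcocart(\subuniverse[I]^{\full})$. Since $\subuniverse[I]$ is inductible, it is full, so $\subuniverse[I]=\subuniverse[I]^{\full}$ and this structure is just $\equipcocart(\subuniverse[I])$. I would then take $\varintcat{C}_2=\varintcat{C}$ and evaluate the lifted $\vartopos$-functor
\[
\intfun^{\subuniverse[I]\text{-}\otimes}:\intsmon[I](\internalcatofcats)^{\op}\times\intsmon[I](\internalcatofcats)\to\intsmon[I](\internalcatofcats)
\]
at the pair $(\subuniverse[I],\varintcat{C})$. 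Because this functor lifts the $\internalcatofcats$-valued mapping functor, its value is an $\subuniverse[I]$-monoidal $\vartopos$-category whose underlying $\vartopos$-category is $\intfun^{\subuniverse[I]\text{-}\otimes}(\subuniverse[I],\varintcat{C})=\intalg^{\subuniverse[I]}(\varintcat{C})$. This is exactly the pointwise structure in the sense of the preceding definition, so the statement follows.

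The only points I would check explicitly are these. First, $\subuniverse[I]$ is a legitimate object of $\intsmon[I](\internalcatofcats)$; this was shown just before \Cref{def:E_algebras_and_coalgebras}. Second, the lift in \Cref{cor:cmon_is_self_enriched_if_inductible} is compatible with the forgetful functor to $\internalcatofcats$. That compatibility is built into \Cref{Thm:CLL_amby}, since the lift is obtained by factoring through the forgetful functor $\intsmon[I](\internalcatofcats)\to\internalcatofcats$.

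I expect no real obstacle. The only potential subtlety is naturality: one should confirm that the structure is functorial in $\varintcat{C}$. That holds because the lifted functor is a $\vartopos$-functor in the second variable, and \Cref{lem:no_endomorphisms_of_CMon} shows that such lifts are unique up to canonical equivalence. As a sanity check, \Cref{prop:evaluation_is_I_monoidal} applied to the unit object $*\in\subuniverse[I]$ shows that the forgetful functor $\intalg^{\subuniverse[I]}(\varintcat{C})\to\varintcat{C}$ is $\subuniverse[I]$-monoidal for this structure, as expected.
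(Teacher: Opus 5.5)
Your proposal is correct and matches the paper's proof, which simply applies \Cref{cor:cmon_is_self_enriched_if_inductible} with first variable $\subuniverse[I]$. Your additional checks are harmless. These are that $\subuniverse[I]$ is an $\subuniverse[I]$-monoidal category, that the lift is compatible with the forgetful functor, and the naturality remarks. None of them is needed beyond what that corollary already provides.
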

\begin{proof}
    Apply \Cref{cor:cmon_is_self_enriched_if_inductible} with first variable $\subuniverse[I]$.
\end{proof}

The same description identifies the forgetful functor.

\begin{proposition}\label{prop:forgetful_from_calg_is_internally_representable}
    The forgetful functor $\intalg^{\subuniverse[I]}(\varintcat{C})\to\varintcat{C}$ is internally represented in $\intsmon[I](\internalcatofcats)$ by the canonical $\subuniverse[I]$-monoidal inclusion
    \[
    \subuniverse[I]^{\simeq}\into\subuniverse[I].
    \]
    More explicitly, it is the functor induced by precomposition,
    \[
    \intfun^{\subuniverse[I]\text{-}\otimes}(\subuniverse[I],\varintcat{C})
    \to\intfun^{\subuniverse[I]\text{-}\otimes}(\subuniverse[I]^{\simeq},\varintcat{C})
    \simeq\varintcat{C}.
    \]
\end{proposition}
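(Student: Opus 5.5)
The plan is to reduce the statement to the free $\subuniverse[I]$-monoidal category on one generator (\Cref{example:free_E_monoidal_cat}), applied once underlying and once pointwise. First I will identify the underlying $\vartopos$-category of $\subuniverse[I]^{\simeq}$ with the free $\subuniverse[I]$-monoid $\subuniverse[I]^{\simeq}[*]$ from \Cref{prop:free_E_monoid_on_A}. By \Cref{example:free_E_monoidal_cat}, restriction along the generator $*\to\subuniverse[I]^{\simeq}$ (the point $[\id:*\to *]$) gives a natural equivalence
\[
\intfun^{\subuniverse[I]\text{-}\otimes}(\subuniverse[I]^{\simeq},\varintcat{C})\simeq\varintcat{C}.
\]
Next I will check that the canonical inclusion $\subuniverse[I]^{\simeq}\into\subuniverse[I]$ is $\subuniverse[I]$-monoidal. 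It is the monoidal subcategory of invertible morphisms (indexed colimits of equivalences are equivalences, so \Cref{prop:monoidal_subcategories_are_monoidal} applies). Alternatively, it is the unique $\subuniverse[I]$-monoidal map out of the free object sending the generator to the point $*\in\subuniverse[I]$. This point is the unit of the cocartesian structure, which is also the underlying object of the monoidal unit functor.

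Precomposition along this inclusion gives
\[
\intfun^{\subuniverse[I]\text{-}\otimes}(\subuniverse[I],\varintcat{C})\to\intfun^{\subuniverse[I]\text{-}\otimes}(\subuniverse[I]^{\simeq},\varintcat{C})\simeq\varintcat{C}.
\]
Chasing the generator, this composite is evaluation at the image of $*$, namely $F\mapsto F(*)$, which is the forgetful functor by \Cref{def:E_algebras_and_coalgebras}. This proves the explicit form of the statement on underlying $\vartopos$-categories.

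For the internally represented form, I will use that both the source and target carry the pointwise structure of \Cref{cor:cmon_is_self_enriched_if_inductible}. That lift is functorial in the first variable, so precomposition along $\subuniverse[I]^{\simeq}\into\subuniverse[I]$ is automatically an $\subuniverse[I]$-monoidal functor. It remains to show that the equivalence $\intfun^{\subuniverse[I]\text{-}\otimes}(\subuniverse[I]^{\simeq},-)\simeq\operatorname{id}$ holds as $\subuniverse[I]$-monoidal functors, not just on underlying objects. For this I will invoke the example following \Cref{cor:cmon_is_self_enriched_if_inductible} with $A=*$. There, \Cref{lem:no_endomorphisms_of_CMon} identifies any two lifts of the same endofunctor of $\internalcatofcats$ canonically.

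The main obstacle is coherence. The identification with the forgetful functor has to be natural in $\varintcat{C}$, and it has to respect the monoidal structures rather than only underlying objects. I expect \Cref{lem:no_endomorphisms_of_CMon} to handle this, since it forces any lift over the underlying $\vartopos$-category to be canonically unique. Beyond that, I only need to verify once that evaluation at the generator is natural in $\varintcat{C}$, which follows from the naturality in \Cref{example:free_E_monoidal_cat}.
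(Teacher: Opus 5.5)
Your proposal is correct and follows the paper's argument. The paper's proof is exactly your middle step: the universal property of $\subuniverse[I]^{\simeq}$ (\Cref{example:free_E_monoidal_cat}) identifies $\intfun^{\subuniverse[I]\text{-}\otimes}(\subuniverse[I]^{\simeq},\varintcat{C})\simeq\varintcat{C}$ with evaluation at the point, so the composite is $F\mapsto F(*)$. Your extra step, upgrading this to an $\subuniverse[I]$-monoidal identification via the pointwise structure and \Cref{lem:no_endomorphisms_of_CMon}, is not in the paper's proof of this statement. It is sound, however, and matches how the paper uses this identification in the example after \Cref{cor:cmon_is_self_enriched_if_inductible} and in \Cref{prop:evaluation_is_I_monoidal}.
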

\begin{proof}
    The universal property of $\subuniverse[I]^{\simeq}$ identifies the last equivalence with evaluation at the point. Thus the displayed composite sends an algebra $F$ to $F(*)$, as required.
\end{proof}

\subsubsection*{The adjunction for commutative algebras}

Our next goal is to construct an adjunction
\[
\equipcocart:\internalcatofcats^{\subuniverse[I]\text{-}\sqcup}
\fromto\intsmon[I](\internalcatofcats):\intalg^{\subuniverse[I]}.
\]
We begin by showing that the pointwise structure on commutative algebras is cocartesian.

\begin{proposition}\label{prop:calg_has_E_colimits}
    Let $\subuniverse[I]$ be an inductible context-free subuniverse, and let $\varintcat{C}$ be an $\subuniverse[I]$-monoidal $\vartopos$-category. The pointwise $\subuniverse[I]$-monoidal structure on
    \[
    \intalg^{\subuniverse[I]}(\varintcat{C})
    \simeq\intfun^{\subuniverse[I]\text{-}\otimes}(\subuniverse[I],\varintcat{C})
    \]
    is cocartesian.
\end{proposition}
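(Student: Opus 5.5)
The plan is to verify cocartesianness through the criterion of \Cref{thm:uniqueness_of_cartesian_structures}. Write $\varintcat{A}:=\intalg^{\subuniverse[I]}(\varintcat{C})$ with its pointwise structure. It suffices to show that the $\vartopos$-functor $\intspan(\subuniverse[I])\to\internalcatofcats$ classifying $\varintcat{A}$ is $\subuniverse[I]$-adjointed. Concretely, for every $p:C\to B$ in $\subuniverse[I](A)$ we need a left adjoint $p_\sharp$ of $p^*:\varintcat{A}^{B}\to\varintcat{A}^{C}$, and the inductively defined norm maps $p_\sharp\to p_\otimes$ must be equivalences. I would not check the norms by hand. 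Instead I would exhibit the whole structure as the image of an adjointed functor under a $2$-functor, and then invoke \Cref{lem:nat_of_norms} and \Cref{cor:upward_close_lemma}.

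\textbf{Step 1: the source structure is adjointed.} The source $\subuniverse[I]$ carries the structure inherited from $\equipcocart(\subuniverse[I])$, and since $\subuniverse[I]$ is full it is cocartesian. Apply the doubling functor $\delta$ of \Cref{cor:doubling}. This gives $\delta(\subuniverse[I]):\intspan(\subuniverse[I])\to\intsmon[I](\internalcatofcats)$, whose value on $p$ is $p_\sharp$ on $\subuniverse[I]^{B}\to\subuniverse[I]^{A}$, now regarded as an $\subuniverse[I]$-monoidal functor. The forgetful $2$-functor $\intsmon[I](\internalcatofcats)\to\internalcatofcats$ sends this to the cocartesian, hence adjointed, structure. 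I would then argue that the adjunctions $p_\sharp\dashv p^*$ lift to $\intsmon[I](\internalcatofcats)$. The reason is that $p^*$ is $\subuniverse[I]$-monoidal and, by \Cref{prop:C_is_cartesian_iff_tensor_preserves_limits}, all tensor functors commute with $p_\sharp$. Consequently the oplax structure on $p_\sharp$ is invertible, and the unit and counit are monoidal transformations. Once the adjunctions lift, the norm maps in $\intsmon[I](\internalcatofcats)$ are defined. Because the forgetful $2$-functor preserves norms (\Cref{lem:nat_of_norms}) and is conservative on $2$-morphisms, those norms are equivalences. So $\delta(\subuniverse[I])$ is $\subuniverse[I]$-adjointed as a functor into $\intsmon[I](\internalcatofcats)$.

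\textbf{Step 2: apply $\intfun^{\subuniverse[I]\text{-}\otimes}(-,\varintcat{C})$ and identify the result.} This $2$-functor is contravariant on $1$-morphisms and covariant on $2$-morphisms. It therefore sends each adjunction $p_\sharp\dashv p^*$ to an adjunction $(p^*)^*\dashv(p_\sharp)^*$. After the left/right swap, it converts the "left adjointed" data of Step 1 into left adjointed data again, now indexed along the spans running in the opposite direction. The required identification is
\[
\intfun^{\subuniverse[I]\text{-}\otimes}(\delta(\subuniverse[I])(-),\varintcat{C})\;\simeq\;\text{the pointwise structure on }\varintcat{A}.
\]
To establish it, I would use that both sides are $\subuniverse[I]$-monoidal lifts of $\intspan(\subuniverse[I])\ni B\mapsto\varintcat{A}^{B}$. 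Both arise from \Cref{Thm:CLL_amby}: the left side is obtained by forming $\intfun^{\subuniverse[I]\text{-}\otimes}(-,\varintcat{C})$ of an $\subuniverse[I]$-monoid object, which lands in $\intsmon[I](\intsmon[I](\internalcatofcats))$. Uniqueness then follows from \Cref{cor:doubling} and \Cref{lem:no_endomorphisms_of_CMon}. With the identification in hand, \Cref{cor:upward_close_lemma} shows that the structure on $\varintcat{A}$ is adjointed, and hence cocartesian by \Cref{thm:uniqueness_of_cartesian_structures}.

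\textbf{Main obstacle.} I expect the hard point to be the identification in Step 2, together with the bookkeeping of variance. The concern is that precomposition flips $1$-cells, so it is not immediately clear that the image is "left adjointed along $p$" with respect to the \emph{same} span directions as the pointwise structure. I would first test the identification on underlying objects. There $p_\otimes$ of a family of algebras should be the fibrewise tensor product. The unit $R\to p^*p_\otimes R$ should come from the algebra applied to the unit $D\to p^*p_\sharp D$ in $\subuniverse[I]$, and the counit $p_\otimes p^*S\to S$ should be the multiplication of $S$ along the fold map.

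If the $2$-categorical identification proves too delicate, the fallback is more direct. I would build the adjunction $p_\otimes\dashv p^*$ on $\varintcat{A}$ using exactly these unit and counit maps, and check the triangle identities by functoriality, since they already hold in $\subuniverse[I]$. The norm maps would then be handled by induction on the truncation level, using \Cref{lem:every_morphism_in_I_is_locally_truncated_on_the_target} to pass to a cover on which every morphism is truncated.
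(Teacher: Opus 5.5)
Your main route is sound, but it differs from the paper's. The paper builds the adjunction $s_\otimes\dashv s^*$ on $\intalg^{\subuniverse[I]}(\varintcat{C})$ by hand, and it runs no norm induction:
\begin{itemize}
\item the counit is $R$ applied to the universal multiplication $\mu_s\colon s_\sharp s^{*}{*_A}\to{*_A}$;
\item the unit $\upsilon^Q_s$ is assembled from $\mu_\Delta$ along the diagonal square, because $Q(\upsilon_s)$ only makes sense for $Q$ split from context $A$, which is exactly the problem your fallback runs into;
\item it is compared with $Q(\upsilon_s)$ only after applying $s_\otimes$;
\item the triangle identities are reduced to those of the universal algebra ${*_-}$.
\end{itemize}

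You instead obtain the adjunction as the image of $p_\sharp\dashv p^*$ under the contravariant $2$-functor $\intfun^{\subuniverse[I]\text{-}\otimes}(-,\varintcat{C})$, and let \Cref{lem:nat_of_norms} and \Cref{thm:uniqueness_of_cartesian_structures} handle the norms. This has two advantages. Non-split families are treated uniformly: a $C$-family $Q$ is a monoidal functor out of $\subuniverse[I]^C$, so the unit $Q\eta$ exists outright. The counit is also visibly $R\epsilon$, i.e.\ $R(\mu_p)$, which is the description \Cref{thm:calg_is_right_adjoint} later relies on.

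The cost is infrastructure that the paper's proof avoids:
\begin{itemize}
\item the enriched hom as a $2$-functor on the $\vartopos$-$2$-category of $\subuniverse[I]$-monoidal categories;
\item an internal doctrinal-adjunction lift of $p_\sharp\dashv p^*$, requiring Beck--Chevalley of $p_\sharp$ against $q_\sharp$ and $q^*$;
\item a version of \Cref{cor:upward_close_lemma} for $2$-functors out of $\vartwocat{K}^{\op}$. This holds because adjointedness is invariant under passing to $(-)^{\op}$ combined with the self-duality $\intspan(\subuniverse[I])\simeq\intspan(\subuniverse[I])^{\op}$, which is where fullness of $\subuniverse[I]$ enters.
\end{itemize}

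In Step 2, state the uniqueness for functors rather than for monoid structures on the single object $\varintcat{A}$, since the latter are not unique. The assignment $B\mapsto\intfun^{\subuniverse[I]\text{-}\otimes}(\subuniverse[I]^B,\varintcat{C})$, with its pointwise structures, is an object of $\intsmon[I](\intsmon[I](\internalcatofcats))$. \Cref{cor:doubling} then identifies it with $\delta$ of its underlying pointwise structure. Checking that this assignment is $\subuniverse[I]$-continuous requires $\delta(\subuniverse[I])$ to send the span coproduct cones to coproduct cones in $\intsmon[I](\internalcatofcats)$. That is the semiadditivity fact that continuous functors between $\subuniverse[I]$-ambidextrous categories are also cocontinuous. \Cref{lem:no_endomorphisms_of_CMon} is not the relevant input here.
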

\begin{proof}
    Let $[s:B\to A]\in\subuniverse[I](A)$. We construct a unit and counit for $s_\otimes\dashv s^*$. Write $*_A=[\id:A\to A]\in\subuniverse[I](A)$ and similarly for $*_B$.

    Consider the counit morphism
    \[
    \mu_s:s_\sharp s^* *_A\to *_A
    \]
    in $\subuniverse[I](A)$, which we call the \emph{universal multiplication}. It is canonically a morphism of $\subuniverse[I]$-algebras, using the equivalences
    \[
    \begin{aligned}
    \intalg^{\subuniverse[I]}(\subuniverse[I])
    &=\intfun^{\subuniverse[I]\text{-}\otimes}(\subuniverse[I],\subuniverse[I])\\
    &\simeq\intfun^{\subuniverse[I]\text{-}\sqcup}(\subuniverse[I],\subuniverse[I])
    \simeq\subuniverse[I].
    \end{aligned}
    \]

    For $R\in\intalg^{\subuniverse[I]}(\varintcat{C})(A)$, there is an identification
    \[
    (s_\otimes s^*R)(*_A)\simeq R(s_\sharp s^* *_A).
    \]
    Applying $R$ to the universal multiplication therefore gives a natural morphism whose underlying map is
    \[
    (s_\otimes s^*R)(*_A)
    \simeq R(s_\sharp s^* *_A)
    \xto{R(\mu_s)}R(*_A).
    \]
    Since $\subuniverse[I]$-monoidal functors preserve $\subuniverse[I]$-algebras, this is a morphism of algebras
    \[
    \mu_s^R:s_\otimes s^*R\to R.
    \]

    We next construct the unit for an arbitrary algebra $Q\in\intalg^{\subuniverse[I]}(\varintcat{C})(B)$. Consider the diagram
    \[
    \begin{tikzcd}
        B\\
        &{B\times_A B}\arrow[rd,"\lrcorner"{anchor=center,pos=0.125},draw=none]&B\\
        &B&A.
        \arrow["\Delta"',from=1-1,to=2-2]
        \arrow[curve={height=-6pt},equal,from=1-1,to=2-3]
        \arrow[curve={height=6pt},equal,from=1-1,to=3-2]
        \arrow["\pi_2",from=2-2,to=2-3]
        \arrow["\pi_1"',from=2-2,to=3-2]
        \arrow["s",from=2-3,to=3-3]
        \arrow["s"',from=3-2,to=3-3]
    \end{tikzcd}
    \]
    It gives a morphism in $\intalg^{\subuniverse[I]}(\varintcat{C})(B)$:
    \[
    \begin{aligned}
    \upsilon_s^Q:Q
    &\simeq(\pi_1)_\otimes\Delta_\otimes\Delta^*\pi_2^*Q\\
    &\xto{(\pi_1)_\otimes\mu_\Delta^{\pi_2^*Q}}
    (\pi_1)_\otimes\pi_2^*Q
    \simeq s^*s_\otimes Q.
    \end{aligned}
    \]

    In $\subuniverse[I](B)$, write
    \[
    \upsilon_s:=\upsilon_s^{*_B}:*_B\to s^*s_\sharp *_B.
    \]
    We call this the \emph{universal split unit}. It identifies with the unit of the adjunction
    \[
    s_\sharp:\subuniverse[I](B)\fromto\subuniverse[I](A):s^*.
    \]
    We say that $Q$ \emph{splits} if $Q\simeq s^*R$ for some $R\in\intalg^{\subuniverse[I]}(\varintcat{C})(A)$.

    If $Q$ splits, then $\pi_1^*Q\simeq\pi_2^*Q$, and hence
    \[
    Q(\upsilon_s)
    \simeq(\pi_1)_\otimes\mu_\Delta^{\pi_1^*Q}
    \simeq(\pi_1)_\otimes\mu_\Delta^{\pi_2^*Q}
    =\upsilon_s^Q.
    \]
    The first equivalence follows from functoriality of $Q$ and naturality of $\upsilon$ in $\intspan(\subuniverse[I])$. The second uses the splitting, and the last equality is the definition of $\upsilon_s^Q$.

    For arbitrary $Q$, the corresponding comparison holds after applying $s_\otimes$:
    \[
    \begin{aligned}
    s_\otimes\upsilon_s^Q
    &=s_\otimes(\pi_1)_\otimes\mu_\Delta^{\pi_2^*Q}\\
    &\simeq s_\otimes(\pi_2)_\otimes\mu_\Delta^{\pi_2^*Q}
    \simeq s_\otimes Q(\upsilon_s).
    \end{aligned}
    \]
    Here the first equality and the last equivalence are as above, while the middle equivalence uses $s\pi_1=s\pi_2$.

    These natural maps satisfy the triangle identities: the preceding comparisons reduce them to the identities for the universal algebra $*_-$. The Beck--Chevalley conditions hold as well. Thus $s_\otimes$ computes the indexed colimit along $s$, proving the proposition.
\end{proof}

\begin{remark}
    We expect this proposition to hold for every full context-free subuniverse $\subuniverse$, but to fail in general when $\subuniverse$ is not full.
\end{remark}

Passing to opposites gives the corresponding statement for coalgebras.

\begin{proposition}\label{prop:cocalg_has_E_limits}
    Let $\subuniverse[I]$ be an inductible context-free subuniverse, and let $\varintcat{C}$ be an $\subuniverse[I]$-monoidal $\vartopos$-category. The pointwise $\subuniverse[I]$-monoidal structure on
    \[
    \intcoalg^{\subuniverse[I]}(\varintcat{C})
    \simeq\intfun^{\subuniverse[I]\text{-}\otimes}(\subuniverse[I]^{\op},\varintcat{C})
    \]
    is cartesian.
\end{proposition}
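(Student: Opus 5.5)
The plan is to deduce this from \Cref{prop:calg_has_E_colimits} by passing to opposite categories, as the phrase ``Passing to opposites'' suggests.

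First, I would identify the opposite of the coalgebra category with an algebra category. The opposite $\subuniverse[I]$-monoidal structure (see \Cref{prop:op_is_also_monoidal}) is obtained by postcomposing with the involution $(-)^{\op}$ of $\internalcatofcats$. This involution is a $\vartopos$-equivalence, so it preserves $\subuniverse[I]$-limits. Hence it induces an involution of $\intsmon[I](\internalcatofcats)$, and on mapping categories it gives
\[
\intfun^{\subuniverse[I]\text{-}\otimes}(\varintcat{A},\varintcat{B})^{\op}\simeq\intfun^{\subuniverse[I]\text{-}\otimes}(\varintcat{A}^{\op},\varintcat{B}^{\op}).
\]
Taking $\varintcat{A}=\subuniverse[I]^{\op}$ and $\varintcat{B}=\varintcat{C}$ yields
\[
\intcoalg^{\subuniverse[I]}(\varintcat{C})^{\op}\simeq\intalg^{\subuniverse[I]}(\varintcat{C}^{\op}).
\]
This uses that $\subuniverse[I]^{\op}$ carries, by definition, the opposite structure of $\subuniverse[I]$.

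Second, and this is the step needing the most care, I would check that this equivalence is $\subuniverse[I]$-monoidal when the left side has the opposite of the pointwise structure and the right side has the pointwise structure. The pointwise structure of \Cref{cor:cmon_is_self_enriched_if_inductible} is the canonical lift through $\intsmon[I]$ provided by \Cref{Thm:CLL_amby}. Applying $(-)^{\op}$ to the lift for $\intfun^{\subuniverse[I]\text{-}\otimes}(\subuniverse[I]^{\op},\varintcat{C})$ produces another $\subuniverse[I]$-monoidal lift of the same underlying category $\intalg^{\subuniverse[I]}(\varintcat{C}^{\op})$. Since $\intalg^{\subuniverse[I]}(\varintcat{C}^{\op})$ is $\subuniverse[I]$-complete, any two such lifts over the same underlying category agree by \Cref{lem:no_endomorphisms_of_CMon}, or directly by the uniqueness in \Cref{Thm:CLL_amby}. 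I would also verify that the lifts are natural in $\varintcat{C}$; this follows from the canonicity of the lift.

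Finally, \Cref{prop:calg_has_E_colimits} applied to $\varintcat{C}^{\op}$ shows that the pointwise structure on $\intalg^{\subuniverse[I]}(\varintcat{C}^{\op})$ is cocartesian. By definition, a structure is cocartesian exactly when its opposite is cartesian. Combined with the second step, this makes the pointwise structure on $\intcoalg^{\subuniverse[I]}(\varintcat{C})$ cartesian.
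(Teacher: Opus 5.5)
Your route is the paper's: its entire proof is to apply \Cref{prop:calg_has_E_colimits} to $\varintcat{C}^{\op}$ and take opposites. The paper leaves implicit the compatibility of the pointwise structure with $(-)^{\op}$, which is what your second step tries to justify. That justification is wrong as stated.

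Being $\subuniverse[I]$-complete does not make $\subuniverse[I]$-monoidal structures on a fixed $\vartopos$-category unique. Already for $\vartopos=\catofanima$ and $\subuniverse[I]=\varintcat{Fin}$, abelian groups form a complete category carrying both $\oplus$ and $\otimes$. Nor do \Cref{lem:no_endomorphisms_of_CMon} or \Cref{Thm:CLL_amby} say anything about lifts of a single object. They concern $\subuniverse[I]$-continuous lifts of \emph{functors} whose source is ambidextrous.

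The repair is to run the uniqueness argument at the level of functors in $\varintcat{C}$; this is what your closing remark about naturality has to become. Write $\sigma$ for the involution of $\intsmon[I](\internalcatofcats)$ induced by $(-)^{\op}$. Write $P_{\varintcat{A}}$ for the pointwise lift of $\intfun^{\subuniverse[I]\text{-}\otimes}(\varintcat{A},-)$ from \Cref{cor:cmon_is_self_enriched_if_inductible}.

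Then $\sigma\circ P_{\subuniverse[I]^{\op}}$ and $P_{\subuniverse[I]}\circ\sigma$ are both $\subuniverse[I]$-continuous functors $\intsmon[I](\internalcatofcats)\to\intsmon[I](\internalcatofcats)$. By your first step, both lift the same continuous functor $\varintcat{C}\mapsto\intalg^{\subuniverse[I]}(\varintcat{C}^{\op})$ to $\internalcatofcats$. Their source is $\subuniverse[I]$-ambidextrous (\Cref{cor:doubling}), so \Cref{Thm:CLL_amby} identifies them. Evaluating at $\varintcat{C}$ gives the $\subuniverse[I]$-monoidal equivalence between the opposite of the pointwise structure on $\intcoalg^{\subuniverse[I]}(\varintcat{C})$ and the pointwise structure on $\intalg^{\subuniverse[I]}(\varintcat{C}^{\op})$. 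With that change, your final step goes through as written.
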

\begin{proof}
    Apply \Cref{prop:calg_has_E_colimits} to $\varintcat{C}^{\op}$ and take opposites.
\end{proof}

The doubling functor $\delta$ from \Cref{cor:doubling} gives another description of the cocartesian structure on algebras.

\begin{proposition}\label{prop:calg_has_E_colimits_coherence}
    Let $\subuniverse[I]$ be an inductible context-free subuniverse, and let $F:\intspan(\subuniverse[I])\to\internalcatofcats$ be an $\subuniverse[I]$-monoidal $\vartopos$-category with underlying category $\varintcat{C}=F(*)$. The composite
    \[
    D:\intspan(\subuniverse[I])\xto{\delta(F)}\intsmon[I](\internalcatofcats)
    \xto{\intalg^{\subuniverse[I]}}\internalcatofcats
    \]
    identifies with $\equipcocart(\intalg^{\subuniverse[I]}(\varintcat{C}))$.
\end{proposition}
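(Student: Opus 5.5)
The plan is to reduce everything to the uniqueness of cocartesian structures, \Cref{thm:uniqueness_of_cartesian_structures}. Because $\equipcocart$ is fully faithful, and its essential image is exactly the cocartesian $\subuniverse[I]$-monoidal categories, two things suffice. First, $D$ is a cocartesian $\subuniverse[I]$-monoidal $\vartopos$-category. Second, its underlying $\vartopos$-category $D(*)$ is identified with $\intalg^{\subuniverse[I]}(\varintcat{C})$ in a way compatible with the indexed colimit functors. Granting both, $D\simeq\equipcocart(D(*))\simeq\equipcocart(\intalg^{\subuniverse[I]}(\varintcat{C}))$.

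\textbf{Step 1: $D$ is $\subuniverse[I]$-monoidal.} I would show that $D$ is $\subuniverse[I]^{\full}$-continuous. By \Cref{cor:doubling}, $\delta(F)$ is an $\subuniverse[I]$-monoid in $\intsmon[I](\internalcatofcats)$, so it preserves $\subuniverse[I]$-limits. The functor $\intalg^{\subuniverse[I]}=\intfun^{\subuniverse[I]\text{-}\otimes}(\subuniverse[I],-)$ preserves limits in its second variable by \Cref{cor:cmon_is_tensored_and_powered_by_cat}. The composite is therefore $\subuniverse[I]$-continuous, and $D$ is an $\subuniverse[I]$-monoidal $\vartopos$-category.

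\textbf{Step 2: identify the underlying category and the tensor functors.} Evaluating at the point gives $\delta(F)(*)\simeq F$, hence $D(*)\simeq\intalg^{\subuniverse[I]}(\varintcat{C})$. For $[s:B\to A]\in\subuniverse[I](A)$, the span $B=B\xto{s}A$ is sent by $\delta(F)$ to a morphism $s_\otimes\colon F^B\to F^A$ in $\intsmon[I](\internalcatofcats)$; by the currying description in \Cref{cor:doubling}, it is induced by the multiplication $\intspan(\subuniverse[I])\times\intspan(\subuniverse[I])\to\intspan(\subuniverse[I])$. Applying $\intalg^{\subuniverse[I]}$ turns this into the functor $\intalg^{\subuniverse[I]}(\varintcat{C})^B\to\intalg^{\subuniverse[I]}(\varintcat{C})^A$ given by pointwise tensoring, $R\mapsto s_\otimes R$. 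This is exactly the indexed tensor product of the pointwise structure from \Cref{cor:cmon_is_self_enriched_if_inductible}. Both structures are lifts of the same $\internalcatofcats$-valued data through $\intsmon[I]$, which is a terminal cartesian–cocartesian object by \Cref{Thm:CLL_amby}. I would therefore use \Cref{lem:no_endomorphisms_of_CMon} to identify $D$ with the pointwise $\subuniverse[I]$-monoidal structure on $\intalg^{\subuniverse[I]}(\varintcat{C})$.

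\textbf{Step 3: conclude.} By \Cref{prop:calg_has_E_colimits}, the pointwise structure is cocartesian, so $D$ is cocartesian with $s_\otimes\simeq s_\sharp$. \Cref{thm:uniqueness_of_cartesian_structures} then identifies $D$ with $\equipcocart(\intalg^{\subuniverse[I]}(\varintcat{C}))$.

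The main obstacle is Step 2: the coherent identification of the structure obtained through the doubling functor with the pointwise structure of \Cref{cor:cmon_is_self_enriched_if_inductible}, rather than a merely objectwise match of the functors $s_\otimes$. I would first reduce this to comparing two lifts of a single $\vartopos$-functor along the forgetful functor $\intsmon[I](\internalcatofcats)\to\internalcatofcats$, and then use the contractibility in \Cref{lem:no_endomorphisms_of_CMon}. If that reduction fails, the fallback is a direct check. One shows that the restriction of $D$ to $\subuniverse[I]^{\op}$ sends each $s$ to $s^*$ on algebras, and that $D$ is $\subuniverse[I]$-adjointed with the units $\upsilon_s$ and counits $\mu_s$ from the proof of \Cref{prop:calg_has_E_colimits}. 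Uniqueness of unfurling, \Cref{thm:internal_uniqueness_of_unfurling}, then gives the comparison.
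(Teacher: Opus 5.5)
Your proposal is correct and is essentially the paper's argument. The paper observes that $F\mapsto D$ and $F\mapsto\equipcocart(\intalg^{\subuniverse[I]}(F(*)))$ are both $\subuniverse[I]$-continuous lifts of $\intalg^{\subuniverse[I]}\colon\intsmon[I](\internalcatofcats)\to\internalcatofcats$, and concludes by the uniqueness in \Cref{Thm:CLL_amby}; your passage through the pointwise structure, \Cref{prop:calg_has_E_colimits}, and full faithfulness of $\equipcocart$ just makes explicit why the second construction is such a lift.

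One small correction: comparing the two lifts uses the terminality in \Cref{Thm:CLL_amby} directly, with source the ambidextrous $\intsmon[I](\internalcatofcats)$; continuity in $F$ of $F\mapsto\intalg^{\subuniverse[I]}\circ\delta(F)$ holds because $\delta$ is an equivalence. \Cref{lem:no_endomorphisms_of_CMon} does not apply here, since it only concerns endofunctors of $\intsmon[I](\varintcat{C})$ over $\varintcat{C}$.
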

\begin{proof}
    Both constructions lift the limit-preserving $\vartopos$-functor
    \[
    \intalg^{\subuniverse[I]}:\intsmon[I](\internalcatofcats)\to\internalcatofcats.
    \]
    They are therefore equivalent by \Cref{Thm:CLL_amby}.
\end{proof}

We now construct the unit and counit of the desired adjunction. This generalizes the fact that commutative algebras form the right adjoint to the inclusion of cocartesian symmetric monoidal categories.

\begin{proposition}\label{prop:canonical_Calg_structure_for_cocartesian_categories}
    Let $\varintcat{C}$ be an $\subuniverse[I]$-cocomplete $\vartopos$-category. The composite
    \[
    \begin{aligned}
    \varintcat{C}
    &\simeq\intfun(*,\varintcat{C})
    \simeq\intfun^{\subuniverse[I]\text{-}\sqcup}(\subuniverse[I],\varintcat{C})\\
    &\xto{\equipcocart}
    \intfun^{\subuniverse[I]\text{-}\otimes}
    (\equipcocart(\subuniverse[I]),\equipcocart(\varintcat{C}))
    =\intalg^{\subuniverse[I]}(\equipcocart(\varintcat{C}))
    \end{aligned}
    \]
    is $\subuniverse[I]$-cocontinuous and natural in $\varintcat{C}$.
\end{proposition}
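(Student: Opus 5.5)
We prove naturality and $\subuniverse[I]$-cocontinuity separately. Naturality is formal: each functor in the composite is natural in $\varintcat{C}$ for $\subuniverse[I]$-cocontinuous functors. The identification $\varintcat{C}\simeq\intfun^{\subuniverse[I]\text{-}\sqcup}(\subuniverse[I],\varintcat{C})$ is \Cref{prop:S_is_free_with_S_colmitis}, which is natural in the target. The middle arrow is the action of the $\vartopos$-functor $\equipcocart$ on internal mapping categories, which is natural by \Cref{prop:categories_with_E_colimits_have_E_monoidal_structure}.

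For cocontinuity, we first note that the middle arrow is an equivalence. By \Cref{thm:uniqueness_of_cartesian_structures}, $\equipcocart$ is fully faithful, and $\equipcocart(\subuniverse[I])$ is the structure placed on $\subuniverse[I]$ in \Cref{def:E_algebras_and_coalgebras}. Hence the whole composite is an equivalence
\[
\Phi_{\varintcat{C}}\colon\varintcat{C}\xto{\sim}\intalg^{\subuniverse[I]}(\equipcocart(\varintcat{C})).
\]
By \Cref{prop:calg_has_E_colimits}, the target is $\subuniverse[I]$-cocomplete. An equivalence of $\vartopos$-categories preserves every colimit that exists, so $\Phi_{\varintcat{C}}$ is $\subuniverse[I]$-cocontinuous.

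The one point to check is that the colimits in the target, which \Cref{prop:calg_has_E_colimits} computes as the pointwise tensor products $s_\otimes$, are genuine internal colimits of the underlying $\vartopos$-category. The proof of \Cref{prop:calg_has_E_colimits} shows exactly this, since it exhibits $s_\otimes\dashv s^*$ together with the Beck--Chevalley conditions. A direct check is also available. Under $\Phi$, an object $x\in\varintcat{C}(B)$ goes to the cocontinuous functor $\subuniverse[I]\to\varintcat{C}$ with $T\mapsto T_\sharp x$. The pointwise $s_\otimes$ of this functor, evaluated at $T$, is $s_\sharp$ of its value, because tensor products in $\equipcocart(\varintcat{C})$ are $\sharp$-colimits. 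This agrees with $\Phi(s_\sharp x)$ by the Beck--Chevalley identities for $\varintcat{C}$.

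The main obstacle is naturality of these identifications across contexts, since the Beck--Chevalley coherence for $\equipcocart$ must be invoked contextwise. The equivalence argument sidesteps this: full faithfulness is a statement about $\vartopos$-functors, so it holds in every context at once.
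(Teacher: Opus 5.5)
Your proof is correct, but its main line is not the paper's. The paper argues exactly as in your ``direct check'' paragraph. By \Cref{prop:calg_has_E_colimits}, the $\subuniverse[I]$-colimits of $\intalg^{\subuniverse[I]}(\equipcocart(\varintcat{C}))$ are the pointwise tensor products. These are induced by the tensor products of $\equipcocart(\varintcat{C})$, i.e.\ by the indexed colimits $s_\sharp$ of $\varintcat{C}$, so the composite commutes with them; your base-change computation makes this explicit. Naturality is treated as you do.

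Your primary route instead shows that the composite is an equivalence via \Cref{thm:uniqueness_of_cartesian_structures}, and that step needs one more sentence. Full faithfulness of the $\vartopos$-functor $\equipcocart$ gives equivalences of mapping $\vartopos$-anima, i.e.\ only of the cores
\[
\intfun^{\subuniverse[I]\text{-}\sqcup}(\subuniverse[I],\varintcat{C})^{\simeq}\simeq\intfun^{\subuniverse[I]\text{-}\otimes}(\equipcocart(\subuniverse[I]),\equipcocart(\varintcat{C}))^{\simeq}.
\]
What is missing is therefore the level of natural transformations, not the level of contexts. You can obtain it by running the same argument for $\varintcat{C}^{\Delta^1}$ and using $\equipcocart(\varintcat{C}^{\Delta^1})\simeq\equipcocart(\varintcat{C})^{\Delta^1}$. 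This holds because the pointwise structure on a cotensor of a cocartesian category is again cocartesian, which is the observation used in \Cref{prop:I_cocart_loc_is_sym_monoidal}. Once you have an equivalence, the appeal to \Cref{prop:calg_has_E_colimits} and your ``one point to check'' are superfluous, since cocompleteness and cocontinuity transfer along equivalences.

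As for what each route buys: yours proves more, namely that the unit of \Cref{thm:calg_is_right_adjoint} is an equivalence. The paper only extracts this later from the adjunction, in \Cref{thm:in_cocart_everything_is_alg}. However, your route depends on the uniqueness of unfurling, whereas the paper's pointwise check uses only the computation of colimits in algebra categories. The paper's argument is therefore the one that should survive for general full subuniverses, where the authors expect the adjunction to exist but $\equipcocart$ to fail to be fully faithful.
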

\begin{proof}
    By \Cref{prop:calg_has_E_colimits}, colimits in the target are given by its pointwise tensor products. These are induced by the tensor products in $\equipcocart(\varintcat{C})$, which are the indexed colimits in $\varintcat{C}$ by \Cref{prop:categories_with_E_colimits_have_E_monoidal_structure}. Thus the displayed functor is $\subuniverse[I]$-cocontinuous.

    Naturality of the free cocontinuous extension follows from naturality of left Kan extension. The final functor is natural by functoriality of $\equipcocart$.
\end{proof}

The forgetful functor provides the counit.

\begin{proposition}\label{prop:forgetful_from_calg_is_monoidal}
    Let $\varintcat{C}$ be an $\subuniverse[I]$-monoidal $\vartopos$-category. The forgetful functor
    \[
    \intalg^{\subuniverse[I]}(\varintcat{C})\to\varintcat{C},
    \qquad F\mapsto F(*),
    \]
    lifts to a natural $\subuniverse[I]$-monoidal functor
    \[
    \equipcocart(\intalg^{\subuniverse[I]}(\varintcat{C}))\to\varintcat{C}.
    \]
\end{proposition}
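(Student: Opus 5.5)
We construct the lift as the counit of the adjunction $\equipcocart\dashv\intalg^{\subuniverse[I]}$, by identifying the forgetful functor with evaluation at the unit object $*\in\subuniverse[I]$. By \Cref{prop:forgetful_from_calg_is_internally_representable}, the forgetful functor is precomposition along the $\subuniverse[I]$-monoidal inclusion $j:\subuniverse[I]^{\simeq}\into\subuniverse[I]$, followed by $\intfun^{\subuniverse[I]\text{-}\otimes}(\subuniverse[I]^{\simeq},\varintcat{C})\simeq\varintcat{C}$. So the task is to promote evaluation at $*$ to an $\subuniverse[I]$-monoidal functor out of the \emph{cocartesian} structure on $\intalg^{\subuniverse[I]}(\varintcat{C})$, rather than the pointwise one. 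By \Cref{prop:calg_has_E_colimits}, however, these two structures coincide. We therefore only need an $\subuniverse[I]$-monoidal lift of $F\mapsto F(*)$ with respect to the pointwise structure.

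For that, the first step is to apply \Cref{prop:evaluation_is_I_monoidal} with $x=*$ in context $*$. It says that $\operatorname{ev}_*:\intfun^{\subuniverse[I]\text{-}\otimes}(\subuniverse[I],\varintcat{C})\to\varintcat{C}$ is canonically $\subuniverse[I]$-monoidal for the pointwise structure on the source. Unwinding that proof, this monoidal structure is precomposition with $j$. It is obtained by applying the $\vartopos$-functor $\intfun^{\subuniverse[I]\text{-}\otimes}(-,\varintcat{C})$, lifted to $\intsmon[I](\internalcatofcats)$ by \Cref{cor:cmon_is_self_enriched_if_inductible}, to the morphism $j$. The target $\intfun^{\subuniverse[I]\text{-}\otimes}(\subuniverse[I]^{\simeq},\varintcat{C})$ is then identified with $\varintcat{C}$ by \Cref{lem:no_endomorphisms_of_CMon}, exactly as in the example preceding \Cref{prop:evaluation_is_I_monoidal}.

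Composing with the identification of \Cref{prop:calg_has_E_colimits}, or more coherently \Cref{prop:calg_has_E_colimits_coherence}, gives the desired $\equipcocart(\intalg^{\subuniverse[I]}(\varintcat{C}))\to\varintcat{C}$. Its underlying functor is $F\mapsto F(*)$ by construction.

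For naturality in $\varintcat{C}$, we note that every ingredient is a $\vartopos$-functor in $\varintcat{C}$. Precomposition with $j$ is natural transformation-valued in the second variable of the bifunctor from \Cref{cor:cmon_is_self_enriched_if_inductible}. The equivalence $\intfun^{\subuniverse[I]\text{-}\otimes}(\subuniverse[I]^{\simeq},-)\simeq\operatorname{id}$ is natural, again by \Cref{lem:no_endomorphisms_of_CMon}.

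The main obstacle is making the identification of the pointwise structure with $\equipcocart(\intalg^{\subuniverse[I]}(\varintcat{C}))$ natural in $\varintcat{C}$, rather than objectwise. I would handle this via \Cref{prop:calg_has_E_colimits_coherence}: both constructions are lifts of the limit-preserving functor $\intalg^{\subuniverse[I]}:\intsmon[I](\internalcatofcats)\to\internalcatofcats$ through the forgetful functor from $\subuniverse[I]$-monoids. The universal property of \Cref{Thm:CLL_amby}, in its functor-category form \Cref{lem:no_endomorphisms_of_CMon}, then makes the space of such lifts contractible, so the identification is canonical and hence natural. The fully faithfulness of $\equipcocart$ (\Cref{thm:uniqueness_of_cartesian_structures}) ensures that the cocartesian structure is determined by the underlying $\subuniverse[I]$-cocomplete category, so no further coherence data need be checked.
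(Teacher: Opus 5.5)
Your proposal is correct and follows the paper's argument. The paper likewise obtains the lift by combining the description of the forgetful functor as precomposition with $\subuniverse[I]^{\simeq}\into\subuniverse[I]$ (\Cref{prop:forgetful_from_calg_is_internally_representable}) with the fact that the pointwise structure on $\intalg^{\subuniverse[I]}(\varintcat{C})$ is cocartesian (\Cref{prop:calg_has_E_colimits}); your extra appeals to \Cref{prop:evaluation_is_I_monoidal} and \Cref{prop:calg_has_E_colimits_coherence} only make the monoidality and the naturality explicit. One caution: in the paper this lift is what \emph{defines} the counit of \Cref{thm:calg_is_right_adjoint}, so your opening phrase ``as the counit of the adjunction'' should be read as naming the role of the lift, not as invoking that adjunction, which would be circular.
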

\begin{proof}
    This follows from \Cref{prop:forgetful_from_calg_is_internally_representable}, together with the identification of the pointwise structure as cocartesian in \Cref{prop:calg_has_E_colimits}.
\end{proof}

\begin{theorem}\label{thm:calg_is_right_adjoint}
    Let $\subuniverse[I]$ be an inductible context-free subuniverse. There is an adjunction of $\vartopos$-categories
    \[
    \equipcocart:\internalcatofcats^{\subuniverse[I]\text{-}\sqcup}
    \fromto\intsmon[I](\internalcatofcats):\intalg^{\subuniverse[I]},
    \]
    with $\equipcocart$ as the left adjoint.
\end{theorem}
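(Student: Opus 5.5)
We exhibit the adjunction through a unit and a counit satisfying the triangle identities, using the constructions just established.

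First, we make $\intalg^{\subuniverse[I]}$ a functor $\intsmon[I](\internalcatofcats)\to\internalcatofcats^{\subuniverse[I]\text{-}\sqcup}$. It is the $\vartopos$-functor $\intfun^{\subuniverse[I]\text{-}\otimes}(\subuniverse[I],-)$. By \Cref{prop:calg_has_E_colimits}, its values are $\subuniverse[I]$-cocomplete. Postcomposition with an $\subuniverse[I]$-monoidal functor $\varintcat{C}\to\varintcat{D}$ commutes with the pointwise tensor products, since the pointwise structure is natural in the second variable (\Cref{cor:cmon_is_self_enriched_if_inductible}). Hence postcomposition is $\subuniverse[I]$-cocontinuous, again by \Cref{prop:calg_has_E_colimits}. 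Next we fix the two transformations:
\begin{itemize}
    \item The unit $\eta_{\varintcat{C}}\colon\varintcat{C}\to\intalg^{\subuniverse[I]}(\equipcocart(\varintcat{C}))$ is the natural $\subuniverse[I]$-cocontinuous functor of \Cref{prop:canonical_Calg_structure_for_cocartesian_categories}.
    \item The counit $\epsilon_{\varintcat{C}}\colon\equipcocart(\intalg^{\subuniverse[I]}(\varintcat{C}))\to\varintcat{C}$ is the natural $\subuniverse[I]$-monoidal lift of the forgetful functor from \Cref{prop:forgetful_from_calg_is_monoidal}.
\end{itemize}

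Second, we check the triangle identity on the left adjoint side. The composite
\[
\equipcocart(\varintcat{C})\xto{\equipcocart(\eta)}\equipcocart(\intalg^{\subuniverse[I]}(\equipcocart\varintcat{C}))\xto{\epsilon}\equipcocart(\varintcat{C})
\]
is a map between objects in the essential image of $\equipcocart$. Since $\equipcocart$ is fully faithful (\Cref{thm:uniqueness_of_cartesian_structures}), it suffices to identify the underlying functor $\varintcat{C}\to\varintcat{C}$ with the identity. Unwinding, the underlying functor sends $x$ to the free $\subuniverse[I]$-cocontinuous extension $\subuniverse[I]\to\varintcat{C}$ of $x$, evaluated at $*$. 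This is $x$, naturally in $x$, by \Cref{prop:S_is_free_with_S_colmitis}.

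Third, we check the triangle identity on the right adjoint side. The composite
\[
\intalg^{\subuniverse[I]}(\varintcat{D})\xto{\eta}\intalg^{\subuniverse[I]}(\equipcocart\intalg^{\subuniverse[I]}(\varintcat{D}))\xto{\intalg(\epsilon)}\intalg^{\subuniverse[I]}(\varintcat{D})
\]
is an $\subuniverse[I]$-cocontinuous endofunctor. It lies over $\varintcat{D}$ after composing with the forgetful functor: following an algebra $R$ through the maps, the underlying object of $R$ ends up at $R(*)$. Rather than compare coherences by hand, I would use rigidity. Passing to opposites if needed, endofunctors of algebra categories commuting with the forgetful functor should be unique, in analogy with \Cref{lem:no_endomorphisms_of_CMon}.

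Concretely, I would write $\intalg^{\subuniverse[I]}(\varintcat{D})$ as the category of $\subuniverse[I]$-monoidal functors out of $\equipcocart(\subuniverse[I])$. I would then use that $\subuniverse[I]\simeq\subuniverse[I]^{\full}$ is the free $\subuniverse[I]$-cocomplete category on a point. By \Cref{thm:uniqueness_of_cartesian_structures}, this gives
\[
\intfun^{\subuniverse[I]\text{-}\otimes}\bigl(\equipcocart(\subuniverse[I]),\equipcocart(\subuniverse[I])\bigr)\simeq\subuniverse[I].
\]
Under this identification, the composite corresponds to precomposition with an $\subuniverse[I]$-monoidal endofunctor of $\subuniverse[I]$ sending $*$ to $*$. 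Such an endofunctor is the identity.

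The main obstacle is this last step. One must verify that the composite really is induced by precomposition with an endomorphism of the universal algebra $\subuniverse[I]$. This requires naturality of the unit from \Cref{prop:canonical_Calg_structure_for_cocartesian_categories} in the $\intalg$-variable. Failing that, I would instead check directly that both composites agree on underlying objects and multiplications $R(\mu_s)$, using the explicit description of the colimits in the proof of \Cref{prop:calg_has_E_colimits}.
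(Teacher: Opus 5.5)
The gap is in your third step, the triangle identity on the $\intalg^{\subuniverse[I]}$ side. Your unit, your counit, and your check of the triangle identity on the $\equipcocart$ side all agree with the paper; reducing to underlying functors via full faithfulness of $\equipcocart$ is a clean way to do that part.

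\textbf{The rigidity principle is false.} You cannot borrow it from \Cref{lem:no_endomorphisms_of_CMon}. That lemma comes from the terminality statement of \Cref{Thm:CLL_amby}, and there is nothing analogous for the forgetful functor $\intalg^{\subuniverse[I]}(\varintcat{D})\to\varintcat{D}$. Heuristically, the very adjunction you are proving says that $\subuniverse[I]$-cocontinuous functors $\varintcat{C}\to\intalg^{\subuniverse[I]}(\varintcat{D})$ lying over a fixed $f\colon\varintcat{C}\to\varintcat{D}$ are $\subuniverse[I]$-monoidal structures on $f$, and these need not be unique. Two concrete counterexamples:
\begin{itemize}
\item Take $\vartopos=\catofanima$, $\subuniverse[I]=\varintcat{Fin}$, and $\varintcat{D}$ the symmetric monoidal category of $\mathbb{Q}$-vector spaces. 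For $\lambda\in\mathbb{Q}^\times\setminus\{1\}$, the assignment $(R,m,u)\mapsto(R,\lambda m,\lambda^{-1}u)$ is a colimit-preserving automorphism of $\operatorname{CAlg}(\varintcat{D})$. It lies strictly over $\varintcat{D}$ but is not equivalent to the identity over $\varintcat{D}$.
\item On $\mathbb{Z}/2$-graded commutative $\mathbb{Q}$-algebras, multiply the product of two odd elements by $2$. This automorphism over $\varintcat{D}$ is not equivalent to the identity even as a plain functor. Naturality against free algebras forces any natural isomorphism to be $a\cdot\id$ on odd parts and $\id$ on even parts, and multiplicativity then forces $a^2=2$.
\end{itemize}
So no argument can work that uses, for a single fixed $\varintcat{D}$, only that the composite is $\subuniverse[I]$-cocontinuous and lies over $\varintcat{D}$.

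\textbf{The refined claim is the real content, and it is left open.} What would suffice is your claim that the composite is precomposition with an $\subuniverse[I]$-monoidal endomorphism of the universal algebra. Naturality in $\varintcat{D}$ is available, since the unit and counit are natural. However, the internal Yoneda lemma applied to $\intfun^{\subuniverse[I]\text{-}\otimes}(\subuniverse[I],-)^{\simeq}\simeq\intmap(\subuniverse[I],-)$ only identifies the composite on cores. To control morphisms you would also need compatibility with the cotensors $(-)^{\Delta^n}$ of \Cref{cor:cmon_is_tensored_and_powered_by_cat}, and you do not address this.

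\textbf{The fallback you list is the proof.} The paper closes the step by exactly the direct computation you mention at the end, and you should write it out:
\begin{itemize}
\item For an algebra $R$ in context $A$, the unit sends $R$ to the $\subuniverse[I]$-cocontinuous extension $G\colon\subuniverse[I]\to\intalg^{\subuniverse[I]}(\varintcat{D})$ of the point $R$.
\item By \Cref{prop:calg_has_E_colimits}, $G([p\colon B\to A])\simeq p_\sharp p^*R\simeq p_\otimes p^*R$. Its underlying object is $p_\otimes p^*R(*_A)\simeq R([p\colon B\to A])$, because $R$ is $\subuniverse[I]$-monoidal.
\item $G$ sends each universal multiplication $\mu_s$ to the counit $\mu_s^R$, whose underlying morphism is $R(\mu_s)$ by construction.
\end{itemize}
Hence postcomposing $G$ with the forgetful functor returns $R$, naturally in $R$, which is the second triangle identity.
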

\begin{proof}
    The unit is given by \Cref{prop:canonical_Calg_structure_for_cocartesian_categories}, and the counit by \Cref{prop:forgetful_from_calg_is_monoidal}. We verify the triangle identities.

    For an $\subuniverse[I]$-cocomplete $\vartopos$-category $\varintcat{C}$, the composite
    \[
    \equipcocart(\varintcat{C})
    \to\equipcocart(\intalg^{\subuniverse[I]}(\equipcocart(\varintcat{C})))
    \to\equipcocart(\varintcat{C})
    \]
    first extends a functor $*\to\varintcat{C}$ cocontinuously to $\subuniverse[I]$ and then evaluates at the point. It is therefore canonically equivalent to the identity.

    Now let $\varintcat{C}$ be an $\subuniverse[I]$-monoidal $\vartopos$-category. We must identify
    \[
    \intalg^{\subuniverse[I]}(\varintcat{C})
    \to\intalg^{\subuniverse[I]}(\equipcocart(\intalg^{\subuniverse[I]}(\varintcat{C})))
    \to\intalg^{\subuniverse[I]}(\varintcat{C})
    \]
    with the identity. Let $F:\subuniverse[I]\to\varintcat{C}$ be an algebra in context $A$, with underlying object $F(*_A)$. The first arrow assigns to $F$ the $\subuniverse[I]$-cocontinuous extension
    \[
    G:\subuniverse[I]\to\intalg^{\subuniverse[I]}(\varintcat{C})
    \]
    of the point selecting $F$, equipped with its induced monoidal structure. The second arrow postcomposes with the forgetful functor. It remains to identify the composite
    \[
    \subuniverse[I]\xto{G}\intalg^{\subuniverse[I]}(\varintcat{C})\to\varintcat{C}
    \]
    with $F$.

    For $[p:B\to A]\in\subuniverse[I](A)$, the value of $G$ is $p_\sharp p^*F$. Its underlying object is $p_\otimes p^*F(*_A)\simeq F([p:B\to A])$. On morphisms, $G$ is given by the corresponding counits. By their construction in \Cref{prop:calg_has_E_colimits}, these are the multiplication morphisms of $F$. Thus the composite recovers $F$, naturally in $F$, proving the second triangle identity.
\end{proof}

\begin{remark}
    We expect this adjunction to exist for every full context-free subuniverse. We do not expect $\equipcocart$ to remain fully faithful in that generality, as it is in \Cref{thm:uniqueness_of_cartesian_structures}.
\end{remark}

\subsubsection*{The cocartesian localization}

A symmetric monoidal $\infty$-category is cocartesian precisely when every object carries a unique commutative algebra structure. We first establish the corresponding criterion for $\subuniverse[I]$-monoidal $\vartopos$-categories.

\begin{theorem}\label{thm:in_cocart_everything_is_alg}
    An $\subuniverse[I]$-monoidal $\vartopos$-category $\varintcat{C}$ is cocartesian if and only if the forgetful functor
    \[
    \intalg^{\subuniverse[I]}(\varintcat{C})\to\varintcat{C},
    \qquad R\mapsto R(*),
    \]
    is an equivalence of $\vartopos$-categories.
\end{theorem}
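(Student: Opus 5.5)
We prove the two implications separately, using the adjunction of \Cref{thm:calg_is_right_adjoint} and the full faithfulness of \Cref{thm:uniqueness_of_cartesian_structures}.

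\emph{Cocartesian implies the forgetful functor is an equivalence.} Suppose $\varintcat{C}$ is cocartesian. By \Cref{thm:uniqueness_of_cartesian_structures} we may write $\varintcat{C}\simeq\equipcocart(\varintcat{C}_0)$ for an $\subuniverse[I]$-cocomplete $\varintcat{C}_0$ with the same underlying category. Since $\equipcocart$ is a fully faithful left adjoint, the unit $\varintcat{C}_0\to\intalg^{\subuniverse[I]}(\equipcocart(\varintcat{C}_0))$ of \Cref{prop:canonical_Calg_structure_for_cocartesian_categories} is an equivalence. One can also see this directly: $\intalg^{\subuniverse[I]}(\equipcocart\varintcat{C}_0)=\intfun^{\subuniverse[I]\text{-}\otimes}(\equipcocart\subuniverse[I],\equipcocart\varintcat{C}_0)\simeq\intfun^{\subuniverse[I]\text{-}\sqcup}(\subuniverse[I],\varintcat{C}_0)\simeq\varintcat{C}_0$ by full faithfulness and \Cref{prop:S_is_free_with_S_colmitis}. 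The first triangle identity in the proof of \Cref{thm:calg_is_right_adjoint} says that the unit followed by the forgetful functor is the identity. Hence the forgetful functor is an equivalence, being a one-sided inverse of an equivalence.

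\emph{The forgetful functor is an equivalence implies cocartesian.} Assume $U\colon\intalg^{\subuniverse[I]}(\varintcat{C})\to\varintcat{C}$ is an equivalence.
\begin{enumerate}
\item By \Cref{prop:forgetful_from_calg_is_monoidal}, $U$ lifts to an $\subuniverse[I]$-monoidal functor $\equipcocart(\intalg^{\subuniverse[I]}(\varintcat{C}))\to\varintcat{C}$. Its source is a cocartesian structure, using \Cref{prop:calg_has_E_colimits} to ensure the colimits exist.
\item The underlying functor of this lift is $U$, an equivalence of $\vartopos$-categories.
\item An $\subuniverse[I]$-monoidal functor whose underlying functor is an equivalence is an equivalence of $\subuniverse[I]$-monoidal categories. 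Its value at $[B\to A]$ is $U^B$ by \Cref{prop:values_of_E_monoidal_categories}, which is an equivalence, so the natural transformation $\spanEfE\to\internalcatofcats$ is pointwise invertible.
\end{enumerate}
Therefore $\varintcat{C}$ is equivalent, as an $\subuniverse[I]$-monoidal category, to a cocartesian one. Being cocartesian means lying in the essential image of $\equipcocart$, which is replete, so $\varintcat{C}$ is cocartesian.

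The main obstacle is the first implication. There we must know that the unit equivalence and the forgetful map are compatible, rather than merely that both categories are abstractly equivalent to $\varintcat{C}_0$. The first triangle identity supplies this compatibility. The identification $\intfun^{\subuniverse[I]\text{-}\otimes}(\equipcocart\subuniverse[I],\equipcocart\varintcat{C}_0)\simeq\intfun^{\subuniverse[I]\text{-}\sqcup}(\subuniverse[I],\varintcat{C}_0)$ also requires full faithfulness of $\equipcocart$ at the level of internal functor categories, not just mapping anima. This holds because \Cref{thm:uniqueness_of_cartesian_structures} is stated as a fully faithful $\vartopos$-functor and both sides are cotensored compatibly via \Cref{cor:cmon_is_tensored_and_powered_by_cat}.
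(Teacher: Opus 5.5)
Your proof is correct and follows the same route as the paper. The paper observes that the forgetful functor is the underlying functor of the counit of $\equipcocart\dashv\intalg^{\subuniverse[I]}$, and that, since $\equipcocart$ is fully faithful, this counit is invertible exactly on its essential image; your two directions unpack this standard fact (invertible unit plus the triangle identity in one direction, and $\varintcat{C}\simeq\equipcocart(\intalg^{\subuniverse[I]}(\varintcat{C}))$ together with conservativity of passing to the underlying category in the other).
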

\begin{proof}
    This is the underlying functor of the counit in \Cref{thm:calg_is_right_adjoint}. The left adjoint is fully faithful by \Cref{thm:uniqueness_of_cartesian_structures}. Hence the counit is an equivalence precisely on its essential image, namely the cocartesian $\subuniverse[I]$-monoidal $\vartopos$-categories.
\end{proof}

This criterion identifies cocartesian categories as the objects internally local with respect to $\subuniverse[I]^{\simeq}\into\subuniverse[I]$.

\begin{theorem}\label{thm:cocart_cats_are_localization_of_monoidal}
    Let $\subuniverse[I]$ be an inductible context-free subuniverse. The fully faithful $\vartopos$-functor
    \[
    \equipcocart:\internalcatofcats^{\subuniverse[I]\text{-}\sqcup}
    \into\intsmon[I](\internalcatofcats)
    \]
    also admits a left adjoint $L_{\sqcup}$. Moreover, $\internalcatofcats^{\subuniverse[I]\text{-}\sqcup}$ is $\vartopos$-presentable.
\end{theorem}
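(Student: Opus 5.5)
The plan is to realize $\internalcatofcats^{\subuniverse[I]\text{-}\sqcup}$ as the full $\vartopos$-subcategory of $\intsmon[I](\internalcatofcats)$ of objects that are internally local with respect to a single map. The presentability of $\intsmon[I](\internalcatofcats)$ will give the localization, and hence the left adjoint $L_{\sqcup}$.

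First, $\intsmon[I](\internalcatofcats)$ is $\vartopos$-presentable. By \Cref{prop:cmon_is_cat_module_in_prl}, it is equivalent to $\intsmon[I](\universe)\otimes\internalcatofcats$. The first factor is presentable by \Cref{prop:cmon_is_presentable}, $\internalcatofcats$ is presentable, and the multilinear tensor product of Martini--Wolf preserves presentability.

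Next, consider the $\subuniverse[I]$-monoidal inclusion $j:\subuniverse[I]^{\simeq}\into\subuniverse[I]$. By \Cref{prop:forgetful_from_calg_is_internally_representable}, for every $\varintcat{C}$ the map
\[
j^*:\intfun^{\subuniverse[I]\text{-}\otimes}(\subuniverse[I],\varintcat{C})\to\intfun^{\subuniverse[I]\text{-}\otimes}(\subuniverse[I]^{\simeq},\varintcat{C})\simeq\varintcat{C}
\]
is the forgetful functor $\intalg^{\subuniverse[I]}(\varintcat{C})\to\varintcat{C}$. By \Cref{thm:in_cocart_everything_is_alg}, $\varintcat{C}$ lies in the essential image of $\equipcocart$ exactly when $j^*$ is an equivalence of $\vartopos$-categories. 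Using the tensoring of \Cref{cor:cmon_is_tensored_and_powered_by_cat}, a functor of $\vartopos$-categories is an equivalence iff it induces equivalences on $\intmap(\varintcat{K},-)$ for all $\varintcat{K}$. It even suffices to take $\varintcat{K}=[1]$ together with a point, since $\internalcatofcats$ is generated under colimits by simplices. Thus cocartesianness is equivalent to locality with respect to the small set of maps $j\otimes\varintcat{K}$ for $\varintcat{K}\in\{*,[1]\}$, in all contexts, i.e.\ after base change along $\pi_A^*$. This set forms a small $\vartopos$-category of morphisms.

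By the internal localization theorem \cite[Corollary 2.4.2.9 and Theorem 2.4.2.5]{MWPresentabilityAndTopoi}, already used in \Cref{prop:cmon_is_presentable}, the local objects form a reflective, $\vartopos$-presentable full subcategory. By \Cref{thm:uniqueness_of_cartesian_structures}, this subcategory is identified with $\internalcatofcats^{\subuniverse[I]\text{-}\sqcup}$ via the fully faithful $\equipcocart$. This yields both $L_{\sqcup}$ and the presentability claim.

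The main obstacle is bookkeeping. We must check that locality in the internal sense, namely equivalence of $\intfun^{\subuniverse[I]\text{-}\otimes}$ rather than only of mapping anima, matches the Martini--Wolf notion of $S$-local objects. We must also check that this is stable under base change, so the local objects form a genuine $\vartopos$-subcategory. The tensor--cotensor adjunction in \Cref{cor:cmon_is_tensored_and_powered_by_cat}, applied contextwise, should handle both points.
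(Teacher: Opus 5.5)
Your proposal is correct and follows essentially the same route as the paper: use \Cref{thm:in_cocart_everything_is_alg,prop:forgetful_from_calg_is_internally_representable} to show that cocartesian structures are exactly the objects internally local against $\subuniverse[I]^{\simeq}\into\subuniverse[I]$; use \Cref{cor:cmon_is_tensored_and_powered_by_cat} to turn this into locality against a small family of tensored maps; then invoke $\vartopos$-presentability of $\intsmon[I](\internalcatofcats)$. The only difference is that the paper tensors with all $\Delta^n$, while you use only $\Delta^0$ and $\Delta^1$. That also suffices, but the right justification is the Segal condition, i.e.\ $\Delta^n\simeq\Delta^1\sqcup_{\Delta^0}\cdots\sqcup_{\Delta^0}\Delta^1$ in $\internalcatofcats$, not merely that simplices generate $\internalcatofcats$ under colimits.
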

\begin{proof}
    By \Cref{thm:in_cocart_everything_is_alg,prop:forgetful_from_calg_is_internally_representable}, an $\subuniverse[I]$-monoidal $\vartopos$-category is cocartesian if and only if it is internally local with respect to the $\subuniverse[I]$-monoidal inclusion $\subuniverse[I]^{\simeq}\into\subuniverse[I]$. By \Cref{cor:cmon_is_tensored_and_powered_by_cat}, this is equivalent to locality with respect to the functors
    \[
    \subuniverse[I]^{\simeq}\otimes\Delta^n
    \to\subuniverse[I]\otimes\Delta^n,
    \qquad n\in\nats.
    \]
    Since $\intsmon[I](\internalcatofcats)$ is $\vartopos$-presentable by \Cref{prop:cmon_is_cat_module_in_prl}, these local objects form an accessible localization. This gives $L_{\sqcup}$ and the asserted presentability.
\end{proof}

There is also a description in terms of colocality with respect to the forgetful functors from commutative algebras.

\begin{corollary}\label{cor:Icocart_iff_forgetful_colocal}
    An $\subuniverse[I]$-monoidal $\vartopos$-category $\varintcat{C}$ is cocartesian if and only if, for every $\subuniverse[I]$-monoidal $\vartopos$-category $\varintcat{T}$, the canonical morphism
    \[
    \intmap(\varintcat{C},\intalg^{\subuniverse[I]}(\varintcat{T}))
    \to\intmap(\varintcat{C},\varintcat{T})
    \]
    is an equivalence of $\vartopos$-anima. Here the mapping objects are taken in $\intsmon[I](\internalcatofcats)$, with the cocartesian structure on $\intalg^{\subuniverse[I]}(\varintcat{T})$.
\end{corollary}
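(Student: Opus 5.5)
This is a Yoneda-style repackaging of \Cref{thm:in_cocart_everything_is_alg}, and the plan is to obtain it from the adjunction of \Cref{thm:calg_is_right_adjoint} together with the full faithfulness of \Cref{thm:uniqueness_of_cartesian_structures}. The canonical morphism in the statement is postcomposition with the counit $\varepsilon_{\varintcat{T}}\colon\equipcocart(\intalg^{\subuniverse[I]}(\varintcat{T}))\to\varintcat{T}$ of \Cref{prop:forgetful_from_calg_is_monoidal}. Throughout, I interpret the source as maps into $\intalg^{\subuniverse[I]}(\varintcat{T})$ equipped with its cocartesian structure (\Cref{prop:calg_has_E_colimits}).

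For the forward direction, suppose $\varintcat{C}\simeq\equipcocart(\varintcat{C}_0)$ with $\varintcat{C}_0$ an $\subuniverse[I]$-cocomplete $\vartopos$-category. I would first use full faithfulness of $\equipcocart$ to write
\[
\intmap(\varintcat{C},\equipcocart(\intalg^{\subuniverse[I]}(\varintcat{T})))
\simeq
\intmap_{\internalcatofcats^{\subuniverse[I]\text{-}\sqcup}}(\varintcat{C}_0,\intalg^{\subuniverse[I]}(\varintcat{T})).
\]
Then I would use the adjunction $\equipcocart\dashv\intalg^{\subuniverse[I]}$ to identify the right-hand side with $\intmap(\equipcocart(\varintcat{C}_0),\varintcat{T})$. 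The one point to check is that this composite equivalence is exactly postcomposition with $\varepsilon_{\varintcat{T}}$. This holds by the standard description of adjunction equivalences via the counit, applied to internal mapping $\vartopos$-anima; the adjunction is one of $\vartopos$-categories, so it induces equivalences of internal mapping objects.

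For the converse, suppose the map is an equivalence for every $\varintcat{T}$. I would take $\varintcat{T}=\varintcat{C}$, so that the identity of $\varintcat{C}$ lifts to a map $\sigma\colon\varintcat{C}\to\equipcocart(\intalg^{\subuniverse[I]}(\varintcat{C}))$ with $\varepsilon_{\varintcat{C}}\circ\sigma\simeq\id$. Next I would look at $\sigma\circ\varepsilon_{\varintcat{C}}$. Its source $\equipcocart(\intalg^{\subuniverse[I]}(\varintcat{C}))$ is cocartesian, so by the forward direction, maps out of it into $\equipcocart(\intalg^{\subuniverse[I]}(\varintcat{C}))$ are detected after postcomposing with $\varepsilon_{\varintcat{C}}$. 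Both $\sigma\varepsilon_{\varintcat{C}}$ and the identity become $\varepsilon_{\varintcat{C}}$ after this postcomposition, so $\sigma\varepsilon_{\varintcat{C}}\simeq\id$. Hence $\varepsilon_{\varintcat{C}}$ is an equivalence. Its underlying functor is the forgetful functor $\intalg^{\subuniverse[I]}(\varintcat{C})\to\varintcat{C}$, so \Cref{thm:in_cocart_everything_is_alg} shows that $\varintcat{C}$ is cocartesian.

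An alternative route to the converse is to note that the condition says $\varintcat{C}$ is colocal for the coreflective subcategory given by the essential image of $\equipcocart$, with coreflector $\equipcocart\circ\intalg^{\subuniverse[I]}$; colocal objects lie in that image. The only real obstacle is the bookkeeping identification of the adjunction equivalence with postcomposition by the counit. At the internal level this is formal once the adjunction is stated internally, so I would not expect difficulty there.
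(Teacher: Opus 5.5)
Your proof is correct. The forward direction is the same as the paper's: the adjunction of \Cref{thm:calg_is_right_adjoint} combined with full faithfulness of $\equipcocart$. Your check that the resulting equivalence is postcomposition with the counit is the right bookkeeping.

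For the converse you take a genuinely different route. The paper uses the \emph{left} adjoint $L_{\sqcup}$ of \Cref{thm:cocart_cats_are_localization_of_monoidal}. Chaining $L_{\sqcup}\dashv\equipcocart\dashv\intalg^{\subuniverse[I]}$ with the hypothesis identifies the functor corepresented by $\varintcat{C}$ with the one corepresented by $\equipcocart(L_{\sqcup}\varintcat{C})$. The internal Yoneda lemma then gives $\varintcat{C}\simeq\equipcocart(L_{\sqcup}\varintcat{C})$.

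You use only the coreflection $\equipcocart\circ\intalg^{\subuniverse[I]}$. The hypothesis at $\varintcat{T}=\varintcat{C}$ gives a section $\sigma$ of the counit $\varepsilon_{\varintcat{C}}$. The forward direction, applied to the cocartesian source $\equipcocart(\intalg^{\subuniverse[I]}(\varintcat{C}))$, then forces $\sigma\varepsilon_{\varintcat{C}}\simeq\id$, so the counit is invertible. This is the standard retract argument that colocal objects of a coreflective subcategory lie in it.

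What your route buys:
\begin{itemize}
\item It does not need $L_{\sqcup}$, whose existence relies on $\vartopos$-presentability and accessibility and is itself proved using \Cref{thm:in_cocart_everything_is_alg}.
\item It only uses the hypothesis for the single test object $\varintcat{T}=\varintcat{C}$, and only to lift $\id$ on global sections, so it proves a slightly stronger converse.
\end{itemize}
The paper's version is shorter once $L_{\sqcup}$ is available, and it produces the identification with $\equipcocart(L_{\sqcup}\varintcat{C})$ directly.

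At your last step you could conclude directly that $\varintcat{C}\simeq\equipcocart(\intalg^{\subuniverse[I]}(\varintcat{C}))$ lies in the essential image of $\equipcocart$, without citing \Cref{thm:in_cocart_everything_is_alg}; citing it is also fine.
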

\begin{proof}
    If $\varintcat{C}$ is cocartesian, the equivalence is the adjunction of \Cref{thm:calg_is_right_adjoint}.

    Conversely, the two adjunctions identify the mapping functor represented by $\varintcat{C}$ with that represented by $\equipcocart(L_{\sqcup}\varintcat{C})$. The internal Yoneda lemma gives $\varintcat{C}\simeq\equipcocart(L_{\sqcup}\varintcat{C})$, so $\varintcat{C}$ is cocartesian.
\end{proof}

\subsection{\texorpdfstring{$\subuniverse$}{E}-operads.}\label{subsec:E_operads}

We now introduce $\subuniverse$-operads and their monoidal envelopes. Our definitions are inspired by the work of Lenz--Linskens--P\"utzst\"uck \cite{lenz2026norms} and Barkan--Haugseng--Steinebrunner \cite{barkan2022envelopes}, although our formulation differs slightly.

We begin by defining $\subuniverse$-preoperads (\Cref{def:E_preoperads}), which may be viewed as a generalization of operads without the Segal condition. We construct the envelope adjunction at this level (\Cref{prop:preoperad_premonoidal_adjunction}). We then define $\subuniverse$-operads by requiring their envelopes to be $\subuniverse$-monoidal categories (\Cref{def:E_operads}). For finite covering maps of anima, this recovers the usual notion of an $\infty$-operad (\Cref{ex:classical_E_operads}).

Our main result identifies the monoidal envelope as the left adjoint to the associated operad construction (\Cref{thm:env_associated_operad_adjunction}).
\subsubsection*{Preoperads}

We first recall the notation for internal cocartesian fibrations.

\begin{definition}
    Let $\varintcat{C}$ be a $\vartopos$-category. We write
    \[
    \intcocart_{\varintcat{C}}\into\internalcatofcats_{/\varintcat{C}}
    \]
    for the $\vartopos$-subcategory whose objects are cocartesian fibrations over $\varintcat{C}$ and whose morphisms are functors over $\varintcat{C}$ that preserve cocartesian lifts.
\end{definition}

For preoperads, we require cocartesian lifts only for left-pointing arrows in the span category.

\begin{definition}\label{def:E_preoperads}
    The $\vartopos$-category $\intpreoperads$ of \emph{$\subuniverse$-preoperads} is the $\vartopos$-subcategory of $\internalcatofcats_{/\spanEfE}$ defined as follows. Its objects in context $A$ are functors of $\vartopos_{/A}$-categories
    \[
    \varintcat{O}^{\otimes}\to\pi_A^*\spanEfE
    \]
    admitting cocartesian lifts of left-pointing arrows. Its morphisms are functors over $\pi_A^*\spanEfE$ that preserve these cocartesian lifts.
\end{definition}

\begin{warning}
    This condition is stronger than requiring the restriction
    \[
    \varintcat{O}^{\otimes}\times_{\pi_A^*\spanEfE}
    \pi_A^*(\subuniverse^{\full})^{\op}
    \to\pi_A^*(\subuniverse^{\full})^{\op}
    \]
    to be a cocartesian fibration. A lift must be cocartesian over the entire span category, so its universal property must hold against all morphisms, including those that are not left-pointing.
\end{warning}

Unstraightening associates a preoperad to every functor $\spanEfE\to\internalcatofcats$.

\begin{definition}
    Consider the composite
    \[
    (-)^\otimes:\intfun(\spanEfE,\internalcatofcats)
    \xto{\sim}\intcocart_{\spanEfE}\into\intpreoperads.
    \]
    We call $F^\otimes$ the \emph{associated preoperad} of $F$. If $F$ defines an $\subuniverse$-monoidal $\vartopos$-category $\varintcat{C}$, we also write $\varintcat{C}^{\otimes}:=F^\otimes$.
\end{definition}

\subsubsection*{A span description of \texorpdfstring{$\subuniverse^\otimes$}{E tensor}}

To construct a left adjoint to $(-)^\otimes$, we need an explicit description of $\subuniverse^\otimes$ and its source and target functors. We begin with the corresponding description for $\subuniverse^{\full}$.

\begin{definition}\label{def:E_full_1}
    Let $\subuniverse^{\full,(1)}\subset(\subuniverse^{\full})^{\Delta^1}$ be the full $\vartopos$-subcategory spanned by arrows in $\subuniverse$. Thus a morphism in $\subuniverse^{\full,(1)}(A)$ is a commutative square in $\subuniverse^{\full}(A)$ of the form
    \[
    \begin{tikzcd}
        C'\ar[r]\ar[d,tail]&B'\ar[d,tail]\\
        C\ar[r]&B,
    \end{tikzcd}
    \]
    where the tailed arrows lie in $\subuniverse$.

    Let $\subuniverse^{(1)}\subset\subuniverse^{\full,(1)}$ be the wide $\vartopos$-subcategory whose morphisms have the form
    \[
    \begin{tikzcd}
        C'\ar[r]\ar[d,tail]&B'\ar[d,tail]\\
        C\ar[r,tail]&B,
    \end{tikzcd}
    \]
    again with the tailed arrows in $\subuniverse$. Finally, let $\subuniverse^{\full,(1)}_{\cart}$ be the wide $\vartopos$-subcategory whose morphisms are cartesian squares.
\end{definition}

\begin{proposition}
    The triplet
    \[
    (\subuniverse^{\full,(1)},\subuniverse^{\full,(1)}_{\cart},\subuniverse^{(1)})
    \]
    is adequate. The target functor $t:\subuniverse^{\full,(1)}\to\subuniverse^{\full}$ defines a morphism of adequate triplets
    \[
    (\subuniverse^{\full,(1)},\subuniverse^{\full,(1)}_{\cart},\subuniverse^{(1)})
    \to(\subuniverse^{\full},\subuniverse^{\full},\subuniverse).
    \]
\end{proposition}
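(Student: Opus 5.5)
Since adequacy of an internal triplet is checked contextwise and pullbacks in $\universe(A)\simeq\vartopos_{/A}$ are computed in $\vartopos$, it suffices to fix a context $A$ and verify the conditions for the $\infty$-category $\subuniverse^{\full,(1)}(A)$, together with stability of everything under restriction $q^*$ along $q:A'\to A$. That $\subuniverse^{\full,(1)}_{\cart}$ and $\subuniverse^{(1)}$ are wide subcategories is immediate: identity squares qualify, cartesian squares compose by the pasting lemma, and $\subuniverse$ is closed under composition. Restriction functors preserve pullbacks and $\subuniverse$ (independence of context and the fact that $\subuniverse$ is a $\vartopos$-subcategory), so compatibility with $q^*$ will follow once the pullbacks are constructed by a formula that is visibly preserved by $q^*$.

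The main step is the pullback. Given a cartesian square $(C'\to C)\to(B'\to B)$ viewed as a left-pointing map from the arrow $C'\to B'$ to $C\to B$ (in the paper's convention a morphism goes from the top row to the bottom row), and a right-pointing map from $D'\to E'$ to $C\to B$, that is, a square with bottom arrow $E'\to B$ in $\subuniverse$, I form the pullback of arrows in $(\subuniverse^{\full})^{\Delta^1}$ levelwise: $B'\times_B E'$ and $C'\times_C D'$, with the induced map $C'\times_C D'\to B'\times_B E'$. I will check three things. First, $B'\times_B E'$ and $C'\times_C D'$ lie in $\subuniverse^{\full}(A)$: since $E'\to B$ is in $\subuniverse$ and $(\subuniverse^{\full},\subuniverse)$ is a span pair, $B'\times_B E'\to B'$ is in $\subuniverse$, so by independence of context its composite to $A$ is in $\subuniverse$. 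For the $C$-level, note that $C'\simeq C\times_B B'$ because the given square is cartesian, so $C'\times_C D'\simeq D'\times_B B'\simeq D'\times_{E'}(E'\times_B B')$, which is a base change of $D'\to E'$ and hence its map to $B'\times_BE'$ lies in $\subuniverse$. This also shows that the resulting object is an arrow in $\subuniverse$. Second, the projection to $(D'\to E')$ is a cartesian square, by the displayed identification $C'\times_C D'\simeq D'\times_{E'}(B'\times_B E')$; so it lies in $\subuniverse^{\full,(1)}_{\cart}$. Third, the projection to $(C'\to B')$ has bottom arrow $B'\times_B E'\to B'$, a base change of $E'\to B$, hence in $\subuniverse$; so it lies in $\subuniverse^{(1)}$. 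That this levelwise pullback is the pullback in the full subcategory $\subuniverse^{\full,(1)}\subset(\subuniverse^{\full})^{\Delta^1}$ follows since limits in arrow categories are levelwise and the inclusion $\subuniverse^{\full}\subset\universe$ is full.

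Finally, for the target functor: $t$ sends cartesian squares to arbitrary maps in $\subuniverse^{\full}$ (no condition) and sends squares in $\subuniverse^{(1)}$ to their bottom arrow, which lies in $\subuniverse$ by definition. It preserves the pullbacks above because they were computed levelwise, so the target of the pullback is $B'\times_B E'$, the pullback in $\subuniverse^{\full}$. The only delicate point, which I expect to be the main (though mild) obstacle, is the pasting argument identifying $C'\times_C D'$ with a base change of $D'\to E'$; this relies essentially on the left-pointing map being cartesian, which is why $\subuniverse^{\full,(1)}_{\cart}$ rather than all squares is used.
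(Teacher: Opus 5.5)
Your proposal is correct and follows essentially the same route as the paper. You compute the pullback levelwise and use cartesianness of the left-pointing square to rewrite the source-level pullback as a base change of a source arrow; your $C'\times_C D'\simeq D'\times_{E'}(B'\times_B E')$ is the paper's $C'\times_{B'}D'\simeq(C\times_B D)\times_D D'$ after relabeling. You then read off that the two projections lie in $\subuniverse^{\full,(1)}_{\cart}$ and $\subuniverse^{(1)}$, and that compatibility with change of context and with $t$ follows from the levelwise formula.

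One minor correction: in the paper, a morphism of $\subuniverse^{\full,(1)}$ goes from the left column to the right column, since the objects are the vertical tailed arrows. It does not go from the top row to the bottom row. Your own labeling is internally consistent, though: the structure arrows of objects are the ones in $\subuniverse$, and the target component is the one constrained for $\subuniverse^{(1)}$. So this does not affect the argument.
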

\begin{proof}
    We check that the required pullbacks exist and are preserved by change of context. Consider a morphism in $\subuniverse^{(1)}(A)$ and a morphism in $\subuniverse^{\full,(1)}_{\cart}(A)$ with the same target:
    \[
    \begin{tikzcd}
        D'\ar[r]\ar[d,tail]&B'\ar[d,tail]\\
        D\ar[r,tail]&B
    \end{tikzcd}
    \qquad
    \begin{tikzcd}
        C'\ar[r]\ar[d,tail]\pullbackdr&B'\ar[d,tail]\\
        C\ar[r]&B.
    \end{tikzcd}
    \]
    Their pullback is computed at the source and target. At the target, $C\times_B D$ exists because $D\to B$ lies in $\subuniverse$. At the source, the pullback exists by the identification
    \[
    C'\times_{B'}D'\simeq(C\times_B D)\times_D D',
    \]
    where we use $C'\simeq C\times_B B'$. The induced arrow
    \[
    C'\times_{B'}D'\to C\times_B D
    \]
    lies in $\subuniverse$ by base change. The resulting square over $D'\to D$ is cartesian, and the morphism to $C'\to C$ lies in $\subuniverse^{(1)}$ because $C\times_B D\to C$ lies in $\subuniverse$.

    These pullbacks are preserved by change of context, since pullbacks in $\universe$ are preserved by change of context. The same description shows that $t$ preserves the required pullbacks and the two distinguished classes of morphisms.
\end{proof}

\begin{lemma}\label{lem:descript_of_E_tensor}
    There is a canonical equivalence
    \[
    \intspan(\subuniverse^{\full,(1)},\subuniverse^{\full,(1)}_{\cart},\subuniverse^{(1)})
    \simeq(\subuniverse^{\full})^{\otimes}.
    \]
\end{lemma}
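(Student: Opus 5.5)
The plan is to read off $(\subuniverse^{\full})^{\otimes}$ directly from the unfurling construction used to define $\equipcocart$. By \Cref{prop:categories_with_E_colimits_have_E_monoidal_structure}, the cocartesian structure on $\subuniverse^{\full}$ is the internal unfurling (\Cref{const:internal_unfurling}) of the left $\subuniverse$-adjointable functor
\[
F\colon(\subuniverse^{\full})^{\op}\to\internalcatofcats,\qquad B\mapsto(\subuniverse^{\full})^{B},
\]
which sends a morphism to restriction (pullback). The proposition following \Cref{const:internal_unfurling} says that the unstraightening of the unfurling $\widetilde F$ is $\intspan(\varintcat{P},\varintcat{P}^{p\dashcart},\varintcat{P}_R)$. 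Here $p\colon\varintcat{P}\to\subuniverse^{\full}$ is the cartesian unstraightening of $F$, and $\varintcat{P}_R$ is the wide subcategory of morphisms lying over $\subuniverse$. Since $(-)^{\otimes}$ is by definition the cocartesian unstraightening of $\widetilde F$, the lemma reduces to an identification of adequate triplets over $(\subuniverse^{\full},\subuniverse^{\full},\subuniverse)$:
\[
(\varintcat{P},\varintcat{P}^{p\dashcart},\varintcat{P}_R)\simeq(\subuniverse^{\full,(1)},\subuniverse^{\full,(1)}_{\cart},\subuniverse^{(1)}).
\]

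First I identify $F$ contextwise. For $[B\to A]\in\subuniverse^{\full}(A)$, \Cref{prop:values_of_E_monoidal_categories} (or simply the straightening formula for $\universe$) gives $(\subuniverse^{\full})^{B}\simeq\intfun_{\vartopos_{/A}}(B,\subuniverse^{\full})$. In context $A$ this is the full subcategory of $\vartopos_{/B}$ on the maps $C\to B$ with $[C\to B]\in\subuniverse(B)$. By independence of context (\Cref{cond:idependent_of_context}), these are exactly the arrows $C\to B$ of $\subuniverse(A)$. Functoriality in $B$ is pullback. Hence $F$ is the restriction of the internal codomain fibration $t\colon\universe^{\Delta^1}\to\universe$, which is the cartesian unstraightening of $B\mapsto\universe_{/B}$ with cartesian edges the pullback squares. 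Restricting $t$ to the full subcategory on arrows lying in $\subuniverse$ gives $\subuniverse^{\full,(1)}\to\subuniverse^{\full}$. This subcategory is closed under cartesian lifts because $\subuniverse$ is stable under base change in $\subuniverse^{\full}$, as shown in the proposition establishing that $(\subuniverse^{\full},\subuniverse)$ is a span pair. Therefore $\varintcat{P}\simeq\subuniverse^{\full,(1)}$ and $\varintcat{P}^{p\dashcart}\simeq\subuniverse^{\full,(1)}_{\cart}$.

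Next, $\varintcat{P}_R$ consists of the squares whose image under $t$, namely the bottom arrow $C\to B$, lies in $\subuniverse$. This is precisely the definition of $\subuniverse^{(1)}$ in \Cref{def:E_full_1}: the vertical arrows are automatically in $\subuniverse$ as objects, and the top arrow is unrestricted. The adequacy of this triplet was established in the preceding proposition, and it agrees with the adequacy guaranteed by the unfurling construction. Applying $\intspan$ then gives the stated equivalence, compatibly with the projection to $\spanEfE$.

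I expect the main obstacle to be the first step: making the identification of the internal cartesian unstraightening of $F$ with the restricted codomain fibration genuinely internal and natural in the context. The plan is to check it contextwise, using that internal unstraightening is computed contextwise and that $t$ is a cartesian fibration in each $\vartopos_{/A}$, stably under base change. Then I would invoke \cite[Proposition 3.1.7]{MInternalStraightenning} to conclude internally, exactly as in the proof that internal unfurling yields a cocartesian fibration.
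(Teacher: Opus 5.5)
Your proposal is correct and follows the same route as the paper. In both, you identify $t\colon\subuniverse^{\full,(1)}\to\subuniverse^{\full}$ as the cartesian fibration classifying $B\mapsto(\subuniverse^{\full})^{B}$, with cartesian edges the pullback squares, and read the equivalence off the unfurling construction. You are slightly more explicit than the paper in two places: you obtain the cartesian fibration by restricting the codomain fibration of $\universe$, and you check that the morphisms lying over $\subuniverse$ are exactly those of $\subuniverse^{(1)}$.
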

\begin{proof}
    The target functor $t:\subuniverse^{\full,(1)}\to\subuniverse^{\full}$ is a cartesian fibration: arrows in $\subuniverse$ admit pullbacks and are stable under base change in $\subuniverse^{\full}$. For $B\in\subuniverse^{\full}(A)$, its fibre is the full $\vartopos_{/A}$-subcategory
    \[
    (\pi_A^*\subuniverse^{\full})_{\subuniverse/B}
    \subset(\pi_A^*\subuniverse^{\full})_{/B}
    \]
    spanned by objects whose structure morphism to $B$ lies in $\subuniverse$.

    Straightening identifies this fibre with $(\pi_A^*\subuniverse^{\full})^B$. Thus $t$ classifies the $\subuniverse^{\full}$-continuous functor
    \[
    (\subuniverse^{\full})^{\op}\to\internalcatofcats
    \]
    determined by $\subuniverse^{\full}$. The $t$-cartesian arrows are precisely the morphisms in $\subuniverse^{\full,(1)}_{\cart}$. The displayed span category is therefore the unfurling defining $(\subuniverse^{\full})^\otimes$.
\end{proof}

Morphisms in the wide subcategory $\subuniverse^{\Delta^1}$ are stable under base change in $\subuniverse^{\full,(1)}$ along morphisms in $\subuniverse^{\full,(1)}_{\cart}$. Restricting the preceding construction gives the following description of $\subuniverse^\otimes$.

\begin{corollary}\label{cor:description_of_the_env_of_triv}
    The triplet
    \[
    (\subuniverse^{\full,(1)},\subuniverse^{\full,(1)}_{\cart},\subuniverse^{\Delta^1})
    \]
    is adequate, and there is a canonical equivalence
    \[
    \intspan(\subuniverse^{\full,(1)},\subuniverse^{\full,(1)}_{\cart},\subuniverse^{\Delta^1})
    \simeq\subuniverse^{\otimes}.
    \]
\end{corollary}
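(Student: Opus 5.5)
Two things have to be checked for \Cref{cor:description_of_the_env_of_triv}: that the triplet $(\subuniverse^{\full,(1)},\subuniverse^{\full,(1)}_{\cart},\subuniverse^{\Delta^1})$ is adequate, and that its span category is $\subuniverse^\otimes$. The plan is to obtain both by restricting \Cref{lem:descript_of_E_tensor}. Here $\subuniverse^\otimes$ is the unstraightening of the $\subuniverse$-monoidal category $\subuniverse$, which is the $\subuniverse$-monoidal subcategory of $\equipcocart(\subuniverse^{\full})$. Hence it is the sub-cocartesian fibration of $(\subuniverse^{\full})^\otimes$ whose fibre over $B$ is $\subuniverse^B\subset(\subuniverse^{\full})^B$. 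This is the subfunctor $G\into F$ produced in \Cref{prop:monoidal_subcategories_are_monoidal}.

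\textbf{Adequacy.} First, $\subuniverse^{\Delta^1}$ is a wide subcategory of $\subuniverse^{(1)}$: squares with all four sides in $\subuniverse$ compose. Take a morphism in $\subuniverse^{\Delta^1}(A)$ and a cartesian square with the same target. Their pullback is computed as in the proof of the preceding proposition, and the resulting map $C'\times_{B'}D'\to C'$ is a base change of $C\times_BD\to C$. The latter lies in $\subuniverse$ because $D\to B$ does and $\subuniverse$ is stable under base change in $\subuniverse^{\full}$. The top map $C'\times_{B'}D'\to C'$ is likewise a base change of $D'\to B'$ (resp.\ $D'\to D$), so it also lies in $\subuniverse$. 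Hence the pulled-back morphism lies in $\subuniverse^{\Delta^1}$. The other projection is cartesian, and compatibility with change of context is inherited from the preceding proposition.

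\textbf{Identification.} The inclusion of triplets induces a functor
\[
\intspan(\subuniverse^{\full,(1)},\subuniverse^{\full,(1)}_{\cart},\subuniverse^{\Delta^1})\to\intspan(\subuniverse^{\full,(1)},\subuniverse^{\full,(1)}_{\cart},\subuniverse^{(1)})\simeq(\subuniverse^{\full})^\otimes.
\]
It is an equivalence on objects and a monomorphism on mapping objects (levelwise on the simplicial objects, as in \Cref{lem:truncated_sheafify_to_locally_trancated}), so it is a $\vartopos$-subcategory inclusion. It remains to see that this subcategory coincides with $\subuniverse^\otimes$. Both are wide, so it suffices to compare morphisms. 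Every span factors as a backward cartesian map followed by a forward map in $\subuniverse^{(1)}$, and backward maps lie in both subcategories. So the question reduces to forward maps. A forward map lying over a fixed $\subuniverse$-arrow $D\to B$ is a square whose $\subuniverse$-component $D'\to B'$ corresponds, under straightening, to the morphism $s_\sharp$(object)$\to$ object of the fibre over $B$. Since tensor products in $\equipcocart(\subuniverse^{\full})$ are computed by forgetting context, this is exactly a morphism in the fibre composed with the cocartesian lift. Thus the forward map lies in $\subuniverse^\otimes$ precisely when its fibrewise component lies in $\subuniverse^B$, i.e.\ when $D'\to B'$ lies in $\subuniverse$ independently of context. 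That is precisely the condition defining $\subuniverse^{\Delta^1}$.

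The main obstacle is this last matching: correctly identifying, through straightening, which side of the square corresponds to the fibrewise morphism after cocartesian transport. Independence of context is what makes the two descriptions agree.
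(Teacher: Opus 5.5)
Your proposal is correct and follows the paper's route: the paper notes that $\subuniverse^{\Delta^1}$ is stable under base change along cartesian squares and then restricts \Cref{lem:descript_of_E_tensor}, and your fibrewise check (a forward square lies in $\subuniverse^{\otimes}$ exactly when its top edge lies in $\subuniverse$) spells out what the paper leaves implicit. One harmless slip in your adequacy paragraph: $C'\times_{B'}D'\to C'$ is not in general a base change of $C\times_B D\to C$, but it is the base change of $D'\to B'$ along $C'\to B'$, as you say in the next sentence, and that is all that is needed.
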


Source and target induce functors $s,t:\subuniverse^\otimes\to\spanEfE$. Here $t$ is the cocartesian fibration associated with the $\subuniverse$-monoidal structure on $\subuniverse$. These functors fit into two adjunctions.

\begin{proposition}\label{prop:source_target_identity_adjunctions}
    The source and target functors
    \[
    s,t:\subuniverse^\otimes\to\spanEfE,
    \]
    induced by $\delta_1,\delta_0:\Delta^0\to\Delta^1$, fit into adjunctions
    \[
    t\dashv r\dashv s,
    \]
    where $r:\spanEfE\to\subuniverse^\otimes$ is the fully faithful functor induced by $\sigma:\Delta^1\to\Delta^0$.
\end{proposition}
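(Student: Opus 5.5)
We construct both adjunctions at the level of adequate triplets and then apply $\intspan$, using the description of \Cref{cor:description_of_the_env_of_triv}. In context $A$, write $s,t\colon\subuniverse^{\full,(1)}\to\subuniverse^{\full}$ for source and target, so an object $[C\rightarrowtail B]$ has $s=C$ and $t=B$. Write $r\colon\subuniverse^{\full}\to\subuniverse^{\full,(1)}$ for the degeneracy $B\mapsto[\id\colon B\to B]$. Since $\subuniverse$ contains equivalences, $r$ lands in the full subcategory, and it is fully faithful because morphisms between identity arrows are just morphisms of $B$. All three are functors of adequate triplets
\[
(\subuniverse^{\full,(1)},\subuniverse^{\full,(1)}_{\cart},\subuniverse^{\Delta^1})\leftrightarrows(\subuniverse^{\full},\subuniverse^{\full},\subuniverse).
\]
For $s$ and $t$, a square in $\subuniverse^{\Delta^1}$ has both horizontal maps in $\subuniverse$. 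For $r$, a map in $\subuniverse$ gives a square of identities lying in $\subuniverse^{\Delta^1}$ and a cartesian square. Preservation of the relevant pullbacks follows because these pullbacks are computed at source and target.

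At the level of $\Delta^1$-diagrams there are the standard adjunctions $s\dashv r\dashv t$. The unit of $s\dashv r$ is the square $[C\to B]\Rightarrow[C=C]$ with vertical maps $\id_C$ and $C\to B$. The counit of $r\dashv t$ is the square $[C\to B]\Rightarrow[B=B]$ with vertical maps $C\to B$ and $\id_B$. The other unit and counit are identities. Since $\Span$ is contravariant on left-pointing structure, these give the reversed chain $t\dashv r\dashv s$ on span categories, as stated. So I will check that $s\dashv r$ is an adjunction in $\adtriptwo^L$ and $r\dashv t$ is an adjunction in $\adtriptwo^L$, then apply the 2-functor $\Span\colon(\adtriptwo^L)^{\co}\to\twocatofcats$ contextwise. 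Compatibility with restriction along $B\to A$ in $\vartopos$ is automatic, since all constructions commute with base change. This yields internal adjunctions by the pointwise criterion for $\vartopos$-adjunctions.

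It remains to verify, for each nonidentity unit or counit $\theta$, that
\begin{itemize}
\item its components are left-pointing, and
\item its naturality squares along right-pointing morphisms are cartesian.
\end{itemize}
The components are left-pointing in $\subuniverse^{\full}$ automatically, because all maps are left-pointing there. In $\subuniverse^{\full,(1)}$, the components must lie in $\subuniverse^{\full,(1)}_{\cart}$, and this is where I expect the main obstacle.

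The counit $[C\to B]\Rightarrow[B=B]$ is not a cartesian square unless $C\to B$ is an equivalence. So the naive assignment of left and right adjoints must be reexamined. The likely fix is that in $\intspan(\subuniverse^{\full,(1)},\ldots)$ this counit should be represented by a \emph{right-pointing} span: the square lies in $\subuniverse^{\Delta^1}$, since both of its vertical maps lie in $\subuniverse$. Likewise the unit $[C\to B]\Rightarrow[C=C]$ is a cartesian square once it is oriented correctly, since $C\times_B B=C$. I would therefore sort each transformation into $\adtriptwo^L$ or $\adtriptwo^R$ according to which class its components lie in, and obtain each span adjunction from the matching 2-functor $\Span\colon\adtriptwo^R\to\twocatofcats$ or $\Span\colon(\adtriptwo^L)^{\co}\to\twocatofcats$. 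The directions come out as $t\dashv r$ (via $\adtriptwo^R$) and $r\dashv s$ (via $\adtriptwo^L$ with the $\co$).

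The remaining check is that the naturality squares are cartesian in $\subuniverse^{\full,(1)}$. This reduces to elementary pullback identities such as $C'\simeq C\times_B B'$, computed at source and target separately. Full faithfulness of $r$ on span categories follows because one of the unit or counit maps is the identity.
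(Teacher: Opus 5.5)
Your treatment of $t\dashv r$ works once the labels are corrected, but the $r\dashv s$ half fails as proposed.

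The arrow-level adjunctions go the opposite way from what you state. In $\subuniverse^{\full,(1)}$, a map $[C\xto{f}B]\to[X=X]$ is the same as a map $B\to X$. A map $[X=X]\to[C\xto{f}B]$ is the same as a map $X\to C$. So already before taking spans one has $t\dashv r\dashv s$: the colimit over $\Delta^1$ is evaluation at $1$, and the limit is evaluation at $0$.

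In particular there is no transformation $[C\to B]\Rightarrow[C=C]$, since it would need a map $B\to C$. The square with components $\id_C$ and $f$ goes $[C=C]\to[C\xto{f}B]$; it is the counit $rs\Rightarrow\id$ of $r\dashv s$. Similarly, your ``counit of $r\dashv t$'' is really the unit $\id\Rightarrow rt$ of $t\dashv r$.

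Your plan obtains $\Span(r)\dashv\Span(s)$ from $\Span\colon(\adtriptwo^L)^{\co}\to\twocatofcats$. That needs an adjunction $s\dashv r$ in $\adtriptwo^L$, which does not exist even on underlying categories. The supporting claim that this square is cartesian ``once oriented correctly, since $C\times_B B=C$'' is also false. The square has $\id_C$ on its top and left edges and $f$ on its right and bottom edges. It is therefore cartesian iff the diagonal $C\to C\times_B C$ is an equivalence, i.e.\ iff $f$ is a monomorphism. So the counit is not a left-pointing transformation either.

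The repair is to route $r\dashv s$ through $\adtriptwo^R$, exactly as you do for $t\dashv r$.
\begin{itemize}
\item The unit $\id\simeq sr$ is the identity.
\item The counit $[C=C]\to[C\xto{f}B]$ has components $\id_C,f\in\subuniverse$, so it lies in $\subuniverse^{\Delta^1}$.
\item Along a cartesian morphism $[C'\to B']\to[C\to B]$, its naturality square has identity vertical edges at the source, so it is trivially cartesian there.
\item At the target, the naturality square is $C'\to B'$ over $C\to B$. This is a pullback because $C'\simeq C\times_B B'$.
\end{itemize}
The unit $[C\to B]\to[B=B]$ of $t\dashv r$ is handled by the same computation with source and target interchanged.

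Since $\Span\colon\adtriptwo^R\to\twocatofcats$ preserves the direction of adjunctions, the arrow-level chain $t\dashv r\dashv s$ passes to spans without any reversal. The reversal in your first paragraph only produced the correct chain because the arrow-level directions had also been inverted. With this correction, your argument is a more formal version of the paper's proof. The paper writes down exactly these two right-pointing spans as the nontrivial unit and counit and checks the triangle identities by hand.
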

\begin{proof}
    For $t\dashv r$, the counit is the canonical equivalence $tr\simeq\id$. The unit $\id\to rt$ at an object $C\to B$ is represented by
    \[
    \begin{tikzcd}
        C\ar[d,tail]&C\ar[l,equal]\ar[d,tail]\ar[r,tail]\arrow["\lrcorner"{anchor=center,pos=0.125,rotate=-90},draw=none,dl]&B\ar[d,equal]\\
        B&B\ar[r,equal]\ar[l,equal]&B.
    \end{tikzcd}
    \]
    This morphism is equivalent to the identity when $C\to B$ is an equivalence, and applying $t$ to it gives the identity. These are the two triangle identities.

    For $r\dashv s$, the unit is the canonical equivalence $\id\simeq sr$. The counit $rs\to\id$ at $C\to B$ is represented by
    \[
    \begin{tikzcd}
        C\ar[d,equal]&C\ar[l,equal]\ar[d,equal]\ar[r,equal]\arrow["\lrcorner"{anchor=center,pos=0.125,rotate=-90},draw=none,dl]&C\ar[d,tail]\\
        C&C\ar[r,tail]\ar[l,equal]&B.
    \end{tikzcd}
    \]
    The triangle identities follow in the same way. Either adjunction shows that $r$ is fully faithful.
\end{proof}

\subsubsection*{The envelope adjunction}

The envelope is obtained by pulling a preoperad back along $s$ and then projecting along $t$. We first show that this produces a cocartesian fibration over the entire span category.

\begin{definition}
    Let $\varintcat{O}^{\otimes}\to\spanEfE$ be an $\subuniverse$-preoperad. Define $\intenv(\varintcat{O})^{\otimes}$ by the pullback
    \[
    \begin{tikzcd}
        \intenv(\varintcat{O})^\otimes\ar[r]\ar[d]\pullbackdr
        &\varintcat{O}^{\otimes}\ar[d]\\
        \subuniverse^{\otimes}\ar[r,"s"]\ar[d,"t"]
        &\spanEfE\\
        \spanEfE.
    \end{tikzcd}
    \]
    We regard it as an object over $\spanEfE$ via the left vertical composite.
\end{definition}

\begin{proposition}
    The functor $\intenv(\varintcat{O})^{\otimes}\to\spanEfE$ is a cocartesian fibration.
\end{proposition}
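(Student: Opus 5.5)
The plan is to build cocartesian lifts explicitly. A lift in $\intenv(\varintcat{O})^\otimes$ will pair a $t$-cocartesian lift in $\subuniverse^{\otimes}$ with a cocartesian lift in $\varintcat{O}^{\otimes}$ over a \emph{left-pointing} arrow. By \cite[Proposition 3.1.7]{MInternalStraightenning}, as in the proof that internal unfurling produces a cocartesian fibration, it suffices to do two things. First, show that in every context $A$ the functor $\intenv(\varintcat{O})^\otimes(A)\to\spanEfE(A)$ is a cocartesian fibration of $\infty$-categories. Second, check that restriction along $q:B\to A$ preserves cocartesian edges. The second point will be automatic: all the lifts below are built from pullbacks in $\universe$ and from cocartesian lifts of left-pointing arrows, and restriction preserves both because $\varintcat{O}^\otimes$ is a preoperad in every context. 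So the work is in a fixed context, which I suppress from the notation.

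\textbf{Step 1: $s$ sends $t$-cocartesian edges to left-pointing arrows.} Recall that $t:\subuniverse^{\otimes}\to\spanEfE$ is a cocartesian fibration, namely the one classifying the $\subuniverse$-monoidal structure on $\subuniverse$. I will use the span model of \Cref{cor:description_of_the_env_of_triv}. Let $X\xleftarrow{g}W\xrightarrow{h}Y$ be a morphism of $\spanEfE$, and let $c:C\to X$ be an object of $\subuniverse^\otimes$ over $X$. A $t$-cocartesian lift is then the span in $\subuniverse^{\full,(1)}$
\[
(C\to X)\longleftarrow(C\times_X W\to W)\longrightarrow(C\times_X W\to Y).
\]
Its left leg is the cartesian square over $g$, and its right leg is the square over $h$ whose source component is the identity of $C\times_X W$. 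One checks cocartesianness against the explicit description of $t$ in \Cref{lem:descript_of_E_tensor}, or deduces it from covariant functoriality of $\subuniverse^{\otimes}$ as unfurling. Applying $s$ gives the span $C\leftarrow C\times_X W= C\times_X W$. This is a purely left-pointing morphism of $\spanEfE$. I expect this to be the main obstacle: one has to verify carefully that the forward component of $s$ of a cocartesian lift really is an equivalence, compatibly with composition. Everything else is formal.

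\textbf{Step 2: assemble the lift and verify cocartesianness.} Take an object $(e,o)$ of $\intenv(\varintcat{O})^\otimes$ over $X$, with $e\in\subuniverse^\otimes$, $o\in\varintcat{O}^\otimes$ and $s(e)=p(o)$. Let $\phi:e\to e'$ be the $t$-cocartesian lift from Step 1. Since $s(\phi)$ is left-pointing, the preoperad condition gives a $p$-cocartesian lift $\psi:o\to o'$ of $s(\phi)$, cocartesian over all of $\spanEfE$ by \Cref{def:E_preoperads}. The pair $(\phi,\psi)$ is then a morphism of the pullback. To prove it is cocartesian for the composite $t\circ\mathrm{pr}$, fix a target $(e'',o'')$ lying over $Z$. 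The mapping anima of the pullback are fibre products
\[
\operatorname{Map}(e',e'')\times_{\operatorname{Map}(s e',s e'')}\operatorname{Map}(o',o'').
\]
Rewrite $\operatorname{Map}(o',o'')$ using $p$-cocartesianness of $\psi$, and $\operatorname{Map}(e',e'')$ using $t$-cocartesianness of $\phi$. A pasting of pullback squares then yields
\[
\operatorname{Map}((e',o'),(e'',o''))\simeq\operatorname{Map}((e,o),(e'',o''))\times_{\operatorname{Map}(X,Z)}\operatorname{Map}(Y,Z).
\]
This is exactly the cocartesian condition. The pasting needs care with the base $\operatorname{Map}(se,se'')$, but it is routine bookkeeping.
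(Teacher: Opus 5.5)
Your proposal is correct and follows the paper's proof essentially verbatim. You reduce to the contextwise criterion, take the explicit $t$-cocartesian lift $(C\to X)\leftarrow(C\times_X W\to W)\to(C\times_X W\to Y)$ (the paper's lift with $F'=A'\times_A F$), note that its image under $s$ is left-pointing, lift that image cocartesianly in the preoperad, and pair the two lifts; change of context is handled by stability of pullbacks and of the preoperad's lifts, exactly as in the paper. Your mapping-anima pasting in Step 2 only spells out what the paper leaves implicit, and the point you flag as the main obstacle in Step 1 is immediate from the explicit formula for the lift.
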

\begin{proof}
    We construct cocartesian lifts in every context and check that they are preserved by change of context. The construction is compatible with slicing, so we describe it in the global context.

    Let
    \[
    A\xfrom{x}F\xto{y}B
    \]
    be a morphism in $\spanEfE(*)$. An object of $\intenv(\varintcat{O})^{\otimes}(*)$ over $A$ consists of an object $O\in\varintcat{O}^{\otimes}(*)$ over $A'$ and an arrow $A'\to A$ in $\subuniverse(*)$. Its image in $\subuniverse^\otimes$ has the $t$-cocartesian lift
    \[
    \begin{tikzcd}
        A'\ar[d]&F'\ar[l,"x'"']\ar[d]\ar[r,equal]\arrow["\lrcorner"{anchor=center,pos=0.125,rotate=-90},draw=none,dl]&F'\ar[d]\\
        A&F\ar[r,"y"]\ar[l,"x"]&B,
    \end{tikzcd}
    \]
    where $F'=A'\times_A F$. Applying $s$ gives a left-pointing arrow, which admits a cocartesian lift from $O$. Together these lifts give the required cocartesian lift in $\intenv(\varintcat{O})^{\otimes}$.

    Change of context preserves this construction, since it preserves the pullback defining $F'$ and the cocartesian lifts in the preoperad.
\end{proof}

\begin{definition}
    The \emph{envelope functor} is the composite
    \[
    \intenv:\intpreoperads\to\intcocart_{\spanEfE}
    \xto{\sim}\intfun(\spanEfE,\internalcatofcats),
    \]
    where the first functor sends $\varintcat{O}^{\otimes}$ to the cocartesian fibration $\intenv(\varintcat{O})^\otimes\to\spanEfE$.
\end{definition}

We use $\intenv(\varintcat{O})$ both for the classified functor and for its value at $*$. When this functor is $\subuniverse^{\full}$-continuous, the same notation denotes the resulting $\subuniverse$-monoidal category and its underlying $\vartopos$-category, as in the preceding subsections. The underlying category has a simple pullback description.

\begin{proposition}
    There is a pullback square of $\vartopos$-categories
    \[
    \begin{tikzcd}
        \intenv(\varintcat{O})\ar[r]\ar[d]\pullbackdr&\varintcat{O}^\otimes\ar[d]\\
        \subuniverse\ar[r]&\spanEfE.
    \end{tikzcd}
    \]
\end{proposition}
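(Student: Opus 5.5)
The underlying category $\intenv(\varintcat{O})$ is by definition the fibre of $\intenv(\varintcat{O})^\otimes\to\spanEfE$ over the point $*$. The plan is to compute this fibre by pasting pullbacks. By definition $\intenv(\varintcat{O})^\otimes\simeq\subuniverse^\otimes\times_{\spanEfE}\varintcat{O}^\otimes$, where the fibre product is formed using $s$. The fibre of $\intenv(\varintcat{O})^\otimes$ over $*$ along the composite $t\circ(\text{projection})$ is therefore
\[
\intenv(\varintcat{O})\simeq\bigl(*\times_{\spanEfE}\subuniverse^\otimes\bigr)\times_{\spanEfE}\varintcat{O}^\otimes,
\]
where the first fibre product uses $t$ and the second uses the restriction of $s$. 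This step is formal pasting of pullback squares. It is carried out in every context simultaneously, since pullbacks in $\internalcatofcats$ are computed contextwise.

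It therefore suffices to identify the fibre $t^{-1}(*)=*\times_{\spanEfE}\subuniverse^\otimes$, together with the restricted functor $s|_{t^{-1}(*)}$, with the inclusion $\subuniverse\into\spanEfE$ of right-pointing arrows.

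We do this using \Cref{cor:description_of_the_env_of_triv}, which gives
\[
\subuniverse^\otimes\simeq\intspan(\subuniverse^{\full,(1)},\subuniverse^{\full,(1)}_{\cart},\subuniverse^{\Delta^1}),
\]
with $t$ induced by the target functor. We will compute the fibre in context $A$.

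\begin{itemize}
\item \textbf{Objects.} The objects over $*_A$ are arrows $C\to A$ in $\subuniverse(A)$ with target $*_A$, that is, objects of $\subuniverse(A)$.
\item \textbf{Morphisms.} A morphism in the fibre is a span in $\subuniverse^{\full,(1)}$ whose image under $t$ is the identity span of $*_A$.
\begin{itemize}
\item Its left leg is a cartesian square over $\id_{*_A}$, hence an equivalence.
\item Its right leg is a square in $\subuniverse^{\Delta^1}$ over $\id_{*_A}$, that is, a morphism $C'\to C$ in $\subuniverse(A)$.
\end{itemize}
\end{itemize}

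It follows that the fibre is $\intspan$ of the adequate triplet $(\subuniverse,\subuniverse^{\simeq},\subuniverse)$, and this span category is just $\subuniverse$. Under this identification, $s$ sends $C\to A$ to $C$ and sends the morphism to the right-pointing span $C'=C'\to C$. This is exactly the canonical inclusion $\subuniverse\into\spanEfE$.

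To make this rigorous, we note that $\intspan$ preserves limits, being computed contextwise from a limit-preserving functor $\adtrip\to\catofcats$. The fibre of $\intspan(t)$ over $*$ is then $\intspan$ of the fibre of the map of adequate triplets. The latter fibre is computed directly as the triplet $(\subuniverse,\subuniverse^{\simeq},\subuniverse)$ above.

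The main obstacle I expect is precisely this commutation of $\intspan$ with the fibre product. The fibre over $*$ must be taken in $\adtrip$, and we need the resulting triplet to have equivalences as its left-pointing class. I would check this by hand, using that cartesian squares over an identity are equivalences. I would also verify that the limit in $\adtrip$ is computed on underlying categories with the induced subcategories, which should be immediate because the constant triplet $*$ is adequate.

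Once this is done, the remaining identification $\intspan(\subuniverse,\subuniverse^{\simeq},\subuniverse)\simeq\subuniverse$ is standard. Compatibility with $s$ follows because $s$ is induced by the source functor of adequate triplets.
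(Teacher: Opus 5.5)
Your proposal is correct and follows the paper's argument. Both paste the defining pullback of $\intenv(\varintcat{O})^\otimes$ with the fibre square of $t\colon\subuniverse^\otimes\to\spanEfE$ over $*$; you additionally make explicit, via the span model of \Cref{cor:description_of_the_env_of_triv}, that this fibre is $\subuniverse$ and that $s$ restricts to the inclusion $\subuniverse\into\spanEfE$, which the paper takes for granted. One harmless slip: the fibre triplet is $(\subuniverse^{\full},\subuniverse^{\simeq},\subuniverse)$ rather than $(\subuniverse,\subuniverse^{\simeq},\subuniverse)$, since morphisms of $\subuniverse^{\full,(1)}$ over $\id_{*_A}$ may have arbitrary top edge, but the resulting span category is still $\subuniverse$.
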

\begin{proof}
    In the diagram
    \[
    \begin{tikzcd}
        \intenv(\varintcat{O})\ar[r]\ar[d]\pullbackdr
        &\intenv(\varintcat{O})^\otimes\ar[r]\ar[d]\pullbackdr
        &\varintcat{O}^\otimes\ar[d]\\
        \subuniverse\ar[r]\ar[d]\pullbackdr
        &\subuniverse^\otimes\ar[r,"s"]\ar[d,"t"]
        &\spanEfE\\
        *\ar[r]&\spanEfE,
    \end{tikzcd}
    \]
    all three squares are pullbacks. Pasting the two upper squares gives the claim.
\end{proof}

\begin{remark}
    Unless $\subuniverse=\explicitset{*}$ is the context-free subuniverse of equivalences, $s$ is not a morphism of preoperads: it does not commute with the structure maps to $\spanEfE$. Indeed, $\spanEfE$ is the terminal preoperad, and the structure map of $\subuniverse^\otimes$ is $t$.

    In contrast, $r$ and $t$ are morphisms of preoperads. Once operads are defined below, they are also morphisms of $\subuniverse$-operads.
\end{remark}

The identity $sr\simeq\id$ gives the unit of the envelope adjunction.

\begin{proposition}\label{prop:eta_is_a_morphism_of_preoperads}
    Let $\varintcat{O}^\otimes\to\spanEfE$ be an $\subuniverse$-preoperad. The identity $sr\simeq\id$ gives a pasting of pullbacks
    \[
    \begin{tikzcd}
        \varintcat{O}^\otimes\ar[r,"\eta"]\ar[d]\pullbackdr
        &\intenv(\varintcat{O})^\otimes\ar[r]\ar[d]\pullbackdr
        &\varintcat{O}^\otimes\ar[d]\\
        \spanEfE\ar[r,"r"]
        &\subuniverse^\otimes\ar[r,"s"]
        &\spanEfE.
    \end{tikzcd}
    \]
    The induced functor $\eta$ is a morphism of $\subuniverse$-preoperads.
\end{proposition}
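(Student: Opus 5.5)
The plan has two parts: first identify $\eta$ as a functor over $\spanEfE$, then show that it preserves cocartesian lifts of left-pointing arrows. Throughout I take the structure map of $\intenv(\varintcat{O})^\otimes$ to be the composite with $t$.

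\textbf{$\eta$ lies over $\spanEfE$.} The left square in the displayed pasting is a pullback by pasting: the outer rectangle is the pullback of $\varintcat{O}^\otimes\to\spanEfE$ along $sr\simeq\id$, hence is $\varintcat{O}^\otimes$ itself, and the right square is a pullback by definition. So $\eta$ is the base change of $\intenv(\varintcat{O})^\otimes\to\subuniverse^\otimes$ along $r$. Since $tr\simeq\id$, the composite $\varintcat{O}^\otimes\xto{\eta}\intenv(\varintcat{O})^\otimes\to\subuniverse^\otimes\xto{t}\spanEfE$ is the structure map of $\varintcat{O}^\otimes$, as required.

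\textbf{$\eta$ preserves cocartesian lifts.} By the construction of cocartesian lifts in the proposition above, it is enough to work in the global context, since that construction is compatible with change of context. Fix an object $O\in\varintcat{O}^\otimes$ over $A$, so that $\eta(O)$ is the pair $(O,\ \id\colon A\to A)$. Let $x\colon A\leftarrow F$ be a left-pointing arrow, i.e.\ the span $A\xfrom{x}F=F$, and let $O\to O'$ be its cocartesian lift in $\varintcat{O}^\otimes$. I would compute the cocartesian lift of $x$ at $(O,\id_A)$ in $\intenv(\varintcat{O})^\otimes$ explicitly.

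\begin{itemize}
\item The $t$-cocartesian lift in $\subuniverse^\otimes$ is the square with $A'=A$ and $F'=A\times_A F\simeq F$, so its target is $\id_F$.
\item Applying $s$ to this lift gives exactly $x$.
\item Therefore the cocartesian lift in $\intenv(\varintcat{O})^\otimes$ is $(O\to O',\ \id_F)$, which is $\eta$ applied to the lift $O\to O'$.
\end{itemize}

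Equivalently, the $t$-cocartesian lift of $r(x)$ starting at $r(A)$ is $r$ applied to a left-pointing arrow, because $r$ is a morphism of preoperads. This uses that, for left-pointing arrows, the lift from an identity object stays inside the image of $r$.

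The main obstacle is the coherence of this comparison: the identification must hold as an equivalence of arrows in $\intenv(\varintcat{O})^\otimes$, not merely on underlying data. I would handle it using the fact that cocartesian lifts in a pullback of fibrations are detected componentwise. Concretely, a morphism of $\intenv(\varintcat{O})^\otimes$ is cocartesian over $\spanEfE$ if its image in $\subuniverse^\otimes$ is $t$-cocartesian and its image in $\varintcat{O}^\otimes$ is cocartesian over the $s$-image arrow; this is exactly how the lifts were built in the proposition above. Both conditions hold for $\eta(O\to O')$, because the $t$-cocartesian arrows over left-pointing maps are the cartesian-square arrows, and $r$ of a left-pointing arrow is one of these.
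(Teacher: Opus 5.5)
Your proof is correct and is essentially the paper's argument: the paper simply notes that $r$ preserves cocartesian lifts of left-pointing arrows, so its base change $\eta$ does too. Your explicit computation of the lift at $(O,\id_A)$, together with the componentwise detection of cocartesian arrows in the pullback $\subuniverse^\otimes\times_{\spanEfE}\varintcat{O}^\otimes$, is a faithful unpacking of that two-line argument.
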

\begin{proof}
    The functor $r$ preserves cocartesian lifts of left-pointing arrows. Its pullback $\eta$ therefore preserves these lifts as well.
\end{proof}

To construct the counit, we use an internal version of \cite[Lemma 2.13]{glossner2025modelindependentuniversalproperty}.

\begin{lemma}\label{lem:right_adj_are_stable_under_BC}
    Let $p:\varintcat{X}\to\varintcat{D}$ be a cocartesian fibration of $\vartopos$-categories, and let $R:\varintcat{C}\to\varintcat{D}$ admit a left adjoint $L\dashv R$. Then the pullback square
    \[
    \begin{tikzcd}
        \varintcat{Y}\ar[r,"R'"]\ar[d,"q"']\pullbackdr&\varintcat{X}\ar[d,"p"]\\
        \varintcat{C}\ar[r,"R"]&\varintcat{D}
    \end{tikzcd}
    \]
    is horizontally left adjointable. Moreover, writing $L'\dashv R'$, the induced commutative square
    \[
    \begin{tikzcd}
        \varintcat{Y}\ar[d,"q"']&\varintcat{X}\ar[d,"p"]\ar[l,"L'"]\\
        \varintcat{C}&\varintcat{D}\ar[l,"L"]
    \end{tikzcd}
    \]
    is a morphism of cocartesian fibrations.
\end{lemma}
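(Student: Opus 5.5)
The plan is to construct the left adjoints $L'$ directly, using cocartesian transport along the unit of $L\dashv R$, and then verify the universal property fibrewise in each context. Since $\varintcat{Y}\simeq\varintcat{C}\times_{\varintcat{D}}\varintcat{X}$, an object of $\varintcat{Y}$ in context $A$ is a pair $(c,x)$ with $x\in\varintcat{X}(A)$ and an identification $p(x)\simeq R(c)$. For an object $x\in\varintcat{X}(A)$ over $d=p(x)$, let $\eta_d\colon d\to RLd$ be the unit. Choose a $p$-cocartesian lift $x\to(\eta_d)_!x$, and set
\[
L'(x):=\bigl(Ld,(\eta_d)_!x\bigr)\in\varintcat{Y}(A).
\]
The unit $x\to R'L'(x)=(\eta_d)_!x$ is the chosen cocartesian arrow, and it lies over $\eta_d$.

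First I will check the universal property of this unit. For $(c,y)\in\varintcat{Y}(A)$, I compare mapping anima. A map $(Ld,(\eta_d)_!x)\to(c,y)$ in $\varintcat{Y}$ consists of a map $g\colon Ld\to c$ together with a map $(\eta_d)_!x\to y$ in $\varintcat{X}$ over $R(g)$. By cocartesianness, the latter is the same as a map $x\to y$ over $R(g)\circ\eta_d$, which is the adjunct of $g$. Summing over $g$ via the fibre sequences of mapping anima over $\intmap_{\varintcat{C}}(Ld,c)\simeq\intmap_{\varintcat{D}}(d,Rc)$, I obtain $\intmap_{\varintcat{Y}}(L'x,(c,y))\simeq\intmap_{\varintcat{X}}(x,R'(c,y))$. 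These are internal mapping objects, so the identification is stable under change of context: pullback along $B\to A$ preserves units, since $L\dashv R$ is an internal adjunction, and it preserves cocartesian lifts, since $p$ is an internal cocartesian fibration. By \cite[Remark 3.3.6]{MWcocomplete} and the pointwise criterion for internal adjunctions recalled above, this gives an internal adjunction $L'\dashv R'$. Horizontal left adjointability amounts to the mate $L q\to q L'$ being an equivalence, and it is one because $qL'(x)=Ld$ by construction, with the mate given by the identity.

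Next I show that $(L',L)$ is a morphism of cocartesian fibrations. Let $f\colon x\to x'$ be $p$-cocartesian over $u\colon d\to d'$. Its image $L'f$ is the induced map $(\eta_d)_!x\to(\eta_{d'})_!x'$ over $Lu$. Composites of cocartesian arrows are cocartesian, and $\eta_{d'}u=RLu\circ\eta_d$, so the square with cocartesian sides shows that $L'f\colon(\eta_d)_!x\to(\eta_{d'})_!x'$ is cocartesian over $RLu$. I will then use the standard fact that in a pullback of $p$ along $R$, an arrow of $\varintcat{Y}$ is $q$-cocartesian if its image under $R'$ is $p$-cocartesian. This can be checked contextwise via \cite[Proposition 3.1.7]{MInternalStraightenning}, and it gives that $L'f$ is $q$-cocartesian.

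I expect the main obstacle to be coherence: making $L'$ an honest $\vartopos$-functor rather than an objectwise assignment, since this requires choosing cocartesian lifts functorially and compatibly with restriction. I would avoid this by invoking the non-internal result \cite[Lemma 2.13]{glossner2025modelindependentuniversalproperty} in each context $A$. That lemma applies because $p(A)$ is a cocartesian fibration and $L(A)\dashv R(A)$ is an adjunction. Internality then follows because the restriction squares are morphisms of cocartesian fibrations and commute with the units, so the Beck--Chevalley maps for $L'$ are identified with those for $L$ together with cocartesian transport, all of which are equivalences.
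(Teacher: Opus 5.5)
Your proposal is correct and follows essentially the paper's route: apply \cite[Lemma 2.13]{glossner2025modelindependentuniversalproperty} in each context (its construction is the cocartesian transport along the unit that you spell out), and obtain internality from the fact that change of context preserves units and cocartesian lifts. Your mapping-anima check of the unit and your two-out-of-three argument for cocartesian edges fill in details the paper leaves to the citation; the one slip is a typo, since the Beck--Chevalley mate should read $Lp\Rightarrow qL'$ rather than $Lq\to qL'$.
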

\begin{proof}
    For $\infty$-categories, the statement is \cite[Lemma 2.13]{glossner2025modelindependentuniversalproperty}. Its proof constructs the adjunction using cocartesian lifts of the unit and counit and verifies the triangle identities. The construction applies in every context and is preserved by change of context, since change-of-context functors preserve cocartesian lifts. It therefore gives the internal adjunction and the asserted adjointability. Preservation of cocartesian lifts by $L'$ can likewise be checked in every context.
\end{proof}

Now let $F:\spanEfE\to\internalcatofcats$ be a functor. Apply \Cref{lem:right_adj_are_stable_under_BC} to $t\dashv r$ and the pullback defining $\eta_{F^\otimes}$. We obtain the upper adjunction in
\[
\begin{tikzcd}
    F^\otimes&\intenv(F^\otimes)^\otimes\\
    \spanEfE&\subuniverse^\otimes.
    \arrow[""{name=0,anchor=center,inner sep=0},"\eta_{F^\otimes}"',from=1-1,to=1-2]
    \arrow[from=1-1,to=2-1]
    \arrow[""{name=1,anchor=center,inner sep=0},"\epsilon_F^\otimes"',shift right=3,dashed,from=1-2,to=1-1]
    \arrow[from=1-2,to=2-2]
    \arrow[""{name=2,anchor=center,inner sep=0},"r"',from=2-1,to=2-2]
    \arrow[""{name=3,anchor=center,inner sep=0},"t"',shift right=3,dashed,from=2-2,to=2-1]
    \arrow["\dashv"{anchor=center,rotate=-90},draw=none,from=1,to=0]
    \arrow["\dashv"{anchor=center,rotate=-90},draw=none,from=3,to=2]
\end{tikzcd}
\]
The left adjoint $\epsilon_F^\otimes$ preserves cocartesian lifts over $\spanEfE$. Its straightening defines the counit $\epsilon_F:\intenv(F^\otimes)\to F$.

\begin{proposition}\label{prop:preoperad_premonoidal_adjunction}
    The natural transformations $\eta$ and $\epsilon$ defined above are the unit and counit of an adjunction
    \[
    \intenv:\intpreoperads
    \fromto\intfun(\spanEfE,\internalcatofcats):(-)^\otimes.
    \]
\end{proposition}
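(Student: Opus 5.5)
We verify the two triangle identities for $\eta$ and $\epsilon$, working throughout with cocartesian fibrations over $\spanEfE$ instead of straightened functors. Internally, all constructions (pullback along $s$, pullback along $r$, and the left adjoint $\epsilon^\otimes_F$ from \Cref{lem:right_adj_are_stable_under_BC}) are compatible with change of context. It therefore suffices to produce the triangle identities naturally in each context and check that the homotopies are stable under restriction. Naturality of $\eta$ in $\varintcat{O}^\otimes$ is clear from the pullback description, since $r$ is fixed. For naturality of $\epsilon$, use that a morphism $F\to G$ in $\intfun(\spanEfE,\internalcatofcats)$ induces a cocartesian-lift-preserving functor $F^\otimes\to G^\otimes$. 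Pulling back along $s$ then gives a map of the pullback squares used in \Cref{lem:right_adj_are_stable_under_BC}, and mates are natural, so $\epsilon$ is natural.

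\textbf{First triangle identity.} For $F$, we must show that the composite
\[
F^\otimes\xto{\eta_{F^\otimes}}\intenv(F^\otimes)^\otimes\xto{\epsilon_F^\otimes}F^\otimes
\]
is the identity over $\spanEfE$. By construction, $\epsilon_F^\otimes$ is left adjoint to $\eta_{F^\otimes}$, and $\eta_{F^\otimes}$ is the base change of $r$. Since $r$ is fully faithful, so is its pullback $\eta_{F^\otimes}$. The counit $\epsilon_F^\otimes\eta_{F^\otimes}\to\id$ is therefore an equivalence, and it lies over the counit $tr\simeq\id$. This gives the identity.

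\textbf{Second triangle identity.} For a preoperad $\varintcat{O}^\otimes$, we must show that
\[
\intenv(\varintcat{O})\xto{\intenv(\eta)}\intenv(\intenv(\varintcat{O})^\otimes)\xto{\epsilon_{\intenv(\varintcat{O})}}\intenv(\varintcat{O})
\]
is the identity. Unwinding, $\intenv(\intenv(\varintcat{O})^\otimes)^\otimes$ is the iterated pullback
\[
\subuniverse^\otimes\times_{s,\spanEfE,t}\subuniverse^\otimes\times_{s,\spanEfE}\varintcat{O}^\otimes,
\]
that is, composable pairs $C\to B\to A$ together with $O$ over $C$. The map $\intenv(\eta)$ inserts an identity. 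The map $\epsilon$ is the left adjoint to $r$ pulled back along the first factor, and on objects it sends $(C\to B\to A,O)$ to $(C\to A,O)$ by composing. The key step is to identify $\epsilon_{\intenv(\varintcat{O})}^\otimes$ with the map induced by composition $\subuniverse^\otimes\times_{\spanEfE}\subuniverse^\otimes\to\subuniverse^\otimes$, using the descriptions of \Cref{cor:description_of_the_env_of_triv} and \Cref{lem:descript_of_E_tensor} in terms of spans of arrows. One must check that composition is left adjoint to the "insert identity" map $r\times\id$ relative to the fibration, and this is where the main work lies. I would try the following. Both functors are base changes, along $\subuniverse^\otimes\xto{s}\spanEfE$, of the adjunction $t\dashv r$ on the other factor. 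By uniqueness of left adjoints, it suffices to exhibit composition with the unit described in \Cref{prop:source_target_identity_adjunctions} as a relative adjunction. This is a direct computation with the explicit unit squares.

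Granting that identification, the composite is $\id\circ$ composition with an identity, which is canonically the identity, and the homotopy is compatible with cocartesian lifts since all maps preserve them. I expect the main obstacle to be coherence: the naturality of the identification of $\epsilon$ with composition in $\varintcat{O}$. I would handle it by producing the adjunction universally, once for $\subuniverse^\otimes$ itself, and then base changing along $\varintcat{O}^\otimes\to\spanEfE$.
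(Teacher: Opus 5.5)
Your first triangle identity is exactly the paper's argument. For the second you take a different route: you identify $\epsilon^\otimes_{\intenv(\varintcat{O})}$ with the composition functor. That identification is true, but it carries all the content of the triangle identity. You assert it rather than prove it, and the reason you give is wrong.

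\textbf{What fails.} The right adjoint $\eta_{\intenv(\varintcat{O})^\otimes}$, which inserts an identity in the outer slot, is a base change of $r$. The base change is along the projection to the outer factor of $\subuniverse^\otimes$, which is a cocartesian fibration. Its left adjoint, however, is not a base change of $t$:
\begin{itemize}
\item A base change of $t$ would leave the inner arrow $C\to B$ untouched, whereas composition replaces it by $C\to A$.
\item The matching maps in the fibre product are $s$, and $t$ does not commute with them.
\item Literally base-changing $t\dashv r$ on the inner factor along $s$ gives a different adjunction: projection onto the outer factor, left adjoint to inserting an identity in the inner slot.
\end{itemize}

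\textbf{How to close it along your lines.} The left adjoint you need is the one from \Cref{lem:right_adj_are_stable_under_BC}. It is computed by cocartesian transport along the unit $(B\to A)\to(A=A)$ of $t\dashv r$, whose image under $s$ is the right-pointing arrow $B\rightarrowtail A$. Only after combining this with the explicit cocartesian lifts of $\intenv(\varintcat{O})^\otimes\to\spanEfE$ do you see composition, and even then only on objects. To finish, you would have to:
\begin{itemize}
\item Prove that composition is left adjoint to outer insertion in the universal case $\varintcat{O}^\otimes=\spanEfE$. The unit needs, besides the unit of $t\dashv r$ from \Cref{prop:source_target_identity_adjunctions}, a second component $(C\to B)\to(C\to A)$ on the inner factor.
\item Check that this unit and counit project to identities under the source maps to $\spanEfE$. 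Only then does the adjunction survive base change along $\varintcat{O}^\otimes\to\spanEfE$, which is not a fibration.
\item Conclude by uniqueness of left adjoints and the unit law for composition.
\end{itemize}
The coherence in $\varintcat{O}$ that you worry about is not the real issue, since the triangle identities can be checked objectwise.

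\textbf{What the paper does instead.} The paper never identifies $\epsilon^\otimes$. With $q\colon\intenv(\varintcat{O})^\otimes\to\varintcat{O}^\otimes$ the projection, it builds two natural transformations out of $\eta_{\varintcat{O}}q$, both with cocartesian components over the counit $rs\to\id$ of $r\dashv s$:
\begin{itemize}
\item One goes to $\id$ and is obtained by lifting that counit.
\item The other goes to $\epsilon^\otimes_{\intenv(\varintcat{O})}\intenv(\eta)^\otimes$. It is obtained from the unit of $\epsilon^\otimes\dashv\eta_{\intenv(\varintcat{O})^\otimes}$, which by construction is a cocartesian lift of $\id\to rt$, by postcomposing with the projection $p$ and precomposing with $\intenv(\eta)^\otimes$.
\end{itemize}
Uniqueness of cocartesian lifts then identifies the two targets. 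The paper thus uses only the shape of the unit supplied by the lemma. Your route, once completed, proves more — an explicit formula for the counit as composition — at the cost of an extra adjunction computation and a relative base change argument.
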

\begin{proof}
    Since $r$ is fully faithful, its pullback $\eta_{F^\otimes}$ is fully faithful. The counit of $\epsilon_F^\otimes\dashv\eta_{F^\otimes}$ is therefore an equivalence
    \[
    \epsilon_F^\otimes\eta_{F^\otimes}\simeq\id_{F^\otimes}.
    \]
    This proves the first triangle identity.

    For the second, let $\varintcat{O}^\otimes$ be an $\subuniverse$-preoperad. We must show that the composite
    \[
    \intenv(\varintcat{O})
    \xto{\intenv(\eta)}\intenv(\intenv(\varintcat{O})^\otimes)
    \xto{\epsilon_{\intenv(\varintcat{O})}}\intenv(\varintcat{O})
    \]
    is equivalent to the identity. It suffices to check this after unstraightening, for the composite
    \[
    \intenv(\varintcat{O})^\otimes
    \xto{\intenv(\eta)^\otimes}\intenv(\intenv(\varintcat{O})^\otimes)^\otimes
    \xto{\epsilon_{\intenv(\varintcat{O})}^\otimes}\intenv(\varintcat{O})^\otimes.
    \]

    Consider the pasting
    \[
    \begin{tikzcd}
        \intenv(\varintcat{O})^\otimes\ar[r,"q"]\ar[d]\pullbackdr
        &\varintcat{O}^\otimes\ar[r,"\eta_{\varintcat{O}}"]\ar[d]\pullbackdr
        &\intenv(\varintcat{O})^\otimes\ar[d]\\
        \subuniverse^\otimes\ar[r,"s"]
        &\spanEfE\ar[r,"r"]
        &\subuniverse^\otimes.
    \end{tikzcd}
    \]
    Lifting the counit $rs\to\id$ gives a natural transformation
    \[
    \eta_{\varintcat{O}}q\to\id
    \]
    whose components are cocartesian. We will construct a second such transformation
    \[
    \eta_{\varintcat{O}}q
    \to\epsilon_{\intenv(\varintcat{O})}^\otimes\intenv(\eta)^\otimes
    \]
    over the same counit. Uniqueness of cocartesian lifts will then identify the composite with the identity.

    In the commutative diagram
    \[
    \begin{tikzcd}
        \intenv(\intenv(\varintcat{O})^\otimes)^\otimes\ar[r,"\epsilon_{\intenv(\varintcat{O})}^\otimes"]\ar[d]
        &\intenv(\varintcat{O})^\otimes\ar[r,"\eta_{\intenv(\varintcat{O})^\otimes}"]\ar[d]\pullbackdr
        &\intenv(\intenv(\varintcat{O})^\otimes)^\otimes\ar[r,"p"]\ar[d]\pullbackdr
        &\intenv(\varintcat{O})^\otimes\ar[d]\\
        \subuniverse^\otimes\ar[r,"t"]
        &\spanEfE\ar[r,"r"]
        &\subuniverse^\otimes\ar[r,"s"]
        &\spanEfE,
    \end{tikzcd}
    \]
    the middle and right squares are pullbacks; the left square need not be. The middle upper arrow is the unit at the preoperad $\intenv(\varintcat{O})^\otimes$, rather than $\intenv(\eta)^\otimes$.

    By construction, the unit
    \[
    \id\to
    \eta_{\intenv(\varintcat{O})^\otimes}\epsilon_{\intenv(\varintcat{O})}^\otimes
    \]
    is a cocartesian lift of $\id\to rt$. Postcomposing with $p$ gives a natural transformation
    \[
    p\to\epsilon_{\intenv(\varintcat{O})}^\otimes
    \]
    over $s\to t$. Precomposing with $\intenv(\eta)^\otimes$ then gives a transformation with cocartesian components
    \[
    p\intenv(\eta)^\otimes
    \to\epsilon_{\intenv(\varintcat{O})}^\otimes\intenv(\eta)^\otimes.
    \]
    Finally, the upper square in
    \[
    \begin{tikzcd}
        \intenv(\varintcat{O})^\otimes\ar[r,"q"]\ar[d,"\intenv(\eta)^\otimes"']\pullbackdr
        &\varintcat{O}^\otimes\ar[d,"\eta_{\varintcat{O}}"]\\
        \intenv(\intenv(\varintcat{O})^\otimes)^\otimes\ar[d]\ar[r,"p"]\pullbackdr
        &\intenv(\varintcat{O})^\otimes\ar[d]\\
        \subuniverse^\otimes\ar[r,"s"]
        &\spanEfE
    \end{tikzcd}
    \]
    identifies $p\intenv(\eta)^\otimes$ with $\eta_{\varintcat{O}}q$. Thus we have the required second cocartesian lift, proving the triangle identity.
\end{proof}

\subsubsection*{Operads and the classical case}

We now use the envelope to define operads.

\begin{definition}\label{def:E_operads}
    An $\subuniverse$-preoperad is an \emph{$\subuniverse$-operad} if its envelope is an $\subuniverse$-monoidal category. Thus the $\vartopos$-category of $\subuniverse$-operads is defined by the pullback
    \[
    \begin{tikzcd}
        \intoperads\ar[r]\ar[d]\pullbackdr
        &\intpreoperads\ar[d,"\intenv"]\\
        \intsmon(\internalcatofcats)\ar[r,hook]
        &\intfun(\spanEfE,\internalcatofcats).
    \end{tikzcd}
    \]
\end{definition}

\begin{example}\label{ex:classical_E_operads}
    Let $\vartopos=\catofanima$, and let $\subuniverse$ be the context-free subuniverse corresponding to finite covering maps of anima. Then an $\subuniverse$-operad in the global context is the same as an $\infty$-operad in the sense of \cite[Definition 2.1.1.10]{HA}.
\end{example}
\begin{proof}
    By \cite[Corollary 5.1.15]{barkan2022envelopes}, $\infty$-operads in the sense of \cite[Definition 2.1.1.10]{HA} are equivalent to functors
    \[
    p:\mathcal{O}^\otimes\to\Span(\Fin)
    \]
    satisfying the following conditions:
    \begin{mylist}
        \item The functor $p$ admits cocartesian lifts of inert, that is, left-pointing, morphisms.
        \item For every finite set $A$, the square
        \[
        \begin{tikzcd}
            \mathcal{O}^\otimes\times_{\Span(\Fin)}\Fin_{/A}\ar[r]\ar[d]
            &(\mathcal{O}^\otimes\times_{\Span(\Fin)}\Fin)^A\ar[d]\\
            \Fin_{/A}\ar[r]&\Fin^A
        \end{tikzcd}
        \]
        is a pullback.
    \end{mylist}
    The lower arrow is an equivalence, so the second condition is equivalent to requiring the upper arrow to be an equivalence. This upper arrow is the comparison map for preservation of finite products by the functor classified by
    \[
    \operatorname{env}(\mathcal{O})^\otimes\to\Span(\Fin).
    \]
    Thus a preoperad is an $\infty$-operad precisely when its envelope preserves finite products, as required.
\end{proof}

In this example, evaluation at $*$ identifies $\catofanima$-categories with $\infty$-categories. The comparison maps can therefore be described using the elements of finite sets. For a finite set $B$, each $b\in B$ gives a section $l_b:*\to B$. Applying a functor
\[
F:\Span(\Fin)\to\catofcats
\]
to the spans $B\xfrom{l_b}*=*$ gives the comparison map
\[
F(B)\to\prod_{b\in B}F(*).
\]
The functor $F$ defines a symmetric monoidal category precisely when these comparison maps are equivalences.

\begin{warning}
    For a general topos $\vartopos$, an object $[p:B\to A]\in\subuniverse(A)$ need not admit a section in context $A$. Sections after change of context play the role of the maps $l_b$ above.

    Nevertheless, a functor $F:\spanEfE\to\internalcatofcats$ has canonical comparison maps of $\vartopos_{/A}$-categories
    \[
    F(B)\to F(A)^B,
    \]
    where $A$ denotes the terminal object in context $A$. By definition, $F$ is an $\subuniverse$-monoidal category if and only if these comparison maps are equivalences in every context.
\end{warning}

\subsubsection*{The adjunction for operads}

To restrict the envelope adjunction to operads and monoidal categories, it remains to show that $(-)^\otimes$ sends $\subuniverse$-monoidal categories to operads. We will check this using explicit indexed limit cones in $\spanEfE$ and a criterion for recognizing limit cones from their unstraightenings.

\begin{notation}
    For $[B\to A]\in\spanEfE(A)$, let $B^{A\triangleleft}$ be the $\vartopos_{/A}$-category defined by the pushout
    \[
    \begin{tikzcd}
        B\sqcup B\ar[r]\ar[d]&B\times\Delta^1\ar[d]\\
        A\sqcup B\ar[r]&B^{A\triangleleft}\arrow["\lrcorner"{anchor=center,pos=0.125,rotate=180},draw=none,from=2-2,to=1-1].
    \end{tikzcd}
    \]
    Here the upper arrow includes the two endpoints, and the left arrow is the structure map on the first summand and the identity on the second. The projection $B\times\Delta^1\to\Delta^1$ induces a projection $B^{A\triangleleft}\to\Delta^1$, sending $A$ to the source and $B$ to the target.
\end{notation}

\begin{remark}
    This is the left cone $B^{\triangleleft}$ formed in $\vartopos_{/A}$, as in \cite[Remark 4.1.4]{MWcocomplete}. We include $A$ in the notation to make the context explicit.
\end{remark}

The following criterion expresses the limit condition in terms of cocartesian sections.

\begin{lemma}\label{lem:limit_cone_cocartesian_fib_critirion}
    Let
    \[
    \varintcat{Y}\xto{p}\varintcat{X}\xto{q}\varintcat{I}^{\triangleleft}
    \]
    be cocartesian fibrations of $\vartopos$-categories, and write $\intsec(q)$ for the $\vartopos$-category of cocartesian sections of $q$.
    \begin{mylist}
        \item The functor $q$ classifies a limit cone if and only if the restriction functor
        \[
        \intsec(q)\to\intsec(q|_{\varintcat{I}})
        \]
        is an equivalence.
        \item Suppose that $q$ classifies a limit cone. Then $qp$ classifies a limit cone if and only if, for every object $\sigma:A\to\intsec(q)$, the pullback $\sigma^*p$ in the diagram
        \[
        \begin{tikzcd}
            \sigma^*\varintcat{Y}\ar[r]\ar[d,"\sigma^*p"]\pullbackdr
            &\pi_A^*\varintcat{Y}\ar[d]\\
            \pi_A^*\varintcat{I}^{\triangleleft}\ar[dr,equal]\ar[r,"\sigma"]
            &\pi_A^*\varintcat{X}\ar[d]\\
            &\pi_A^*\varintcat{I}^{\triangleleft}
        \end{tikzcd}
        \]
        classifies a limit cone of $\vartopos_{/A}$-categories.
    \end{mylist}
\end{lemma}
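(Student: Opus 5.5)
The plan is to reduce both parts to the internal straightening equivalence and to the standard description of limits in $\internalcatofcats$ as categories of cocartesian sections. Straighten $q$ to a $\vartopos$-functor $Q\colon\varintcat{I}^{\triangleleft}\to\internalcatofcats$. By internal straightening \cite{MInternalStraightenning}, the cocartesian sections of a cocartesian fibration over a $\vartopos$-category $\varintcat{J}$ compute the limit of the classified functor:
\[
\intsec(q|_{\varintcat{J}})\simeq\lim_{\varintcat{J}}Q|_{\varintcat{J}}.
\]
This holds in every context, since formation of $\intsec$ is compatible with base change along $\pi_A$. For $\varintcat{J}=\varintcat{I}^{\triangleleft}$, whose cone point is initial, the limit is the value at the cone point, and the restriction map $\intsec(q)\to\intsec(q|_{\varintcat{I}})$ becomes the canonical comparison $Q(-\infty)\to\lim_{\varintcat{I}}Q$. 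This is an equivalence exactly when $Q$ is a limit cone, which gives part (1). The main point to make precise is the naturality of this identification with respect to restriction along $\varintcat{I}\into\varintcat{I}^{\triangleleft}$. I would get it by writing $\intsec(q|_{\varintcat{J}})$ as a right adjoint to pullback of the constant fibration, so that it is natural in $\varintcat{J}$.

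For part (2), straighten $p$. Since $qp$ is cocartesian and $p$ is a cocartesian fibration, $p$ is a morphism of cocartesian fibrations over $\varintcat{I}^{\triangleleft}$. It therefore corresponds to a natural transformation $P\Rightarrow Q$ of functors $\varintcat{I}^{\triangleleft}\to\internalcatofcats$, with fibrewise cocartesian fibrations. By part (1), $qp$ classifies a limit cone if and only if $\intsec(qp)\to\intsec(qp|_{\varintcat{I}})$ is an equivalence. Both sides map to $\intsec(q)\simeq\intsec(q|_{\varintcat{I}})$; the latter equivalence is the hypothesis, via (1). So the map is an equivalence if and only if it is an equivalence on fibres over every object $\sigma\colon A\to\intsec(q)$ in every context $A$. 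Here I use that a map over a base in $\internalcatofcats$ is an equivalence iff it is so on fibres over all objects in all contexts, which reduces to conservativity of the family of fibre functors together with essential surjectivity checked contextwise.

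It remains to identify these fibres. A cocartesian section of $qp$ lying over $\sigma$ is a cocartesian section of $\sigma^*p$, so the fibre of $\intsec(qp)$ over $\sigma$ is $\intsec(\sigma^*p)$. Similarly, the fibre of the restricted side is $\intsec(\sigma^*p|_{\varintcat{I}})$. The expected obstacle is checking that a section of $qp$ over a cocartesian $\sigma$ is $qp$-cocartesian exactly when it is $\sigma^*p$-cocartesian. This follows from the pasting law for cocartesian morphisms: a morphism over a $q$-cocartesian edge is $qp$-cocartesian iff it is $p$-cocartesian. Applying part (1) to $\sigma^*p$ over $\pi_A^*\varintcat{I}^{\triangleleft}$ then identifies the fibrewise condition with $\sigma^*p$ classifying a limit cone of $\vartopos_{/A}$-categories, which proves part (2).
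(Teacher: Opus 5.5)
Your strategy matches the paper's. Part (1) goes through cocartesian sections computing limits, with the initial cone point giving $\intsec(q)\simeq Q(-\infty)$. Part (2) compares $\intsec(qp)\to\intsec((qp)|_{\varintcat{I}})$ with $\intsec(q)\xto{\sim}\intsec(q|_{\varintcat{I}})$ and then checks fibrewise. Your identification of the fibres with $\intsec(\sigma^*p)$ and $\intsec((\sigma^*p)|_{\pi_A^*\varintcat{I}})$ via the pasting law is correct, and more explicit than the paper.

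The gap is the fibrewise criterion. You assert that a map over a base in $\internalcatofcats$ is an equivalence iff it is one on all fibres, justified by ``conservativity of the family of fibre functors''. This is false for general maps over a base. The inclusion $\{0\}\sqcup\{1\}\to\Delta^1$, viewed over $\Delta^1$, is an equivalence on both fibres but is not an equivalence. Even between cocartesian fibrations, a fibrewise equivalence that fails to preserve cocartesian edges need not be an equivalence. Fibre functors are jointly conservative only on cocartesian fibrations and maps preserving cocartesian edges. The paper needs exactly this input when it notes that the vertical functors are cocartesian fibrations, and you have to supply it.

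It follows from data you already have:
\begin{itemize}
\item The components $\varintcat{Y}_i\to\varintcat{X}_i$ of your transformation $P\Rightarrow Q$ are cocartesian fibrations.
\item The $qp$-transports preserve $p$-cocartesian edges. A $qp$-cocartesian edge lies over a $q$-cocartesian edge, since you note $p$ preserves cocartesian edges, so it is $p$-cocartesian by your pasting law. Now let $e\colon y\to y'$ be $p$-cocartesian in a fibre, with transport $f_!e$ along $f$. Then $f_!e\circ(y\to f_!y)\simeq(y'\to f_!y')\circ e$ is a composite of $p$-cocartesian edges. Since $y\to f_!y$ is $p$-cocartesian, so is $f_!e$ by cancellation.
\end{itemize}
Hence $P\Rightarrow Q$ is a diagram of cocartesian fibrations and morphisms of them. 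Its limits over $\varintcat{I}^{\triangleleft}$ and over $\varintcat{I}$ are your two vertical functors, so both are cocartesian fibrations. The restriction functor is induced by restricting the diagram, so it preserves cocartesian morphisms. With this in place, internal straightening gives the fibrewise criterion, and the rest of your argument goes through.
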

\begin{proof}
    For (a), \cite[Proposition 7.1.2]{MInternalStraightenning} identifies cocartesian sections with the limit of the classified functor. Since $\varintcat{I}^{\triangleleft}$ has an initial object, $\intsec(q)$ identifies with the value at the cone point, while $\intsec(q|_{\varintcat{I}})$ identifies with the limit over $\varintcat{I}$. The restriction functor is the comparison map between them, proving (a).

    For (b), consider the commutative square
    \[
    \begin{tikzcd}
        \intsec(qp)\ar[r]\ar[d]
        &\intsec((qp)|_{\varintcat{I}})\ar[d]\\
        \intsec(q)\ar[r]
        &\intsec(q|_{\varintcat{I}}).
    \end{tikzcd}
    \]
    By (a), the lower arrow is an equivalence, and the upper arrow is an equivalence precisely when $qp$ classifies a limit cone. The vertical functors are cocartesian fibrations. It therefore suffices to check that the upper arrow induces an equivalence on the fibres over every object $\sigma:A\to\intsec(q)$. The induced functor on these fibres is
    \[
    \intsec(\sigma^*p)
    \to\intsec((\sigma^*p)|_{\pi_A^*\varintcat{I}}),
    \]
    which is an equivalence precisely when $\sigma^*p$ classifies a limit cone, again by (a).
\end{proof}

We now describe the indexed limit cones in $\spanEfE$. Recall that an object $B\in\vartopos$ defines a $\vartopos$-groupoid with $B(C)=\map_{\vartopos}(C,B)$; these maps correspond to sections of $C\times B\to C$. The same observation applies in every slice topos.

\begin{definition}
    Let $[B\to A]\in\spanEfE(A)$. Define a functor of $\vartopos_{/A}$-categories
    \[
    T:B\to\bigl(\pi_A^*(\subuniverse^{\full})^{\op}\bigr)^{\Delta^1}
    \into(\pi_A^*\spanEfE)^{\Delta^1}
    \]
    as follows. In context $[C\to A]$, a map $C\to B$ over $A$ determines a section $C\to C\times_A B$. We send it to the left-pointing span
    \[
    C\times_A B\from C=C.
    \]
    The source and target of $T$ are the constant diagrams on $[B\to A]$ and $[A=A]$, respectively. Since the corresponding functor $B\times\Delta^1\to\pi_A^*\spanEfE$ is constant on the source, it factors through a cone
    \[
    B\times\Delta^1\to B^{A\triangleleft}
    \xto{c_B}\pi_A^*\spanEfE.
    \]
\end{definition}

\begin{proposition}\label{prop:canonical_span_limit_cones}
    The cones $c_B:B^{A\triangleleft}\to\pi_A^*\spanEfE$ are limit cones.
\end{proposition}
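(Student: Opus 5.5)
We reduce the claim to the $\subuniverse^{\full}$-completeness of $\spanEfE$ established in \Cref{prop:limits_in_span_E}. There the right adjoint $p_*$ to restriction along $[p:B\to A]\in\subuniverse(A)$ is $\Span(p_\sharp)$. Consequently the limit of the constant $B$-indexed diagram at the point $*_B=[\id:B\to B]$ is $p_\sharp(*_B)=[B\to A]$. The cone $c_B$ has apex $[B\to A]$ and base the constant diagram $B\to\pi_A^*\spanEfE$ at $[A=A]$, restricted along $B\to A$. It therefore suffices to show that $c_B$ corresponds, under the adjunction $p^*\dashv p_*$, to the counit $p^*p_*(*_B)\to *_B$ in context $B$. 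Equivalently, we must show that the transpose of $c_B$ is the identity of $[B\to A]=p_*(*_B)$.

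First, we unwind what a $B$-shaped cone in context $A$ with apex $X$ and constant base $Y$ is. By the construction of $B^{A\triangleleft}$ as a pushout, it is a morphism $p^*X\to p^*Y$ in $\spanEfE(B)$, i.e.\ a $B$-indexed family of morphisms, encoded in context $B$. By definition of $T$, the cone $c_B$ corresponds in context $B$ (taking $C=B$ and the identity $B\to B$) to the left-pointing span
\[
B\times_A B\xfrom{\Delta}B=B,
\]
viewed as a morphism $p^*[B\to A]=[B\times_AB\to B]\to[B=B]$. Naturality in $C$ identifies the general component of $T$ with base change of this one, so it is enough to work with this universal component.

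Second, we compare this span with the counit of $p^*\dashv p_*$ in $\spanEfE$. Since $\Span$ reverses the adjunction $p_\sharp\dashv p^*$ of adequate triplets (proof of \Cref{prop:limits_in_span_E}), the counit $p^*p_*\to\id$ in spans is the backward image of the unit $D\to p^*p_\sharp D$ in $\subuniverse^{\full}(B)$. At $D=*_B$ this unit is the diagonal $B\to B\times_AB$ over $B$, so its image is exactly the span above. Hence the transpose of $c_B$ is the identity of $p_*(*_B)$, and $c_B$ is a limit cone. The same argument applies after any change of context $A'\to A$, because $T$, $p_\sharp$ and the pullbacks involved are compatible with restriction.

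The main obstacle is the first step: identifying cones out of $B^{A\triangleleft}$, as defined here by an explicit pushout, with the adjunction data for $p_*$ in the Martini--Wolf sense. We would handle it by citing \cite[Remark 4.1.4]{MWcocomplete}, which identifies this pushout with the left cone used to define internal limits. Internal limits along $p$ are exactly right adjoints to the diagonal $p^*$. Under this identification, a cone with apex $X$ over a diagram $d$ is a map $p^*X\to d$, and it is a limit cone iff the adjoint map $X\to p_*d$ is an equivalence. Once this is in place, the remaining identifications are bookkeeping with spans.
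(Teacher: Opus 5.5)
Your proof is correct and is essentially the paper's argument. The paper uses the $\subuniverse^{\full}$-continuity of $(\subuniverse^{\full})^{\op}\into\spanEfE$, coming from the adjunction in \Cref{prop:limits_in_span_E}, to reduce to the dual statement: the cocone of sections $C\to C\times_A B$ exhibits $B$ as the colimit in $\subuniverse^{\full}$ of the constant diagram at the terminal object. Your identification of $c_B$ with the span-image of the unit $*_B\to p^*p_\sharp *_B$ (the diagonal), i.e.\ with the counit of $p^*\dashv p_*=\Span(p_\sharp)$, is that same computation carried out explicitly on the $\Span$ side.
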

\begin{proof}
    By \Cref{prop:limits_in_span_E_are_computed_in_E_op}, $\subuniverse^{\full}$-limits in $\spanEfE$ are computed in $(\subuniverse^{\full})^{\op}$. It therefore suffices to check the dual statement in $\subuniverse^{\full}$.

    The constant diagram $B\to\pi_A^*\subuniverse^{\full}$ on the terminal object $A$ has colimit $B$. In context $C$, its cocone sends a map $C\to B$ over $A$ to the corresponding section $C\to C\times_A B$. These are precisely the arrows defining $c_B$.
\end{proof}

We can now verify that the associated preoperad of a monoidal category is an operad.

\begin{proposition}\label{prop:otimes_sends_monoidal_to_operads}
    The functor
    \[
    (-)^\otimes:\intfun(\spanEfE,\internalcatofcats)\to\intpreoperads
    \]
    sends $\subuniverse$-monoidal categories to $\subuniverse$-operads.
\end{proposition}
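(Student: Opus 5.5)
We must show: if $F:\spanEfE\to\internalcatofcats$ is $\subuniverse^{\full}$-continuous, then $\intenv(F^\otimes)$ is $\subuniverse^{\full}$-continuous. Since $F^\otimes$ is a cocartesian fibration over all of $\spanEfE$, it is in particular a preoperad, and we only need to check the envelope condition. The plan is to compare $\intenv(F^\otimes)$ with $F$ itself through the counit $\epsilon_F:\intenv(F^\otimes)\to F$ of \Cref{prop:preoperad_premonoidal_adjunction}. It is not enough to hope that $\epsilon_F$ is an equivalence. Instead, I would verify continuity directly, using the explicit limit cones $c_B$ of \Cref{prop:canonical_span_limit_cones} and the criterion of \Cref{lem:limit_cone_cocartesian_fib_critirion}.

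\textbf{Reduction to a limit cone statement.} By \Cref{prop:limits_in_span_E_are_computed_in_E_op}, continuity may be tested on limits in $(\subuniverse^{\full})^{\op}$. Every such limit, in context $A$, is built from the cones $c_B$ for $[B\to A]\in\subuniverse(A)$ after change of context, composed with constant diagrams on objects. Working in context $A$ and replacing $\vartopos$ by $\vartopos_{/A}$, it therefore suffices to show that the composite
\[
\intenv(F^\otimes)^\otimes\times_{\spanEfE}B^{A\triangleleft}\to B^{A\triangleleft}
\]
classifies a limit cone for each such $B$. I would factor this composite through the pullback of $t:\subuniverse^\otimes\to\spanEfE$ along $c_B$:
\[
\intenv(F^\otimes)^\otimes\times_{\spanEfE}B^{A\triangleleft}\xto{p}\subuniverse^\otimes\times_{\spanEfE}B^{A\triangleleft}\xto{q}B^{A\triangleleft}.
\]
Here $q$ classifies $\subuniverse$ with its cocartesian structure restricted along $c_B$. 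Since $\subuniverse^{\full}$ is $\subuniverse$-cocomplete (right regularity), the cocartesian monoidal structure is a monoid, so $q$ classifies a limit cone. This is part (a) of \Cref{lem:limit_cone_cocartesian_fib_critirion}.

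\textbf{Fibrewise check.} By part (b) of \Cref{lem:limit_cone_cocartesian_fib_critirion}, it remains to show that for each cocartesian section $\sigma$ of $q$ in context $A'$, the pullback $\sigma^*p$ classifies a limit cone. Concretely, $\sigma$ amounts to an object $[C\to A]\in\subuniverse(A)$ together with its restriction to $B$, namely the family $C\times_A B\to B$ over points of $B$. Unwinding the pullback defining the envelope, $\sigma^*p$ is obtained by pulling back $F^\otimes$ along the composite $s\circ\sigma$. This composite is the cone in $\spanEfE$ given by the left-pointing spans $C\times_A B\leftarrow C_b$, where $C_b$ is the fibre of $C\to A$ restricted along a section $b$.

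The key claim is that $s\circ\sigma$ is again a limit cone in $\spanEfE$. On the level of $\subuniverse^{\full}$ it is the colimit cone exhibiting $C\times_A B$ as the $B$-indexed colimit of the fibres, which is the colimit of $C$ pulled back along $B\to A$ by universality of colimits in $\vartopos_{/A}$. I would verify this exactly as in the proof of \Cref{prop:canonical_span_limit_cones}, using that colimits in $\universe$ are universal and \Cref{prop:limits_in_span_E_are_computed_in_E_op}. Granting the claim, $F$ is $\subuniverse^{\full}$-continuous and its unstraightening $F^\otimes$ restricted along this cone classifies $F\circ s\circ\sigma$. That is a limit cone, so $\sigma^*p$ classifies a limit cone, as required.

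\textbf{Main obstacle.} The hardest step is identifying $s\circ\sigma$ with a limit cone uniformly and compatibly with change of context, and checking that $\sigma^*p$ is literally the unstraightening of $F\circ s\circ\sigma$. The first point is that the $q$-cocartesian lifts described in the proof that $\intenv(\varintcat{O})^\otimes$ is cocartesian become left-pointing arrows after applying $s$. The second point is that the pullback $\sigma^*p$ is computed along $s\sigma$, not along $t\sigma$. I would first do the computation in the global context using that description of cocartesian lifts, and then note that every step is stable under restriction.
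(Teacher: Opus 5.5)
Your strategy is the paper's: reduce to the cones $c_B$, note that $c_B^*t$ already classifies a limit cone, and apply \Cref{lem:limit_cone_cocartesian_fib_critirion}(b). Then rewrite $\sigma^*p\simeq(s\sigma)^*q$ with $q\colon F^\otimes\to\spanEfE$, and show that $s\sigma$ is an indexed limit cone. However, your description of the cocartesian sections $\sigma$ is wrong. As written, you check the condition of part (b) only for some of the sections it quantifies over. The cone point of $B^{A\triangleleft}$ lies over $c_B(\text{cone point})=[B\to A]$, not over $[A=A]$. So by part (a), a cocartesian section is the same as an object of the fibre of $t$ over $[B\to A]$, i.e.\ an \emph{arbitrary} arrow $B'\to B$ in $\subuniverse(A)$ (equivalently, an object of $\subuniverse(B)$). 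You instead parametrize sections by $[C\to A]\in\subuniverse(A)$ with cone-point value $C\times_A B\to B$. These are only the sections whose value is pulled back from $A$, and pullback $\subuniverse(A)\to\subuniverse(B)$ is not essentially surjective in general. For instance, take $\vartopos=\catofanima$, $\subuniverse=\varintcat{Fin}$, $A=*$, $B$ a two-point set, and $B'\to B$ with fibres of different cardinalities.

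The repair is the one the paper makes, and your key idea survives it. Take a general $B'\to B$ and a point $D\to B$ over $A$. Cocartesianness forces the arrow of $\sigma$ to be the square with top row $D\times_A B'\leftarrow D\times_B B'= D\times_B B'$ over the bottom row $D\times_A B\leftarrow D=D$. So $s\sigma$ consists of the left-pointing spans $D\times_A B'\leftarrow D\times_B B'$. Their dual cocone in $\subuniverse^{\full}$ exhibits $B'$ as the $B$-indexed colimit of its fibres, so this is a limit cone by the argument of \Cref{prop:canonical_span_limit_cones}. Replace $C\times_A B$ by $B'$ throughout and the proof goes through.

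A smaller point: $t$ classifies the $\subuniverse$-monoidal subcategory $\subuniverse\subset\equipcocart(\subuniverse^{\full})$, not a cocartesian structure on $\subuniverse$ itself. Its continuity, and hence the fact that $c_B^*t$ classifies a limit cone, follows from \Cref{prop:monoidal_subcategories_are_monoidal}, not from part (a) of the lemma.
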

\begin{proof}
    Let $F:\spanEfE\to\internalcatofcats$ define an $\subuniverse$-monoidal category. We must show that $\intenv(F^\otimes)$ is $\subuniverse^{\full}$-continuous. Consider the defining diagram
    \[
    \begin{tikzcd}
        \intenv(F^\otimes)^\otimes\ar[r]\ar[d,"p"]\pullbackdr
        &F^\otimes\ar[d,"q"]\\
        \subuniverse^\otimes\ar[r,"s"]\ar[d,"t"]
        &\spanEfE\\
        \spanEfE.
    \end{tikzcd}
    \]
    It suffices to show that, for every context $A$ and every $[B\to A]\in\subuniverse(A)$, the pullback $c_B^*(tp)$ classifies a limit cone.

    The functor $t$ classifies an $\subuniverse^{\full}$-continuous functor, so $c_B^*t$ already classifies a limit cone. By \Cref{lem:limit_cone_cocartesian_fib_critirion}(b), it remains to show that $\sigma^*p$ classifies a limit cone for every $\pi:A'\to A$ and every $t$-cocartesian section
    \[
    \sigma:\pi^*B^{A\triangleleft}\to\pi_{A'}^*\subuniverse^\otimes
    \]
    over $\pi^*c_B$. Since
    \[
    \sigma^*p\simeq(s\sigma)^*q
    \]
    and $q$ classifies the $\subuniverse^{\full}$-continuous functor $F$, it is enough to show that $s\sigma$ is an indexed limit cone in $\spanEfE$.

    After changing context from $A$ to $A'$, we may assume $A'=A$. By \Cref{lem:limit_cone_cocartesian_fib_critirion}(a), the section $\sigma$ is determined by its value at the cone point. This value is an arrow $B'\to B$ in $\subuniverse(A)$. For a map $C\to B$ over $A$, cocartesianness determines the corresponding arrow of $\sigma$ as
    \[
    \begin{tikzcd}
        C\times_A B'\ar[d]
        &P\ar[d]\ar[r,equal]\ar[l]\arrow["\lrcorner"{anchor=center,pos=0.125,rotate=-90},draw=none,from=1-2,to=2-1]
        &P\ar[d]\\
        C\times_A B
        &C\ar[l]\ar[r,equal]
        &C,
    \end{tikzcd}
    \]
    where $P=C\times_B B'$. Applying $s$ gives the cone whose dual cocone in $\subuniverse^{\full}$ exhibits $B'$ as the colimit of its fibres over $B$. Thus $s\sigma$ is a limit cone by the same argument as in \Cref{prop:canonical_span_limit_cones}. This proves the required continuity.
\end{proof}

The envelope adjunction therefore restricts to operads and monoidal categories.

\begin{theorem}\label{thm:env_associated_operad_adjunction}
    The natural transformations $\eta$ and $\epsilon$ defined above are the unit and counit of an adjunction
    \[
    \intenv:\intoperads
    \fromto\intsmon(\internalcatofcats):(-)^\otimes.
    \]
\end{theorem}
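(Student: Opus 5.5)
The plan is to obtain the adjunction by restricting the preoperad-level adjunction of \Cref{prop:preoperad_premonoidal_adjunction} to full $\vartopos$-subcategories on both sides. By \Cref{def:E_operads}, $\intoperads$ is the pullback of $\intsmon(\internalcatofcats)\into\intfun(\spanEfE,\internalcatofcats)$ along $\intenv$. Since $\intsmon(\internalcatofcats)$ is a full $\vartopos$-subcategory, $\intoperads$ is a full $\vartopos$-subcategory of $\intpreoperads$. By construction, $\intenv$ carries it into $\intsmon(\internalcatofcats)$. So we have a functor $\intenv\colon\intoperads\to\intsmon(\internalcatofcats)$.

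In the other direction, \Cref{prop:otimes_sends_monoidal_to_operads} shows that $(-)^\otimes$ sends $\subuniverse$-monoidal categories to $\subuniverse$-operads. Hence it restricts to a functor $(-)^\otimes\colon\intsmon(\internalcatofcats)\to\intoperads$.

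The key formal step is the observation that an adjunction $L\dashv R$ between $\vartopos$-categories restricts to full $\vartopos$-subcategories $\varintcat{A}'\subset\varintcat{A}$ and $\varintcat{B}'\subset\varintcat{B}$ whenever $L(\varintcat{A}')\subset\varintcat{B}'$ and $R(\varintcat{B}')\subset\varintcat{A}'$. Under these conditions the components of the unit at objects of $\varintcat{A}'$ are morphisms between objects of $\varintcat{A}'$. Likewise the components of the counit at objects of $\varintcat{B}'$ lie in $\varintcat{B}'$. Fullness then ensures that these components, and the homotopies witnessing the triangle identities, all live in the subcategories.

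Internally, I will verify this contextwise. I will use the description of internal adjunctions from \Cref{subsec:internal_spans_and_unfurling}: an adjunction in every context together with compatibility with restriction. Alternatively, I can phrase it via the mapping-anima equivalence $\intmap(\intenv\varintcat{O},\varintcat{C})\simeq\intmap(\varintcat{O}^\otimes,\varintcat{C}^\otimes)$ supplied by $\eta$ and $\epsilon$. This equivalence restricts verbatim because the subcategories are full. Both membership conditions are stable under change of context. For operads this holds because $\intenv$ commutes with restriction, and for monoidal categories because $\subuniverse^{\full}$-continuity is a local, context-stable condition.

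The only substantive input is therefore \Cref{prop:otimes_sends_monoidal_to_operads}, which has already been established. Consequently I expect no real obstacle. The one point needing care is confirming that the restricted unit $\eta$ and counit $\epsilon$ are exactly those of \Cref{prop:preoperad_premonoidal_adjunction}. In particular, $\epsilon_F$ for $F$ monoidal is then a morphism in $\intsmon(\internalcatofcats)$, which holds because that subcategory is full in $\intfun(\spanEfE,\internalcatofcats)$.
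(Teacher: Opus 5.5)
Your proposal is correct and is essentially the paper's proof: $\intoperads\subset\intpreoperads$ and $\intsmon(\internalcatofcats)\subset\intfun(\spanEfE,\internalcatofcats)$ are full subcategories, $\intenv$ lands in $\subuniverse$-monoidal categories by the definition of operads, and $(-)^\otimes$ lands in operads by \Cref{prop:otimes_sends_monoidal_to_operads}, so the adjunction of \Cref{prop:preoperad_premonoidal_adjunction} restricts. Your additional remarks, on why the pullback defining $\intoperads$ is full and on stability under change of context, only make explicit what the paper leaves implicit.
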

\begin{proof}
    Both $\intoperads\subset\intpreoperads$ and
    \[
    \intsmon(\internalcatofcats)\subset\intfun(\spanEfE,\internalcatofcats)
    \]
    are full $\vartopos$-subcategories. By definition, $\intenv$ sends operads to $\subuniverse$-monoidal categories. Conversely, \Cref{prop:otimes_sends_monoidal_to_operads} shows that $(-)^\otimes$ sends $\subuniverse$-monoidal categories to operads. The adjunction of \Cref{prop:preoperad_premonoidal_adjunction} therefore restricts as claimed.
\end{proof}

Finally, morphisms of associated operads give the notion of a lax monoidal functor.

\begin{definition}\label{def:lax_E_monoidal_functors}
    Let $\varintcat{C}$ and $\varintcat{D}$ be $\subuniverse$-monoidal $\vartopos$-categories. A \emph{lax $\subuniverse$-monoidal functor} $\varintcat{C}\to\varintcat{D}$ is a morphism of $\subuniverse$-operads
    \[
    \varintcat{C}^\otimes\to\varintcat{D}^\otimes.
    \]
\end{definition}

\subsection{Partially cartesian and cocartesian structures.}\label{sec:partially_cartesian_structures}

We now consider partial cartesian and cocartesian conditions on an $\subuniverse$-monoidal category. Given inductible subuniverses $\subuniverse[P],\subuniverse[I]\subseteq\subuniverse$, we require the structure to be cartesian with respect to $\subuniverse[P]$ and cocartesian with respect to $\subuniverse[I]$. We call such categories $(\subuniverse[P,I])$-ambidextrous (\Cref{def:P_I_ambi_E_monoidal_cats}).

These categories form an accessible localization of the category of $\subuniverse$-monoidal categories (\Cref{prop:localization_to_partial_ambidexterity}). This gives an abstract construction of free ambidextrous categories (\Cref{cor:existance_of_free_ambi_monoidal_category}), whose explicit description is one of the main goals of this paper. We conclude with criteria for recognizing ambidexterity (\Cref{prop:inductive_amby_iff_ambi,cor:critiria_for_amby}).

\subsubsection*{Restriction and partially cartesian structures}

Let $\subuniverse[Q]\subset\subuniverse\subset\universe$ be context-free subuniverses. The inclusion
\[
\intspan(\subuniverse[Q]^{\full},\subuniverse[Q])\into\spanEfE
\]
is $\subuniverse[Q]$-continuous. Restriction therefore gives a forgetful $\vartopos$-functor
\[
\intsmon(\internalcatofcats)\to\intsmon[Q](\internalcatofcats).
\]

\begin{proposition}\label{prop:forgetting_E_to_I_monoidal_in_prr}
    The forgetful $\vartopos$-functor
    \[
    \intsmon(\internalcatofcats)\to\intsmon[Q](\internalcatofcats)
    \]
    is accessible, conservative, and continuous. It admits a left adjoint.
\end{proposition}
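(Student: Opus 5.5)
The plan is to exhibit the forgetful functor as restriction along a single $\vartopos$-functor between presentable $\vartopos$-categories of functors, and then read off the four properties. Write $j:\intspan(\subuniverse[Q]^{\full},\subuniverse[Q])\into\spanEfE$ for the inclusion. Restriction along $j$ gives
\[
j^*:\intfun(\spanEfE,\internalcatofcats)\to\intfun(\intspan(\subuniverse[Q]^{\full},\subuniverse[Q]),\internalcatofcats).
\]
This functor preserves all $\vartopos$-limits and $\vartopos$-colimits, since both are computed pointwise. It sends $\subuniverse^{\full}$-continuous functors to $\subuniverse[Q]^{\full}$-continuous ones, because $j$ is $\subuniverse[Q]$-continuous. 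So the forgetful functor of the statement is the restriction of $j^*$ to the full subcategories $\intsmon(\internalcatofcats)$ and $\intsmon[Q](\internalcatofcats)$.

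\emph{Continuity.} Both subcategories are closed under all $\vartopos$-limits in their ambient functor categories, since limits commute with limits. Because $j^*$ preserves limits, so does the forgetful functor.

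\emph{Accessibility.} By \Cref{prop:cmon_is_presentable} and \Cref{prop:cmon_is_cat_module_in_prl}, both $\intsmon(\internalcatofcats)\simeq\intsmon(\universe)\otimes\internalcatofcats$ and its $\subuniverse[Q]$-analogue are $\vartopos$-presentable, presented as Bousfield localizations at small sets of maps. Hence each inclusion into its functor category is accessible, i.e.\ closed under $\kappa$-filtered colimits for some regular $\kappa$. The reason is that the limit conditions are $\subuniverse^{\full}$-indexed and $\subuniverse^{\full}$ is small, so they commute with sufficiently filtered colimits in $\internalcatofcats$. Since $j^*$ preserves all colimits, the forgetful functor preserves $\kappa$-filtered colimits.

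\emph{Left adjoint.} This follows from the internal adjoint functor theorem for presentable $\vartopos$-categories (Martini--Wolf), applied to an accessible, continuous functor. Alternatively, one can construct it directly as the left Kan extension $j_\sharp$ followed by the localization $L$ onto $\intsmon(\internalcatofcats)$.

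\emph{Conservativity.} This is the step needing the real input. Let $\alpha:F\to G$ be a map of $\subuniverse$-monoidal categories whose restriction along $j$ is an equivalence. Since $\subuniverse[Q]$ contains $*$, the component $\alpha_*$ on the underlying category is an equivalence. By \Cref{prop:values_of_E_monoidal_categories}, for every context $A$ and every $[B\to A]\in\subuniverse(A)$ the component $\alpha_B$ identifies with $\intfun_{\vartopos_{/A}}(B,\alpha_*)$, and is therefore an equivalence. Natural transformations of $\vartopos$-functors are detected objectwise in every context, so $\alpha$ is an equivalence.

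The main obstacle I expect is making the accessibility rank and the adjoint functor theorem precise in the internal setting. Everything else is formal once the description via $j^*$ is in place.
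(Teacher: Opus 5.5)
Your proof is correct, but it is organized differently from the paper's. The paper never passes to the ambient functor categories. It notes that restriction commutes with the two forgetful functors to $\internalcatofcats$ (evaluation at $*$), takes as known that both are accessible, conservative and continuous, and transfers these properties formally along the commuting triangle:
\begin{itemize}
\item Conservativity of restriction follows because its composite with the $\subuniverse[Q]$-forgetful functor is the $\subuniverse$-forgetful functor, which is conservative.
\item For preservation of limits and $\kappa$-filtered colimits, choose $\kappa$ so that both forgetful functors preserve $\kappa$-filtered colimits. Conservativity of the $\subuniverse[Q]$-forgetful functor then detects that restriction preserves them.
\item The left adjoint comes from the internal adjoint functor theorem.
\end{itemize}

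You instead realize the functor as the restriction of $j^*$, which preserves all limits and colimits. You then use that the monoid subcategories are closed under limits and under $\kappa$-filtered colimits. Your continuity argument therefore does not need conservativity at all. Your conservativity step via \Cref{prop:values_of_E_monoidal_categories} is exactly the reason the paper's forgetful functor to $\internalcatofcats$ is conservative.

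The paper's route is shorter and purely formal once the properties of the forgetful functors are granted. Yours makes explicit where those properties come from. Note that the uniform choice of $\kappa$ for the $\subuniverse^{\full}$-indexed limit conditions is the same compactness input in both arguments, and neither proves it in detail. Your alternative description of the left adjoint as $L\circ j_\sharp$ also sidesteps the adjoint functor theorem, relying only on the existence of the localization $L$ supplied by presentability.
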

\begin{proof}
    Restriction commutes with the forgetful functors to $\internalcatofcats$. These functors are accessible, conservative, and continuous, so the same properties hold for restriction. For accessibility, choose a $\vartopos$-regular cardinal $\kappa$ such that both forgetful functors preserve $\kappa$-filtered colimits; conservativity then detects preservation of these colimits by restriction. The same argument applies to limits.

    Both categories are $\vartopos$-presentable by \Cref{prop:cmon_is_cat_module_in_prl}. The internal adjoint functor theorem \cite[Proposition 2.4.3.3]{MWPresentabilityAndTopoi} therefore gives the left adjoint.
\end{proof}

We use restriction to impose cartesian or cocartesian conditions with respect to a smaller inductible subuniverse.

\begin{definition}
    Let $\subuniverse[I]\subset\subuniverse$ be context-free subuniverses, with $\subuniverse[I]$ inductible. An $\subuniverse$-monoidal $\vartopos$-category $\varintcat{C}$ is \emph{$\subuniverse[I]$-cartesian}, respectively \emph{$\subuniverse[I]$-cocartesian}, if its image under
    \[
    \intsmon(\internalcatofcats)\to\intsmon[I](\internalcatofcats)
    \]
    is cartesian, respectively cocartesian. We write
    \[
    \intmon^{\subuniverse,\subuniverse[I]\text{-}\sqcap}(\internalcatofcats)
    \quad\text{and}\quad
    \intmon^{\subuniverse,\subuniverse[I]\text{-}\sqcup}(\internalcatofcats)
    \]
    for the corresponding full $\vartopos$-subcategories of $\intsmon(\internalcatofcats)$.
\end{definition}

The partially cocartesian categories form an accessible localization, obtained from the cocartesian localization for $\subuniverse[I]$ by the following pushout.

\begin{proposition}\label{prop:pushout_in_prl_defining_E_mon_I_cocart}
    There is a pushout square in $\intprl$
    \[
    \begin{tikzcd}
        \intsmon[I](\internalcatofcats)\ar[r]\ar[d]
        &\internalcatofcats^{\subuniverse[I]\text{-}\sqcup}\ar[d]\\
        \intsmon(\internalcatofcats)\ar[r]
        &\intmon^{\subuniverse,\subuniverse[I]\text{-}\sqcup}(\internalcatofcats)
        \arrow[ul,"\lrcorner"{anchor=center,pos=0.125,rotate=180},draw=none].
    \end{tikzcd}
    \]
    The horizontal functors are Bousfield localizations. The left vertical functor is the left adjoint to restriction.
\end{proposition}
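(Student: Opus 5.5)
The plan is to identify $\intmon^{\subuniverse,\subuniverse[I]\text{-}\sqcup}(\internalcatofcats)$ as the localization of $\intsmon(\internalcatofcats)$ at the image, under the left adjoint $\mathrm{Fr}$ of restriction (\Cref{prop:forgetting_E_to_I_monoidal_in_prr}), of the generating local maps for the cocartesian localization of $\intsmon[I](\internalcatofcats)$. Then I would invoke the general fact that in $\intprl$ a pushout of a Bousfield localization along a left adjoint is the localization at the image of the generating set.

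The first step is to recall from the proof of \Cref{thm:cocart_cats_are_localization_of_monoidal} that $\internalcatofcats^{\subuniverse[I]\text{-}\sqcup}\subset\intsmon[I](\internalcatofcats)$ is the full subcategory of objects internally local with respect to the small family
\[
S_n:\subuniverse[I]^{\simeq}\otimes\Delta^n\to\subuniverse[I]\otimes\Delta^n,\qquad n\in\nats.
\]
This identifies the top arrow as a Bousfield localization, namely $L_{\sqcup}$.

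The second step uses the adjunction $\mathrm{Fr}\dashv\mathrm{res}$. For $\varintcat{C}\in\intsmon(\internalcatofcats)$, internal mapping anima satisfy $\intmap(\mathrm{Fr}S_n,\varintcat{C})\simeq\intmap(S_n,\mathrm{res}\,\varintcat{C})$. Hence $\varintcat{C}$ is local for $\mathrm{Fr}(S_n)$ exactly when $\mathrm{res}\,\varintcat{C}$ is cocartesian, that is, when $\varintcat{C}$ is $\subuniverse[I]$-cocartesian. Since $\intsmon(\internalcatofcats)$ is $\vartopos$-presentable (\Cref{prop:cmon_is_cat_module_in_prl}) and the family is small, the subcategory is an accessible localization. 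This makes the bottom arrow a Bousfield localization $L$. Because $\mathrm{res}$ sends $\subuniverse[I]$-cocartesian objects into $\internalcatofcats^{\subuniverse[I]\text{-}\sqcup}$, there is a commuting square of right adjoints. The left adjoint square then commutes, with the right vertical map given by $L\circ\mathrm{Fr}\circ\iota$.

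The third step verifies the pushout property in $\intprl$. A left adjoint $G:\intsmon(\internalcatofcats)\to\varintcat{T}$ together with $H$ out of $\internalcatofcats^{\subuniverse[I]\text{-}\sqcup}$ such that $G\circ\mathrm{Fr}\simeq H\circ L_{\sqcup}$ is the same as a $G$ with $G\mathrm{Fr}$ inverting each $S_n$. That condition says $G$ inverts each $\mathrm{Fr}(S_n)$. Internal locality is needed here, so I would use the cotensoring by $\Delta^n$ from \Cref{cor:cmon_is_tensored_and_powered_by_cat}, which $\mathrm{Fr}$ commutes with since it is $\internalcatofcats$-linear. Such $G$ correspond exactly to left adjoints out of the localization. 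I expect this $\internalcatofcats$-linearity of $\mathrm{Fr}$ and the contextwise bookkeeping of the internal universal property to be the main obstacle. I would handle it by checking the pushout after evaluating in each slice $\vartopos_{/A}$, using that all constructions commute with étale base change.
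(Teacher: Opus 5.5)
Your proposal is correct, but it takes a more hands-on route than the paper.

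\textbf{The paper's argument.} By definition, $\intmon^{\subuniverse,\subuniverse[I]\text{-}\sqcup}(\internalcatofcats)$ is the pullback of the fully faithful functor $\equipcocart\colon\internalcatofcats^{\subuniverse[I]\text{-}\sqcup}\into\intsmon[I](\internalcatofcats)$ along restriction. Both functors are accessible right adjoints (\Cref{prop:forgetting_E_to_I_monoidal_in_prr,thm:cocart_cats_are_localization_of_monoidal}). Passing to left adjoints turns this pullback into a pushout in $\intprl$. Full faithfulness of the horizontal right adjoints then makes the horizontal left adjoints Bousfield localizations. This needs no generators. It does implicitly use the internal duality between left and right adjoints of presentable $\vartopos$-categories, and the fact that pullbacks of accessible right adjoints are computed on underlying large $\vartopos$-categories.

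\textbf{Your argument.} You instead identify the corner as the localization at $\mathrm{Fr}(S_n)$. You then check the pushout property directly from the universal property of Bousfield localization: a left adjoint factors through the localization iff its right adjoint lands in the local objects, iff it inverts the generators. This avoids the duality. It also gives an explicit small set of generating local equivalences. Note that $\mathrm{Fr}(S_0)$ is exactly the map $\subuniverse^{\simeq}\to F^{\subuniverse}_{\subuniverse[I]}(\subuniverse[I])$ that the paper later uses in \Cref{subsec:symmetric_monoidal_cat_of_E_monoidal_cats}.

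\textbf{On the obstacle you flag.} The $\internalcatofcats$-linearity of $\mathrm{Fr}$ is not actually needed. Your $S_n$ already contain the factor $\Delta^n$, so the internal adjunction $\intmap(\mathrm{Fr}(S_n),\varintcat{C})\simeq\intmap(S_n,\mathrm{res}\,\varintcat{C})$ alone identifies the local objects. Compatibility with change of context comes from $\mathrm{Fr}$ being a $\vartopos$-functor.
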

\begin{proof}
    By definition, the full $\vartopos$-subcategory of $\subuniverse[I]$-cocartesian $\subuniverse$-monoidal categories is the pullback of
    \[
    \equipcocart:\internalcatofcats^{\subuniverse[I]\text{-}\sqcup}
    \into\intsmon[I](\internalcatofcats)
    \]
    along restriction. By \Cref{prop:forgetting_E_to_I_monoidal_in_prr,thm:cocart_cats_are_localization_of_monoidal}, these are accessible right adjoints. Passing to left adjoints gives the displayed pushout. The horizontal right adjoints are fully faithful, so the horizontal left adjoints are Bousfield localizations.
\end{proof}

\subsubsection*{Ambidexterity and free objects}

We can impose the cartesian and cocartesian conditions for two different indexing subuniverses.

\begin{definition}\label{def:P_I_ambi_E_monoidal_cats}
    Let $\subuniverse[I],\subuniverse[P]\subset\subuniverse$ be inductible context-free subuniverses of a context-free subuniverse $\subuniverse$. An $\subuniverse$-monoidal $\vartopos$-category is \emph{$(\subuniverse[P,I])$-ambidextrous} if it is $\subuniverse[P]$-cartesian and $\subuniverse[I]$-cocartesian. We write
    \[
    \intmon^{\subuniverse,(\subuniverse[P,I])\text{-}\oplus}(\internalcatofcats)
    \]
    for the full $\vartopos$-subcategory of $\intsmon(\internalcatofcats)$ spanned by these categories.
\end{definition}

Imposing both conditions again gives an accessible localization.

\begin{proposition}\label{prop:localization_to_partial_ambidexterity}
    There is a Bousfield localization
    \[
    L_{\oplus}:\intsmon(\internalcatofcats)
    \onto\intmon^{\subuniverse,(\subuniverse[P,I])\text{-}\oplus}(\internalcatofcats).
    \]
\end{proposition}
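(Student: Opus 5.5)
We obtain $L_{\oplus}$ by showing that the full $\vartopos$-subcategory $\intmon^{\subuniverse,(\subuniverse[P,I])\text{-}\oplus}(\internalcatofcats)$ is the intersection of two accessible reflective subcategories of the $\vartopos$-presentable $\vartopos$-category $\intsmon(\internalcatofcats)$. It is therefore itself an accessible localization, so the internal adjoint functor theorem \cite[Proposition 2.4.3.3]{MWPresentabilityAndTopoi} supplies the left adjoint.

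\textbf{The $\subuniverse[I]$-cocartesian condition.} This is \Cref{prop:pushout_in_prl_defining_E_mon_I_cocart}. We make it explicit as locality with respect to a small family of morphisms. By \Cref{thm:in_cocart_everything_is_alg} and \Cref{prop:forgetful_from_calg_is_internally_representable}, an $\subuniverse$-monoidal category $\varintcat{C}$ is $\subuniverse[I]$-cocartesian if and only if its restriction is internally local with respect to the $\subuniverse[I]$-monoidal maps
\[
\subuniverse[I]^{\simeq}\otimes\Delta^n\to\subuniverse[I]\otimes\Delta^n .
\]
Let $\mathrm{Fr}$ denote the left adjoint of restriction from \Cref{prop:forgetting_E_to_I_monoidal_in_prr}. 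Transporting along $\mathrm{Fr}$, $\varintcat{C}$ is $\subuniverse[I]$-cocartesian exactly when it is local with respect to the small set of morphisms $\mathrm{Fr}(\subuniverse[I]^{\simeq}\otimes\Delta^n)\to\mathrm{Fr}(\subuniverse[I]\otimes\Delta^n)$, taken in all contexts.

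\textbf{The $\subuniverse[P]$-cartesian condition.} We pass to opposites. By \Cref{prop:op_is_also_monoidal}, $\varintcat{C}$ is $\subuniverse[P]$-cartesian if and only if $\varintcat{C}^{\op}$ is $\subuniverse[P]$-cocartesian. Since $(-)^{\op}$ is an involutive equivalence of $\intsmon(\internalcatofcats)$, the argument of \Cref{thm:in_cocart_everything_is_alg} applied to $\varintcat{C}^{\op}$ gives a criterion for $\varintcat{C}$ itself. Namely, $\varintcat{C}$ is $\subuniverse[P]$-cartesian if and only if $\intcoalg^{\subuniverse[P]}(\varintcat{C})\to\varintcat{C}$ is an equivalence. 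This forgetful map is precomposition with $\subuniverse[P]^{\simeq}\simeq(\subuniverse[P]^{\simeq})^{\op}\into\subuniverse[P]^{\op}$. Hence $\varintcat{C}$ is $\subuniverse[P]$-cartesian exactly when it is local with respect to $\mathrm{Fr}(\subuniverse[P]^{\simeq}\otimes\Delta^n)\to\mathrm{Fr}(\subuniverse[P]^{\op}\otimes\Delta^n)$.

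\textbf{Conclusion.} Taking the union of the two small sets of generating morphisms, the ambidextrous categories are precisely the objects local with respect to a small $\vartopos$-family of morphisms in a $\vartopos$-presentable category (\Cref{prop:cmon_is_cat_module_in_prl}). By \cite[Corollary 2.4.2.9 and Theorem 2.4.2.5]{MWPresentabilityAndTopoi}, they form an accessible Bousfield localization, and we call its reflector $L_{\oplus}$.

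\textbf{Main obstacle.} The delicate step is verifying that locality with respect to the mapping \emph{anima} in all contexts, after tensoring with $\Delta^n$ as in \Cref{cor:cmon_is_tensored_and_powered_by_cat}, really recovers equivalence of the internal mapping categories. The same issue arises in transporting the criteria along the adjunction $\mathrm{Fr}\dashv$ restriction. Both go exactly as in the proof of \Cref{thm:cocart_cats_are_localization_of_monoidal}, using that restriction commutes with $(-)^{\Delta^n}$, since it is computed pointwise.
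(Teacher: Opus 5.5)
Your argument is correct, but it is organized differently from the paper's.

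The paper never writes down generators. It takes the two Bousfield localizations supplied by \Cref{prop:pushout_in_prl_defining_E_mon_I_cocart} and its opposite-category analogue. It notes that the $(\subuniverse[P,I])$-ambidextrous categories are exactly the pullback of the two fully faithful accessible right adjoints. It then obtains $L_{\oplus}$ by passing to left adjoints, i.e.\ as the pushout of the two localizations in $\intprl$. The only input is the formal fact that such pullbacks of presentable $\vartopos$-categories along accessible right adjoints remain presentable.

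You instead present each condition as locality against an explicit small family. These are the images under $\mathrm{Fr}$ of $\subuniverse[I]^{\simeq}\otimes\Delta^n\to\subuniverse[I]\otimes\Delta^n$ and of $\subuniverse[P]^{\simeq}\otimes\Delta^n\to\subuniverse[P]^{\op}\otimes\Delta^n$. You then take the union, so the conclusion comes from the same localization theorem used in \Cref{prop:cmon_is_presentable}. In particular, your argument does not actually need \Cref{prop:pushout_in_prl_defining_E_mon_I_cocart}, even though you cite it.

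The paper's route is shorter and purely formal. Yours costs some bookkeeping with $\Delta^n$-tensors and with the internal adjunction between $\mathrm{Fr}$ and restriction. In return it gives an explicit set of generating local equivalences for $L_{\oplus}$. It also makes the paper's phrase ``cartesian analogue obtained by taking opposites'' concrete, via the criterion that $\intcoalg^{\subuniverse[P]}(\varintcat{C})\to\varintcat{C}$ be an equivalence.
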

\begin{proof}
    By \Cref{prop:pushout_in_prl_defining_E_mon_I_cocart} and its cartesian analogue obtained by taking opposites, there are Bousfield localizations onto
    \[
    \intmon^{\subuniverse,\subuniverse[I]\text{-}\sqcup}(\internalcatofcats)
    \quad\text{and}\quad
    \intmon^{\subuniverse,\subuniverse[P]\text{-}\sqcap}(\internalcatofcats).
    \]
    The pullback of their fully faithful right adjoints is the full $\vartopos$-subcategory of $(\subuniverse[P,I])$-ambidextrous categories. Passing to left adjoints identifies this category with the pushout of the two localizations in $\intprl$ and gives $L_{\oplus}$.
\end{proof}

Applying this localization to a free $\subuniverse$-monoidal category gives a free ambidextrous category. An explicit description of these objects is one of the main goals of this paper.

\begin{example}\label{cor:existance_of_free_ambi_monoidal_category}
    For $A\in\vartopos$, the $(\subuniverse[P,I])$-ambidextrous category $L_{\oplus}\subuniverse^{\simeq}[A]$ internally represents the composite
    \[
    \begin{aligned}
    \Phi:\intmon^{\subuniverse,(\subuniverse[P,I])\text{-}\oplus}(\internalcatofcats)
    &\into\intsmon(\internalcatofcats)\\
    &\to\internalcatofcats\xto{\intmap(A,-)}\universe.
    \end{aligned}
    \]
    This follows from the adjunction defining $L_{\oplus}$ and the universal property in \Cref{example:free_E_monoidal_cat}.
\end{example}

For $A=*$, several cases recover constructions already encountered.

\begin{remark}
    The following are special cases of the preceding construction. In the last two cases, assume that $\subuniverse$ is inductible.
    \begin{mylist}
        \item If $\subuniverse[I]=\subuniverse[P]=\explicitset{*}$ is the context-free subuniverse of equivalences, then
        \[
        L_{\oplus}\subuniverse^{\simeq}\simeq\subuniverse^{\simeq}.
        \]
        \item If $\subuniverse[I]=\subuniverse$ and $\subuniverse[P]=\explicitset{*}$, then
        \[
        L_{\oplus}\subuniverse^{\simeq}\simeq\subuniverse.
        \]
        This follows from the free cocompletion property in \Cref{prop:S_is_free_with_S_colmitis} and the full faithfulness of the cocartesian construction in \Cref{thm:uniqueness_of_cartesian_structures}.
        \item If $\subuniverse[I]=\subuniverse[P]=\subuniverse$, then
        \[
        L_{\oplus}\subuniverse^{\simeq}\simeq\intspan(\subuniverse).
        \]
        Its universal property among $\subuniverse$-ambidextrous $\subuniverse$-monoidal categories is established in \cite{CLLambi}.
    \end{mylist}
\end{remark}

\subsubsection*{Recognition criteria}

We finish by describing the local objects of $L_{\oplus}$ using norm maps and indexed tensor products. The first criterion applies the inductive constructions of \Cref{def:cartesian_S_monoidal_category} and its dual to the relevant classes of morphisms in $\subuniverse$.

\begin{definition}
    An $\subuniverse$-monoidal $\vartopos$-category $\varintcat{C}$ is \emph{inductively $(\subuniverse[P,I])$-ambidextrous} if the inductive norm constructions of \Cref{def:cartesian_S_monoidal_category} and its dual are defined and give natural equivalences
    \[
    s_\sharp\xto{\sim}s_\otimes
    \quad\text{for }s\in\subuniverse_{\subuniverse[I]},
    \qquad
    s_\otimes\xto{\sim}s_*
    \quad\text{for }s\in\subuniverse_{\subuniverse[P]},
    \]
    in every context.
\end{definition}

\begin{proposition}\label{prop:inductive_amby_iff_ambi}
    An $\subuniverse$-monoidal $\vartopos$-category is inductively $(\subuniverse[P,I])$-ambidextrous if and only if it is $(\subuniverse[P,I])$-ambidextrous.
\end{proposition}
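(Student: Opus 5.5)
The plan is to reduce the statement to the already established single-subuniverse results by restricting along the inclusions $\subuniverse[I]\subset\subuniverse$ and $\subuniverse[P]\subset\subuniverse$, and then to check that the norm maps seen after restriction agree with those in the definition of inductive ambidexterity. By definition, $\varintcat{C}$ is $(\subuniverse[P,I])$-ambidextrous exactly when its restriction $\varintcat{C}|_{\subuniverse[P]}$ is $\subuniverse[P]$-cartesian and its restriction $\varintcat{C}|_{\subuniverse[I]}$ is $\subuniverse[I]$-cocartesian. By the proposition that an $\subuniverse[I]$-monoidal category is cartesian if and only if it is inductively cartesian (itself a consequence of \Cref{thm:internal_uniqueness_of_unfurling}), together with its dual for cocartesian structures, this holds iff the inductive norms $N_s\colon s_\otimes\to s_*$ of $\varintcat{C}|_{\subuniverse[P]}$ are equivalences for all $s$ in $\subuniverse[P]$, and the dual norms $s_\sharp\to s_\otimes$ of $\varintcat{C}|_{\subuniverse[I]}$ are equivalences for all $s$ in $\subuniverse[I]$. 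It therefore remains to compare these norms with those in the inductive definition, where $s$ ranges over morphisms of $\subuniverse_{\subuniverse[P]}$ (resp. $\subuniverse_{\subuniverse[I]}$) viewed inside $\subuniverse$.

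The first step is to observe that every norm in the definition of inductive ambidexterity is built only from ingredients that restriction preserves. These ingredients are the functors $(\pi_1)_\otimes$, $\pi_2^*$, $\Delta^*$ and $\Delta_\otimes$, the unit of $\Delta^*\dashv\Delta_*$, and the inverse of the previously constructed norm $N_\Delta$. All of them lie over the diagram $A\to A\times_B A\rightrightarrows A\to B$. When $s\colon A\to B$ lies in $\subuniverse[P]$ in some context $X$, the whole diagram lies in $\subuniverse[P](X)$, because $\subuniverse[P]$ is a context-free subuniverse and hence closed under base change, and the diagonal lies in it by fullness/left cancellation. Consequently, by independence of context, the diagram lies in $\intspan(\subuniverse[P]^{\full},\subuniverse[P])(X)$. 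Restriction along $\intspan(\subuniverse[P]^{\full},\subuniverse[P])\into\spanEfE$ is a functor over this diagram and preserves units of adjunctions. An induction on the truncation level $n$, exactly as in \Cref{lem:nat_of_norms}, then identifies the two norms, the induction hypothesis supplying the identification for $N_\Delta$.

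The second step handles an extra source of morphisms. In context $X$, the classes $\subuniverse_{\subuniverse[P]}$ include maps $C\to B$ over $X$ with $[C\to B]\in\subuniverse[P](B)$, and these need not lie in $\subuniverse[P](X)$. By independence of context, after changing context from $X$ to $B$ such an $s$ becomes a morphism of $\subuniverse[P](B)$. Indexed tensors, limits and colimits are compatible with change of context, so the norm of $s$ in context $X$ is the pushforward of a norm in context $B$, and the two are equivalent. Truncation is only local, so the induction is carried out after passing to covers on which the maps are $n$-truncated. Being an equivalence is local for the topology, which reduces the claim to this case.

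I expect the main obstacle to be the bookkeeping in the second step. The difficulty is identifying the norm for $\subuniverse_{\subuniverse[P]}$ in context $X$ with a norm for $\subuniverse[P]$ in context $B$, coherently enough that the inverse of $N_\Delta$ used in the inductive definition is also transported correctly. To handle this, I would first treat the diagonal case and then use the Beck--Chevalley compatibility of the unfurled structure to carry out the transport.
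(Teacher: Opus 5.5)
Your proposal is correct and follows the paper's argument. The paper likewise reduces to the restricted $\subuniverse[P]$- and $\subuniverse[I]$-monoidal structures, where cartesian and cocartesian agree with their inductive versions, and treats morphisms of $\subuniverse_{\subuniverse[P]}$ and $\subuniverse_{\subuniverse[I]}$ exactly as in your second step: it regards them in the context of their target as objects of $\subuniverse[P]$ and $\subuniverse[I]$ and uses that the norm constructions are independent of context; your version additionally spells out the identification of norms under restriction and the passage to covers for local truncation, which the paper leaves implicit.
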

\begin{proof}
    If the category is inductively $(\subuniverse[P,I])$-ambidextrous, restricting the norm conditions to $\subuniverse[I]$ and $\subuniverse[P]$ shows that the induced structures are cocartesian and cartesian, respectively.

    Conversely, the norm constructions and the condition that they be equivalences are independent of context. A morphism in $\subuniverse_{\subuniverse[I]}$, respectively $\subuniverse_{\subuniverse[P]}$, can be regarded in the context of its target as an object of $\subuniverse[I]$, respectively $\subuniverse[P]$. The required norm equivalences therefore follow from the cocartesian and cartesian structures on the restrictions.
\end{proof}

The preservation criterion of \Cref{prop:C_is_cartesian_iff_tensor_preserves_limits} likewise extends to partial structures.

\begin{corollary}\label{cor:critiria_for_amby}
    An $\subuniverse$-monoidal $\vartopos$-category $\varintcat{C}$ is $(\subuniverse[P,I])$-ambidextrous if and only if the following conditions hold in every context $A$:
    \begin{mylist}
        \item For every $p:C\to B$ in $\subuniverse_{\subuniverse[I]}(A)$, the functor
        \[
        p_\otimes:\varintcat{C}^{C}\to\varintcat{C}^{B}
        \]
        is $\subuniverse[I]$-cocontinuous.
        \item For every $p:C\to B$ in $\subuniverse_{\subuniverse[P]}(A)$, the functor
        \[
        p_\otimes:\varintcat{C}^{C}\to\varintcat{C}^{B}
        \]
        is $\subuniverse[P]$-continuous.
    \end{mylist}
\end{corollary}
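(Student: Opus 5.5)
The plan is to reduce both directions to \Cref{prop:C_is_cartesian_iff_tensor_preserves_limits}, applied to the restrictions of $\varintcat{C}$ along
\[
\intspan(\subuniverse[I])\into\spanEfE
\quad\text{and}\quad
\intspan(\subuniverse[P])\into\spanEfE,
\]
together with \Cref{prop:inductive_amby_iff_ambi}. By definition, $\varintcat{C}$ is $(\subuniverse[P,I])$-ambidextrous if and only if its restriction to an $\subuniverse[I]$-monoidal category is cocartesian and its restriction to an $\subuniverse[P]$-monoidal category is cartesian.

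For the implication from ambidexterity to (a) and (b), I will use \Cref{prop:inductive_amby_iff_ambi}. It identifies $p_\otimes\simeq p_\sharp$ for every $p\in\subuniverse_{\subuniverse[I]}(A)$ and $p_\otimes\simeq p_*$ for every $p\in\subuniverse_{\subuniverse[P]}(A)$, naturally and in every context. Indexed colimit functors $p_\sharp\colon\varintcat{C}^C\to\varintcat{C}^B$ are left adjoints internally (to $p^*$), so they preserve all indexed colimits that exist, in particular $\subuniverse[I]$-colimits. Dually, $p_*$ preserves $\subuniverse[P]$-limits. The $\subuniverse[I]$-cocompleteness of $\varintcat{C}^C$ and $\varintcat{C}^B$ follows from that of $\varintcat{C}$, since these are cotensors.

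For the converse, assume (a) and (b). Every morphism $p\colon C\to B$ in $\subuniverse[I](A)$ lies in $\subuniverse_{\subuniverse[I]}(A)$, because $\subuniverse[I]$ is a context-free subuniverse and independence of context applies. The indexed tensor products of the restricted $\subuniverse[I]$-monoidal structure are the same functors $p_\otimes$, since restriction along $\intspan(\subuniverse[I])\into\spanEfE$ does not change values on spans. Hence (a) gives exactly the hypothesis of \Cref{prop:C_is_cartesian_iff_tensor_preserves_limits} for the restriction to $\subuniverse[I]$, which is therefore cocartesian. In the same way, (b) makes the restriction to $\subuniverse[P]$ cartesian. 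By definition, $\varintcat{C}$ is then $(\subuniverse[P,I])$-ambidextrous.

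The only subtle point is the bookkeeping between $\subuniverse_{\subuniverse[I]}(A)$ and $\subuniverse[I](A)$. One must check that conditions stated for maps $p\colon C\to B$ in context $A$ are at least as strong as the conditions \Cref{prop:C_is_cartesian_iff_tensor_preserves_limits} requires for the restricted structure. Independence of context handles this: a morphism over $A$ between objects of $\subuniverse[I](A)$ lies in $\subuniverse[I](A)$ exactly when it is classified by an object of $\subuniverse[I](B)$. The other potential obstacle is the implicit completeness hypothesis in that proposition, namely that $\varintcat{C}$ must be $\subuniverse[I]$-cocomplete for cocontinuity to make sense. I would take this as part of the meaning of conditions (a) and (b), since they require the relevant (co)limits to exist, and I would note this explicitly.
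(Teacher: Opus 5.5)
Your proposal is correct and follows the paper's route. The paper's proof simply cites \Cref{prop:inductive_amby_iff_ambi} and \Cref{prop:C_is_cartesian_iff_tensor_preserves_limits}, and you have spelled out how each is used: the norm equivalences $p_\sharp\simeq p_\otimes$ and $p_\otimes\simeq p_*$ give the forward direction, the criterion applied to the restrictions gives the converse, and independence of context ensures that morphisms in $\subuniverse[I](A)$ lie in $\subuniverse_{\subuniverse[I]}(A)$.
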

\begin{proof}
    This follows from \Cref{prop:inductive_amby_iff_ambi,prop:C_is_cartesian_iff_tensor_preserves_limits}.
\end{proof}

\subsection{The symmetric monoidal category of \texorpdfstring{$\subuniverse$}{E}-monoidal categories.}\label{subsec:symmetric_monoidal_cat_of_E_monoidal_cats}

In this final subsection, we construct a symmetric monoidal structure on the $\vartopos$-category of $\subuniverse$-monoidal $\vartopos$-categories using internal Day convolution (\Cref{cor:E_monoidal_cats_are_symmetric_monoidal}) and its compatibility with some of the previous constructions.

Our main result is that the localization of $\intsmon(\internalcatofcats)$ imposing $(\subuniverse[P,I])$-ambidexterity is smashing: it is given by tensoring with the idempotent algebra $L_{\oplus}(\subuniverse^{\simeq})$ (\Cref{cor:P_I_ambi_is_a_smashing_localization}). We first prove this for the localization $L_{\subuniverse[I]\sqcup}$ imposing $\subuniverse[I]$-cocartesianness (\Cref{thm:cocart_is_a_mode}).

This subsection was heavily inspired by ideas of Yanovsky.

\subsubsection*{Day convolution and tensor products}

The following construction borrows from Ben-Moshe's work \cite[\S 4.3]{Ben_Moshe_ambi_K}.

In \cite[\S\S 2.5--2.6]{MWPresentabilityAndTopoi}, the authors define symmetric monoidal $\vartopos$-categories\footnote{Note that symmetric monoidal $\vartopos$-categories and $\subuniverse$-monoidal $\vartopos$-categories are different notions.} as sheaves of symmetric monoidal $\infty$-categories. They also show that $\intprl$ carries a natural tensor product, which in this subsection we will call the multilinear tensor product.

In \cite[Remark 2.6.2.6]{MWPresentabilityAndTopoi}, the authors also show that the functor
\[
\intpresh:\internalcatofcats\to\intprl
\]
is symmetric monoidal. Thus, we get a theory of Day convolution for $\vartopos$-categories; if $\varintcat{C}$ is a symmetric monoidal $\vartopos$-category, then there is a symmetric monoidal structure on $\intpresh(\varintcat{C})$ such that the Yoneda embedding
\[
\varintcat{C}\into\intpresh(\varintcat{C})
\]
is symmetric monoidal.

\begin{proposition}
    The $\vartopos$-category $\spanEfE$ carries a canonical symmetric monoidal structure induced by products in $\subuniverse^{\full}$.
\end{proposition}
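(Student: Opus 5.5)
The plan is to deduce the symmetric monoidal structure on $\spanEfE$ from the cartesian (product) structure on $\subuniverse^{\full}$, using the fact that $\Span$ preserves products of adequate triplets. The key point to establish is that $(\subuniverse^{\full},\subuniverse)$ is closed under binary products, so that $\times$ gives a functor of span pairs.

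First I would show that $\subuniverse^{\full}(A)$ contains the terminal object and is closed under binary products in $\universe(A)=\vartopos_{/A}$. The terminal object $[\id:A\to A]$ lies in $\subuniverse(A)$ by pointedness. Given $B,C\in\subuniverse^{\full}(A)$, the map $B\times_A C\to C$ is the base change of $B\to A$. By independence of context it lies in $\subuniverse(A)$, and its composite with $C\to A$ is then in $\subuniverse(A)$ by independence of context again. Products are preserved by the restrictions $p^*$, so $\subuniverse^{\full}$ is a cartesian symmetric monoidal $\vartopos$-category. Concretely, this is a sheaf $\vartopos^{\op}\to\operatorname{CAlg}(\catofcats)$ obtained by applying the cartesian structure functor contextwise, which is natural because each $p^*$ preserves finite products.

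Next I would check that the product functor $\times:\subuniverse^{\full}\times\subuniverse^{\full}\to\subuniverse^{\full}$ is a functor of span pairs $(\subuniverse^{\full},\subuniverse)^{\times2}\to(\subuniverse^{\full},\subuniverse)$. This needs two things. First, if $f,g$ are in $\subuniverse$, then $f\times g$ is in $\subuniverse$. Writing $f\times g=(f\times\id)\circ(\id\times g)$, each factor is a base change of a morphism in $\subuniverse$, hence lies in $\subuniverse$ by the span-pair property already proved, and $\subuniverse$ is closed under composition. Second, $\times$ must preserve pullbacks of right-pointing maps along arbitrary maps; this holds because products commute with pullbacks in $\vartopos_{/A}$. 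The unit $*\to\subuniverse^{\full}$ is likewise a map of span pairs. It follows that the commutative algebra structure lifts to $\operatorname{CAlg}$ of the cartesian monoidal $\infty$-category $\spanpairs$, contextwise and naturally in $A$, since adequate triplets have products computed componentwise.

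Finally, $\Span:\adtrip\to\catofcats$ preserves limits, in particular finite products. It therefore induces a functor $\operatorname{CAlg}(\adtrip)\to\operatorname{CAlg}(\catofcats)$. Applying this contextwise yields a limit-preserving functor $\vartopos^{\op}\to\operatorname{CAlg}(\catofcats)$ whose underlying $\vartopos$-category is $\spanEfE$, which is exactly a symmetric monoidal $\vartopos$-category in the sense of Martini--Wolf.

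I expect the main obstacle to be coherence. A commutative monoid in $\spanpairs$ under the cartesian structure is determined by the product being a product in each category. To make this rigorous, I would use that a cartesian monoidal structure on a category with finite products is unique and functorial in product-preserving functors (Lurie's $\operatorname{Cart}$), applied to $\spanpairs$ itself. This would need a check that products in $\spanpairs$ are product categories with product subcategories, so that $(\subuniverse^{\full}(A),\subuniverse(A))$ is a commutative monoid object there, functorially in $A$.
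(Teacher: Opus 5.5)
Your proposal is correct and follows essentially the same route as the paper. You first check contextwise that $\subuniverse^{\full}(A)$ has finite products, via base change and independence of context, and that the restrictions $p^*$ preserve them. You then transport the cartesian structure through the product-preserving span construction, naturally in $A$. The only difference is that the paper handles this second step by citing Haugseng's theorem on symmetric monoidal structures on span categories induced by cartesian products. You instead check by hand that $\times$ is a map of span pairs, and sketch the commutative-monoid coherence argument that such a theorem encodes.
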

\begin{proof}
    Observe that in every context $A\in\vartopos$, the $\infty$-category $\subuniverse^{\full}(A)$ admits finite products since morphisms in $\subuniverse$ are stable under base change in $\vartopos$. Moreover, for $p:B\to A$, the functor $p^*:\subuniverse^{\full}(A)\to\subuniverse^{\full}(B)$ preserves finite products.

    Thus, by \cite[Theorem 1.2(iv)]{Haugseng_2017}, the functor $\spanEfE:\vartopos^{\op}\to\catofcats$ lifts canonically to a limit-preserving functor $\spanEfE:\vartopos^{\op}\to\operatorname{CMon}(\catofcats)$. In every context $A\in\vartopos$, the symmetric monoidal structure can be described on objects in $\spanEfE(A)$ by binary products in $\subuniverse^{\full}(A)$.
\end{proof}

Hence, using Day convolution, we get a symmetric monoidal structure on $\intfun(\spanEfE,\universe)$.

\begin{corollary}
    The $\vartopos$-category $\intfun(\spanEfE,\universe)$ carries a symmetric monoidal structure such that the Yoneda embedding
    \[
    \spanEfE^{\op}\into\intfun(\spanEfE,\universe)
    \]
    is symmetric monoidal.
\end{corollary}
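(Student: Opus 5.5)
The plan is to obtain the statement directly from Day convolution for $\vartopos$-categories, as recalled just above. By the preceding proposition, $\spanEfE$ is a symmetric monoidal $\vartopos$-category, and hence so is its opposite $\spanEfE^{\op}$. I apply the symmetric monoidal $\vartopos$-functor
\[
\intpresh:\internalcatofcats\to\intprl
\]
of \cite[Remark 2.6.2.6]{MWPresentabilityAndTopoi} to $\spanEfE^{\op}$. Since symmetric monoidal functors send commutative algebras to commutative algebras, this makes $\intpresh(\spanEfE^{\op})$ a commutative algebra in $\intprl$ for the multilinear tensor product. That is, it becomes a presentably symmetric monoidal $\vartopos$-category whose tensor product is cocontinuous in each variable.

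Next I identify the underlying $\vartopos$-category. Internal presheaves are defined by $\intpresh(\varintcat{D})=\intfun(\varintcat{D}^{\op},\universe)$, so
\[
\intpresh(\spanEfE^{\op})\simeq\intfun(\spanEfE,\universe).
\]
Transporting the structure along this equivalence gives the desired symmetric monoidal structure on $\intfun(\spanEfE,\universe)$.

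Finally, I must check that the Yoneda embedding is symmetric monoidal. The embedding
\[
\spanEfE^{\op}\into\intpresh(\spanEfE^{\op})
\]
is the unit of the free cocompletion. The symmetric monoidal enhancement of $\intpresh$ is constructed so that this unit is a symmetric monoidal natural transformation; this is the content of the Day convolution statement recalled before the proposition. Under the identification above, this is exactly the displayed embedding.

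The only point needing care is this last compatibility: the monoidal structure on $\intpresh(\spanEfE^{\op})$ must be the one making the Yoneda embedding symmetric monoidal, not merely some structure abstractly produced by the functor. I would handle it by quoting the cited remark in the form "$\varintcat{C}\into\intpresh(\varintcat{C})$ is symmetric monoidal for every symmetric monoidal $\varintcat{C}$." I would also note that the opposite of a symmetric monoidal $\vartopos$-category is canonically symmetric monoidal, obtained contextwise by applying $(-)^{\op}$ to each symmetric monoidal $\infty$-category. Everything else is formal.
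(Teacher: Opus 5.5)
Your proposal is correct and is essentially the paper's argument: the paper also obtains the structure by applying the symmetric monoidal functor $\intpresh:\internalcatofcats\to\intprl$ of \cite[Remark 2.6.2.6]{MWPresentabilityAndTopoi} (internal Day convolution) to $\spanEfE^{\op}$, and it likewise takes the symmetric monoidality of the Yoneda embedding to be part of that cited statement. Your identification $\intpresh(\spanEfE^{\op})\simeq\intfun(\spanEfE,\universe)$ and your remark about opposites only spell out steps the paper leaves implicit.
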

\begin{proof}
    As explained above, this follows from \cite[Remark 2.6.2.6]{MWPresentabilityAndTopoi}.
\end{proof}

This gives the promised symmetric monoidal structure on $\intsmon$.

\begin{corollary}\label{cor:E_cmon_is_symmetric_monoidal}
    The $\vartopos$-category $\intsmon$ carries a symmetric monoidal structure such that the localization
    \[
    \intfun(\spanEfE,\universe)\onto\intsmon
    \]
    is symmetric monoidal.
\end{corollary}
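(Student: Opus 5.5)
The plan is to exhibit $\intsmon$ (that is, $\intsmon(\universe)$) as a localization of $\intfun(\spanEfE,\universe)$ that is compatible with the Day convolution product. Then we appeal to the general fact that a compatible Bousfield localization of a presentably symmetric monoidal $\vartopos$-category inherits a unique symmetric monoidal structure making the localization functor symmetric monoidal. This is the internal version of \cite[Proposition 2.2.1.9]{HA}. In the internal setting it can be checked contextwise, because $\intprl$ and its multilinear tensor product are defined as sheaves.

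First, by the proof of \Cref{prop:cmon_is_presentable}, $\intsmon$ is the localization at the small $\vartopos$-family of maps
\[
p_\sharp\yo(B)(x)\to\yo(A)(p_*x),
\]
where $[p:B\to A]\in\subuniverse(A)$ and $x=[C\to B]\in\subuniverse(B)$. Day convolution is cocontinuous in each variable, and every presheaf is an internal colimit of representables. It therefore suffices to check that tensoring such a generating map with a representable $\yo(D)$, with $D\in\spanEfE$ in a context $A'\to A$, gives a local equivalence. After changing context we may assume $A'=A$.

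By the corollary above, the Yoneda embedding is symmetric monoidal. Hence $\yo(D)\otimes\yo(A)(p_*x)\simeq\yo(D\times_A p_*x)$. The tensor product preserves $p_\sharp$ by the projection formula for the multilinear tensor product, so the source becomes $p_\sharp\yo(B)(p^*D\times_B x)$. The tensored map is therefore again the comparison map for the $B$-indexed limit of the diagram $p^*D\times_B x$. For this we need the identification
\[
D\times_A p_*x\simeq p_*(p^*D\times_B x)
\]
in $\spanEfE$. By \Cref{prop:limits_in_span_E}, $p_*$ is given by $p_\sharp$ on underlying objects of $\subuniverse^{\full}$, so this reduces to the formula $D\times_A (C\to A)\simeq p_\sharp(p^*D\times_B C)$ in $\vartopos_{/A}$, which holds because colimits in a topos are universal. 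Since $p^*D\times_B x$ is again a $B$-shaped diagram of the allowed kind, the tensored map is one of the generating maps and hence a local equivalence.

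I expect the main obstacle to be making this compatibility coherent internally. Concretely, it requires two things: identifying $\yo(D)\otimes p_\sharp(-)$ with $p_\sharp(p^*\yo(D)\otimes -)$, which is a projection formula for internal Day convolution, and checking that the symmetric monoidal structure on $\spanEfE$ commutes with the limit functors $p_*$, as in the computation above. I would get the first from the fact that $\intpresh$ is symmetric monoidal (\cite[Remark 2.6.2.6]{MWPresentabilityAndTopoi}) together with the fact that $p_\sharp$ is computed by left Kan extension along a colimit. Given both facts, the localization theorem finishes the proof.
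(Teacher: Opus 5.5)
Your proposal is correct and follows essentially the same route as the paper. The paper's proof just says that the proof of Ben-Moshe's Lemma 4.23 carries over to the internal setting and can be checked in every context. That argument is this same verification that the Bousfield localization is compatible with Day convolution. It reduces to generators and representables, and rests on the fact that $D\times_A(-)$ on the span category commutes with the indexed limits $p_*$, by universality of colimits in the topos. You spell out details the paper leaves implicit: the projection formula for $p_\sharp$, and the closure of the generating family under base change and under tensoring with representables.
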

\begin{proof}
    The proof of \cite[Lemma 4.23]{Ben_Moshe_ambi_K} carries over unchanged to the internal setting and can be checked in every context.
\end{proof}

We can also extend this symmetric monoidal structure to a symmetric monoidal structure on $\intsmon(\internalcatofcats)$.

\begin{corollary}\label{cor:E_monoidal_cats_are_symmetric_monoidal}
    The $\vartopos$-category $\intsmon(\internalcatofcats)$ carries a symmetric monoidal structure such that the inclusion
    \[
    \intsmon\into\intsmon(\internalcatofcats)
    \]
    is a morphism of commutative algebras in $\intprl$.
\end{corollary}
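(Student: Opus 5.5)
We obtain the symmetric monoidal structure on $\intsmon(\internalcatofcats)$ by base change along the commutative algebra $\intsmon$ in $\intprl$, using the module description of \Cref{prop:cmon_is_cat_module_in_prl}. That proof identified
\[
\intsmon(\internalcatofcats)\simeq\intsmon\otimes\internalcatofcats,
\]
where $\otimes$ is the multilinear tensor product of presentable $\vartopos$-categories. By \Cref{cor:E_cmon_is_symmetric_monoidal}, $\intsmon$ is a commutative algebra in $\intprl$, since it is a presentably symmetric monoidal $\vartopos$-category. The $\vartopos$-category $\internalcatofcats$ is also a commutative algebra in $\intprl$, via its cartesian product. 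A tensor product of commutative algebras in a symmetric monoidal $\vartopos$-category is again a commutative algebra; it is their coproduct in commutative algebras. Hence $\intsmon\otimes\internalcatofcats$ inherits a commutative algebra structure in $\intprl$. The canonical map $\intsmon\simeq\intsmon\otimes\universe\to\intsmon\otimes\internalcatofcats$, induced by the unit $\universe\to\internalcatofcats$ of $\internalcatofcats$, is a morphism of commutative algebras.

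The key steps, in order, are as follows. First, recall from \cite[\S 2.6]{MWPresentabilityAndTopoi} that $\intprl$ is a symmetric monoidal $\vartopos$-category with unit $\universe$. Second, note that the inclusion $\universe\into\internalcatofcats$ is the unit map of the commutative algebra $\internalcatofcats$: it is the unique colimit-preserving functor sending $*$ to $*$, and it is symmetric monoidal for the cartesian structures. Third, tensor this algebra map with $\intsmon$ to get a map of commutative algebras $\intsmon\to\intsmon\otimes\internalcatofcats$. Fourth, identify this map with the inclusion $\intsmon\into\intsmon(\internalcatofcats)$, i.e. postcomposition of an $\subuniverse$-continuous functor $\spanEfE\to\universe$ with $\universe\into\internalcatofcats$, under the identification of \Cref{prop:cmon_is_cat_module_in_prl}.

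The main obstacle is the fourth step. Under $\intsmon\otimes\varintcat{D}\simeq\intfun^{R}(\intsmon^{\op},\varintcat{D})$, the functoriality in $\varintcat{D}$ of $\intsmon\otimes(-)$ along a left adjoint $\universe\to\internalcatofcats$ must be matched with postcomposition on $\subuniverse$-monoids. To do this, we would first verify it on the free object $\intfun(\spanEfE,-)\simeq\intpresh(\spanEfE^{\op})\otimes(-)$. There it is the standard compatibility of $\intpresh(\varintcat{K})\otimes\varintcat{D}\simeq\intfun(\varintcat{K}^{\op},\varintcat{D})$ with left adjoints $\varintcat{D}\to\varintcat{D}'$ that preserve limits; the inclusion $\universe\into\internalcatofcats$ is both a left and a right adjoint. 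We would then pass to the Bousfield localization of \Cref{prop:cmon_is_presentable}. Since the inclusion preserves limits, it carries $\subuniverse$-continuous functors to $\subuniverse$-continuous functors, so the localization is compatible with the comparison. All remaining checks may be done contextwise, as in the earlier corollaries.
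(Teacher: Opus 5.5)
Your proposal is correct and follows the paper's proof: the paper likewise takes the identification $\intsmon(\internalcatofcats)\simeq\intsmon\otimes\internalcatofcats$ from \Cref{prop:cmon_is_cat_module_in_prl} and identifies the inclusion with $\intsmon$ tensored with the unit $\universe\into\internalcatofcats$. Your fourth step fills in a point the paper only asserts. There you match the tensored map with postcomposition, using that $\universe\into\internalcatofcats$ preserves both limits and colimits, and then pass through the Bousfield localization.
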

\begin{proof}
    Recall that we showed in the proof of \Cref{prop:cmon_is_cat_module_in_prl} that
    \[
    \intsmon(\internalcatofcats)\simeq\intsmon\otimes\internalcatofcats,
    \]
    with respect to the multilinear tensor product. The inclusion is identified with the tensor product of the unit $\universe\into\internalcatofcats$ with $\intsmon$.
\end{proof}

\begin{notation}
    Diverging from Ben-Moshe's notation, we will denote the tensor product in $\intsmon(\internalcatofcats)$ by $\otimes$. If we have several context-free subuniverses, then we might denote the different tensor products by $\otimes^{\subuniverse}$.

    We also have an internal Hom functor, which we denote by $\intfun^{\otimes}$ or $\intfun^{\subuniverse-\otimes}$.
\end{notation}

\begin{remark}
    Note that if $\subuniverse$ is inductible, then $\intfun^{\subuniverse-\otimes}$ agrees with the self-enrichment described in \Cref{cor:cmon_is_self_enriched_if_inductible}, as both of them lift the $\internalcatofcats$-enrichment coming from the $\internalcatofcats$-module structure.
\end{remark}

\subsubsection*{Monoidal localization and change of subuniverse}

Next, we will show that the $\vartopos$-subcategory of $\subuniverse[I]$-cocartesian $\subuniverse$-monoidal $\vartopos$-categories is a monoidal localization of $\intsmon(\internalcatofcats)$. Recall that our current goal is to show that it is a smashing localization.

\begin{proposition}\label{prop:I_cocart_loc_is_sym_monoidal}
    The $\vartopos$-category $\intmon^{\subuniverse,\subuniverse[I]-\sqcup}(\internalcatofcats)$ carries a symmetric monoidal structure such that the localization
    \[
    L_{\subuniverse[I]-\sqcup}:\intsmon(\internalcatofcats)\onto\intmon^{\subuniverse,\subuniverse[I]-\sqcup}(\internalcatofcats)
    \]
    is a morphism of commutative algebras in $\intprl$.
\end{proposition}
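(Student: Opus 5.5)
We use the standard criterion for a Bousfield localization of a presentably symmetric monoidal $\vartopos$-category to be compatible with the tensor product. It suffices to show that the class of $L_{\subuniverse[I]-\sqcup}$-equivalences is closed under tensoring with arbitrary objects. Internally, this should hold in every context, which the internal version of \cite[Proposition 2.2.1.9]{HA} (or its $\prl$ formulation via modules) requires. Given this, the localized category inherits a unique symmetric monoidal structure making $L_{\subuniverse[I]-\sqcup}$ symmetric monoidal. Since $L_{\subuniverse[I]-\sqcup}$ is a left adjoint between presentable $\vartopos$-categories, it is then a morphism of commutative algebras in $\intprl$. By adjunction, closure under tensoring is equivalent to the following statement: for every $\subuniverse[I]$-cocartesian $\varintcat{T}$ and every $\varintcat{C}$, the internal hom $\intfun^{\subuniverse-\otimes}(\varintcat{C},\varintcat{T})$ is again $\subuniverse[I]$-cocartesian.

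First we describe a generating set of local equivalences. By \Cref{prop:pushout_in_prl_defining_E_mon_I_cocart} and the proof of \Cref{thm:cocart_cats_are_localization_of_monoidal}, the local objects are exactly those internally local with respect to the image under the left adjoint $\operatorname{Ind}\colon\intsmon[I](\internalcatofcats)\to\intsmon(\internalcatofcats)$ of the maps $\subuniverse[I]^{\simeq}\otimes\Delta^n\to\subuniverse[I]\otimes\Delta^n$ (tensor with $\internalcatofcats$). Since $\intsmon(\internalcatofcats)$ is generated under colimits and tensors with $\internalcatofcats$ by the free objects $\subuniverse^{\simeq}[A]$ in contexts $A$, it suffices to show the following. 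For each generating local equivalence $f$ and each $A$, the map $f\otimes\subuniverse^{\simeq}[A]$ is again a local equivalence. Equivalently, for a local $\varintcat{T}$, the object $\intfun^{\subuniverse-\otimes}(\subuniverse^{\simeq}[A],\varintcat{T})$ is local.

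Next we compute this internal hom. We expect $\intfun^{\subuniverse-\otimes}(\subuniverse^{\simeq}[A],\varintcat{T})$ to be the cotensor $\varintcat{T}^A$, i.e.\ pointwise $\varintcat{T}^A$ with the induced $\subuniverse$-monoidal structure. This follows from \Cref{example:free_E_monoidal_cat} together with the Day convolution formula $\yo(X)\otimes\yo(Y)\simeq\yo(X\times Y)$ for representables, using that $\subuniverse^{\simeq}[A]$ is the localization of the functor corepresented by $A$ in $\intspan(\universe,\universe_{\subuniverse})$. Cocartesianness of $\varintcat{T}^A$ is then checked with \Cref{cor:critiria_for_amby}. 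The tensor functors $p_\otimes$ of $\varintcat{T}^A$ are computed pointwise, i.e.\ by applying $(-)^A$ to those of $\varintcat{T}$, which preserve $\subuniverse[I]$-colimits. Likewise $\subuniverse[I]$-colimits in $\varintcat{T}^A$ are computed pointwise, so $\subuniverse[I]$-cocontinuity is inherited.

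The main obstacle is the middle step: the tensor product is defined only abstractly, via Day convolution followed by localization, so the identification $\subuniverse^{\simeq}[A]\otimes\varintcat{C}\simeq$ ``$\varintcat{C}$ cotensored/tensored by $A$'' requires care. I would try to do it by showing that $X\mapsto\subuniverse^{\simeq}[X]$ is symmetric monoidal from $(\vartopos,\times)$, via the corepresentables $\intmap_{\intspan(\universe,\universe_{\subuniverse})}(X,-)$ and the product-preserving symmetric monoidal structure on the span category. Mapping out of $\subuniverse^{\simeq}[A]\otimes\varintcat{C}$ then reduces to $\intfun^{\subuniverse-\otimes}(\varintcat{C},-)^{A}$. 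If this is too cumbersome, a fallback is to verify closure directly using the $\internalcatofcats$-module structure of \Cref{prop:cmon_is_cat_module_in_prl}.
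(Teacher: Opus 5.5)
Your proposal is correct, but it is organized differently from the paper's proof.

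The step you flag as the main obstacle is more formal than your Day-convolution plan suggests. That plan is also awkward as stated, since $A$ is generally not an object of $\spanEfE$, so $\subuniverse^{\simeq}[A]$ is not a localized corepresentable there. By \Cref{example:free_E_monoidal_cat} and \Cref{cor:cmon_is_tensored_and_powered_by_cat}, $\subuniverse^{\simeq}[A]$ and $\subuniverse^{\simeq}\otimes A$ corepresent the same functor. So $\subuniverse^{\simeq}[A]$ is the $\internalcatofcats$-tensor of the unit by $A$. Since the tensor product is $\internalcatofcats$-bilinear (\Cref{cor:E_monoidal_cats_are_symmetric_monoidal}), it follows formally that $\underline{\operatorname{Hom}}(\subuniverse^{\simeq}[A],\varintcat{T})\simeq\varintcat{T}^{A}$ as $\subuniverse$-monoidal categories.

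You should also say explicitly that your colimit generators are really $\subuniverse^{\simeq}[A]\otimes\Delta^n$. So you need $\varintcat{T}^{A\times\Delta^n}$ to be local as well; your pointwise check handles this verbatim.

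The paper never computes internal homs out of free objects. Using the argument of Ben-Moshe's Lemma 4.20, it reduces to showing that $L_{\subuniverse[I]\text{-}\sqcup}$-equivalences are detected by the mapping objects $\intfun^{\subuniverse\text{-}\otimes}(-,\varintcat{T})$ into local $\varintcat{T}$. The forgetful functor to $\internalcatofcats$, and on to simplicial objects, is conservative. So this detection amounts to ordinary locality of the cotensors $\varintcat{T}^{\Delta^n}$. The only input is then that $\subuniverse[I]$-cocartesianness is stable under $(-)^{\Delta^n}$. Groupoid cotensors $\varintcat{T}^A$ never appear, because locality is checked internally in every context.

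Both arguments rest on the same two facts: $\intsmon(\internalcatofcats)$ is generated by its unit under colimits and $\internalcatofcats$-tensors, and local objects are stable under cotensors. Your route makes the identification of internal homs out of free objects explicit as $\subuniverse$-monoidal categories. The paper's route only needs the underlying $\vartopos$-category of the internal hom and is correspondingly shorter.
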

\begin{proof}
    By the same argument as in \cite[Lemma 4.20]{Ben_Moshe_ambi_K}, it is enough to show that a morphism $F:\varintcat{C}\to\varintcat{D}$ is an $L_{\subuniverse[I]-\sqcup}$-equivalence if and only if, for any $\subuniverse[I]$-cocartesian $\subuniverse$-monoidal $\vartopos$-category $\varintcat{T}$, the morphism
    \[
    \intfun^{\subuniverse-\otimes}(\varintcat{D},\varintcat{T})\to\intfun^{\subuniverse-\otimes}(\varintcat{C},\varintcat{T})
    \]
    is an equivalence.

    Note that since the forgetful functor $\intsmon(\internalcatofcats)\to\internalcatofcats\into\universe_{\Delta}$ is conservative, a $\vartopos$-category $\varintcat{T}$ is internally local with respect to $F:\varintcat{C}\to\varintcat{D}$ if and only if, for every $n$, $\varintcat{T}^{\Delta^n}$ is local with respect to $F:\varintcat{C}\to\varintcat{D}$.

    Thus, the result follows by observing that an $\subuniverse$-monoidal $\vartopos$-category $\varintcat{T}$ is $\subuniverse[I]$-cocartesian if and only if $\varintcat{T}^{\Delta^n}$ is $\subuniverse[I]$-cocartesian for every $n$.
\end{proof}

We now want to understand the interaction between the $\subuniverse$-monoidal structures and the $\subuniverse[I]$-monoidal structures.

\begin{proposition}\label{prop:rel_free_is_sym_mon}
    Let $\subuniverse[I]\subset\subuniverse$ be an inductible context-free subuniverse. Then the functor
    \[
    F_{\subuniverse[I]}^{\subuniverse}:\intsmon[I](\internalcatofcats)\to\intsmon(\internalcatofcats),
    \]
    which is left adjoint to the forgetful functor $U^{\subuniverse}_{\subuniverse[I]}:\intsmon(\internalcatofcats)\to\intsmon[I](\internalcatofcats)$, can be promoted to a morphism of commutative algebras in $\intprl$.
\end{proposition}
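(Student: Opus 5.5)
We realize $F^{\subuniverse}_{\subuniverse[I]}$ as the result of applying $-\otimes\internalcatofcats$ to a morphism of commutative algebras at the level of monoids in $\universe$. Then we show that this morphism arises from a symmetric monoidal functor of indexing span categories via Day convolution.

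The inclusion $j:\intspan(\subuniverse[I])\into\spanEfE$ is induced by the inclusion $\subuniverse[I]^{\full}\into\subuniverse^{\full}$. This inclusion preserves binary products in every context, because products in both are fibre products over the context, and it is compatible with restriction. By the functoriality in \cite[Theorem 1.2(iv)]{Haugseng_2017} used above, $j$ is therefore a symmetric monoidal $\vartopos$-functor. Since $\intpresh$ is symmetric monoidal \cite[Remark 2.6.2.6]{MWPresentabilityAndTopoi}, left Kan extension
\[
j_\sharp:\intfun(\intspan(\subuniverse[I]),\universe)\to\intfun(\spanEfE,\universe)
\]
is a morphism of commutative algebras in $\intprl$ for Day convolution. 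Its right adjoint is restriction $j^*$.

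Next we descend to monoids. Restriction $j^*$ sends $\subuniverse$-monoids to $\subuniverse[I]$-monoids, since $j$ is $\subuniverse[I]$-continuous. Hence $L^{\subuniverse}\circ j_\sharp$ inverts the $L^{\subuniverse[I]}$-equivalences, and it factors through a left adjoint
\[
F:\intsmon[I]\to\intsmon
\]
with $F\circ L^{\subuniverse[I]}\simeq L^{\subuniverse}\circ j_\sharp$. This $F$ is left adjoint to restriction. By \Cref{cor:E_cmon_is_symmetric_monoidal}, both localizations are symmetric monoidal and are moreover localizations in $\operatorname{CAlg}(\intprl)$. The universal property of symmetric monoidal localizations (as in the proof of \cite[Lemma 4.23]{Ben_Moshe_ambi_K}) then promotes $F$ to a morphism of commutative algebras. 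Concretely, we check that $L^{\subuniverse}j_\sharp$ sends the generating local equivalences $p_\sharp\yo(x)\to\yo(p_*x)$ of \Cref{prop:cmon_is_presentable} to equivalences. This holds because $j$ preserves $\subuniverse[I]$-limits by \Cref{prop:limits_in_span_E}.

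Finally, tensoring with $\internalcatofcats$ in $\intprl$ preserves commutative algebra maps. By the identification $\intsmon(\internalcatofcats)\simeq\intsmon\otimes\internalcatofcats$ from \Cref{prop:cmon_is_cat_module_in_prl} and \Cref{cor:E_monoidal_cats_are_symmetric_monoidal}, the map $F\otimes\internalcatofcats$ is a morphism of commutative algebras. It remains to identify $F\otimes\internalcatofcats$ with $F^{\subuniverse}_{\subuniverse[I]}$. For this we observe that its right adjoint is restriction along $j$ under the identification of $\intsmon(\internalcatofcats)$ with $\internalcatofcats$-valued $\subuniverse$-continuous functors, and this restriction is exactly $U^{\subuniverse}_{\subuniverse[I]}$.

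The main obstacle is this identification of right adjoints: we must show that $(j^*)\otimes\internalcatofcats$ corresponds to restriction on $\intfun^R(\intsmon^{\op},\internalcatofcats)$. I would try to do this via the description $\intfun^R(\varintcat{M}^{\op},\internalcatofcats)$ of the tensor product, noting that the right adjoint of $F\otimes\id$ is precomposition with $F^{\op}$. Precomposition with $F^{\op}$ matches restriction along $j$ on representables, since $F$ sends $\yo(x)$ to $\yo(jx)$.
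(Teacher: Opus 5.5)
Your proposal is correct and follows the same route as the paper: the product-preserving inclusion $j\colon\intspan(\subuniverse[I])\into\spanEfE$ makes $\intsmon[I]\to\intsmon$ a morphism of commutative algebras in $\intprl$, and tensoring with $\internalcatofcats$ then produces $F^{\subuniverse}_{\subuniverse[I]}$. You also spell out two steps the paper leaves implicit, and both arguments are sound: the descent of $j_\sharp$ through the symmetric monoidal localizations, and the identification of $F\otimes\internalcatofcats$ with the left adjoint of restriction by comparing right adjoints (precomposition with $F^{\op}$).
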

\begin{proof}
    The functor $\intspan(\subuniverse[I]^{\full},\subuniverse[I])\into\spanEfE$ is symmetric monoidal since the inclusion $\subuniverse[I]\into\subuniverse^{\full}$ preserves products. Thus, we get that $\intsmon[I]\to\intsmon$ is a morphism of commutative algebras in $\intprl$.

    The morphism $F_{\subuniverse[I]}^{\subuniverse}:\intsmon[I](\internalcatofcats)\to\intsmon(\internalcatofcats)$ is obtained by tensoring $\intsmon[I]\to\intsmon$ with $\internalcatofcats$.
\end{proof}

In fact, we can describe $\intmon^{\subuniverse,\subuniverse[I]-\sqcup}(\internalcatofcats)$ as a tensor product.

\begin{proposition}\label{prop:pushout_in_prl_I_cocart}
    The following is a pushout in $\intalg(\intprl)$:
    \[
    \begin{tikzcd}
        \intsmon[I](\internalcatofcats)\ar[d]\ar[r]
        &\internalcatofcats^{\subuniverse[I]-\sqcup}\ar[d]\\
        \intsmon(\internalcatofcats)\ar[r]
        &\intmon^{\subuniverse,\subuniverse[I]-\sqcup}(\internalcatofcats).
    \end{tikzcd}
    \]
\end{proposition}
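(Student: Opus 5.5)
We will upgrade the pushout of \Cref{prop:pushout_in_prl_defining_E_mon_I_cocart}, which already lives in $\intprl$, to a pushout of commutative algebras. The basic point is that pushouts in $\intalg(\intprl)$ are computed as relative tensor products. For a span $\varintcat{B}\leftarrow\varintcat{A}\to\varintcat{C}$ of commutative algebras, the pushout is $\varintcat{B}\otimes_{\varintcat{A}}\varintcat{C}$. When $\varintcat{A}\to\varintcat{C}$ is a smashing (idempotent) localization, this relative tensor product is the localization of $\varintcat{B}$ at the base-changed maps.

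The argument proceeds in the following steps.
\begin{mylist}
    \item We equip each vertex with a commutative algebra structure and each arrow with a morphism of algebras. The left vertical arrow is $F^{\subuniverse}_{\subuniverse[I]}$, which is handled by \Cref{prop:rel_free_is_sym_mon}. The bottom arrow is $L_{\subuniverse[I]-\sqcup}$, handled by \Cref{prop:I_cocart_loc_is_sym_monoidal}. The top arrow is the special case $\subuniverse=\subuniverse[I]$ of that same proposition, since then $\intmon^{\subuniverse[I],\subuniverse[I]-\sqcup}(\internalcatofcats)\simeq\internalcatofcats^{\subuniverse[I]-\sqcup}$ by \Cref{thm:uniqueness_of_cartesian_structures}. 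For the right vertical arrow, we observe that $F^{\subuniverse}_{\subuniverse[I]}$ sends $L_{\subuniverse[I]-\sqcup}$-equivalences to $L_{\subuniverse[I]-\sqcup}$-equivalences, because its right adjoint (restriction) preserves $\subuniverse[I]$-cocartesian objects. Hence it descends to a symmetric monoidal functor between the localizations, and the square commutes in $\intalg(\intprl)$.
    \item We show that the top localization is \emph{symmetric monoidal}, meaning its equivalences are closed under tensoring. By the proof of \Cref{prop:I_cocart_loc_is_sym_monoidal}, the localization is generated by the maps $\subuniverse[I]^{\simeq}\otimes\Delta^n\to\subuniverse[I]\otimes\Delta^n$, where we use \Cref{thm:cocart_cats_are_localization_of_monoidal}. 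So $\internalcatofcats^{\subuniverse[I]-\sqcup}$ is the localization of the commutative algebra $\intsmon[I](\internalcatofcats)$ at the set $S$ of these maps, made tensor-closed.
    \item We apply a general lemma in $\intalg(\intprl)$. If $\varintcat{A}\to\varintcat{A}[S^{-1}]$ is a monoidal Bousfield localization and $f:\varintcat{A}\to\varintcat{B}$ is a morphism of algebras, then $\varintcat{B}\otimes_{\varintcat{A}}\varintcat{A}[S^{-1}]\simeq\varintcat{B}[f(S)^{-1}]$, the monoidal localization generated by $f(S)$. We would prove this by checking universal properties: a symmetric monoidal left adjoint out of $\varintcat{B}$ factors through the pushout exactly when its composite with $f$ inverts $S$. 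This factorization property is also the universal property of $\varintcat{B}[f(S)^{-1}]$. The lemma can be checked contextwise, so it reduces to Lurie's non-internal statement.
    \item It remains to identify $\intsmon(\internalcatofcats)[F(S)^{-1}]$ with $\intmon^{\subuniverse,\subuniverse[I]-\sqcup}(\internalcatofcats)$. By adjunction, $\varintcat{T}$ is local for $F(S)$ if and only if $U\varintcat{T}$ is local for $S$, that is, if and only if $U\varintcat{T}$ is $\subuniverse[I]$-cocartesian. This is exactly the definition of $\subuniverse[I]$-cocartesian for an $\subuniverse$-monoidal category. Moreover, $F$ preserves tensors with $\Delta^n$, so the tensor-closure is harmless.
\end{mylist}

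We expect the main obstacle to be the general lemma in the third step, carried out internally. The subtle points are that the underlying $\intprl$-pushout agrees with the pushout in $\intalg(\intprl)$, and that the monoidal localization generated by $f(S)$ is the Bousfield localization at $f(S)\otimes\varintcat{B}$. For the latter we would use that $S$ is already tensor-closed up to generation by the objects $\Delta^n$, together with the fact that localness is detected by internal homs (as in the proof of \Cref{prop:I_cocart_loc_is_sym_monoidal}). Once this lemma is in place, the comparison with the $\intprl$-pushout of \Cref{prop:pushout_in_prl_defining_E_mon_I_cocart} follows, because the forgetful functor from $\intalg(\intprl)$ to $\intprl$ preserves these localization pushouts.
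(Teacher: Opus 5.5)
Your argument is correct in substance, but it takes a different route from the paper.

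\textbf{The paper's route.} The proof there is two lines. Both horizontal arrows are monoidal Bousfield localizations (\Cref{prop:I_cocart_loc_is_sym_monoidal}), so it suffices that the square is a pushout in $\intprl$, which is \Cref{prop:pushout_in_prl_defining_E_mon_I_cocart}. The reasoning left implicit is as follows. The pushout in $\intalg(\intprl)$ is the localization of $\intsmon(\internalcatofcats)$ at the tensor-closure of the image of the top generators. It is therefore a further localization of the $\intprl$-pushout, which localizes at the image alone. Monoidality of the bottom arrow says that the latter's equivalences are already tensor-closed, so the two agree.

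\textbf{Your route.} You compute the commutative-algebra pushout directly from universal properties: a pushout along a monoidal localization is the monoidal localization generated by $f(S)$. You then identify the local objects through the free--forgetful adjunction. So you never invoke \Cref{prop:pushout_in_prl_defining_E_mon_I_cocart}; you re-derive its content instead. Your version makes explicit the single point where monoidality of $L_{\subuniverse[I]-\sqcup}$ is indispensable, which the paper's ``it suffices to check in $\intprl$'' hides. The paper's version gains brevity by reusing the $\intprl$-pushout.

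\textbf{Two repairs.}
\begin{enumerate}
\item In step 4, the tensor-closure produced by your lemma is closure under the Day convolution product with arbitrary objects $b$ of $\intsmon(\internalcatofcats)$. Closure under $-\otimes\Delta^n$ is not enough, and the fact that $F^{\subuniverse}_{\subuniverse[I]}$ preserves $\Delta^n$-tensors does not address it, because $b$ need not lie in the image of $F^{\subuniverse}_{\subuniverse[I]}$. The right justification is this. Your adjunction argument shows that $F(S)$ consists of $L_{\subuniverse[I]-\sqcup}$-equivalences. By \Cref{prop:I_cocart_loc_is_sym_monoidal}, $L_{\subuniverse[I]-\sqcup}$ is a symmetric monoidal localization, so every $F(s)\otimes b$ is also an equivalence. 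Hence $F(S)$-local and $(F(S)\otimes b)$-local objects coincide. Likewise, step 2 does not prove monoidality of the top localization; that is the case $\subuniverse=\subuniverse[I]$ of the same proposition, which you already cite in step 1.
\item Your closing claim, that the forgetful functor $\intalg(\intprl)\to\intprl$ preserves pushouts along monoidal localizations, is false in general. Consider $\operatorname{Sp}[1/p]\leftarrow\operatorname{Sp}\to\Fun(B\mathbb{Z},\operatorname{Sp})$.
\begin{itemize}
\item The $\prl$-pushout consists of the objects whose homotopy fixed points lie in $\operatorname{Sp}[1/p]$.
\item The pushout of commutative algebras is $\Fun(B\mathbb{Z},\operatorname{Sp}[1/p])$.
\item For $p$ odd, $H\mathbb{F}_p$ with the generator acting by $2$ has vanishing homotopy fixed points, so it lies in the former, but it does not lie in the latter.
\end{itemize}
The agreement holds in the paper's situation only because the bottom localization is monoidal. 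In any case you do not need it, since the statement concerns only the pushout in $\intalg(\intprl)$.
\end{enumerate}
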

\begin{proof}
    Note that the horizontal morphisms are monoidal Bousfield localizations. Hence, to show that the square is a pushout in $\intalg(\intprl)$, it suffices to show that it is a pushout in $\intprl$. This was shown in \Cref{prop:pushout_in_prl_defining_E_mon_I_cocart}.
\end{proof}

\subsubsection*{Algebras and the localized unit}

\begin{corollary}
    Let $\varintcat{C}$ be an $\subuniverse$-monoidal $\vartopos$-category. Then $\intalg^{\subuniverse[I]}(\varintcat{C})$ carries a canonical $\subuniverse$-monoidal structure extending the $\subuniverse[I]$-monoidal structure that was discussed before. Moreover, we have a canonical isomorphism of $\subuniverse$-monoidal $\vartopos$-categories:
    \[
    \intalg^{\subuniverse[I]}(\varintcat{C})\simeq\intfun^{\subuniverse-\otimes}(F_{\subuniverse[I]}^{\subuniverse}(\subuniverse[I]),\varintcat{C}).
    \]
\end{corollary}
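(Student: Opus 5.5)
The plan is to define the $\subuniverse$-monoidal structure on $\intalg^{\subuniverse[I]}(\varintcat{C})$ as the internal hom in $\intsmon(\internalcatofcats)$ out of $F_{\subuniverse[I]}^{\subuniverse}(\subuniverse[I])$. Once that is done, the only real content is that its restriction to $\subuniverse[I]$ recovers the pointwise structure of \Cref{cor:cmon_is_self_enriched_if_inductible}.

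\textbf{Step 1: the underlying equivalence.} The symmetric monoidal structure of \Cref{cor:E_monoidal_cats_are_symmetric_monoidal} is closed, so $\intfun^{\subuniverse-\otimes}(F_{\subuniverse[I]}^{\subuniverse}(\subuniverse[I]),\varintcat{C})$ is an object of $\intsmon(\internalcatofcats)$. Its underlying $\vartopos$-category is the $\internalcatofcats$-valued mapping category, because the internal hom lifts the $\internalcatofcats$-enrichment coming from the module structure of \Cref{prop:cmon_is_cat_module_in_prl}. By the adjunction $F_{\subuniverse[I]}^{\subuniverse}\dashv U^{\subuniverse}_{\subuniverse[I]}$, enriched over $\internalcatofcats$ since both functors are $\internalcatofcats$-linear by \Cref{prop:rel_free_is_sym_mon}, this category is
\[
\intfun^{\subuniverse[I]-\otimes}(\subuniverse[I],U^{\subuniverse}_{\subuniverse[I]}\varintcat{C})=\intalg^{\subuniverse[I]}(\varintcat{C}).
\]
This gives the displayed isomorphism on underlying categories and defines the $\subuniverse$-monoidal structure.

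\textbf{Step 2: compatibility with the earlier structure.} We must show that
\[
U^{\subuniverse}_{\subuniverse[I]}\,\intfun^{\subuniverse-\otimes}(F_{\subuniverse[I]}^{\subuniverse}\subuniverse[I],\varintcat{C})
\simeq\intfun^{\subuniverse[I]-\otimes}(\subuniverse[I],U^{\subuniverse}_{\subuniverse[I]}\varintcat{C})
\]
as $\subuniverse[I]$-monoidal categories, where the right side carries the pointwise structure. Since $F_{\subuniverse[I]}^{\subuniverse}$ is symmetric monoidal (\Cref{prop:rel_free_is_sym_mon}), the projection formula yields an $\subuniverse[I]$-monoidal equivalence
\[
U\,\intfun^{\subuniverse-\otimes}(F X,Y)\simeq\intfun^{\subuniverse[I]-\otimes}(X,UY).
\]
To get it, map in an arbitrary $T$: the left side becomes $\intmap(F T\otimes F X,Y)\simeq\intmap(F(T\otimes X),Y)\simeq\intmap(T\otimes X,UY)$. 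Finally, the internal hom on $\intsmon[I](\internalcatofcats)$ agrees with the pointwise structure by the preceding remark.

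\textbf{Main obstacle.} The delicate part is that last remark. Both the Day-convolution internal hom and the pointwise structure lift the same limit-preserving functor to $\internalcatofcats$, so \Cref{lem:no_endomorphisms_of_CMon} identifies them. I would invoke it exactly as in the remark, checking that the lift is functorial in both variables, via \Cref{Thm:CLL_amby} applied to $\intsmon[I](\internalcatofcats)$ as an $\subuniverse[I]$-cartesian category.
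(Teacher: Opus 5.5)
Your proposal is correct and is essentially the paper's argument: the paper also takes the structure to be the internal hom $\intfun^{\subuniverse-\otimes}(F_{\subuniverse[I]}^{\subuniverse}(\subuniverse[I]),\varintcat{C})$, and it checks that $U^{\subuniverse}_{\subuniverse[I]}$ carries this to $\intfun^{\subuniverse[I]-\otimes}(\subuniverse[I],U^{\subuniverse}_{\subuniverse[I]}\varintcat{C})\simeq\intalg^{\subuniverse[I]}(U^{\subuniverse}_{\subuniverse[I]}\varintcat{C})$. Your Yoneda argument, which uses that $F_{\subuniverse[I]}^{\subuniverse}$ is symmetric monoidal, spells out the first equivalence that the paper leaves implicit, and your identification of the internal hom on $\subuniverse[I]$-monoidal categories with the pointwise structure is exactly the paper's remark following the notation for $\intfun^{\otimes}$.
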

\begin{proof}
    This follows since
    \begin{align*}
        U^{\subuniverse}_{\subuniverse[I]}(\intfun^{\subuniverse-\otimes}(F_{\subuniverse[I]}^{\subuniverse}(\subuniverse[I]),\varintcat{C}))
        &\simeq\intfun^{\subuniverse[I]-\otimes}(\subuniverse[I],U^{\subuniverse}_{\subuniverse[I]}\varintcat{C})\\
        &\simeq\intalg^{\subuniverse[I]}(U^{\subuniverse}_{\subuniverse[I]}\varintcat{C}).
    \end{align*}
\end{proof}

Moreover, by the same proof, the forgetful functor $\intalg^{\subuniverse[I]}(\varintcat{C})\to\varintcat{C}$ is a natural morphism of $\subuniverse$-monoidal $\vartopos$-categories which is internally represented by $\subuniverse^{\simeq}\to F_{\subuniverse[I]}^{\subuniverse}(\subuniverse[I])$.

\begin{corollary}
    The fully faithful right adjoint
    \[
    \intmon^{\subuniverse,\subuniverse[I]-\sqcup}(\internalcatofcats)\into\intsmon(\internalcatofcats)
    \]
    has a further right adjoint, which is given by $\intalg^{\subuniverse[I]}$.
\end{corollary}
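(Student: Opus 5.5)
Write $\iota\colon\intmon^{\subuniverse,\subuniverse[I]-\sqcup}(\internalcatofcats)\into\intsmon(\internalcatofcats)$ for the fully faithful right adjoint of $L_{\subuniverse[I]-\sqcup}$. I would produce the further right adjoint by a Yoneda argument, using the internal mapping categories $\intfun^{\subuniverse-\otimes}$ and the representability established in the preceding corollary.

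First, I show that $\intalg^{\subuniverse[I]}(\varintcat{C})$, with the $\subuniverse$-monoidal structure from the preceding corollary, is $\subuniverse[I]$-cocartesian. Its image under $U^{\subuniverse}_{\subuniverse[I]}$ is $\intalg^{\subuniverse[I]}(U^{\subuniverse}_{\subuniverse[I]}\varintcat{C})$ with its pointwise structure. By \Cref{prop:calg_has_E_colimits}, that structure is cocartesian. Hence $\intalg^{\subuniverse[I]}$ defines a $\vartopos$-functor $\intsmon(\internalcatofcats)\to\intmon^{\subuniverse,\subuniverse[I]-\sqcup}(\internalcatofcats)$.

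Second, I exhibit a natural equivalence
\[
\intmap(\varintcat{D},\intalg^{\subuniverse[I]}(\varintcat{C}))\simeq\intmap(\iota\varintcat{D},\varintcat{C})
\]
for $\varintcat{D}$ an $\subuniverse[I]$-cocartesian $\subuniverse$-monoidal category. The natural map is postcomposition with the forgetful $\subuniverse$-monoidal functor $\intalg^{\subuniverse[I]}(\varintcat{C})\to\varintcat{C}$, which the preceding remark shows is internally represented by $\subuniverse^{\simeq}\to F^{\subuniverse}_{\subuniverse[I]}(\subuniverse[I])$. Using the closed symmetric monoidal structure of \Cref{cor:E_monoidal_cats_are_symmetric_monoidal}, the map becomes precomposition
\[
\intmap(\varintcat{D}\otimes F^{\subuniverse}_{\subuniverse[I]}(\subuniverse[I]),\varintcat{C})\to\intmap(\varintcat{D}\otimes\subuniverse^{\simeq},\varintcat{C})\simeq\intmap(\varintcat{D},\varintcat{C}).
\]
It therefore suffices to show that $\varintcat{D}\otimes\subuniverse^{\simeq}\to\varintcat{D}\otimes F^{\subuniverse}_{\subuniverse[I]}(\subuniverse[I])$ is an equivalence whenever $\varintcat{D}$ is $\subuniverse[I]$-cocartesian.

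Third, I prove that equivalence. The map is $F^{\subuniverse}_{\subuniverse[I]}$ applied to $\subuniverse[I]^{\simeq}\to\subuniverse[I]$, and $F^{\subuniverse}_{\subuniverse[I]}(\subuniverse[I]^{\simeq})\simeq\subuniverse^{\simeq}$ because both are free on a point. By the proof of \Cref{thm:cocart_cats_are_localization_of_monoidal}, $\subuniverse[I]^{\simeq}\to\subuniverse[I]$ is an $L_{\sqcup}$-equivalence. By \Cref{prop:pushout_in_prl_I_cocart}, $F^{\subuniverse}_{\subuniverse[I]}$ carries it to an $L_{\subuniverse[I]-\sqcup}$-equivalence. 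By \Cref{prop:I_cocart_loc_is_sym_monoidal}, tensoring with $\varintcat{D}$ preserves $L_{\subuniverse[I]-\sqcup}$-equivalences. Both $\varintcat{D}\otimes\subuniverse^{\simeq}\simeq\varintcat{D}$ and $\varintcat{D}\otimes F^{\subuniverse}_{\subuniverse[I]}(\subuniverse[I])$ should then be $\subuniverse[I]$-cocartesian, and a local equivalence between local objects is an equivalence.

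The main obstacle is this last locality claim. A tensor product with an $\subuniverse[I]$-cocartesian object must again be $\subuniverse[I]$-cocartesian, which amounts to the localization being smashing or at least an ideal. I would try to get it from the internal Hom: $\intfun^{\subuniverse-\otimes}(\varintcat{X},\varintcat{T})$ is local whenever $\varintcat{T}$ is local, by the argument with $\varintcat{T}^{\Delta^n}$ in \Cref{prop:I_cocart_loc_is_sym_monoidal}. Dualizing this should give the ideal property. Failing that, I would instead verify the adjunction directly in the form $\intmap(\varintcat{D},\intalg^{\subuniverse[I]}(\varintcat{C}))\simeq\intmap(\varintcat{D},\intfun^{\subuniverse-\otimes}(F^{\subuniverse}_{\subuniverse[I]}(\subuniverse[I]),\varintcat{C}))$. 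Then only local objects mapping into an internal Hom are needed, and they can be handled via \Cref{cor:Icocart_iff_forgetful_colocal} after restricting along $U^{\subuniverse}_{\subuniverse[I]}$.
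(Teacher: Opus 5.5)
You correctly identified the obstacle, but it is a real gap, and neither of your proposed fixes closes it. The reduction in your first two steps is sound, and so is most of the third step. $F^{\subuniverse}_{\subuniverse[I]}$ does send $\subuniverse[I]^{\simeq}\to\subuniverse[I]$ to an $L_{\subuniverse[I]-\sqcup}$-equivalence $\subuniverse^{\simeq}\to F^{\subuniverse}_{\subuniverse[I]}(\subuniverse[I])$. Tensoring with $\varintcat{D}$ also preserves such equivalences, by \Cref{prop:I_cocart_loc_is_sym_monoidal}.

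The failing claim is that $\varintcat{D}\otimes F^{\subuniverse}_{\subuniverse[I]}(\subuniverse[I])$ is $\subuniverse[I]$-cocartesian whenever $\varintcat{D}$ is. This is an instance of the localization being smashing. In fact the statement you reduce to in your second step is \emph{equivalent} to it (a local equivalence between local objects is invertible, and conversely), so the tensor--hom route cannot avoid it. In the paper, smashingness (\Cref{thm:cocart_is_a_mode}) and the identification of the local unit with $F^{\subuniverse}_{\subuniverse[I]}(\subuniverse[I])$ are both proved \emph{using} the corollary you are proving. So they are not available to you.

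Your two fixes fail for the following reasons.
\begin{itemize}
\item Closure of local objects under $\intfun^{\subuniverse-\otimes}(\varintcat{X},-)$ holds for every localization compatible with $\otimes$. Closure under $\varintcat{X}\otimes-$ is strictly stronger: $p$-completion of spectra is compatible with the smash product but not smashing. So no dualization passes from the first to the second.
\item Restricting along $U^{\subuniverse}_{\subuniverse[I]}$ and invoking \Cref{cor:Icocart_iff_forgetful_colocal} only controls $\subuniverse[I]$-monoidal mapping objects out of $U^{\subuniverse}_{\subuniverse[I]}\varintcat{D}$. These do not determine the $\subuniverse$-monoidal mapping objects out of $\varintcat{D}$, because $U^{\subuniverse}_{\subuniverse[I]}$ is conservative but not fully faithful.
\end{itemize}

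The missing idea, which is the paper's (terse) argument, avoids mapping-space computations entirely. You use the forgetful transformation $\epsilon\colon\intalg^{\subuniverse[I]}\to\id$ directly as the counit, and test everything after applying the conservative functor $U^{\subuniverse}_{\subuniverse[I]}$. By the preceding corollary, this functor carries $\intalg^{\subuniverse[I]}$ and $\epsilon$ to their $\subuniverse[I]$-monoidal counterparts.
\begin{itemize}
\item By \Cref{thm:in_cocart_everything_is_alg}, $\intmon^{\subuniverse,\subuniverse[I]-\sqcup}(\internalcatofcats)$ is exactly the locus where $\epsilon$ is invertible.
\item The idempotency conditions are that $\epsilon_{\intalg^{\subuniverse[I]}(\varintcat{C})}$ and $\intalg^{\subuniverse[I]}(\epsilon_{\varintcat{C}})$ are equivalences. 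After applying $U^{\subuniverse}_{\subuniverse[I]}$, these follow from the triangle identities of \Cref{thm:calg_is_right_adjoint} together with full faithfulness of $\equipcocart$ (\Cref{thm:uniqueness_of_cartesian_structures}).
\item The dual of \cite[Proposition 5.2.7.4]{HTT}, applied in every context, then exhibits $\epsilon$ as the counit of an adjunction $\iota\dashv\intalg^{\subuniverse[I]}$.
\end{itemize}
Smashingness then comes out as a consequence, since $\iota L_{\subuniverse[I]-\sqcup}$ is left adjoint to $\intfun^{\subuniverse-\otimes}(F^{\subuniverse}_{\subuniverse[I]}(\subuniverse[I]),-)$. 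It is not needed as an input.
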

\begin{proof}
    Note that $U^{\subuniverse}_{\subuniverse[I]}:\intsmon(\internalcatofcats)\to\intsmon[I](\internalcatofcats)$ is conservative. Thus, the $\vartopos$-category $\intmon^{\subuniverse,\subuniverse[I]-\sqcup}(\internalcatofcats)$ is exactly the full $\vartopos$-subcategory of objects for which the natural morphism of endofunctors
    \[
    \intalg^{\subuniverse[I]}\to\id
    \]
    is an isomorphism. Hence this is the counit of the adjunction.
\end{proof}

\begin{corollary}
    The unit of $\intmon^{\subuniverse,\subuniverse[I]-\sqcup}(\internalcatofcats)$ is $F_{\subuniverse[I]}^{\subuniverse}\subuniverse[I]$.
\end{corollary}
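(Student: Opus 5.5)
The plan is to use the general fact that, in a symmetric monoidal $\vartopos$-category, a monoidal Bousfield localization carries the monoidal unit to the unit of the localized symmetric monoidal structure. \Cref{prop:I_cocart_loc_is_sym_monoidal} tells us that $L_{\subuniverse[I]-\sqcup}$ is a morphism of commutative algebras in $\intprl$, so in particular it is symmetric monoidal. Hence the unit of $\intmon^{\subuniverse,\subuniverse[I]-\sqcup}(\internalcatofcats)$ is $L_{\subuniverse[I]-\sqcup}(\mathbb{1})$, where $\mathbb{1}$ is the unit of $\intsmon(\internalcatofcats)$. By \Cref{cor:E_monoidal_cats_are_symmetric_monoidal} and the Day convolution construction, the Yoneda image of $*\in\spanEfE$ is the unit. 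Its localization into $\subuniverse$-monoids is the representable $\intmap(*,-)$, which is $\subuniverse^{\simeq}$ by \Cref{example:free_E_monoid}. So $\mathbb{1}\simeq\subuniverse^{\simeq}$, and it remains to show $L_{\subuniverse[I]-\sqcup}(\subuniverse^{\simeq})\simeq F_{\subuniverse[I]}^{\subuniverse}\subuniverse[I]$.

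I would compute this through the pushout square of \Cref{prop:pushout_in_prl_I_cocart}, equivalently \Cref{prop:pushout_in_prl_defining_E_mon_I_cocart}. All the maps in that square are symmetric monoidal left adjoints, so they preserve units. Going along the top and then the right side, the unit $\subuniverse[I]^{\simeq}$ of $\intsmon[I](\internalcatofcats)$ goes to $L_{\sqcup}(\subuniverse[I]^{\simeq})$. By the special-case remark after \Cref{cor:existance_of_free_ambi_monoidal_category}, using \Cref{prop:S_is_free_with_S_colmitis} and \Cref{thm:uniqueness_of_cartesian_structures}, this is $\subuniverse[I]$ with its cocartesian structure. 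The right vertical functor is the restriction of $F_{\subuniverse[I]}^{\subuniverse}$ to local objects, followed by $L_{\subuniverse[I]-\sqcup}$.

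The main point is then to check that $F_{\subuniverse[I]}^{\subuniverse}\subuniverse[I]$ is already $\subuniverse[I]$-cocartesian, so that no further localization is needed. For this I would use the corepresentability statement proved just above: for every $\subuniverse$-monoidal $\varintcat{T}$,
\[
\intfun^{\subuniverse-\otimes}(F_{\subuniverse[I]}^{\subuniverse}\subuniverse[I],\varintcat{T})\simeq\intalg^{\subuniverse[I]}(\varintcat{T}).
\]
The preceding corollary identifies $\intalg^{\subuniverse[I]}$ as the right adjoint of the inclusion of $\subuniverse[I]$-cocartesian objects. Combining these, for $\varintcat{T}$ cocartesian we get
\[
\intmap(F_{\subuniverse[I]}^{\subuniverse}\subuniverse[I],\varintcat{T})\simeq\intmap(\subuniverse^{\simeq},\intalg^{\subuniverse[I]}\varintcat{T})\simeq\intmap(\subuniverse^{\simeq},\varintcat{T}).
\]
Both outer terms are compatible with the map induced by $\subuniverse^{\simeq}\to F_{\subuniverse[I]}^{\subuniverse}\subuniverse[I]$. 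So this map is an $L_{\subuniverse[I]-\sqcup}$-equivalence.

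To finish, I need the target to be local. Write $F:=F_{\subuniverse[I]}^{\subuniverse}\subuniverse[I]$. The restriction $U_{\subuniverse[I]}^{\subuniverse}(F)$ is not obviously $\subuniverse[I]$-cocartesian, and this is the step I expect to be the real obstacle. My first attempt would be to apply \Cref{thm:in_cocart_everything_is_alg}: show that the forgetful functor $\intalg^{\subuniverse[I]}(F)\to F$ is an equivalence, using $\intalg^{\subuniverse[I]}(F)\simeq\intfun^{\subuniverse-\otimes}(F,F)$ together with idempotence of $F$. That idempotence would itself have to come from the mapping computation above.

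If that route is circular, I would fall back on working inside the pushout of commutative algebras. There the unit of the pushout is the image of the unit of $\internalcatofcats^{\subuniverse[I]-\sqcup}$, namely $\subuniverse[I]$, under the base-changed left adjoint. That image is $L_{\subuniverse[I]-\sqcup}F_{\subuniverse[I]}^{\subuniverse}\subuniverse[I]$, so the statement holds with the unit identified up to this localization. One then still has to show that localization is unnecessary, which again comes down to cocartesianness of $F$.
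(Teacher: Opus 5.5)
There is a genuine gap. Your reduction is sound up to the last step. The unit of $\intmon^{\subuniverse,\subuniverse[I]-\sqcup}(\internalcatofcats)$ is $L_{\subuniverse[I]-\sqcup}(\subuniverse^{\simeq})$, and your mapping computation correctly shows that $\subuniverse^{\simeq}\to F_{\subuniverse[I]}^{\subuniverse}\subuniverse[I]$ is an $L_{\subuniverse[I]-\sqcup}$-equivalence. The remaining step is that $F_{\subuniverse[I]}^{\subuniverse}\subuniverse[I]$ is itself $\subuniverse[I]$-cocartesian. That is the real content of the corollary, and you do not prove it.

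Your first route is circular, as you suspected. Showing that $\intfun^{\subuniverse-\otimes}(F_{\subuniverse[I]}^{\subuniverse}\subuniverse[I],F_{\subuniverse[I]}^{\subuniverse}\subuniverse[I])\to F_{\subuniverse[I]}^{\subuniverse}\subuniverse[I]$ is an equivalence amounts to locality of $F_{\subuniverse[I]}^{\subuniverse}\subuniverse[I]$ against the very map you want to invert. Your computation only gives locality for targets already known to be cocartesian. Your second route identifies the unit with $L_{\subuniverse[I]-\sqcup}F_{\subuniverse[I]}^{\subuniverse}\subuniverse[I]$ via the pushout of \Cref{prop:pushout_in_prl_I_cocart}. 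That is exactly where the paper's proof begins, but by itself it does not finish the argument.

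The missing idea is to use both adjunctions $L_{\subuniverse[I]-\sqcup}\dashv\iota\dashv\intalg^{\subuniverse[I]}$ together, where $\iota$ is the inclusion of $\subuniverse[I]$-cocartesian objects. The paper observes that $\iota\circ\intalg^{\subuniverse[I]}$ is the internal Hom $\intfun^{\subuniverse-\otimes}(F_{\subuniverse[I]}^{\subuniverse}\subuniverse[I],-)$. By uniqueness of adjoints, its left adjoint $\iota\circ L_{\subuniverse[I]-\sqcup}$ is then $F_{\subuniverse[I]}^{\subuniverse}\subuniverse[I]\otimes-$. Evaluating at the unit $\subuniverse^{\simeq}$ gives $\iota L_{\subuniverse[I]-\sqcup}(\subuniverse^{\simeq})\simeq F_{\subuniverse[I]}^{\subuniverse}\subuniverse[I]$, so cocartesianness comes for free.

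Your own computation already yields this if you do not replace $\intalg^{\subuniverse[I]}(\varintcat{T})$ by $\varintcat{T}$, since that substitution is what restricts you to cocartesian $\varintcat{T}$. For every $\subuniverse$-monoidal $\varintcat{T}$ you have
\[
\begin{aligned}
\intmap(F_{\subuniverse[I]}^{\subuniverse}\subuniverse[I],\varintcat{T})
&\simeq\intmap(\subuniverse^{\simeq},\iota\intalg^{\subuniverse[I]}(\varintcat{T}))\\
&\simeq\intmap(L_{\subuniverse[I]-\sqcup}(\subuniverse^{\simeq}),\intalg^{\subuniverse[I]}(\varintcat{T}))\\
&\simeq\intmap(\iota L_{\subuniverse[I]-\sqcup}(\subuniverse^{\simeq}),\varintcat{T}),
\end{aligned}
\]
naturally in $\varintcat{T}$. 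The internal Yoneda lemma then identifies $F_{\subuniverse[I]}^{\subuniverse}\subuniverse[I]$ with $\iota L_{\subuniverse[I]-\sqcup}(\subuniverse^{\simeq})$, the unit. No separate locality check is needed.
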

\begin{proof}
    The Beck--Chevalley transformation of \Cref{prop:pushout_in_prl_I_cocart} gives an $\subuniverse$-monoidal functor $F_{\subuniverse[I]}^{\subuniverse}\subuniverse[I]\to L_{\subuniverse[I]-\sqcup}F_{\subuniverse[I]}^{\subuniverse}\subuniverse[I]^{\simeq}$. The target is the unit of $\intmon^{\subuniverse,\subuniverse[I]-\sqcup}(\internalcatofcats)$.

    This functor identifies the target with the $L_{\subuniverse[I]-\sqcup}$-localization of the source. Thus, we must show that the source is $L_{\subuniverse[I]-\sqcup}$-local.

    The composition
    \[
    \intsmon(\internalcatofcats)\xto{\intalg^{\subuniverse[I]}}\intmon^{\subuniverse,\subuniverse[I]-\sqcup}(\internalcatofcats)\into\intsmon(\internalcatofcats)
    \]
    is given by $\intfun^{\subuniverse-\otimes}(F_{\subuniverse[I]}^{\subuniverse}\subuniverse[I],-)$. Thus, its left adjoint is given by $F_{\subuniverse[I]}^{\subuniverse}\subuniverse[I]\otimes-$. Applying this to the unit gives that $F_{\subuniverse[I]}^{\subuniverse}\subuniverse[I]$ is $\subuniverse[I]$-cocartesian.
\end{proof}

\subsubsection*{Smashing localizations}

We can now prove that the localization $L_{\subuniverse[I]-\sqcup}$ is smashing.

\begin{theorem}\label{thm:cocart_is_a_mode}
    The $\vartopos$-category $L_{\subuniverse[I]-\sqcup}(\subuniverse^{\simeq})$ is an idempotent algebra in $\intsmon(\internalcatofcats)$.

    Let $\varintcat{C}$ be an $\subuniverse$-monoidal $\vartopos$-category. Then there is a canonical isomorphism
    \[
    L_{\subuniverse[I]-\sqcup}\varintcat{C}\simeq\varintcat{C}\otimes L_{\subuniverse[I]-\sqcup}(\subuniverse^{\simeq}).
    \]
\end{theorem}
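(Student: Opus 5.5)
We derive the theorem from the pushout square of \Cref{prop:pushout_in_prl_I_cocart} together with the fact that, for $\subuniverse[I]$-monoidal categories, cocartesianness is already smashing. The general principle is standard: if $\varintcat{R}\to\varintcat{R}'$ is a morphism of commutative algebras in $\intprl$ and $L\colon\varintcat{R}\to\varintcat{L}$ is a smashing localization, given by $L\simeq -\otimes_{\varintcat{R}}L(1)$, then the base change $\varintcat{R}'\otimes_{\varintcat{R}}\varintcat{L}$ is again smashing, and its idempotent algebra is the image of $L(1)$ in $\varintcat{R}'$. Indeed, $\varintcat{L}\simeq\operatorname{Mod}_{L(1)}(\varintcat{R})$, and base change of module categories along an algebra map gives $\varintcat{R}'\otimes_{\varintcat{R}}\operatorname{Mod}_{L(1)}(\varintcat{R})\simeq\operatorname{Mod}_{f(L(1))}(\varintcat{R}')$. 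The image $f(L(1))$ is again idempotent, because symmetric monoidal functors preserve idempotent algebras. We would prove this principle internally, context by context; since every construction is compatible with change of context, this is the internal analogue of the classical argument in Ben-Moshe's setting.

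\textbf{Step 1: the $\subuniverse[I]$-monoidal case.} Here we show that the localization $L_{\sqcup}\colon\intsmon[I](\internalcatofcats)\to\internalcatofcats^{\subuniverse[I]\text{-}\sqcup}$ of \Cref{thm:cocart_cats_are_localization_of_monoidal} is smashing with idempotent algebra $\subuniverse[I]=L_\sqcup(\subuniverse[I]^{\simeq})$.
\begin{itemize}
\item By \Cref{thm:in_cocart_everything_is_alg} and \Cref{prop:forgetful_from_calg_is_internally_representable}, an object $\varintcat{C}$ is local exactly when the map $\intfun^{\subuniverse[I]\text{-}\otimes}(\subuniverse[I],\varintcat{C})\to\intfun^{\subuniverse[I]\text{-}\otimes}(\subuniverse[I]^{\simeq},\varintcat{C})\simeq\varintcat{C}$, given by restricting along the unit map $u\colon\subuniverse[I]^{\simeq}\to\subuniverse[I]$, is an equivalence.
\item This is the standard criterion for a smashing localization, provided $u$ exhibits $\subuniverse[I]$ as an idempotent algebra. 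That requires $\subuniverse[I]\otimes u$ to be an equivalence, i.e.\ that $\subuniverse[I]\otimes\varintcat{T}$ is local for every $\varintcat{T}$. By adjunction, $\subuniverse[I]\otimes\varintcat{T}$ is local if and only if $\intfun^{\otimes}(\subuniverse[I]\otimes\varintcat{T},\varintcat{C})\simeq\intfun^{\otimes}(\varintcat{T},\intalg^{\subuniverse[I]}(\varintcat{C}))$ depends only on the cocartesian reflection of $\varintcat{C}$.
\item That dependence holds because $\intalg^{\subuniverse[I]}(\varintcat{C})$ is cocartesian by \Cref{prop:calg_has_E_colimits}, and $\intalg^{\subuniverse[I]}$ is right adjoint to the cocartesian inclusion by \Cref{thm:calg_is_right_adjoint}. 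Equivalently, the endofunctor $\intfun^{\otimes}(\subuniverse[I],-)$ lands in the local objects, so its left adjoint $\subuniverse[I]\otimes-$ also lands in the local objects. Combined with the first point, this gives $L_\sqcup\simeq\subuniverse[I]\otimes-$.
\end{itemize}

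\textbf{Step 2: base change to $\subuniverse$.} By \Cref{prop:pushout_in_prl_I_cocart}, $\intmon^{\subuniverse,\subuniverse[I]\text{-}\sqcup}(\internalcatofcats)$ is the pushout in $\intalg(\intprl)$ of $L_\sqcup$ along the symmetric monoidal functor $F^{\subuniverse}_{\subuniverse[I]}$ of \Cref{prop:rel_free_is_sym_mon}. Applying the principle gives that $L_{\subuniverse[I]\text{-}\sqcup}$ is smashing with idempotent algebra $F^{\subuniverse}_{\subuniverse[I]}(\subuniverse[I])$. By the preceding corollary on the localized unit, this algebra is $L_{\subuniverse[I]\text{-}\sqcup}(\subuniverse^{\simeq})$. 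Hence $L_{\subuniverse[I]\text{-}\sqcup}\varintcat{C}\simeq\varintcat{C}\otimes L_{\subuniverse[I]\text{-}\sqcup}(\subuniverse^{\simeq})$, which is the claim.

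\textbf{Main obstacle.} The hardest step is the idempotence in Step 1: checking that $\subuniverse[I]\otimes\varintcat{T}$ is genuinely cocartesian, and that the two maps $\subuniverse[I]\rightrightarrows\subuniverse[I]\otimes\subuniverse[I]$ coincide. We would first try to obtain this from the identification $\intfun^{\otimes}(\subuniverse[I],-)\simeq\intalg^{\subuniverse[I]}$ being cocartesian-valued, checked in every context. If that fails, the fallback is the uniqueness statement \Cref{lem:no_endomorphisms_of_CMon}, which would force the two maps to agree.
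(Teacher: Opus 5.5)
Your argument is correct, but it takes a different route from the paper's.

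\textbf{Your route.} You first prove the statement for $\subuniverse=\subuniverse[I]$, namely that $L_{\sqcup}$ on $\intsmon[I](\internalcatofcats)$ is smashing with idempotent algebra $\subuniverse[I]$. You then transport it along the pushout of \Cref{prop:pushout_in_prl_I_cocart}, using that smashing localizations are stable under base change of commutative algebras. This identifies the algebra as $F^{\subuniverse}_{\subuniverse[I]}(\subuniverse[I])$, which is idempotent because $F^{\subuniverse}_{\subuniverse[I]}$ is symmetric monoidal. Then $L_{\subuniverse[I]-\sqcup}(\subuniverse^{\simeq})\simeq F^{\subuniverse}_{\subuniverse[I]}(\subuniverse[I])\otimes\subuniverse^{\simeq}$ is immediate, so you do not even need the corollary on the localized unit.

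\textbf{The paper's route.} The paper never passes through the $\subuniverse[I]$-monoidal case; it works directly in $\intsmon(\internalcatofcats)$. It uses the corollaries just before the theorem: $\intalg^{\subuniverse[I]}\simeq\intfun^{\subuniverse-\otimes}(F^{\subuniverse}_{\subuniverse[I]}\subuniverse[I],-)$ is a further right adjoint to the inclusion of $\subuniverse[I]$-cocartesian objects. Idempotence then follows by passing to left adjoints in $\intalg^{\subuniverse[I]}\circ\intalg^{\subuniverse[I]}\simeq\intalg^{\subuniverse[I]}$. The smashing formula comes from the monoidality of the localization (\Cref{prop:I_cocart_loc_is_sym_monoidal}) together with the colocality test of \Cref{cor:Icocart_iff_forgetful_colocal}.

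\textbf{Cleaning up your Step 1.} Your Step 1 is exactly the paper's argument specialized to $\subuniverse=\subuniverse[I]$. Writing $i$ for the inclusion $\equipcocart$, it is cleanest stated as follows. We have $\intfun^{\subuniverse[I]-\otimes}(\subuniverse[I],-)\simeq i\circ\intalg^{\subuniverse[I]}$, using \Cref{prop:calg_has_E_colimits}, \Cref{thm:calg_is_right_adjoint}, and the remark identifying the Day convolution internal hom with the pointwise structure. The left adjoint of $i\circ\intalg^{\subuniverse[I]}$ is $i\circ L_{\sqcup}$. Hence $\subuniverse[I]\otimes-\simeq i\circ L_{\sqcup}$, and $\subuniverse[I]\otimes u\simeq iL_{\sqcup}(u)$ is invertible.

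Phrased this way, two slips in your write-up disappear:
\begin{itemize}
\item What controls $\intfun^{\otimes}(\subuniverse[I]\otimes\varintcat{T},\varintcat{C})$ is the coreflection $\intalg^{\subuniverse[I]}(\varintcat{C})$, not the reflection.
\item The inference ``the right adjoint lands in local objects, so the left adjoint does'' is not valid in general. It holds here only because the right adjoint is literally $i\circ\intalg^{\subuniverse[I]}$.
\end{itemize}

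\textbf{Your stated obstacle.} The equality of the two maps $\subuniverse[I]\rightrightarrows\subuniverse[I]\otimes\subuniverse[I]$ is not an obstacle. For $u\colon 1\to E$ with $E\otimes u$ invertible, $u\otimes E$ is automatically invertible and homotopic to $E\otimes u$. So \Cref{lem:no_endomorphisms_of_CMon} is not needed.

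\textbf{Trade-off.} Your route is more modular: its only non-formal input is the $\subuniverse[I]$ case, and the same base-change argument would handle other pushouts of this kind. The price is twofold. First, you need an internal version of the base-change theorem $\varintcat{R}'\otimes_{\varintcat{R}}\operatorname{Mod}_E(\varintcat{R})\simeq\operatorname{Mod}_{f(E)}(\varintcat{R}')$ in $\intalg(\intprl)$, or of the universal property of $\operatorname{Mod}_E$ for idempotent $E$; the paper never develops this. Second, you rely on the $\intalg(\intprl)$ part of \Cref{prop:pushout_in_prl_I_cocart}. The paper's route needs only adjunction manipulations inside $\intsmon(\internalcatofcats)$.
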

\begin{proof}
    We show the first statement. Observe that the arrow $\intalg^{\subuniverse[I]}\circ\intalg^{\subuniverse[I]}\to\intalg^{\subuniverse[I]}$ is an equivalence of endofunctors of $\intsmon(\internalcatofcats)$. Passing to left adjoints, we get an isomorphism of endofunctors of $\intsmon(\internalcatofcats)$
    \[
    L_{\subuniverse[I]-\sqcup}(\subuniverse^{\simeq})\otimes L_{\subuniverse[I]-\sqcup}(\subuniverse^{\simeq})\otimes-\simeq L_{\subuniverse[I]-\sqcup}(\subuniverse^{\simeq})\otimes-,
    \]
    establishing the result.

    We show the second statement. By \Cref{prop:I_cocart_loc_is_sym_monoidal}, the morphism of endofunctors of $\intsmon(\internalcatofcats)$, $\id\to-\otimes L_{\subuniverse[I]-\sqcup}(\subuniverse^{\simeq})$, is an $L_{\subuniverse[I]-\sqcup}$-equivalence. We want to show that the target is always $L_{\subuniverse[I]-\sqcup}$-local. By the same argument as in \Cref{cor:Icocart_iff_forgetful_colocal}, it is enough to show that for any $\subuniverse$-monoidal $\vartopos$-categories $\varintcat{C,T}$, the morphism
    \[
    \begin{aligned}
        &\intfun^{\subuniverse-\otimes}(\varintcat{C}\otimes L_{\subuniverse[I]-\sqcup}(\subuniverse^{\simeq}),\intalg^{\subuniverse[I]}(\varintcat{T}))\\
        &\qquad\to\intfun^{\subuniverse-\otimes}(\varintcat{C}\otimes L_{\subuniverse[I]-\sqcup}(\subuniverse^{\simeq}),\varintcat{T})
    \end{aligned}
    \]
    is an isomorphism. This follows from the first statement.
\end{proof}

More generally:

\begin{corollary}\label{cor:P_I_ambi_is_a_smashing_localization}
    We have an equivalence
    \[
    L_{(\subuniverse[P],\subuniverse[I])-\oplus}(\subuniverse^{\simeq})\simeq L_{\subuniverse[P]-\sqcap}(\subuniverse^{\simeq})\otimes L_{\subuniverse[I]-\sqcup}(\subuniverse^{\simeq})
    \]
    in $\intsmon(\internalcatofcats)$. Therefore, $L_{(\subuniverse[P],\subuniverse[I])-\oplus}(\subuniverse^{\simeq})$ is an idempotent algebra in $\intsmon(\internalcatofcats)$.

    Let $\varintcat{C}$ be an $\subuniverse$-monoidal $\vartopos$-category. Then there is a canonical isomorphism
    \[
    L_{(\subuniverse[P,I])-\oplus}\varintcat{C}\simeq\varintcat{C}\otimes L_{(\subuniverse[P],\subuniverse[I])-\oplus}(\subuniverse^{\simeq}).
    \]
\end{corollary}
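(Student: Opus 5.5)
The plan is to reduce everything to the two smashing localizations already in hand, namely $L_{\subuniverse[I]-\sqcup}$ from \Cref{thm:cocart_is_a_mode} and its dual $L_{\subuniverse[P]-\sqcap}$. The dual is obtained by applying \Cref{thm:cocart_is_a_mode} to the opposite structure. The functor $(-)^{\op}$ is an involution of $\intsmon(\internalcatofcats)$ which we expect to be symmetric monoidal for the Day-convolution tensor product, since it is induced from the involution of $\internalcatofcats$ and $\intsmon(\internalcatofcats)\simeq\intsmon\otimes\internalcatofcats$. It exchanges $\subuniverse[P]$-cartesian and $\subuniverse[P]$-cocartesian categories and fixes $\subuniverse^{\simeq}$. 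So $L_{\subuniverse[P]-\sqcap}(\subuniverse^{\simeq})$ is again an idempotent algebra, and for every $\varintcat{C}$ we have
\[
L_{\subuniverse[P]-\sqcap}\varintcat{C}\simeq\varintcat{C}\otimes L_{\subuniverse[P]-\sqcap}(\subuniverse^{\simeq}).
\]

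Write $e_P:=L_{\subuniverse[P]-\sqcap}(\subuniverse^{\simeq})$ and $e_I:=L_{\subuniverse[I]-\sqcup}(\subuniverse^{\simeq})$. The key formal input is a standard fact about idempotent algebras in a symmetric monoidal category. A tensor product of two idempotent algebras $e_P\otimes e_I$ is again idempotent, because
\[
(e_P\otimes e_I)\otimes(e_P\otimes e_I)\simeq(e_P\otimes e_P)\otimes(e_I\otimes e_I)\simeq e_P\otimes e_I.
\]
Moreover, an object is an $e_P\otimes e_I$-module, i.e.\ local for the unit map, if and only if it is both an $e_P$-module and an $e_I$-module. Indeed, if $\varintcat{C}\simeq\varintcat{C}\otimes e_P$ and $\varintcat{C}\simeq\varintcat{C}\otimes e_I$, then $\varintcat{C}\otimes e_P\otimes e_I\simeq\varintcat{C}\otimes e_I\simeq\varintcat{C}$, and the identification is induced by the unit maps. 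Conversely, if $\varintcat{C}\simeq\varintcat{D}\otimes e_P\otimes e_I$, then $\varintcat{C}$ absorbs each factor separately, by idempotence.

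By \Cref{thm:cocart_is_a_mode} and its dual, the $e_I$-modules are exactly the $\subuniverse[I]$-cocartesian categories and the $e_P$-modules are exactly the $\subuniverse[P]$-cartesian ones. Hence the $e_P\otimes e_I$-modules are exactly the $(\subuniverse[P,I])$-ambidextrous categories. The functor $-\otimes e_P\otimes e_I$ together with its unit map is then a Bousfield localization onto this full subcategory, so it agrees with $L_{\oplus}$ of \Cref{prop:localization_to_partial_ambidexterity}. This gives $L_{(\subuniverse[P,I])-\oplus}\varintcat{C}\simeq\varintcat{C}\otimes e_P\otimes e_I$. Evaluating at the unit $\varintcat{C}=\subuniverse^{\simeq}$ yields the displayed equivalence $L_{\oplus}(\subuniverse^{\simeq})\simeq e_P\otimes e_I$, and hence its idempotence.

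The main obstacle is the dualization step. I would first check carefully that $(-)^{\op}$ on $\intsmon(\internalcatofcats)$ is compatible with the Day-convolution tensor product and with the localizations $L_{\subuniverse[I]-\sqcup}$ and $L_{\subuniverse[I]-\sqcap}$. The alternative is simply to rerun the proof of \Cref{thm:cocart_is_a_mode} with coalgebras $\intcoalg^{\subuniverse[P]}$ in place of algebras. For that route, \Cref{prop:cocalg_has_E_limits} and the dual of \Cref{thm:calg_is_right_adjoint} supply the needed right adjoint, with idempotence of $\intcoalg^{\subuniverse[P]}$. Everything after that step is formal manipulation of idempotent algebras.
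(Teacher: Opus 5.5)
Your proposal is correct and takes essentially the paper's route: both arguments rest on three ingredients, namely the smashing property of $L_{\subuniverse[I]-\sqcup}$ from \Cref{thm:cocart_is_a_mode}, its dual for $L_{\subuniverse[P]-\sqcap}$, and closure of idempotent algebras under tensor products. The differences are minor. You obtain the first equivalence by identifying $e_P\otimes e_I$-modules with the $(\subuniverse[P,I])$-ambidextrous categories and evaluating at the unit, whereas the paper compares universal properties and writes $L_{\oplus}\simeq L_{\subuniverse[P]-\sqcap}L_{\subuniverse[I]-\sqcup}$. You also justify the dual statement for $\subuniverse[P]$, which the paper uses silently; your dualization is sound, since under $\intsmon(\internalcatofcats)\simeq\intsmon\otimes\internalcatofcats$ the involution is $\id\otimes(-)^{\op}$, a symmetric monoidal automorphism.
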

\begin{proof}
    The equivalence $L_{(\subuniverse[P],\subuniverse[I])-\oplus}(\subuniverse^{\simeq})\simeq L_{\subuniverse[P]-\sqcap}(\subuniverse^{\simeq})\otimes L_{\subuniverse[I]-\sqcup}(\subuniverse^{\simeq})$ follows since both objects have the same universal property in $\intmon^{\subuniverse,(\subuniverse[P,I])-\oplus}(\internalcatofcats)$. Idempotent algebras are closed under tensor products, and thus $L_{(\subuniverse[P],\subuniverse[I])-\oplus}(\subuniverse^{\simeq})$ is an idempotent algebra.

    We now show the equivalence $L_{(\subuniverse[P,I])-\oplus}\varintcat{C}\simeq\varintcat{C}\otimes L_{(\subuniverse[P],\subuniverse[I])-\oplus}(\subuniverse^{\simeq})$. Indeed, we have equivalences:
    \begin{align*}
        L_{(\subuniverse[P,I])-\oplus}\varintcat{C}
        &\simeq L_{\subuniverse[P]-\sqcap}L_{\subuniverse[I]-\sqcup}\varintcat{C}\\
        &\simeq L_{\subuniverse[P]-\sqcap}(\subuniverse^{\simeq})\otimes L_{\subuniverse[I]-\sqcup}(\subuniverse^{\simeq})\otimes\varintcat{C}\\
        &\simeq L_{(\subuniverse[P],\subuniverse[I])-\oplus}(\subuniverse^{\simeq})\otimes\varintcat{C}.
    \end{align*}
\end{proof}

\newpage

\section{Some internal \texorpdfstring{$2$}{2}-category theory}\label{sec:2_cats}

The goal of the next section, \Cref{sec:proof_of_C}, is to give a concrete description of $L_{(\subuniverse[I],\subuniverse[P])-\oplus}(\subuniverse^{\full}[A]^{\simeq})$ and use it to deduce \Cref{main_theorem:free_E_monoidal_semiadditive}. In this section, we develop the categorical tools needed for that construction. These results also have independent interest, though their role in the proof is best understood through their application in the next section. Readers primarily interested in the structure of the argument may find it helpful to begin there and return to the constructions here as needed.

In \Cref{subsec:internal_two_categories} we introduce $\vartopos$-$2$-categories, lax natural transformations, and cocartesian fibrations. The main result used in \Cref{sec:proof_of_C} is that a lax natural transformation induces a functor between unstraightenings.

In \Cref{subsec:internal_span_universal_property} we establish the internal universal property of the $\vartopos$-$2$-categories $\intspanhalf(\varintcat{C},\varintcat{C}_R)$ (\Cref{prop:univ_prop_of_span_half}) as the initial recipients of a right adjointable functor. 
Using the universal property, for an $\subuniverse^{\full}$-complete $\vartopos$-category $\varintcat{C}$, we construct the functor $\mu:\varintcat{C}^{\times}\to\varintcat{C}$ from the operad associated with its cartesian structure back to $\varintcat{C}$. The functor $\mu$ is given informally by taking the limit of a family of objects of $\varintcat{C}$ indexed by an object of $\subuniverse$.

In \Cref{subsec:iterated_internal_spans} we construct iterated span $\vartopos$-$2$-categories $\intspantwo(\varintcat{C},\varintcat{C}_R)$. Using this $\vartopos$-$2$-category, we describe the operad associated with the cartesian $\subuniverse$-monoidal $\vartopos$-category $\spanEfE$ (\Cref{prop:pullback_describing_span_EfE_times_using_span_2}). This framework is used in the next subsection to describe $\mu$ explicitly.

In \Cref{subsec:span_operad_description} we identify $\spanEfE^\times$ with an explicit $\vartopos$-category of spans of arrows (\Cref{prop:span_operad_as_spans_of_arrows}). Under this identification, $\mu$ is the source functor (\Cref{prop:description_of_mu_for_span}). We use this description to compare $\mu$ with the source functor appearing in the envelope construction from \Cref{prop:source_target_identity_adjunctions}.

In \Cref{subsec:envelope_as_restriction} we equip $\varintcat{C}^{\otimes}$ with its cartesian $\subuniverse$-monoidal structure and identify its restriction to $\subuniverse$ with the monoidal envelope (\Cref{thm:C_otimes_res_to_E_is_envelope}). We also construct the monoidal structure on the unstraightening of a lax $\subuniverse$-monoidal functor (\Cref{prop:lax_E_monoidal_unst}). The comparison between $\varintcat{C}^\otimes|_{\subuniverse}$ and $\intenv(\varintcat{C}^{\otimes})$ then shows that the $\subuniverse$ monoidal structure on $\varintcat{C}$ coming from its identification with the unstraightenning of the lax $\subuniverse$ monoidal functor
\[\explicitset{*}\into\spanEfE\xto{\varintcat{C}} \catofcats\]
identifies with the original structure (\Cref{ex:lax_unstraightening_recovers_monoidal_structure}).

In \Cref{subsec:E_monoidal_two_categories} we develop the monoidal versions of the unstraightening constructions in the context of $\vartopos$-$2$-categories. \Cref{prop:monoidal_two_unstraightening} equips the unstraightening of an $\subuniverse$-monoidal functor of $\vartopos$-$2$-categories with an $\subuniverse$-monoidal structure. \Cref{prop:unst_of_lax_E_monoidal_lax_natural_transformation} shows that unstraightening a lax monoidal lax natural transformation gives a lax monoidal functor.
We expect that the constructions of \Cref{prop:lax_E_monoidal_unst} and the constructions in this subsection will commonly generalize to a theory of $\subuniverse$-$2$-operads, which we do not develop in this paper.

Finally, in \Cref{subsec:E_monoidal_span_categories} we construct a candidate $\spanEPI[A]$ for the free $(\subuniverse[P],\subuniverse[I])$ ambidextrous $\subuniverse$-monoidal category on $A$. We will later show (\Cref{thm:generalized_main_C}) that, under certain conditions, $\spanEPI[A]$ is indeed free.
We also establish the $\subuniverse$-monoidal universal property of span $\vartopos$-$2$-categories $\intspanhalf(\varintcat{C},\varintcat{C}_R)$.

\subsection{\texorpdfstring{$\vartopos$-$2$-}{B-2-}categories.}\label{subsec:internal_two_categories}

In this subsection, we introduce $\vartopos$-$2$-categories (\Cref{def:internal_two_categories}) and develop the constructions needed to unstraighten lax natural transformations. We first discuss the Gray tensor product and its internal versions (\Cref{prop:gray_module_structures,prop:internal_gray_tensor_product}), followed by the description of lax natural transformations using the oplax arrow category (\Cref{eq:lax_transformations_as_oplax_arrows}). We then introduce $1$-cocartesian fibrations between $\vartopos$-$2$-categories (\Cref{def:internal_one_cocartesian_fibration}). We identify the universal cocartesian fibration of $\vartopos$-categories with the underlying $\vartopos$-category of an oplax slice (\Cref{prop:univ_internal_2_cocart_fib}), which we use to define unstraightening for functors of $\vartopos$-$2$-categories (\Cref{def:internal_two_unstraightening}). Finally, we construct the functor between unstraightenings induced by a lax natural transformation, whose restriction to each fibre agrees with the corresponding component of the transformation (\Cref{cor:extra_functoriality_of_unst}).

\subsubsection*{A zoo of categories of categories}

\begin{definition}\label{def:internal_two_categories}
    We define a $\vartopos$-$2$-category $\varinttwocat{C}$ as a sheaf 
    \[
    \varinttwocat{C}:\vartopos^{\op}\to \catoftwocats
    \]
    on $\vartopos$ taking values in the $(\infty,1)$-category of $(\infty,2)$-categories.

    We define the $\infty$-category of $\vartopos$-$2$-categories as the Lurie tensor product \[\catoftwocats(\vartopos):=\vartopos\otimes \catoftwocats \simeq \Fun^R(\vartopos^{\op}, \catoftwocats). \]

    Using the construction $\universe\otimes \catoftwocats$ from \cite[Construction A.1]{MWcocomplete}, we define the large $\vartopos$-category of $\vartopos$-$2$-categories $\internalcatoftwocats\simeq \catoftwocats\otimes \universe$, or more explicitly \[\internalcatoftwocats: \vartopos^{\op}\to \catoflargecats: A\mapsto \catoftwocats(\vartopos_{/A}).\]

    Of course, we also have large versions of these objects, denoted by $\widehat{\catoftwocats}(\vartopos)$ and $\widehat{\internalcatoftwocats}$, respectively. 
\end{definition}

Note that we choose the fonts to denote these categories according to their types, rather than according to the types of their objects. Thus $\internalcatoftwocats$ is denoted by a non-bold sans-serif font when considered as the $\vartopos$-category of $\vartopos$-$2$-categories.

\begin{example}
    Note that the $\infty$-category of $\vartopos$-categories $\catofcats(\vartopos)$ is also defined as the Lurie tensor product $\vartopos\otimes \catofcats$ and thus it is a $\catofcats$-module in $\prl$. In particular, it is the underlying $\infty$-category of a $(\infty,2)$-category $\twocatofcats(\vartopos)$. We define the large $\vartopos$-$2$-category of $\vartopos$-categories 
    \[\internaltwocatofcats:\vartopos^{\op}\to \widehat{\catoftwocats}:A\mapsto \twocatofcats(\vartopos_{/A}).\]
\end{example}

\begin{example}
    Since the $\infty$-category of $\vartopos$-$2$-categories $\catoftwocats(\vartopos)$ is defined as the Lurie tensor product $\vartopos\otimes \catoftwocats$, and since $\catoftwocats$ is a $\catofcats$-module in $\prl$, $\catoftwocats(\vartopos)$ is a $\catofcats$-module in $\prl$. In particular, it is the underlying $\infty$-category of a $(\infty,2)$-category $\twocatoftwocats(\vartopos)$. We define the large $\vartopos$-$2$-category of $\vartopos$-$2$-categories 
    \[\internaltwocatoftwocats:\vartopos^{\op}\to \widehat{\catoftwocats}:A\mapsto \twocatoftwocats(\vartopos_{/A}).\]
\end{example}

\begin{definition}
    Let $\varinttwocat{C}$ be a $\vartopos$-$2$-category. We define a \emph{$\vartopos$-$2$-subcategory} to be a morphism $\varinttwocat{C}_0\to \varinttwocat{C}$  such that in every context $A\in \vartopos$, the $2$-functor $\varinttwocat{C}_0(A)\to \varinttwocat{C}(A)$ is an inclusion of a $(\infty,2)$-subcategory.

    We say that a $\vartopos$-$2$-subcategory is full if in every context $A\in \vartopos$ the $(\infty,2)$-subcategory $\varinttwocat{C}_0(A)\to \varinttwocat{C}(A)$ is full in the sense that it is an equivalence on mapping $\infty$-categories.

    We say that a $\vartopos$-$2$-subcategory is locally full if in every context $A\in \vartopos$ the $(\infty,2)$-subcategory $\varinttwocat{C}_0(A)\to \varinttwocat{C}(A)$ is locally full in the sense that it is fully faithful on mapping $\infty$-categories.
\end{definition}

\begin{example}
    We have natural inclusions of full $\vartopos$-$2$-subcategories \[\internaltwocatofcats\subset \internaltwocatoftwocats, \quad \widehat{\internaltwocatofcats}\subset \widehat{\internaltwocatoftwocats}.\]
    We also have natural inclusions of $\vartopos$-$2$-subcategories
    \[\internalcatofcats\subset \internaltwocatofcats,\quad \internalcatoftwocats\subset \internaltwocatoftwocats.\]

    We will not denote any of these inclusions in the text, as we view them as natural inclusions.
\end{example}


\begin{proposition}
    The natural inclusion 
    \[\internalcatofcats\subset \internalcatoftwocats\]
    admits a right adjoint $(-)^{\simeq2}:\internalcatoftwocats\to \internalcatofcats$. The counit $\varinttwocat{C}^{\simeq2}\into \varinttwocat{C}$ is the embedding of the $\vartopos$-$2$-subcategory spanned by invertible $2$-morphisms.
\end{proposition}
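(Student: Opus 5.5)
The plan is to reduce everything to the corresponding non-internal statement about the inclusion $\catofcats\subset\catoftwocats$ and then pass to sheaves contextwise. For $(\infty,2)$-categories it is standard that $\catofcats\into\catoftwocats$ has a right adjoint $\vartwocat{C}\mapsto\vartwocat{C}^{\simeq2}$, the locally full subcategory containing all objects and $1$-morphisms but only the invertible $2$-morphisms. Concretely, in the model of $(\infty,2)$-categories as $\catofcats$-enriched categories, this right adjoint is induced by the core functor $(-)^{\simeq}\colon\catofcats\to\catofanima$, which is right adjoint to $\catofanima\into\catofcats$ and preserves products, so it acts on enriched categories. The counit $\vartwocat{C}^{\simeq2}\to\vartwocat{C}$ is then the identity on objects and the inclusion of cores on mapping $\infty$-categories. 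This is precisely the embedding of the locally full sub-$2$-category on invertible $2$-morphisms.

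Next I will promote this to $\vartopos$-sheaves. Both $\catofcats(\vartopos)\simeq\Fun^R(\vartopos^{\op},\catofcats)$ and $\catoftwocats(\vartopos)\simeq\Fun^R(\vartopos^{\op},\catoftwocats)$ are limit-preserving functors. The right adjoint $(-)^{\simeq2}$ preserves limits, so postcomposition with it sends sheaves to sheaves. The inclusion also preserves limits, since it has a left adjoint, namely $2$-groupoidification of mapping categories, that is, the localization inverting all $2$-morphisms. So postcomposition gives an adjunction
\[
\catofcats(\vartopos)\fromto\catoftwocats(\vartopos)\colon(-)^{\simeq2}
\]
with unit and counit computed pointwise.

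Applying this in every slice $\vartopos_{/A}$, and using that restriction $p^*$ along $p\colon B\to A$ is given by precomposition with $\vartopos_{/B}\to\vartopos_{/A}$, the adjunctions commute with $p^*$ strictly. The squares expressing compatibility of the right adjoint with $p^*$ are therefore adjointable. By the criterion for internal adjunctions recalled in the section on internal unfurling, this gives an internal adjunction between $\internalcatofcats$ and $\internalcatoftwocats$. In every context $A$ and at every $U\in\vartopos_{/A}$, the counit is the inclusion $\varinttwocat{C}(U)^{\simeq2}\into\varinttwocat{C}(U)$, which is the asserted embedding.

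The main obstacle I expect is the first step. One must make sure the core-induced change of enrichment really is right adjoint to the inclusion in $\catoftwocats$, in a model-independent way. I would first try to argue via the description of $\catoftwocats$ as complete Segal objects in $\catofcats$. There the inclusion is induced by $\catofanima\into\catofcats$ levelwise, and its right adjoint is levelwise $(-)^{\simeq}$. This preserves the Segal and completeness conditions because $(-)^{\simeq}$ preserves limits and equivalences.
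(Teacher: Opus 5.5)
Your proposal is correct and takes essentially the same route as the paper. The paper's proof is the one-line observation that $\catofcats\into\catoftwocats$ is a morphism in $\prl$, so the external adjunction transports to the internal setting by applying $-\otimes\universe$; your contextwise postcomposition on $\Fun^R(\vartopos^{\op},-)$, your Beck--Chevalley check under change of context, and your pointwise identification of the counit spell out that step. One small imprecision that does not affect the argument: the left adjoint of $\catofcats\into\catoftwocats$ is groupoidification of mapping categories \emph{followed by univalent completion}, since inverting $2$-morphisms can create new equivalences between objects; all you actually use is that the inclusion preserves limits, and that is true.
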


\begin{proof}
    This follows since the inclusion $\catofcats\into \catoftwocats$ is a morphism in $\prl$ and so it induces an adjunction after applying $-\otimes \universe$.
\end{proof}

\subsubsection*{The Gray tensor product and lax natural transformations}

The $\infty$-category $\catoftwocats$ is an associative algebra in $\prl$ with the underlying monoidal structure called the \emph{Gray} tensor product, which we denote \[\gray:\catoftwocats\otimes \catoftwocats\to \catoftwocats.\]
This tensor product was defined and studied in \cite{campion2023modelindependentgraytensorproduct}.

The defining adjunction of the Gray tensor product expresses its relation to the $(\infty,2)$-category of \emph{lax} natural transformations. Let $\vartwocat{I},\vartwocat{J},\vartwocat{C}$ be three $(\infty,2)$-categories. Then there is a canonical adjunction
\[\map_{\catoftwocats}(\vartwocat{I}\gray\vartwocat{J},\vartwocat{C})\simeq \map_{\catoftwocats}(\vartwocat{I},\funtwocat^{\lax}(\vartwocat{J},\vartwocat{C})).\]
The $(\infty,2)$-category $\funtwocat^{\lax}(\vartwocat{J},\vartwocat{C})$ is the $(\infty,2)$-category of $2$-functors $\vartwocat{J}\to \vartwocat{C}$, lax natural transformations and modifications between them. 

\begin{proposition}\label{prop:gray_module_structures}
    The $\infty$-category $\catoftwocats(\vartopos)$ carries a left and a right module structure over $\catoftwocats$, equipped with the monoidal structure given by the Gray tensor product $\gray$. Thus $\catoftwocats(\vartopos)$ is tensored and cotensored under $\catoftwocats$ in two different ways.
\end{proposition}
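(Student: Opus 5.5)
The plan is to obtain both module structures by transporting the Gray tensor product through the Lurie tensor product, using the definition $\catoftwocats(\vartopos)=\vartopos\otimes\catoftwocats$. The Gray tensor product makes $\catoftwocats$ an associative algebra in $\prl$ \cite{campion2023modelindependentgraytensorproduct}, so $\catoftwocats$ is a left module over itself via $\vartwocat{I}\gray-$ and a right module via $-\gray\vartwocat{J}$. Equivalently, it is a bimodule over the algebra $(\catoftwocats,\gray)$. The Gray product is not symmetric, so these two actions genuinely differ.

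First, tensor this bimodule with $\vartopos$ in $\prl$. Since $\vartopos\otimes-\colon\prl\to\prl$ is a $\prl$-linear functor, it carries bimodules over $(\catoftwocats,\gray)$ to bimodules. This gives left and right actions
\[
\catoftwocats\otimes\catoftwocats(\vartopos)\to\catoftwocats(\vartopos)
\]
and
\[
\catoftwocats(\vartopos)\otimes\catoftwocats\to\catoftwocats(\vartopos),
\]
both morphisms in $\prl$.

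Concretely, under $\catoftwocats(\vartopos)\simeq\Fun^R(\vartopos^{\op},\catoftwocats)$ the actions should be computed pointwise: $\vartwocat{I}\gray\varinttwocat{C}$ should be the sheaf $A\mapsto\vartwocat{I}\gray\varinttwocat{C}(A)$. I would check this through the identification $\vartopos\otimes\catoftwocats\simeq\Fun^R(\vartopos^{\op},\catoftwocats)$, in which $\catoftwocats$ acts on the target variable, together with the fact that each $\vartwocat{I}\gray-$ is a left adjoint. The pointwise formula is useful for the later arguments, but the abstract statement does not need it.

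Next, tensoredness and cotensoredness follow formally. Each action functor is a morphism in $\prl$, so it preserves colimits separately in each variable. The presentable adjoint functor theorem then gives right adjoints in each variable:
\begin{itemize}
\item a right adjoint to $\vartwocat{I}\gray-$, which is the cotensor;
\item a right adjoint to $-\gray\varinttwocat{C}$, which is an enriched mapping object in $\catoftwocats$.
\end{itemize}
The same holds for the right action. For instance, the cotensor of $\varinttwocat{C}$ by $\vartwocat{J}$ for the right action is expected to be $A\mapsto\funtwocat^{\lax}(\vartwocat{J},\varinttwocat{C}(A))$, and the other action gives the oplax variant.

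The only point needing care is the claim that $\vartopos\otimes-$ preserves bimodule structures over a non-symmetric algebra. It holds because $\prl$ is symmetric monoidal and the relevant action is tensoring with a fixed object, so no commutativity of $\gray$ is required. I expect this step to be routine rather than a genuine obstacle.
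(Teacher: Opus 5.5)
Your argument is essentially the paper's, which deduces the proposition immediately from $\catoftwocats(\vartopos)\simeq\catoftwocats\otimes\vartopos$ in $\prl$; the bimodule transport and the adjoint-functor-theorem step for the cotensors are fine. One correction to your side remark: the tensor $\vartwocat{I}\gray\varinttwocat{C}$ is the \emph{sheafification} of $A\mapsto\vartwocat{I}\gray\varinttwocat{C}(A)$, not that presheaf itself, because $\vartwocat{I}\gray-$ does not preserve limits (it sends $\varinttwocat{C}(\emptyset)\simeq *$ to $\vartwocat{I}$); only the cotensors $\inttwofun^{\lax}(\vartwocat{I},-)$ and $\inttwofun^{\oplax}(\vartwocat{I},-)$ are computed contextwise.
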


\begin{proof}
    This follows immediately from the isomorphism $\catoftwocats(\vartopos)\simeq \catoftwocats\otimes \vartopos$ in $\prl$.
\end{proof}

Let us spell out this structure explicitly:

We have the action functors
\[\gray:\catoftwocats\times \catoftwocats(\vartopos)\to \catoftwocats(\vartopos)\]
and 
\[\gray:\catoftwocats(\vartopos)\times \catoftwocats\to \catoftwocats(\vartopos).\]
Let $\varinttwocat{C}\in \catoftwocats(\vartopos)$, and let $\vartwocat{I}\in \catoftwocats$. Then \[\varinttwocat{C}:\vartopos^{\op}\to \catoftwocats\] is a sheaf on $\vartopos$ valued in $(\infty,2)$-categories. Then we get that $\vartwocat{I}\gray\varinttwocat{C}$ and $\varinttwocat{C}\gray \vartwocat{I}$ are the sheafifications of the presheaves given by the compositions
\[
\vartopos^{\op}\xto{\varinttwocat{C}} \catoftwocats\xto{\vartwocat{I}\gray - } \catoftwocats
\]
and 
\[
\vartopos^{\op}\xto{\varinttwocat{C}} \catoftwocats\xto{-\gray \vartwocat{I} } \catoftwocats
\]
respectively.

The cotensoring structures are given by the right adjoints. Let $\vartwocat{I}\in \catoftwocats$. Then we get two adjunctions:
\[
-\gray \vartwocat{I}: \catoftwocats(\vartopos) \fromto \catoftwocats(\vartopos): \inttwofun^{\lax}(\vartwocat{I}, - )
\]
and 
\[
\vartwocat{I}\gray - : \catoftwocats(\vartopos) \fromto \catoftwocats(\vartopos): \inttwofun^{\oplax}(\vartwocat{I}, - ).
\]

The right adjoints $\inttwofun^{\lax}(\vartwocat{I}, - )$ and $\inttwofun^{\oplax}(\vartwocat{I}, - )$ are even easier to define. Let $\varinttwocat{C}\in \catoftwocats(\vartopos)$. Then $\inttwofun^{\lax}(\vartwocat{I}, \varinttwocat{C} )$  is the composition (without the need for sheafification)
\[\vartopos^{\op}\xto{\varinttwocat{C}} \catoftwocats\xto{\funtwocat^{\lax}(\vartwocat{I}, - )} \catoftwocats \]
 while 
 $\inttwofun^{\oplax}(\vartwocat{I}, \varinttwocat{C} )$  is the composition (without the need for sheafification)
\[\vartopos^{\op}\xto{\varinttwocat{C}} \catoftwocats\xto{\funtwocat^{\oplax}(\vartwocat{I}, - )} \catoftwocats. \]

In other words, we have a canonical equivalence of $(\infty,2)$-categories

\[\inttwofun^{\oplax}(\vartwocat{I}, \varinttwocat{C} )(A) = \funtwocat^{\oplax}(\vartwocat{I}, \varinttwocat{C}(A)). \]

\begin{proposition}\label{prop:internal_gray_tensor_product}
    $\catoftwocats(\vartopos)$ carries a further structure of an associative algebra in $\prl$.
\end{proposition}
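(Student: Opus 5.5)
The plan is to transport the monoidal structure of $\catoftwocats$ along the equivalence $\catoftwocats(\vartopos)\simeq\vartopos\otimes\catoftwocats$ in $\prl$, as was done for the module structures in \Cref{prop:gray_module_structures}. Recall that $\prl$ is symmetric monoidal under the Lurie tensor product. The first step is to observe that $\vartopos$, with its cartesian product, is a commutative algebra in $\prl$: the product preserves colimits separately in each variable because colimits in an $\infty$-topos are universal. Hence $\vartopos$ is in particular an associative algebra. By the discussion preceding \Cref{prop:gray_module_structures}, $\catoftwocats$ with the Gray tensor product $\gray$ is an associative algebra in $\prl$.

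The second step is to form the tensor product of these two algebras in $\prl$. Since $\prl$ is symmetric monoidal, $\operatorname{Alg}_{\mathbb{E}_1}(\prl)$ inherits a symmetric monoidal structure, and the forgetful functor $\operatorname{Alg}_{\mathbb{E}_1}(\prl)\to\prl$ is symmetric monoidal. Restricting $\vartopos$ along $\mathbb{E}_1\to\mathbb{E}_\infty$ and tensoring with $(\catoftwocats,\gray)$ produces an associative algebra $\vartopos\otimes\catoftwocats$. Its underlying object is $\catoftwocats(\vartopos)$ by \Cref{def:internal_two_categories}, so this yields the claimed associative algebra structure.

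Finally, I would identify the structure concretely to justify the name ``internal Gray tensor product.'' For $\varinttwocat{C},\varinttwocat{D}\in\catoftwocats(\vartopos)$, the product should be the sheafification of the presheaf
\[
A\mapsto\varinttwocat{C}(A)\gray\varinttwocat{D}(A).
\]
One checks this on generators $\yo(A)\otimes\vartwocat{I}$, where the product is $\yo(A\times B)\otimes(\vartwocat{I}\gray\vartwocat{J})$, and extends by colimits in each variable. The same description shows compatibility with the actions of \Cref{prop:gray_module_structures} via the unit map $\catoftwocats\to\vartopos\otimes\catoftwocats$, $\vartwocat{I}\mapsto *\otimes\vartwocat{I}$.

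The only real obstacle is this last identification, namely that the abstract tensor product is computed by levelwise Gray product followed by sheafification. The existence claim itself is formal. To handle the identification, I would use the fact that $\catoftwocats(\vartopos)$ is a left exact localization of $\presh(\vartopos)\otimes\catoftwocats\simeq\Fun(\vartopos^{\op},\catoftwocats)$, which carries the pointwise algebra structure. I would then check that the sheafification is compatible with it, i.e.\ that local equivalences are stable under tensoring. This follows because $-\gray\vartwocat{I}$ preserves colimits and sheafification is generated by Čech covers.
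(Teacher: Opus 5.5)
Your argument is correct and is essentially the paper's: the paper applies the symmetric monoidal base change $-\otimes\vartopos\colon\prl\to\operatorname{Mod}_{\vartopos}(\prl)$ to $(\catoftwocats,\gray)$ and then the lax symmetric monoidal forgetful functor back to $\prl$. This produces exactly your tensor product of the algebras $\vartopos$ and $\catoftwocats$, and it additionally records that the structure is $\vartopos$-linear. Your last step, identifying the product with the sheafified levelwise Gray product, is not needed for the statement (the paper deliberately does not give formulas), and if you keep it, deduce the compatibility with sheafification formally from the left exactness of $\presh(S)\to\vartopos$, which makes that localization a map of commutative algebras in $\prl$, rather than from the cocontinuity of $-\gray\vartwocat{I}$ alone.
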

\begin{proof}
    This follows since the functor \[-\otimes \vartopos: \prl\to \operatorname{Mod}_{\vartopos}(\prl)\]
    is symmetric monoidal, and its right adjoint, which is the forgetful functor $\operatorname{Mod}_{\vartopos}(\prl)\to \prl$, is lax symmetric monoidal.
\end{proof}

We will denote the  monoidal structure by $\gray$ and the internal homs by $\inttwofun^{\lax},\inttwofun^{\oplax}$ as before.

The formulas for the Gray tensor products of $\vartopos$-$2$-categories are more complicated. We will not really need much of this more general structure, so we will not explicate them further.

\begin{remark}
    Let $\vartwocat{I}$ be some $(\infty,2)$-category. Then we can view $\vartwocat{I}$ as the constant sheaf on $\vartopos$ with value $\vartwocat{I}$. In this case we have that the two meanings of \[\inttwofun^{\lax}(\vartwocat{I},-),\] as the internal hom in $\catoftwocats(\vartopos)$ and the one given by the structure of $\catoftwocats(\vartopos)$ as cotensored over $\catoftwocats$ coincide. 
\end{remark}

\subsubsection*{Lax natural transformations and oplax arrows}

Of particular interest to us in this work is the case where $\vartwocat{I} = \Delta^1$. Let $\varinttwocat{C}$ be a $\vartopos$-$2$-category, and let $\varintcat{T}$ be some $\vartopos$-category.

Let us explicate the objects and morphisms in $\inttwofun^{\lax}(\varintcat{T},\varinttwocat{C})$. The objects in the context $A$ are functors of $\vartopos$-$2$-categories  \[F:A\times\varintcat{T}\to \varinttwocat{C}\] which are the same as functors 
\[F:A\times\varintcat{T}\to \varinttwocat{C}^{\simeq2}.\]

Morphisms $\alpha:F\to G$ in $\inttwofun^{\lax}(\varintcat{T},\varinttwocat{C})$  between $F,G:A\times\varintcat{T}\to \varinttwocat{C}$ are given by the following data. For any object $x:B\to \varintcat{T}$, in the context $B$ where $B\in \vartopos_{/A}$, we have a $1$-morphism $\alpha_x:F(x)\to G(x)$ in $\varinttwocat{C}$. For any morphism $f:x\to y$ in the context $B$, we have a $2$-morphism in $\varinttwocat{C}(B)$:
\[\begin{tikzcd}
	{F(x)} & {G(x)} \\
	{F(y)} & {G(y)}
	\arrow["{\alpha_x}", from=1-1, to=1-2]
	\arrow["{F(f)}"', from=1-1, to=2-1]
	\arrow["{\alpha_f}"', Rightarrow, nfold, from=1-2, to=2-1]
	\arrow["{G(f)}", from=1-2, to=2-2]
	\arrow["{\alpha_y}"', from=2-1, to=2-2]
\end{tikzcd}\]
These are accompanied by higher coherence data for compositions in $\varintcat{T}$.

By the adjunctions described above, we have canonical equivalences of $\vartopos$-$2$-categories: 
\begin{equation}\label{eq:lax_transformations_as_oplax_arrows}
\begin{aligned}
    \inttwofun(\Delta^1,\inttwofun^{\lax}(\varintcat{T},\varinttwocat{C}))
    &\simeq \inttwofun(\Delta^1\gray \varintcat{T},\varinttwocat{C})\\
    &\simeq \inttwofun(\varintcat{T},\inttwofun^{\oplax}(\Delta^1, \varinttwocat{C})).
\end{aligned}
\end{equation}

\subsubsection*{Cocartesian fibrations}

\begin{definition}
    Let $p:\vartwocat{C}\to \vartwocat{D}$ be a functor of $(\infty,2)$-categories. Let $f:x\to y$ be a $1$-morphism in $\vartwocat{C}$. We say that $f$ is $p$-cocartesian if the following square is a pullback of $\infty$-categories.
    \[
        \begin{tikzcd}
        \Fun_{\vartwocat C}(y,z)
          \arrow[r, "{\circ f}"]
          \arrow[d, "p"']
        &
        \Fun_{\vartwocat C}(x,z)
          \arrow[d, "p"]
        \\
        \Fun_{\vartwocat D}(p(y),p(z))
          \arrow[r, "{\circ p(f)}"']
        &
        \Fun_{\vartwocat D}(p(x),p(z)).
        \end{tikzcd}
    \]

    In this case we say that $f$ is a cocartesian lift of $p(f):p(x)\to p(y)$ at $x$.
\end{definition}

\begin{definition}\label{def:one_cocartesian_fibration}
    Let $p:\vartwocat{C}\to \vartwocat{D}$ be a functor of $(\infty,2)$-categories. We will say that $p$ is a $1$-cocartesian fibration if the following conditions hold:
    \begin{enumerate}
        \item $p$ admits cocartesian lifts.
        \item For any $x,y\in \vartwocat{C}$, the functor \[\Fun_{\vartwocat{C}}(x,y)\to \Fun_{\vartwocat{D}}(p(x),p(y))\] is a right fibration.
    \end{enumerate}

    A morphism of $1$-cocartesian fibrations from $p:\vartwocat{C}\to\vartwocat{D}$ to $q:\vartwocat{X}\to\vartwocat{Y}$ is a commutative square in $\catoftwocats$:
    \[\begin{tikzcd}
        \vartwocat{C}\ar[r]\ar[d, "p"]& \vartwocat{X}\ar[d, "q"]\\
        \vartwocat{D}\ar[r] & \vartwocat{Y}
    \end{tikzcd}\]
    such that the upper horizontal functor sends $p$-cocartesian edges to $q$-cocartesian edges. 
\end{definition}

We denote by $\twococart_{\vartwocat{C}}$ the $(\infty,2)$-category of $1$-cocartesian fibrations over $\vartwocat{C}$ with morphisms of cocartesian fibrations over $\vartwocat{C}$ as morphisms. We denote by $\twococart$ the $(\infty,2)$-category of cocartesian fibrations and morphisms of cocartesian fibrations.

We have a corresponding (un)straightening equivalence.

\begin{theorem}(\cite[Theorem 3.2]{CLRUniversalityOfUnferling})
    Let $\vartwocat{C}\in \catoftwocats$. Then there is an equivalence of $(\infty,2)$-categories 
    \[\twococart_{\vartwocat{C}}\simeq \funtwocat(\vartwocat{C},\twocatofcats).\]
\end{theorem}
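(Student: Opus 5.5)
We will construct the straightening equivalence through a universal $1$-cocartesian fibration and then compare both sides as functors of $\vartwocat{C}$. First, let $\twocatofcats_{*/}^{\oplax}\to\twocatofcats$ be the oplax under-category of the point, i.e.\ the forgetful functor from pointed $\infty$-categories with oplax morphisms (functors $F$ together with a morphism $F(x)\to y$). We check that it is a $1$-cocartesian fibration in the sense of \Cref{def:one_cocartesian_fibration}.
\begin{itemize}
  \item A cocartesian lift of $F\colon\varcat{X}\to\varcat{Y}$ at $x$ is $(F,\id_{F(x)})$.
  \item On mapping $\infty$-categories, the functor $\Fun((\varcat{X},x),(\varcat{Y},y))\to\Fun(\varcat{X},\varcat{Y})$ has fibre $\operatorname{Map}_{\varcat{Y}}(F(x),y)$ over $F$. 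This fibre is contravariantly functorial in $2$-morphisms $F\Rightarrow F'$, which is exactly the right-fibration condition.
\end{itemize}
Pulling back along $F\colon\vartwocat{C}\to\twocatofcats$ then gives a $2$-functor
\[
\operatorname{Un}\colon\funtwocat(\vartwocat{C},\twocatofcats)\to\twococart_{\vartwocat{C}}.
\]
Here $2$-morphisms act by pasting, since pullback of fibrations is functorial in the $(\infty,2)$-category of functors. Cocartesian edges and right fibrations are both stable under pullback, so $\operatorname{Un}$ is well defined.

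Second, we show $\operatorname{Un}$ is an equivalence on underlying $\infty$-groupoids, naturally in $\vartwocat{C}$. Both $\vartwocat{C}\mapsto\operatorname{Map}(\vartwocat{C},\twocatofcats)$ and $\vartwocat{C}\mapsto\twococart_{\vartwocat{C}}^{\simeq}$ are functors $\catoftwocats^{\op}\to\catofanima$. The first sends colimits to limits tautologically. The second does so by descent for $1$-cocartesian fibrations, which has two parts:
\begin{itemize}
  \item A compatible family of $1$-cocartesian fibrations over a colimit diagram $\vartwocat{C}_k$ glues to one over $\operatorname{colim}\vartwocat{C}_k$.
  \item Being $1$-cocartesian can be checked after pullback to each $\vartwocat{C}_k$.
\end{itemize}
Since $\catoftwocats$ is generated under colimits by the cells $[0]$, $[1]$ and the walking $2$-cell $\mathbf{O}^{\lax}$ (the objects of $\Theta_2$ suffice), it then remains to check the equivalence on these generators.
\begin{itemize}
  \item Over $[0]$, $1$-cocartesian fibrations are $(\infty,2)$-categories whose hom-categories are groupoids, i.e.\ $\infty$-categories, so both sides are $\catofcats^{\simeq}$.
  \item Over $[1]$ it is the classical (un)straightening of a single functor: cocartesian fibrations over $\Delta^1$ are correspondences given by a functor.
  \item Over $\mathbf{O}^{\lax}$, the right-fibration condition on mapping categories encodes exactly a natural transformation between the two induced functors $\varcat{X}_0\to\varcat{X}_1$.
\end{itemize}

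Third, we upgrade from groupoids to the full $(\infty,2)$-categories. Because $\operatorname{Un}$ is essentially surjective by the second step, it suffices to identify mapping $\infty$-categories. We do this by replacing $\vartwocat{C}$ with $\vartwocat{C}\times\Delta^n$ for the $\infty$-categorical level and $\vartwocat{C}\times\mathbf{O}^{\lax}$ for $2$-morphisms. Morphisms in $\twococart_{\vartwocat{C}}$ correspond to $1$-cocartesian fibrations over $\vartwocat{C}\times\Delta^1$ that are cartesian-trivial in the $\Delta^1$ direction, and likewise on the functor side. Both sides are compatible with these cotensors, so the equivalence on cores for all such base changes yields the $2$-categorical equivalence.

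The main obstacle is the descent statement in the second step, namely that $\twococart_{(-)}^{\simeq}$ turns colimits of $(\infty,2)$-categories into limits. Colimits in $\catoftwocats$ are not computed levelwise on mapping categories. So we would first try to prove it only for the specific cellular colimits (pushouts along boundary inclusions of $\Theta_2$-cells, and filtered colimits), where mapping categories of the pushout admit explicit descriptions. If that proves unwieldy, the fallback is to model everything in scaled or marked simplicial sets and transport Lurie's scaled straightening, at the cost of model-dependence.
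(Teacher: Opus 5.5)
The paper gives no argument of its own for this statement: it quotes it from \cite[Theorem 3.2]{CLRUniversalityOfUnferling}. So it rests on the existing, ultimately model-dependent, straightening theory for fibrations of $(\infty,2)$-categories. Compare how the paper later obtains its lax variant by restricting \cite[Theorem A]{AbellanGagnaHaugseng2025}.

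That is your stated fallback. Your check that the oplax slice $\twocatofcats_{*//}\to\twocatofcats$ is $1$-cocartesian is fine, and so are the generator computations. The model-independent plan in front of the fallback, however, has a genuine gap exactly where you flag it. Once the generators are handled, the claim that $\vartwocat{C}\mapsto\twococart_{\vartwocat{C}}^{\simeq}$ turns colimits in $\catoftwocats$ into limits is, by Yoneda, \emph{equivalent} to the core-level theorem, i.e.\ to representability of $\twococart^{\simeq}_{(-)}$ by $\twocatofcats$. So step two reduces nothing; it is the theorem. Neither half of it is formal.
\begin{itemize}
\item \emph{Restriction.} You need colimits in $\catoftwocats$ to be stable under pullback along a $1$-cocartesian fibration. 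This is an exponentiability statement that fails for general functors: already $[2]\simeq[1]\sqcup_{[0]}[1]$ is not preserved by pullback along $\{0\to 2\}\subset[2]$.
\item \emph{Gluing.} You must show that $\operatorname{colim}X_k\to\operatorname{colim}\vartwocat{C}_k$ is $1$-cocartesian with the $X_k$ as its restrictions. That means producing cocartesian lifts of composites of arrows from different pieces, and identifying the hom-categories of a colimit as right fibrations. This is exactly the information colimits in $\catoftwocats$ do not give you.
\end{itemize}
Restricting to $\Theta_2$-cell presentations does not rescue this. Those colimits include the Segal relations such as $[2]\simeq[1]\sqcup_{[0]}[1]$. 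Descent along these is precisely the coherence of composing cocartesian transports, which is the content of straightening.

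Steps one and three share a second, related gap. Pullback of the universal fibration $E=\twocatofcats_{*//}$ along $F$ is functorial only in $F\in\operatorname{Map}(\vartwocat{C},\twocatofcats)$, which is a space. It gives no functor $F^*E\to G^*E$ for a non-invertible natural transformation $\alpha\colon F\Rightarrow G$. Such an $\alpha$ is a functor $\vartwocat{C}\times\Delta^1\to\twocatofcats$, and pulling back along it yields a fibration over $\vartwocat{C}\times\Delta^1$. Extracting a map over $\vartwocat{C}$ from that fibration needs cocartesian transport in the $\Delta^1$-direction, parametrized over $\vartwocat{C}$. Similarly, modifications need transport over $\vartwocat{C}\times\mathbf{O}^{\lax}$.

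This is the same as your step-three assertion $\operatorname{Map}(K,\twococart_{\vartwocat{C}})\simeq\twococart^{\simeq}_{\vartwocat{C}\times K}$ for $K=\Delta^n,\mathbf{O}^{\lax}$. That equivalence is tautological on the functor side. On the fibration side it requires three things: constructing transport functors, constructing a collage or mapping-cylinder inverse, and checking that they are mutually inverse. This is another instance of straightening, not a formality. Also, no extra ``cartesian-trivial'' condition belongs there: every $1$-cocartesian fibration over $\vartwocat{C}\times\Delta^1$ should correspond to a morphism of $\twococart_{\vartwocat{C}}$.

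Until both gaps are filled you have neither the $2$-functor $\operatorname{Un}$ nor the equivalence on cores. The fallback to scaled straightening is what would actually carry the proof.
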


\begin{example}\label{example:oplax_slice}
    Let $\vartwocat{C}$ be an $(\infty,2)$-category. Let $x\in \vartwocat{C}$ be an object. Let $\vartwocat{C}_{x//}$ denote the following pullback:
    \[\begin{tikzcd}
        \vartwocat{C}_{x//}\ar[d]\ar[r]\pullbackdr
        &\funtwocat^{\oplax}(\Delta^1,\vartwocat{C})\ar[d,"s"]\\
        * \ar[r] & \vartwocat{C}.
    \end{tikzcd}\]
    Then the composition 
    \[\vartwocat{C}_{x//}\to \funtwocat^{\oplax}(\Delta^1,\vartwocat{C})\xto{t} \vartwocat{C}\]
    is the cocartesian fibration classifying the $2$-functor 
    \[\Fun_{\vartwocat{C}}(x,-):\vartwocat{C}\to \twocatofcats.\]
\end{example}

\begin{definition}\label{def:internal_one_cocartesian_fibration}
    We say that a functor of $\vartopos$-$2$-categories $p:\varinttwocat{C}\to \varinttwocat{D}$ is a $1$-cocartesian fibration if in every context $A\in \vartopos$, the $2$-functor 
    \[p(A):\varinttwocat{C}(A)\to \varinttwocat{D}(A)\]
    is a $1$-cocartesian fibration and for every change of context $s:B\to A$ the square 
    \[\begin{tikzcd}
        \varinttwocat{C}(A)\ar[r,"s^*"]\ar[d,"p"] & \varinttwocat{C}(B)\ar[d,"p"]\\
        \varinttwocat{D}(A)\ar[r,"s^*"] &\varinttwocat{D}(B).
    \end{tikzcd}\]
    is a morphism of $1$-cocartesian fibrations of $(\infty,2)$-categories. 
\end{definition}

\begin{example}
    Since \[\inttwofun^{\oplax}(\Delta^1,\varinttwocat{C})(A)\simeq \funtwocat^{\oplax}(\Delta^1,\varinttwocat{C}(A))\] is computed contextwise, and pullbacks of sheaves agree with pullbacks of presheaves, we get that if $\varinttwocat{C}$ is a $\vartopos$-$2$-category and $x:A\to \varinttwocat{C}$ is an object defined in the context $A$, we may define a $\vartopos_{/A}$-$2$-category $\varinttwocat{C}_{x//}$ by the pullback in $\catoftwocats(\vartopos)$:
    \[
    \begin{tikzcd}
        \varinttwocat{C}_{x//}\ar[d]\ar[r]\pullbackdr
        &\inttwofun^{\oplax}(\Delta^1,\varinttwocat{C})\ar[d,"s"]\\
        *\ar[r,"x"] & \varinttwocat{C}.
    \end{tikzcd}
    \]
    Then the composition $\varinttwocat{C}_{x//}\to \inttwofun^{\oplax}(\Delta^1,\varinttwocat{C}) \xto{t} \varinttwocat{C}$ is a $1$-cocartesian fibration of $\vartopos$-$2$-categories.
\end{example}

\begin{proposition}
    Let the following be a pullback in $\catoftwocats(\vartopos)$. If $p$ is a $1$-cocartesian fibration of $\vartopos$-$2$-categories, then so is $p'$.
    \[\begin{tikzcd}
        \varinttwocat{X}\ar[r]\ar[d,"p'"']\pullbackdr
        &\varinttwocat{Y}\ar[d,"p"]\\
        \varinttwocat{Z}\ar[r] &\varinttwocat{W}.
    \end{tikzcd}\]
\end{proposition}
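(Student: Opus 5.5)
The plan is to reduce the statement to the corresponding fact for $(\infty,2)$-categories, one context at a time, using \Cref{def:internal_one_cocartesian_fibration}. The key input is that pullbacks in $\catoftwocats(\vartopos)\simeq\Fun^R(\vartopos^{\op},\catoftwocats)$ are computed contextwise. The inclusion of sheaves into presheaves is a right adjoint, so it preserves limits. Hence for every $A\in\vartopos$ the square
\[
\begin{tikzcd}
    \varinttwocat{X}(A)\ar[r]\ar[d,"p'(A)"'] & \varinttwocat{Y}(A)\ar[d,"p(A)"]\\
    \varinttwocat{Z}(A)\ar[r] & \varinttwocat{W}(A)
\end{tikzcd}
\]
is a pullback in $\catoftwocats$. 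For every $s:B\to A$, the restriction functors assemble into a map of such pullback squares.

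The first step is the non-internal claim: $1$-cocartesian fibrations of $(\infty,2)$-categories are stable under pullback. For mapping categories, the functor $\Fun_{\varinttwocat{X}(A)}(x,y)\to\Fun_{\varinttwocat{Z}(A)}(p'x,p'y)$ is the base change of $\Fun_{\varinttwocat{Y}(A)}(gx,gy)\to\Fun_{\varinttwocat{W}(A)}(pgx,pgy)$ along a functor, where $g$ is the top horizontal map. This holds because mapping categories of a pullback of $(\infty,2)$-categories are pullbacks of mapping categories. Right fibrations are stable under base change, which gives condition (2) of \Cref{def:one_cocartesian_fibration}.

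For cocartesian lifts, take $x\in\varinttwocat{X}(A)$ and a $1$-morphism $h:p'x\to z$. Choose a $p$-cocartesian lift $\tilde h:gx\to y$ of the image of $h$ in $\varinttwocat{W}(A)$. Because objects and $1$-morphisms of the pullback are compatible pairs, the pair $(h,\tilde h)$ defines a $1$-morphism $f$ of $\varinttwocat{X}(A)$. To see that $f$ is $p'$-cocartesian, I would paste pullback squares. The square of mapping categories for $f$ and $p'$ is obtained by pulling back the cocartesian square for $\tilde h$ and $p$ along the map from the corresponding square in $\varinttwocat{Z}(A)$, with all mapping categories of $\varinttwocat{X}(A)$ computed as the pullbacks above. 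A cube whose top and bottom faces are given by base change, and whose back face is a pullback, has a pullback front face by the pasting lemma. The same computation shows that a $1$-morphism of $\varinttwocat{X}(A)$ whose image in $\varinttwocat{Y}(A)$ is $p$-cocartesian is itself $p'$-cocartesian. This step is the main obstacle, since it is where the pullback description of the mapping categories has to be kept coherent.

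The second step is compatibility with change of context. For $s:B\to A$, we must check that $s^*:\varinttwocat{X}(A)\to\varinttwocat{X}(B)$ sends $p'$-cocartesian edges to $p'$-cocartesian edges. Every $p'$-cocartesian edge is equivalent to one of the constructed form, since cocartesian lifts are unique up to equivalence. The image of such an edge in $\varinttwocat{Y}(A)$ is $p$-cocartesian. By hypothesis, $s^*$ preserves $p$-cocartesian edges, so its image in $\varinttwocat{Y}(B)$ is $p$-cocartesian. The detection criterion from the first step, applied in context $B$, then shows that the edge is $p'$-cocartesian in $\varinttwocat{X}(B)$. This completes both conditions of \Cref{def:internal_one_cocartesian_fibration}.
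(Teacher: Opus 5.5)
Your proof is correct and follows the same route as the paper, whose entire argument is that pullbacks in $\catoftwocats(\vartopos)$ are computed contextwise, as for presheaves. You additionally spell out two things the paper leaves implicit: that $1$-cocartesian fibrations of $(\infty,2)$-categories are stable under pullback, and that cocartesian edges are preserved under change of context.
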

\begin{proof}
    This follows since pullbacks of sheaves are computed as pullbacks of presheaves.
\end{proof}

\begin{lemma}
    Let $p:\varinttwocat{Y}\to \varinttwocat{W}$ be a $1$-cocartesian fibration of $\vartopos$-$2$-categories. If $\varinttwocat{W}$ is a $\vartopos$-category, i.e. all its $2$-arrows are isomorphisms, then so is $\varinttwocat{Y}$. Moreover, $p$ is then a cocartesian fibration of $\vartopos$-categories.
\end{lemma}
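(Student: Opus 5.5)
The whole statement reduces to a contextwise check. Both being a $\vartopos$-category and being a cocartesian fibration of $\vartopos$-categories are conditions on the sections in each context $A\in\vartopos$. By \Cref{def:internal_one_cocartesian_fibration}, $p(A)\colon\varinttwocat{Y}(A)\to\varinttwocat{W}(A)$ is a $1$-cocartesian fibration of $(\infty,2)$-categories, and restriction along each $s\colon B\to A$ preserves cocartesian edges. Hence it suffices to prove the following claim. Let $p\colon\vartwocat{C}\to\vartwocat{D}$ be a $1$-cocartesian fibration of $(\infty,2)$-categories and suppose $\vartwocat{D}$ is an $\infty$-category. Then $\vartwocat{C}$ is an $\infty$-category and $p$ is a cocartesian fibration of $\infty$-categories. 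The compatibility with change of context then transfers verbatim.

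For the first part of the claim, fix $x,y\in\vartwocat{C}$. By condition (2) of \Cref{def:one_cocartesian_fibration}, the functor $\Fun_{\vartwocat{C}}(x,y)\to\Fun_{\vartwocat{D}}(p(x),p(y))$ is a right fibration. Its target is an $\infty$-groupoid, because all $2$-morphisms of $\vartwocat{D}$ are invertible. A right fibration over an $\infty$-groupoid is itself a left and right fibration, so its total space is an $\infty$-groupoid. Indeed, right fibrations are conservative, so every morphism of $\Fun_{\vartwocat{C}}(x,y)$ maps to an equivalence and is therefore an equivalence. Thus every mapping $\infty$-category of $\vartwocat{C}$ is a groupoid, so $\vartwocat{C}$ is an $(\infty,1)$-category, i.e. lies in the essential image of $\catofcats\into\catoftwocats$.

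For the second part, with both $\vartwocat{C}$ and $\vartwocat{D}$ now $(\infty,1)$-categories, the definition of a $p$-cocartesian $1$-morphism becomes a pullback of mapping spaces. This is precisely Lurie's mapping-space characterization of $p$-cocartesian edges. Condition (1) then says that $p$ admits cocartesian lifts in the usual sense, so $p$ is a cocartesian fibration. The morphisms of $1$-cocartesian fibrations along the change-of-context squares preserve cocartesian edges, which gives exactly the condition of \cite[Proposition 3.1.7]{MInternalStraightenning} for an internal cocartesian fibration.

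The only step requiring care is the conservativity of right fibrations in the first part. This is standard, since right fibrations reflect equivalences, so I expect no real obstacle. One should also note that being an $(\infty,1)$-category is detected by local groupoidality of mapping categories, via the fully faithful inclusion $\catofcats\into\catoftwocats$ with right adjoint $(-)^{\simeq2}$. Here the counit $\vartwocat{C}^{\simeq2}\to\vartwocat{C}$ is an equivalence precisely in this case, and this transfers to sheaves contextwise.
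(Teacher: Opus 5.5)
Your proposal is correct and follows the same route as the paper: its proof also argues that a right fibration over a groupoid is a groupoid, so $\varinttwocat{Y}$ is a $\vartopos$-category, and that condition (1) of the definition supplies the cocartesian lifts. You fill in details the paper leaves implicit: the contextwise reduction, conservativity of right fibrations, the mapping-space characterization of cocartesian edges, and compatibility with change of context via \cite[Proposition 3.1.7]{MInternalStraightenning}.
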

\begin{proof}
    $\varinttwocat{Y}$ is a $\vartopos$-category since a right fibration over a groupoid is a groupoid. 

    $p$ is a cocartesian fibration by the first condition in the definition of $1$-cocartesian fibrations of $\vartopos$-$2$-categories.
\end{proof}

\begin{lemma}
    Let $p:\varinttwocat{Y}\to \varinttwocat{W}$ be a $1$-cocartesian fibration of $\vartopos$-$2$-categories. Then the following square is a pullback square:
        \[\begin{tikzcd}
        \varinttwocat{Y}^{\simeq2}\ar[r]\ar[d,"p'"']\pullbackdr
        &\varinttwocat{Y}\ar[d,"p"]\\
        \varinttwocat{W}^{\simeq2}\ar[r] &\varinttwocat{W}.
    \end{tikzcd}\]
    In particular, $p'$ is a cocartesian fibration of $\vartopos$-categories.
\end{lemma}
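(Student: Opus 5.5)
The plan is to reduce everything to a statement about individual $(\infty,2)$-categories, one context at a time. In any $\infty$-topos, pullbacks, the core $(-)^{\simeq2}$ and the notion of $1$-cocartesian fibration are all computed contextwise. By \Cref{def:internal_one_cocartesian_fibration}, $p(A)$ is a $1$-cocartesian fibration for each $A\in\vartopos$. It therefore suffices to do two things:
\begin{itemize}
\item show that for a $1$-cocartesian fibration of $(\infty,2)$-categories $p\colon\vartwocat{Y}\to\vartwocat{W}$, the square with $\vartwocat{Y}^{\simeq2}\to\vartwocat{W}^{\simeq2}$ is a pullback in $\catoftwocats$;
\item check that this construction is compatible with the change-of-context functors.
\end{itemize}
The second point is automatic, since $s^*$ commutes with $(-)^{\simeq2}$ and the comparison is a map between sheaves that we test levelwise.

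For the pointwise statement, note that $\vartwocat{W}^{\simeq2}\into\vartwocat{W}$ is a locally full inclusion which is the identity on objects and on $1$-morphisms. Its pullback $\vartwocat{Y}\times_{\vartwocat{W}}\vartwocat{W}^{\simeq2}$ is then the locally full sub-$(\infty,2)$-category of $\vartwocat{Y}$ with the same objects and $1$-morphisms, whose mapping categories are $\Fun_{\vartwocat{Y}}(x,y)\times_{\Fun_{\vartwocat{W}}(px,py)}\Fun_{\vartwocat{W}}(px,py)^{\simeq}$. So the claim reduces to comparing two subcategories of $\Fun_{\vartwocat{Y}}(x,y)$: this pullback and the core $\Fun_{\vartwocat{Y}}(x,y)^{\simeq}$.

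By condition (2) of \Cref{def:one_cocartesian_fibration}, the functor $\Fun_{\vartwocat{Y}}(x,y)\to\Fun_{\vartwocat{W}}(px,py)$ is a right fibration, hence conservative. A right fibration restricted over a groupoid is a groupoid, and conservativity says a $2$-morphism is invertible if and only if its image is. Together these give
\[
\Fun_{\vartwocat{Y}}(x,y)\times_{\Fun_{\vartwocat{W}}(px,py)}\Fun_{\vartwocat{W}}(px,py)^{\simeq}\simeq\Fun_{\vartwocat{Y}}(x,y)^{\simeq},
\]
which is exactly the hom of $\vartwocat{Y}^{\simeq2}$. Since both sides are locally full and agree on objects, $1$-morphisms and mapping categories, the square is a pullback.

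For the final claim, pullbacks preserve $1$-cocartesian fibrations by the preceding proposition, so $p'$ is a $1$-cocartesian fibration. Its base $\vartwocat{W}^{\simeq2}$ is a $\vartopos$-category, so the preceding lemma shows $p'$ is a cocartesian fibration of $\vartopos$-categories.

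The main point needing care is the description of pullbacks along locally full inclusions in $\catoftwocats$, in particular that mapping categories of the pullback are pullbacks of mapping categories. I would justify this either through a complete Segal model, or by noting that the mapping-category functor commutes with limits in $\catoftwocats$ computed over a fixed pair of objects.
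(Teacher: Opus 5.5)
Your proposal is correct and follows the paper's argument. As in the paper's one-line proof, the key point is that the homwise right fibration is conservative, so a $2$-morphism of $\varinttwocat{Y}$ is invertible exactly when its image in $\varinttwocat{W}$ is, and the ``in particular'' follows from the preceding proposition and lemma exactly as you say.

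One terminological slip: $\vartwocat{W}^{\simeq2}\into\vartwocat{W}$ is not locally full in the paper's sense. On mapping categories it is the core inclusion, which is wide rather than fully faithful. This does not affect the argument, since the formula for the mapping categories of the pullback that you actually use is correct.
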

\begin{proof}
    We must show that a $2$-morphism $\alpha:f\to g:X\to Y$ in the context $A$ in $\varinttwocat{Y}$ is an isomorphism if and only if $p(\alpha)$ is an isomorphism. This follows from $p$ being a homwise right fibration.
\end{proof}

\subsubsection*{The universal cocartesian fibration}

\begin{theorem}\label{prop:univ_internal_2_cocart_fib}
    Let $\internaltwocatofcats_{*//}$ be the oplax slice for the terminal $\vartopos$-category $*\in \internaltwocatofcats(*)$. Then \[(\internaltwocatofcats_{*//})^{\simeq_2}\to \internaltwocatofcats^{\simeq_2}\simeq \internalcatofcats\] is the cocartesian fibration of $\vartopos$-categories classifying the identity functor $\id:\internalcatofcats\to \internalcatofcats$.
\end{theorem}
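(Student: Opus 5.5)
The plan is to reduce everything to a contextwise statement and then invoke the non-internal version of the same fact, namely \Cref{example:oplax_slice} for the $(\infty,2)$-category $\twocatofcats(\vartopos_{/A})$. By the explicit formula for the oplax arrow $\vartopos$-$2$-category, $\inttwofun^{\oplax}(\Delta^1,\varinttwocat{C})(A)\simeq\funtwocat^{\oplax}(\Delta^1,\varinttwocat{C}(A))$. Moreover, pullbacks of sheaves are computed as pullbacks of presheaves, so in every context $A$ we obtain
\[
(\internaltwocatofcats_{*//})(A)\simeq\twocatofcats(\vartopos_{/A})_{*//}.
\]
By \Cref{example:oplax_slice}, the target projection $\twocatofcats(\vartopos_{/A})_{*//}\to\twocatofcats(\vartopos_{/A})$ is the $1$-cocartesian fibration classifying the corepresented $2$-functor $\Fun_{\twocatofcats(\vartopos_{/A})}(*,-)$. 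This functor sends a $\vartopos_{/A}$-category $\varintcat{C}$ to $\Fun(*,\varintcat{C})\simeq\varintcat{C}(A)$, i.e.\ to global sections. Combined with the earlier example exhibiting the internal oplax slice as a $1$-cocartesian fibration of $\vartopos$-$2$-categories, and with the lemma on passing to $(-)^{\simeq2}$, the map $(\internaltwocatofcats_{*//})^{\simeq2}\to\internalcatofcats$ is therefore a cocartesian fibration of $\vartopos$-categories. In context $A$, it classifies the functor $\Gamma_A:\catofcats(\vartopos_{/A})\to\catofcats$.

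Next, I would identify the classified $\vartopos$-functor. By Martini's internal straightening \cite{MInternalStraightenning}, a cocartesian fibration over $\internalcatofcats$ is equivalent to a $\vartopos$-functor $F:\internalcatofcats\to\internalcatofcats$. Its fibre computations in each context determine the composites $\Gamma_A\circ F(A):\internalcatofcats(A)\to\catofcats$, naturally in $A$. The identity functor has exactly this property: for $\varintcat{C}\in\internalcatofcats(A)=\catofcats(\vartopos_{/A})$, the value $\Gamma_A(\varintcat{C})=\varintcat{C}(A)$ is the fibre of the classifying fibration over $\varintcat{C}$ in context $A$. So the strategy is to show that the functor $F$ classified by the oplax slice satisfies $F(A)(\varintcat{C})(B)\simeq\varintcat{C}(B)$, naturally in $B\to A$ and in $\varintcat{C}$.

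To obtain this naturally, I would apply the global-sections statement above in all contexts $B\in\vartopos_{/A}$ simultaneously. The restriction $s^*$ for $s:B\to A$ is compatible with the oplax slices, since oplax slices are built contextwise. It therefore sends the fibre over $\varintcat{C}$ in context $A$ (namely the mapping category $\Fun(*,\varintcat{C})$) to the fibre over $s^*\varintcat{C}$ in context $B$. Hence the sheaf $B\mapsto\Fun_{\twocatofcats(\vartopos_{/B})}(*,s^*\varintcat{C})\simeq\varintcat{C}(B)$ recovers $\varintcat{C}$, and this exhibits $F\simeq\id$. An alternative, slicker route would be the internal Yoneda lemma: the identity of $\internalcatofcats$ is corepresented by $*$, and the oplax slice $\internaltwocatofcats_{*//}$ is by construction the unstraightening of the corepresentable functor $\intmap_{\internaltwocatofcats}(*,-)$ restricted to the core.

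The main obstacle I expect is coherence. The equivalence $\Fun(*,\varintcat{C})\simeq\varintcat{C}(A)$ must be natural both in $\varintcat{C}$ and under change of context, and one must check that the cocartesian morphisms of the internal oplax slice match the restriction maps of the universal fibration, rather than merely having the right fibres. I would handle this by characterizing the universal cocartesian fibration as the one whose pullback along each object $\varintcat{C}:A\to\internalcatofcats$ is $\varintcat{C}\to A$ itself. I would then verify this pullback property for the oplax slice directly from the pullback square defining it, using that $\inttwofun^{\oplax}(\Delta^1,-)$ is computed contextwise.
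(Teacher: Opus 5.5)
Your proposal is essentially the paper's argument. In each context $A$ the oplax slice classifies $\Fun_{\twocatofcats(\vartopos_{/A})}(*,-)\simeq\Gamma_A$ by \Cref{example:oplax_slice}, which is exactly what the universal fibration classifies in context $A$ (the paper cites \cite[Proposition 6.4.9]{MInternalStraightenning} for this), and the contextwise identifications are then made compatible with change of context. One caution: your proposed coherence fix via pullbacks along objects $A\to\internalcatofcats$ only controls the restriction to the core of $\internalcatofcats$, so it must be combined with the contextwise $2$-functorial identification and its compatibility with $s^*$, which the paper obtains from the functoriality of the $2$-Yoneda embedding \cite{BenMosheEnrichedYoneda}.
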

\begin{proof}
    Let $\intunst(\id)\to \internalcatofcats$ be the unstraightening of the identity.  \cite[Proposition 6.4.9]{MInternalStraightenning} states that in each context $A$ the cocartesian fibration of $\infty$-categories $\intunst(\id)(A)\to \internalcatofcats(A)$ classifies the functor \[\Gamma:\internalcatofcats(A)\simeq \catofcats(\vartopos_{/A})\to \catofcats.\]

    In every context $A$, the $2$-functor 
    \[\Gamma:\twocatofcats(\vartopos_{/A})\to \twocatofcats\] is $2$-representable by the terminal $\vartopos_{/A}$-category $*\in \twocatofcats(\vartopos_{/A})$ in the sense that there is a canonical equivalence of functors $\Fun_{\twocatofcats(\vartopos_{/A})}(*,-)\simeq\Gamma:\twocatofcats(\vartopos_{/A})\to \twocatofcats$. Thus, in every context $A$ the functor \[(\internaltwocatofcats_{*//})(A)\to \internaltwocatofcats(A)\]
    classifies the functor $\Gamma$. 
    
    Therefore, in every context $A$, $\intunst(\id)(A)\to \internalcatofcats(A)$ agrees with $(\internaltwocatofcats_{*//})^{\simeq_2}(A)\to \internaltwocatofcats^{\simeq_2}(A)$. This identification can be  made functorial in $A$ by functoriality of the $2$-Yoneda embedding \cite{BenMosheEnrichedYoneda}, thus proving the claim.
\end{proof}

This motivates the following definition.

\begin{definition}\label{def:internal_two_unstraightening}
    Let 
    \[F:\varinttwocat{X}\to \internaltwocatofcats\]
    be a functor of $\vartopos$-$2$-categories. Then we define the $1$-cocartesian unstraightening of $F$ as the left vertical morphism in the following pullback.
    \[\begin{tikzcd}
    \inttwounst(F)\ar[r]\ar[d]\pullbackdr & \internaltwocatofcats_{*//}\ar[d]\\
    \varinttwocat{X}\ar[r] & \internaltwocatofcats.
\end{tikzcd}\]
\end{definition}

\subsubsection*{Unstraightening lax natural transformations}

We have the following theorem for $(\infty,2)$-categories.

\begin{theorem}\label{thm:lax_unstraightening_external}
    Let $\vartwocat{C}$ be an $(\infty,2)$-category. Then there is an equivalence of $(\infty,2)$-categories 
    \[\funtwocat^{\lax}(\vartwocat{C}, \twocatofcats) \simeq \twocatoftwocats^{\cocart}_{/\vartwocat{C}}\]
    where the right-hand side is the full $(\infty,2)$-subcategory of $\twocatoftwocats_{/\vartwocat{C}}$ spanned by $1$-cocartesian fibrations.
\end{theorem}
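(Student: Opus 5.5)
The plan is to deduce \Cref{thm:lax_unstraightening_external} from the strict $(\infty,2)$-categorical straightening theorem (\cite[Theorem 3.2]{CLRUniversalityOfUnferling}) together with the universal $1$-cocartesian fibration $\twocatofcats_{*//}\to\twocatofcats$ of \Cref{example:oplax_slice}. On objects the two sides already agree: a $2$-functor $F\colon\vartwocat{C}\to\twocatofcats$ corresponds to the pullback $\vartwocat{C}\times_{\twocatofcats}\twocatofcats_{*//}$. The work is in matching the morphisms. A lax natural transformation must correspond to an arbitrary functor over $\vartwocat{C}$ between unstraightenings, whereas strict transformations correspond only to those functors that preserve cocartesian edges.

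The construction proceeds as follows. By the Gray adjunction, a lax transformation $\alpha\colon F\Rightarrow G$ is a functor $\vartwocat{C}\to\funtwocat^{\oplax}(\Delta^1,\twocatofcats)$ with prescribed source $F$ and target $G$. I then form the ``oplax arrow'' version of the universal fibration, namely the pullback of $\funtwocat^{\oplax}(\Delta^1,\twocatofcats_{*//})$ along the diagonal-type inclusion recording a point together with its image. Pulling back along $\alpha$ yields a $1$-cocartesian fibration over $\vartwocat{C}\times\Delta^1$ (more precisely, over $\vartwocat{C}\gray\Delta^1$ collapsed appropriately). Its restrictions to $0$ and $1$ recover $\inttwounst(F)$ and $\inttwounst(G)$. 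Transporting along the arrow of $\Delta^1$ by cocartesian pushforward in the $\Delta^1$-direction only then gives a functor $\inttwounst(F)\to\inttwounst(G)$ over $\vartwocat{C}$. This functor need not preserve $\vartwocat{C}$-cocartesian edges: the lax squares $\alpha_f$ appear exactly as the comparison $2$-cells. To make this functorial in $\alpha$, and in modifications, I would write the assignment as a $2$-functor $\funtwocat^{\lax}(\vartwocat{C},\twocatofcats)\to\twocatoftwocats_{/\vartwocat{C}}$ by applying the same pullback to $\funtwocat^{\lax}(\Delta^n\gray\ldots)$, that is, levelwise in the cotensors $\funtwocat(\Delta^n,-)$.

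For the inverse direction, a functor $H\colon\inttwounst(F)\to\inttwounst(G)$ over $\vartwocat{C}$ should be regarded as a $1$-cocartesian fibration over $\vartwocat{C}\times\Delta^1$ via the relative mapping cylinder, i.e.\ the pushout gluing $\inttwounst(F)\times\Delta^1$ to $\inttwounst(G)$ along $H$. One then checks that this fibration is cocartesian in the $\Delta^1$ direction but only ``locally'' cocartesian in the mixed direction. Straightening it therefore produces a functor out of $\vartwocat{C}\gray\Delta^1$ rather than $\vartwocat{C}\times\Delta^1$, which is precisely a lax transformation. The identification of local cocartesian fibrations over $\vartwocat{C}\times\Delta^1$ that are cocartesian over $\vartwocat{C}\times\{i\}$ with functors from the Gray product is the heart of the argument.

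That identification is the main obstacle. My first approach would be to appeal to a Gray-straightening result in the literature (Gagna--Harpaz--Lanari's lax straightening, or Abellán--Stern), of which this theorem is essentially a restatement. Failing that, I would reduce to checking representables: I would verify the equivalence on mapping $\infty$-categories by identifying both sides, for $F=\Fun_{\vartwocat{C}}(x,-)$, with $G(x)$. On the lax side this uses a lax Yoneda lemma; on the fibration side it uses that $\vartwocat{C}_{x//}$ is freely generated by $\id_x$ among functors over $\vartwocat{C}$. I would then extend to all $F$ by writing $F$ as a colimit of representables, since both sides send such colimits to limits.
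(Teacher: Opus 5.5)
Your primary route, appealing to an existing lax straightening theorem, is what the paper does. The paper's proof is a one-line citation of \cite[Theorem A]{AbellanGagnaHaugseng2025}, restricted to functors landing in $\twocatofcats\subset\twocatoftwocats$ and correspondingly to $1$-cocartesian fibrations. That is the reference to cite. The sources you name are not the ones the paper uses, and you would need to check that they contain the version in which arbitrary functors over the base correspond to lax transformations.

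The constructions you sketch in between are heuristic. This includes the fibration over $\vartwocat{C}\times\Delta^1$ ``collapsed appropriately'' and the mapping cylinder that straightens to a functor out of a Gray product. A lax transformation is a functor out of a Gray product with $\Delta^1$, so its unstraightening lives over that Gray product, not over $\vartwocat{C}\times\Delta^1$. Passing between the two is exactly the content of the cited theorem.

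Your fallback would not work as stated. For $F=\Fun_{\vartwocat{C}}(x,-)$, neither side is $G(x)$:
\begin{itemize}
\item Freeness of $\vartwocat{C}_{x//}$ on $\id_x$ holds only among functors over $\vartwocat{C}$ that preserve cocartesian edges.
\item The Yoneda lemma holds only for strict transformations.
\end{itemize}
These restrictions are precisely what this theorem removes. Take $\vartwocat{C}=\Delta^1$, $x=0$, and $G$ classifying $g\colon G_0\to G_1$. Then $\vartwocat{C}_{0//}\simeq\Delta^1$, so $\Fun_{/\Delta^1}(\vartwocat{C}_{0//},\twounst(G))$ is the category of all sections. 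These are triples $(a\in G_0,\ b\in G_1,\ g(a)\to b)$, which is not $G_0$. Lax transformations $\Fun_{\Delta^1}(0,-)\Rightarrow G$ likewise consist of $a$, $b$ and a $2$-cell relating $g(a)$ and $b$, again a comma-type category rather than $G_0$.

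Matching these lax-limit-type answers naturally in $x$ is essentially the whole theorem. It also needs a comparison functor between the two $(\infty,2)$-categories, which you never actually construct, so there is nothing yet to check on mapping categories.

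Finally, plain colimits of corepresentables do not generate $\Fun(\vartwocat{C},\catofcats)$. For $\vartwocat{C}=*$ they only produce $\infty$-groupoids. The extension step would therefore also need tensors with $\Delta^n$ and an enriched version of the argument.
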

\begin{proof}
    This is a restriction of \cite[Theorem A]{AbellanGagnaHaugseng2025} to functors landing in $\twocatofcats\subset \twocatoftwocats$ and to $1$-cocartesian fibrations of $(\infty,2)$-categories.
\end{proof}

We expect there to be a version of this theorem for $\vartopos$-$2$-categories, but we do not need the full functoriality of unstraightening for lax natural transformations. We only need a single lax natural transformation to give rise to a functor between the unstraightenings. We will first discuss this for the universal lax natural transformation.

Let $\funtwocat^{\oplax}(\Delta^1,\twocatofcats)\xto{t}\twocatofcats$ and $\funtwocat^{\oplax}(\Delta^1,\twocatofcats)\xto{s}\twocatofcats$ denote the target and source functors coming from pulling back along the functors $*\to \Delta^1$. Then the evaluation functor (the counit)
\[
\Delta^1 \gray \funtwocat^{\oplax}(\Delta^1,\twocatofcats)\to \twocatofcats
\]
classifies a canonical lax natural transformation \[\alpha_{\univ}:s\to t:\funtwocat^{\oplax}(\Delta^1,\twocatofcats)\to\twocatofcats.\] 

Indeed, for any $(\infty,2)$-category $\vartwocat{C}$, giving a functor 
\[\vartwocat{C}\xto{T} \funtwocat^{\oplax}(\Delta^1,\twocatofcats)\] amounts to giving a functor \[\Delta^1\to \funtwocat^{\lax}(\vartwocat{C},\twocatofcats), \] that is, it is the same as giving  two functors $F,G:\vartwocat{C}\to \twocatofcats$ and a lax natural transformation $\alpha:F\to G$. Then $F,G,\alpha$ can be reconstructed as $s\circ T, t\circ T, \alpha_{\univ}\circ T$, respectively.

By \Cref{thm:lax_unstraightening_external}, we have a canonical commutative triangle in $\widehat{\twocatoftwocats}$
\[\begin{tikzcd}
    \twounst(s)\ar[rr, "\twounst(\alpha_{\univ})"]\ar[dr] && \twounst(t)\ar[dl]\\
    &\funtwocat^{\oplax}(\Delta^1, \twocatofcats).
\end{tikzcd}\]

We will next give better descriptions of $\twounst(s),\twounst(t)$ and $\twounst(\alpha_{\univ})$.

Let \[\funtwocat^{\oplax,0-*}(\Delta^2, \twocatofcats)\subset \funtwocat^{\oplax}(\Delta^2, \twocatofcats)\]
and let 
\[\funtwocat^{\oplax,0-*}(\Lambda^2_2, \twocatofcats)\subset \funtwocat^{\oplax}(\Lambda^2_2, \twocatofcats)\]
denote the full $(\infty,2)$-subcategories spanned by functors that send $0\in \Delta^2$ (respectively $0\in \Lambda^2_2$) to a terminal object in $\twocatofcats$.

That is, an object of $\funtwocat^{\oplax,0-*}(\Delta^2, \twocatofcats)$ is a commutative diagram
\[\begin{tikzcd}
    *\ar[rr]\ar[dr]&&C\ar[dl]\\
    &D.
\end{tikzcd}\]
And morphisms are oplax natural transformations. Less formally, objects in $\funtwocat^{\oplax,0-*}(\Delta^2, \twocatofcats)$ are functors $F:C\to D$ with a choice of an object $x\in C$ in the source. Morphisms between $F_0:C_0\to D_0, x_0\in C_0$ and $F_1:C_1\to D_1, x_1\in C_1$ are given by functors $X:C_0\to C_1$, $Y:D_0\to D_1$, a natural transformation $\alpha: Y\circ F_0\to F_1\circ X$ and a morphism $X(x_0)\to x_1$.

In the other case, an object of $\funtwocat^{\oplax,0-*}(\Lambda^2_2, \twocatofcats)$ is a cospan
\[\begin{tikzcd}
    *\ar[dr]&&C\ar[dl]\\
    &D.
\end{tikzcd}\]
And morphisms are oplax natural transformations. Less formally, objects in $\funtwocat^{\oplax,0-*}(\Lambda^2_2, \twocatofcats)$ are functors $F:C\to D$ with a choice of an object $x\in D$ in the target. Morphisms between $F_0:C_0\to D_0, x_0\in D_0$ and $F_1:C_1\to D_1, x_1\in D_1$ are given by functors $X:C_0\to C_1$, $Y:D_0\to D_1$, a natural transformation $\alpha: Y\circ F_0\to F_1\circ X$ and a morphism $Y(x_0)\to x_1$.

We never actually use the following theorem. We only include it as a motivation for the definition in the internal case:

\begin{theorem}\label{thm:desc_of_the_unst_of_universal_lax_natural_transformation}
    We have a commutative square in $\widehat{\twocatoftwocats}_{/ \funtwocat^{\oplax}(\Delta^1, \twocatofcats)}$:
    \[\begin{tikzcd}
        \twounst(s)\ar[r,"\sim"]\ar[d,"\twounst(\alpha_{\univ})"'] & \funtwocat^{\oplax,0-*}(\Delta^2, \twocatofcats)\ar[d,"\Lambda^2_2\into\Delta^2"]\\
        \twounst(t)\ar[r,"\sim"] & \funtwocat^{\oplax,0-*}(\Lambda^2_2, \twocatofcats),
    \end{tikzcd}\]
    where the horizontal functors are equivalences. 
\end{theorem}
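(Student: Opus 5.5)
The plan is to identify both unstraightenings with explicit pullbacks of the oplax slice $\twocatofcats_{*//}$ from \Cref{example:oplax_slice}, and then to recognize the restriction along $\Lambda^2_2\into\Delta^2$ as fibrewise postcomposition.

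\textbf{Step 1: decompose the indexing shapes.} The Gray tensor product preserves colimits in each variable. Hence $\funtwocat^{\oplax}(-,\twocatofcats)$ sends pushouts of $(\infty,2)$-categories to pullbacks. In $\catoftwocats$ we have $\Lambda^2_2\simeq\Delta^1\sqcup_{\Delta^0}\Delta^1$, glued at the common target. Since the spine inclusion $\Lambda^2_1\into\Delta^2$ is an equivalence after passing to $\catofcats\subset\catoftwocats$, we also have $\Delta^2\simeq\Delta^1\sqcup_{\Delta^0}\Delta^1$, glued target-to-source. This gives
\[
\funtwocat^{\oplax}(\Delta^2,\twocatofcats)\simeq\funtwocat^{\oplax}(\Delta^1,\twocatofcats)\times_{t,\twocatofcats,s}\funtwocat^{\oplax}(\Delta^1,\twocatofcats),
\]
and the analogous formula for $\Lambda^2_2$ with $t$ on both factors.

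\textbf{Step 2: impose the condition $0\mapsto *$.} Pulling the first factor back along $*\to\twocatofcats$ at the source replaces it by $\twocatofcats_{*//}$. We obtain
\[
\funtwocat^{\oplax,0-*}(\Delta^2,\twocatofcats)\simeq\twocatofcats_{*//}\times_{\twocatofcats,s}\funtwocat^{\oplax}(\Delta^1,\twocatofcats),
\qquad
\funtwocat^{\oplax,0-*}(\Lambda^2_2,\twocatofcats)\simeq\twocatofcats_{*//}\times_{\twocatofcats,t}\funtwocat^{\oplax}(\Delta^1,\twocatofcats).
\]
By \Cref{example:oplax_slice} with $x=*$, the map $\twocatofcats_{*//}\to\twocatofcats$ is the $1$-cocartesian fibration classifying $\Fun_{\twocatofcats}(*,-)\simeq\id$. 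Unstraightening is compatible with base change, since pullbacks of $1$-cocartesian fibrations classify the precomposed functor. Therefore the two pullbacks are $\twounst(s)$ and $\twounst(t)$. This produces both horizontal equivalences, and they lie over $\funtwocat^{\oplax}(\Delta^1,\twocatofcats)$.

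\textbf{Step 3: match the vertical maps.} Under these identifications, restriction along $\Lambda^2_2\into\Delta^2$ sends a pair $(x\in C,\ F\colon C\to D)$ to $(F(x)\in D,\ F)$. On a morphism it records the composite $2$-cell $Y F_0(x_0)\to F_1X(x_0)\to F_1(x_1)$. It is therefore a functor over the base that acts on the fibre over $F$ by $F$ itself, and this is exactly the fibrewise description of $\twounst(\alpha_{\univ})$ given by \Cref{thm:lax_unstraightening_external}.

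\textbf{Main obstacle.} Fibrewise agreement does not yet give an equivalence of functors over the base, so the real difficulty is to identify the two functors coherently. I would first try to reduce to the universal case via $2$-Yoneda. A functor over $\funtwocat^{\oplax}(\Delta^1,\twocatofcats)$ between these unstraightenings corresponds, by \Cref{thm:lax_unstraightening_external}, to a lax transformation $s\to t$. Since $\twounst(s)$ is a pullback of the representable fibration $\twocatofcats_{*//}$, such a transformation is determined by where it sends the universal object, using representability of $\id\simeq\Fun(*,-)$. Both functors send the universal pointed object $(\id_*\in *)$ to the same object. I therefore expect the Yoneda-type uniqueness to identify them, with the remaining work being bookkeeping of the oplax $2$-cells in Step 3.
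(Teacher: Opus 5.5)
Your Steps 1–3 are the paper's proof. The paper also writes $\Delta^2$ as $\Delta^1\sqcup_{\Delta^0}\Delta^1$ in $\catofcats\subset\catoftwocats$, pastes the resulting pullback with the one defining $\twocatofcats_{*//}$, and recognizes the universal unstraightening; you additionally spell out the $\Lambda^2_2$ case, which the paper leaves as an exercise. For commutativity the paper gives only the objectwise description of $\twounst(\alpha_{\univ})$ via cocartesian lifts, which is your Step 3. (The paper never uses this theorem; in \Cref{cor:extra_functoriality_of_unst} the functor induced by $\Lambda^2_2\into\Delta^2$ is in effect taken as the definition of $\inttwounst(\alpha)$.) What fails is the Yoneda argument in your last paragraph, which you rely on to turn Step 3 into a coherent identification.

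Two inferences there are wrong.

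\emph{The corepresentability claim.} Being a pullback of the corepresented fibration $\twocatofcats_{*//}$ does not make $\twounst(s)$ corepresented, and in fact $s$ is not corepresentable on $\funtwocat^{\oplax}(\Delta^1,\twocatofcats)$. Testing against objects $(C\to *)$ forces a corepresenting object to have the form $(*\to R_1)$ with $R_1$ nonempty. With the paper's $2$-cell conventions, its maps to $(*\xto{1}\Delta^1)$ then form $\Fun(R_1,\Delta^1)$, which is not a point.

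\emph{The lax versus strong issue.} This is the more fundamental error. Under \Cref{thm:lax_unstraightening_external}, functors over the base correspond to \emph{lax} transformations, while Yoneda determines only \emph{strong} ones by the image of the identity. Already over $\Delta^1$, a lax transformation $\Fun_{\Delta^1}(0,-)\Rightarrow G$ consists of $a\in G(0)$, $b\in G(1)$ and a map between $g(a)$ and $b$, where $g=G(0\to 1)$. So it is not determined by $a$.

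In fibration terms, $2$-Yoneda controls only maps that preserve cocartesian edges, and $\twounst(\alpha_{\univ})$ does not preserve them. By your own Step 3, it sends the cocartesian edge at $(F_0,x_0)$ over $(X,Y,\alpha)$ to an edge whose $2$-cell component is $\alpha_{x_0}$, and this need not be invertible. Hence agreement at $(\id_*,*)$ proves nothing.

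A coherent identification needs one of two things:
\begin{itemize}
\item real input about how the equivalence of \Cref{thm:lax_unstraightening_external} acts on $1$-morphisms, namely that it turns a lax transformation into $\twocatofcats$ into postcomposition in the oplax slice; or
\item a separate rigidity result for lax transformations $s\Rightarrow t$.
\end{itemize}
Neither follows formally from $2$-Yoneda.
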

\begin{proof}
    We prove the upper horizontal equivalence.
    
    Observe the following pasting of squares in $\widehat{\catoftwocats}$, which we claim are cartesian.
    \[\begin{tikzcd}
        \funtwocat^{\oplax,0-*}(\Delta^2, \twocatofcats)\ar[r]\ar[d]\pullbackdr
        &\funtwocat^{\oplax}(\Delta^2, \twocatofcats)\ar[r,"\explicitset{1,2}"]\ar[d,"\explicitset{0,1}"]\pullbackdr
        & \funtwocat^{\oplax}(\Delta^1, \twocatofcats)\ar[d,"s"]\\
        \twocatofcats_{*//}\ar[r]\ar[d]\pullbackdr
        &\funtwocat^{\oplax}(\Delta^1, \twocatofcats)\ar[r,"t"]\ar[d,"s"]
        &\twocatofcats\\
        \explicitset{*}\ar[r]
        &\twocatofcats.
    \end{tikzcd}\]
    The upper right square follows from the pushout in $\catofcats\subset \catoftwocats$: 
    \[\begin{tikzcd}
        \Delta^0\ar[r,"1"]\ar[d,"0"'] & \Delta^1\ar[d]\\
        \Delta^1 \ar[r] & \Delta^2.	\arrow["\lrcorner"{anchor=center, pos=0.125, rotate=180}, draw=none, from=2-2, to=1-1]
    \end{tikzcd}\]
    The lower square is a pullback by the definition of $\twocatofcats_{*//}$, and the upper left square is a pullback by the pasting lemma applied to the vertical pasting of squares on the left.

    Recall that the middle horizontal composition is the universal unstraightening. Thus the upper horizontal composition is the unstraightening of $s$ as we claimed before.

    The proof of the lower horizontal equivalence is similar and is left as an exercise to the reader. 

    We are left to show that the square in the theorem commutes.  
    The functoriality of $\twounst(\alpha_{\univ})$ is given by mapping an object to the target of the cocartesian lift of the arrow it sits over in $\funtwocat^{\oplax}(\Delta^1,\twocatofcats)$.
    This sends $x\in C$ to the image of $x$ in $D$. 
\end{proof}

\subsubsection*{The internal construction}

We will now lift this to the setting of internal category theory.

As in the $(\infty,2)$-categorical setting, there is a universal lax natural transformation of functors of $\vartopos$-$2$-categories \[\alpha_{\univ}:s\to t:\inttwofun^{\oplax}(\Delta^1,\internaltwocatofcats)\to \internaltwocatofcats.\]

\begin{definition}
    Let \[\inttwofun^{\oplax,0-*}(\Delta^2, \internaltwocatofcats)\subset \inttwofun^{\oplax}(\Delta^2, \internaltwocatofcats)\]
    and let 
    \[\inttwofun^{\oplax,0-*}(\Lambda^2_2, \internaltwocatofcats)\subset \inttwofun^{\oplax}(\Lambda^2_2, \internaltwocatofcats)\]
    denote the full $\vartopos$-$2$-subcategories spanned by functors that in every context $A$ send $0\in \Delta^2$ (respectively $0\in \Lambda^2_2$) to the terminal object in $\internaltwocatofcats(A)$.
\end{definition}

\begin{proposition}\label{prop:extra_functoriality_of_unst_univ_case}
    $\inttwofun^{\oplax,0-*}(\Delta^2, \internaltwocatofcats)$ and $\inttwofun^{\oplax,0-*}(\Lambda^2_2, \internaltwocatofcats)$ are the unstraightenings of the functors $s$ and $t$, respectively.
\end{proposition}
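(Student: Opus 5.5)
The plan is to copy the external argument of \Cref{thm:desc_of_the_unst_of_universal_lax_natural_transformation} contextwise and check that every step survives change of context. By \Cref{def:internal_two_unstraightening}, $\inttwounst(s)$ is the pullback of the universal $1$-cocartesian fibration $\internaltwocatofcats_{*//}\to\internaltwocatofcats$ along
\[
s:\inttwofun^{\oplax}(\Delta^1,\internaltwocatofcats)\to\internaltwocatofcats,
\]
and similarly for $t$. Pullbacks in $\catoftwocats(\vartopos)$ are computed as pullbacks of presheaves. Moreover, $\inttwofun^{\oplax}(\vartwocat{I},-)$ is computed contextwise without sheafification. It is therefore enough to produce, for every context $A$, a natural pasting of pullback squares of $(\infty,2)$-categories, and then to check that these squares are compatible with the restriction functors $p^*$.

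For $s$, I will use the diagram from the proof of \Cref{thm:desc_of_the_unst_of_universal_lax_natural_transformation} with $\twocatofcats$ replaced by $\twocatofcats(\vartopos_{/A})=\internaltwocatofcats(A)$.

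The upper right square
\[
\funtwocat^{\oplax}(\Delta^2,\internaltwocatofcats(A))\simeq\funtwocat^{\oplax}(\Delta^1,\internaltwocatofcats(A))\times_{\internaltwocatofcats(A)}\funtwocat^{\oplax}(\Delta^1,\internaltwocatofcats(A))
\]
is a pullback. This follows from the pushout $\Delta^1\sqcup_{\Delta^0}\Delta^1\simeq\Delta^2$, which is a pushout in $\catofcats$ and hence in $\catoftwocats$, because $\funtwocat^{\oplax}(-,\vartwocat{C})$ sends colimits in the first variable to limits. This holds since $\vartwocat{I}\gray-$ is a left adjoint in $\vartwocat{I}$ and the Gray tensor product preserves colimits in each variable.

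The lower square, with corner $\internaltwocatofcats_{*//}(A)$, is a pullback by the definition of the oplax slice in the preceding example. The pasting lemma then identifies the full sub-$2$-category $\funtwocat^{\oplax,0-*}(\Delta^2,\internaltwocatofcats(A))$, on functors sending $0$ to the terminal object $*\in\internaltwocatofcats(A)$, with the pullback of $\internaltwocatofcats_{*//}(A)\to\internaltwocatofcats(A)$ along $s$. Finally, the middle horizontal composite $\internaltwocatofcats_{*//}\to\inttwofun^{\oplax}(\Delta^1,\internaltwocatofcats)\xto{t}\internaltwocatofcats$ is exactly the map used in \Cref{def:internal_two_unstraightening}. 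This gives the identification for $s$.

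For $t$, I will run the same argument with the pushout $\Lambda^2_2\simeq\Delta^1\sqcup_{\Delta^0}\Delta^1$, glued at the common target $2$. This gives
\[
\funtwocat^{\oplax}(\Lambda^2_2,\vartwocat{C})\simeq\funtwocat^{\oplax}(\Delta^1,\vartwocat{C})\times_{\vartwocat{C}}\funtwocat^{\oplax}(\Delta^1,\vartwocat{C}).
\]
Imposing that vertex $0$ goes to $*$ again cuts out the pullback of the oplax slice, now along $t$.

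The main obstacle is naturality in $A$. For $p:B\to A$, the restriction $p^*:\twocatofcats(\vartopos_{/A})\to\twocatofcats(\vartopos_{/B})$ must preserve the terminal object, which holds since $p^*$ is a right adjoint. It must also be compatible with the oplax-slice pullbacks, which holds because $\inttwofun^{\oplax}$ is defined by postcomposition. Together these show that the full sub-$2$-categories defined by the condition $0\mapsto *$ form sub-presheaves, so the contextwise equivalences assemble into equivalences of presheaves and hence of sheaves. The identification of the universal fibration in each context with $\internaltwocatofcats_{*//}(A)$ should be taken in the functorial form given by \Cref{prop:univ_internal_2_cocart_fib}, so that no additional coherence has to be built by hand.
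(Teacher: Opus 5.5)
Your proposal is correct and follows the paper's argument: run the pasting-of-pullbacks proof of \Cref{thm:desc_of_the_unst_of_universal_lax_natural_transformation} in each context, and note that it is natural in the context because $\inttwofun^{\oplax}$ and pullbacks of sheaves are computed contextwise. Your closing appeal to \Cref{prop:univ_internal_2_cocart_fib} is unnecessary, since $\inttwounst$ is defined directly as a pullback of $\internaltwocatofcats_{*//}$ (and that result only concerns cores anyway).
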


\begin{proof}
    The proof of \Cref{thm:desc_of_the_unst_of_universal_lax_natural_transformation} applies here in every context and is functorial under change of context since $\inttwofun^{\oplax}$ is applied contextwise. 
\end{proof}

\begin{corollary}\label{cor:extra_functoriality_of_unst}
    Let \[\alpha:F\to G:\varinttwocat{X}\to \internaltwocatofcats\] be a lax natural transformation. Then there exists a commutative triangle 
    \[\begin{tikzcd}
        \inttwounst(F)\ar[rr,"\inttwounst(\alpha)"]\ar[dr] && \inttwounst(G)\ar[dl] \\
        &\varinttwocat{X}
    \end{tikzcd}\]
    such that for any object $x:A\to \varinttwocat{X}$ in the context $A$, the restriction of $\inttwounst(\alpha)$ to the fiber over $x$ agrees with the component $F(x)\to G(x)$  of $\alpha$ at $x$ in $ \internaltwocatofcats(A)$.
\end{corollary}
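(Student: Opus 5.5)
The plan is to reduce to the universal case of \Cref{prop:extra_functoriality_of_unst_univ_case} by pulling back along a classifying functor. By the adjunction \eqref{eq:lax_transformations_as_oplax_arrows} (with $\varintcat{T}$ replaced by the $\vartopos$-$2$-category $\varinttwocat{X}$, using the internal Gray tensor product of \Cref{prop:internal_gray_tensor_product}), the lax natural transformation $\alpha:F\to G$ is classified by a functor of $\vartopos$-$2$-categories
\[
T_\alpha:\varinttwocat{X}\to\inttwofun^{\oplax}(\Delta^1,\internaltwocatofcats)
\]
with $s\circ T_\alpha\simeq F$, $t\circ T_\alpha\simeq G$, and $\alpha_{\univ}\circ T_\alpha\simeq\alpha$. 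This is exactly the internal analogue of the external discussion preceding \Cref{thm:desc_of_the_unst_of_universal_lax_natural_transformation}. It is obtained contextwise, since $\inttwofun^{\oplax}(\Delta^1,-)$ is computed contextwise.

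Next, unstraightening is defined by pullback (\Cref{def:internal_two_unstraightening}), so it is compatible with precomposition. This gives canonical equivalences
\[
\inttwounst(F)\simeq\varinttwocat{X}\times_{\inttwofun^{\oplax}(\Delta^1,\internaltwocatofcats)}\inttwounst(s),
\qquad
\inttwounst(G)\simeq\varinttwocat{X}\times_{\inttwofun^{\oplax}(\Delta^1,\internaltwocatofcats)}\inttwounst(t),
\]
by pasting of pullbacks. By \Cref{prop:extra_functoriality_of_unst_univ_case}, $\inttwounst(s)$ and $\inttwounst(t)$ are identified with $\inttwofun^{\oplax,0-*}(\Delta^2,\internaltwocatofcats)$ and $\inttwofun^{\oplax,0-*}(\Lambda^2_2,\internaltwocatofcats)$ over $\inttwofun^{\oplax}(\Delta^1,\internaltwocatofcats)$. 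Restriction along $\Lambda^2_2\into\Delta^2$ is a functor between them over the base: it preserves the condition at $0$ and commutes with the projections to the edge $\{1,2\}$. I then define $\inttwounst(\alpha)$ as the base change of this restriction functor along $T_\alpha$. The commutative triangle over $\varinttwocat{X}$ is then automatic.

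It remains to identify the fibres. For $x:A\to\varinttwocat{X}$, the fibre of $\inttwounst(F)$ over $x$ is $F(x)$, computed as the fibre of the universal oplax slice. Similarly, the fibre of $\inttwounst(G)$ is $G(x)$. After pulling back further along $x$, the induced functor is the restriction map on fibres of the universal objects over $T_\alpha(x)=[F(x)\xto{\alpha_x}G(x)]$. Here I would argue as in the last paragraph of the proof of \Cref{thm:desc_of_the_unst_of_universal_lax_natural_transformation}. An object of the $\Delta^2$-fibre is a point $*\to F(x)$ together with the given arrow. Forgetting the edge $\{0,1\}$ and keeping the composite $*\to F(x)\to G(x)$ gives a point of $G(x)$. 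This map is precisely postcomposition with $\alpha_x$, and the argument works naturally in every context, since all constructions are contextwise.

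The main obstacle is the first step: making the classifying map $T_\alpha$ and the identification $\alpha_{\univ}\circ T_\alpha\simeq\alpha$ precise internally for a general $\vartopos$-$2$-category $\varinttwocat{X}$, rather than a constant one. I would handle it by using that $\inttwofun^{\oplax}(\Delta^1,-)$ is right adjoint to $\Delta^1\gray-$ in $\catoftwocats(\vartopos)$, and by taking that as the definition of a lax natural transformation $\varinttwocat{X}\to\internaltwocatofcats$. The fibre identification then reduces to a contextwise check.
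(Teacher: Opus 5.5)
Your proposal is correct and follows the paper's argument. The paper likewise classifies $\alpha$ by a functor $\varinttwocat{X}\to\inttwofun^{\oplax}(\Delta^1,\internaltwocatofcats)$, writes $\inttwounst(F)$ and $\inttwounst(G)$ as base changes of $\inttwounst(s)$ and $\inttwounst(t)$, and pulls back the universal restriction functor from \Cref{prop:extra_functoriality_of_unst_univ_case}; your fibrewise check (forgetting the edge $\{0,1\}$ gives postcomposition with $\alpha_x$) just makes explicit what the paper leaves implicit.
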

\begin{proof}
    \Cref{prop:extra_functoriality_of_unst_univ_case} proves this for the universal case and gives a functor $\inttwounst(s)\to \inttwounst(t)$. Let us prove this for the general case. 
    
    The data of $\alpha:F\to G$ are equivalent to the data of a functor
    \[\varinttwocat{X}\to \inttwofun^{\oplax}(\Delta^1,\internaltwocatofcats).\]
    
    We have that \[\inttwounst(F)\simeq \varinttwocat{X}\times_{\inttwofun^{\oplax}(\Delta^1,\internaltwocatoftwocats)}\inttwounst(s)\] and that
    \[\inttwounst(G)\simeq \varinttwocat{X}\times_{\inttwofun^{\oplax}(\Delta^1,\internaltwocatoftwocats)}\inttwounst(t).\] Thus the result follows from the universal case.
\end{proof}

\subsection{Universal property of span \texorpdfstring{$\vartopos$-$2$-}{B-2-}categories.}\label{subsec:internal_span_universal_property}

In this subsection, we construct span $\vartopos$-$2$-categories $\intspanhalf(\varintcat{C},\varintcat{C}_R)$  (\Cref{def:internal_span_two_category}) and establish their universal property in terms of right adjointable functors (\Cref{prop:univ_prop_of_span_half}). We prove the $\subuniverse^{\full}$-completeness and continuity properties of the universal cocartesian fibration and its subcategory of cocartesian arrows (\Cref{thm:univ_unst_is_monoidal}). 

Finally, for an $\subuniverse^{\full}$-complete $\vartopos$-category $\varintcat{C}$, we construct a natural functor $\mu:\varintcat{C}^\times\to\varintcat{C}$ from the operad associated with its cartesian $\subuniverse$-monoidal structure, sending an indexed family to its limit (\Cref{eq:construction_of_mu}).

\subsubsection*{The internal universal property of spans}

Recall \Cref{def:internal_ad_triplet} for the definition of an adequate triplet of $\vartopos$-categories. We denote by \[\adtrip(\vartopos)\simeq \adtrip\otimes \vartopos\] the $\infty$-category of $\vartopos$ adequate triplets and \[\spanpairs(\vartopos)\subset \adtrip(\vartopos)\] its full $\infty$-subcategory spanned by $\vartopos$ span pairs. 

We have a functor
\[\spanhalf:\spanpairs\to \catoftwocats,\] sending a span pair $(C,C')$ to the $(\infty,2)$-category $\spanhalf(C,C')$. The $(\infty,2)$-category $\spanhalf(C,C')$ has underlying $\infty$-category $\Span(C,C')$. Its $2$-morphisms between spans are given by diagrams where morphisms in $C'$ are denoted by tailed arrows:
\[\begin{tikzcd}
    &f\ar[dl]\ar[dr,tail]\\
    x&&y\\
    &g\ar[ul]\ar[ur,tail]\ar[uu]
\end{tikzcd}\]
viewed as a morphism from $f$ to $g$.

We have a universal property of $\spanhalf(C,C')$ in $\catoftwocats$. More precisely, for every $(\infty,2)$-category $\vartwocat{D}$, we have an equivalence of $(\infty,2)$-categories 
\[\funtwocat^{C'\operatorname{-radj}}(C^{\op}, \vartwocat{D})\simeq \funtwocat(\spanhalf(C,C'),\vartwocat{D}).\]

\begin{definition}\label{def:internal_span_two_category}
    Let $(\varintcat{C},\varintcat{C'})\in \spanpairs(\vartopos)$. Then the following composition \[\intspanhalf(\varintcat{C},\varintcat{C'}): \vartopos^{\op}\xto{(\varintcat{C},\varintcat{C'})}\spanpairs\xto{\spanhalf}\catoftwocats\]
    is a $\vartopos$-$2$-category which we call the internal span $\vartopos$-$2$-category attached to $(\varintcat{C},\varintcat{C'})$.

    This construction defines a functor 
    \[\spanpairs(\vartopos)\to \catoftwocats(\vartopos).\]
\end{definition}

We will now prove the needed universal property.

Let $(\varintcat{C},\varintcat{C'})\in \spanpairs(\vartopos)$, and let $F:\varintcat{C}^{\op}\to \varinttwocat{D}$ be a functor of $\vartopos$-$2$-categories. Then we say that $F$ is $\varintcat{C}'$-right adjointable if, in every context $A\in \vartopos$, the functor \[\varintcat{C}^{\op}(A)\to \varinttwocat{D}(A)\]
is $\varintcat{C}'(A)$-right adjointable. 

We denote by $\Fun_{\twocatoftwocats(\vartopos)}^{\varintcat{C}'\operatorname{-radj}}(\varintcat{C}^{\op},\varinttwocat{D})$ the $\infty$-subcategory of $\Fun_{\twocatoftwocats(\vartopos)}(\varintcat{C}^{\op},\varinttwocat{D})$ spanned by $\varintcat{C'}$-adjointable functors and adjointable natural transformations. 

\begin{proposition}\label{prop:univ_prop_of_span_half}
    We have an equivalence of $\infty$-categories
    \[\Fun_{\twocatoftwocats(\vartopos)}^{\varintcat{C}'\operatorname{-radj}}(\varintcat{C}^{\op},\varinttwocat{D})\simeq \Fun_{\twocatoftwocats(\vartopos)}(\intspanhalf(\varintcat{C},\varintcat{C}'),\varinttwocat{D}).\]
\end{proposition}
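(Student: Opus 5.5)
The plan is to reduce the internal statement to the external universal property
\[
\funtwocat^{C'\operatorname{-radj}}(C^{\op},\vartwocat{D})\simeq\funtwocat(\spanhalf(C,C'),\vartwocat{D}),
\]
applied contextwise and then assembled over $\vartopos$. First, I will write the mapping $\infty$-category in $\twocatoftwocats(\vartopos)\simeq\Fun^R(\vartopos^{\op},\catoftwocats)$ as an end (a limit over the twisted arrow category of $\vartopos$) of the external mapping $\infty$-categories $\Fun_{\twocatoftwocats}(\varinttwocat{X}(A),\varinttwocat{D}(A))$. The sheaf $\intspanhalf(\varintcat{C},\varintcat{C}')$ is computed contextwise without sheafification, because $\spanhalf$ preserves limits: $\Span$ preserves limits, and the mapping categories of $\spanhalf$ are spans in slices, which are also limit-compatible. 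Hence $\Fun_{\twocatoftwocats(\vartopos)}(\intspanhalf(\varintcat{C},\varintcat{C}'),\varinttwocat{D})$ is the limit of $\funtwocat(\spanhalf(\varintcat{C}(A),\varintcat{C}'(A)),\varinttwocat{D}(B))$ over arrows $B\to A$. The same formula holds for $\Fun(\varintcat{C}^{\op},\varinttwocat{D})$.

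Second, I will check that the external equivalence is natural in both variables. It should be contravariantly natural in the span pair, with respect to morphisms of span pairs, meaning functors preserving the right class and the relevant pullbacks. It should be covariantly natural in $\vartwocat{D}$, with respect to all $2$-functors. Naturality in $\vartwocat{D}$ holds because $2$-functors preserve adjunctions and Beck--Chevalley squares. Naturality in the span pair holds because restriction functors $p^*$ of an internal span pair are functors of adequate triplets. Both naturalities should follow from the construction of the equivalence as restriction along $C^{\op}\to\spanhalf(C,C')$, with inverse given by unfurling (\Cref{const:unfurling}, \cite{CLRUniversalityOfUnferling}). With this in hand, the external equivalences assemble into an equivalence of the two ends, provided the adjointability conditions match.

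Third, I will identify the subcategory. In the end formula, the restriction of the left-hand end to $\Fun^{\varintcat{C}'\operatorname{-radj}}$ corresponds exactly to imposing, for each $A$, $\varintcat{C}'(A)$-right adjointability of the component $\varintcat{C}^{\op}(A)\to\varinttwocat{D}(A)$, and right adjointability of the transformations. Since the restriction maps $p^*$ in $\varinttwocat{D}$ are $2$-functors, the compatibility squares over arrows $B\to A$ carry no extra condition. This matches the definition given just before the proposition, which is purely contextwise.

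The main obstacle is coherence in the second step. The external equivalence must be functorial as a natural equivalence of functors on $\spanpairs^{\op}\times\catoftwocats$, not just objectwise, so that it passes through the end. I would obtain this by realizing it as restriction along the natural transformation $C^{\op}\to\spanhalf(C,C')$ of functors $\spanpairs\to\catoftwocats$. Restriction is manifestly natural, and being an equivalence can then be checked pointwise.
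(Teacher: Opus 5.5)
Your proposal is correct and follows the same argument as the paper, whose proof is the one-line remark that the external universal property of $\spanhalf$ applies pointwise. Your end formula for mapping $\infty$-categories in $\twocatoftwocats(\vartopos)$ makes explicit the coherence the paper leaves implicit, and so does realizing the equivalence as restriction along the natural transformation $C^{\op}\to\spanhalf(C,C')$, so that naturality is automatic and invertibility can be checked contextwise. (The aside that unfurling provides the inverse does not apply to a general target $\vartwocat{D}$, but your argument never uses it.)
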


\begin{proof}
    This follows from applying the universal property pointwise.
\end{proof}

From this follow the variants available by taking opposites or $2$-opposites. We denote them as follows:

\begin{align*}
    \intspanhalf(\varintcat{C},\varintcat{C}')^{\op} = \intspanhalf(\varintcat{C},\varintcat{C}',\varintcat{C})
\end{align*}
This admits the universal property
 \[\Fun_{\twocatoftwocats(\vartopos)}^{\varintcat{C}'\operatorname{-ladj}}(\varintcat{C},\varinttwocat{D})\simeq \Fun_{\twocatoftwocats(\vartopos)}(\intspanhalf(\varintcat{C},\varintcat{C}',\varintcat{C}),\varinttwocat{D}).\]

Applying $(-)^{\co}$ changes the directions of only the adjunctions. For example:
\[\Fun_{\twocatoftwocats(\vartopos)}^{\varintcat{C}'\operatorname{-ladj}}(\varintcat{C}^{\op},\varinttwocat{D})\simeq \Fun_{\twocatoftwocats(\vartopos)}(\intspanhalf(\varintcat{C},\varintcat{C}')^{\co},\varinttwocat{D}).\]

\subsubsection*{Cartesian monoids and \texorpdfstring{$2$}{2}-completeness}

Let $\varinttwocat{D}$ be a $\vartopos$-$2$-category and assume that $\varinttwocat{D}^{\simeq_2}$ is $\varintcat{\subuniverse^{\full}}$-complete. Recall that we have an equivalence of $\vartopos$-categories
\[\intfun^{\subuniverse-\sqcap}((\subuniverse^{\full})^{\op}, \varinttwocat{D}^{\simeq_2})\simeq \varinttwocat{D}^{\simeq_2}.\]

Let $x:*\to \varinttwocat{D}$ be an object. We say that $x$ admits a structure of a cartesian monoid in $\varinttwocat{D}$ if the corresponding functor 
\[(\subuniverse^{\full})^{\op}\to \varinttwocat{D}\]
is $\subuniverse$-right adjointable. In this case we get a canonical functor 
\[\intspanhalf(\subuniverse^{\full},\subuniverse)\to \varinttwocat{D}.\]
The functor given by applying $(-)^{\simeq2}$ classifies a structure of $x$ as an $\subuniverse$-monoid in $\varinttwocat{D}^{\simeq_2}$.

\begin{example}
    A $\vartopos$-category $\varintcat{D}$ is $\subuniverse^{\full}$-complete iff $\varintcat{D}$ admits a structure of a cartesian $\subuniverse$-monoid in $\internaltwocatofcats$.
\end{example}

This motivates the following definition:

\begin{definition}\label{def:E_two_complete}
    We say that a $\vartopos$-$2$-category $\varinttwocat{D}$ is $\subuniverse^{\full}$-$2$-complete if $\varinttwocat{D}$ admits a structure of a cartesian $\subuniverse$-monoid in $\internaltwocatoftwocats$.
\end{definition}

We do not define $2$-limits in $\vartopos$-$2$-categories in general, but we expect that this is equivalent to $\varinttwocat{D}$ admitting $2$-limits of shapes indexed by $\vartopos$-groupoids in $\subuniverse$.

The following proposition gives a more convenient condition for when $\varinttwocat{D}$ is $\subuniverse^{\full}$-$2$-complete.

\begin{proposition}\label{prop:E_two_complete_criterion}
    Let $\varinttwocat{D}$ be a $\vartopos$-$2$-category. Then the following conditions are equivalent:
    \begin{enumerate}
        \item $\varinttwocat{D}$ is $\subuniverse^{\full}$-$2$-complete.
        \item The functor \[\varinttwocat{D}:\vartopos^{\op}\to \twocatoftwocats\] is $\vartopos[E]$-right adjointable, where $\vartopos[E]$ is the local class corresponding to the context-free subuniverse $\subuniverse$.
    \end{enumerate}
\end{proposition}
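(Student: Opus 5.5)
The plan is to unwind both conditions contextwise and match them through the identification of $\subuniverse^{\full}$ with slices of $\vartopos$. A functor of $\vartopos$-$2$-categories $(\subuniverse^{\full})^{\op}\to\internaltwocatoftwocats$ picking out $\varinttwocat{D}$ corresponds, as in the dual of \Cref{prop:S_is_free_with_S_colmitis} applied to $\internaltwocatoftwocats^{\simeq_2}$, to the right Kan extension along $*\to(\subuniverse^{\full})^{\op}$. In context $A$ it sends $[p:B\to A]\in\subuniverse^{\full}(A)$ to
\[
p_*p^*\varinttwocat{D}\simeq\inttwofun_{\vartopos_{/A}}(B,\varinttwocat{D}).
\]
Taking global sections over $\vartopos_{/A}$, this is the $(\infty,2)$-category $\varinttwocat{D}(B)$. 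A morphism $C\to B$ over $A$ is sent to the restriction $2$-functor $\varinttwocat{D}(B)\to\varinttwocat{D}(C)$. So the underlying functor of the cartesian monoid candidate, evaluated in context $A$ and passed to global sections, is exactly the restriction of the sheaf $\varinttwocat{D}:\vartopos^{\op}\to\catoftwocats$ to $(\vartopos_{/A})^{\op}$, further restricted to objects of $\subuniverse^{\full}(A)$.

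First I verify that the equivalence $\intfun^{\subuniverse-\sqcap}((\subuniverse^{\full})^{\op},\internalcatoftwocats)\simeq\internalcatoftwocats$ holds. This is \Cref{prop:S_is_free_with_S_colmitis} applied to the complete $\vartopos$-category $\internalcatoftwocats$. Hence condition (1) reduces to the following: the functor associated to $\varinttwocat{D}$ is $\subuniverse$-right adjointable in every context, as a functor into $\internaltwocatoftwocats$.

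Next I translate right adjointability. Right adjoints in $\internaltwocatoftwocats(A)=\twocatoftwocats(\vartopos_{/A})$ are detected contextwise, with Beck--Chevalley compatibility along changes of context, exactly as in the $1$-categorical description of internal adjunctions recalled before \Cref{const:internal_unfurling}. By independence of context (\Cref{cond:idependent_of_context}), a morphism $f:C\to B$ in $\subuniverse(A)$ is an object of $\subuniverse(B)$. Hence $f$ lies in the local class $\vartopos[E]$, and conversely every morphism of $\vartopos[E]$ appears this way with $A=B$. So condition (1) says two things. First, $f^*:\varinttwocat{D}(B)\to\varinttwocat{D}(C)$ has a right adjoint for every $f\in\vartopos[E]$, stably under base change in the slice. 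Second, the base-change squares for pullbacks along arbitrary maps $B'\to B$ in $\subuniverse^{\full}(A)$ are right adjointable. Since every map in $\vartopos$ appears in some slice with both ends in $\subuniverse^{\full}$ (take $A=B$ and use pointedness, \Cref{cond:srs_has_iso}), this is precisely condition (2).

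The main obstacle is the bookkeeping in the first step. I must check that the internal adjoint in $\internaltwocatoftwocats(A)$ between the internal functor $2$-categories $\inttwofun_{\vartopos_{/A}}(B,\varinttwocat{D})$ is equivalent to an adjunction of global sections over all further slices, compatible with restriction. The intended argument is that adjunctions in $\twocatoftwocats(\vartopos_{/A})$ are determined by their values in every context $A'\to A$ together with the Beck--Chevalley squares, by the $2$-categorical analogue of the locality remark cited from \cite{MWcocomplete}. The internal $\inttwofun(B,-)$ is computed contextwise by $A'\mapsto\varinttwocat{D}(B\times_A A')$. The Beck--Chevalley conditions along $A'\to A$ are then the base-change squares for $B\times_AA'\to B$, which are already part of (2). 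With this, both directions follow, and no $2$-limit theory beyond \Cref{prop:univ_prop_of_span_half} is needed.
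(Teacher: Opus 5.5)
Your proposal follows essentially the same route as the paper. Both unwind the cartesian-monoid condition contextwise, pass to global sections (the paper uses that $\Gamma$ is a $2$-functor) to obtain the adjoints along maps in $\vartopos[E]$, and get the Beck--Chevalley conditions from the fact that adjunctions in $\twocatoftwocats(\vartopos_{/A})$ include adjointability of the change-of-context squares.

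One correction: your claim that every map of $\vartopos$ appears in some $\subuniverse^{\full}(A)$ is false in general. Taking $A=B$ only realizes maps $g\colon B'\to B$ that already lie in $\vartopos[E]$, and for $\subuniverse=\explicitset{*}$ only equivalences ever occur. The claim is not needed, though. The base-change squares along an arbitrary $g$ in (2) are exactly the change-of-context squares of the internal adjunction in context $B$, as your final paragraph already observes.
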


\begin{proof}
    Suppose $\varinttwocat{D}$ is $\subuniverse^{\full}$-$2$-complete. 

    That is, we assume that the functor $\subuniverse^{\full}(A)^{\op}\to \twocatoftwocats(\vartopos_{/A})$ sending $[s:B\to A]\mapsto s_*s^*\pi_A^*(\varinttwocat{D})$ is $\subuniverse(A)$-right adjointable. 
    
    We first prove that $\varinttwocat{D}:\vartopos^{\op}\to \twocatoftwocats$ sends morphisms in $\vartopos[E]^{\op}$ to left adjoints. Let $s:B\to A \in \vartopos[E]$. We must show that $s^*:\varinttwocat{D}(A)\to \varinttwocat{D}(B)$ is a left adjoint.
    
    Then the morphism $\pi_A^*(\varinttwocat{D})\to s_*s^*\pi_A^*(\varinttwocat{D})$ is a left adjoint in $\twocatoftwocats(\vartopos_{/A})$. 
    
    Recall that $\Gamma:\twocatoftwocats(\vartopos_{/A})\to \twocatoftwocats$ is a $2$-functor. It sends $\pi_A^*(\varinttwocat{D})$ to $\varinttwocat{D}(A)$ and it sends $ s_*s^*\pi_A^*(\varinttwocat{D})$ to $\varinttwocat{D}(B)$.
    
    The resulting morphism $s^*:\varinttwocat{D}(A)\to \varinttwocat{D}(B)$ is a left adjoint as we stated. We next must show the adjointability for pullback squares, but this follows from the fact that adjunctions in $\twocatoftwocats(\vartopos_{/A})$ require adjointability of squares.
    
    The proof of the other implication is analogous.
\end{proof}

\begin{example}
    The $\vartopos$-$2$-category $\internaltwocatofcats\in\widehat{\internaltwocatoftwocats}$ is $\subuniverse^{\full}$-$2$-complete.
\end{example}
\begin{proof}
    The functor $\internaltwocatofcats:\vartopos^{\op}\to \widehat{\twocatoftwocats}$ is given on underlying $\infty$-categories as the composition
    \[
    \internaltwocatofcats:\vartopos^{\op}\xto{A\mapsto \vartopos_{/A}}\prl \xto{-\otimes \catofcats}\operatorname{Mod}_{\catofcats}(\prl)\to \widehat{\twocatoftwocats}.
    \]
    It sends all morphisms in $\vartopos^{\op}$ to left adjoints by definition. The Beck-Chevalley conditions can also be verified directly.
\end{proof}

\begin{example}
    Let $\varinttwocat{D}$ be $\subuniverse^{\full}$-$2$-complete, and let $\vartwocat{I}\in \catoftwocats$. Then $\inttwofun^{\oplax}(\vartwocat{I},\varinttwocat{D})$ is $\subuniverse^{\full}$-$2$-complete.
\end{example}
\begin{proof}
    The functor $\inttwofun^{\oplax}(\vartwocat{I},\varinttwocat{D})$ is given as the composition
    \[\vartopos^{\op}\xto{\varinttwocat{D}}\twocatoftwocats\xto{\funtwocat^{\oplax}(\vartwocat{I},-)}\twocatoftwocats.\]
    The result follows since $\funtwocat^{\oplax}(\vartwocat{I},-)$ is a $2$-functor. 
\end{proof}

\subsubsection*{\texorpdfstring{$2$}{2}-continuity and limits}

\begin{definition}\label{def:E_two_continuous}
    Let $F:\varinttwocat{C}\to \varinttwocat{D}$ be a $2$-functor between $\subuniverse^{\full}$-$2$-complete $\vartopos$-$2$-categories. 
    Recall that \[\intfun^{\subuniverse-\sqcap}((\subuniverse^{\full})^{\op}, \internaltwocatoftwocats^{\simeq_2})\simeq \internaltwocatoftwocats^{\simeq_2}.\]
    
    We say that $F$ is $\subuniverse^{\full}$-$2$-continuous if the corresponding natural transformation  
    \[\begin{tikzcd}
    	(\subuniverse^{\full})^{\op} && \internaltwocatoftwocats
    	\arrow[""{name=0, anchor=center, inner sep=0}, shift right=2, curve={height=18pt}, from=1-1, to=1-3]
    	\arrow[""{name=1, anchor=center, inner sep=0}, shift left=2, curve={height=-12pt}, from=1-1, to=1-3]
    	\arrow[between={0.2}{0.8}, Rightarrow, nfold, from=1, to=0]
    \end{tikzcd}\]
    corresponding to $F$ is $\subuniverse$-right adjointable.
\end{definition}

\begin{proposition}\label{prop:E_two_continuous_criterion}
    Let $F:\varinttwocat{C}\to \varinttwocat{D}$ be a $2$-functor between $\subuniverse^{\full}$-$2$-complete $\vartopos$-$2$-categories. Then the following are equivalent:
    \begin{enumerate}
        \item $F$ is  $\subuniverse^{\full}$-$2$-continuous.
        \item The natural transformation $F$ of the type
        \[\begin{tikzcd}
        	\vartopos^{\op} && \twocatoftwocats
        	\arrow[""{name=0, anchor=center, inner sep=0}, shift right=2, curve={height=18pt}, from=1-1, to=1-3]
        	\arrow[""{name=1, anchor=center, inner sep=0}, shift left=2, curve={height=-12pt}, from=1-1, to=1-3]
        	\arrow[between={0.2}{0.8}, Rightarrow, nfold, from=1, to=0]
        \end{tikzcd}\]
        is $\vartopos[E]$-right adjointable.
    \end{enumerate}
\end{proposition}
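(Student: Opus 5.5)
The plan is to mirror the proof of \Cref{prop:E_two_complete_criterion}, replacing the functor $\varinttwocat{D}$ by the natural transformation $F$ and replacing adjunctions by adjointable squares. First I would unwind condition (1). Under the equivalence $\intfun^{\subuniverse-\sqcap}((\subuniverse^{\full})^{\op},\internaltwocatoftwocats^{\simeq_2})\simeq\internaltwocatoftwocats^{\simeq_2}$, the natural transformation associated to $F$ has, in context $A$ and at $[s:B\to A]\in\subuniverse^{\full}(A)$, the component
\[
s_*s^*\pi_A^*F\colon s_*s^*\pi_A^*\varinttwocat{C}\to s_*s^*\pi_A^*\varinttwocat{D}
\]
in $\twocatoftwocats(\vartopos_{/A})$. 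Condition (1) therefore asks that, for every morphism $u\colon C\to B$ in $\subuniverse(A)$, the naturality square formed by the restriction maps $u^*$ of $\varinttwocat{C}$ and $\varinttwocat{D}$ and the components of $F$ be right adjointable in $\twocatoftwocats(\vartopos_{/A})$. The horizontal right adjoints already exist by the assumed $\subuniverse^{\full}$-$2$-completeness of $\varinttwocat{C}$ and $\varinttwocat{D}$, so (1) reduces to the invertibility of a Beck--Chevalley $2$-cell $F\circ u_*\Rightarrow u_*\circ F$.

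For (1)$\Rightarrow$(2), take $s\colon B\to A$ in $\vartopos[E]$, regarded as an object of $\subuniverse(A)$, together with the morphism $s\to\id_A$ in $\subuniverse(A)$. I would then apply the $2$-functor $\Gamma\colon\twocatoftwocats(\vartopos_{/A})\to\twocatoftwocats$ of global sections, as in \Cref{prop:E_two_complete_criterion}. Being a $2$-functor, $\Gamma$ preserves adjunctions, mates, and invertibility of $2$-cells, so it sends the adjointable square to the square
\[
F(A)\colon\varinttwocat{C}(A)\to\varinttwocat{D}(A),\qquad F(B)\colon\varinttwocat{C}(B)\to\varinttwocat{D}(B),
\]
with horizontal maps $s^*$. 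This square is then right adjointable in $\twocatoftwocats$, which is exactly the Beck--Chevalley condition in (2) for morphisms of $\vartopos[E]$. Pullback squares in $\vartopos[E]$ need no separate treatment at the level of the transformation, since the adjointability of the functors $\varinttwocat{C},\varinttwocat{D}$ on pullback squares is already part of the hypothesis by \Cref{prop:E_two_complete_criterion}.

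For (2)$\Rightarrow$(1), I would run the argument in reverse. By \Cref{prop:E_two_complete_criterion}, the internal right adjoint $u_*$ in $\twocatoftwocats(\vartopos_{/A})$ is computed contextwise: in context $A'\to A$ it is given by the right adjoints of the restriction maps along the base changes of the relevant morphisms of $\vartopos[E]$. Consequently, the internal Beck--Chevalley $2$-cell $F\circ u_*\Rightarrow u_*\circ F$ is a morphism of sheaves whose value in each context is the external Beck--Chevalley $2$-cell provided by (2). Invertibility can be checked contextwise, so the cell is invertible.

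The main obstacle is making precise the claim that mates in $\twocatoftwocats(\vartopos_{/A})$ are computed contextwise, that is, that an adjunction between sheaves of $2$-categories is the same as contextwise adjunctions satisfying Beck--Chevalley, compatibly with the formation of mates. I would first try to deduce this from the description of $\twocatoftwocats(\vartopos_{/A})$ as $\catoftwocats\otimes\vartopos_{/A}$ with its $\catofcats$-module structure, exactly as in the internal-adjunction criterion recalled before \Cref{const:internal_unfurling}. If that deduction only yields the statement for adjunctions and not for mates, I would apply it to the $2$-category of squares, using the $\inttwofun^{\oplax}(\Delta^1,-)$ construction, to obtain the statement for adjointable squares.
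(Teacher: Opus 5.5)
Your proposal is correct and follows the paper's route. The paper only says that the proof of \Cref{prop:E_two_complete_criterion} carries over: apply the global-sections $2$-functor $\Gamma\colon\twocatoftwocats(\vartopos_{/A})\to\twocatoftwocats$ to the square at $[s\colon B\to A]\to[\id_A]$ for (1)$\Rightarrow$(2), and treat the converse ``analogously''. Your contextwise computation of the internal right adjoints and Beck--Chevalley cells for (2)$\Rightarrow$(1) spells out what the paper leaves implicit.
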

\begin{proof}
    The same proof as before works.
\end{proof}

We let $\twocatoftwocats(\vartopos)^{\subuniverse-2\sqcap}\subset \twocatoftwocats(\vartopos)$ denote the locally full $(\infty,2)$-category spanned by $\subuniverse^{\full}$-$2$-complete $\vartopos$-$2$-categories and $\subuniverse^{\full}$-$2$-continuous functors. 

\begin{proposition}
    The inclusion $(\twocatoftwocats(\vartopos)^{\subuniverse-2\sqcap})^{\simeq2}\subset \twocatoftwocats(\vartopos)^{\simeq2}$ preserves all limits.
\end{proposition}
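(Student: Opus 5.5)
We need to show that the non-full subcategory of $\twocatoftwocats(\vartopos)^{\simeq2}$ spanned by $\subuniverse^{\full}$-$2$-complete $\vartopos$-$2$-categories and $\subuniverse^{\full}$-$2$-continuous functors is closed under limits, and that the limit cone consists of $2$-continuous functors with the expected universal property. The approach is to work entirely through the criteria of \Cref{prop:E_two_complete_criterion,prop:E_two_continuous_criterion}. These translate $2$-completeness of $\varinttwocat{D}$ into $\vartopos[E]$-right adjointability of the functor $\varinttwocat{D}:\vartopos^{\op}\to\twocatoftwocats$, and $2$-continuity of $F$ into $\vartopos[E]$-right adjointability of the corresponding natural transformation. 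The statement then becomes the assertion that, inside $\Fun(\vartopos^{\op},\twocatoftwocats)$, the subcategory of $\vartopos[E]$-right adjointable functors and adjointable transformations is closed under limits. This is the standard fact that adjointable diagrams are closed under limits, as in \Cref{lem:adj_closed_under_limits}.

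\textbf{Step 1.} Let $D:\varcat{K}\to(\twocatoftwocats(\vartopos)^{\subuniverse-2\sqcap})^{\simeq2}$ be a diagram, and let $\varinttwocat{L}=\lim D$ in $\catoftwocats(\vartopos)$. Since limits of sheaves are computed contextwise, for each $s:B\to A$ in $\vartopos[E]$ the restriction $s^*:\varinttwocat{L}(A)\to\varinttwocat{L}(B)$ is the limit in $\Fun(\Delta^1,\catoftwocats)$ of the restrictions $s^*_k:D_k(A)\to D_k(B)$.

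\textbf{Step 2.} Each $s^*_k$ is a left adjoint in $\twocatoftwocats$, with right adjoint $s_{*,k}$. Because the transition maps are $2$-continuous, the naturality squares are right adjointable, so the right adjoints assemble into a diagram as well. Concretely, the diagram $k\mapsto(s_k^*\dashv s_{*,k})$ factors through $\Fun(\mathbf{Adj},\twocatoftwocats)$. This is exactly what adjointability of the transformations encodes: the mates must be invertible for the diagram of adjunctions to exist. Taking limits in $\catoftwocats$ of $\mathbf{Adj}$-shaped diagrams, computed pointwise, produces an adjunction $s^*\dashv s_*$ on $\varinttwocat{L}$. This is precisely the argument of \Cref{lem:adj_closed_under_limits}, applied in the $(\infty,3)$-category $\twocatoftwocats$, or more concretely through the cotensoring $\funtwocat(\mathbf{Adj},-)$.

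\textbf{Step 3.} For a pullback square in $\vartopos$ with the relevant leg in $\vartopos[E]$, the Beck--Chevalley mate for $\varinttwocat{L}$ is the limit of the Beck--Chevalley mates for the $D_k$. These are invertible, so the mate for $\varinttwocat{L}$ is invertible by \Cref{lem:2_isos_closed_under_limits}. Hence $\varinttwocat{L}$ is $2$-complete.

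\textbf{Step 4.} The projections $\varinttwocat{L}\to D_k$ are $2$-continuous, since their mates are the limit-projections of the identity mates. For the universal property, take a cone from a $2$-complete $\varinttwocat{T}$ through $2$-continuous functors. The induced map $\varinttwocat{T}\to\varinttwocat{L}$ has as its mate the limit of the invertible mates, so it is again $2$-continuous. Since the subcategory is non-full, we must also check that a map into $\varinttwocat{L}$ is $2$-continuous exactly when all of its composites to the $D_k$ are. This follows because the mate of the map into the limit is computed as the limit of the mates.

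The main obstacle is Step 2: making the limit of adjunctions coherent rather than only objectwise. This step uses $\mathbf{Adj}$ as a $3$-categorical walking adjunction inside $\twocatoftwocats$. I would reduce it to \Cref{lem:adj_closed_under_limits} by noting that a morphism of adjunctions in a $2$-category is the same as an adjunction in the arrow $2$-category $\funtwocat(\Delta^1,-)$ in which the square commutes on the nose and the mate is invertible.
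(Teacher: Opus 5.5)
Your argument is correct, but it takes a different and longer route than the paper. The paper does not construct $s_*$ on the limit or check Beck--Chevalley by hand. It combines \Cref{prop:E_two_complete_criterion,prop:E_two_continuous_criterion} with the universal property of spans (\Cref{prop:univ_prop_of_span_half}, for the span pair $(\vartopos,\vartopos[E])$ and target $\twocatoftwocats$). This identifies $(\twocatoftwocats(\vartopos)^{\subuniverse-2\sqcap})^{\simeq2}$ with the \emph{full} subcategory of $\Fun(\spanhalf(\vartopos,\vartopos[E]),\twocatoftwocats)$ on functors whose restriction to $\vartopos^{\op}$ is a sheaf. The inclusion then becomes restriction along $\vartopos^{\op}\into\spanhalf(\vartopos,\vartopos[E])$, which preserves limits because they are computed pointwise.

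What the paper's route buys is that your three separate verifications are absorbed into a single universal object: existence of the right adjoints on $\varinttwocat{L}$, invertibility of the limit mates, and the non-full universal property of Step 4. In particular, the coherence problem you flag in Step 2 never arises. The price is treating the universal property of $\spanhalf$, including its statement on adjointable transformations, as a black box.

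Your route needs only the walking adjunction, but its real input should be stated explicitly. That input is that restriction $\Fun(\mathbf{Adj},\twocatoftwocats)\to\Fun(\Delta^1,\twocatoftwocats)$ is a subcategory inclusion whose image consists of the left adjoints and the right-adjointable squares. This is what makes lifting the whole $\varcat{K}$-diagram a condition on objects and morphisms. You also need that limits are pointwise in both functor categories.

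Note also that \Cref{lem:adj_closed_under_limits,lem:2_isos_closed_under_limits} concern cells inside the $(\infty,2)$-categories of a diagram, not $1$- and $2$-cells of $\twocatoftwocats$. In Steps 2--3 they are therefore analogies, not literal applications. They become literal after cotensoring. For example, apply \Cref{lem:2_isos_closed_under_limits} to the diagram $k\mapsto\funtwocat(\varinttwocat{L}(B),D_k(A'))$. This shows the mate for $\varinttwocat{L}$ is invertible, once you know that the projections are morphisms of adjunctions, so that they commute with the mates.
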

\begin{proof}
    The $\infty$-category $(\twocatoftwocats(\vartopos)^{\subuniverse-2\sqcap})^{\simeq2}$ is the full $\infty$-subcategory of \[\Fun(\spanhalf(\vartopos,\vartopos[E]), \twocatoftwocats)\] spanned by those functors that restrict to sheaves on $\vartopos^{\op}$. The inclusion in the proposition above agrees with restriction along $\vartopos^{\op}\into \spanhalf(\vartopos,\vartopos[E])$ and thus preserves all limits.
\end{proof}

\begin{proposition}
    Let $\vartwocat{I}\to \vartwocat{J}$ be a $2$-functor. Let $\varinttwocat{D}$ be an $\subuniverse^{\full}$-$2$-complete $\vartopos$-$2$-category. Then the functor \[\inttwofun^{\oplax}(\vartwocat{J},\varinttwocat{D})\to \inttwofun^{\oplax}(\vartwocat{I},\varinttwocat{D}) \]
    is $\subuniverse^{\full}$-$2$-continuous.
\end{proposition}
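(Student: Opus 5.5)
The natural route is through \Cref{prop:E_two_continuous_criterion}. It reduces $\subuniverse^{\full}$-$2$-continuity of the restriction functor
\[
\operatorname{res}\colon\inttwofun^{\oplax}(\vartwocat{J},\varinttwocat{D})\to\inttwofun^{\oplax}(\vartwocat{I},\varinttwocat{D})
\]
to showing that the natural transformation $\vartopos^{\op}\Rightarrow\twocatoftwocats$ it determines is $\vartopos[E]$-right adjointable. By construction, both source and target are computed contextwise (no sheafification is needed): the source is $A\mapsto\funtwocat^{\oplax}(\vartwocat{J},\varinttwocat{D}(A))$ and the target is $A\mapsto\funtwocat^{\oplax}(\vartwocat{I},\varinttwocat{D}(A))$. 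The transformation between them is whiskering of the composite $\vartopos^{\op}\xto{\varinttwocat{D}}\twocatoftwocats$ with the $2$-natural transformation
\[
\funtwocat^{\oplax}(\vartwocat{J},-)\Rightarrow\funtwocat^{\oplax}(\vartwocat{I},-)
\]
of endo-$2$-functors of $\twocatoftwocats$, given by restriction along $\vartwocat{I}\to\vartwocat{J}$.

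First I would note the input. Since $\varinttwocat{D}$ is $\subuniverse^{\full}$-$2$-complete, \Cref{prop:E_two_complete_criterion} shows that for each $s\colon B\to A$ in $\vartopos[E]$ the functor $s^*\colon\varinttwocat{D}(A)\to\varinttwocat{D}(B)$ is a left adjoint in $\twocatoftwocats$, with right adjoint $s_*$, and that the Beck--Chevalley conditions hold for pullback squares. By the preceding example, applying the $2$-functor $\funtwocat^{\oplax}(\vartwocat{J},-)$ gives an adjunction
\[
s^*\circ-\dashv s_*\circ-
\]
on oplax functor categories, and likewise for $\vartwocat{I}$. The Beck--Chevalley squares are preserved as well.

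The key step is to check that the naturality square
\[
\begin{tikzcd}
\funtwocat^{\oplax}(\vartwocat{J},\varinttwocat{D}(A))\ar[r,"s^*\circ-"]\ar[d,"\operatorname{res}"'] & \funtwocat^{\oplax}(\vartwocat{J},\varinttwocat{D}(B))\ar[d,"\operatorname{res}"]\\
\funtwocat^{\oplax}(\vartwocat{I},\varinttwocat{D}(A))\ar[r,"s^*\circ-"'] & \funtwocat^{\oplax}(\vartwocat{I},\varinttwocat{D}(B))
\end{tikzcd}
\]
is horizontally right adjointable, i.e.\ that the mate $\operatorname{res}\circ(s_*\circ-)\to(s_*\circ-)\circ\operatorname{res}$ is an equivalence. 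The argument would be as follows. The right adjoints are postcomposition with $s_*$, and the vertical maps are precomposition along $\vartwocat{I}\to\vartwocat{J}$. Postcomposition and precomposition commute strictly, since they are the two variables of the functor $\funtwocat^{\oplax}(-,-)\colon\catoftwocats^{\op}\times\twocatoftwocats\to\twocatoftwocats$, which is functorial in the first variable through $2$-natural restriction. Because of this, the mate is computed from the unit and counit of $s^*\dashv s_*$ whiskered on the left and on the right, and it reduces to the triangle identity. So the mate is the identity, hence invertible.

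I expect the main obstacle to be making this precise. One has to justify that restriction $\funtwocat^{\oplax}(\vartwocat{J},-)\to\funtwocat^{\oplax}(\vartwocat{I},-)$ is a genuinely $2$-natural (strict, not merely lax) transformation between $2$-functors on $\twocatoftwocats$. The way to do this is to view it as induced by the bifunctoriality of the internal hom for the Gray tensor product, coming from the module structure in \Cref{prop:gray_module_structures}. Once that is granted, mates of $2$-natural squares against adjunctions are automatically invertible: a $2$-natural transformation sends the adjunction $s^*\dashv s_*$ to a map of adjunctions. Finally, the conditions are checked contextwise and are stable under change of context, so \Cref{prop:E_two_continuous_criterion} applies.
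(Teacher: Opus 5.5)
Your proposal is correct and follows the paper's proof. The paper also identifies the restriction functor with the whiskering of $\varinttwocat{D}\colon\vartopos^{\op}\to\twocatoftwocats$ by the natural transformation $\funtwocat^{\oplax}(\vartwocat{J},-)\Rightarrow\funtwocat^{\oplax}(\vartwocat{I},-)$, and concludes because whiskering with a natural transformation of $2$-functors preserves adjointability; your mate computation via the triangle identities simply spells out that last step.
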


\begin{proof}
    This arrow agrees with the whiskering 
    \[\begin{tikzcd}
        \vartopos^{\op}\ar[r]&\twocatoftwocats && \twocatoftwocats
        \arrow[""{name=0, anchor=center, inner sep=0}, "{\funtwocat^{\oplax}(\vartwocat{I},-)}"', shift right=2, curve={height=18pt}, from=1-2, to=1-4]
        \arrow[""{name=1, anchor=center, inner sep=0}, "{\funtwocat^{\oplax}(\vartwocat{J},-)}", shift left=2, curve={height=-12pt}, from=1-2, to=1-4]
        \arrow[between={0.2}{0.8}, Rightarrow, nfold, from=1, to=0]
    \end{tikzcd}\]
    which is right adjointable since whiskering with a natural transformation of $2$-functors preserves adjointability.
\end{proof}

\begin{example}
    Let $\varinttwocat{D}$ be $\subuniverse^{\full}$-$2$-complete. Assume $\varinttwocat{D}$ has a terminal object $*$. Then $\varinttwocat{D}_{*//}$ is $\subuniverse^{\full}$-$2$-complete.
\end{example}
\begin{proof}
    The following is a pullback in $\twocatoftwocats(\vartopos)^{\subuniverse-2\sqcap}$.
    \[
    \begin{tikzcd}
        \varinttwocat{D}_{*//}\ar[d]\ar[r]\pullbackdr
        &\inttwofun^{\oplax}(\Delta^1,\varinttwocat{D})\ar[d,"s"]\\
        *\ar[r,"*"] & \varinttwocat{D}.
    \end{tikzcd}
    \]
\end{proof}

\subsubsection*{Completeness and continuity of universal unstraightening}

This has applications for the universal cocartesian fibration.

Let \[\internalcatofcats_{*//}:=(\internaltwocatofcats_{*//})^{\simeq2}.\]
Then the functor $\internalcatofcats_{*//}\to \internalcatofcats$ is the universal cocartesian fibration of $\vartopos$-categories. That is, it is the cocartesian fibration of $\vartopos$-categories classifying the identity functor of $\internalcatofcats$. Also note that the composite $\internalcatofcats_{*/}\to\internalcatofcats_{*//}\to\internalcatofcats$ is the left fibration classifying the core functor $(-)^{\simeq}:\internalcatofcats\to \universe$. Thus $\internalcatofcats_{*/}\subset\internalcatofcats_{*//}$ is the wide $\vartopos$-subcategory spanned by cocartesian edges.

\begin{theorem}\label{thm:univ_unst_is_monoidal}
    The three $\vartopos$-categories $\internalcatofcats_{*/},\internalcatofcats_{*//},\internalcatofcats$ are $
    \subuniverse^{\full}$-complete and both functors in the composition \[\internalcatofcats_{*/}\to \internalcatofcats_{*//} \to \internalcatofcats \] are $\subuniverse^{\full}$-continuous.
\end{theorem}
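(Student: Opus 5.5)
The plan is to deduce everything from the $2$-categorical statements just established, by passing to cores $(-)^{\simeq2}$. First, $\internaltwocatofcats$ is $\subuniverse^{\full}$-$2$-complete by the example above. Since $*$ is terminal in $\internaltwocatofcats$, the example on oplax slices shows that $\internaltwocatofcats_{*//}$ is $\subuniverse^{\full}$-$2$-complete as well. The projection $\internaltwocatofcats_{*//}\to\internaltwocatofcats$ factors as the pullback of $t\colon\inttwofun^{\oplax}(\Delta^1,\internaltwocatofcats)\to\internaltwocatofcats$ along the $\subuniverse^{\full}$-$2$-continuous square defining the slice. The functor $t$ is restriction along $\{1\}\into\Delta^1$, hence $\subuniverse^{\full}$-$2$-continuous by the proposition on restriction along $\vartwocat{I}\to\vartwocat{J}$. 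Because the inclusion of $(\twocatoftwocats(\vartopos)^{\subuniverse-2\sqcap})^{\simeq2}$ preserves limits, the pullback defining the slice is formed there. The projection to $\internaltwocatofcats$ is therefore $\subuniverse^{\full}$-$2$-continuous.

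Next I would show that $(-)^{\simeq2}$ carries $\subuniverse^{\full}$-$2$-complete $\vartopos$-$2$-categories to $\subuniverse^{\full}$-complete $\vartopos$-categories, and $\subuniverse^{\full}$-$2$-continuous functors to $\subuniverse^{\full}$-continuous ones. By \Cref{prop:E_two_complete_criterion}, $2$-completeness means that $\varinttwocat{D}\colon\vartopos^{\op}\to\twocatoftwocats$ is $\vartopos[E]$-right adjointable. Consider the $2$-functor $(-)^{\simeq2}\colon\twocatoftwocats\to\twocatofcats$, where $2$-morphisms are sent to themselves and are thus required to be invertible. Since it is right adjoint to the inclusion, it is a $2$-functor on the level needed. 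It therefore preserves adjunctions and Beck--Chevalley squares, so $\varinttwocat{D}^{\simeq2}\colon\vartopos^{\op}\to\catofcats$ is $\vartopos[E]$-right adjointable. Unwinding, for $s\in\vartopos[E]$ the functor $s^*$ then admits a right adjoint $s_*$ satisfying base change, which is exactly $\subuniverse^{\full}$-completeness in the Martini--Wolf sense. The same argument applies to continuity via \Cref{prop:E_two_continuous_criterion}. Combined with \Cref{prop:univ_internal_2_cocart_fib}, this settles $\internalcatofcats_{*//}$, $\internalcatofcats$, and the functor $\internalcatofcats_{*//}\to\internalcatofcats$. The main obstacle is checking that $(-)^{\simeq2}$ preserves adjunctions between $(\infty,2)$-categories; if it does not, I would fall back on verifying directly, contextwise, that the right adjoint $s_*$ of $s^*$ on $\internaltwocatofcats_{*//}$ preserves the invertible-$2$-cell part.

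For $\internalcatofcats_{*/}$, I would use that it is the left fibration classifying $(-)^{\simeq}$, i.e.\ $\internalcatofcats_{*/}\simeq\internalcatofcats\times_{\universe}\universe_{*/}$. The functor $(-)^{\simeq}\colon\internalcatofcats\to\universe$ is a right adjoint, hence continuous, and $\universe_{*/}\to\universe$ is continuous with complete source. Limits in a pullback of complete categories along continuous functors exist and are computed componentwise, so $\internalcatofcats_{*/}$ is $\subuniverse^{\full}$-complete and its projection to $\internalcatofcats$ is continuous. Finally, the composite $\internalcatofcats_{*/}\to\internalcatofcats_{*//}\to\internalcatofcats$ is continuous and the second functor is continuous and conservative on limit cones over a fixed base. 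From this I conclude that the inclusion preserves $\subuniverse^{\full}$-limits. Concretely, the limit of a family of pointed categories with cocartesian (invertible) transition maps has its point given by the limit point in both $\vartopos$-categories.
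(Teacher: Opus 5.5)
Your treatment of $\internalcatofcats_{*//}$, $\internalcatofcats$ and the projection $\internalcatofcats_{*//}\to\internalcatofcats$ is essentially the paper's argument. You form the oplax slice of the $2$-complete $\internaltwocatofcats$ inside the $2$-complete world, use that $t$ is a restriction functor, and pass to $(-)^{\simeq2}$. Your explicit check that $(-)^{\simeq2}$ is a $2$-functor, and so preserves adjunctions and Beck--Chevalley squares, fills in a step the paper leaves implicit. It is fine, since $(-)^{\simeq2}$ commutes with $\varcat{T}\times-$ for $\infty$-categories $\varcat{T}$. Likewise, your pullback $\internalcatofcats_{*/}\simeq\internalcatofcats\times_{\universe}\universe_{*/}$ correctly gives completeness of $\internalcatofcats_{*/}$ and continuity of $\internalcatofcats_{*/}\to\internalcatofcats$.

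The gap is the continuity of the inclusion $f\colon\internalcatofcats_{*/}\to\internalcatofcats_{*//}$. This is part of the statement, and it is what is used later, when cocartesian edges must be closed under $\subuniverse^{\full}$-limits.

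\begin{itemize}
\item Knowing that $g\circ f$ and $g$ are continuous gives continuity of $f$ only if $g$ reflects limits, e.g.\ is conservative. The projection $g\colon\internalcatofcats_{*//}\to\internalcatofcats$ is not conservative: a morphism $(\varintcat{C},x)\to(\varintcat{C},y)$ over $\id_{\varintcat{C}}$ is an arbitrary morphism $x\to y$ in $\varintcat{C}$.
\item So all you obtain is a comparison map from the limit in $\internalcatofcats_{*/}$ to the limit in $\internalcatofcats_{*//}$. It lies over an equivalence, but there is no reason for it to be invertible.
\item The fact that the indexing objects are groupoids does not help. The limit along $s\colon B\to A$ is exhibited by the counit $s^*s_*X\to X$, which is a non-invertible morphism in context $B$.
\item Your ``concretely'' sentence is precisely the claim to be proved: that this counit in $\internalcatofcats_{*//}$ has invertible oplax $2$-cell, i.e.\ is cocartesian.
\end{itemize}

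The missing idea, as in the paper, is to realize $\internalcatofcats_{*/}$ as the pullback of $\internalcatofcats_{*//}$ along $\intfun(\Delta^1,\internalcatofcats)\into\inttwofun^{\oplax}(\Delta^1,\internaltwocatofcats)^{\simeq2}$, and to show that this inclusion is continuous. The paper does this using $\intfun(\Delta^1,\internalcatofcats)\simeq\inttwofun^{\oplax}(\Delta^1,\internaltwocatofcats^{\simeq2})^{\simeq2}$ and the $2$-continuity of $\internaltwocatofcats^{\simeq2}\into\internaltwocatofcats$. Concretely, applying $\funtwocat^{\oplax}(\Delta^1,-)$ to the adjunction $s^*\dashv s_*$ yields a counit whose $2$-cell component is a naturality square of a strong transformation, hence invertible. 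The inclusion $\internalcatofcats_{*/}\to\internalcatofcats_{*//}$ is then a base change of a continuous functor between complete $\vartopos$-categories, and your separate pullback along the core becomes unnecessary.
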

\begin{proof}
    Consider the following diagram:
        \[
    \begin{tikzcd}
        \internalcatofcats_{*/}\ar[d]\ar[r]\pullbackdr
        &\intfun(\Delta^1,\internalcatofcats)\ar[d,hook]\ar[r]
        &\internalcatofcats\ar[d,equal]\\
        \internalcatofcats_{*//}\ar[d]\ar[r]\pullbackdr
        &\inttwofun^{\oplax}(\Delta^1,\internaltwocatofcats)^{\simeq2}\ar[d,"s"]\ar[r]
        &\internalcatofcats\\
        *\ar[r,"*"] & \internalcatofcats.
    \end{tikzcd}
    \]
    It is enough to show that all of it is in $\internalcatofcats^{\subuniverse-\sqcap}$.

    The lower left square is in $\internalcatofcats^{\subuniverse-\sqcap}$ by the example above. 
    
    Therefore, to show that the upper left square is in $\internalcatofcats^{\subuniverse-\sqcap}$ it suffices to show that the middle upper vertical morphism is in  $\internalcatofcats^{\subuniverse-\sqcap}$. This follows since $\intfun(\Delta^1,\internalcatofcats) \simeq \inttwofun^{\oplax}(\Delta^1,\internaltwocatofcats^{\simeq2})^{\simeq2}$ and the inclusion $\internaltwocatofcats^{\simeq2}\into \internaltwocatofcats$ is $\subuniverse^{\full}$-continuous. A similar argument explains why the upper right square is in $\internalcatofcats^{\subuniverse-\sqcap}$.
\end{proof}

\begin{theorem}\label{thm:univ_unst_of_lax_natural_transformation_is_E_monoidal}
    The universal unstraightening of the universal lax natural transformation  \[\inttwounst(\alpha_{\univ}):\inttwounst(s)\to \inttwounst(t)\]
    described in \Cref{prop:extra_functoriality_of_unst_univ_case} is an $\subuniverse^{\full}$-$2$-continuous functor between $\subuniverse^{\full}$-$2$-complete $\vartopos$-$2$-categories. 
\end{theorem}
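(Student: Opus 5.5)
We identify the three objects with the explicit models from \Cref{prop:extra_functoriality_of_unst_univ_case}: $\inttwounst(s)\simeq\inttwofun^{\oplax,0-*}(\Delta^2,\internaltwocatofcats)$, $\inttwounst(t)\simeq\inttwofun^{\oplax,0-*}(\Lambda^2_2,\internaltwocatofcats)$, and $\inttwounst(\alpha_{\univ})$ is restriction along $\Lambda^2_2\into\Delta^2$. Everything is then phrased in terms of $\inttwofun^{\oplax}(\vartwocat{I},\internaltwocatofcats)$, for which the preceding examples give the needed completeness and continuity.

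\textbf{Completeness.} Since $\internaltwocatofcats$ is $\subuniverse^{\full}$-$2$-complete, so are $\inttwofun^{\oplax}(\Delta^2,\internaltwocatofcats)$ and $\inttwofun^{\oplax}(\Lambda^2_2,\internaltwocatofcats)$. Restriction along $\Lambda^2_2\into\Delta^2$ and evaluation at the vertex $0$ are restrictions along $2$-functors, hence $\subuniverse^{\full}$-$2$-continuous by the proposition on $\inttwofun^{\oplax}(\vartwocat{J},\varinttwocat{D})\to\inttwofun^{\oplax}(\vartwocat{I},\varinttwocat{D})$. The full subcategory with $0\mapsto *$ should be the pullback
\[
\inttwofun^{\oplax,0-*}(\Delta^2,\internaltwocatofcats)\simeq *\times_{\internaltwocatofcats}\inttwofun^{\oplax}(\Delta^2,\internaltwocatofcats),
\]
as in the proof of \Cref{thm:desc_of_the_unst_of_universal_lax_natural_transformation}: the fibre over the terminal object is a full subcategory, because $\internaltwocatofcats(*,*)\simeq *$ in every context. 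The same holds for $\Lambda^2_2$. Here $*\to\internaltwocatofcats$ is $\subuniverse^{\full}$-$2$-continuous, since the terminal object is preserved by the right adjoints $s_*$ of restriction.

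\textbf{Continuity and the main obstacle.} It remains to check that $\subuniverse^{\full}$-$2$-complete $\vartopos$-$2$-categories and $\subuniverse^{\full}$-$2$-continuous functors are closed under pullbacks in $\catoftwocats(\vartopos)$, and that the induced map of pullbacks is continuous. The cleanest route uses \Cref{prop:E_two_complete_criterion,prop:E_two_continuous_criterion}: both notions become $\vartopos[E]$-right adjointability of the corresponding functors $\vartopos^{\op}\to\twocatoftwocats$ and of natural transformations between them. Pullbacks of sheaves are computed contextwise. A pullback of diagrams in which every $s^*$ is a left adjoint, and every transition square is right adjointable, again has $s^*$ left adjoint, with right adjoint computed as the pullback of the right adjoints. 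Beck--Chevalley also passes to the pullback. This is the $2$-categorical analogue of \Cref{lem:adj_closed_under_limits}, now using the initial $(\infty,2)$-category on an adjointable square. This is the step I expect to be the main obstacle, because it needs the pullback to be formed in the locally full $(\infty,2)$-category $\twocatoftwocats(\vartopos)^{\subuniverse-2\sqcap}$.

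For that I would first show that
\[
(\twocatoftwocats(\vartopos)^{\subuniverse-2\sqcap})^{\simeq2}\subset\twocatoftwocats(\vartopos)^{\simeq2}
\]
creates limits; the preceding proposition says it preserves them. The identification with $\Fun(\spanhalf(\vartopos,\vartopos[E]),\twocatoftwocats)$ in that proof then shows that a cone of $\subuniverse^{\full}$-$2$-continuous functors is a limit cone there. The square defining $\inttwofun^{\oplax,0-*}(\Delta^2,-)$ therefore lies in this subcategory, and so does the one for $\Lambda^2_2$. Finally, the restriction map is induced by the continuous map $\inttwofun^{\oplax}(\Delta^2,-)\to\inttwofun^{\oplax}(\Lambda^2_2,-)$ over the identity of $*\to\internaltwocatofcats$. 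It is therefore a map of limit diagrams inside the subcategory, hence $\subuniverse^{\full}$-$2$-continuous.
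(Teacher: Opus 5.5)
Your proposal is correct and is essentially the paper's argument. Both exhibit $\inttwounst(s)$ and $\inttwounst(t)$ as pullbacks of $\subuniverse^{\full}$-$2$-complete $\vartopos$-$2$-categories along $\subuniverse^{\full}$-$2$-continuous functors, hence as pullbacks in $\twocatoftwocats(\vartopos)^{\subuniverse-2\sqcap}$, and both identify $\inttwounst(\alpha_{\univ})$ as induced by (equivalently, a base change of) the continuous restriction $\inttwofun^{\oplax}(\Delta^2,\internaltwocatofcats)\to\inttwofun^{\oplax}(\Lambda^2_2,\internaltwocatofcats)$. The differences are cosmetic: you collapse the two-step pullback through $\internaltwocatofcats_{*//}$ into a single pullback along evaluation at $0$, and you spell out what the paper leaves implicit, namely that the limit-preservation proposition (via the full subcategory of $\Fun(\spanhalf(\vartopos,\vartopos[E]),\twocatoftwocats)$) lets these pullbacks be formed inside the subcategory.
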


\begin{proof}
    The pullbacks defining both $\inttwounst(s), \inttwounst(t)$ are in $\twocatoftwocats(\vartopos)^{\subuniverse-2\sqcap}$ and the morphism between them as described in \Cref{thm:desc_of_the_unst_of_universal_lax_natural_transformation} is a base change of a morphism in $\twocatoftwocats(\vartopos)^{\subuniverse-2\sqcap}$.
\end{proof}

\subsubsection*{The natural functor \texorpdfstring{$\mu$}{mu}}\label{subsubsec:const_of_mu}

Let $\varintcat{C}$ be an $\subuniverse^{\full}$-complete $\vartopos$-category. Let $\varintcat{C}^\times$ denote the operad associated with the $\subuniverse$-monoidal $\vartopos$-category defined by $\varintcat{C}$ via the unfurling construction. Then our next goal is to describe a natural functor of $\vartopos$-categories
\[\mu:\varintcat{C}^\times \to \varintcat{C}\]
described as follows: 

Let $A\in \vartopos$ and let $\pi_B:B\to A\in \subuniverse(A)$. Then $B$ defines an object in $\spanEfE(A)$. An object in $\varintcat{C}^\times(A)$ lying over $B$ is an object $x\in \varintcat{C}^B(A) = \varintcat{C}(B)$. In the context $A$, the functor we are after will send $x$ to the object $(\pi_B)_* x \in \varintcat{C}(A)$.

We describe the action of $\mu$ on morphisms. A morphism in $\varintcat{C}^\times(A)$ from $x\in \varintcat{C}^B(A)$ lying over $[\pi_B:B\to A]$  to $y$ lying over $[\pi_C:C\to A]$ is given by a span
\[
\begin{tikzcd}
    &F\ar[dl,"p"']\ar[dr,"q"]\\ B&&C,
\end{tikzcd}
\]
and a morphism 
\[ q_*p^* x\xto{f} y \quad \in \quad\varintcat{C}(C).\]
Then $\mu$ sends this morphism to the composition
\[(\pi_B)_*x \to (\pi_F)_*p^* x \simeq (\pi_C)_*q_*p^*x\xto{f} (\pi_C)_* y. \]
The first arrow is the Beck–Chevalley morphism induced by the square
\[\begin{tikzcd}
    F\ar[r]\ar[d]&B\ar[d]\\A\ar[r,equal]&A.
\end{tikzcd}\]

Let $\intspanhalf(\subuniverse^{\full},\subuniverse)_{//*}$ be the $\vartopos$-$2$-category defined by the following pullback:

\[\begin{tikzcd}
    \intspanhalf(\subuniverse^{\full},\subuniverse)_{//*}\ar[r]\ar[d]\pullbackdr
    &\inttwofun^{\oplax}(\Delta^1, \intspanhalf(\subuniverse^{\full},\subuniverse))\ar[d,"t"]\\
    \explicitset{*}\ar[r,hook] & \intspanhalf(\subuniverse^{\full},\subuniverse).
\end{tikzcd}\]

An object of $\intspanhalf(\subuniverse^{\full},\subuniverse)_{//*}(A)$  is a span in $\subuniverse^{\full}(A)$:
\[B\from B' \to A.\]
A morphism is a diagram of the form:

\[\begin{tikzcd}
	{B} && {F} && C \\
	&&& {F\times_CC'} \\
	{B'} &&&& C' \\
	& {B'} \\
	{A} && {A} && {A}.
	\arrow[from=1-3, to=1-1]
	\arrow[tail,from=1-3, to=1-5]
	\arrow[from=2-4, to=1-3]
	\arrow["\lrcorner"{anchor=center, pos=0.125, rotate=90}, draw=none, from=2-4, to=1-5]
	\arrow[tail, from=2-4, to=3-5]
	\arrow[from=3-1, to=1-1]
	\arrow[from=3-1, to=5-1]
	\arrow[from=3-5, to=1-5]
	\arrow[tail,from=3-5, to=5-5]
    \arrow[from=2-4, to=4-2]
	\arrow[from=4-2, to=3-1]
	\arrow["\lrcorner"{anchor=center, pos=0.125, rotate=-90}, draw=none, from=4-2, to=5-1]
	\arrow[tail,from=4-2, to=5-3]
	\arrow[equal,from=5-3, to=5-1]
	\arrow[equal,from=5-3, to=5-5]
\end{tikzcd}\]

Let $\varintcat{S}\subset(\intspanhalf(\subuniverse^{\full},\subuniverse)_{//*})^{\simeq2}$ denote the full $\vartopos$-subcategory spanned by those objects of the form $B=B\to *$. 

\begin{proposition}\label{prop:E_cart_S_is_equivalent_to_span}
    The composition
    \[\varintcat{S}\subset\intspanhalf(\subuniverse^{\full},\subuniverse)_{//*} \to \inttwofun^{\oplax}(\Delta^1, \intspanhalf(\subuniverse^{\full},\subuniverse))\xto{s}\intspanhalf(\subuniverse^{\full},\subuniverse)\]
    is an equivalence between $\varintcat{S}$ and $\spanEfE\simeq \intspanhalf(\subuniverse^{\full},\subuniverse)^{\simeq_2}$.
\end{proposition}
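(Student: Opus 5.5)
The plan is to verify the equivalence contextwise and naturally in the context. Since all constructions involved ($\inttwofun^{\oplax}(\Delta^1,-)$, pullbacks, $(-)^{\simeq2}$, and full subcategories cut out by a condition on objects) are computed pointwise in $A$ and are compatible with restriction along $B\to A$, it suffices to produce, for each context $A$, an equivalence of $\infty$-categories $\varintcat{S}(A)\simeq\Span(\subuniverse^{\full}(A),\subuniverse(A))$ compatible with $s$, and to check that it is natural in $A$.

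\textbf{Step 1 (essential surjectivity).} Objects of $\varintcat{S}(A)$ are spans $B\xleftarrow{=}B\to A$, i.e.\ oplax arrows from $B$ to the terminal object $*=A$ given by the left-pointing span $B=B\to A$ (this is in $\subuniverse$ since $[B\to A]\in\subuniverse(A)$). The source functor sends it to $B$, so $s$ is surjective on objects. Injectivity on equivalence classes holds because the only datum is $B$ itself.

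\textbf{Step 2 (full faithfulness).} Using the explicit description of morphisms in $\intspanhalf(\subuniverse^{\full},\subuniverse)_{//*}$ displayed above, specialise to $B'=B$ and $C'=C$. A morphism then consists of a span $B\leftarrow F\rightarrowtail C$ together with a $2$-cell filling the triangle over $A$. That $2$-cell is a map $F\times_C C\simeq F\to B'=B$ over $A$, compatible with the leg $F\to B$ and with the structure maps to $A$. So the mapping space is the space of spans $B\leftarrow F\rightarrowtail C$ together with a choice of a $2$-morphism in $\spanhalf$ from $B\leftarrow F\to A$ to $B= B\to A$ which restricts to the identity on the $B$-leg.

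The key point is to show that this extra datum is contractible. A $2$-morphism in $\spanhalf(\subuniverse^{\full},\subuniverse)$ between spans $B\leftarrow F\to A$ and $B=B\to A$ is a map $F\to B$ over both legs. Commuting over the $B$-leg forces it to be the given map $F\to B$, and the space of such factorizations is contractible. Hence $s$ induces an equivalence on mapping spaces $\Map_{\varintcat{S}(A)}(B,C)\simeq\Map_{\Span}(B,C)$.

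One must also check that, after passing to $(-)^{\simeq2}$, only invertible $2$-cells appear. The $2$-cells between morphisms of $\varintcat{S}$ involve maps $F\to F'$ over $B$ and $C$. These are invertible in $\spanEfE\simeq\intspanhalf^{\simeq2}$ precisely when the underlying map is an equivalence, which is compatible with the restriction to cores.

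\textbf{Step 3 (compatibility with composition and naturality).} Composition in the oplax slice is given by pasting. Under the identification of Step 2, this pasting reduces to pullback composition of spans, because the $2$-cell data are uniquely determined. Naturality in $A$ follows from the fact that base change preserves all the pullbacks involved.

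The main obstacle is Step 2: making the contractibility of the $2$-cell data rigorous in the $(\infty,2)$-categorical oplax arrow category, rather than only at the level of the informal diagram. The first approach I would try is to identify the slice mapping space as a fibre product
\[
\Map_{\spanhalf}(B,C)\times_{\Fun(B,A)}\Fun(B,A)_{/\id\text{-leg}},
\]
and then use that $\Fun_{\spanhalf}(B,A)$ has the span $B=B\to A$ as a terminal object among spans with the given $B$-leg. Equivalently, this is the statement that $B\xleftarrow{=}B\to A$ is the counit of the adjunction $p^*\dashv p_!$ that is encoded by right adjointability.
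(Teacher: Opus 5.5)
Your proposal is correct and follows essentially the same route as the paper. The paper also checks $s$ contextwise on the anima of objects and of morphisms. Its key observation is exactly your Step 2: a morphism in $\varintcat{S}$ is determined by its top span $B\leftarrow F\rightarrowtail C$, because the $2$-cell $F\times_C C\simeq F\to B$ is forced to be the given leg.

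Your Step 3 is unnecessary. Since $s$ is already a functor, full faithfulness plus essential surjectivity in each context suffices; equivalently, it suffices to have equivalences on objects and morphisms together with the Segal condition.
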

\begin{proof}
    We will show that the composition in the proposition defines an equivalence on the $\vartopos$ anima of objects and on the $\vartopos$ anima of morphisms, which is enough. 

    The fact that this is an equivalence on objects is obvious. The fact that it is an equivalence on morphisms comes from the fact that morphisms in $\varintcat{S}$ are of the form
    \[\begin{tikzcd}
    {B} && {F} && C \\
    &&& {F} \\
    {B} &&&& C \\
    & {B} \\
    {A} && {A} && {A}.
    \arrow[from=1-3, to=1-1]
    \arrow[tail,from=1-3, to=1-5]
    \arrow[equal,from=2-4, to=1-3]
    \arrow["\lrcorner"{anchor=center, pos=0.125, rotate=90}, draw=none, from=2-4, to=1-5]
    \arrow[tail, from=2-4, to=3-5]
    \arrow[equal,from=3-1, to=1-1]
    \arrow[from=3-1, to=5-1]
    \arrow[equal,from=3-5, to=1-5]
    \arrow[tail,from=3-5, to=5-5]
    \arrow[from=2-4, to=4-2]
    \arrow[equal,from=4-2, to=3-1]
    \arrow["\lrcorner"{anchor=center, pos=0.125, rotate=-90}, draw=none, from=4-2, to=5-1]
    \arrow[tail,from=4-2, to=5-3]
    \arrow[equal,from=5-3, to=5-1]
    \arrow[equal,from=5-3, to=5-5]
    \end{tikzcd}\]
    and are visibly determined uniquely by the first row.
\end{proof}

Thus we get a lax natural transformation $\nu:\iota\Rightarrow *: \spanEfE\to \intspanhalf(\subuniverse^{\full},\subuniverse)$ between the inclusion $\iota:\spanEfE\into \intspanhalf(\subuniverse^{\full},\subuniverse)$ and the constant functor on $*$. This functor is given by the composition
\[\spanEfE\simeq \varintcat{S} \subset \intspanhalf(\subuniverse^{\full},\subuniverse)_{//*} \to \inttwofun^{\oplax}(\Delta^1, \intspanhalf(\subuniverse^{\full},\subuniverse)).\]

Let $\varintcat{C}\in \internalcatofcats^{\subuniverse-\sqcap}$. Then by \Cref{prop:univ_prop_of_span_half} and since the functor $(\subuniverse^{\full})^{\op}\to \internaltwocatofcats$ is $\subuniverse$-right adjointable, we get a functor
\[F:\intspanhalf(\subuniverse^{\full},\subuniverse)\to \internaltwocatofcats.\]
extending the functor classifying the cartesian $\subuniverse$-monoidal structure on $\varintcat{C}$.

We can unstraighten $F\nu$ and get a natural functor 
\begin{equation}\label{eq:construction_of_mu}
\mu:\varintcat{C}^\times\to \varintcat{C}\times\spanEfE\to \varintcat{C}.
\end{equation}

The formula above for $\mu$ is given since $\mu$ is computed by cocartesian lifts.

\subsection{Iterated span \texorpdfstring{$\vartopos$-$2$-}{B-2-}categories.}\label{subsec:iterated_internal_spans}

In this subsection, we review the construction of iterated span $(\infty,2)$-categories whose $2$-morphisms are spans between spans (\Cref{prop:construction_of_iterated_spans}). Applying it contextwise to $(\subuniverse^{\full},\subuniverse)$ gives an iterated span $\vartopos$-$2$-category (\Cref{eq:internal_iterated_span_construction}). Using this $\vartopos$-$2$-category we identify the operad associated with the cartesian $\subuniverse$-monoidal structure on $\spanEfE$ (\Cref{prop:pullback_describing_span_EfE_times_using_span_2}). This provides the model used in the next subsection to describe the natural functor $\mu$ explicitly (\Cref{prop:description_of_mu_for_span}).

\subsubsection*{Iterated spans of a span pair}

Let $C$ be an ordinary (i.e. non-internal) $\infty$-category with finite products. 
Let $(C,C_R)$ be a span pair. That is, $C_R\subset C$ is a wide $\infty$-subcategory such that pullbacks of morphisms in $C_R$ along arbitrary morphisms in $C$ exist and are again in $C_R$.\footnote{Note that we do not assume $C$ has pullbacks.} We begin this subsection by constructing an $(\infty,2)$-category $\spantwo(C,C_R)^{C}_{C_R}$ with maximal $\infty$-subcategory $\Span(C,C_R)$ and with $2$-morphisms given by a span between spans
\[\begin{tikzcd}
    &F\ar[ld]\ar[rd,tail]\\
    A
    &X\ar[u]\ar[d,tail]
    &B\\
    &G\ar[lu]\ar[ru,tail]
\end{tikzcd}\]
with tailed arrows denoting morphisms in $C_R$.

We wish to first convince the reader that this $(\infty,2)$-category should exist, namely that the necessary pullbacks exist in $C$ when we take the needed compositions. The pullbacks needed for composition of $1$-morphisms and vertical composition of $2$-morphisms exist since $(C,C_R)$ is a span pair. 

The whiskering
\[\begin{tikzcd}
    &H\ar[ld]\ar[rd,tail]&&
    F\ar[ld]\ar[rd,tail]\\
    C&&A
    &X\ar[u]\ar[d,tail]
    &B\\
    &&&G\ar[lu]\ar[ru,tail]
\end{tikzcd}\]
exists since the functor $H\times_A - $ is defined on any arrow to $A$ since the morphism $H\to A$ is in $C_R$.

To form the whiskering 
\[\begin{tikzcd}
    &F\ar[ld]\ar[rd,tail]
    && H\ar[ld]\ar[rd,tail]\\
    A
    &X\ar[u]\ar[d,tail]
    &B && C\\
    &G\ar[lu]\ar[ru,tail]
\end{tikzcd}\]
we observe that the arrows from all of $F,G,X$ to $B$ are in $C_R$, so the pullbacks $F\times_BH, G\times_BH$ and $X\times_BH$ required to perform the needed whiskering exist in $C$.

\begin{proposition}\label{prop:construction_of_iterated_spans}
    There exists a pullback-preserving fully faithful inclusion  $C\subset C'$, where $C'$ has all pullbacks.

    Furthermore, restricting $\spantwo(C')$ to objects in $C$, $1$-morphisms in $\Span(C,C_R)$ and $2$-morphisms as described above defines a  $(\infty,2)$-subcategory \[\spantwo(C,C_R)^{C}_{C_R}\subset \spantwo(C').\]
\end{proposition}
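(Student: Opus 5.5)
For $C'$ I would take the presheaf $\infty$-category $\presh(C)$ with the Yoneda embedding $\yo:C\into\presh(C)$. This is fully faithful, $\presh(C)$ has all small limits and hence all pullbacks, and $\yo$ preserves every limit that exists in $C$. In particular it preserves the pullbacks of morphisms in $C_R$ along arbitrary morphisms, which are the only pullbacks the construction uses. Since $C$ has finite products, these are preserved as well. If one wants $C'$ to be small, one can instead take the closure of $\yo(C)$ in $\presh(C)$ under finite limits. The argument below does not depend on which choice is made.

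For the second assertion, form the $(\infty,2)$-category $\spantwo(C')$ of iterated spans, e.g.\ by applying the Haugseng iterated span construction to $C'$. I would then cut out the subcategory in three steps.
\begin{itemize}
\item First, pass to the full sub-$(\infty,2)$-category on the objects of $C$.
\item Second, pass to the locally full sub-$(\infty,2)$-category whose $1$-morphisms are spans $A\leftarrow F\to B$ with $F\in C$ and $F\to B$ in $C_R$.
\item Third, within each mapping $\infty$-category, take the wide subcategory of $2$-morphisms $F\leftarrow X\to G$ over $A,B$ with $X\in C$ and $X\to G$ in $C_R$.
\end{itemize}
Objects and $1$-morphisms form a $1$-categorical subcategory datum. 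The $2$-morphisms form a subcategory of each mapping $\infty$-category $\Span(C'_{/A\times B})$. An $(\infty,2)$-subcategory is determined by such a choice, provided the choice is closed under identities and under all the compositions: horizontal composition of $1$-morphisms, vertical composition of $2$-morphisms, and whiskering on both sides. Closure under these operations up to equivalence suffices, because the inclusions are monomorphisms of $(\infty,2)$-categories, e.g.\ as subobjects of the complete Segal presentation. So the proof reduces to checking that each composite, computed in $C'$, lands in the prescribed classes. The pullbacks involved are computed in $C'$, and because $\yo$ preserves the relevant ones, such a composite agrees with the pullback formed in $C$ whenever the latter exists.

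The checks follow the discussion preceding the proposition.
\begin{itemize}
\item Horizontal composition of $1$-morphisms uses $F\times_B H$ with $H\to B$ in $C_R$. This pullback exists in $C$, and its map to the target lies in $C_R$ by base change and closure of $C_R$ under composition.
\item Vertical composition of $2$-morphisms $F\leftarrow X\to G\leftarrow Y\to K$ uses $X\times_G Y$ with $Y\to G$ in $C_R$, so it exists in $C$, and $X\times_G Y\to K$ lies in $C_R$.
\item Left whiskering by $H\to A$ in $C_R$ replaces $F,X,G$ by $H\times_A F$, $H\times_A X$, $H\times_A G$. These exist in $C$, and the legs stay in $C_R$ by base change.
\item Right whiskering by $B\leftarrow H$ uses $F\times_B H$, $X\times_B H$, $G\times_B H$. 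These exist in $C$ because $F,X,G$ map to $B$ through $C_R$ (for $X$, via $X\to G\to B$). The map $X\times_B H\to G\times_B H$ is a base change of $X\to G$ and so lies in $C_R$.
\item Identities are clearly in the prescribed classes.
\end{itemize}

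The main obstacle I expect is the formal point that "closed under these operations" really determines a sub-$(\infty,2)$-category with all higher coherences. I would handle it as follows. Model $\spantwo(C')$ as a $2$-fold complete Segal space whose $(n,m)$-simplices are grid-shaped diagrams in $C'$ with cartesian squares. Take the subobject spanned by those diagrams all of whose vertices lie in $C$ and whose relevant edges lie in $C$, respectively $C_R$. A simplex lies in this subobject if and only if its $1$-skeletal and $2$-skeletal faces do, and the composites above show that the Segal maps remain equivalences on the subobject. Restriction to such subobjects preserves the Segal and completeness conditions, so the subobject is again an $(\infty,2)$-category.
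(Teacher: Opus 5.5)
Your proposal is correct and takes essentially the same route as the paper. You use the Yoneda embedding for $C\subset C'$ and cut $\spantwo(C,C_R)^{C}_{C_R}$ out of $\spantwo(C')$ using the same closure checks the paper lists before the proposition; the formal sub-$(\infty,2)$-category step, which the paper delegates to \cite[Construction 4.12]{CLLuniv}, you sketch via a sub-$2$-fold Segal space. One small slip: in the vertical composite $F\leftarrow X\to G\leftarrow Y\to K$, the pullback $X\times_G Y$ exists because $X\to G$ (not $Y\to G$) lies in $C_R$, and that same fact is what puts $X\times_G Y\to Y\to K$ in $C_R$.
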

\begin{proof}
    For the existence of $C'$, take, for example, the Yoneda embedding.

    For the second part, the construction of $\spantwo(C,E)_{P,I}$ described in \cite[Construction 4.12]{CLLuniv} works in this situation as well.
\end{proof}

We also have the $2$-opposite $(\infty,2)$-category
\[
\spantwo(C,C_R)^{C_R}_{C} := (\spantwo(C,C_R)^{C}_{C_R})^{\operatorname{co}}.
\]

This construction is functorial in span pairs:

\begin{proposition}\label{construction_of_span_2_as_a_functor_on_span_pairs}
    The construction 
    \[(C,C_R)\mapsto \spantwo(C,C_R)^{C}_{C_R}\]
    assembles into a limit-preserving functor
    \[\spanpairs\to \catoftwocats.\]
\end{proposition}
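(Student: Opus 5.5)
We need to show that $(C,C_R)\mapsto \spantwo(C,C_R)^{C}_{C_R}$ is functorial in span pairs and preserves limits. For functoriality, a morphism of span pairs $f\colon(C,C_R)\to(D,D_R)$ preserves the pullbacks used to compose and whisker $1$- and $2$-morphisms. These are pullbacks of $C_R$-maps along arbitrary maps, together with the pullbacks $F\times_B H$, $G\times_B H$, $X\times_B H$ used in whiskering, all taken along $C_R$-maps. So $f$ should induce a $2$-functor on the subcategories constructed in \Cref{prop:construction_of_iterated_spans}.

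To make this coherent rather than object-by-object, I would avoid choosing an ambient $C'$ and instead use the complete Segal model of $\spantwo$ from \cite[Construction 4.12]{CLLuniv}. There $\spantwo(C')$ is given by a bisimplicial (or $\Theta_2$-) space
\[
[n],[m]\mapsto \map(\Sigma^{n,m},C'),
\]
with $\Sigma^{n,m}$ a suitable poset of iterated twisted arrows. The subcategory $\spantwo(C,C_R)^{C}_{C_R}$ is then the subobject spanned by diagrams $\Sigma^{n,m}\to C$ whose prescribed edges lie in $C_R$ and whose prescribed squares are cartesian. This condition makes sense for any span pair, since only pullbacks of $C_R$-maps are required. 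It is manifestly functorial in maps of span pairs, because these preserve the marked edges and the relevant cartesian squares. This yields a functor $\spanpairs\to\catoftwocats$, which agrees with \Cref{prop:construction_of_iterated_spans} after embedding into $C'$.

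For limit preservation, limits in $\spanpairs$ are computed underlying-wise: the limit is $\lim C_i$ with $R$-part $\lim (C_i)_R$, and pullbacks along $R$-maps are computed levelwise. The $\Theta_2$-space above is therefore
\[
\map^{\mathrm{adm}}(\Sigma^{n,m},C)\subset \map(\Sigma^{n,m},C),
\]
the union of components of admissible diagrams. Admissibility of a diagram in $\lim C_i$ is checked componentwise: an edge lies in $\lim (C_i)_R$ iff all its images do, and a square is cartesian in the limit iff it is cartesian in each $C_i$, since in the limit these pullbacks are computed pointwise. Hence $\map^{\mathrm{adm}}(\Sigma^{n,m},\lim C_i)\simeq\lim\map^{\mathrm{adm}}(\Sigma^{n,m},C_i)$ levelwise. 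Limits in $\catoftwocats$ are computed levelwise among complete Segal $\Theta_2$-spaces, because that full subcategory is closed under limits in presheaves, so the functor preserves limits.

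The main obstacle is the first step: verifying that the restricted sub-presheaf of the $\Theta_2$-space of $\spantwo(C')$ still satisfies the Segal and completeness conditions. Equivalently, one must check that the construction of \Cref{prop:construction_of_iterated_spans} is independent of $C'$ and is cut out by a levelwise admissibility condition. I would first try to deduce this from the corresponding statement for $\Span(C,C_R)$ in \cite{HHLNSpansDef}, applied to each mapping category $\Span(C_{/A\times B}\text{-type},\dots)$ of spans.
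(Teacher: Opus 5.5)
Your proposal is correct in outline. The step you flag as the main obstacle is already settled by \Cref{prop:construction_of_iterated_spans}.

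Fix any fully faithful $C\subset C'$ that preserves the pullbacks of $C_R$-maps, for example the Yoneda embedding. Take a cartesian diagram $\Sigma^{n,m}\to C'$ whose constituent objects, $1$-cells and $2$-cells are allowed. Its remaining vertices lie in $C$ and its marked edges lie in $C_R$. This is because those vertices are iterated pullbacks along $C_R$-maps, and $C_R$ is closed under base change and composition. Conversely, admissible diagrams in $C$ remain cartesian in $C'$.

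So your admissible sub-presheaf is exactly the $(\infty,2)$-subcategory of that proposition. It is complete $2$-fold Segal, since a levelwise union of components of a complete $2$-fold Segal space that is closed under Segal gluing is again complete $2$-fold Segal. This also gives independence of $C'$ for free. Your fallback of applying \cite{HHLNSpansDef} to the mapping categories would not suffice on its own. It only controls the vertical direction and says nothing about the Segal condition for horizontal composition of $1$-morphisms.

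Compared with the paper, the routes differ as follows.
\begin{itemize}
\item For functoriality, the paper fixes $C'=\presh(C)$ and transports diagrams along $F_!$. On the representable diagrams involved, $F_!$ acts as $F$, even though it does not preserve pullbacks in general. In substance this is your admissible-diagram argument, routed through a chosen ambient category.
\item For limits, the paper checks only the spaces of objects, $1$-morphisms and $2$-morphisms. Each is corepresented in $\operatorname{Pairs}$ by a small marked $1$-category, so no cartesianness conditions appear, and the Segal conditions propagate the equivalence to all levels.
\item You instead work at every level. This needs the extra (correct) remark that a square with a marked leg is cartesian in $\lim C_i$ if and only if it is cartesian in each $C_i$. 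That holds because such pullbacks exist in each $C_i$ and are preserved by the transition functors.
\end{itemize}
Your version makes the coherence of the functor transparent and avoids choosing $C'$. The paper's version is shorter on the limit side.
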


\begin{proof}
    We first explain the functoriality. Take $C'=\presh(C)$. A morphism of span pairs $F:(C,C_R)\to(D,D_R)$ induces by left Kan extension a functor
    \[
    F_!:\presh(C)\to\presh(D)
    \]
    satisfying $F_!\circ \yo_C\simeq \yo_D\circ F$, where $\yo_C$ and $\yo_D$ are the Yoneda embeddings. Although $F_!$ need not preserve all pullbacks, it preserves the pullback squares among representables arising from pullbacks along morphisms in $C_R$, since $F$ preserves these pullbacks and the Yoneda embeddings preserve existing limits. Thus, applying $F_!$ to the diagrams defining the restricted iterated span $(\infty,2)$-categories preserves the prescribed markings and all pullbacks used in composition, giving a $2$-functor
    \[
    \spantwo(C,C_R)^C_{C_R}\to\spantwo(D,D_R)^D_{D_R}.
    \]
    The functoriality of left Kan extension makes this construction functorial in span pairs.

    To check that the functor preserves limits, it is enough to check that it preserves limits on spaces of objects, $1$-morphisms and $2$-morphisms.

    Recall that limits in $\spanpairs$ are computed in the $\infty$-category $\operatorname{Pairs}$ of pairs of an $\infty$-category and a wide $\infty$-subcategory. 
    The functor sending a span pair $(C,C_R)$ to the space of $2$-morphisms of $\spantwo(C,C_R)^{C}_{C_R}$ can be described as the composition 
    \[\spanpairs\into \operatorname{Pairs}\xto{\map((X,X'),-)}\catofanima\]
    where $(X,X')$ is the pair of $(1,1)$-categories in which $X$ is the following diagram and $X'$ is the wide $\infty$-subcategory of tailed arrows: 
    \[\begin{tikzcd}
    	& \bullet & \\
    	\bullet & \bullet & \bullet \\
    	& \bullet.
    	\arrow[from=1-2, to=2-1]
    	\arrow[tail, from=1-2, to=2-3]
    	\arrow[from=2-2, to=1-2]
    	\arrow[tail, from=2-2, to=3-2]
    	\arrow[from=3-2, to=2-1]
    	\arrow[tail, from=3-2, to=2-3]
    \end{tikzcd}\]
    Thus it preserves limits.

    The proofs that the functors of $1$-morphisms and of objects are limit-preserving are similar.
\end{proof}

\subsubsection*{Internal iterated spans and the cartesian operad}

We denote the composition as follows, where the second functor is the functor of \Cref{construction_of_span_2_as_a_functor_on_span_pairs}:
\begin{equation}\label{eq:internal_iterated_span_construction}
\intspantwo(\subuniverse^{\full},\subuniverse)^{\subuniverse^{\full}}_{\subuniverse}:\vartopos^{\op}\xto{(\subuniverse^{\full},\subuniverse)} \spanpairs\to \catoftwocats.
\end{equation}

Then $\intspantwo(\subuniverse^{\full},\subuniverse)^{\subuniverse^{\full}}_{\subuniverse}$ is a well-defined $\vartopos$-$2$-category.

\begin{proposition}\label{prop:pullback_describing_span_EfE_times_using_span_2}
    We have a pullback of $\vartopos$-$2$-categories:
    \[
    \begin{tikzcd}
        \spanEfE^\times\ar[r]\ar[d]\pullbackdr
        & (\intspantwo(\subuniverse^{\full},\subuniverse)^{\subuniverse^{\full}}_{\subuniverse})_{*//}\ar[d]\\
        \spanEfE\ar[r]
        &\intspantwo(\subuniverse^{\full},\subuniverse)^{\subuniverse^{\full}}_{\subuniverse}.
    \end{tikzcd}
    \]
\end{proposition}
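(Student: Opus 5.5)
The plan is to reduce the statement to a contextwise computation with the unstraightening of a representable $2$-functor, and then to identify the result with the explicit unfurling description of $\spanEfE^\times$.

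First, I would record what the two vertical functors classify. By \Cref{example:oplax_slice} applied in every context, the right vertical functor is the $1$-cocartesian fibration classifying the corepresentable $2$-functor
\[
\Fun_{\intspantwo(\subuniverse^{\full},\subuniverse)^{\subuniverse^{\full}}_{\subuniverse}}(*,-).
\]
Pullbacks of $\vartopos$-$2$-categories are computed contextwise. So the pullback along $\spanEfE\to\intspantwo(\subuniverse^{\full},\subuniverse)^{\subuniverse^{\full}}_{\subuniverse}$ is a $1$-cocartesian fibration over a $\vartopos$-category. By the lemmas following \Cref{def:internal_one_cocartesian_fibration}, it is therefore a cocartesian fibration of $\vartopos$-categories. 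It classifies the restriction of this corepresentable functor to $\spanEfE$.

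On the other side, $\spanEfE^\times\to\spanEfE$ is by definition the cocartesian unstraightening of the cartesian $\subuniverse$-monoidal structure $\equipcart(\spanEfE)$. This structure exists by \Cref{prop:limits_in_span_E}. By straightening, the proposition therefore reduces to an equivalence of functors
\[
\equipcart(\spanEfE)\simeq \Fun_{\intspantwo(\subuniverse^{\full},\subuniverse)^{\subuniverse^{\full}}_{\subuniverse}}(*,-)\big|_{\spanEfE},
\]
natural in the context.

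Second, I would compute the right-hand side in context $A$ on an object $[B\to A]$. A $1$-morphism from $*=A$ to $B$ is a span $A\leftarrow X\to B$ with $X\to B$ in $\subuniverse$, that is, an object of $\subuniverse(B)$. A $2$-morphism between two such spans $X$ and $X'$ is a span $X\leftarrow Y\to X'$ over $B$ whose right leg lies in $\subuniverse$. Hence
\[
\Fun(*,B)\simeq \Span(\subuniverse^{\full}(B),\subuniverse(B))\simeq\spanEfE(B),
\]
which by \Cref{prop:values_of_E_monoidal_categories} is $\equipcart(\spanEfE)(B)=\spanEfE^B$. Functoriality under change of context follows from \Cref{construction_of_span_2_as_a_functor_on_span_pairs} applied contextwise. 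So both sides are $\subuniverse^{\full}$-continuous functors $\spanEfE\to\internalcatofcats$ with the same underlying category $\spanEfE$.

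Third, I would compare the functors on morphisms of $\spanEfE$. On a left-pointing map $g:B'\to B$, precomposition in $\spantwo$ acts by pullback $g^*$. On a right-pointing map $f:B\to C$ in $\subuniverse$, postcomposition sends $X\to B$ to the composite $X\to C$, which is $f_\sharp$ on $\subuniverse$. In $\spanEfE$ this is exactly the right adjoint $f_*$ of restriction, by \Cref{prop:limits_in_span_E}. These are precisely the values of $\equipcart(\spanEfE)$.

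The main obstacle is promoting these pointwise agreements to a coherent equivalence of functors. I would do this by restricting both functors to $(\subuniverse^{\full})^{\op}$, where they agree by the computation above, since both send $[B\to A]$ to $\spanEfE^B$ with restriction maps. I would then argue that both are the unfurling of that restriction. For $\equipcart(\spanEfE)$ this holds by construction. For the corepresentable functor it holds because a right-pointing morphism acts by a right adjoint satisfying Beck–Chevalley. Agreement of two such extensions then follows from \Cref{thm:internal_uniqueness_of_unfurling} (in its covariant form, \Cref{prop:cov_unfurling}) if $\subuniverse$ is truncated and left cancellable.

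In general, uniqueness of unfurling is not available, and I would instead directly identify the unstraightenings as adequate-triplet span categories. This is analogous to \Cref{lem:descript_of_E_tensor}: both unstraightenings are spans in the arrow category of $\subuniverse^{\full}$ whose targets are marked.
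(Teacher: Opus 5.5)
\textbf{There is a genuine gap at the coherence step.} Your reduction in the first three steps is sound. The pullback along $\spanEfE\to\intspantwo(\subuniverse^{\full},\subuniverse)^{\subuniverse^{\full}}_{\subuniverse}$ classifies the restriction of the corepresentable $2$-functor, and your computation of its values on objects and on left- and right-pointing maps is correct. The gap is exactly at the step you flag as the main obstacle.

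The proposition is stated for an arbitrary context-free subuniverse $\subuniverse$, and the later applications in \Cref{sec:proof_of_C} only assume $\subuniverse[I]$ and $\subuniverse[P]$ inductible. So \Cref{thm:internal_uniqueness_of_unfurling} is not available for $\subuniverse$. Even where it is, you would have to show the corepresentable functor is adjointed in the sense of invertible norm maps. Knowing that right-pointing maps act by some right adjoint satisfying Beck--Chevalley is not enough.

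Your fallback does not close the gap either. The fibres of $\spanEfE^\times$ are the span categories $\spanEfE(B)$, not slices. Hence the unfurling's unstraightening is a span category built on a Grothendieck construction whose morphisms are themselves spans, not on an arrow category, and the analogy with \Cref{lem:descript_of_E_tensor} breaks down. The remark after \Cref{def:span_of_arrows_E} says explicitly that $\varintcat{Sp}_{\subuniverse}^{(1)}$ is not the span category of any adequate-triplet structure on arrows. Moreover, the identification $\varintcat{Sp}_{\subuniverse}^{(1)}\simeq\spanEfE^\times$ in \Cref{prop:span_operad_as_spans_of_arrows} is deduced from the present proposition, so it cannot serve as input here.

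\textbf{The missing idea is to keep the $2$-cells.} Factor the bottom map as
$\spanEfE\subset\intspanhalf(\subuniverse^{\full},\subuniverse)^{\subuniverse^{\full}}\to\intspantwo(\subuniverse^{\full},\subuniverse)^{\subuniverse^{\full}}_{\subuniverse}$ and paste pullbacks. The pullback of the oplax slice to $\intspanhalf(\subuniverse^{\full},\subuniverse)^{\subuniverse^{\full}}$ classifies a $2$-functor out of it. By \Cref{prop:univ_prop_of_span_half}, such a $2$-functor is uniquely determined by its $\subuniverse$-right adjointable restriction to $(\subuniverse^{\full})^{\op}$. This universal property needs no truncation or cancellation hypotheses, because the $2$-morphisms of $\spanhalf$ already carry the units and counits that uniqueness of unfurling would otherwise have to reconstruct.

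Your objectwise computation identifies that restriction with $B\mapsto\spanEfE(B)$, with restriction maps. The unfurling of this restriction is the underlying functor of the $2$-functor on $\intspanhalf$ given by the same universal property, so the two $2$-functors agree. Restricting to $\spanEfE$ yields the unfurling, whose unstraightening is $\spanEfE^\times$ by definition.
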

\begin{proof}
    Consider the composition of pullbacks:
    \[
    \begin{tikzcd}
        \spanEfE^\times\ar[r]\ar[d]\pullbackdr
        & \bm{\varintcat{P}}\ar[r]\ar[d]\pullbackdr
        &(\intspantwo(\subuniverse^{\full},\subuniverse)^{\subuniverse^{\full}}_{\subuniverse})_{*//}\ar[d]\\
        \spanEfE\ar[r]
        &\intspanhalf(\subuniverse^{\full},\subuniverse)^{\subuniverse^{\full}}\ar[r]&\intspantwo(\subuniverse^{\full},\subuniverse)^{\subuniverse^{\full}}_{\subuniverse}.
    \end{tikzcd}
    \]
    The middle vertical map classifies, in the context $A$, the $2$-functor
    \[
    \intspanhalf(\subuniverse^{\full},\subuniverse)^{\subuniverse^{\full}}(A) = \spanhalf(\subuniverse^{\full}(A),\subuniverse(A))\to \twocatofcats,
    \]
    sending $B\to A$ to \[\Fun_{\intspantwo(\subuniverse^{\full},\subuniverse)^{\subuniverse^{\full}}_{\subuniverse}(A)}(*,B)\simeq\spanEfE^B(A)\simeq \spanEfE(B).\]
    
    This functor is the unique $2$-functor on $\spanhalf(\subuniverse^{\full}(A),\subuniverse(A))$ which arises from the unfurling construction and thus its restriction to $\spanEfE$ agrees with the cocartesian fibration classifying the unfurling construction. 
    
    Finally, by definition, the unfurling construction is classified by the cocartesian fibration $\spanEfE^\times\to \spanEfE$, proving the claim.
\end{proof}

\subsection{Another formula for \texorpdfstring{$\spanEfE^\times$}{the operad of Span(E full,E)}.}\label{subsec:span_operad_description}

Given a fully faithful embedding $\subuniverse^{\full}\hookrightarrow\varintcat{D}$ preserving the terminal object and existing pullbacks, the goal of this subsection is to provide a commutative diagram
\begin{equation}\label{eq:span_operad_into_spans_of_arrows}
\begin{tikzcd}
    \spanEfE^{\times}\ar[r, hook]\ar[d]
    &\intspan(\varintcat{D}^{\Delta^1})\ar[d,"t"]\\
    \spanEfE\ar[r,hook] &\intspan(\varintcat{D}).
\end{tikzcd}
\end{equation}
In this description, the natural functor $\mu$ is the source functor, sending an arrow $X'\to X$ to $X'$ (\Cref{prop:description_of_mu_for_span}).

\subsubsection*{Spans of arrows and oplax slices}

Fix a fully faithful embedding of $\vartopos$-categories $i:\subuniverse^{\full}\subset \varintcat{D}$ such that $\varintcat{D}$ has a terminal object and all pullbacks and $i$ preserves the terminal object and all pullbacks which exist in $\subuniverse^{\full}$.
For example, take $\varintcat{D} = \intpresh(\subuniverse^{\full})$ with the Yoneda embedding.

\begin{proposition}
    The functor 
    \[\intspantwo(\varintcat{D})_{*//} \to \intspantwo(\varintcat{D})\]
    is a $1$-cocartesian fibration of $\vartopos$-$2$-categories.
\end{proposition}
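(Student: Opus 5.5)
The claim is that $\intspantwo(\varintcat{D})_{*//}\to\intspantwo(\varintcat{D})$ is a $1$-cocartesian fibration. By \Cref{def:internal_one_cocartesian_fibration}, we can check this contextwise and then check compatibility with change of context.

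\emph{Contextwise.} In each context $A$, the functor in question is the oplax slice
\[
\spantwo(\varintcat{D}(A))_{*//}\xto{t}\spantwo(\varintcat{D}(A)),
\]
where we use that $\inttwofun^{\oplax}(\Delta^1,-)$ and the pullback along $*$ are computed contextwise. By \Cref{example:oplax_slice}, applied to the $(\infty,2)$-category $\vartwocat{C}=\spantwo(\varintcat{D}(A))$ and the object $x=*$, this is the $1$-cocartesian fibration classifying the corepresentable $2$-functor
\[
\Fun_{\spantwo(\varintcat{D}(A))}(*,-)\colon\spantwo(\varintcat{D}(A))\to\twocatofcats.
\]
This $2$-functor lands in $\twocatofcats$ because its mapping $\infty$-categories are honest $\infty$-categories of spans $*\leftarrow X\to B$. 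The cocartesian lift of a $1$-morphism $B\leftarrow F\to C$ at an object $*\leftarrow X\to B$ is given by postcomposition, i.e. by forming $X\times_B F$. This pullback exists because $\varintcat{D}(A)$ has all pullbacks. The homwise right fibration condition is part of the cited example. Thus each $t(A)$ is a $1$-cocartesian fibration.

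\emph{Change of context.} For $s\colon B\to A$, the restriction $s^*\colon\varintcat{D}(A)\to\varintcat{D}(B)$ preserves pullbacks and the terminal object. By the functoriality in \Cref{construction_of_span_2_as_a_functor_on_span_pairs}, it induces a $2$-functor on iterated span $2$-categories that commutes with the oplax slices over $*$. We then need $s^*$ to send cocartesian lifts to cocartesian lifts. Since these lifts are described by the explicit pullback formula above, and $s^*$ preserves pullbacks, this holds. The resulting square is therefore a morphism of $1$-cocartesian fibrations.

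\emph{Main obstacle.} The delicate point is that \Cref{example:oplax_slice} is stated for an abstract $(\infty,2)$-category, so it gives cocartesianness abstractly. To compare with $s^*$ we need the concrete description of the lifts as pullbacks, i.e. the identification of the abstract cocartesian lift with $X\times_B F$. I would prove this by checking the pullback condition on mapping categories in the definition of $p$-cocartesian morphisms directly. Postcomposition with the span $B\leftarrow F\to C$ is, on the categories of spans from $*$, the functor $X\mapsto X\times_B F$, and the square of mapping categories is then a pullback by the universal property of composition in $\spantwo$. Alternatively, one can avoid the explicit formula: the oplax slice is built from $\inttwofun^{\oplax}(\Delta^1,-)$ and pullbacks, which $2$-functors preserve, and $2$-functors that preserve the terminal object preserve corepresentables. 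This formally yields the compatibility with change of context.
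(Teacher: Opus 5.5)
Your proposal is correct and follows essentially the paper's route. The paper's proof is the one-line remark that the claim follows contextwise from \Cref{example:oplax_slice}, leaving compatibility with change of context implicit; the paper had already recorded in general that internal oplax slices of $\vartopos$-$2$-categories are $1$-cocartesian fibrations, in the example following \Cref{def:internal_one_cocartesian_fibration}. Your explicit check fills in that implicit step: each restriction $s^*$ is a $2$-functor preserving $*$, so it sends the cocartesian lifts $(h,\id_{h\circ f})$, i.e.\ the pullbacks $X\times_B F$, to cocartesian lifts.
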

\begin{proof}
    This follows contextwise from \Cref{example:oplax_slice}.
\end{proof}

Since we do not verify the unstraightening equivalence in this work, we cannot say that the cocartesian fibration above classifies a functor of $\vartopos$-$2$-categories to $\internaltwocatofcats$, but if we restrict it to underlying $\vartopos$-categories we can say that:

\begin{proposition}
    We have a pullback
    \[\begin{tikzcd}
        (\intspantwo(\varintcat{D})_{*//})^{\simeq2}\ar[r]\ar[d]\pullbackdr
        &\intspantwo(\varintcat{D})_{*//}\ar[d]\\
        \intspan(\varintcat{D})\ar[r]
        &\intspantwo(\varintcat{D}).
    \end{tikzcd}\]
    Thus the left-hand arrow is the cocartesian fibration classifying the functor 
    \[\intspan(\varintcat{D})\to \internalcatofcats\]
    sending $X\in \intspan(\varintcat{D})(A)$ to \[\intfun_{\intspantwo(\varintcat{D})}(*,X).\]
\end{proposition}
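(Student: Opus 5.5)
We have to prove the last proposition: the square with $(\intspantwo(\varintcat{D})_{*//})^{\simeq2}$ over $\intspan(\varintcat{D})$ is a pullback, and the left map classifies $X\mapsto\intfun_{\intspantwo(\varintcat{D})}(*,X)$. The plan is to argue contextwise, since all the $\vartopos$-$2$-categories involved are defined pointwise. The oplax slice is a pullback of $\inttwofun^{\oplax}(\Delta^1,-)$, which is computed contextwise. The functor $(-)^{\simeq2}$ is also computed contextwise, because it is induced by the right adjoint to $\catofcats\into\catoftwocats$ tensored with $\universe$. Pullbacks of sheaves agree with pullbacks of presheaves. It therefore suffices to prove the analogous statement for a single $(\infty,2)$-category $\vartwocat{V}=\intspantwo(\varintcat{D})(A)$, naturally in $A$.

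For the pullback claim, I would use the preceding proposition that $\intspantwo(\varintcat{D})_{*//}\to\intspantwo(\varintcat{D})$ is a $1$-cocartesian fibration. I would then invoke the lemma stated earlier: for a $1$-cocartesian fibration $p$, the square relating $\varinttwocat{Y}^{\simeq2}\to\varinttwocat{W}^{\simeq2}$ to $p$ is a pullback. This lemma rests on the homwise right fibration condition, which detects invertibility of $2$-morphisms. It remains to identify $\intspantwo(\varintcat{D})^{\simeq2}$ with $\intspan(\varintcat{D})$. This holds because the iterated span construction has underlying $(\infty,1)$-category $\Span$: a $2$-morphism $F\leftarrow X\to G$ is invertible exactly when both legs are equivalences. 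I would check this contextwise from \Cref{prop:construction_of_iterated_spans}, using that $\varintcat{D}$ has all pullbacks.

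For the classification statement, the same lemma says the pulled-back map is a cocartesian fibration of $\vartopos$-categories. By \Cref{example:oplax_slice}, in each context the oplax slice $\vartwocat{V}_{*//}\to\vartwocat{V}$ is the $1$-cocartesian fibration classifying $\Fun_{\vartwocat{V}}(*,-)$. Restricting a $1$-cocartesian fibration along $\vartwocat{V}^{\simeq2}\into\vartwocat{V}$ classifies the restricted functor, because straightening commutes with base change. Hence in context $A$ the left map classifies $X\mapsto\Fun_{\intspantwo(\varintcat{D})(A)}(*,X)$.

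To assemble these pointwise identifications into the internal functor $X\mapsto\intfun_{\intspantwo(\varintcat{D})}(*,X)$, I would proceed as in \Cref{prop:univ_internal_2_cocart_fib}. There, \cite[Proposition 6.4.9]{MInternalStraightenning} identifies internal cocartesian fibrations with compatible families of contextwise ones. Naturality in $A$ then comes from functoriality of the $2$-Yoneda embedding \cite{BenMosheEnrichedYoneda}, because change of context preserves cocartesian lifts and $\Fun(*,-)$ is compatible with the restriction $2$-functors. I expect the main obstacle to be this coherence step: making the identification of the classified functor natural in the context, rather than only objectwise. The pullback assertion itself should be routine.
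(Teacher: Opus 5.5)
Your proposal is correct and follows essentially the paper's route. The paper simply refers back to the proof of \Cref{prop:univ_internal_2_cocart_fib}: contextwise, the oplax slice classifies the corepresentable $2$-functor by \Cref{example:oplax_slice}, its core classifies the restriction to the underlying category, and naturality in the context comes from functoriality of the $2$-Yoneda embedding. Your use of the core-pullback lemma for $1$-cocartesian fibrations, together with $\intspantwo(\varintcat{D})^{\simeq2}\simeq\intspan(\varintcat{D})$, spells out the pullback assertion the paper leaves implicit.

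One small correction, which does not affect your argument: the paper cites \cite[Proposition 6.4.9]{MInternalStraightenning} for the fact that, in each context, the universal internal cocartesian fibration classifies the global sections functor $\Gamma$. It does not use it to identify internal cocartesian fibrations with compatible contextwise families.
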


\begin{proof}
    This follows by the same argument appearing in the proof of \Cref{prop:univ_internal_2_cocart_fib}.
\end{proof}

Our next goal is to identify another $1$-cocartesian fibration $\intspantwo(\varintcat{D}^{\Delta^1})^{\varintcat{X}}_{\varintcat{Y}}\to \intspantwo(\varintcat{D})$ and show that the underlying $\vartopos$-categories classify the same functors.

\begin{definition}
    Let $\varintcat{Y},\varintcat{X}\subset \varintcat{D}^{\Delta^1}$ denote the following wide $\vartopos$-subcategories. Let $A\in \vartopos$. Then an arrow of $\varintcat{D}^{\Delta^1}(A)$ is a commutative square 
    \[\begin{tikzcd}
        X'\ar[r]\ar[d,"x"] &Y'\ar[d,"y"]\\X\ar[r] & Y
    \end{tikzcd}\]
    viewed as a morphism from the left arrow $x$ to the right arrow $y$.
    
    We let morphisms in $\varintcat{Y}$ be those morphisms which are pullback squares. We let morphisms in $\varintcat{X}$ be those morphisms in which the source component $X'\to Y'$ is an isomorphism.
\end{definition}

\begin{proposition}\label{prop:arrow_markings_stable_under_base_change}
    The wide $\vartopos$-subcategories $\varintcat{Y},\varintcat{X}\subset \varintcat{D}^{\Delta^1}$ are stable under base change in $\varintcat{D}^{\Delta^1}$. 
\end{proposition}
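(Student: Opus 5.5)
The plan is to work contextwise and reduce to an explicit pullback computation in the $\infty$-category $\varintcat{D}(A)$. In context $A$, pullbacks in $\varintcat{D}^{\Delta^1}(A)=\Fun(\Delta^1,\varintcat{D}(A))$ exist and are computed pointwise, because $\varintcat{D}(A)$ has all pullbacks. The change-of-context functors $s^*$ preserve pullbacks, and they preserve the two markings, since being a pullback square or having an invertible source component is preserved by limit-preserving functors. So it suffices to fix a context $A$ and a cospan in $\varintcat{D}^{\Delta^1}(A)$,
\[
x=(X'\to X)\xto{f}z=(Z'\to Z)\xleftarrow{g}y=(Y'\to Y),
\]
with $f$ in $\varintcat{Y}$ (respectively $\varintcat{X}$) and $g$ arbitrary. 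We then have to show that the base change $x\times_z y\to y$ is again in $\varintcat{Y}$ (respectively $\varintcat{X}$).

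\emph{The case of $\varintcat{Y}$.} The pointwise pullback is the arrow $X'\times_{Z'}Y'\to X\times_Z Y$, and the base-changed morphism is the square with top $X'\times_{Z'}Y'\to Y'$ and bottom $X\times_Z Y\to Y$. Since $f$ is cartesian, we have $X'\simeq X\times_Z Z'$. Hence
\[
X'\times_{Z'}Y'\simeq (X\times_Z Z')\times_{Z'}Y'\simeq X\times_Z Y'\simeq (X\times_Z Y)\times_Y Y',
\]
where the last step uses $Y'\to Y\to Z$. This identification exhibits the square as a pullback, by pasting of pullback squares.

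\emph{The case of $\varintcat{X}$.} Here $X'\to Z'$ is an equivalence, so its base change $X'\times_{Z'}Y'\to Y'$ is an equivalence too. The base-changed morphism therefore lies in $\varintcat{X}$.

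The only step needing care is checking that the composite identification in the $\varintcat{Y}$ case really is the canonical comparison map. This is where I would spend the most effort: I would write the whole computation as one $3$-dimensional cube of pullbacks and apply the pasting lemma twice.
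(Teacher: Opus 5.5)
Your proof is correct and is the same argument as the paper's: the $\varintcat{X}$ case follows because equivalences are stable under base change in $\varintcat{D}$, and the $\varintcat{Y}$ case is the pasting lemma. Your cube, read as two vertical pastings, is exactly the detail the paper leaves implicit. The first pasting gives $X'\times_{Z'}Y'\simeq X\times_Z Y'$, and the second cancels the pullback square defining $X\times_Z Y$; this also settles your worry about the comparison map.
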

\begin{proof}
    For $\varintcat{X}$, this follows since isomorphisms are stable under base change in $\varintcat{D}$ and for $\varintcat{Y}$, this follows by the pasting lemma.
\end{proof}

Thus we get a well-defined $\vartopos$-$2$-subcategory  $\intspantwo(\varintcat{D}^{\Delta^1})^{\varintcat{X}}_{\varintcat{Y}}\subset \intspantwo(\varintcat{D}^{\Delta^1})$ defined as follows.

We allow all objects and all morphisms. A $2$-arrow in $\intspantwo(\varintcat{D}^{\Delta^1})(A)$ is an iterated span diagram in $\varintcat{D}^{\Delta^1}(A)$:
\[\begin{tikzcd}
    &f\ar[ld]\ar[rd]\\
    x
    &k\ar[u]\ar[d]
    &y\\
    &g.\ar[lu]\ar[ru]
\end{tikzcd}\]
It is a $2$-morphism in $\intspantwo(\varintcat{D}^{\Delta^1})^{\varintcat{X}}_{\varintcat{Y}}$ if the upward-pointing arrow is in $\varintcat{X}$ and the downward-pointing arrow is in $\varintcat{Y}$. \Cref{prop:arrow_markings_stable_under_base_change} implies that these allowed $2$-morphisms are closed under composition and whiskering and thus they form a $\vartopos$-$2$-subcategory. 

\begin{proposition}
    The functor 
    \[t: \intspantwo(\varintcat{D}^{\Delta^1})^{\varintcat{X}}_{\varintcat{Y}}\to \intspantwo(\varintcat{D})\]
    sending an arrow $x:X'\to X$ to its target $X$ is a $1$-cocartesian fibration of $\vartopos$-$2$-categories.
\end{proposition}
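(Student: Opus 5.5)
Since $1$-cocartesian fibrations of $\vartopos$-$2$-categories are defined contextwise (\Cref{def:internal_one_cocartesian_fibration}), the plan is to work in a fixed context $A$ and write $D:=\varintcat{D}(A)$, an $\infty$-category with pullbacks and a terminal object. There I show that $t:\spantwo(D^{\Delta^1})^{X}_{Y}\to\spantwo(D)$ is a $1$-cocartesian fibration. I then check that restriction along $s:B\to A$ preserves cocartesian lifts. This last step is automatic once the lifts are described by pullbacks, because $s^*$ preserves pullbacks.

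\textbf{Cocartesian lifts.} Take an object $x:X'\to X$ and a span $X\xleftarrow{a}F\xrightarrow{b}Z$ in $D$. Define $F':=X'\times_X F$ and the arrow $f:F'\to F$. The candidate lift is the span of arrows
\[
x\longleftarrow f\longrightarrow (F'\xrightarrow{b\circ f} Z).
\]
Its left leg is a pullback square. Its right leg is the square with $F'=F'$ on sources and $b$ on targets. The target of this lift is the arrow $F'\to Z$, so $t$ sends the lift to the given span.

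For cocartesianness, I must show that postcomposition with this lift induces, for every object $z:Z'\to W$, a pullback square of mapping $\infty$-categories. Concretely, I expect a $1$-morphism $x\to z$ lying over a span $X\leftarrow G\to W$ to be a span of arrows $x\leftarrow g\to z$. Here the source component of $g$ is a map $G'\to X'$ and $G'\to G$ is an arbitrary arrow over it. The key observation is the universal property of the pullback. Given a factorization $G\to F$ through the composite span, maps $G'\to X'$ compatible with $G'\to G\to F\to X$ correspond exactly to maps $G'\to F'$ over $F$.

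The same argument must be run on $2$-morphisms. Since the allowed $2$-morphisms have the upward leg in $\varintcat{X}$, meaning its source component is invertible, the source part of a $2$-morphism is forced. This should make the comparison square of mapping categories a pullback on $2$-cells as well.

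\textbf{Homwise right fibration.} Next I show that $\Fun(x,z)\to\Fun(X,W)$ is a right fibration. A morphism in the base is a $2$-cell $G_1\leftarrow K\to G_2$ with $K\to G_1$ arbitrary and $K\to G_2$ in the right class. Lifting it to a target $g_2:G_2'\to G_2$, the downward leg must be a pullback square, by the marking $\varintcat{Y}$. This forces $K':=G_2'\times_{G_2}K$. The upward leg must have invertible source component, by the marking $\varintcat{X}$, so it is determined as $K'\to G_1'$ with $G_1'=K'$. Thus lifts of morphisms with a prescribed target exist and are unique, up to contractible choice. Put differently, the fibre is governed by pulling back the source arrow along the base $2$-cell. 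Making this uniqueness precise, for instance by identifying the map with the pullback of $t:D^{\Delta^1}_{\mathrm{cart}}\to D$, which is a right fibration on cartesian squares, is what I expect to be the main technical point.

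\textbf{Main obstacle.} The main difficulty is the coherence of the mapping-category pullback squares, as opposed to just their objects. I would handle it by expressing all mapping $\infty$-categories of $\spantwo$ as $\infty$-categories of spans in slices, following the description used in \cite[Construction 4.12]{CLLuniv}. On these, both conditions reduce to pullback-pasting statements in $D$.
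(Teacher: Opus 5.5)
Your proposal is correct and follows the paper's proof. It uses the same pullback lifts and the same homwise right fibration argument: the $\varintcat{Y}$-leg is forced to be a pullback, and the $\varintcat{X}$-leg forces the source. It also makes the same remark that change of context preserves pullbacks.

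The one difference is your ``main obstacle'', which the paper avoids by citing \cite[Lemma 3.3]{CLRUniversalityOfUnferling}. Once the vertical maps of the comparison square are right fibrations, the map from $\Fun(y,z)$ to the pullback is a map of right fibrations over $\Fun(ty,tz)$. It is therefore an equivalence as soon as it is one on fibres, which are anima. That is exactly your object-level pullback argument, so no separate check on $2$-cells is needed.
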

\begin{proof}
    Let $A\in \vartopos$. We first want to show that 
    \[t: \intspantwo(\varintcat{D}^{\Delta^1})^{\varintcat{X}}_{\varintcat{Y}}(A)\to \intspantwo(\varintcat{D})(A)\]
    is a $1$-cocartesian fibration.

    Indeed, for an object $f:X'\to X$ and an arrow in $\intspantwo(\varintcat{D})(A)$ 
    \[X\from F\to Y,\]
    the cocartesian lifts on underlying $\infty$-categories are of the form
    \[\begin{tikzcd}
    	X' & F' & F' \\
    	X & F & Y.
    	\arrow[from=1-1, to=2-1]
    	\arrow[from=1-2, to=1-1]
    	\arrow[from=1-2, to=1-3,equal]
    	\arrow["\lrcorner"{anchor=center, pos=0.125, rotate=-90}, draw=none, from=1-2, to=2-1]
    	\arrow[from=1-2, to=2-2]
    	\arrow[from=1-3, to=2-3]
    	\arrow[from=2-2, to=2-1]
    	\arrow[from=2-2, to=2-3]
    \end{tikzcd}\]

    By \cite[Lemma 3.3]{CLRUniversalityOfUnferling}, if we show that $t$ above is a homwise right fibration, then we get that $t$ is a $1$-cocartesian fibration. This follows since cartesian $2$-morphisms (which are given by opposites of the span of arrows directly above) are the only allowed $2$-morphisms in $\intspantwo(\varintcat{D}^{\Delta^1})^{\varintcat{X}}_{\varintcat{Y}}(A)$.

    We are left to show that change of context preserves cocartesian lifts. This follows since it preserves pullbacks and isomorphisms.
\end{proof}

These two cocartesian fibrations are equivalent:

\begin{proposition}\label{prop:description_of_oplax_slice}
    We have an equivalence of cocartesian fibrations of $\vartopos$-$2$-categories 
    \[\intspantwo(\varintcat{D})_{*//} \simeq \intspantwo(\varintcat{D}^{\Delta^1})^{\varintcat{X}}_{\varintcat{Y}}.\]
\end{proposition}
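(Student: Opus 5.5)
We construct an explicit comparison functor over $\intspantwo(\varintcat{D})$ and check that it is an equivalence contextwise. The change-of-context functors act on both sides by pulling back diagrams in $\varintcat{D}$, and both sides are built contextwise: the oplax slice by the pullback defining $\varinttwocat{C}_{x//}$, and the marked span category by \Cref{prop:arrow_markings_stable_under_base_change}. Hence it suffices to give, for each $A\in\vartopos$, an equivalence of $(\infty,2)$-categories over $\spantwo(\varintcat{D}(A))$ that is natural in $A$. Once such an equivalence over the base is established, compatibility with cocartesian lifts is automatic, since equivalences over the base preserve cocartesian morphisms.

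First we match objects and $1$-morphisms. An object of $\spantwo(\varintcat{D}(A))_{*//}$ is an object $X$ together with a $1$-morphism $*\to X$, that is, a span $*\leftarrow X'\to X$. Since $*$ is terminal, this is just an arrow $X'\to X$, i.e. an object of $\varintcat{D}^{\Delta^1}(A)$.

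A $1$-morphism in the oplax slice from $(X'\to X)$ to $(Y'\to Y)$ consists of a span $X\leftarrow F\to Y$ together with a $2$-cell filling the triangle from $*\to X\to Y$ to $*\to Y$. The composite $*\to Y$ is the span $*\leftarrow X'\times_X F\to Y$. A $2$-morphism in $\spantwo$ from it to $*\leftarrow Y'\to Y$ is therefore a span of spans $X'\times_XF\leftarrow K\to Y'$ over $Y$. This gives a span in $\varintcat{D}^{\Delta^1}$:
\[(X'\to X)\leftarrow (K\to F)\to (Y'\to Y).\]
The left leg factors as $K\to X'\times_XF\to X'$, and the right leg is a square. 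To land in the marked category we must normalize the left leg to be a pullback square. We do this by the standard factorization of a square into a $\varintcat{Y}$-morphism and an $\varintcat{X}$-morphism (pullback square followed by an isomorphism on sources), and we absorb the factor $K\to X'\times_XF$ into the right leg. This identifies $1$-morphisms of the oplax slice with the $1$-morphisms of $\Span(\varintcat{D}^{\Delta^1})$; composition matches because composing in both pictures is computed by the same iterated pullbacks.

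Second, we match $2$-morphisms. In the oplax slice, a $2$-morphism is a $2$-morphism $F_1\Rightarrow F_2$ in $\spantwo(\varintcat{D})$, i.e. a span $F_1\leftarrow G\to F_2$ over $X,Y$, together with a compatibility $2$-cell between the fillers. Unwinding this compatibility, the $K$-components must be related by a map whose square over $G$ is cartesian. So we expect exactly the spans of arrows in which the upward leg lies in $\varintcat{X}$ and the downward leg in $\varintcat{Y}$. We would verify this by an explicit computation on mapping $\infty$-categories: both $\Fun((X'\to X),(Y'\to Y))$ categories are right fibrations over $\Fun_{\spantwo(\varintcat{D})}(X,Y)$. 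The oplax slice is a homwise right fibration by \Cref{example:oplax_slice}, and the marked span category is one by the proposition just proved. A map of right fibrations is an equivalence if it is so on fibres, and the fibre over a span $F$ on both sides is the space of spans $X'\times_XF\leftarrow K\to Y'$ over $Y$.

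The main obstacle is making the correspondence on $2$-cells coherent, rather than merely bijective on equivalence classes. To handle it I would avoid constructing the functor by hand. Instead I would produce it universally from \Cref{prop:construction_of_iterated_spans} applied to $\presh$-enlargements, then deduce the equivalence via the recognition of $1$-cocartesian fibrations by their straightenings. Both fibrations classify, on each fibre, the functor $X\mapsto \Fun_{\spantwo(\varintcat{D})}(*,X)\simeq \Span(\varintcat{D}_{/X})$ (the fibre of $t$ over $X$ is $\Span$ of arrows into $X$ with the given markings). The comparison functor then induces an equivalence on fibres and preserves cocartesian lifts (compare the explicit lifts above). By the external straightening theorem of \cite{CLRUniversalityOfUnferling}, a map of $1$-cocartesian fibrations that is an equivalence on fibres is an equivalence. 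Naturality in $A$ then gives the internal statement.
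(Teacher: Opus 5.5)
Your final strategy has the right shape: straighten both $1$-cocartesian fibrations contextwise, compare fibres, and conclude by unstraightening. But the step that carries all the content is missing. You never actually construct a map of $1$-cocartesian fibrations $\intspantwo(\varintcat{D})_{*//}\to\intspantwo(\varintcat{D}^{\Delta^1})^{\varintcat{X}}_{\varintcat{Y}}$ over the base. \Cref{prop:construction_of_iterated_spans} only realizes iterated span $2$-categories as subcategories of $\spantwo$ of a presheaf enlargement. It gives no functor out of the oplax slice, which is defined through $\inttwofun^{\oplax}(\Delta^1,-)$, i.e.\ through Gray tensors. So ``produce it universally'' has nothing behind it, and without a comparison functor your fibrewise check has nothing to check.

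The missing idea is the $2$-Yoneda lemma. By \Cref{example:oplax_slice}, $\intspantwo(\varintcat{D})_{*//}$ is contextwise the unstraightening of the corepresentable $\yo(*)$. Hence a natural transformation $\yo(*)\to T$, where $T$ is the $2$-functor classified by the marked span fibration, is the same as an object of $T(*)$; take the identity arrow $*=*$. Its component at $B$ is the standard equivalence $\Fun(*,B)\simeq\Span(\varintcat{D}(A)_{/B})$, so it is an equivalence by unstraightening. It is natural in $A$ because the chosen object is preserved by change of context. This is exactly the paper's argument, and with it your remaining outline goes through.

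Separately, your explicit analysis of $1$-morphisms is off. In $\intspantwo(\varintcat{D}^{\Delta^1})^{\varintcat{X}}_{\varintcat{Y}}$ every span of arrows is an allowed $1$-morphism; the markings $\varintcat{X},\varintcat{Y}$ constrain only $2$-morphisms. So no normalization of the left leg is needed, and the normalization you describe does not exist as stated:
\begin{itemize}
\item the map $(K\to F)\to(X'\times_XF\to F)$ is an equivalence on targets, not on sources;
\item the factor $K\to X'\times_XF$ cannot be pushed into the right leg.
\end{itemize}
The identification is direct: a map $K\to X'$ together with $K\to F$ over $X$ is the same as a map $K\to X'\times_XF$. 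This is why the underlying $\vartopos$-category of the marked $2$-category is all of $\intspan(\varintcat{D}^{\Delta^1})$.
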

\begin{proof}
    We first show this equivalence in a context $A$.

    Let 
    \[T:\intspantwo(\varintcat{D})(A)\to\twocatofcats\] be the $2$-functor classified by $\intspantwo(\varintcat{D}^{\Delta^1})^{\varintcat{X}}_{\varintcat{Y}}$. The $2$-functor classified by $\intspantwo(\varintcat{D})_{*//}$ is $\yo(*)$, the $2$-Yoneda embedding of $*$. Then, by the $2$-Yoneda lemma, we have a natural transformation $\yo(*) \to T$  determined by the object $*=*$ in $T(*)$.

    In the context $A$, we have that 
    \[T(B) = \Span(\varintcat{D}(A)_{/B}).\]
    The component at $B$ of this natural transformation is the usual equivalence: 
    \[\Fun_{\intspantwo(\varintcat{D})(A)}(*,B)\simeq \Span(\varintcat{D}(A)_{/B}). \]

    By the $2$-categorical unstraightening theorem, we get an equivalence $\intspantwo(\varintcat{D}^{\Delta^1})^{\varintcat{X}}_{\varintcat{Y}}(A)\simeq \intspantwo(\varintcat{D})_{*//}(A)$.

    By naturality of Yoneda, this equivalence is natural as well.
\end{proof}

\begin{corollary}\label{cor:square_with_span_of_arrows_and_span_2_exists_for_presh}
    We have a commutative square 
    \begin{equation}\label{eq:diag_of_BC_comp}\begin{tikzcd}
        \intspan(\varintcat{D}^{\Delta^1})\ar[r]\ar[d]
        & \intspantwo(\varintcat{D})_{*//}\ar[d]\\
        \intspan(\varintcat{D})\ar[r]
        &\intspantwo(\varintcat{D}).
    \end{tikzcd}\end{equation}
\end{corollary}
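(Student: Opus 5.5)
The square \eqref{eq:diag_of_BC_comp} will come from passing to underlying $\vartopos$-categories in the equivalence of \Cref{prop:description_of_oplax_slice}, after identifying the underlying $\vartopos$-category of $\intspantwo(\varintcat{D}^{\Delta^1})^{\varintcat{X}}_{\varintcat{Y}}$ with $\intspan(\varintcat{D}^{\Delta^1})$. The maps in the square are as follows. The top horizontal map is the composite
\[
\intspan(\varintcat{D}^{\Delta^1})\simeq(\intspantwo(\varintcat{D}^{\Delta^1})^{\varintcat{X}}_{\varintcat{Y}})^{\simeq2}\into\intspantwo(\varintcat{D}^{\Delta^1})^{\varintcat{X}}_{\varintcat{Y}}\simeq\intspantwo(\varintcat{D})_{*//}.
\]
The left vertical map is $\intspan(t)$, induced by the target functor $t:\varintcat{D}^{\Delta^1}\to\varintcat{D}$. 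The bottom map is the canonical inclusion $\intspan(\varintcat{D})\into\intspantwo(\varintcat{D})$.

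First, $t$ preserves pullbacks, because pullbacks in $\varintcat{D}^{\Delta^1}$ are computed pointwise. So $t$ is a morphism of span pairs $(\varintcat{D}^{\Delta^1},\varintcat{D}^{\Delta^1})\to(\varintcat{D},\varintcat{D})$, and \Cref{construction_of_span_2_as_a_functor_on_span_pairs}, applied contextwise, turns it into a $2$-functor $\intspantwo(\varintcat{D}^{\Delta^1})\to\intspantwo(\varintcat{D})$. This $2$-functor restricts to the subcategory $\intspantwo(\varintcat{D}^{\Delta^1})^{\varintcat{X}}_{\varintcat{Y}}$, and on it agrees with the $1$-cocartesian fibration $t$ of the preceding proposition.

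Next, the underlying $\vartopos$-category of $\intspantwo(\varintcat{D}^{\Delta^1})^{\varintcat{X}}_{\varintcat{Y}}$ is obtained by discarding the non-invertible $2$-morphisms. Its objects and $1$-morphisms are all objects and spans of $\varintcat{D}^{\Delta^1}$, and its $2$-morphisms are spans of spans whose two legs are equivalences. Hence it is $\intspan(\varintcat{D}^{\Delta^1})$. This holds contextwise and is natural in the context, since $(-)^{\simeq2}$ is computed contextwise and all constructions are contextwise.

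Finally, both squares
\[
\begin{tikzcd}
(\intspantwo(\varintcat{D}^{\Delta^1})^{\varintcat{X}}_{\varintcat{Y}})^{\simeq2}\ar[r]\ar[d]&\intspantwo(\varintcat{D}^{\Delta^1})^{\varintcat{X}}_{\varintcat{Y}}\ar[d,"t"]\\
\intspantwo(\varintcat{D})^{\simeq2}\ar[r]&\intspantwo(\varintcat{D})
\end{tikzcd}
\]
commute by naturality of the counit $(-)^{\simeq2}\to\id$. We paste this square with the equivalence of \Cref{prop:description_of_oplax_slice}, which is an equivalence over $\intspantwo(\varintcat{D})$. Using $\intspantwo(\varintcat{D})^{\simeq2}\simeq\intspan(\varintcat{D})$, this gives \eqref{eq:diag_of_BC_comp}.

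The main obstacle is the middle step. The identification of the underlying $\vartopos$-category with $\intspan(\varintcat{D}^{\Delta^1})$ must be compatible with the construction via $\presh$ in \Cref{prop:construction_of_iterated_spans}. We only claim it on spaces of objects, $1$-morphisms and $2$-morphisms, as in the proof of \Cref{construction_of_span_2_as_a_functor_on_span_pairs}, and this is where care is needed. Everything else is formal pasting.
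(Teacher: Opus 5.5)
Your proposal is correct and follows the paper's argument. The paper's one-line proof says that, via the equivalence of \Cref{prop:description_of_oplax_slice}, the left vertical functor is the core $(-)^{\simeq2}$ of the right vertical one; this is exactly your pasting of the naturality square for the core inclusion with that equivalence, and your identification of $(\intspantwo(\varintcat{D}^{\Delta^1})^{\varintcat{X}}_{\varintcat{Y}})^{\simeq2}$ with $\intspan(\varintcat{D}^{\Delta^1})$ spells out a step the paper leaves implicit.
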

\begin{proof}
    By \Cref{prop:description_of_oplax_slice}, the left-hand functor is the core of the right-hand functor.
\end{proof}

\subsubsection*{The cartesian operad as spans of arrows}

 We have the chosen embedding $\subuniverse^{\full}\into \varintcat{D}$. 
This inclusion preserves the terminal object and all pullbacks that exist in $\subuniverse^{\full}$. 
Thus we may define the following subsheaves of $\intspantwo(\varintcat{D})$ viewed as a $\vartopos$-sheaf of $(\infty,2)$-categories 
\[\intspantwo(\subuniverse^{\full},\subuniverse)^{\subuniverse^{\full}}_{\subuniverse}\subset \intspantwo(\varintcat{D})\]
\[\intspanhalf(\subuniverse^{\full},\subuniverse)^{\subuniverse^{\full}}\subset \intspanhalf(\varintcat{D})^{\varintcat{D}}.\]

We consider the following $\vartopos$-subcategory of $\intspan(\varintcat{D}^{\Delta^1})$:

\begin{definition}\label{def:span_of_arrows_E}
    Let $\varintcat{Sp}_{\subuniverse}^{(1)}\subset \intspan(\varintcat{D}^{\Delta^1})$ denote the following $\vartopos$-subcategory. In the context $A$, the allowed objects are morphisms \[Y'\rightarrowtail Y\]
    where both $Y$ and $Y'$ are in $\subuniverse^{\full}$ and the morphism is in $\subuniverse$. The allowed morphisms are spans
    \[\begin{tikzcd}
        Y'\ar[tail,d]
        &X'\ar[d]\ar[tail,r]\ar[l]
        &Z'\ar[tail,d]\\
        Y
        &X\ar[l]\ar[tail,r]
        &Z
    \end{tikzcd}\]
    where $X,X'$ are in $\subuniverse^{\full}$ and the tailed morphisms are required to be in $\subuniverse$. This defines a $\vartopos$-subcategory since  $(\subuniverse^{\full},\subuniverse)$ is a span pair, so the required pullbacks exist in $\subuniverse^{\full}$ and are preserved by the Yoneda embedding. 
\end{definition}

\begin{remark}
    The $\vartopos$-category $\varintcat{Sp}_{\subuniverse}^{(1)}$ need not be a span $\vartopos$-category of any sub-adequate triplet of $(\varintcat{D}^{\Delta^1},\varintcat{D}^{\Delta^1},\varintcat{D}^{\Delta^1})$. Indeed, the arrows in the roof of the spans are more general objects of $\intspan(\varintcat{D}^{\Delta^1})$ than the objects of  $\varintcat{Sp}_{\subuniverse}^{(1)}$ so that there is no appropriate factorization system of left-pointing and right-pointing maps.
\end{remark}

\begin{corollary}\label{cor:square_with_span_of_arrows_and_span_2_exists}
    The commutative diagram of \Cref{cor:square_with_span_of_arrows_and_span_2_exists_for_presh} restricts to a pullback square: 
    \begin{equation} \label{eq:diag_of_BC_comp_restricted}\begin{tikzcd}
        \varintcat{Sp}_{\subuniverse}^{(1)}\ar[r]\ar[d]
        & (\intspantwo(\subuniverse^{\full},\subuniverse)^{\subuniverse^{\full}}_{\subuniverse})_{*//}\ar[d]\\
        \spanEfE\ar[r]
        &\intspantwo(\subuniverse^{\full},\subuniverse)^{\subuniverse^{\full}}_{\subuniverse}.
    \end{tikzcd}\end{equation}
\end{corollary}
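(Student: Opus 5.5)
The plan is to obtain the square by restricting the square \eqref{eq:diag_of_BC_comp} of \Cref{cor:square_with_span_of_arrows_and_span_2_exists_for_presh} along the inclusions $\spanEfE\subset\intspan(\varintcat{D})$ and $(\intspantwo(\subuniverse^{\full},\subuniverse)^{\subuniverse^{\full}}_{\subuniverse})_{*//}\subset\intspantwo(\varintcat{D})_{*//}$, and then to check that the restricted square is cartesian. Under \Cref{prop:description_of_oplax_slice}, the right vertical arrow is the target functor $t$ on $\intspantwo(\varintcat{D}^{\Delta^1})^{\varintcat{X}}_{\varintcat{Y}}$. Accordingly, I will identify $(\intspantwo(\subuniverse^{\full},\subuniverse)^{\subuniverse^{\full}}_{\subuniverse})_{*//}$ inside it as the sub-$\vartopos$-$2$-category whose objects, $1$-morphisms and $2$-morphisms are exactly those that are allowed after unwinding the oplax slice. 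This means: objects $Y'\rightarrowtail Y$ with the arrow in $\subuniverse$ (an object of the slice is a $1$-morphism $*\leftarrow Y'\rightarrowtail Y$), and $1$-morphisms given by spans of arrows whose roofs and legs are as in \Cref{def:span_of_arrows_E}.

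The argument proceeds contextwise, which is enough because everything is compatible with change of context (pullbacks and the embedding $i$ are preserved). In a fixed context $A$ I would carry out three steps. First, compare objects: an object of the pullback is an object $Y\in\subuniverse^{\full}(A)$ together with an object of the fibre $\Fun_{\intspantwo(\subuniverse^{\full},\subuniverse)(A)}(*,Y)$, that is, a span $*\leftarrow Y'\rightarrowtail Y$ with $Y'\in\subuniverse^{\full}$ and $Y'\to Y$ in $\subuniverse$. These are precisely the objects of $\varintcat{Sp}_{\subuniverse}^{(1)}$. Second, compare $1$-morphisms: a morphism in the pullback lies over a span $Y\leftarrow X\rightarrowtail Z$ in $\spanEfE$, and its lift is a span of arrows. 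Because the lift lies in the core, its $2$-cells are invertible, and because $\spanEfE$ sits in the underlying $1$-category, the relevant squares are the pullback and isomorphism markings $\varintcat{Y},\varintcat{X}$. Full faithfulness of $i$ and the span-pair property of $(\subuniverse^{\full},\subuniverse)$ then force $X'\in\subuniverse^{\full}$ and $X'\rightarrowtail Z'$ to lie in $\subuniverse$, which gives exactly the morphisms of \Cref{def:span_of_arrows_E}. Third, check that these identifications on objects and on morphisms are compatible with the Segal structure. This reduces to a statement about spaces of objects and morphisms, and it holds because both sides are subobjects of the same simplicial object $\intspan(\varintcat{D}^{\Delta^1})$.

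I expect the main obstacle to be the second step, namely showing that the $1$-morphisms of the slice $(\intspantwo(\subuniverse^{\full},\subuniverse)^{\subuniverse^{\full}}_{\subuniverse})_{*//}$ lying over $\spanEfE$ correspond exactly to the allowed spans of arrows rather than to some larger class. The worry is that the oplax slice admits morphisms whose $2$-cell component is an arbitrary allowed $2$-morphism, and these have to be matched against the data of a span of arrows. I would first try to read this off from the explicit description of cocartesian lifts in the proof that $t$ is a $1$-cocartesian fibration. Any morphism in the core factors as a cocartesian lift (the pullback square displayed there, with $F'=X'\times_X F$) followed by a morphism in a fibre. Fibre morphisms are morphisms of $\Span(\varintcat{D}(A)_{/Z})$, and restricting them to $\subuniverse$ gives exactly spans with tailed legs over $Z$. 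Combining the factorization with this description of the fibres yields the stated description.
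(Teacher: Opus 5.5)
Your overall route is the paper's. You work contextwise, use that the lower functor is the inclusion of the $2$-core, note that objects match trivially, and compare the anima of $1$-morphisms of the slice with the spans of arrows of \Cref{def:span_of_arrows_E}.

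However, the first half of your Step 2 is wrong, and taken literally it gives the wrong answer. Pulling back along the $2$-core inclusion only discards the non-invertible $2$-morphisms \emph{of the slice}. A $1$-morphism of the slice is still a pair $(h,\alpha)$, where $h=(Y\leftarrow X\rightarrowtail Z)$ and $\alpha$ is an \emph{arbitrary} allowed $2$-morphism of $\intspantwo(\subuniverse^{\full},\subuniverse)^{\subuniverse^{\full}}_{\subuniverse}$. Likewise, the markings $\varintcat{X},\varintcat{Y}$ constrain only the $2$-morphisms of $\intspantwo(\varintcat{D}^{\Delta^1})^{\varintcat{X}}_{\varintcat{Y}}$, not its $1$-morphisms. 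If you required $\alpha$ to be invertible and the squares to be cartesian, respectively to have invertible source component, you would recover only the cocartesian morphisms: left square cartesian and $X'\simeq Z'$. You would miss, e.g., every fibre morphism that is not an equivalence, and the comparison with $\varintcat{Sp}_{\subuniverse}^{(1)}$ would fail. For the same reason, $X'\in\subuniverse^{\full}$ and $X'\rightarrowtail Z'\in\subuniverse$ do not come from full faithfulness of $i$. They hold because $\alpha$ is an allowed $2$-cell.

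Your final (``main obstacle'') paragraph is the correct replacement, and it is exactly what the paper does when it writes down the diagram \eqref{eq:diag_of_morph_inlax_slice_of_sp_2}. By cocartesianness, the anima of $1$-morphisms over $h$ from $Y'\rightarrowtail Y$ to $Z'\rightarrowtail Z$ is the anima of fibre morphisms from $Y'\times_Y X\to Z$ to $Z'\rightarrowtail Z$. Such a fibre morphism is a span $Y'\times_Y X\leftarrow X'\rightarrowtail Z'$ over $Z$ whose left leg is arbitrary and whose right leg lies in $\subuniverse$; only the right leg is tailed, not both, as you wrote.

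A map $X'\to Y'\times_Y X$ is precisely the roof $X'\to X$ together with the left commuting square, and commutativity over $Z$ is the right square. This identifies the two anima of morphisms, and at the same time shows that the upper functor lands in the restricted slice. With Step 2 rewritten this way, the proposal goes through.
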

\begin{proof}
    To check that the square exists, we need to check that each object, each morphism and each $2$-morphism in each of the $\vartopos$-$2$-categories is sent by each arrow to the $\vartopos$-$2$-subcategory in the target. We will only show that the upper morphism is well-defined\footnote{In fact, all other checks are subsumed in this one.}.

    Let
    \begin{equation}\label{eq:diag_of_morph_in_sp_1}\begin{tikzcd}
        Y'\ar[tail,d]
        &X'\ar[d]\ar[tail,r]\ar[l]
        &Z'\ar[tail,d]\\
        Y
        &X\ar[l]\ar[tail,r]
        &Z
    \end{tikzcd}\end{equation}
    be a morphism in $\varintcat{Sp}_{\subuniverse}^{(1)}$. The upper arrow in \Cref{eq:diag_of_BC_comp} sends this morphism to the arrow
    \begin{equation}\label{eq:diag_of_morph_inlax_slice_of_sp_2}\begin{tikzcd}
    	{*} && {Y'} && Y \\
    	&&& {Y'\times_YX} \\
    	{*} && X' && X \\
    	& {Z'} \\
    	{*} && {Z'} && Z.
    	\arrow[from=1-3, to=1-1]
    	\arrow[tail, from=1-3, to=1-5]
    	\arrow[from=2-4, to=1-3]
    	\arrow["\lrcorner"{anchor=center, pos=0.125, rotate=90}, draw=none, from=2-4, to=1-5]
    	\arrow[from=2-4, to=3-5]
    	\arrow[equal, from=3-1, to=1-1]
    	\arrow[equal, from=3-1, to=5-1]
    	\arrow[from=3-3, to=2-4]
    	\arrow[tail, from=3-3, to=4-2]
    	\arrow[from=3-5, to=1-5]
    	\arrow[tail, from=3-5, to=5-5]
    	\arrow[from=4-2, to=3-1]
    	\arrow["\lrcorner"{anchor=center, pos=0.125, rotate=-90}, draw=none, from=4-2, to=5-1]
    	\arrow[equal, from=4-2, to=5-3]
    	\arrow[from=5-3, to=5-1]
    	\arrow[tail, from=5-3, to=5-5]
    \end{tikzcd}\end{equation}

    We now check  that the  square \Cref{eq:diag_of_BC_comp_restricted} is a pullback square. Since $\varintcat{Sp}_{\subuniverse}^{(1)}$ is a sheaf of $\infty$-categories, it is enough to check this contextwise on the anima of objects and morphisms. The statement about objects is trivial. 

    Note that in \Cref{eq:diag_of_BC_comp_restricted} the lower horizontal functor is the inclusion of the maximal $\vartopos$-subcategory. Thus we only need to show in every context $A$ that the upper morphism gives an equivalence on the anima of $1$-morphisms.

    The anima of $1$-morphisms in $\varintcat{Sp}_{\subuniverse}^{(1)}(A)$ is equivalent to the anima of diagrams as in \Cref{eq:diag_of_morph_in_sp_1}. The anima of $1$-morphisms in $(\intspantwo(\subuniverse^{\full},\subuniverse)^{\subuniverse^{\full}}_{\subuniverse})_{*//}(A)$ is equivalent to the anima of diagrams as in \Cref{eq:diag_of_morph_inlax_slice_of_sp_2}, that is, the anima of diagrams of the shape in \Cref{eq:diag_of_morph_inlax_slice_of_sp_2} in $\subuniverse^{\full}(A)$ such that the morphisms denoted by $=$ are isomorphisms, the objects denoted by $*$ are terminal and the marked squares in the corners are pullbacks.
    The morphism between these animae described above is clearly an equivalence.
\end{proof}

This gives us the desired description of $\spanEfE^\times$.

\begin{proposition}\label{prop:span_operad_as_spans_of_arrows}
    There is an equivalence of cocartesian fibrations
    \[\begin{tikzcd}
        \varintcat{Sp}_{\subuniverse}^{(1)}\ar[r,"\sim"]\ar[d]
        &\spanEfE^{\times}\ar[d]\\
        \spanEfE\ar[r, equal] & \spanEfE.
    \end{tikzcd}\]
\end{proposition}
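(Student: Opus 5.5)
The plan is to deduce this directly by pasting two pullback squares already established. By \Cref{prop:pullback_describing_span_EfE_times_using_span_2}, the cocartesian fibration $\spanEfE^\times\to\spanEfE$ is the pullback of the oplax slice $(\intspantwo(\subuniverse^{\full},\subuniverse)^{\subuniverse^{\full}}_{\subuniverse})_{*//}\to\intspantwo(\subuniverse^{\full},\subuniverse)^{\subuniverse^{\full}}_{\subuniverse}$ along the inclusion of $\spanEfE$. By \Cref{cor:square_with_span_of_arrows_and_span_2_exists}, $\varintcat{Sp}_{\subuniverse}^{(1)}\to\spanEfE$ is the pullback of the \emph{same} functor along the \emph{same} inclusion. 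So I would first check that the two bottom arrows $\spanEfE\to\intspantwo(\subuniverse^{\full},\subuniverse)^{\subuniverse^{\full}}_{\subuniverse}$ agree. Both are the inclusion of the maximal $\vartopos$-subcategory on invertible $2$-morphisms, factoring through $\intspanhalf(\subuniverse^{\full},\subuniverse)^{\subuniverse^{\full}}$. Uniqueness of pullbacks in $\catoftwocats(\vartopos)$ then gives an equivalence $\varintcat{Sp}_{\subuniverse}^{(1)}\simeq\spanEfE^\times$ over $\spanEfE$.

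Next, I would show that this equivalence is an equivalence of cocartesian fibrations, i.e.\ that it preserves cocartesian edges. Since it is an equivalence over $\spanEfE$, it automatically preserves and reflects cocartesian morphisms: cocartesianness is defined by a pullback condition on mapping anima over the base, and this condition is invariant under equivalence over the base. It is still worth recording what these edges are. In both descriptions they are pulled back from the $1$-cocartesian edges of the oplax slice. Under \Cref{prop:description_of_oplax_slice}, these are the spans of arrows whose left square is cartesian and whose right square has an identity source component, as in the explicit lifts computed for $t$. This gives the expected description of the cocartesian lifts in $\varintcat{Sp}_{\subuniverse}^{(1)}$.

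The main obstacle is coherence rather than computation. One has to make sure that the square in \Cref{cor:square_with_span_of_arrows_and_span_2_exists} uses, as its right vertical map, exactly the oplax slice fibration appearing in \Cref{prop:pullback_describing_span_EfE_times_using_span_2}. In the former, the identification passes through \Cref{prop:description_of_oplax_slice} and the choice of an ambient $\varintcat{D}$ (say $\intpresh(\subuniverse^{\full})$). So I would check that restricting the equivalence $\intspantwo(\varintcat{D})_{*//}\simeq\intspantwo(\varintcat{D}^{\Delta^1})^{\varintcat{X}}_{\varintcat{Y}}$ to the subsheaf $\intspantwo(\subuniverse^{\full},\subuniverse)^{\subuniverse^{\full}}_{\subuniverse}$ recovers the intrinsic oplax slice there. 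Because that subsheaf is locally full on the relevant mapping categories and contains the terminal object, the oplax slice of the sub-$2$-category is the pullback of the ambient oplax slice, which settles this. Compatibility with change of context is automatic, since all constructions are computed contextwise.
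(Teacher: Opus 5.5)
Your proposal is correct and is the paper's argument: the paper also identifies $\varintcat{Sp}_{\subuniverse}^{(1)}$ and $\spanEfE^\times$ as pullbacks of the same oplax slice fibration along the same inclusion, using \Cref{cor:square_with_span_of_arrows_and_span_2_exists} and \Cref{prop:pullback_describing_span_EfE_times_using_span_2}. The coherence check in your last paragraph is unnecessary, because that corollary already has the intrinsic oplax slice $(\intspantwo(\subuniverse^{\full},\subuniverse)^{\subuniverse^{\full}}_{\subuniverse})_{*//}$ in its corner and verifies the pullback directly on anima of objects and morphisms. Your ``locally full'' justification for that check is also inaccurate: the sub-$2$-category restricts both $1$- and $2$-morphisms, so its oplax slice is strictly smaller than the pullback of the ambient one.
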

\begin{proof}
    By \Cref{eq:diag_of_BC_comp_restricted}, it is enough to show that $\spanEfE^\times$ sits in the following pullback:
    \[\begin{tikzcd}
        \spanEfE^\times \ar[r]\ar[d]
        & (\intspantwo(\subuniverse^{\full},\subuniverse)^{\subuniverse^{\full}}_{\subuniverse})_{*//}\ar[d]\\
        \spanEfE\ar[r]
        &\intspantwo(\subuniverse^{\full},\subuniverse)^{\subuniverse^{\full}}_{\subuniverse}.
    \end{tikzcd}\]

    This was shown in \Cref{prop:pullback_describing_span_EfE_times_using_span_2}.
\end{proof}

\subsubsection*{The source functor and \texorpdfstring{$\mu$}{mu}}

Recall the functor $\mu:\spanEfE^\times\to \spanEfE$ defined in \Cref{eq:construction_of_mu}. 

\begin{proposition}\label{prop:description_of_mu_for_span}
    The composition 
    \[\varintcat{Sp}_{\subuniverse}^{(1)}\simeq \spanEfE^\times\xto{\mu} \spanEfE\]
    is equivalent to the source functor 
    \[s: \varintcat{Sp}_{\subuniverse}^{(1)}\to\spanEfE\]
    sending $(X'\rightarrowtail X)\mapsto X'$.
\end{proposition}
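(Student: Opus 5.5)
The plan is to unwind the definition of $\mu$ in \Cref{eq:construction_of_mu} for $\varintcat{C}=\spanEfE$, and to compare it with the source functor through the diagram of \Cref{cor:square_with_span_of_arrows_and_span_2_exists}. First, $\spanEfE$ is $\subuniverse^{\full}$-complete by \Cref{prop:limits_in_span_E}. Its cartesian structure is unfurled from $(\subuniverse^{\full})^{\op}\to\internaltwocatofcats$, $B\mapsto\spanEfE^B\simeq\spanEfE(B)$. The $2$-functorial extension $F:\intspanhalf(\subuniverse^{\full},\subuniverse)\to\internaltwocatofcats$ of \Cref{prop:univ_prop_of_span_half} should then be identified with the corepresented functor $\intfun_{\intspantwo(\subuniverse^{\full},\subuniverse)^{\subuniverse^{\full}}_{\subuniverse}}(*,-)$, restricted along the inclusion $\intspanhalf\subset\intspantwo$. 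This is exactly the identification used in the proof of \Cref{prop:pullback_describing_span_EfE_times_using_span_2}, and it rests on the uniqueness of the $2$-functor arising from unfurling.

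Next, recall that $\mu$ is obtained by unstraightening $F\nu$. Here $\nu:\iota\Rightarrow *$ is the lax transformation built from $\varintcat{S}\simeq\spanEfE$ (\Cref{prop:E_cart_S_is_equivalent_to_span}). Since $F$ is corepresented, \Cref{cor:extra_functoriality_of_unst} reduces $\mu$ to postcomposition. An object of $\spanEfE^\times$ in context $A$ over $B$ is a $1$-morphism $*\to B$ in $\intspantwo$, that is, a span $*\leftarrow Y'\rightarrowtail B$. It is sent to its composite with $\nu_B$, which is the span $B\leftarrow B\to *$. This composite is $*\leftarrow Y'\to *$, which corresponds to $Y'$ under $\intfun(*,*)\simeq\spanEfE$. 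By \Cref{prop:span_operad_as_spans_of_arrows}, this object is exactly $(Y'\rightarrowtail B)$, and $Y'$ is its source. On morphisms, I would compose the diagram \Cref{eq:diag_of_morph_inlax_slice_of_sp_2} with the naturality $2$-cells of $\nu$. These are the Beck--Chevalley cells described after \Cref{eq:construction_of_mu}. The composite should collapse to the top row $Y'\leftarrow X'\rightarrowtail Z'$ of \Cref{eq:diag_of_morph_in_sp_1}, after cancelling the pullbacks against the identities into $*$.

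The remaining step, and the one I expect to be the main obstacle, is coherence: the pointwise identification has to be assembled into an equivalence of functors. Here is how I would do that. Both $\mu$ and $s$ send the cocartesian lifts of left-pointing maps in $\varintcat{Sp}_{\subuniverse}^{(1)}$ (the pullback squares) to the same maps. Moreover, $\mu$ is built entirely from cocartesian lifts. So I would produce the natural transformation $s\Rightarrow\mu$ from the $2$-Yoneda lemma, as in \Cref{prop:description_of_oplax_slice}: a natural transformation out of the corepresented functor $\yo(*)$ is determined by its value at $\id_*$, and here that value is the identity span. I would then check that its components are equivalences on objects, using the computation above, contextwise and compatibly with change of context.
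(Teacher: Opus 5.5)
Your proposal is correct and is essentially the paper's argument made explicit. The paper's proof only notes that both $\mu$ and $s$ are constructed by sending an arrow to its source; that is your computation that $\mu$ is fibrewise postcomposition with $\nu_B=(B\xleftarrow{=}B\rightarrowtail *)$ on the corepresented functor, so that $(*\leftarrow Y'\rightarrowtail B)\mapsto(*\leftarrow Y'\rightarrowtail *)\simeq Y'$, with the same collapse on morphisms.

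One caution about your last paragraph. $s$ and $\mu$ correspond to \emph{lax} transformations out of $\yo(*)$, and these are not determined by their value at $\id_*$. So the Yoneda lemma does not by itself produce $s\Rightarrow\mu$. Coherence should instead come from the constructions themselves, for example by whiskering the strict equivalence of \Cref{prop:description_of_oplax_slice} with $\nu$.
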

\begin{proof}
    Both $s$ and $\mu$ were constructed by sending an arrow to its source. The equivalence above is immediate from those constructions.
\end{proof}

\begin{corollary}\label{cor:s_and_mu_compatibility_diagram}
    There is a commutative diagram 
    \[\begin{tikzcd}
        &\subuniverse^\otimes\ar[dd,hook]\ar[dr,"s"]\\
        \spanEfE\ar[ru,"r"]\ar[dr,"\tau"']
        &&\spanEfE\\
        &\spanEfE^\times\ar[ru,"\mu"']
    \end{tikzcd}\]
\end{corollary}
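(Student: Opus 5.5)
The plan is to reduce everything to the explicit span-of-arrows models, where all three functors have concrete formulas, and then compare them there. By \Cref{lem:descript_of_E_tensor} and \Cref{cor:description_of_the_env_of_triv}, $\subuniverse^\otimes$ is the span $\vartopos$-category of the adequate triplet $(\subuniverse^{\full,(1)},\subuniverse^{\full,(1)}_{\cart},\subuniverse^{\Delta^1})$. Its objects are arrows $C\rightarrowtail B$ in $\subuniverse$. Its morphisms are spans of such arrows whose left leg is a cartesian square and whose right leg is a square with both components in $\subuniverse$. By \Cref{prop:span_operad_as_spans_of_arrows}, $\spanEfE^\times\simeq\varintcat{Sp}_{\subuniverse}^{(1)}$. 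This is the $\vartopos$-subcategory of $\intspan(\varintcat{D}^{\Delta^1})$ on arrows in $\subuniverse$, with spans whose tailed components lie in $\subuniverse$ and whose left-pointing squares are arbitrary.

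\textbf{Step 1: the vertical inclusion.} The vertical inclusion $\subuniverse^\otimes\into\spanEfE^\times$ is the evident one. A cartesian left leg is in particular a commuting square. A right leg in $\subuniverse^{\Delta^1}$ has both its source and target components in $\subuniverse$. So every morphism of $\subuniverse^\otimes$ is a morphism of $\varintcat{Sp}_{\subuniverse}^{(1)}$. This is a $\vartopos$-subcategory inclusion compatible with the target functors to $\spanEfE$: both targets are restrictions of $t:\intspan(\varintcat{D}^{\Delta^1})\to\intspan(\varintcat{D})$. Composition is computed by pullbacks in $\varintcat{D}^{\Delta^1}$ in both cases, so the inclusion is a functor. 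It is defined contextwise and is compatible with change of context.

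\textbf{Step 2: the upper-right triangle.} By \Cref{prop:description_of_mu_for_span}, $\mu$ corresponds to the source functor $s$ on $\varintcat{Sp}_{\subuniverse}^{(1)}$. The functor $s$ on $\subuniverse^\otimes$ from \Cref{prop:source_target_identity_adjunctions} is also induced by the source $\delta_1$. Both are restrictions of the source functor $\intspan(\varintcat{D}^{\Delta^1})\to\intspan(\varintcat{D})$. Hence the upper-right triangle commutes canonically. One check is needed here: applying the source to a span in $\subuniverse^\otimes$ gives a span in $\spanEfE$. This holds because the source component of a morphism in $\subuniverse^{\Delta^1}$ lies in $\subuniverse$.

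\textbf{Step 3: defining $\tau$.} I define $\tau$ as the composite of $r$ with the inclusion, so the left triangle commutes by definition. Concretely, $\tau$ sends $B$ to the identity arrow $B=B$. Via $\mu\simeq s$, the whole diagram then reduces to the identity $s\circ r\simeq\id$ from \Cref{prop:source_target_identity_adjunctions}. If one wants $\tau$ to be the functor induced by $\sigma:\Delta^1\to\Delta^0$ on $\intspan(\varintcat{D}^{\Delta^1})$, the diagonal embedding, the same argument applies: that functor visibly factors through $\subuniverse^\otimes$ as $r$.

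\textbf{Main obstacle.} The hardest part is making the identification in Step 2 genuinely natural rather than objectwise. \Cref{prop:description_of_mu_for_span} is stated through the chain of equivalences of \Cref{prop:pullback_describing_span_EfE_times_using_span_2}, \Cref{cor:square_with_span_of_arrows_and_span_2_exists} and \Cref{prop:span_operad_as_spans_of_arrows}. I would have to check that restricting that chain along $\subuniverse^\otimes\into\varintcat{Sp}_{\subuniverse}^{(1)}$ recovers the equivalence of \Cref{lem:descript_of_E_tensor}. I would first compare both as $\vartopos$-subcategories of the common ambient $\intspan(\varintcat{D}^{\Delta^1})$. Since all the maps involved are inclusions of subcategories, commutativity can be checked on anima of objects and morphisms contextwise. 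There it is a direct inspection of the diagrams.
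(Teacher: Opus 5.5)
Your proposal is correct and follows essentially the paper's own argument. The vertical map is the evident inclusion of span-of-arrows models, since a cartesian left leg and a right leg in $\subuniverse^{\Delta^1}$ make every morphism of $\subuniverse^\otimes$ a morphism of $\varintcat{Sp}_{\subuniverse}^{(1)}$; the right triangle commutes because $s$ and $\mu$ are both the source functor by \Cref{prop:description_of_mu_for_span}; and the left triangle commutes by the definition $\tau=i\circ r$. The only nuance is that the paper obtains the vertical map as the operad morphism induced by $\subuniverse\subset\spanEfE$ being an $\subuniverse$-monoidal subcategory (which matters later for $\tau$ to be lax $\subuniverse$-monoidal) and simply asserts that it ``sends a diagram to itself,'' which is exactly the compatibility you single out as your main obstacle.
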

\begin{proof}
    Since $\subuniverse\subset \spanEfE$ is an $\subuniverse$-monoidal $\vartopos$-subcategory, we have a functor $\subuniverse^\otimes \into \spanEfE^\times$ which is an inclusion of a $\vartopos$-subcategory.

    In \Cref{cor:description_of_the_env_of_triv} we showed that morphisms in $\subuniverse^\otimes$ are given by diagrams of the form
    \[\begin{tikzcd}
        Y'\ar[tail,d]
        &X'\ar[tail,d]\ar[tail,r]\ar[l]\arrow["\lrcorner"{anchor=center, pos=0.125, rotate=-90}, draw=none, dl]
        &Z'\ar[tail,d]\\
        Y
        &X\ar[l]\ar[tail, r]
        &Z.
    \end{tikzcd}\]
    Morphisms in $\spanEfE^\times$ are given by diagrams as in \Cref{eq:diag_of_morph_in_sp_1}, and the inclusion sends a diagram above to itself. Thus we get the commutativity of the right triangle from \Cref{prop:description_of_mu_for_span}.
    
     The commutativity of the left triangle was the definition of $\tau$.
\end{proof}

\subsection{Another formula for the \texorpdfstring{$\subuniverse$}{E}-monoidal envelope.}\label{subsec:envelope_as_restriction}

In this subsection, we give another description of the $\subuniverse$-monoidal envelope of the operad associated with an $\subuniverse$-monoidal $\vartopos$-category $\varintcat{C}$. We show that $\varintcat{C}^\otimes$ carries a cartesian $\subuniverse$-monoidal structure and that the equivalence
\[
\intenv(\varintcat{C}^\otimes)\simeq \varintcat{C}^\otimes\times_{\spanEfE}\subuniverse
\]
is an equivalence of $\subuniverse$-monoidal $\vartopos$-categories (\Cref{thm:C_otimes_res_to_E_is_envelope}). Thus the envelope identifies with an $\subuniverse$-monoidal $\vartopos$-subcategory of $\varintcat{C}^\otimes$.

We also show that unstraightening a lax $\subuniverse$-monoidal functor $F:\varintcat{C}\to\internalcatofcats$ gives a canonical $\subuniverse$-monoidal structure on $\intunst(F)$, with $\subuniverse$-monoidal projection to $\varintcat{C}$ (\Cref{prop:lax_E_monoidal_unst}). In \Cref{ex:lax_unstraightening_recovers_monoidal_structure}, we recover the original $\subuniverse$-monoidal structure on $\varintcat{C}$ by applying this construction to the functor $*\to\internalcatofcats$ selecting $\varintcat{C}$.

\subsubsection*{Monoidal structures on unstraightenings}

\begin{corollary}\label{cor:associated_operad_total_category_complete}
    Let $F:\spanEfE \to \internalcatofcats$ be an $\subuniverse$-monoidal $\vartopos$-category with $\varintcat{C}= F(*)$. Then $\varintcat{C}^\otimes := \intunst(F)$ is $\subuniverse^{\full}$-complete and the classifying cocartesian fibration $\varintcat{C}^\otimes\to \spanEfE$ is $\subuniverse^{\full}$-continuous. Moreover, the $\vartopos$-subcategory of $\varintcat{C}^\otimes$ spanned by cocartesian edges is closed under $\subuniverse^{\full}$-limits.
\end{corollary}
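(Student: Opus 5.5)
The plan is to realize $\varintcat{C}^\otimes=\intunst(F)$ as a pullback of $\subuniverse^{\full}$-complete $\vartopos$-categories along $\subuniverse^{\full}$-continuous functors. Then both completeness and the continuity of the projection follow formally, since $\internalcatofcats^{\subuniverse\text{-}\sqcap}$ is closed under limits in $\internalcatofcats$.

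Concretely, by \Cref{prop:univ_internal_2_cocart_fib} and the discussion preceding \Cref{thm:univ_unst_is_monoidal}, the unstraightening of $F$ is the pullback
\[
\begin{tikzcd}
    \varintcat{C}^\otimes\ar[r]\ar[d]\pullbackdr & \internalcatofcats_{*//}\ar[d]\\
    \spanEfE\ar[r,"F"] & \internalcatofcats.
\end{tikzcd}
\]
The inputs are then as follows.
\begin{enumerate}
    \item $\spanEfE$ is $\subuniverse^{\full}$-complete by \Cref{prop:limits_in_span_E}.
    \item $F$ is $\subuniverse^{\full}$-continuous, because $F$ is an $\subuniverse$-monoid.
    \item $\internalcatofcats_{*//}$ and $\internalcatofcats$ are $\subuniverse^{\full}$-complete, and the right vertical map is $\subuniverse^{\full}$-continuous, by \Cref{thm:univ_unst_is_monoidal}.
\end{enumerate}
Since the inclusion of $\subuniverse^{\full}$-complete categories with continuous functors into $\internalcatofcats$ preserves limits (limits are computed contextwise, and indexed limits in a pullback are computed componentwise), the pullback $\varintcat{C}^\otimes$ is $\subuniverse^{\full}$-complete. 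Both of its projections are $\subuniverse^{\full}$-continuous, and in particular so is $\varintcat{C}^\otimes\to\spanEfE$.

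For the last claim I apply the same argument to the upper row of the diagram in the proof of \Cref{thm:univ_unst_is_monoidal}. The cocartesian edges of $\intunst(F)$ form the wide subcategory
\[
\varintcat{C}^\otimes\times_{\internalcatofcats_{*//}}\internalcatofcats_{*/},
\]
since $\internalcatofcats_{*/}\subset\internalcatofcats_{*//}$ is exactly the wide subcategory of cocartesian edges. This is again a pullback of $\subuniverse^{\full}$-complete categories along $\subuniverse^{\full}$-continuous functors, using the continuity of $\internalcatofcats_{*/}\to\internalcatofcats_{*//}$ from \Cref{thm:univ_unst_is_monoidal}. Hence it is $\subuniverse^{\full}$-complete, and its inclusion into $\varintcat{C}^\otimes$ is $\subuniverse^{\full}$-continuous. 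That is precisely the statement that cocartesian edges are closed under $\subuniverse^{\full}$-limits.

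The main point requiring care is the closure of $\internalcatofcats^{\subuniverse\text{-}\sqcap}$ under pullbacks in $\internalcatofcats$. I would check this by verifying, in each context $A$ and for each $[p:B\to A]\in\subuniverse(A)$, two things. First, the right adjoints $p_*$ on the three corners assemble into a right adjoint on the pullback. Second, the Beck--Chevalley maps of the two continuous functors are equivalences. Together these give the required right adjoint and base change on the pullback, as in the completeness argument for the oplax slice.
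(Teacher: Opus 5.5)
Your proposal is correct and is essentially the paper's proof: the paper also exhibits $\varintcat{C}^\otimes$ as the pullback of $\internalcatofcats_{*//}\to\internalcatofcats$ along $F$, notes that this is a pullback of $\subuniverse^{\full}$-complete $\vartopos$-categories along $\subuniverse^{\full}$-continuous functors, and concludes because $\internalcatofcats^{\subuniverse-\sqcap}\to\internalcatofcats$ preserves limits. For the cocartesian-edge clause, you pull back along $\internalcatofcats_{*/}\subset\internalcatofcats_{*//}$ and invoke \Cref{thm:univ_unst_is_monoidal}, which makes explicit what the paper leaves implicit.
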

\begin{proof}
    The pullback
    \[\begin{tikzcd}
        \varintcat{C}^\otimes\ar[r]\ar[d]\pullbackdr & \internalcatofcats_{*//}\ar[d]\\
        \spanEfE\ar[r] & \internalcatofcats
    \end{tikzcd}\]
    is a pullback of $\subuniverse^{\full}$-complete $\vartopos$-categories along $\subuniverse^{\full}$-continuous functors. Thus it can be computed in $\internalcatofcats$ since the functor $\internalcatofcats^{\subuniverse-\sqcap}\to \internalcatofcats$ is continuous. 
\end{proof}

Another corollary is that given an $\subuniverse$-monoidal functor to $\internalcatofcats$, the unstraightening carries a natural $\subuniverse$-monoidal structure:

\begin{corollary}\label{cor:unst_of_monoidal_functor_is_monoidal}
    Let $\varintcat{J}$ be an $\subuniverse$-monoidal $\vartopos$-category and let $F:\varintcat{J}\to \internalcatofcats$ be an $\subuniverse$-monoidal functor where the target is equipped with the cartesian $\subuniverse$-monoidal structure. Then $\intunst(F)$ carries a natural $\subuniverse$-monoidal structure such that the unstraightening functor $\intunst(F)\to \varintcat{J}$ is $\subuniverse$-monoidal. Moreover, the $\vartopos$-subcategory of $\intunst(F)$ spanned by cocartesian edges is an $\subuniverse$-monoidal $\vartopos$-subcategory.
\end{corollary}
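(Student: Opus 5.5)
The plan is to argue exactly as in \Cref{cor:associated_operad_total_category_complete}: first realize $\intunst(F)$ as a pullback of $\subuniverse^{\full}$-complete $\vartopos$-categories along $\subuniverse^{\full}$-continuous functors, and then promote that pullback to a pullback of $\subuniverse$-monoidal $\vartopos$-categories. By definition, $\intunst(F)$ sits in the pullback square
\[
\begin{tikzcd}
    \intunst(F)\ar[r]\ar[d]\pullbackdr & \internalcatofcats_{*//}\ar[d]\\
    \varintcat{J}\ar[r,"F"] & \internalcatofcats.
\end{tikzcd}
\]
The cartesian $\subuniverse$-monoidal structure on $\internalcatofcats$ comes from $\equipcart$ (\Cref{prop:categories_with_E_limits_have_E_monoidal_structure}). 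By \Cref{thm:univ_unst_is_monoidal}, $\internalcatofcats_{*//}$ and $\internalcatofcats_{*/}$ are $\subuniverse^{\full}$-complete, and the functors $\internalcatofcats_{*/}\to\internalcatofcats_{*//}\to\internalcatofcats$ are $\subuniverse^{\full}$-continuous. Applying the functoriality of $\equipcart$ therefore gives $\subuniverse$-monoidal structures on $\internalcatofcats_{*//}$ and $\internalcatofcats_{*/}$ for which the projection and the inclusion are $\subuniverse$-monoidal.

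The key step is that $\intsmon(\internalcatofcats)\to\internalcatofcats$, $G\mapsto G(*)$, preserves limits. Limits in $\intsmon(\internalcatofcats)=\intfun^{\subuniverse\text{-}\sqcap}(\spanEfE,\internalcatofcats)$ are computed pointwise, because $\subuniverse^{\full}$-continuous functors are closed under limits in the functor category. The pullback of the cospan
\[
\varintcat{J}\xto{F}\equipcart(\internalcatofcats)\leftarrow\equipcart(\internalcatofcats_{*//})
\]
in $\intsmon(\internalcatofcats)$ is therefore an $\subuniverse$-monoidal $\vartopos$-category whose underlying category is $\intunst(F)$, and its projection to $\varintcat{J}$ is $\subuniverse$-monoidal. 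The construction is natural in $F$ because it is a pullback.

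For the final claim, I would apply \Cref{prop:monoidal_subcategories_are_monoidal} and check the subcategory condition directly. A morphism of $\intunst(F)(B)$ is cocartesian exactly when its image in $\internalcatofcats_{*//}$ lies in $\internalcatofcats_{*/}$, so the cocartesian-edge subcategory is $\varintcat{J}\times_{\internalcatofcats}\internalcatofcats_{*/}$. The tensor products $s_\otimes$ on the pullback are computed componentwise. On $\equipcart(\internalcatofcats_{*//})$ they are the indexed limits $s_*$, and $\internalcatofcats_{*/}$ is closed under these limits by \Cref{thm:univ_unst_is_monoidal}. Hence $s_\otimes$ of a cocartesian edge is again cocartesian.

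I expect the main obstacle to be identifying the componentwise tensor product in the pullback with the pair (tensor in $\varintcat{J}$, limit in $\internalcatofcats_{*//}$). This identification requires that the cartesian structure be functorial in $\subuniverse^{\full}$-continuous functors compatibly with the inclusion $\internalcatofcats_{*/}\subset\internalcatofcats_{*//}$, and I would obtain that from the functoriality clause of \Cref{prop:categories_with_E_limits_have_E_monoidal_structure}.
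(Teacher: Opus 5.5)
Your proposal is correct and is essentially the paper's argument. You realize $\intunst(F)$ as the defining pullback taken in $\intsmon(\internalcatofcats)$, using that the forgetful functor preserves pullbacks and that $\internalcatofcats_{*//}\to\internalcatofcats$ is $\subuniverse^{\full}$-continuous, hence $\subuniverse$-monoidal for the cartesian structures; your check of the cocartesian-edge clause, via closure of $\internalcatofcats_{*/}$ under indexed limits in $\internalcatofcats_{*//}$ (\Cref{thm:univ_unst_is_monoidal}), fills in a step the paper's one-line proof leaves implicit.
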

\begin{proof}
    The forgetful functor $\intmon^{\subuniverse}(\internalcatofcats)\to \internalcatofcats$ preserves pullbacks, and since $\internalcatofcats_{*//}\to \internalcatofcats$ preserves $\subuniverse$-limits, its base change to $\varintcat{J}$ along $F$ is $\subuniverse$-monoidal.
\end{proof}

\subsubsection*{The canonical lax monoidal functor}

Let $\varintcat{C}$ be an $\subuniverse$-monoidal $\vartopos$-category. We have an equivalence of $\vartopos$-categories:
\[
\intenv(\varintcat{C}^\otimes)\simeq \varintcat{C}^\otimes \times_{\spanEfE} \subuniverse.
\]
Our next goal is to verify that this is a pullback of $\subuniverse$-monoidal $\vartopos$-categories.

Let $\varintcat{C}$ be a $\subuniverse^{\full}$-complete $\vartopos$-category. Let $\varintcat{C}^\times$ denote the operad associated with the $\subuniverse$-monoidal $\vartopos$-category defined by $\varintcat{C}$ via the unfurling construction. Recall from \Cref{eq:construction_of_mu} the natural functor of $\vartopos$-categories
\[\mu:\varintcat{C}^\times \to \varintcat{C}.\]

\begin{lemma}\label{lem:mu_is_equifibed_for_univ_unst}
    Let $\internalcatofcats_{*//}\to \internalcatofcats$ be the universal cocartesian fibration. Then the commutative naturality square
    \[
    \begin{tikzcd}
        (\internalcatofcats_{*//})^\times\ar[r,"\mu"]\ar[d]\pullbackdr
        &\internalcatofcats_{*//}\ar[d]\\
        \internalcatofcats^\times\ar[r,"\mu"']
        &\internalcatofcats
    \end{tikzcd}
    \]
    is cartesian.
\end{lemma}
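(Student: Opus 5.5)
To prove that the naturality square for $\mu$ is cartesian, I would compare fibres over objects of $\internalcatofcats^\times$ and show that the induced map on fibres is an equivalence, after first checking that the square is a morphism of cocartesian-type fibrations. Both vertical functors are obtained by applying the associated-operad construction, or equivalently unstraightening, to the $\subuniverse^{\full}$-continuous projection $p:\internalcatofcats_{*//}\to\internalcatofcats$. By \Cref{thm:univ_unst_is_monoidal}, $p$ is a morphism of cartesian $\subuniverse$-monoidal $\vartopos$-categories, so the functoriality of the cartesian structure (\Cref{prop:categories_with_E_limits_have_E_monoidal_structure}) gives a morphism of associated preoperads $(\internalcatofcats_{*//})^\times\to\internalcatofcats^\times$ over $\spanEfE$. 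Naturality of $\mu$ in $\subuniverse^{\full}$-continuous functors then supplies the commuting square. Its naturality comes from the unstraightening construction in \Cref{eq:construction_of_mu}, since $\nu$ is fixed and $F$ varies functorially.

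Since everything is a $\vartopos$-category, I would check cartesianness contextwise and on fibres. The left vertical map is the unstraightening of the natural transformation $\varintcat{C}^B\mapsto$ (fibrewise) induced by $p$. Over an object of $\internalcatofcats^\times(A)$ given by $[\pi_B:B\to A]\in\subuniverse(A)$ and $\varintcat{X}\in\internalcatofcats(B)$, the fibre of $(\internalcatofcats_{*//})^\times$ is the fibre of $(\internalcatofcats_{*//})^B(A)=\internalcatofcats_{*//}(B)$ over $\varintcat{X}$, namely $\Gamma_B(\varintcat{X})$. The functor $\mu$ sends $\varintcat{X}$ to $(\pi_B)_*\varintcat{X}$, and the fibre of $p$ over that object is $\Gamma_A((\pi_B)_*\varintcat{X})\simeq\Gamma_B(\varintcat{X})$. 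The fibrewise map is exactly this canonical equivalence, because $\mu$ on $\internalcatofcats_{*//}$ is computed by $\subuniverse^{\full}$-limits, which \Cref{thm:univ_unst_is_monoidal} says are preserved by $p$ and computed fibrewise. So on objects the square is a pullback. Stated globally, the limit of a $B$-family of pointed categories is the limit category pointed by the limit section.

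For morphisms, I would reduce to the fibrewise statement by showing that both vertical functors are cocartesian fibrations. Then, by the standard criterion that a square of cocartesian fibrations whose top map preserves cocartesian edges is cartesian once it induces equivalences on fibres, the claim follows. The right vertical functor is the universal cocartesian fibration. The left one is a pullback of $p$ along $\internalcatofcats^\times\to\internalcatofcats$ in the product-of-fibres sense; more precisely, $(\internalcatofcats_{*//})^\times$ is $\internalcatofcats^\times\times_{\internalcatofcats^{(\cdot)}}\ldots$, and I would argue it is cocartesian via the description of morphisms in $\varintcat{C}^\times$ given in \Cref{subsubsec:const_of_mu}.

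The main obstacle is verifying that $\mu$ for $\internalcatofcats_{*//}$ sends these cocartesian edges to $p$-cocartesian edges. A morphism is sent to a composite of a Beck--Chevalley map $(\pi_B)_*\to(\pi_F)_*p^*$ and $(\pi_C)_*f$. The first map is cocartesian because $p$ is $\subuniverse^{\full}$-continuous, so Beck--Chevalley maps in $\internalcatofcats_{*//}$ lie over those in $\internalcatofcats$ and cocartesian edges are closed under limits (\Cref{thm:univ_unst_is_monoidal}, since $\internalcatofcats_{*/}\subset\internalcatofcats_{*//}$ is closed under $\subuniverse^{\full}$-limits). The second map is a limit of cocartesian edges, so it is again cocartesian. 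This is the step I expect to be hardest; I would attempt it using the cocartesian-lift formula for $\mu$.
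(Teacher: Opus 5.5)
Your proposal is correct in outline, but it handles the morphism-level part differently from the paper.

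\textbf{How the two arguments compare.} The paper checks the pullback directly on $\intmap(A,-)$ and $\intmap(A\times\Delta^1,-)$. On objects this is exactly your fibre computation: a point $x:A\to(\pi_B)_*\varintcat{C}$ is the same as a point $y:B\to\varintcat{C}$ by adjunction. The morphism level is dismissed there as ``a similar argument''.

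You instead work contextwise and use the criterion that a square whose verticals are cocartesian fibrations, and whose top map preserves cocartesian edges, is cartesian iff it is a fibrewise equivalence. This replaces the direct check on arrows with two structural inputs: (a) $p^\times$ is a cocartesian fibration, and (b) $\mu$ preserves cocartesian edges. Your route makes explicit where the closure of $\internalcatofcats_{*/}$ under $\subuniverse^{\full}$-limits (\Cref{thm:univ_unst_is_monoidal}) enters. The paper's ``similar argument'' implicitly needs the same input. The cost is that you must establish (a).

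\textbf{Step (b).} Your treatment of (b) is fine. Write the span as $B\xfrom{u}F\xto{v}C$. The Beck--Chevalley map is $((\pi_B)_*\eta,\mathrm{id})$, and the second map is $((\pi_C)_*f,(\pi_C)_*\phi)$. Both have invertible point components whenever $\phi$ is invertible.

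\textbf{Step (a) needs a different justification.} Do not phrase it as ``$p^\times$ is a pullback of $p$ along $\internalcatofcats^\times\to\internalcatofcats$''. If that map is $\mu$, this is precisely the lemma you are proving. Instead use the relative criterion over $\spanEfE$:
\begin{itemize}
\item $p^\times$ is a map of cocartesian fibrations over $\spanEfE$ that preserves cocartesian edges.
\item Its fibre over $B$ is the cotensor $p^B$, which is a cocartesian fibration.
\item The transition functors $v_*u^*$ preserve pointwise-cocartesian edges, since $u^*$ does and $v_*$ does by \Cref{thm:univ_unst_is_monoidal}.
\end{itemize}
Hence $p^\times$ is a cocartesian fibration. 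The same criterion shows that its cocartesian edges are exactly those whose component $v_*u^*x\to y$ has invertible point part. That is the description you tacitly use in (b), so it should be stated rather than assumed.

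\textbf{Fibres in every context.} Your $\Gamma$-level fibre comparison is exactly what the criterion requires, provided you run it in every context $A$. Pullbacks of $\vartopos$-categories are computed contextwise, and the fibre of $p^\times(A)$ over an object in context $A$ is $\Gamma$ of the internal fibre.
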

\begin{proof}
    To show that a square of $\vartopos$-categories is cartesian, it is enough to check that for each $A\in \vartopos$, the functors $\intmap(A, -)$ and $\intmap(A\times \Delta^1, - )$ send it to cartesian squares in $\vartopos$. 

    Let $A\in \vartopos$. A functor $A\to \internalcatofcats_{*//}$ classifies a $\vartopos_{/A}$-category $\varintcat{D}$ and an object $x:A\to \varintcat{D}$. A functor $A\to \internalcatofcats^\times$ classifies an object $[\pi_B:B\to A]\in \spanEfE(A)$ and a $\vartopos_{/B}$-category $\varintcat{C}$. Thus, an object in the pullback classifies a $\vartopos_{/A}$-category $\varintcat{D}$, a $\vartopos_{/B}$-category $\varintcat{C}$, an isomorphism $(\pi_B)_*\varintcat{C}\simeq \varintcat{D}$ and an object $x:A\to \varintcat{D}$. The data of an object $x$ correspond uniquely to the data of an object $y:B\to \varintcat{C}$. Thus we get that mapping from $A$ to the pullback amounts to giving $[\pi_B:B\to A]$, $\varintcat{C}$, and $y:B\to \varintcat{C}$. This is exactly the data of an object in $\internalcatofcats_{*//}^\times(A)$.

    Next, we must show that $\intmap(A\times \Delta^1, - )$ sends our square to a cartesian square in $\vartopos$. This is achieved by a similar argument.
\end{proof}

Our next goal is to define a canonical lax $\subuniverse$-monoidal functor $\varintcat{C}\to \varintcat{C}^\otimes$ for every $\subuniverse$-monoidal $\vartopos$-category $\varintcat{C}$, where the target is given the $\subuniverse$-monoidal structure coming from the fact that $\varintcat{C}^\otimes$ is $\subuniverse^{\full}$-complete. 
That is, we want to define a canonical functor of $\subuniverse$-operads\footnote{Notice that $\varintcat{C}^{\otimes}$ has both the structure of an $\subuniverse$-operad and the structure of a cartesian $\subuniverse$-monoidal $\vartopos$-category.} \[\varintcat{C}^\otimes\to (\varintcat{C}^\otimes)^\times.\]
We first construct this for the specific example of the terminal $\subuniverse$-monoidal $\vartopos$-category $\varintcat{C} = *$.

\begin{proposition}
    The inclusion
    \[\explicitset{*}\to \spanEfE\]
    can be given the structure of a lax $\subuniverse$-monoidal functor
    \[\tau:\spanEfE\to \spanEfE^\times.\]
\end{proposition}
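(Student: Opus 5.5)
The functor $\tau$ is already in hand: it is the composite
\[
\spanEfE\xto{r}\subuniverse^\otimes\into\spanEfE^\times,
\]
which appeared in \Cref{cor:s_and_mu_compatibility_diagram} and was there taken as the definition of $\tau$. Here $r$ is the fully faithful functor of \Cref{prop:source_target_identity_adjunctions}. The second arrow is the inclusion of associated preoperads induced by the $\subuniverse$-monoidal subcategory $\subuniverse\subset\spanEfE$, where $\spanEfE$ carries its cartesian structure. So the plan has two parts: show that this composite is a morphism of $\subuniverse$-operads from the terminal operad $\spanEfE=\explicitset{*}^\otimes$ to $\spanEfE^\times$, and check that its underlying functor is the inclusion of the point.

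First I verify the operad-morphism structure by the following steps.
\begin{mylist}
\item The projection of $\spanEfE^\times$ to $\spanEfE$ restricts on $\subuniverse^\otimes$ to the target functor $t$, because $\subuniverse^\otimes\into\spanEfE^\times$ is a subfunctor over $\spanEfE$. Since $tr\simeq\id$, the composite lies over $\spanEfE$.
\item The remark following the envelope definition records that $r$ is a morphism of preoperads, so it preserves cocartesian lifts of left-pointing arrows.
\item By \Cref{prop:monoidal_subcategories_are_monoidal}, the inclusion $G\into F$ is a natural transformation of functors on $\spanEfE$. Its unstraightening $\subuniverse^\otimes\into\spanEfE^\times$ therefore preserves all cocartesian edges, and in particular the inert ones.
\end{mylist}
Composing, $\tau$ is a morphism of preoperads. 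Both source and target are operads, by \Cref{prop:otimes_sends_monoidal_to_operads}, and operads form a full subcategory by \Cref{def:E_operads}. Hence $\tau$ is a lax $\subuniverse$-monoidal functor in the sense of \Cref{def:lax_E_monoidal_functors}.

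Second, I identify the underlying functor. Restricting over $*\in\spanEfE$, the fibre of $\spanEfE^\times$ is $\spanEfE(*)$ in the global context, and more generally $\spanEfE$ in every context. Meanwhile $r(*)$ is the identity arrow $*\to *$. Under the description of $\spanEfE^\times$ as spans of arrows in \Cref{prop:span_operad_as_spans_of_arrows}, this identity arrow is the point $*$ of the fibre. So the underlying functor of $\tau$ is $\explicitset{*}\to\spanEfE$ selecting the terminal object, as required.

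The main obstacle is the first step. To prove that $\subuniverse^\otimes$ sits over $\spanEfE$ via $t$ inside $\spanEfE^\times$, I would compare the explicit span-of-arrows models: \Cref{cor:description_of_the_env_of_triv} describes $\subuniverse^\otimes$ and \Cref{def:span_of_arrows_E} describes $\varintcat{Sp}_{\subuniverse}^{(1)}$. In both models the structure map records the target arrow, and the inclusion is the evident one.
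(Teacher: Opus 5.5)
Your proposal is correct and matches the paper's argument: the paper also takes $\tau$ to be the composite $\spanEfE\xto{r}\subuniverse^\otimes\to\spanEfE^\times$. Here $r$ is the operad morphism encoding a lax structure on $\explicitset{*}\into\subuniverse$, and the second map is the operad map induced by the $\subuniverse$-monoidal inclusion $\subuniverse\into\spanEfE$. Your extra checks (that the composite lies over $\spanEfE$, preserves inert cocartesian lifts, and has the point as underlying functor) only spell out what the paper leaves implicit, and the comparison of span-of-arrows models you flag is how the paper treats this inclusion in the proof of \Cref{cor:s_and_mu_compatibility_diagram}.
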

\begin{proof}
    We have a natural inclusion of an $\subuniverse$-monoidal $\vartopos$-subcategory
    \[\subuniverse\into \spanEfE.\]
    Thus, it is enough to give $*\into \subuniverse$ the structure of a lax $\subuniverse$-monoidal functor. Recall the morphism of $\subuniverse$-operads
    \[
    r:\spanEfE\into \subuniverse^\otimes.
    \]
    This morphism classifies a lax monoidal structure for the functor $\explicitset{*}\into \subuniverse$.

    Hence we get that the composition of morphisms of $\subuniverse$-operads
    \[
    \spanEfE\xto{r} \subuniverse^\otimes\to \spanEfE^\times
    \]
    gives the desired result. 
\end{proof}

Recall the following diagram from \Cref{cor:s_and_mu_compatibility_diagram}:

\begin{proposition}[\Cref{cor:s_and_mu_compatibility_diagram}]\label{prop:s_and_mu_compatibility_diagram}
    There is a commutative diagram 
    \[\begin{tikzcd}
        &\subuniverse^\otimes\ar[dd,hook, "i"]\ar[dr,"s"]\\
        \spanEfE\ar[ru,"r"]\ar[dr,"\tau"']
        &&\spanEfE\\
        &\spanEfE^\times\ar[ru,"\mu"']
    \end{tikzcd}\]
\end{proposition}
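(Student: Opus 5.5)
This is a restatement of \Cref{cor:s_and_mu_compatibility_diagram}, so we reuse its argument, making each step explicit. The diagram consists of two triangles. The left triangle $i\circ r\simeq\tau$ holds by definition: $\tau$ was constructed as the composite of the morphism of $\subuniverse$-operads $r:\spanEfE\to\subuniverse^\otimes$ with the operad inclusion $i:\subuniverse^\otimes\into\spanEfE^\times$. The inclusion $i$ itself comes from the fact that $\subuniverse\subset\spanEfE$ is an $\subuniverse$-monoidal $\vartopos$-subcategory. So the content is the right triangle $\mu\circ i\simeq s$.

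For the right triangle, we work in the explicit models.
\begin{itemize}
\item By \Cref{cor:description_of_the_env_of_triv}, $\subuniverse^\otimes\simeq\intspan(\subuniverse^{\full,(1)},\subuniverse^{\full,(1)}_{\cart},\subuniverse^{\Delta^1})$. Here objects are arrows $Y'\rightarrowtail Y$ in $\subuniverse$, and morphisms are spans of arrows whose left square is cartesian and whose right square lies in $\subuniverse^{\Delta^1}$.
\item By \Cref{prop:span_operad_as_spans_of_arrows}, $\spanEfE^\times\simeq\varintcat{Sp}^{(1)}_{\subuniverse}$.
\item Both descriptions sit inside $\intspan(\varintcat{D}^{\Delta^1})$ for a chosen embedding $\subuniverse^{\full}\into\varintcat{D}$, say $\varintcat{D}=\intpresh(\subuniverse^{\full})$.
\end{itemize}

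First we check that $i$ is the evident inclusion of spans of arrows into $\varintcat{Sp}^{(1)}_{\subuniverse}$. Both are subfunctors of the unstraightening of the $\subuniverse$-monoidal structure on $\spanEfE$. The identification in \Cref{prop:span_operad_as_spans_of_arrows} was built from the same cocartesian lifts (pullback along the left leg) as the unfurling in \Cref{lem:descript_of_E_tensor}. It therefore restricts along the inclusion of the cocartesian fibration classifying $\subuniverse\subset\spanEfE$. Concretely, we compare both sides as subobjects of $\intspan(\varintcat{D}^{\Delta^1})$ over $\spanEfE$ via the target functor $t$. The comparison can be checked on objects and on the anima of morphisms, where it is visibly the identity on diagrams.

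Next, \Cref{prop:description_of_mu_for_span} identifies $\mu$ on $\varintcat{Sp}^{(1)}_{\subuniverse}$ with the source functor $(X'\rightarrowtail X)\mapsto X'$. Restricting along $i$ gives the source functor on $\subuniverse^\otimes$. That functor is $s$ by its definition via $\delta_1$ in \Cref{prop:source_target_identity_adjunctions}.

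The main obstacle is coherence. All the constituent identifications are equivalences over $\spanEfE$ via $t$, but $\mu$ and $s$ are not maps over $\spanEfE$, so we have to make sure the source functors really correspond under these equivalences and not merely on objects. To handle this, we would realize both source functors as restriction along $\Delta^0\xrightarrow{0}\Delta^1$ on $\intspan(\varintcat{D}^{\Delta^1})$. Then $s\simeq\mu\circ i$ follows by functoriality of $\intspan$ applied to $\varintcat{D}^{\Delta^1}\to\varintcat{D}$. What remains is to check that the equivalence of \Cref{prop:span_operad_as_spans_of_arrows} is compatible with this ambient source map, which is how $\mu$ was identified in \Cref{prop:description_of_mu_for_span}.
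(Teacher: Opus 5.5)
Your proposal is correct and follows the paper's argument. The left triangle holds by the definition of $\tau$ as $i\circ r$. For the right triangle, you identify $i$ with the evident inclusion of spans of arrows via \Cref{cor:description_of_the_env_of_triv} and \Cref{prop:span_operad_as_spans_of_arrows}, and then use \Cref{prop:description_of_mu_for_span} to identify $\mu$ with the source functor, whose restriction along $i$ is $s$. Your extra coherence discussion, realizing both source maps as restriction along $\Delta^0\to\Delta^1$ inside $\intspan(\varintcat{D}^{\Delta^1})$, is more careful than the paper, which simply asserts that the inclusion sends each diagram to itself.
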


\begin{corollary}
    The composition 
    \[\spanEfE\xto{\tau} \spanEfE^\times\xto{\mu}\spanEfE \]
    is homotopic to the identity.
\end{corollary}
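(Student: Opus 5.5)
The plan is to read the claim off the commutative diagram of \Cref{prop:s_and_mu_compatibility_diagram}. That diagram gives a homotopy
\[
\mu\circ\tau\simeq \mu\circ i\circ r\simeq s\circ r .
\]
The first equivalence is the left triangle, which is the definition of $\tau$ as the composite $i\circ r$. The second is the right triangle $\mu\circ i\simeq s$. So the problem reduces to identifying $s\circ r$ with $\id_{\spanEfE}$.

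For that last identification, I would invoke \Cref{prop:source_target_identity_adjunctions}. There the functor $r$ is induced by the degeneracy $\sigma:\Delta^1\to\Delta^0$, and the source functor $s$ by $\delta_1:\Delta^0\to\Delta^1$. Since $\sigma\circ\delta_1=\id_{\Delta^0}$, we get $sr\simeq\id$ canonically. This is exactly the equivalence used as the unit of $r\dashv s$ and in \Cref{prop:eta_is_a_morphism_of_preoperads}. Concretely, $r$ sends $B$ to the identity arrow $B=B$ and spans to spans of identity arrows, and $s$ returns the source, so the composite is the identity on objects and morphisms, naturally in the context.

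The only point needing care is that the homotopies compose coherently as $\vartopos$-functors, rather than merely objectwise. Each ingredient is an equivalence of $\vartopos$-functors: the definition of $\tau$, \Cref{prop:description_of_mu_for_span} through the equivalence $\varintcat{Sp}_{\subuniverse}^{(1)}\simeq\spanEfE^\times$, and the simplicial identity. Pasting them therefore gives the required homotopy $\mu\tau\simeq\id$. I expect the main obstacle to be checking that the right triangle $\mu\circ i\simeq s$ really is compatible with the span-of-arrows model of $\subuniverse^\otimes$ from \Cref{cor:description_of_the_env_of_triv}. That compatibility is asserted in \Cref{cor:s_and_mu_compatibility_diagram}, so I would simply cite it.
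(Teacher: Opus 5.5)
Your proposal is correct and follows the paper's own proof: both use the chain $\id\simeq s\circ r\simeq \mu\circ i\circ r\simeq \mu\circ\tau$, coming from the commutative diagram of \Cref{cor:s_and_mu_compatibility_diagram} and the canonical equivalence $sr\simeq\id$ of \Cref{prop:source_target_identity_adjunctions}. Your extra remarks on coherence are harmless but not needed beyond citing those two results.
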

\begin{proof}
    Consider the following isomorphism:
    \[\id \simeq s\circ r \simeq \mu\circ i \circ r \simeq \mu\circ \tau.\]
\end{proof}


We now define a canonical lax $\subuniverse$-monoidal functor $\varintcat{C}\to \varintcat{C}^\otimes$, that is, a functor $\tau_{\varintcat{C}}:\varintcat{C}^\otimes\to (\varintcat{C}^\otimes)^\times$ of $\subuniverse$-operads for every $\subuniverse$-monoidal $\vartopos$-category $\varintcat{C}$.

\begin{proposition}
    There exists a canonical pullback diagram 
    \[\begin{tikzcd}
        \varintcat{C}^\otimes \ar[r,"\tau_{\varintcat{C}}"]\ar[d]\pullbackdr
        &(\varintcat{C}^\otimes)^\times\ar[d]\\
        \spanEfE\ar[r,"\tau"]
        &\spanEfE^\times,
    \end{tikzcd}\]
    natural in $\varintcat{C}$, and the upper functor $\tau_{\varintcat{C}}$ is a morphism of $\subuniverse$-operads. 
\end{proposition}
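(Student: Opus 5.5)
The plan is to obtain $\tau_{\varintcat{C}}$ by base change from the universal case, using that every $\subuniverse$-monoidal $\vartopos$-category is classified by a functor $F:\spanEfE\to\internalcatofcats$ with $\varintcat{C}^\otimes\simeq\spanEfE\times_{\internalcatofcats}\internalcatofcats_{*//}$.

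First I will apply the construction $(-)^\times$ to this pullback. By \Cref{cor:associated_operad_total_category_complete}, this is a pullback of $\subuniverse^{\full}$-complete $\vartopos$-categories along $\subuniverse^{\full}$-continuous functors. The functor $(-)^\times$, meaning unfurling of the cartesian structure followed by unstraightening, preserves such pullbacks: the cartesian structure functor $\equipcart$ is computed contextwise by $B\mapsto\varintcat{C}^B$, which commutes with limits, and unstraightening preserves limits. This gives
\[
(\varintcat{C}^\otimes)^\times\simeq\spanEfE^\times\times_{\internalcatofcats^\times}(\internalcatofcats_{*//})^\times .
\]
By \Cref{lem:mu_is_equifibed_for_univ_unst}, $(\internalcatofcats_{*//})^\times\simeq\internalcatofcats^\times\times_{\internalcatofcats}\internalcatofcats_{*//}$ via $\mu$. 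Hence
\[
(\varintcat{C}^\otimes)^\times\simeq\spanEfE^\times\times_{\internalcatofcats}\internalcatofcats_{*//},
\]
where the map $\spanEfE^\times\to\internalcatofcats$ is the composite $\mu\circ F^\times$. By naturality of $\mu$ in $\subuniverse^{\full}$-continuous functors, this composite is $F\circ\mu$.

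Next I will pull back along $\tau$. Since $\mu\circ\tau\simeq\id$ by the preceding corollary, we get
\[
\spanEfE\times_{\spanEfE^\times}(\varintcat{C}^\otimes)^\times\simeq\spanEfE\times_{\internalcatofcats}\internalcatofcats_{*//}\simeq\varintcat{C}^\otimes,
\]
with the structure map $F\circ\mu\circ\tau\simeq F$. This produces the pullback square and the functor $\tau_{\varintcat{C}}$. Naturality in $\varintcat{C}$ holds because every step is functorial in $F$, given naturality of $\mu$ and of the equivalences above.

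Finally, I will show that $\tau_{\varintcat{C}}$ is a morphism of $\subuniverse$-operads, that is, that it lies over $\spanEfE$ and preserves cocartesian lifts of left-pointing arrows. It lies over $\spanEfE$ because $\tau$ does. For preservation of cocartesian lifts, a cocartesian lift in $\varintcat{C}^\otimes$ of a left-pointing arrow is sent by the pullback square to a pair consisting of its image under $\tau$, which is cocartesian since $\tau$ is a morphism of operads, and a compatible arrow in $\internalcatofcats_{*//}$. Since $(\varintcat{C}^\otimes)^\times\to\spanEfE^\times$ is a base change of the cocartesian fibration $\internalcatofcats_{*//}\to\internalcatofcats$, and the $\internalcatofcats_{*//}$-component is the same cocartesian edge, the resulting arrow is cocartesian.

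I expect the main obstacle to be the first step, namely verifying that $(-)^\times$ preserves the relevant pullback and that the identification $\mu\circ F^\times\simeq F\circ\mu$ holds coherently. The coherent identification would be obtained from the naturality of $\mu$, which is built from the unstraightening of $F\nu$ together with the functoriality in \Cref{thm:univ_unst_of_lax_natural_transformation_is_E_monoidal}.
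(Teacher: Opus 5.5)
Your proposal is correct and follows essentially the same route as the paper. Both arguments paste the $(-)^\times$-image of the pullback defining $\varintcat{C}^\otimes$ with the square of \Cref{lem:mu_is_equifibed_for_univ_unst}, then pull back along $\tau$ and use $\mu\circ F^\times\circ\tau\simeq F\circ\mu\circ\tau\simeq F$. Your extra justifications (that $(-)^\times$ preserves the pullback, and that $\tau_{\varintcat{C}}$ preserves cocartesian lifts by base change) fill in steps the paper states only briefly.
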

\begin{proof}
    Let \[F:\spanEfE\to\internalcatofcats\] denote the functor defining the $\subuniverse$-monoidal structure of $\varintcat{C}$.

    Consider the following pasting of pullbacks of $\vartopos$-categories:
    \[\begin{tikzcd}
        \varintcat{P}\ar[r]\ar[d]\pullbackdr
        &(\varintcat{C}^\otimes)^\times \ar[r]\ar[d]\pullbackdr
        &\internalcatofcats_{*//}^\times \ar[r,"\mu"]\ar[d]\pullbackdr
        &\internalcatofcats_{*//}\ar[d]\\
        \spanEfE\ar[r,"\tau"]
        &\spanEfE^\times\ar[r,"F^\times"]
        &\internalcatofcats^\times\ar[r,"\mu"]
        &\internalcatofcats.
    \end{tikzcd}\]
    The right square is the pullback square described in \Cref{lem:mu_is_equifibed_for_univ_unst}.
    The middle square is obtained by applying $(-)^\times$ to the pullback square defining $\varintcat{C}^\otimes$. The left square is defined as the pullback as written. 
    
    We claim that $\varintcat{P}\simeq \varintcat{C}^\otimes$ and so the promised pullback is the left square. 
    
    To check this, it is enough to verify that the lower composition is homotopic to $F$. Indeed, this follows since 
    \[\mu\circ F^\times \circ \tau \simeq F\circ \mu \circ \tau \simeq F\circ \id \simeq F.\]

    The morphism $\tau_{\varintcat{C}}$ is a morphism of operads since $\tau$ is.

    Lastly, we need to check that $\tau_{\varintcat{C}}$ is natural, but this follows since all the ingredients in its construction are functorial.
\end{proof}

\subsubsection*{The envelope as a monoidal subcategory}

As a corollary, by the envelope--associated operad adjunction, we get a canonical $\subuniverse$-monoidal functor
\[\iota_{\varintcat{C}}:\intenv(\varintcat{C}^\otimes)\to \varintcat{C}^\otimes.\]

We can now finish our discussion by identifying $\intenv(\varintcat{C}^\otimes)$ with an $\subuniverse$-monoidal $\vartopos$-subcategory of $\varintcat{C}^\otimes$.

\begin{theorem}\label{thm:C_otimes_res_to_E_is_envelope}
    The functor $\iota_{\varintcat{C}}$ is an inclusion of an $\subuniverse$-monoidal $\vartopos$-subcategory, participating in a pullback of $\subuniverse$-monoidal $\vartopos$-categories:
    \[\begin{tikzcd}
        \intenv(\varintcat{C}^\otimes)\ar[r,"\iota_{\varintcat{C}}"]\ar[d]\pullbackdr
        &\varintcat{C}^\otimes\ar[d]\\
        \subuniverse\ar[r]
        &\spanEfE.
    \end{tikzcd}\]
\end{theorem}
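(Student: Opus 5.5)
The plan is to first construct the pullback on the level of $\subuniverse$-monoidal $\vartopos$-categories, and only afterwards identify it with $\iota_{\varintcat{C}}$. By \Cref{cor:associated_operad_total_category_complete}, $\varintcat{C}^\otimes$ is $\subuniverse^{\full}$-complete, and the projection $\varintcat{C}^\otimes\to\spanEfE$ is $\subuniverse^{\full}$-continuous. Hence it is an $\subuniverse$-monoidal functor between cartesian $\subuniverse$-monoidal categories. By \Cref{prop:monoidal_subcategories_are_monoidal}, $\subuniverse\subset\spanEfE$ is an $\subuniverse$-monoidal subcategory, so its preimage $\varintcat{C}^\otimes\times_{\spanEfE}\subuniverse$ is also an $\subuniverse$-monoidal subcategory of $\varintcat{C}^\otimes$. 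Membership is tested on images in $\spanEfE$, and the tensor functors on $\varintcat{C}^\otimes$ lie over those of $\spanEfE$. Since the forgetful functor $\intsmon(\internalcatofcats)\to\internalcatofcats$ preserves pullbacks, this subcategory is the pullback in $\intsmon(\internalcatofcats)$. On underlying $\vartopos$-categories, it agrees with $\intenv(\varintcat{C}^\otimes)$ by the pullback description of the envelope.

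It remains to show that the $\subuniverse$-monoidal functor $\iota_{\varintcat{C}}$, obtained as the adjoint of $\tau_{\varintcat{C}}$, factors through this subcategory and identifies with the inclusion. The factorization is formal. By \Cref{prop:monoidal_subcategories_are_monoidal}, it suffices to check that the underlying functor $\intenv(\varintcat{C}^\otimes)\to\varintcat{C}^\otimes$ lands over $\subuniverse$. This follows because $\iota_{\varintcat{C}}$ lies over the adjoint of $\tau$, which is the counit $\intenv(\spanEfE^{\simeq\text{-}*})\to\spanEfE$, namely the inclusion $\subuniverse\into\spanEfE$ (the envelope of the terminal operad is $\subuniverse$).

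The remaining step is to show that the induced $\subuniverse$-monoidal functor $\intenv(\varintcat{C}^\otimes)\to\varintcat{C}^\otimes\times_{\spanEfE}\subuniverse$ is an equivalence. Since the forgetful functor is conservative, it is enough to compare underlying $\vartopos$-categories. I would compute the underlying functor of $\iota_{\varintcat{C}}$ explicitly. By \Cref{prop:preoperad_premonoidal_adjunction}, the adjoint of $\tau_{\varintcat{C}}$ is the composite
\[
\intenv(\varintcat{C}^\otimes)^\otimes\xto{\intenv(\tau_{\varintcat{C}})^\otimes}\intenv((\varintcat{C}^\otimes)^\times)^\otimes\xto{\epsilon^\otimes}(\varintcat{C}^\otimes)^\times.
\]
Restricted to the fiber over $*$, this sends an object $O\in\varintcat{C}^\otimes$ over $B'$, together with $B'\rightarrowtail *$ in $\subuniverse$, to the cocartesian pushforward in $(\varintcat{C}^\otimes)^\times$. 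That pushforward is the $B'$-indexed limit in $\varintcat{C}^\otimes$ of the family $\tau_{\varintcat{C}}(O)$, which is exactly $\mu$ applied to it.

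Using \Cref{prop:s_and_mu_compatibility_diagram} and the pullback defining $\tau_{\varintcat{C}}$, together with the equifiberedness of $\mu$ in \Cref{lem:mu_is_equifibed_for_univ_unst}, this limit is identified with $O$ itself viewed over $B'=s(r(B'))$. Thus the underlying functor is the canonical identification $\intenv(\varintcat{C}^\otimes)\simeq\varintcat{C}^\otimes\times_{\spanEfE}\subuniverse$.

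The main obstacle is making the last step coherent on morphisms, not just objects. My approach is to reduce to the universal case $\varintcat{C}^\otimes=\internalcatofcats_{*//}$ over $\internalcatofcats$. There, both functors arise by base change from the square of \Cref{lem:mu_is_equifibed_for_univ_unst}, and the identity $\mu\circ\tau\simeq\id$ pins down the comparison.
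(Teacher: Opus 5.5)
Your argument is correct, but it is organized differently from the paper's. The paper never builds $\varintcat{C}^\otimes\times_{\spanEfE}\subuniverse$ as a monoidal subcategory by hand. It first observes, as you do, that the square is the naturality square of $\iota$ along $\varintcat{C}\to *$. It then applies the conservative right adjoint $(-)^\otimes$ and shows that the square $\intenv(\varintcat{C}^\otimes)^\otimes\to(\varintcat{C}^\otimes)^\times$ over $\subuniverse^\otimes\to\spanEfE^\times$ is cartesian, via a cube argument:
\begin{itemize}
\item \Cref{lem:mu_is_equifibed_for_univ_unst}, together with the two unstraightening pullbacks, makes the face $(\varintcat{C}^\otimes)^\times\xto{\mu}\varintcat{C}^\otimes$ over $\spanEfE^\times\xto{\mu}\spanEfE$ cartesian;
\item the identity $\mu\circ i\simeq s$ identifies the outer rectangle with the defining pullback of the envelope;
\item the pasting lemma finishes.
\end{itemize}
The subcategory claim is then read off from the pullback.

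Your route instead obtains the subcategory from \Cref{prop:monoidal_subcategories_are_monoidal} and the factorization formally. You then get the equivalence by computing the counit on the fibre over $*$ as pushforward along $B'\to *$, i.e.\ as $\mu$. The pasting lemma gives coherence on morphisms for free. However, it tacitly uses that $\mu\circ(\iota_{\varintcat{C}})^\otimes$ is the projection of the envelope pullback, and that is precisely the identification you make explicit.

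For your coherence step, no reduction to a ``universal case'' is needed. As phrased, that reduction does not quite make sense, since $\internalcatofcats_{*//}$ is not an operad over $\spanEfE$. Instead, consider $(\varintcat{C}^\otimes)^\times\times_{\spanEfE}\subuniverse$. On it, both the restricted counit and $\mu$ are functors to the fibre over the terminal object that are the identity on that fibre and invert cocartesian lifts of forward maps. Hence both are canonically the left adjoint of the fibre inclusion. Moreover, $\mu\circ\tau_{\varintcat{C}}\simeq\id$ holds coherently, because $\tau_{\varintcat{C}}$ is a base change of $\tau$ with $\mu\circ\tau\simeq\id$, compatibly with the maps to $\internalcatofcats_{*//}$ by naturality of $\mu$.
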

\begin{proof}
    Any $\subuniverse$-monoidal functor which is an inclusion of a $\vartopos$-subcategory is canonically an $\subuniverse$-monoidal equivalence with the $\subuniverse$-monoidal $\vartopos$-subcategory spanned by its image. Thus the existence of a pullback square as in the statement of the theorem will imply the first statement.

    The square in the statement of the theorem is the naturality square of $\iota$ applied to the canonical $\subuniverse$-monoidal functor $\varintcat{C}\to *$. Since the functor 
    \[(-)^\otimes:\intsmon(\internalcatofcats)\to \intoperads\] is a conservative right adjoint, it suffices to show that the square is cartesian after applying $(-)^\otimes$.
    
    Consider the following diagram:
    \[\begin{tikzcd}
        \intenv(\varintcat{C}^\otimes)^\otimes && (\varintcat{C}^\otimes)^\times && \varintcat{C}^\otimes & \\
        &&& \internalcatofcats_{*//}^\times && \internalcatofcats_{*//} \\
        \subuniverse^\otimes && \spanEfE^\times && \spanEfE \\
        &&& \internalcatofcats^\times && \internalcatofcats.
        \arrow[from=1-1, to=1-3]
        \arrow[from=1-1, to=3-1]
        \arrow[from=1-3, to=1-5]
        \arrow[from=1-3, to=2-4]
        \arrow[from=1-3, to=3-3]
        \arrow[from=1-5, to=2-6]
        \arrow[from=1-5, to=3-5]
        \arrow[from=3-3, to=3-5]
        \arrow[from=2-4, to=2-6, crossing over]
        \arrow[from=2-4, to=4-4, crossing over]
        \arrow[from=2-6, to=4-6]
        \arrow[from=3-1, to=3-3]
        \arrow[from=3-3, to=4-4]
        \arrow[from=3-5, to=4-6]
        \arrow[from=4-4, to=4-6]
    \end{tikzcd}\]
    We wish to show first that the back square of the cube is cartesian. This follows since, as we showed before, the front face is cartesian as well as the right and left faces.

    The outer rectangle at the back is the cartesian square defining $\intenv(\varintcat{C}^\otimes)^\otimes$. Thus, by the pasting lemma, the left square is cartesian, and so is the square in the statement of the theorem. 
\end{proof}

\subsubsection*{Lax monoidal unstraightening}

\begin{proposition}\label{prop:lax_E_monoidal_unst}
    Let 
    \[F:\varintcat{C}\to \internalcatofcats\]
    be a lax $\subuniverse$-monoidal functor. Then the unstraightening $\intunst(F)\to \varintcat{C}$ is canonically an $\subuniverse$-monoidal functor of $\subuniverse$-monoidal $\vartopos$-categories.
\end{proposition}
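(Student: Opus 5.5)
The plan is to realize $\intunst(F)$ as a pullback of $\subuniverse$-monoidal $\vartopos$-categories, in the same way as \Cref{cor:unst_of_monoidal_functor_is_monoidal}. There, $F$ was strictly $\subuniverse$-monoidal and we pulled back the universal fibration $\internalcatofcats_{*//}\to\internalcatofcats$. Here $F$ is only lax, so we first replace it by a strictly $\subuniverse$-monoidal functor out of the envelope.

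By \Cref{def:lax_E_monoidal_functors}, $F$ is a morphism of $\subuniverse$-operads $\varintcat{C}^\otimes\to\internalcatofcats^\times$, where $\internalcatofcats$ carries the cartesian structure. By \Cref{thm:env_associated_operad_adjunction} it corresponds to an $\subuniverse$-monoidal functor $\bar F:\intenv(\varintcat{C}^\otimes)\to\internalcatofcats$. Precomposing with the unit $\eta:\varintcat{C}^\otimes\to\intenv(\varintcat{C}^\otimes)^\otimes$ and restricting to underlying categories (the fibre over $*$) recovers the underlying functor of $F$. \Cref{cor:unst_of_monoidal_functor_is_monoidal} then equips $\intunst(\bar F)$ with an $\subuniverse$-monoidal structure for which $\intunst(\bar F)\to\intenv(\varintcat{C}^\otimes)$ is $\subuniverse$-monoidal.

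It remains to descend from $\intenv(\varintcat{C}^\otimes)$ to $\varintcat{C}$. I would use the counit $\epsilon_{\varintcat{C}}:\intenv(\varintcat{C}^\otimes)\to\varintcat{C}$, which is $\subuniverse$-monoidal. I expect $\intunst(F)$ to be the left Kan extension or unstraightening of $\bar F$ pushed forward along $\epsilon$, rather than a pullback, so this direction is awkward. The cleaner route is to work one level up, using $\mu:\internalcatofcats^\times\to\internalcatofcats$ and the cartesian square of \Cref{lem:mu_is_equifibed_for_univ_unst}. Pulling back $(\internalcatofcats_{*//})^\times\to\internalcatofcats^\times$ along the operad map $F$ gives a map of operads $\varintcat{P}\to\varintcat{C}^\otimes$. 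By that lemma, $\varintcat{P}$ identifies with the unstraightening of $\mu\circ F:\varintcat{C}^\otimes\to\internalcatofcats$, and the fibre of $\varintcat{P}$ over $*$ is $\intunst(F)$.

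The main obstacle is to show that $\varintcat{P}\to\spanEfE$ is again an $\subuniverse$-operad whose envelope, or more directly whose straightening, is $\subuniverse^{\full}$-continuous with underlying category $\intunst(F)$. In other words, $\varintcat{P}$ should be the associated operad $\intunst(F)^\otimes$ of an honest $\subuniverse$-monoidal category, so that $\varintcat{P}\to\varintcat{C}^\otimes$ is strong monoidal. For cocartesian lifts of left-pointing arrows, I would use that $\mu$ preserves them, via the explicit formula for $\mu$ through Beck--Chevalley maps. For cocartesian lifts of right-pointing arrows, I would construct them fibrewise by transporting along the multiplication maps $s_\otimes\to$ of $\varintcat{C}$ and then applying the lax structure maps of $F$. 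Here I would use \Cref{lem:limit_cone_cocartesian_fib_critirion}(b) to reduce continuity to the fibres. Those fibres are exactly the limit cones $F(x)^B$ coming from the cartesian structure on $\internalcatofcats$, together with \Cref{prop:canonical_span_limit_cones}.

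Naturality in $F$ follows because every step is a pullback along functorial constructions.
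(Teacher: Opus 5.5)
Your final route is exactly the paper's proof: pull back $(\internalcatofcats_{*//})^\times\to\internalcatofcats^\times$ along the operad map $\varintcat{C}^\otimes\to\internalcatofcats^\times$, and identify the fibre over $*$ with $\intunst(F)$. Your ``main obstacle'' is lighter than you treat it, because $\varintcat{P}\simeq\intunst(\mu\circ F)\to\varintcat{C}^\otimes$ is already a cocartesian fibration; composing it with $\varintcat{C}^\otimes\to\spanEfE$ gives cocartesian lifts of all arrows at once, automatically preserved by $\varintcat{P}\to\varintcat{C}^\otimes$, so no separate handling of left- and right-pointing arrows is needed (this is how the paper argues), while your fibrewise continuity check via \Cref{lem:limit_cone_cocartesian_fib_critirion}(b) fills in a step the paper leaves implicit.
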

\begin{proof}
    Let us consider the following pullback of $\subuniverse$-operads
    \[\begin{tikzcd}
    \varintcat{P}\ar[r]\ar[d]\pullbackdr
    &(\internalcatofcats_{*//})^\times\ar[d]\\
    \varintcat{C}^{\otimes}\ar[r]&\internalcatofcats^\times.    
    \end{tikzcd}\]
    
    The right-hand functor is a cocartesian fibration, and so is the left-hand functor. Thus, the composition  $\varintcat{P}\to \varintcat{C}^{\otimes}\to \spanEfE$ is a composition of cocartesian fibrations and the arrow  $\varintcat{P}\to \varintcat{C}^{\otimes}$ preserves cocartesian lifts.

    Thus $\varintcat{P}$ classifies an $\subuniverse$-monoidal structure on $\intunst(F)$ and the functor $\varintcat{P}\to \varintcat{C}^{\otimes}$ classifies an $\subuniverse$-monoidal functor.
\end{proof}

\begin{example}\label{ex:lax_unstraightening_recovers_monoidal_structure}
    Let $\varintcat{C}$ be an $\subuniverse$-monoidal $\vartopos$-category. Then consider the following composition
    \[*\into \subuniverse\into \spanEfE \to \internalcatofcats.\]
    This composition chooses the $\vartopos$-category $\varintcat{C}$. The composition above has a canonical structure of a lax $\subuniverse$-monoidal functor. We get that the unstraightening is $\varintcat{C}$ with its original $\subuniverse$-monoidal structure. 
\end{example}
\begin{proof}
    Let us consider the following pasting of pullbacks of operads:
    \[\begin{tikzcd}
        \varintcat{P}\ar[r]\ar[d]\pullbackdr
        &\intenv(\varintcat{C}^{\otimes})^{\otimes}\ar[r]\ar[d]\pullbackdr
        &(\internalcatofcats_{*//})^{\times}\ar[d]\\
        \spanEfE \ar[r,"r"] 
        &\subuniverse^\otimes \ar[r]
        &\internalcatofcats^\times.
    \end{tikzcd}\]
    The bottom composition is the composition of morphisms of operads classifying the lax $\subuniverse$-monoidal functor described as the composition in the example above.

    The right square is a pullback by \Cref{thm:C_otimes_res_to_E_is_envelope}. The left square computes the same pullback as the left square appearing in \Cref{prop:eta_is_a_morphism_of_preoperads}.

    Thus, $\varintcat{P}$ is canonically isomorphic to $\varintcat{C}^\otimes$, proving the claim.
\end{proof}

\subsection{\texorpdfstring{$\subuniverse$}{E}-monoidal \texorpdfstring{$\vartopos$-$2$-}{B-2-}categories.}\label{subsec:E_monoidal_two_categories}

In this subsection, we define $\subuniverse$-monoidal $\vartopos$-$2$-categories (\Cref{def:E_monoidal_two_categories}) and equip every $\subuniverse^{\full}$-$2$-complete $\vartopos$-$2$-category with a canonical cartesian $\subuniverse$-monoidal structure (\Cref{ex:cartesian_monoidal_two_category}). We show that unstraightening an $\subuniverse$-monoidal functor to $\internaltwocatofcats$ produces an $\subuniverse$-monoidal $\vartopos$-$2$-category, and that unstraightening an $\subuniverse$-monoidal lax natural transformation gives an $\subuniverse$-monoidal functor (\Cref{prop:monoidal_two_unstraightening,prop:monoidal_unstraightening_of_lax_transformation}).

We then consider lax $\subuniverse$-monoidal functors whose source is an $\subuniverse$-monoidal $\vartopos$-category. We define lax $\subuniverse$-monoidal lax natural transformations between such functors (\Cref{def:lax_monoidal_lax_natural_transformation}) and show that their unstraightenings are lax $\subuniverse$-monoidal functors (\Cref{prop:unst_of_lax_E_monoidal_lax_natural_transformation}).

\subsubsection*{Monoidal \texorpdfstring{$\vartopos$-$2$-}{B-2-}categories and cartesian structures}

\begin{definition}\label{def:E_monoidal_two_categories}
    We define $\subuniverse$-monoidal $\vartopos$-$2$-categories as $\subuniverse$-monoids in $\internalcatoftwocats$.
    We define the $\vartopos$-category of $\subuniverse$-monoidal $\vartopos$-$2$-categories as  $\intsmon(\internalcatoftwocats)$.

    The $\vartopos$-category $\intsmon(\internalcatoftwocats)$ has a canonical structure of a $\vartopos$-$2$-category $\intsmontwo(\internaltwocatoftwocats)$ since \[\inttwofun(\spanEfE,\internaltwocatoftwocats)\]
    is a $\vartopos$-$2$-category and $\intsmon(\internalcatoftwocats)$ is a full $\vartopos$-subcategory of its core.  

    We denote by $\twocatoftwocats^{\subuniverse-\otimes}(\vartopos)$ the $(\infty,2)$-category of global sections of this $\vartopos$-$2$-categorical refinement of $\intsmon(\internalcatoftwocats)$.
\end{definition}

\begin{example}\label{ex:cartesian_monoidal_two_category}
    Let $\varinttwocat{X}$ be an $\subuniverse^{\full}$-$2$-complete $\vartopos$-$2$-category. Then $\varinttwocat{X}$ has a canonical structure of an $\subuniverse$-monoidal $\vartopos$-$2$-category $\intcart_{\otimes}(\varinttwocat{X})$, which we call the cartesian structure. 
    
    The construction is as follows. $\varinttwocat{X}$ determines a unique $\subuniverse^{\full}$-continuous functor
    \[ (\subuniverse^{\full})^{\op}\to \internaltwocatoftwocats \]
    which is $\subuniverse$-right adjointable.
    Thus we get a canonical functor of $\vartopos$-$2$-categories
    \[\intspanhalf(\subuniverse^{\full},\subuniverse)\to \internaltwocatoftwocats.\]
    We restrict this functor to a functor 
    \[\intspan(\subuniverse^{\full},\subuniverse)\to \internalcatoftwocats.\]
    Since this functor extends an $\subuniverse^{\full}$-continuous functor from $(\subuniverse^{\full})^{\op}$, it determines an $\subuniverse$-monoid in $\internalcatoftwocats$ by \Cref{prop:limits_in_span_E_are_computed_in_E_op}.

    This construction gives rise to a functor of $\infty$-categories 
    \[\intcart_{\otimes}:\catoftwocats(\vartopos)^{\subuniverse-2\sqcap}\to \cmon^{\subuniverse}(\internalcatoftwocats).\]
\end{example}

As usual, we will sometimes suppress the notation $\intcart_\otimes$.

\begin{proposition}
    The functor $\intcart_{\otimes}$ preserves all limits and is conservative. 
\end{proposition}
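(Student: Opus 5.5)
The plan is to factor $\intcart_{\otimes}$ through an intermediate $\infty$-category of right adjointable functors. This gives a chain
\[
\catoftwocats(\vartopos)^{\subuniverse-2\sqcap}\simeq\Fun^{\subuniverse\text{-}\sqcap,\subuniverse\text{-radj}}((\subuniverse^{\full})^{\op},\internaltwocatoftwocats)\to \Fun(\intspan(\subuniverse^{\full},\subuniverse),\internalcatoftwocats),
\]
landing in the full subcategory $\cmon^{\subuniverse}(\internalcatoftwocats)$. The first equivalence is the one implicit in \Cref{def:E_two_complete}, combined with the identification $\intfun^{\subuniverse-\sqcap}((\subuniverse^{\full})^{\op},\internaltwocatoftwocats^{\simeq_2})\simeq\internaltwocatoftwocats^{\simeq_2}$. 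The second arrow first applies \Cref{prop:univ_prop_of_span_half} to pass to functors out of $\intspanhalf(\subuniverse^{\full},\subuniverse)$. It then restricts to $\intspan(\subuniverse^{\full},\subuniverse)=\intspanhalf(\subuniverse^{\full},\subuniverse)^{\simeq_2}$ and applies $(-)^{\simeq_2}$ on the target. I would prove both properties by checking them after composing with evaluation at $*$. This evaluation recovers the underlying $\vartopos$-$2$-category, i.e. the forgetful functor $\catoftwocats(\vartopos)^{\subuniverse-2\sqcap}\to\catoftwocats(\vartopos)$.

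\emph{Conservativity.} The composite of $\intcart_\otimes$ with $\cmon^{\subuniverse}(\internalcatoftwocats)\to\catoftwocats(\vartopos)$, $M\mapsto M(*)$, is the forgetful functor to $\catoftwocats(\vartopos)$. This forgetful functor is conservative on the locally full subcategory $\catoftwocats(\vartopos)^{\subuniverse-2\sqcap}$. Indeed, if a $2$-continuous functor is an equivalence of underlying $\vartopos$-$2$-categories, then its inverse is again $2$-continuous: an inverse of an adjointable transformation is adjointable, via \Cref{prop:E_two_continuous_criterion}. By two-out-of-three for equivalences, $\intcart_\otimes$ is conservative.

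\emph{Limits.} By the proposition preceding this one, limits in $(\twocatoftwocats(\vartopos)^{\subuniverse-2\sqcap})^{\simeq2}$ are computed in $\catoftwocats(\vartopos)$. Concretely, that category is a full subcategory of $\Fun(\spanhalf(\vartopos,\vartopos[E]),\twocatoftwocats)$ closed under limits. Limits in $\cmon^{\subuniverse}(\internalcatoftwocats)$ are computed pointwise in $\Fun(\intspan(\subuniverse^{\full},\subuniverse),\internalcatoftwocats)$, since $\subuniverse^{\full}$-continuous functors are closed under limits. So it suffices to show that, for every context $A$ and every $[B\to A]\in\subuniverse^{\full}(A)$, evaluation of $\intcart_\otimes(\varinttwocat{X})$ at $B$ commutes with limits in $\varinttwocat{X}$. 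That value is $s_*s^*\pi_A^*\varinttwocat{X}$, and $s_*,s^*,\pi_A^*$ all preserve limits of sheaves. The functoriality on spans, i.e. the left adjoints $s_\sharp$ obtained from right adjointability, is then determined, because it is a property-level extension: \Cref{prop:univ_prop_of_span_half} identifies functors out of $\intspanhalf$ with adjointable functors as a full-on-objects condition. I would make this precise by writing the functor as a composite of restriction functors and the equivalence of \Cref{prop:univ_prop_of_span_half}, each of which preserves limits.

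The main obstacle I expect is justifying that the equivalence of \Cref{prop:univ_prop_of_span_half}, assembled over varying targets, commutes with limits of targets. In particular, one must check that adjointable functors into a limit of $2$-complete $\vartopos$-$2$-categories are exactly compatible families. I would handle this with \Cref{lem:adj_closed_under_limits} and \Cref{lem:2_isos_closed_under_limits}, applied contextwise, to show that both adjunctions and Beck--Chevalley isomorphisms are detected and created in limits.
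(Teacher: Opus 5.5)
Your argument is correct and is essentially the paper's: the paper also uses the commutative triangle over $\catoftwocats(\vartopos)$, with both legs conservative and limit-preserving, and your objectwise check at each $[s:B\to A]$ via $s_*s^*\pi_A^*$ is just the unpacked reason why evaluation at $*$ on $\subuniverse$-monoids is conservative and continuous. The ``main obstacle'' you anticipate does not actually arise: limits in the functor category are detected by evaluation at objects, so the span functoriality never enters, and closure of $\subuniverse^{\full}$-$2$-complete categories under limits is already the preceding proposition.
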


\begin{proof}
    We have a commutative triangle:
    \[\begin{tikzcd}
        \catoftwocats(\vartopos)^{\subuniverse-2\sqcap} \ar[dr]\ar[rr]
        && \cmon^{\subuniverse}(\internalcatoftwocats)\ar[dl]\\
        &\catoftwocats(\vartopos).
    \end{tikzcd}\]
    The downward-pointing morphisms are both conservative and preserve limits. Thus the horizontal morphism is also conservative and preserves limits. 
\end{proof}

\subsubsection*{Monoidal unstraightening of lax natural transformations}

\begin{proposition}\label{prop:monoidal_two_unstraightening}
    Let $F:\varinttwocat{C}\to \intcart_\otimes(\internaltwocatofcats)$ be an $\subuniverse$-monoidal functor. Then $\inttwounst(F)$ carries a natural $\subuniverse$-monoidal structure.

    Moreover, if $F$ is given by applying $\intcart_\otimes$ to a $\subuniverse^{\full}$-$2$-continuous functor, then $\inttwounst(F)$ is $\subuniverse^{\full}$-$2$-complete and the $\subuniverse$-monoidal structure agrees with the cartesian one.
\end{proposition}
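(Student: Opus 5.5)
The plan is to imitate the proofs of \Cref{cor:unst_of_monoidal_functor_is_monoidal} and \Cref{cor:associated_operad_total_category_complete}, replacing $\vartopos$-categories by $\vartopos$-$2$-categories.

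By \Cref{def:internal_two_unstraightening}, $\inttwounst(F)$ is the pullback of the universal $1$-cocartesian fibration $\internaltwocatofcats_{*//}\to\internaltwocatofcats$ along the underlying functor of $F$. The first step is to upgrade this universal fibration to a morphism of $\subuniverse$-monoidal $\vartopos$-$2$-categories. The examples preceding \Cref{thm:univ_unst_is_monoidal} show that $\internaltwocatofcats$ and $\internaltwocatofcats_{*//}$ are $\subuniverse^{\full}$-$2$-complete. Indeed, $\internaltwocatofcats_{*//}$ is the pullback of $s:\inttwofun^{\oplax}(\Delta^1,\internaltwocatofcats)\to\internaltwocatofcats$ along the terminal object, computed in $\twocatoftwocats(\vartopos)^{\subuniverse-2\sqcap}$. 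In addition, $t$ and $s$ are $\subuniverse^{\full}$-$2$-continuous by the proposition on restriction along $\vartwocat{I}\to\vartwocat{J}$. Applying $\intcart_\otimes$ then makes $\internaltwocatofcats_{*//}\to\internaltwocatofcats$ an $\subuniverse$-monoidal functor.

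The second step forms the pullback of $\intcart_\otimes(\internaltwocatofcats_{*//})\to\intcart_\otimes(\internaltwocatofcats)$ along $F$ inside $\cmon^{\subuniverse}(\internalcatoftwocats)$. Limits of $\subuniverse$-monoids are computed in the target, since $\intsmon(\internalcatoftwocats)\subset\intfun(\spanEfE,\internalcatoftwocats)$ is closed under limits and the latter are pointwise. Hence the forgetful functor to $\internalcatoftwocats$ preserves pullbacks. The underlying $\vartopos$-$2$-category of this pullback is therefore $\inttwounst(F)$, and this equips $\inttwounst(F)$ with an $\subuniverse$-monoidal structure for which the projection to $\varinttwocat{C}$ is $\subuniverse$-monoidal. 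Naturality in $F$ follows because the construction is a base change of a fixed monoidal morphism.

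For the second assertion, suppose $F=\intcart_\otimes(G)$ for an $\subuniverse^{\full}$-$2$-continuous $G$. Form the same pullback in $\catoftwocats(\vartopos)^{\subuniverse-2\sqcap}$. The inclusion of its core into $\catoftwocats(\vartopos)^{\simeq 2}$ preserves limits, so the pullback is still $\inttwounst(F)$; this shows that $\inttwounst(F)$ is $\subuniverse^{\full}$-$2$-complete. Since $\intcart_\otimes$ preserves limits (the proposition just before), applying it to this pullback recovers the monoidal pullback from the second step. Hence the two structures agree.

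The main obstacle is making sure that limits in $\catoftwocats(\vartopos)^{\subuniverse-2\sqcap}$ are really computed underlying. The stated limit-preservation result concerns the core, which is the $\infty$-categorical level, so this suffices. The point that needs checking is that the comparison maps of $\inttwounst(F)$ remain $\subuniverse$-right adjointable, that is, that the Beck--Chevalley conditions survive the pullback. I would verify this through \Cref{prop:E_two_complete_criterion}: a pullback of right adjointable functors $\vartopos^{\op}\to\twocatoftwocats$ along right adjointable transformations is right adjointable, by the $2$-categorical analogue of \Cref{lem:adj_closed_under_limits} applied to adjunction data and squares.
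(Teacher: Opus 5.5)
Your proposal is correct and follows the paper's argument. The paper likewise forms the defining pullback of $\inttwounst(F)$ in $\cmon^{\subuniverse}(\internalcatoftwocats)$ for the first claim and in $\catoftwocats(\vartopos)^{\subuniverse-2\sqcap}$ for the second; you have additionally spelled out the supporting facts (monoidality of the universal fibration via $\intcart_\otimes$, pullback-preservation of the forgetful functor, limit-preservation of $\intcart_\otimes$). The Beck--Chevalley check you flag at the end needs no separate argument, since it is already contained in the paper's statement that $(\twocatoftwocats(\vartopos)^{\subuniverse-2\sqcap})^{\simeq2}\subset\twocatoftwocats(\vartopos)^{\simeq2}$ preserves all limits.
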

\begin{proof}
    Recall that the following pullback defines $\inttwounst(F)$ as a $\vartopos$-$2$-category:
    \[\begin{tikzcd}
    \inttwounst(F)\ar[r]\ar[d]\pullbackdr & \internaltwocatofcats_{*//}\ar[d]\\
    \varinttwocat{C}\ar[r] & \internaltwocatofcats.
    \end{tikzcd}\]
    If $F$ is $\subuniverse$-monoidal, then this pullback can be taken in $\cmon^{\subuniverse}(\internalcatoftwocats)$. If $F$ is $\subuniverse^{\full}$-$2$-continuous, then the pullback can be taken in $\catoftwocats(\vartopos)^{\subuniverse-2\sqcap}$.
\end{proof}

Our next goal is to define $\subuniverse$-monoidal lax natural transformations $\alpha:F\to G:\varinttwocat{C}\to \internaltwocatofcats$ and show that these induce $\subuniverse$-monoidal structures on the functors $\inttwounst(F)\to \inttwounst(G)$ discussed in \Cref{cor:extra_functoriality_of_unst}.

\begin{proposition}
    Let $\varinttwocat{C}$ be an $\subuniverse$-monoidal $\vartopos$-$2$-category. Then $\inttwofun^{\oplax}(\Delta^1, \varinttwocat{C})$ carries a canonical $\subuniverse$-monoidal structure such that both \[s,t:\inttwofun^{\oplax}(\Delta^1, \varinttwocat{C})\to \varinttwocat{C}\]
    are $\subuniverse$-monoidal functors.
\end{proposition}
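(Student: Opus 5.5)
The plan is to construct the $\subuniverse$-monoidal structure on $\inttwofun^{\oplax}(\Delta^1,\varinttwocat{C})$ as an $\subuniverse$-monoid in $\internalcatoftwocats$ by postcomposition. Write the $\subuniverse$-monoidal structure on $\varinttwocat{C}$ as an $\subuniverse^{\full}$-continuous functor
\[
G:\spanEfE\to\internalcatoftwocats .
\]
Then consider the composite
\[
\spanEfE\xto{G}\internalcatoftwocats\xto{\inttwofun^{\oplax}(\Delta^1,-)}\internalcatoftwocats .
\]
This composite exists as a $\vartopos$-functor because $\inttwofun^{\oplax}(\Delta^1,-)$ is the right adjoint of the $\vartopos$-linear functor $\Delta^1\gray-$ (\Cref{prop:gray_module_structures}). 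That right adjoint is computed contextwise, and in every slice $\vartopos_{/A}$ it is again computed contextwise. Hence it assembles into an internal endofunctor of $\internalcatoftwocats$. This is the same reasoning that gives $\internalcatoftwocats\simeq\catoftwocats\otimes\universe$ from the $\catoftwocats$-module structure.

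Next I would check that the composite is $\subuniverse^{\full}$-continuous. Since $\inttwofun^{\oplax}(\Delta^1,-)$ is an internal right adjoint, it preserves all internal limits, in particular $\subuniverse^{\full}$-indexed ones. Concretely, the limit along $[s:B\to A]$ is $s_*s^*$, and it commutes with a functor applied sectionwise. Composing with the $\subuniverse^{\full}$-continuous $G$ therefore gives an $\subuniverse$-monoid, that is, an $\subuniverse$-monoidal $\vartopos$-$2$-category. Its underlying object is
\[
\inttwofun^{\oplax}(\Delta^1,G(*))=\inttwofun^{\oplax}(\Delta^1,\varinttwocat{C}).
\]
By \Cref{prop:values_of_E_monoidal_categories}, its value at $B$ is $\inttwofun^{\oplax}(\Delta^1,\varinttwocat{C}^B)$, as one expects.

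For the monoidality of $s$ and $t$, I would use the natural transformations
\[
s,t:\inttwofun^{\oplax}(\Delta^1,-)\Rightarrow\id
\]
of endofunctors of $\internalcatoftwocats$. These are induced by restriction along the two inclusions $\Delta^0\rightrightarrows\Delta^1$, equivalently by the maps $\Delta^0\gray-\to\Delta^1\gray-$ and passage to right adjoints. Whiskering $s$ and $t$ with $G$ produces natural transformations of functors $\spanEfE\to\internalcatoftwocats$ between $\subuniverse$-monoids. Such transformations are exactly morphisms in $\intsmon(\internalcatoftwocats)$, since that is a full subcategory of the functor category. On underlying objects they recover the source and target functors $\inttwofun^{\oplax}(\Delta^1,\varinttwocat{C})\to\varinttwocat{C}$.

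I expect the main obstacle to be the first step, that is, justifying that $\inttwofun^{\oplax}(\Delta^1,-)$ is a genuine $\vartopos$-functor on the large $\vartopos$-category $\internalcatoftwocats$ together with the natural transformations $s$ and $t$, rather than merely a functor on global sections. I would first try to deduce this by tensoring the $\catoftwocats$-level adjunction $\Delta^1\gray-\dashv\funtwocat^{\oplax}(\Delta^1,-)$ with $\vartopos_{/A}$, naturally in $A$. That tensoring is functorial in $A$ because base change $\vartopos_{/A}\to\vartopos_{/B}$ is a morphism in $\prl$ and commutes with the action of $\catoftwocats$. The remaining points are formal.
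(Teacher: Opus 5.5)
Your proposal is correct and follows the paper's argument. The paper likewise observes that $\inttwofun^{\oplax}(\Delta^1,-)$ is an $\subuniverse^{\full}$-continuous endofunctor of $\internalcatoftwocats$, and hence induces an endofunctor of $\intsmon(\internalcatoftwocats)$ by postcomposition; your treatment of $s$ and $t$ as whiskerings of the restriction transformations $\inttwofun^{\oplax}(\Delta^1,-)\Rightarrow\id$ makes explicit a step the paper leaves implicit.
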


\begin{proof}
    This follows since 
    \[\internalcatoftwocats\xto{\inttwofun^{\oplax}(\Delta^1, -)}\internalcatoftwocats\] is $\subuniverse^{\full}$-continuous and thus induces a functor  
    \[\intsmon(\internalcatoftwocats)\xto{\inttwofun^{\oplax}(\Delta^1, -)}\intsmon(\internalcatoftwocats).\]
\end{proof}

\begin{definition}
    Let $F,G:\varinttwocat{C}\to \varinttwocat{D}$ be two $\subuniverse$-monoidal functors of $\vartopos$-$2$-categories. We define an $\subuniverse$-monoidal lax natural transformation $\alpha:F\to G$ as an $\subuniverse$-monoidal functor  \[T:\varinttwocat{C}\to \inttwofun^{\oplax}(\Delta^1,\varinttwocat{D})\] together with identifications of $\subuniverse$-monoidal functors $s\circ T \simeq F,  t\circ T\simeq G$. 
\end{definition}

\begin{proposition}\label{prop:monoidal_unstraightening_of_lax_transformation}
    Let $F,G:\varinttwocat{C}\to \internaltwocatofcats$ be two $\subuniverse$-monoidal functors, and let $\alpha:F\to G$ be an $\subuniverse$-monoidal lax natural transformation. Then the functor $\inttwounst(\alpha):\inttwounst(F)\to \inttwounst(G)$ coming from \Cref{cor:extra_functoriality_of_unst} is canonically an $\subuniverse$-monoidal functor.
\end{proposition}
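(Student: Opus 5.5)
The approach is to reduce to the universal case, exactly as in the proof of \Cref{cor:extra_functoriality_of_unst}, and then to use that the relevant pullbacks can be formed inside $\cmon^{\subuniverse}(\internalcatoftwocats)$, as in \Cref{prop:monoidal_two_unstraightening}.

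First, by definition the $\subuniverse$-monoidal lax natural transformation $\alpha$ is an $\subuniverse$-monoidal functor
\[T:\varinttwocat{C}\to\inttwofun^{\oplax}(\Delta^1,\internaltwocatofcats),\]
together with $\subuniverse$-monoidal identifications $sT\simeq F$ and $tT\simeq G$. Here the target carries the structure induced by the $\subuniverse^{\full}$-continuous endofunctor $\inttwofun^{\oplax}(\Delta^1,-)$ applied to $\intcart_\otimes(\internaltwocatofcats)$. We will check that this structure is the cartesian structure $\intcart_\otimes$ of the $\subuniverse^{\full}$-$2$-complete $\vartopos$-$2$-category $\inttwofun^{\oplax}(\Delta^1,\internaltwocatofcats)$, and that $s,t$ are the images under $\intcart_\otimes$ of $\subuniverse^{\full}$-$2$-continuous functors. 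Both facts follow because $\intcart_\otimes$ preserves limits and commutes with the contextwise cotensoring by $\Delta^1$.

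Second, we treat the universal case. By \Cref{prop:extra_functoriality_of_unst_univ_case}, $\inttwounst(s)$ and $\inttwounst(t)$ are the full subcategories $\inttwofun^{\oplax,0-*}(\Delta^2,\internaltwocatofcats)$ and $\inttwofun^{\oplax,0-*}(\Lambda^2_2,\internaltwocatofcats)$. The functor $\inttwounst(\alpha_{\univ})$ is restriction along $\Lambda^2_2\into\Delta^2$, which is a base change of a morphism of $\subuniverse^{\full}$-$2$-complete categories. By \Cref{thm:univ_unst_of_lax_natural_transformation_is_E_monoidal}, $\inttwounst(\alpha_{\univ})$ is an $\subuniverse^{\full}$-$2$-continuous functor between $\subuniverse^{\full}$-$2$-complete $\vartopos$-$2$-categories. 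Applying $\intcart_\otimes$ therefore makes $\inttwounst(\alpha_{\univ})$ an $\subuniverse$-monoidal functor over $\intcart_\otimes\inttwofun^{\oplax}(\Delta^1,\internaltwocatofcats)$. By the second part of \Cref{prop:monoidal_two_unstraightening}, these cartesian structures on $\inttwounst(s)$ and $\inttwounst(t)$ agree with the monoidal unstraightening structures.

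Third, we handle the general case. We write
\[\inttwounst(F)\simeq\varinttwocat{C}\times_{\inttwofun^{\oplax}(\Delta^1,\internaltwocatofcats)}\inttwounst(s),\]
and similarly for $G$. Since $T$, the projections, and $\inttwounst(\alpha_{\univ})$ are $\subuniverse$-monoidal, and the forgetful functor $\cmon^{\subuniverse}(\internalcatoftwocats)\to\catoftwocats(\vartopos)$ preserves and detects limits, these pullbacks may be computed in $\cmon^{\subuniverse}(\internalcatoftwocats)$. The map $\inttwounst(\alpha)$ is the base change of $\inttwounst(\alpha_{\univ})$ along $T$, so it is $\subuniverse$-monoidal. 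It then remains to check that the resulting structures on $\inttwounst(F)$ and $\inttwounst(G)$ coincide with those of \Cref{prop:monoidal_two_unstraightening}. For this we paste pullbacks: the pullback of $\internaltwocatofcats_{*//}$ along $F\simeq sT$ factors through $\inttwounst(s)$, and every square in the factorization is monoidal.

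The main obstacle is the bookkeeping in the first and second steps. There we must identify the $\subuniverse$-monoidal structure on $\inttwofun^{\oplax}(\Delta^1,-)$, and the structures on the $0\!-\!*$ subcategories, with cartesian ones. We also need the identifications $sT\simeq F$ and $tT\simeq G$ to be compatible with the identification of $\inttwounst(s)$ as a pullback of $\internaltwocatofcats_{*//}$ along $s$. We would handle this by working entirely inside $\catoftwocats(\vartopos)^{\subuniverse-2\sqcap}$ and invoking the limit preservation of $\intcart_\otimes$.
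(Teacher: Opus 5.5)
Your proposal is correct and is essentially the paper's argument. The paper's proof is the single sentence that it suffices to treat the universal lax transformation, which \Cref{thm:univ_unst_of_lax_natural_transformation_is_E_monoidal} handles; your base change along $T$, carried out in $\cmon^{\subuniverse}(\internalcatoftwocats)$ where pullbacks are computed on underlying $\vartopos$-$2$-categories, just makes explicit the reduction that the paper leaves implicit.
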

\begin{proof}
    It is enough to show this for the universal case, where it was proven in \Cref{thm:univ_unst_of_lax_natural_transformation_is_E_monoidal}.
\end{proof}

\subsubsection*{Lax monoidal lax natural transformations}

Recall that we defined in \Cref{def:lax_E_monoidal_functors} lax $\subuniverse$-monoidal  functors of $\subuniverse$-monoidal $\vartopos$-categories $F:\varintcat{C}\to \varintcat{D}$ as morphisms of $\subuniverse$-operads $\varintcat{C}^{\otimes}\to \varintcat{D}^{\otimes}$.

We wish to define lax $\subuniverse$-monoidal natural transformations between lax $\subuniverse$-monoidal functors. We will do so under the assumption that the source is a $\vartopos$-category to avoid discussing $2$ $\subuniverse$-operads. 

\begin{definition}\label{def:lax_monoidal_lax_natural_transformation}
    Let $\varintcat{C}$ be an $\subuniverse$-monoidal $\vartopos$-category and let $\varinttwocat{D}$ be an $\subuniverse$-monoidal $\vartopos$-$2$-category.
    Let $F,G:\varintcat{C}\to \varinttwocat{D}^{\simeq2}$ be two lax $\subuniverse$-monoidal functors of $\vartopos$-$2$-categories. We define a lax $\subuniverse$-monoidal lax natural transformation $\alpha:F\to G$ as a morphism of $\subuniverse$-operads  \[T:\varintcat{C}^{\otimes}\to (\inttwofun^{\oplax}(\Delta^1,\varinttwocat{D})^{\simeq2})^{\otimes}\] together with identifications of lax $\subuniverse$-monoidal functors $s\circ T \simeq F,  t\circ T\simeq G$.
\end{definition}

Recall that in \Cref{prop:lax_E_monoidal_unst}, we showed that if $F:\varintcat{C}\to \internalcatofcats$ is lax $\subuniverse$-monoidal, then $\intunst(F)$ carries a canonical $\subuniverse$-monoidal structure. 

\begin{proposition}\label{prop:unst_of_lax_E_monoidal_lax_natural_transformation}
    Let $F,G:\varintcat{C}\to \internaltwocatofcats$ be two lax $\subuniverse$-monoidal functors, and let $\alpha:F\to G$ be a lax $\subuniverse$-monoidal lax natural transformation. Then the functor $\intunst(\alpha):\intunst(F)\to \intunst(G)$ coming from \Cref{cor:extra_functoriality_of_unst} is a lax $\subuniverse$-monoidal functor.
\end{proposition}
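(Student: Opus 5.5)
The plan is to imitate the proof of \Cref{prop:lax_E_monoidal_unst}, replacing the universal cocartesian fibration $\internalcatofcats_{*//}\to\internalcatofcats$ by the universal lax natural transformation and its unstraightening from \Cref{prop:extra_functoriality_of_unst_univ_case}. By \Cref{def:lax_monoidal_lax_natural_transformation}, $\alpha$ is a morphism of $\subuniverse$-operads
\[
T:\varintcat{C}^{\otimes}\to(\inttwofun^{\oplax}(\Delta^1,\internaltwocatofcats)^{\simeq2})^{\otimes},
\]
with $s\circ T\simeq F$ and $t\circ T\simeq G$ as morphisms of operads. First, give $\inttwofun^{\oplax}(\Delta^1,\internaltwocatofcats)^{\simeq2}$ its cartesian structure. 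It is $\subuniverse^{\full}$-complete, because $\internaltwocatofcats$ is $\subuniverse^{\full}$-$2$-complete and $\inttwofun^{\oplax}(\Delta^1,-)$ preserves this. Next, apply $(-)^{\simeq2}$ to the unstraightened universal transformation $\inttwounst(\alpha_{\univ}):\inttwounst(s)\to\inttwounst(t)$ over $\inttwofun^{\oplax}(\Delta^1,\internaltwocatofcats)$. By \Cref{thm:univ_unst_of_lax_natural_transformation_is_E_monoidal}, this gives an $\subuniverse^{\full}$-continuous functor between $\subuniverse^{\full}$-complete $\vartopos$-categories over the base. Taking associated operads of the cartesian structures therefore produces a commutative triangle of $\subuniverse$-operads
\[
\inttwounst(s)^{\simeq2,\times}\to\inttwounst(t)^{\simeq2,\times}\to\inttwofun^{\oplax}(\Delta^1,\internaltwocatofcats)^{\simeq2,\times}.
\]

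Next, identify the two legs with the universal fibrations. The key input is an analogue of \Cref{lem:mu_is_equifibed_for_univ_unst}: the square with top row $\inttwounst(s)^{\simeq2,\times}\xto{\mu}\inttwounst(s)^{\simeq2}$ and bottom row $\inttwofun^{\oplax}(\Delta^1,\internaltwocatofcats)^{\simeq2,\times}\xto{\mu}\inttwofun^{\oplax}(\Delta^1,\internaltwocatofcats)^{\simeq2}$ should be cartesian, and the same for $t$. Given this, the square for $\inttwounst(s)$ is the base change, along $s$, of the cartesian square of \Cref{lem:mu_is_equifibed_for_univ_unst}, so $\inttwounst(s)^{\simeq2,\times}\simeq s^{\times *}(\internalcatofcats_{*//})^{\times}$, and similarly for $t$. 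Pulling the triangle back along $T$ then gives morphisms of operads
\[
\varintcat{P}_F:=T^*\inttwounst(s)^{\simeq2,\times}\to\varintcat{P}_G:=T^*\inttwounst(t)^{\simeq2,\times}
\]
over $\varintcat{C}^{\otimes}$. Using $sT\simeq F$ and $tT\simeq G$, $\varintcat{P}_F$ and $\varintcat{P}_G$ are exactly the pullbacks $\varintcat{C}^{\otimes}\times_{\internalcatofcats^\times}(\internalcatofcats_{*//})^\times$ that \Cref{prop:lax_E_monoidal_unst} uses to define the $\subuniverse$-monoidal structures on $\intunst(F)$ and $\intunst(G)$. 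Pullbacks of operads along operad maps are computed in $\intpreoperads$ and preserve inert cocartesian lifts, so the induced map $\varintcat{P}_F\to\varintcat{P}_G$ is a morphism of $\subuniverse$-operads, i.e.\ a lax $\subuniverse$-monoidal functor $\intunst(F)\to\intunst(G)$.

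Finally, check that the underlying functor of this lax $\subuniverse$-monoidal functor is the $\intunst(\alpha)$ of \Cref{cor:extra_functoriality_of_unst}. Restricting to the fibre over $*\in\spanEfE$, the associated operad of a cartesian structure recovers its underlying category, and the triangle restricts to the base change of $\inttwounst(\alpha_{\univ})^{\simeq2}$ along the underlying functor of $T$. This is exactly how \Cref{cor:extra_functoriality_of_unst} defines $\intunst(\alpha)$.

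The main obstacle is the equifibredness step for $\inttwounst(s)$ and $\inttwounst(t)$. It requires the mapping-anima computation of \Cref{lem:mu_is_equifibed_for_univ_unst} to hold for the explicit models $\inttwofun^{\oplax,0-*}(\Delta^2,-)$ and $\inttwofun^{\oplax,0-*}(\Lambda^2_2,-)$. I would prove it by writing both as iterated pullbacks of copies of $\internalcatofcats_{*//}\to\internalcatofcats$ and $\inttwofun^{\oplax}(\Delta^1,\internaltwocatofcats)$, as in the proof of \Cref{thm:desc_of_the_unst_of_universal_lax_natural_transformation}, and checking that $\mu$ commutes with these pullbacks. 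This should work because $(-)^\times$ and $\mu$ are natural in $\subuniverse^{\full}$-continuous functors and preserve pullbacks in $\internalcatofcats^{\subuniverse-\sqcap}$.
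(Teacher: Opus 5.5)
Your proposal is correct and is the paper's approach: the paper also takes the associated operads of the cartesian structures on the universal unstraightening $\inttwounst(\alpha_{\univ})^{\simeq2}$ (via \Cref{thm:univ_unst_of_lax_natural_transformation_is_E_monoidal}), pulls the resulting triangle back along $T$, and reads off $\intunst(\alpha)$ as the induced morphism of $\subuniverse$-operads. Your extra step identifying these pullbacks with $\intunst(F)^{\otimes}$ and $\intunst(G)^{\otimes}$ from \Cref{prop:lax_E_monoidal_unst} is left implicit in the paper, and it is easier than you expect: it follows directly from $(-)^{\times}$ preserving pullbacks of $\subuniverse^{\full}$-complete $\vartopos$-categories along $\subuniverse^{\full}$-continuous functors, applied to $\inttwounst(s)^{\simeq2}\simeq\inttwofun^{\oplax}(\Delta^1,\internaltwocatofcats)^{\simeq2}\times_{\internalcatofcats}\internalcatofcats_{*//}$ (and likewise for $t$), so no separate equifibredness argument is needed.
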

\begin{proof}
    Consider the following pasting in $\intoperads^{\subuniverse}$:
    \[
    \begin{tikzcd}
        \intunst(F)^{\otimes}\ar[r]\ar[d]\pullbackdr
        &(\inttwounst(s)^{\simeq2})^{\otimes}\ar[d]\\
        \intunst(G)^{\otimes}\ar[r]\ar[d]\pullbackdr
        &(\inttwounst(t)^{\simeq2})^{\otimes}\ar[d]\\
        \varintcat{C}^{\otimes} \ar[r]
        &(\inttwofun^{\oplax}(\Delta^1,\internaltwocatofcats)^{\simeq2})^{\otimes}.
    \end{tikzcd}
    \]
    The upper left vertical functor encodes a lax $\subuniverse$-monoidal functor $\intunst(\alpha):\intunst(F)\to \intunst(G)$.
\end{proof}

\subsection{\texorpdfstring{$\subuniverse$}{E}-monoidal span categories.}\label{subsec:E_monoidal_span_categories}

In this subsection, we equip the span $\vartopos$-category associated with an $\subuniverse$-monoidal adequate triplet with a natural $\subuniverse$-monoidal structure (\Cref{prop:E_monoidal_span_cats}). For inductible context-free subuniverses $\subuniverse[I],\subuniverse[P]\subset\subuniverse$, we then define the $\subuniverse$-monoidal $\vartopos$-category $\spanEPI[A]$ (\Cref{def:span_EPI_A,prop:span_EPI_A_is_monoidal}) and show that it is $(\subuniverse[P,I])$-ambidextrous (\Cref{prop:span_EPI_A_is_ambidextrous}). Under an additional truncation hypothesis, we will identify this category with $L_{\oplus}\subuniverse^{\simeq}[A]$ in \Cref{sec:proof_of_C}.

We also equip span $\vartopos$-$2$-categories $\intspanhalf(\varintcat{C},\varintcat{C}_R)$ with $\subuniverse$-monoidal structures and establish their universal property in terms of right adjointable $\subuniverse$-monoidal functors (\Cref{prop:E_monoidal_span_two_cats,prop:univ_prop_of_E_monoidal_span_half}). This universal property is an $\subuniverse$ monoidal version of \Cref{prop:univ_prop_of_span_half}.

\subsubsection*{Monoidal subcategories of \texorpdfstring{$\subuniverse^{\full}[A]$}{E full[A]}}

\begin{definition}
    Let $\subuniverse\subset \universe$ be a context-free subuniverse and let $\subuniverse[I]\subset\subuniverse$ be another context-free subuniverse. 
    Let $A\in \vartopos$. Recall from \Cref{def:free_on_A_with_E_colimits} that $\subuniverse[E]^{\full}[A]\subset\intpresh(A)$ is the free $\subuniverse$-cocomplete $\vartopos$-category on $A$. We endow it, using \Cref{prop:categories_with_E_colimits_have_E_monoidal_structure}, with an $\subuniverse$-monoidal structure. Recall that in the context $B$, a morphism in $\subuniverse[E]^{\full}[A](B)^{\Delta^1}$ consists of the following data: a morphism $D\to C$  with commuting arbitrary morphisms to $A$ and commuting morphisms in $\subuniverse$ to $B$ (see \Cref{fig:morphism_in_E_A}).
    
    Let $\subuniverse_{\subuniverse[I]}[A]\subset \subuniverse[E]^{\full}[A]$ denote the wide $\vartopos$-subcategory with morphisms in $\subuniverse[I]$. That is, $D\to C \in \subuniverse[E]^{\full}[A](B)^{\Delta^1}$ over $A$ is in $\subuniverse_{\subuniverse[I]}[A](B)$ if and only if $(D\to C)\in \subuniverse[I](C)$.
\end{definition}

\begin{figure}[ht]
    \centering
\[\begin{tikzcd}
	B & \\
	D & C \\
	A
	\arrow[tail, from=2-1, to=1-1]
	\arrow["{\shortmid\shortmid}"{marking}, from=2-1, to=2-2]
	\arrow[from=2-1, to=3-1]
	\arrow[tail, from=2-2, to=1-1]
	\arrow[from=2-2, to=3-1]
\end{tikzcd}\]
    \caption{The tailed morphisms are in $\subuniverse$. The marked arrow is in $\subuniverse[I]$}
    \label{fig:morphism_in_E_A}
\end{figure}

\begin{proposition}
    $\subuniverse_{\subuniverse[I]}[A]\subset \subuniverse[E]^{\full}[A]$ is an $\subuniverse$-monoidal $\vartopos$-subcategory.
\end{proposition}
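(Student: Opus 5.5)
We apply the criterion of \Cref{prop:monoidal_subcategories_are_monoidal}: for every context $B'$, every $[s:B\to B']\in\subuniverse(B')$ and every morphism $f$ of $\subuniverse_{\subuniverse[I]}[A](B)$, we must show that $s_\otimes(f)$ lies in $\subuniverse_{\subuniverse[I]}[A](B')$. We also need $\subuniverse_{\subuniverse[I]}[A]$ to be a $\vartopos$-subcategory in the first place. This holds because $\subuniverse[I]$ is closed under composition and contains equivalences. It is also stable under restriction along $B''\to B'$, since restriction is computed by base change in $\vartopos$ and $\subuniverse[I]$ is a local class, hence stable under base change.

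By \Cref{prop:categories_with_E_colimits_have_E_monoidal_structure}, the structure on $\subuniverse^{\full}[A]$ is cocartesian, so $s_\otimes\simeq s_\sharp$. We therefore make $s_\sharp$ explicit using the straightening description recalled after \Cref{def:free_on_A_with_E_colimits}. An object of $\subuniverse^{\full}[A](B)$ is a map $C\to A\times B$ whose projection to $B$ lies in $\subuniverse$. The functor $s_\sharp$ postcomposes with $\id_A\times s:A\times B\to A\times B'$.

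Now take a morphism $f:D\to C$ over $A\times B$, with $[D\to C]\in\subuniverse[I](C)$. Its image $s_\sharp(f)$ is the same map $D\to C$, now regarded over $A\times B'$. The condition for this to lie in $\subuniverse_{\subuniverse[I]}[A](B')$ is still that $[D\to C]\in\subuniverse[I](C)$. By independence of context (\Cref{cond:idependent_of_context}), this condition depends only on the arrow $D\to C$ and not on the ambient context, so it is unchanged. The projections $D,C\to B'$ remain in $\subuniverse$ because $\subuniverse$ is closed under composition with $s\in\subuniverse$. Thus the objects stay in $\subuniverse^{\full}[A](B')$ and the morphism stays in the subcategory.

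The only point needing care is identifying the indexed tensor $s_\otimes$ with the forgetful pushforward on slices, including on morphisms and naturally in the context. This follows from $p_\otimes\simeq p_\sharp$ together with the formula for $p_\sharp$ on $\intpresh(A)$, so I expect no real obstacle. The argument is the same pattern as the earlier proof that $\subuniverse\subset\equipcocart(\subuniverse^{\full})$ is an $\subuniverse$-monoidal subcategory.
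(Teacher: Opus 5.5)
Your proposal is correct and follows the paper's own argument: the indexed tensor products on $\subuniverse^{\full}[A]$ are the cocartesian ones, acting by postcomposition (change of context), and whether $D\to C$ lies in $\subuniverse[I]$ is a condition on that arrow alone, so it is preserved. Your added check that $\subuniverse_{\subuniverse[I]}[A]$ is a $\vartopos$-subcategory (closure under composition, and base change under restriction of context) fills in a detail the paper leaves implicit.
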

\begin{proof}
    The $\subuniverse$-monoidal structure of $\subuniverse[E]^{\full}[A]$ is given by change of context. The question of whether $D\to C$ is in $\subuniverse[I]$ is independent of the context.
\end{proof}

\begin{proposition}\label{prop:E_I_is_cocart}
    Assume $\subuniverse[I]$ is inductible. Then
    $\subuniverse_{\subuniverse[I]}[A]$ is an $\subuniverse[I]$-cocartesian $\subuniverse$-monoidal $\vartopos$-category.
\end{proposition}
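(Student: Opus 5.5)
The plan is to use the recognition criterion \Cref{prop:C_is_cartesian_iff_tensor_preserves_limits}, applied to the restriction of the $\subuniverse$-monoidal structure on $\subuniverse_{\subuniverse[I]}[A]$ to $\subuniverse[I]$. It then suffices to show two things. First, $\subuniverse_{\subuniverse[I]}[A]$ is $\subuniverse[I]$-cocomplete. Second, for every context $B$ and every $p:C\to D$ in $\subuniverse[I](B)$, the functor $p_\otimes$ is $\subuniverse[I]$-cocontinuous. Since $\subuniverse[I]$ is full, we can equally present the structure as a functor out of $\intspan(\subuniverse[I])$ and check this directly.

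\textbf{Step 1: identify the indexed colimits.} We identify the $\subuniverse[I]$-indexed colimits in $\subuniverse_{\subuniverse[I]}[A]$. Take $q:C\to B$ in $\subuniverse[I]$ and an object $[X\to A\times C]$ in context $C$. We claim $q_\sharp$ is again given by forgetting context, i.e.\ by postcomposition to $A\times B$. This is the formula for colimits in $\subuniverse^{\full}[A]$ recalled after \Cref{def:free_on_A_with_E_colimits}. The composite $X\to C\to B$ lies in $\subuniverse$ because $\subuniverse[I]\subset\subuniverse$ and by independence of context.

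We then check that this candidate is a colimit in the wide subcategory $\subuniverse_{\subuniverse[I]}[A]$, not just in $\subuniverse^{\full}[A]$. The unit $Y\to q^*q_\sharp Y=C\times_B Y$ is the graph-type map, a base change of the diagonal of $q$. Since $\subuniverse[I]$ is full, i.e.\ left cancellable, diagonals of $\subuniverse[I]$-maps lie in $\subuniverse[I]$. Hence the unit lies in $\subuniverse_{\subuniverse[I]}[A]$.

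The counit and the adjunction bijection restrict correctly for the same reason: if a composite map over $C$ is in $\subuniverse[I]$, left cancellation puts its factor in $\subuniverse[I]$. This mirrors the proof of \Cref{prop:limits_in_span_E} and its warning, where the failure in general came exactly from non-fullness. Beck--Chevalley follows from the pullback formulas, as there.

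\textbf{Step 2: tensors preserve these colimits.} The tensors $p_\otimes$ of the cocartesian structure on $\subuniverse^{\full}[A]$ are also $p_\sharp$, again given by postcomposition. Postcomposition functors commute with one another, so $p_\otimes$ commutes with $q_\sharp$ via the evident Beck--Chevalley identifications. On the restricted structure this shows $p_\otimes$ is $\subuniverse[I]$-cocontinuous.

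There is an alternative, possibly cleaner, route. It suffices to show that the restricted $\subuniverse[I]$-monoidal structure is $\equipcocart$ of $\subuniverse_{\subuniverse[I]}[A]$. Both are unfurlings of the same left $\subuniverse[I]$-adjointable functor, and by \Cref{thm:uniqueness_of_cartesian_structures} the comparison then reduces to checking that $p_\otimes\simeq p_\sharp$ with compatible adjunction data.

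\textbf{Main obstacle.} The main obstacle is Step 1. One must verify that the adjunction $q_\sharp\dashv q^*$ from $\subuniverse^{\full}[A]$ restricts to the wide subcategory, including in every context and compatibly with change of context. The crux is the left-cancellation argument for units. If it proves delicate, I would fall back on the local truncation of $\subuniverse[I]$ together with the same fullness argument applied to diagonals.
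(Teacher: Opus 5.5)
Your proposal is correct and follows the paper's route. The paper's proof just asserts that the wide inclusion $\subuniverse_{\subuniverse[I]}[A]\subset\subuniverse^{\full}[A]$ preserves $\subuniverse[I]$-colimits and then invokes \Cref{cor:critiria_for_amby} (equivalently, \Cref{prop:C_is_cartesian_iff_tensor_preserves_limits} applied to the restriction), so that the tensors, being postcompositions, are automatically $\subuniverse[I]$-cocontinuous; your left-cancellation argument (diagonals and base changes of $\subuniverse[I]$-maps lie in $\subuniverse[I]$, so $q_\sharp\dashv q^*$ restricts to the wide subcategory) is exactly the ``straightforward'' verification the paper leaves implicit.
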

\begin{proof}
    It is straightforward to verify that $\subuniverse_{\subuniverse[I]}[A]\subset \subuniverse[E]^{\full}[A]$ preserves $\subuniverse[I]$-colimits, and thus the criterion of \Cref{cor:critiria_for_amby} is automatic.
\end{proof}

\subsubsection*{Monoidal structures on span categories}

\begin{definition}
    Let $\varintcat{C}$ be an $\subuniverse$-monoidal $\vartopos$-category. Let $(\varintcat{C},\varintcat{C}_L,\varintcat{C}_R)$ be an adequate triplet of $\vartopos$-categories. We say that $(\varintcat{C},\varintcat{C}_L,\varintcat{C}_R)$ is an $\subuniverse$-monoidal adequate triplet if $\varintcat{C}_L,\varintcat{C}_R\subset \varintcat{C}$ are $\subuniverse$-monoidal $\vartopos$-subcategories, and if for each $p:C\to B\in \subuniverse(A)$, the functor of $\vartopos_{/A}$-categories $p_\otimes:\varintcat{C}^C\to \varintcat{C}^B$ induces a morphism of adequate triplets
    \[(\varintcat{C}^C,\varintcat{C}_L^C,\varintcat{C}_R^C)\to (\varintcat{C}^B,\varintcat{C}_L^B,\varintcat{C}_R^B).\]
\end{definition}

\begin{proposition}\label{prop:E_monoidal_span_cats}
    Let $(\varintcat{C},\varintcat{C}_L,\varintcat{C}_R)$ be an $\subuniverse$-monoidal adequate triplet. Then $\intspan(\varintcat{C},\varintcat{C}_L,\varintcat{C}_R)$ carries a natural $\subuniverse$-monoidal structure. 
\end{proposition}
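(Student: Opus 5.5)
The plan is to reduce the statement to the fact that $\intspan$ is a limit-preserving functor $\intadtrip\to\internalcatofcats$, and then apply it to the given monoidal structure regarded as a diagram of adequate triplets. An $\subuniverse$-monoidal structure is an $\subuniverse^{\full}$-continuous functor $F:\spanEfE\to\internalcatofcats$, so I would first upgrade $F$ to a functor
\[
F_{\mathrm{trip}}:\spanEfE\to\intadtrip,\qquad [B\to A]\mapsto(\varintcat{C}^B,\varintcat{C}_L^B,\varintcat{C}_R^B).
\]
Composing $F_{\mathrm{trip}}$ with $\intspan$ then gives the candidate structure.

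To build $F_{\mathrm{trip}}$, I would note that the forgetful functor $\intadtrip\to\internalcatofcats$ is faithful on the relevant data. Being an adequate triplet and being a morphism of adequate triplets are conditions on a functor together with two wide subcategories, not extra structure. Hence a lift of $F$ amounts to the following three checks, each made contextwise.
\begin{itemize}
\item Each $\varintcat{C}^B$, with the subcategories $\varintcat{C}_L^B$ and $\varintcat{C}_R^B$, is an internal adequate triplet. Exponentiation by $B$ preserves pullbacks, and membership of a morphism of $\varintcat{C}^B$ in $\varintcat{C}_L^B$ or $\varintcat{C}_R^B$ is tested pointwise.
\item Restriction functors $t^*$ along morphisms of $\subuniverse^{\full}$ are morphisms of triplets, for the same pointwise reasons.
\item The tensor functors $s_\otimes$ for $s\in\subuniverse$ are morphisms of triplets. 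This is exactly the hypothesis in the definition of an $\subuniverse$-monoidal adequate triplet.
\end{itemize}
Since every span in $\spanEfE$ factors as a restriction followed by a tensor, the composite functors are morphisms of triplets. The lift is therefore well defined as a subfunctor, in the same way as in \Cref{prop:monoidal_subcategories_are_monoidal}. The cleanest formal route is to present $\intadtrip$ as the full subcategory of the pullback $\internalcatofcats^{\Lambda}$ cut out by the adequacy conditions, where $\Lambda$ is the diagram $\bullet\to\bullet\leftarrow\bullet$ encoding the two inclusions.

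Next I would check that $F_{\mathrm{trip}}$ is $\subuniverse^{\full}$-continuous. By \Cref{prop:limits_in_span_E_are_computed_in_E_op}, it suffices to do this on $(\subuniverse^{\full})^{\op}$. There the value at $[B\to A]$ is $s_*s^*$ of the triplet, and limits in $\intadtrip$ are computed underlying, with the subcategories computed pointwise. Composing with $\intspan$, which preserves limits, yields an $\subuniverse^{\full}$-continuous functor $\spanEfE\to\internalcatofcats$ whose value at $*$ is $\intspan(\varintcat{C},\varintcat{C}_L,\varintcat{C}_R)$. By \Cref{prop:values_of_E_monoidal_categories}, this is an $\subuniverse$-monoidal structure with $s_\otimes$ given by $\intspan(s_\otimes)$.

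The step I expect to be the main obstacle is making the limit preservation of $\intspan$ and the limit computation in $\intadtrip$ hold internally, that is, for $B$-indexed limits in context $A$ rather than for ordinary limits. My first approach would be to identify $\intadtrip$ with limit-preserving functors $\vartopos^{\op}\to\adtrip$, so that the $B$-indexed limit corresponds to the cotensor $\adtrip$-valued sheaf $(\vartopos_{/A})^{\op}\ni U\mapsto(\text{value at }U\times_A B)$. The claim then reduces contextwise to the limit preservation of $\Span:\adtrip\to\catofcats$ already recalled in the construction of span categories. Naturality in $(\varintcat{C},\varintcat{C}_L,\varintcat{C}_R)$ follows because every step is functorial in monoidal morphisms of triplets.
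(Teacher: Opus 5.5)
Your proposal is correct and follows the paper's route. The paper likewise uses the argument of \Cref{prop:monoidal_subcategories_are_monoidal} to obtain an $\subuniverse^{\full}$-continuous functor $\spanEfE\to\intadtrip$ and then postcomposes with the limit-preserving functor $\intspan$; you spell out the checks the paper leaves implicit, namely adequacy of the cotensors, that restrictions and tensors are morphisms of triplets, and internal limit preservation. One small imprecision: $\intadtrip$ is not a \emph{full} subcategory of $\internalcatofcats^{\Lambda}$, because morphisms must also preserve the relevant pullbacks, but the checks you carry out already cover this condition.
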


\begin{proof}
    By the proof of \Cref{prop:monoidal_subcategories_are_monoidal}, we get that $(\varintcat{C},\varintcat{C}_L,\varintcat{C}_R)$ determines an $\subuniverse^{\full}$-continuous functor \[\spanEfE\to \intadtrip\] which we can post-compose with $\intspan$.
\end{proof}

\begin{proposition}\label{prop:E_monoidal_span_subcats}
    In the situation of \Cref{prop:E_monoidal_span_cats}, if $\varintcat{C}_L'\subset \varintcat{C}_L,\varintcat{C}_R'\subset\varintcat{C}_R$ are wide $\vartopos$-subcategories such that $(\varintcat{C},\varintcat{C}_L',\varintcat{C}_R')$ is another $\subuniverse$-monoidal adequate triplet, then $\intspan(\varintcat{C},\varintcat{C}_L',\varintcat{C}_R')\subset \intspan(\varintcat{C},\varintcat{C}_L,\varintcat{C}_R)$ is an $\subuniverse$-monoidal $\vartopos$-subcategory in the sense that it satisfies the universal property of \Cref{prop:monoidal_subcategories_are_monoidal}.
\end{proposition}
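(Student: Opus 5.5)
The plan is to reduce the statement to the criterion of \Cref{prop:monoidal_subcategories_are_monoidal}, applied at the level of the functors $\spanEfE\to\intadtrip$ produced in the proof of \Cref{prop:E_monoidal_span_cats}. Write $\Phi,\Phi'\colon\spanEfE\to\intadtrip$ for the $\subuniverse^{\full}$-continuous functors attached to $(\varintcat{C},\varintcat{C}_L,\varintcat{C}_R)$ and $(\varintcat{C},\varintcat{C}_L',\varintcat{C}_R')$. In context $A$ they send $[B\to A]$ to
\[
(\varintcat{C}^B,\varintcat{C}_L^B,\varintcat{C}_R^B)
\qquad\text{and}\qquad
(\varintcat{C}^B,(\varintcat{C}_L')^B,(\varintcat{C}_R')^B).
\]
The wide inclusions $\varintcat{C}_L'\subset\varintcat{C}_L$ and $\varintcat{C}_R'\subset\varintcat{C}_R$ give morphisms of adequate triplets $\Phi'(B)\to\Phi(B)$ with identity underlying functor.

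\textbf{Step 1.} We show these morphisms assemble into a natural transformation $\Phi'\Rightarrow\Phi$. Both functors are built as subfunctors of the one attached to $\varintcat{C}$, with the $\subuniverse$-monoidal structure on $\varintcat{C}$ given by \Cref{prop:monoidal_subcategories_are_monoidal}. Restriction maps and the $p_\otimes$ preserve the marked subcategories by hypothesis. So $\Phi'$ is a subfunctor of $\Phi$ inside $\intfun(\spanEfE,\intadtrip)$. Postcomposing with $\intspan$ then gives an $\subuniverse$-monoidal functor
\[
G:=\intspan\circ\Phi'\longrightarrow F:=\intspan\circ\Phi .
\]

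\textbf{Step 2.} We check that $G\to F$ is pointwise an inclusion of a $\vartopos$-subcategory. Here we use that $\intspan$ sends a map of adequate triplets which is the identity on the underlying category and an inclusion on markings to a monomorphism of $\vartopos$-categories. The spans of the smaller triplet are exactly the spans of the larger one whose legs lie in $\varintcat{C}_L'$ and $\varintcat{C}_R'$. This uses that composition in both cases is given by the same pullbacks, which exist since $(\varintcat{C},\varintcat{C}_L',\varintcat{C}_R')$ is adequate. The check is on objects and on morphism anima contextwise, as in the proof of \Cref{lem:truncated_sheafify_to_locally_trancated}.

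\textbf{Step 3.} We identify $G(B)$ with $\varintcat{D}^B$, where $\varintcat{D}:=\intspan(\varintcat{C},\varintcat{C}_L',\varintcat{C}_R')$. This holds because $\intspan$ preserves limits and $\Phi'$ is $\subuniverse^{\full}$-continuous. Consequently $\varintcat{D}\subset\intspan(\varintcat{C},\varintcat{C}_L,\varintcat{C}_R)$ is closed under the tensor functors $s_\otimes$, so it is an $\subuniverse$-monoidal subcategory in the sense of the definition. Moreover $G$ agrees with the subfunctor constructed in the proof of \Cref{prop:monoidal_subcategories_are_monoidal}, since both are the subfunctor of $F$ with values $\varintcat{D}^B$. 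The universal property then follows verbatim from that proof: $G\to F$ is a monomorphism, and a map $T\to F$ factors through $G$ if and only if its underlying functor lands in $\varintcat{D}$.

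The main obstacle is Step 2: verifying that the span construction turns these marking-inclusions into monomorphisms compatibly with change of context. I would first argue it via the limit-preserving description of spaces of $n$-simplices of $\Span$ as mapping anima out of marked grids, as in the proof of \Cref{construction_of_span_2_as_a_functor_on_span_pairs}.
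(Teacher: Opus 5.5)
Your proposal is correct and is essentially the paper's argument. The paper simply says the statement is immediate from the construction in \Cref{prop:E_monoidal_span_cats}, and your Steps 1--3 unpack exactly that construction: the subfunctor $\Phi'\subset\Phi$ of functors $\spanEfE\to\intadtrip$, the fact that $\intspan$ sends marking-inclusions to monomorphisms and preserves the limits giving $G(B)\simeq\varintcat{D}^B$, and then the terminality argument of \Cref{prop:monoidal_subcategories_are_monoidal}. The Step 2 you flag as the main obstacle is routine: in each context, the spaces of simplices of the smaller span category are unions of components of those of the larger one.
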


\begin{proof}
    This is immediate from the construction.
\end{proof}

    

\subsubsection*{The ambidextrous span category}

\begin{definition}\label{def:span_EPI_A}
    We denote \[\spanEPI[A]:=\intspan(\subuniverse[E]^{\full}[A],\subuniverse[E]_{\subuniverse[P]}[A],\subuniverse[E]_{\subuniverse[I]}[A]).\]
\end{definition}

\begin{proposition}\label{prop:span_EPI_A_is_monoidal}
    Let $\subuniverse[I],\subuniverse[P]\subset \subuniverse$ be two inductible context-free subuniverses. Then $\spanEPI[A]$
    is an $\subuniverse$-monoidal $\vartopos$-category.
\end{proposition}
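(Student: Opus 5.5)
The plan is to apply \Cref{prop:E_monoidal_span_cats} to the triplet $(\subuniverse[E]^{\full}[A],\subuniverse[E]_{\subuniverse[P]}[A],\subuniverse[E]_{\subuniverse[I]}[A])$, where $\subuniverse[E]^{\full}[A]$ carries the cocartesian $\subuniverse$-monoidal structure of \Cref{prop:categories_with_E_colimits_have_E_monoidal_structure}. This reduces the statement to checking that this triplet is an $\subuniverse$-monoidal adequate triplet. That amounts to three things: the triplet is adequate in every context and stable under change of context; both wide subcategories are $\subuniverse$-monoidal subcategories; and each indexed tensor product $p_\otimes\simeq p_\sharp$ is a morphism of adequate triplets.

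For adequacy, I work in context $B$, where $\subuniverse[E]^{\full}[A](B)$ is the full subcategory of $\vartopos_{/A\times B}$ on maps $C\to A\times B$ whose projection to $B$ lies in $\subuniverse$. Take a morphism $D\to C$ in $\subuniverse[P]$ and a morphism $D'\to C$ in $\subuniverse[I]$. Their pullback $D\times_C D'$ in $\vartopos$ maps to $B$ through $C$ by a composite of morphisms in $\subuniverse$, so it lies in $\subuniverse[E]^{\full}[A](B)$. Because $\subuniverse[E]^{\full}[A](B)$ is full in the slice, this is also the pullback there. By base change, the projections $D\times_C D'\to D'$ and $D\times_C D'\to D$ lie in $\subuniverse[P]$ and $\subuniverse[I]$ respectively. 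Change of context is pullback along $B'\to B$, which preserves these pullbacks and, by independence of context, both classes.

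The monoidal-subcategory condition is the preceding proposition (proved for $\subuniverse[I]$; the same argument gives it for $\subuniverse[P]$). The remaining step, which I expect to be the main point to check, is that for $[p:C\to B]\in\subuniverse(A')$ the functor $p_\sharp$ on the exponentials preserves the pullbacks above. In the slice model, $p_\sharp$ is composition with $C\to B$, i.e.\ the forgetful functor between slices. Concretely, it sends an object $X\to A\times C$ to $X\to A\times C\to A\times B$ and leaves the underlying morphisms of $\vartopos$ unchanged. Forgetful functors between slices preserve pullbacks, and they preserve the two classes because membership is tested on the underlying morphism $D\to C$ (independence of context). Exponentials $\varintcat{C}^C$ are computed after change of context, so the same description holds there.

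Hence $p_\otimes$ is a morphism of adequate triplets, the triplet is $\subuniverse$-monoidal, and \Cref{prop:E_monoidal_span_cats} equips $\spanEPI[A]$ with its $\subuniverse$-monoidal structure. Inductibility of $\subuniverse[I],\subuniverse[P]$ is not needed for this statement; it is used only later, for ambidexterity.
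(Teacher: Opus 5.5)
Your proof is correct and follows the paper's route. The paper simply cites \Cref{prop:E_monoidal_span_subcats}, whose hypothesis is exactly that $(\subuniverse[E]^{\full}[A],\subuniverse[E]_{\subuniverse[P]}[A],\subuniverse[E]_{\subuniverse[I]}[A])$ is an $\subuniverse$-monoidal adequate triplet, with the structure then supplied by \Cref{prop:E_monoidal_span_cats}; you make explicit the verification it leaves implicit, namely adequacy via closure of $\subuniverse$ under composition and base change, and $p_\otimes\simeq p_\sharp$ being a pullback-preserving forgetful functor.
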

\begin{proof}
    This is a direct application of \Cref{prop:E_monoidal_span_subcats}.
\end{proof}

\begin{proposition}\label{prop:span_EPI_A_is_ambidextrous}
    $\spanEPI[A]$ is $(\subuniverse[P,I])$-ambidextrous.
\end{proposition}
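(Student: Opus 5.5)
The plan is to use the recognition criterion \Cref{cor:critiria_for_amby}: we check that for every context $A'$ and every $p:C\to B$ in $\subuniverse_{\subuniverse[I]}(A')$ the indexed tensor $p_\otimes$ on $\spanEPI[A]$ is $\subuniverse[I]$-cocontinuous, and dually for $\subuniverse[P]$ with continuity. Since the $\subuniverse$-monoidal structure on $\spanEPI[A]$ comes from \Cref{prop:E_monoidal_span_cats}, the functor $p_\otimes$ is $\intspan$ applied to the morphism of adequate triplets induced by $p_\otimes=p_\sharp$ on $\subuniverse^{\full}[A]$. Concretely, $p_\sharp$ is postcomposition of a family $D\to A\times C$ (with $D\to C$ in $\subuniverse$) with $C\to B$.

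First we identify the indexed colimits and limits in $\spanEPI[A]$. For $q:C\to B$ in $\subuniverse[I]$, we mimic the proof of \Cref{prop:limits_in_span_E}. The adjunction $q_\sharp\dashv q^*$ on $\subuniverse^{\full}[A]$ restricts to the triplet $(\subuniverse^{\full}[A],\subuniverse_{\subuniverse[P]}[A],\subuniverse_{\subuniverse[I]}[A])$. Its unit and counit have components in $\subuniverse[I]$ because $\subuniverse[I]$ is full and left cancellable, and their naturality squares are cartesian by the slice computation used there. So the adjunction lies in $\adtriptwo^R$, and $\Span$ sends it to an adjunction $q_\sharp\dashv q^*$ in $\spanEPI[A]$. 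Hence $\subuniverse[I]$-colimits in $\spanEPI[A]$ are computed by $q_\sharp$ on the underlying families. Symmetrically, for $q\in\subuniverse[P]$ we get an adjunction in $\adtriptwo^L$, which $\Span$ reverses, so $\subuniverse[P]$-limits are again computed by $q_\sharp$. This is the same mechanism as \Cref{prop:E_I_is_cocart}, now applied on the span level. We also need to check that these adjunctions are compatible with change of context and satisfy Beck--Chevalley. This should follow from the fact that all of these operations are pullbacks and composites in slices of $\vartopos$.

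With these in hand, cocontinuity of $p_\otimes$ for $p\in\subuniverse_{\subuniverse[I]}$ reduces to a commutation $p_\sharp q_\sharp\simeq q'_\sharp p'_\sharp$ together with the Beck--Chevalley compatibility, for the square built from an $\subuniverse[I]$-indexed diagram over $C$. Both sides are postcomposition of maps in the slice topos, so the identification holds on underlying families. The remaining point is that the Beck--Chevalley map is the canonical one, which reduces to associativity of composition in $\vartopos$. The $\subuniverse[P]$ case is formally the same, because limits are also given by $q_\sharp$.

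The main obstacle I expect is the first step: verifying that the unit and counit are legitimately left- or right-pointing natural transformations of adequate triplets. For the right-pointing case, the unit $D\to q^*q_\sharp D$ must lie in $\subuniverse_{\subuniverse[I]}[A]$, and this is where fullness of $\subuniverse[I]$ (inductibility) is used, as in the warning after \Cref{prop:limits_in_span_E}. We must also check that the relevant naturality squares are cartesian for morphisms of the other marked class. The cleanest route is to reduce to pullback squares in $\vartopos_{/A\times B}$, which are preserved by forgetting to $\vartopos_{/A\times C}$.
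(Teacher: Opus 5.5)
Your proposal is correct and follows the paper's own route. You get $\subuniverse[I]$-cocompleteness and $\subuniverse[P]$-completeness of $\spanEPI[A]$ by rerunning the argument of \Cref{prop:limits_in_span_E} in $\adtriptwo^R$ and $\adtriptwo^L$, and then verify the criterion of \Cref{cor:critiria_for_amby} because indexed tensors and indexed (co)limits are all postcomposition. The paper phrases that last step as checking the criterion through the (co)continuous inclusions of $\subuniverse_{\subuniverse[P]}[A]^{\op}$ and $\subuniverse_{\subuniverse[I]}[A]$, which is the same computation you carry out directly on underlying families.
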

\begin{proof}
    $\spanEPI[A]$ is $\subuniverse[I]$-cocomplete and $\subuniverse[P]$-complete by the same argument as in \Cref{prop:limits_in_span_E}. Moreover, the inclusion $\subuniverse_{\subuniverse[P]}[A]^{\op}\into \spanEPI[A]$ is $\subuniverse[P]^{\full}$-continuous and the inclusion $\subuniverse_{\subuniverse[I]}[A]\into \spanEPI[A]$ is $\subuniverse[I]^{\full}$-cocontinuous.
    
    Thus the criterion of \Cref{cor:critiria_for_amby} applies, since it can be checked in $\subuniverse_{\subuniverse[P]}[A]^{\op}$ and $\subuniverse_{\subuniverse[I]}[A]$.
\end{proof}

\subsubsection*{The monoidal universal property}

We now turn to proving an $\subuniverse$-monoidal version of the universal property of span $\vartopos$-$2$-categories described in \Cref{prop:univ_prop_of_span_half}.

\begin{proposition}\label{prop:E_monoidal_span_two_cats}
    Let $(\varintcat{C},\varintcat{C}')$ be an $\subuniverse$-monoidal span pair. Then $\intspanhalf(\varintcat{C},\varintcat{C}')$ carries a natural $\subuniverse$-monoidal structure. 
\end{proposition}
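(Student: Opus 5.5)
I would mimic the proof of \Cref{prop:E_monoidal_span_cats}, replacing the $1$-categorical span construction by the $2$-categorical one. An $\subuniverse$-monoidal span pair is an $\subuniverse$-monoidal adequate triplet $(\varintcat{C},\varintcat{C},\varintcat{C}')$. As in the proof of \Cref{prop:monoidal_subcategories_are_monoidal}, it determines an $\subuniverse^{\full}$-continuous $\vartopos$-functor
\[
\spanEfE\to\intspanpairs,\qquad [B\to A]\mapsto(\varintcat{C}^B,\varintcat{C}'^B).
\]
Here $\intspanpairs$ denotes the $\vartopos$-category $A\mapsto\spanpairs(\vartopos_{/A})$, which sits as a full subcategory of $\intadtrip$ closed under limits. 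The covariant functoriality along $p:C\to B$ in $\subuniverse$ comes from the functors of span pairs $p_\otimes$, and the contravariant functoriality comes from restriction.

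I would then postcompose with the $\vartopos$-functor $\intspanpairs\to\internalcatoftwocats$ induced by \Cref{def:internal_span_two_category}, namely $(\varintcat{D},\varintcat{D}')\mapsto\intspanhalf(\varintcat{D},\varintcat{D}')$. The result is a functor $\spanEfE\to\internalcatoftwocats$ whose value at $*$ is $\intspanhalf(\varintcat{C},\varintcat{C}')$. It is an $\subuniverse$-monoid in $\internalcatoftwocats$ in the sense of \Cref{def:E_monoidal_two_categories}, provided the postcomposed functor is $\subuniverse^{\full}$-continuous.

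The key step is therefore that $\intspanhalf$, viewed as a $\vartopos$-functor $\intspanpairs\to\internalcatoftwocats$, preserves $\subuniverse^{\full}$-limits. Equivalently, for $s:B\to A$ in $\subuniverse$ we need
\[
\intspanhalf(s_*s^*\varintcat{D},s_*s^*\varintcat{D}')\simeq s_*s^*\intspanhalf(\varintcat{D},\varintcat{D}').
\]
Both sides are computed contextwise. The functor $\spanhalf:\spanpairs\to\catoftwocats$ preserves limits, by the same argument as in \Cref{construction_of_span_2_as_a_functor_on_span_pairs}: the spaces of objects, $1$-morphisms and $2$-morphisms are corepresented by finite marked diagram shapes in the $\infty$-category of pairs, and limits in $\spanpairs$ are computed there. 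It follows that the contextwise construction commutes with the indexed limits $s_*$, since these are right Kan extensions computed by evaluating $\vartopos_{/A}$-sheaves on $\vartopos_{/B}$.

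I expect the main obstacle to be this bookkeeping. One has to confirm that the $\subuniverse^{\full}$-limits in $\intspanpairs$ are computed pointwise in $\spanpairs$, which amounts to checking that the marked pullbacks in $\varintcat{C}^B$ are detected after restriction to each context. One also has to check that the sheafification implicit in $\catoftwocats(\vartopos)$ does not interfere. Because $\intspanhalf$ is given by postcomposition with a limit-preserving functor, no sheafification is needed, and the comparison reduces to the $1$-categorical case already used in \Cref{prop:E_monoidal_span_cats}.

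Naturality in $(\varintcat{C},\varintcat{C}')$ follows from functoriality of both steps. Restricting along $\internalcatofcats\subset\internalcatoftwocats$ recovers the structure of \Cref{prop:E_monoidal_span_cats} on the underlying $\vartopos$-category $\intspan(\varintcat{C},\varintcat{C}')$.
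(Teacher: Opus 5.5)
Your proposal is correct and is essentially the paper's argument. The paper proves this by noting that the proof of \Cref{prop:E_monoidal_span_cats} carries over: the $\subuniverse$-monoidal span pair gives an $\subuniverse^{\full}$-continuous functor $\spanEfE\to\intadtrip$ (via the proof of \Cref{prop:monoidal_subcategories_are_monoidal}), and this functor is postcomposed with $\intspanhalf$. You additionally justify the step the paper leaves implicit, namely that the postcomposition remains $\subuniverse^{\full}$-continuous. That step is simpler than you suggest: $s_*$ on $\vartopos$-sheaves is precomposition with pullback along $s$, so it commutes with any contextwise construction, and limit-preservation of $\spanhalf$ is needed only to make the result a sheaf.
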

\begin{proof}
    The proof of \Cref{prop:E_monoidal_span_cats} works in this case as well.
\end{proof}

Let $(\varintcat{C},\varintcat{C'})$ be an $\subuniverse$-monoidal span pair, and let $F:\varintcat{C}^{\op}\to \varinttwocat{D}$ be an $\subuniverse$-monoidal functor of $\vartopos$-$2$-categories. Then we say that $F$ is $\varintcat{C}'$-right adjointable if, in every context $A\in \vartopos$, for every $B\in \spanEfE(A)$, the functor of $\vartopos_{/A}$-$2$-categories \[(\pi_A^*\varintcat{C}^{\op})^{B}\to \pi_A^{*}\varinttwocat{D}^{B}\]
is $\varintcat{C}'$-right adjointable. 

We denote by $\Fun_{\twocatoftwocats^{\subuniverse-\otimes}(\vartopos)}^{\otimes,\varintcat{C}'\operatorname{-radj}}(\varintcat{C}^{\op},\varinttwocat{D})$ the $\infty$-subcategory of $\Fun_{\twocatoftwocats^{\subuniverse-\otimes}(\vartopos)}(\varintcat{C}^{\op},\varinttwocat{D})$ spanned by $\varintcat{C'}$-adjointable $\subuniverse$-monoidal functors and adjointable natural transformations. 

\begin{proposition}\label{prop:univ_prop_of_E_monoidal_span_half}
    We have an equivalence of $\infty$-categories
    \[\Fun_{\twocatoftwocats^{\subuniverse-\otimes}(\vartopos)}^{\otimes,\varintcat{C}'\operatorname{-radj}}(\varintcat{C}^{\op},\varinttwocat{D})\simeq \Fun_{\twocatoftwocats^{\subuniverse-\otimes}(\vartopos)}^{\otimes}(\intspanhalf(\varintcat{C},\varintcat{C}'),\varinttwocat{D}).\]
\end{proposition}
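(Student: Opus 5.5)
The plan is to deduce the monoidal statement from the non-monoidal universal property \Cref{prop:univ_prop_of_span_half}, applied in a family indexed by $\spanEfE$. Concretely, I regard an $\subuniverse$-monoidal $\vartopos$-$2$-category as an $\subuniverse^{\full}$-continuous functor $\spanEfE\to\internalcatoftwocats$. Then an $\subuniverse$-monoidal functor $\intspanhalf(\varintcat{C},\varintcat{C}')\to\varinttwocat{D}$ is a natural transformation between two such functors. The source is the functor $B\mapsto\intspanhalf(\varintcat{C}^B,\varintcat{C}'^B)$, furnished by \Cref{prop:E_monoidal_span_two_cats}: it is the composite of the $\subuniverse^{\full}$-continuous functor $\spanEfE\to\intadtrip$ (or to $\vartopos$-span pairs) from the proof of \Cref{prop:E_monoidal_span_cats} with $\intspanhalf$. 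Likewise, an $\subuniverse$-monoidal functor $\varintcat{C}^{\op}\to\varinttwocat{D}$ is a natural transformation from $B\mapsto(\varintcat{C}^B)^{\op}$ to $B\mapsto\varinttwocat{D}^B$.

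The first step is a compatibility check. Restricting along the inclusion $\varintcat{C}^{\op}\into\intspanhalf(\varintcat{C},\varintcat{C}')$ is natural in the span pair, since $\spanhalf$ is a functor on $\spanpairs$. It therefore defines a map between the two mapping $\infty$-categories in $\twocatoftwocats^{\subuniverse-\otimes}(\vartopos)$. For each context $A$ and each $B\in\spanEfE(A)$, the $B$-component of this map is exactly the equivalence of \Cref{prop:univ_prop_of_span_half}, applied over $\vartopos_{/A}$ to the span pair $(\pi_A^*\varintcat{C}^B,\pi_A^*\varintcat{C}'^B)$ and target $\pi_A^*\varinttwocat{D}^B$. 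This is precisely why the definition of $\varintcat{C}'$-right adjointability for monoidal functors was phrased componentwise in $B$.

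The second step is to upgrade this componentwise equivalence to an equivalence of natural transformations. I would write the mapping $\infty$-category of natural transformations $\spanEfE\to\internalcatoftwocats$ as an end, that is, a limit over the twisted arrow category of $\spanEfE$ in each context. The restriction map is a map of such limit diagrams. On objects of the twisted arrow category it is the equivalence of \Cref{prop:univ_prop_of_span_half}. On morphisms, a span $B\leftarrow X\rightarrow B'$ acts by $p_\otimes q^*$. These operations are functors of span pairs on the source side, so $\spanhalf$ carries them to $2$-functors compatible with restriction. This follows from naturality of the universal property in both the span pair and the target, which is itself a consequence of the pointwise proof of \Cref{prop:univ_prop_of_span_half}.

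The third step is to check that the essential images match. A natural transformation on the span side restricts to one whose components are right adjointable, and those components are exactly the monoidal $\varintcat{C}'$-right adjointable functors. Morphisms on the span side restrict to adjointable transformations, again by the pointwise statement.

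I expect the main obstacle to be the naturality of the equivalence of \Cref{prop:univ_prop_of_span_half} in the span pair, as used in the second step. One has to show that the inverse, which extends along right adjoints, commutes with the functors $p_\otimes$ coming from spans in $\spanEfE$. My approach would be to use that these functors preserve the relevant pullback squares, because they are morphisms of span pairs. Consequently the Beck--Chevalley data are carried along, so the extension by right adjoints is functorial for them. The remaining coherence would then follow from the fact that the limit over the twisted arrow category of componentwise equivalences is an equivalence.
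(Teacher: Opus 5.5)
Your proposal is correct and is essentially the paper's own argument: the paper's proof is the single line that \Cref{prop:univ_prop_of_span_half} is applied pointwise over $\spanEfE$, and your end formulation over the twisted arrow category is a faithful elaboration of that. The obstacle you flag is milder than you expect. Restriction along $\varintcat{C}^{\op}\into\intspanhalf(\varintcat{C},\varintcat{C}')$ is already natural, so the only real check is that right adjointable functors and transformations are stable under precomposition with the span-pair morphisms $p_\otimes q^*$ and under postcomposition with $2$-functors; once that holds, the inverse is automatically natural.
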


\begin{proof}
    This follows from applying the universal property of \Cref{prop:univ_prop_of_span_half} pointwise.
\end{proof}

\section{Proof of \texorpdfstring{\Cref{main_theorem:free_E_monoidal_semiadditive}}{Theorem C}}\label{sec:proof_of_C}

Let $\vartopos$ be a topos. Let $\subuniverse[I],\subuniverse[P]\subset \subuniverse$ be context-free subuniverses. Assume that $\subuniverse[I],\subuniverse[P]$ are both inductible. 

We make the following additional assumption.
\begin{assumption}\label{assumption:global_trunc_bound}
    There exists an integer $n \geq -2$ such that every morphism in $\subuniverse[I],\subuniverse[P]$ in any context is $n$-truncated.\footnote{See \Cref{remark:colimit_of_n_truncated} for a discussion of whether this assumption is necessary for \Cref{main_theorem:free_E_monoidal_semiadditive}.}
\end{assumption}

In this section, we will prove under the assumption above a version of \Cref{main_theorem:free_E_monoidal_semiadditive} for an arbitrary topos $\vartopos$ (\Cref{thm:generalized_main_C}).

We will now explain the main steps:

Let $\varintcat{C}$ be a $(\subuniverse[P,I])$-ambidextrous $\subuniverse$-monoidal $\vartopos$-category (\Cref{def:P_I_ambi_E_monoidal_cats}) and let $x:A\to \varintcat{C}$ be an object of $\varintcat{C}$ in the context $A$. Recall $\spanEPI[A]$ from \Cref{def:span_EPI_A}, \Cref{prop:span_EPI_A_is_monoidal}, and \Cref{prop:span_EPI_A_is_ambidextrous}. We will construct a natural $\subuniverse$-monoidal functor \[\Xi_x:\spanEPI[A]\to \varintcat{C}.\]

Recall that $\varintcat{C}^\otimes$ denotes the $\subuniverse$-operad associated with $\varintcat{C}$ (\Cref{prop:otimes_sends_monoidal_to_operads}). We equip $\varintcat{C}^\otimes$ with its cartesian $\subuniverse$-monoidal structure (\Cref{cor:associated_operad_total_category_complete}). Given an $\subuniverse$-monoidal $\vartopos$-subcategory $\varintcat{J}\subset \spanEfE$, we denote by $\varintcat{C}^{\otimes}|_{\varintcat{J}}$ the pullback of $\subuniverse$-monoidal $\vartopos$-categories
\[\begin{tikzcd}
    \varintcat{C}^{\otimes}|_{\varintcat{J}}\ar[r]\ar[d]\pullbackdr
    &\varintcat{C}^\otimes\ar[d]\\
    \varintcat{J}\ar[r] & \spanEfE.
\end{tikzcd}\]

There are two main steps to the construction:

\begin{steps}
    \item \label{step:construction_of_the_object_functor} 
    We will construct in \Cref{subsec:Phi_x} an $\subuniverse$-monoidal functor \[\Phi_x: \spanEPI[A]\to \varintcat{C}^{\otimes}|_{\spanEPI}.\]
    This functor will not depend on the $\subuniverse$-monoidal structure of $\varintcat{C}$, only on the fact that $\varintcat{C}$ is $\subuniverse[I]$-cocomplete.
    \item \label{step:construction_of_the_tensoring_functor} We will construct in \Cref{subsec:Psi} an $\subuniverse$-monoidal functor \[\Psi:\varintcat{C}^{\otimes}|_{\spanEPI}\to \varintcat{C}.\] This functor will not depend on $x$, but only on the $\subuniverse$-monoidal structure of $\varintcat{C}$.
\end{steps}

Then we will define \[\Xi_x:=\Psi \circ \Phi_x.\]
We will finish the section by proving in \Cref{subsec:universality_of_Xi} that the construction $(x,\varintcat{C})\mapsto \Xi_x$ identifies $\spanEPI[A]$ with $L_{(\subuniverse[P,I])-\oplus}\subuniverse[E][A]^{\simeq}$ (\Cref{thm:generalized_main_C}).

\subsection{Construction of \texorpdfstring{$\Phi_x$}{Phi}}\label{subsec:Phi_x}
We now detail the construction of \[\Phi_x: \spanEPI[A]\to \varintcat{C}^{\otimes}|_{\spanEPI}\] described in \Cref{step:construction_of_the_object_functor}. 

The construction has two steps. In the first, we construct a $\vartopos$-$2$-category whose underlying $\vartopos$-category is
\[\varintcat{C}^{\otimes}|_{\intspan(\subuniverse^{\full},\subuniverse_{\subuniverse[I]})}\]
(\Cref{prop:core_of_unst_F_is_associated_operad}).

In the second, we use the universal property of the functor
\[(\subuniverse^{\full}[A])^{\op}\to \intspanhalf(\subuniverse^{\full}[A],\subuniverse_{\subuniverse[I]}[A])^{\co}\]
to construct $\Phi_x$, using the left adjointable variant of \Cref{prop:univ_prop_of_E_monoidal_span_half}.

\subsubsection*{Step one: the \texorpdfstring{$\vartopos$-$2$-}{B-2-}categorical refinement}

Let $\varintcat{C}$ be a $(\subuniverse[P,I])$-ambidextrous $\subuniverse$-monoidal $\vartopos$-category.

Let 
\[ 
F_0: (\subuniverse^{\full})^{\op}\to \internalcatofcats
\]
be the $\subuniverse^{\full}$-continuous functor corresponding to $\varintcat{C}$ under the equivalence 
\[
\intfun^{\subuniverse-\sqcap}((\subuniverse^{\full})^{\op}, \internalcatofcats)\simeq \internalcatofcats
\]
which follows from \Cref{prop:S_is_free_with_S_colmitis}.

The composition
\[(\subuniverse^{\full})^{\op}\xto{F_0} \internalcatofcats\into \internaltwocatofcats\]
is $\subuniverse_{\subuniverse[I]}$-left adjointable. Thus, we get a functor of $\vartopos$-$2$-categories
\[F:\intspanhalf(\subuniverse^{\full}, \subuniverse_{\subuniverse[I]})^{\co}\to\internaltwocatofcats.\]

\begin{proposition}
    The functor $F$ is an $\subuniverse$-monoidal functor.
\end{proposition}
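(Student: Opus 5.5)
We need to show that $F:\intspanhalf(\subuniverse^{\full},\subuniverse_{\subuniverse[I]})^{\co}\to\internaltwocatofcats$ is $\subuniverse$-monoidal. Here the source carries the structure of \Cref{prop:E_monoidal_span_two_cats}, coming from the $\subuniverse$-monoidal span pair $(\subuniverse^{\full},\subuniverse_{\subuniverse[I]})$ with the cocartesian structure on $\subuniverse^{\full}$. The target carries the cartesian structure $\intcart_\otimes(\internaltwocatofcats)$ of \Cref{ex:cartesian_monoidal_two_category}.

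The plan is to get monoidality from the $\subuniverse$-monoidal universal property \Cref{prop:univ_prop_of_E_monoidal_span_half}, in its left adjointable $(-)^{\co}$ variant, rather than checking the structure by hand. We must therefore produce an $\subuniverse$-monoidal functor
\[(\subuniverse^{\full})^{\op}\to\intcart_\otimes(\internaltwocatofcats)\]
lifting $F_0$, and then verify that it is $\subuniverse_{\subuniverse[I]}$-left adjointable in the monoidal sense.

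First, $F_0$ is $\subuniverse^{\full}$-continuous, and the cocartesian structure on $\subuniverse^{\full}$ becomes the cartesian structure on $(\subuniverse^{\full})^{\op}$. By functoriality of $\equipcart$ (\Cref{prop:categories_with_E_limits_have_E_monoidal_structure}), $F_0$ is $\subuniverse$-monoidal $(\subuniverse^{\full})^{\op}\to\equipcart(\internalcatofcats)$. The inclusion $\internalcatofcats\into\internaltwocatofcats$ is $\subuniverse^{\full}$-continuous, as its restriction to cores is an equivalence onto $\internaltwocatofcats^{\simeq2}$. Hence the composite is $\subuniverse$-monoidal into $\intcart_\otimes(\internaltwocatofcats)$.

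Second, we check left adjointability contextwise. For $[B\to A]$, the relevant functor is $((\subuniverse^{\full})^{\op})^{B}\to(\internaltwocatofcats)^{B}$. Unwinding, this is the functor classifying $F_0$ after change of context to $\vartopos_{/B}$ and coinduction along $B\to A$. Left adjointability along $\subuniverse_{\subuniverse[I]}$ asks for left adjoints $p_\sharp$ to restrictions $p^*:\varintcat{C}^{Y}\to\varintcat{C}^{X}$ for $p\in\subuniverse_{\subuniverse[I]}$, plus Beck--Chevalley conditions. These exist because $\varintcat{C}$ is $\subuniverse[I]$-cocomplete, and $\subuniverse[I]$-cocompleteness is stable under the coinduction $(-)^B$.

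The main obstacle is this compatibility of adjointability with coinduction, in particular Beck--Chevalley for squares living in the $B$-indexed context. I would handle it as follows. Adjunctions in $\twocatofcats(\vartopos_{/A})$ are local and already encode Beck--Chevalley for all changes of context. The coinduction $s_*$ is a right adjoint $2$-functor $\twocatoftwocats(\vartopos_{/B})\to\twocatoftwocats(\vartopos_{/A})$ and therefore preserves adjunctions and adjointable squares. So the statement reduces to the adjointability of $F_0$ itself, which was just established.

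The monoidal universal property then produces a unique $\subuniverse$-monoidal functor out of $\intspanhalf(\subuniverse^{\full},\subuniverse_{\subuniverse[I]})^{\co}$ extending this lift. Its underlying functor agrees with $F$ by the non-monoidal universal property \Cref{prop:univ_prop_of_span_half}, since both extend the same left adjointable functor. This equips $F$ with its $\subuniverse$-monoidal structure.
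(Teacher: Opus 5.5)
Your proposal is correct and takes the same approach as the paper. In both, $F_0$ is $\subuniverse^{\full}$-continuous, hence $\subuniverse$-monoidal for the cartesian structures, and \Cref{prop:univ_prop_of_E_monoidal_span_half} in its left adjointable $(-)^{\co}$ form then makes $F$ $\subuniverse$-monoidal; you additionally spell out the powered adjointability hypothesis and the identification of the underlying functor with $F$, which the paper leaves implicit. One small correction: the adjointability of the $B$-powered functor reduces to that of $F_0$ simply because $(s_*\varinttwocat{X})(A')\simeq\varinttwocat{X}(A'\times_A B)$ is a reindexing of contexts, not because $s_*$ preserves adjunctions in $\twocatoftwocats$, since that property concerns adjunctions between $\vartopos$-$2$-categories rather than the internal ones in question.
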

\begin{proof}
    The functor $F_0$ is $\subuniverse^{\full}$-continuous by assumption. Thus it is $\subuniverse$-monoidal for the cartesian structure. By \Cref{prop:univ_prop_of_E_monoidal_span_half}, the functor $F$ is $\subuniverse$-monoidal as well. 
\end{proof}

We get an $\subuniverse$-monoidal $\vartopos$-$2$-category $\inttwounst(F)$ by \Cref{prop:monoidal_two_unstraightening}.

\begin{proposition}\label{prop:core_of_unst_F_is_associated_operad}
    We have an $\subuniverse$-monoidal equivalence \[\inttwounst(F)^{\simeq2}\simeq \varintcat{C}^{\otimes}|_{\intspan(\subuniverse^{\full},\subuniverse_{\subuniverse[I]})}.\]
\end{proposition}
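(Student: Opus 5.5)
The plan is to compare two pullbacks. By \Cref{def:internal_two_unstraightening}, $\inttwounst(F)$ is the pullback of $\internaltwocatofcats_{*//}\to\internaltwocatofcats$ along $F$. Applying $(-)^{\simeq2}$ and using the pullback lemma for $1$-cocartesian fibrations (the square with $\varinttwocat{Y}^{\simeq2}\to\varinttwocat{W}^{\simeq2}$ is cartesian), we get
\[
\inttwounst(F)^{\simeq2}\simeq \intspan(\subuniverse^{\full},\subuniverse_{\subuniverse[I]})\times_{\internalcatofcats}\internalcatofcats_{*//}.
\]
Here we use that the core of $\intspanhalf(\subuniverse^{\full},\subuniverse_{\subuniverse[I]})^{\co}$ is $\intspan(\subuniverse^{\full},\subuniverse_{\subuniverse[I]})$. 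By \Cref{prop:univ_internal_2_cocart_fib}, $\internalcatofcats_{*//}\to\internalcatofcats$ is the universal cocartesian fibration. Hence $\inttwounst(F)^{\simeq2}$ is the unstraightening of $F^{\simeq2}:\intspan(\subuniverse^{\full},\subuniverse_{\subuniverse[I]})\to\internalcatofcats$.

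On the other side, $\varintcat{C}^\otimes$ is the unstraightening of the functor $G:\spanEfE\to\internalcatofcats$ defining the $\subuniverse$-monoidal structure. Its restriction to $\intspan(\subuniverse^{\full},\subuniverse_{\subuniverse[I]})$ is by definition the pullback along the inclusion, which is the unstraightening of $G|_{\intspan(\subuniverse^{\full},\subuniverse_{\subuniverse[I]})}$. So as plain $\vartopos$-categories over $\intspan(\subuniverse^{\full},\subuniverse_{\subuniverse[I]})$, it suffices to identify $F^{\simeq2}$ with $G$ restricted to this subcategory.

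For this identification I would use \Cref{thm:internal_uniqueness_of_unfurling}, the uniqueness of unfurling. The morphisms of $\subuniverse_{\subuniverse[I]}$ are left cancellable and locally truncated because $\subuniverse[I]$ is inductible, which is exactly the left cancellability and truncation hypothesis that theorem requires. Both functors restrict to $F_0$ on $(\subuniverse^{\full})^{\op}$: for $G$ this is \Cref{prop:values_of_E_monoidal_categories}, and for $F^{\simeq2}$ it holds by construction. Both functors are also $\subuniverse_{\subuniverse[I]}$-adjointed. For $F^{\simeq2}$ this holds because, by \Cref{prop:univ_prop_of_span_half}, the forward arrows go to the left adjoints $s_\sharp$ with the unit/counit $2$-cells. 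For $G$ it holds because $\varintcat{C}$ is $\subuniverse[I]$-cocartesian, so the inductive norms $s_\sharp\to s_\otimes$ are equivalences by \Cref{prop:inductive_amby_iff_ambi}. The uniqueness theorem then identifies the two functors.

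The main obstacle is the upgrade to an $\subuniverse$-monoidal equivalence. The left side carries the structure from \Cref{prop:monoidal_two_unstraightening}, a pullback in $\subuniverse$-monoids. The right side carries the cartesian structure of \Cref{cor:associated_operad_total_category_complete}, restricted to an $\subuniverse$-monoidal subcategory. Since $F$ is $\subuniverse^{\full}$-$2$-continuous (it comes from the $\subuniverse^{\full}$-continuous $F_0$), the second clause of \Cref{prop:monoidal_two_unstraightening} says that the structure on $\inttwounst(F)$ is cartesian. Taking cores, both sides are pullbacks of $\subuniverse^{\full}$-complete categories along $\subuniverse^{\full}$-continuous functors, both with cartesian structures. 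Cartesian structures are determined by the underlying category, since $\equipcart$ is a functor on complete categories with continuous functors. It therefore remains to check that the equivalence preserves $\subuniverse^{\full}$-limits. I would check this by noting that both pullback diagrams live in $\internalcatofcats^{\subuniverse\text{-}\sqcap}$ and that the identification of classified functors can itself be made inside $\intfun^{\subuniverse\text{-}\sqcap}$.
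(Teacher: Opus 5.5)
Your strategy matches the paper's. You identify $\inttwounst(F)^{\simeq2}$ with the unstraightening of the restriction of $F$ to $\intspan(\subuniverse^{\full},\subuniverse_{\subuniverse[I]})$, observe that this restriction and the restricted monoidal structure of $\varintcat{C}$ both extend $F_0$, and conclude by \Cref{thm:internal_uniqueness_of_unfurling}.

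The gap is in verifying the hypotheses of that theorem. Left cancellability of $\subuniverse_{\subuniverse[I]}$ does follow from fullness of $\subuniverse[I]$. Truncatedness does \emph{not} follow from inductibility. A morphism of $\subuniverse_{\subuniverse[I]}$ in context $A$ is a map $f\colon C\to D$ over $A$ with $[f]\in\subuniverse[I](D)$, so inductibility only gives a cover of $D$ over which $f$ becomes truncated. Since $D\to A$ lies merely in $\subuniverse$, the truncation levels can be unbounded along the fibres of $D\to A$, and no cover of the context $A$ need make $f$ truncated. Local truncation in the context is exactly what \Cref{thm:internal_uniqueness_of_unfurling} uses, through \Cref{lem:truncated_sheafify_to_locally_trancated}. \Cref{lem:every_morphism_in_I_is_locally_truncated_on_the_target} does not help, since it only treats morphisms between objects of $\subuniverse[I](A)$.

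\Cref{remark:colimit_of_n_truncated} gives a concrete counterexample. Take $\vartopos=\catofanima$ and $\subuniverse[I]$ the maps with $\pi$-finite fibres, which is inductible. Then $\coprod_{n\in\nats}B^nC_2\to\nats$ lies in $\subuniverse_{\subuniverse[I]}(*)$ but is not locally truncated in context $*$. The paper closes this step with the standing \Cref{assumption:global_trunc_bound}: a uniform bound $n$ makes every morphism of $\subuniverse_{\subuniverse[I]}$ $n$-truncated in every context. A footnote there notes this is the only use of that assumption for $\subuniverse[I]$. With that substitution your argument goes through. As written, it would apply verbatim without the bound, where its key input fails.

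For the monoidal upgrade your route differs from the paper's. The paper compares the two $\subuniverse$-monoidal classifying functors $\intspan(\subuniverse^{\full},\subuniverse_{\subuniverse[I]})\to\internalcatofcats$ directly. You instead show both structures are cartesian and transport the underlying equivalence along $\equipcart$. That is reasonable, but two remarks apply.
\begin{itemize}
\item Your final check is automatic, since any equivalence of $\vartopos$-categories preserves whatever $\subuniverse^{\full}$-limits exist.
\item The step that actually needs an argument is cartesianness of the structure on $\inttwounst(F)^{\simeq2}$. The second clause of \Cref{prop:monoidal_two_unstraightening} requires $F$ to arise from an $\subuniverse^{\full}$-$2$-continuous functor between $\subuniverse^{\full}$-$2$-complete $\vartopos$-$2$-categories. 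You infer this from the $1$-categorical continuity of $F_0$ without justification, whereas the paper only establishes that $F$ is $\subuniverse$-monoidal.
\end{itemize}
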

\begin{proof}
    The inclusion of 
    \[\intspan(\subuniverse^{\full},\subuniverse_{\subuniverse[I]})\into \intspanhalf(\subuniverse^{\full}, \subuniverse_{\subuniverse[I]})^{\co}\]
    agrees with the inclusion of the underlying $\vartopos$-category and is $\subuniverse$-monoidal itself.

    Thus, the result follows from a general statement: the underlying $\infty$-category of a $1$-cocartesian fibration $p:\vartwocat{P}\to \vartwocat{X}$ between $(\infty,2)$-categories is the cocartesian fibration classifying the restriction of the functor $\vartwocat{X}\to \twocatofcats$ to $\vartwocat{X}^{\simeq2}$.

    We now need to compare the two monoidal functors
    \[\intspan(\subuniverse^{\full},\subuniverse_{\subuniverse[I]})\to \internalcatofcats:\]
    the one coming from $F$ and the one coming from the restriction of the $\subuniverse$-monoidal structure of $\varintcat{C}$.
    
    The restrictions to $(\subuniverse^{\full})^{\op}$ agree since they agree on the image $\varintcat{C}$ of $*$ and they both classify $\subuniverse^{\full}$-continuous functors 
    \[(\subuniverse^{\full})^{\op}\to \internalcatofcats.\]

    By \Cref{assumption:global_trunc_bound}, every morphism in $\subuniverse_{\subuniverse[I]}$ is truncated.\footnote{This is the only place in the construction where we use \Cref{assumption:global_trunc_bound} about $\subuniverse[I]$.}
    
    Then the two monoidal functors agree by \Cref{thm:internal_uniqueness_of_unfurling}.
\end{proof}

\subsubsection*{Step two: extending \texorpdfstring{$x$ to $\Phi_x$}{x to Phi}}

We need a general lemma about $1$-cocartesian fibrations between $(\infty,2)$-categories, which we plan to apply to $\inttwounst(F)$.

\begin{lemma}\label{lem:lifting_of_adjunction_by_cocart_fibrations}
    Let 
    \[p:\vartwocat{P}\to \vartwocat{X}\]
    be a $1$-cocartesian fibration between $(\infty,2)$-categories. Then a cocartesian $1$-morphism $f:x\to y\in \vartwocat{P}$ admits a left adjoint if and only if $p(f)$ admits a left adjoint.

    Moreover, a commutative square 
    \[\begin{tikzcd}
        x\ar[r,"f"]\ar[d] &y\ar[d]\\
        x'\ar[r,"f'"] & y'
    \end{tikzcd}\]
    of cocartesian edges is horizontally left adjointable if and only if the square 
    \[\begin{tikzcd}
        p(x)\ar[r,"p(f)"]\ar[d] &p(y)\ar[d]\\
        p(x')\ar[r,"p(f')"] & p(y')
    \end{tikzcd}\]
    is horizontally left adjointable.
\end{lemma}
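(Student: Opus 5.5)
The forward implications in both statements are formal, since $2$-functors preserve adjunctions, mates and invertible $2$-morphisms. The work lies in the converse directions. The plan is to construct the left adjoint of a cocartesian $1$-morphism $f:x\to y$ representably, using the defining pullback squares of mapping $\infty$-categories. Invertibility of Beck--Chevalley $2$-morphisms will then be detected by the homwise right fibration condition in \Cref{def:one_cocartesian_fibration}.

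\textbf{Step 1: adjoints via representability.} I would use the criterion that $f:x\to y$ admits a left adjoint $l$ in $\vartwocat{P}$ if and only if, for every object $z$, the precomposition functor $(-)\circ f:\Fun_{\vartwocat{P}}(y,z)\to\Fun_{\vartwocat{P}}(x,z)$ admits an adjoint. This adjoint must be natural in $z$ for postcomposition $2$-functorially, and the $2$-categorical Yoneda lemma \cite{BenMosheEnrichedYoneda} then produces $l$ together with its unit and counit. Since $f$ is $p$-cocartesian, we have the pullback
\[
\Fun_{\vartwocat{P}}(y,z)\simeq\Fun_{\vartwocat{P}}(x,z)\times_{\Fun_{\vartwocat{X}}(p(x),p(z))}\Fun_{\vartwocat{X}}(p(y),p(z)).
\]
Under this pullback, $(-)\circ f$ is the base change of $(-)\circ p(f)$ along the right fibration $\Fun_{\vartwocat{P}}(x,z)\to\Fun_{\vartwocat{X}}(p(x),p(z))$. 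The functor $(-)\circ p(f)$ has an adjoint, namely precomposition with the left adjoint $g$ of $p(f)$.

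\textbf{Step 2: base change of the adjoint.} I would then apply the dual of \Cref{lem:right_adj_are_stable_under_BC}. A right fibration is in particular a cartesian fibration, so the dual lemma says that adjoints base change along it. This gives the adjoint of $(-)\circ f$ together with compatibility with the projection to $\vartwocat{X}$. Naturality in $z$ comes from the fact that postcomposition with any $z\to z'$ preserves the pullback squares and the fibrations involved. The resulting left adjoint $l:y\to x$ lies over $g$, and its unit and counit lie over those of $g\dashv p(f)$.

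I expect this to be the main obstacle. The difficulty is keeping track of the direction of the adjunction after precomposition, which is contravariant, and checking that the naturality in $z$ is coherent enough for the $2$-Yoneda lemma. If the representable route becomes too cumbersome, my fallback is to construct $l$ directly. I would take $z=x$ and lift the composite of the counit $g\,p(f)\Rightarrow\id$ through the right fibration on $\Fun_{\vartwocat{P}}(x,x)$ starting from $\id_x$; this lift produces $l\circ f\Rightarrow\id_x$. I would then verify the triangle identities downstairs, using that right fibrations are conservative and that lifts of $2$-morphisms are unique.

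\textbf{Step 3: squares.} Given the square of cocartesian edges, Step 1 gives left adjoints $l,l'$ of $f,f'$ that lie over the left adjoints of $p(f),p(f')$, with units and counits lying over theirs. The Beck--Chevalley $2$-morphism in $\vartwocat{P}$ is built from these units and counits and the square's $2$-cell. It is therefore sent by $p$ to the Beck--Chevalley $2$-morphism of the image square. Finally, the functor $\Fun_{\vartwocat{P}}(y,x')\to\Fun_{\vartwocat{X}}(p(y),p(x'))$ is a right fibration, hence conservative. So one Beck--Chevalley $2$-morphism is invertible exactly when the other is.
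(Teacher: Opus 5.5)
Your proposal is correct. Your fallback route is essentially the paper's proof; your main route is genuinely different.

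\textbf{The paper's route.} The paper works entirely inside $\vartwocat{P}$:
\begin{itemize}
\item It lifts the counit $g\,p(f)\Rightarrow\id$ to a cartesian $2$-cell ending at $\id_x$ in the right fibration $\Fun_{\vartwocat{P}}(x,x)\to\Fun_{\vartwocat{X}}(p(x),p(x))$, and uses cocartesianness of $f$ to write its source as $l\circ f$.
\item It lifts the unit to a cartesian $2$-cell $f''\Rightarrow f\circ l$.
\item It shows $f''\simeq\id_y$. The zig-zag composite $f''\circ f\Rightarrow f$ lies over an identity, hence is invertible, and cocartesianness of $f$ then forces $f''\simeq\id_y$.
\item The triangle identities follow from uniqueness of cartesian lifts.
\end{itemize}
The identification of the unit's source with $\id_y$ is the one non-formal step. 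Your fallback sketch omits it, so you would need to supply it. Also, right fibrations lift $2$-cells with prescribed \emph{target}, so the counit lift ends at $\id_x$ rather than starting from it.

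\textbf{Your main route.} You transport the whole adjunction $(-)\circ p(f)\dashv(-)\circ g$ to hom-categories via the dual of \Cref{lem:right_adj_are_stable_under_BC}, then reassemble it with the $2$-Yoneda lemma. This is more conceptual, and it shows for free that $l$, its unit and its counit lie over $g$ and over its unit and counit.

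The cost is heavier input. You need the fact that a strong transformation $\Fun_{\vartwocat{P}}(y,-)\Rightarrow\Fun_{\vartwocat{P}}(x,-)$ is a left adjoint in $\funtwocat(\vartwocat{P},\twocatofcats)$ once its components are left adjoints and the Beck--Chevalley cells for postcomposition with every $h:z\to z'$ are invertible. This is the mate theorem for $(\infty,2)$-categories.

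The coherence you flag as the obstacle is exactly this Beck--Chevalley condition, and it holds for the reason you indicate. Write $R'_z$ for the base-changed right adjoint of $(-)\circ f$. It sends $\phi$ to the object over $p(\phi)\circ g$ whose composite with $f$ is the cartesian lift of $p(\phi)*\epsilon$ ending at $\phi$. Postcomposition $h\circ(-)$ preserves such lifts, since every morphism of a right fibration is cartesian. Uniqueness of lifts then identifies $h\circ R'_z(\phi)$ with $R'_{z'}(h\circ\phi)$.

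Your treatment of squares matches the paper's: $p$ preserves the Beck--Chevalley cell, and the right fibration $\Fun_{\vartwocat{P}}(y,x')\to\Fun_{\vartwocat{X}}(p(y),p(x'))$ is conservative, so it reflects invertibility.
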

\begin{proof}
    Let $r':x'\to y' \in \vartwocat{P}$. If $r'$ has a left adjoint, then so does $r:=p(r')$ since $p$ is a $2$-functor and thus preserves adjunctions.

    Let $r':x'\to y'\in \vartwocat{P}$ be a cocartesian edge. Suppose that $[r:x\to y ]= [p(r'):p(x')\to p(y')]$ has a left adjoint $l$. Then we want to show that $r'$ has a left adjoint.

    Let $\epsilon':e\to \id_{x'}$ be the $2$-cartesian lift of the counit $\epsilon:lr\to \id_x$. Then $p(e) = lr$ factors through $r$ and thus, since $r'$ is cocartesian, $e$ factors as a composition 
    \[x'\xto{r'} y' \xto{l'} x'.\]
    
    Let $\eta':f'\to r'l'$ be likewise a $2$-cartesian lift of the unit $\eta:\id_{y}\to rl$. By the zig-zag identity for $l\dashv r$, both $(r'*\epsilon')\circ(\eta'*r'):f'\circ r'\to r'$ and the identity $2$-morphism $r' \to r'$ are cartesian lifts of the identity $2$-morphism of $r$. Thus $f'\circ r' = r'$. By the universal property of the cocartesian edge $r'$, we get that $f' = \id_{y'}$ as both are sources of a $2$-cartesian lift of the same morphism. 

    We claim that $\eta',\epsilon'$ are unit and counit of an adjunction $l'\dashv r'$.
    The zig-zag identities follow from functoriality of cartesian lifts.

    The statement about the adjointability of squares is similar. The Beck-Chevalley property is preserved by any functor. The Beck-Chevalley property is reflected since $p$ induces right fibrations on mapping categories and right fibrations are conservative. 
\end{proof}

The $\vartopos$-category $\varintcat{C}^{\otimes}|_{\intspan(\subuniverse^{\full},\subuniverse_{\subuniverse[I]})}$ is $\subuniverse^{\full}$-complete for the same reason that $\varintcat{C}^\otimes$ is $\subuniverse^{\full}$-complete (\Cref{cor:associated_operad_total_category_complete}), since $\intspan(\subuniverse^{\full},\subuniverse_{\subuniverse[I]})$ is $\subuniverse^{\full}$-complete by the same argument as for $\spanEfE$ (\Cref{prop:limits_in_span_E}).

The functor $x:A \to \varintcat{C}$ gives rise to the composition $A\to\varintcat{C}\into \varintcat{C}^{\otimes}|_{\intspan(\subuniverse^{\full},\subuniverse_{\subuniverse[I]})}$, where the second map is the inclusion of the fiber lying over $*\in {\intspan(\subuniverse^{\full},\subuniverse_{\subuniverse[I]})}$. This induces an $\subuniverse^{\full}$-continuous functor \[x':\subuniverse[E]^{\full}[A]^{\op}\to \varintcat{C}^{\otimes}|_{\intspan(\subuniverse^{\full},\subuniverse_{\subuniverse[I]})}\] by (the dual of) \Cref{prop:free_on_A_with_E_colimits}.

The functor $x'$
lands in the wide $\vartopos$-subcategory spanned by cocartesian arrows in $\varintcat{C}^{\otimes}|_{\intspan(\subuniverse^{\full},\subuniverse_{\subuniverse[I]})}$. Indeed, this is true for the composition $x\colon A\to \varintcat{C}\to \varintcat{C}^{\otimes}|_{\intspan(\subuniverse^{\full},\subuniverse_{\subuniverse[I]})}$ since $A$ is a $\vartopos$-groupoid and follows for the functor $x'$ from $\subuniverse^{\full}[A]^{\op}$ by \Cref{thm:univ_unst_is_monoidal,cor:associated_operad_total_category_complete}, which imply that the wide $\vartopos$-subcategory spanned by cocartesian lifts is closed under $\subuniverse^{\full}$-limits. 

\begin{proposition}
    The composition
    \[
    \subuniverse^{\full}[A]^{\op}\xto{x'} \varintcat{C}^{\otimes}|_{\intspan(\subuniverse^{\full},\subuniverse_{\subuniverse[I]})} \into \inttwounst(F)
    \]
    is an $\subuniverse$-monoidal $\subuniverse_{\subuniverse[I]}[A]$-left adjointable functor.
\end{proposition}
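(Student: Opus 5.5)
We prove the two assertions separately: first that the composite is $\subuniverse$-monoidal, then that it is $\subuniverse_{\subuniverse[I]}[A]$-left adjointable.

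\emph{Monoidality.} Both pieces are monoidal.
\begin{itemize}
\item The functor $x'$ is $\subuniverse^{\full}$-continuous between $\subuniverse^{\full}$-complete $\vartopos$-categories. Equipping both sides with their cartesian structures, $x'$ is therefore $\subuniverse$-monoidal by functoriality of $\equipcart$ (\Cref{prop:categories_with_E_limits_have_E_monoidal_structure}). On $\subuniverse^{\full}[A]^{\op}$ this cartesian structure is the opposite of the cocartesian structure used to build $\spanEPI[A]$.
\item The inclusion $\varintcat{C}^{\otimes}|_{\intspan(\subuniverse^{\full},\subuniverse_{\subuniverse[I]})}\into\inttwounst(F)$ is the core inclusion. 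By \Cref{prop:core_of_unst_F_is_associated_operad}, it is $\subuniverse$-monoidal for the structure of \Cref{prop:monoidal_two_unstraightening}.
\end{itemize}
One point needs checking: the two structures on the middle term agree, namely the cartesian one from \Cref{cor:associated_operad_total_category_complete} and the one restricted from $\inttwounst(F)$. Both arise by forming the defining pullback against the universal fibration. By \Cref{thm:univ_unst_is_monoidal} and the last clause of \Cref{prop:monoidal_two_unstraightening}, this pullback can be computed among $\subuniverse^{\full}$-complete objects, so the two structures agree and the composite is monoidal.

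\emph{Left adjointability, pointwise.} By the definition preceding \Cref{prop:univ_prop_of_E_monoidal_span_half} (left-adjointable variant), we must check the following. In every context and after applying $(-)^B$ for $B\in\spanEfE(A')$, each morphism of $\subuniverse_{\subuniverse[I]}[A]$ is sent to a left adjoint in $\inttwounst(F)$, and each base-change square is sent to a horizontally left adjointable square. Since $(-)^B$ commutes with everything in sight, it suffices to treat the case $B=*$, in an arbitrary context. Here we use \Cref{lem:lifting_of_adjunction_by_cocart_fibrations}. The image of $x'$ consists of cocartesian edges, as argued just before the statement. The lemma therefore reduces both conditions to the corresponding conditions for the image in $\intspanhalf(\subuniverse^{\full},\subuniverse_{\subuniverse[I]})^{\co}$ under $\inttwounst(F)\to\intspanhalf(\subuniverse^{\full},\subuniverse_{\subuniverse[I]})^{\co}$.

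Describe this image concretely. A morphism $D\to C$ in $\subuniverse_{\subuniverse[I]}[A]$ lies over $B$ with $D\to C$ in $\subuniverse[I]$. The functor $x'$ sends $[C\to A\times B]$ to $(\pi_C)_*$ of the restricted object, which lies over $C\in\spanEfE$ (context $B$). The morphism is sent to the left-pointing span $C\leftarrow D = D$. In $\intspanhalf(\subuniverse^{\full},\subuniverse_{\subuniverse[I]})^{\co}$ this backward arrow has the forward arrow $D = D\to C$ as a left adjoint, by the universal property of \Cref{prop:univ_prop_of_span_half} in its $\co$-variant. Base-change squares in $\subuniverse^{\full}[A]$ are pullbacks in $\vartopos$, and these are preserved by passing to the underlying objects over $B$. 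In the span $2$-category such pullbacks are horizontally left adjointable, since the Beck--Chevalley condition holds there tautologically. Hence the image satisfies both conditions, and the lemma lifts them to $\inttwounst(F)$.

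\emph{Main obstacle.} The hard part is making the reduction legitimate rather than pointwise heuristic. Two things must be verified.
\begin{itemize}
\item \Cref{lem:lifting_of_adjunction_by_cocart_fibrations} is stated for $(\infty,2)$-categories. We must show that the adjoints it produces are compatible with change of context and with $(-)^B$, so that they assemble into internal adjunctions. The construction of $l'$ uses cocartesian lifts and $2$-cartesian lifts. Both are preserved by restriction, because $\inttwounst(F)\to\intspanhalf$ is a $1$-cocartesian fibration in the internal sense (\Cref{def:internal_one_cocartesian_fibration}) and the monoidal structure is built from $\subuniverse^{\full}$-$2$-continuous functors. I would argue this by the same contextwise-plus-base-change pattern as in \Cref{lem:right_adj_are_stable_under_BC}.
\item The identification of the image of $x'$ with the left-pointing spans described above must be carried out carefully. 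This is where the formula for $\subuniverse^{\full}[A]$ as maps $C\to A\times B$ enters.
\end{itemize}
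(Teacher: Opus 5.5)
Your proposal follows the paper's proof. Monoidality comes from \Cref{prop:core_of_unst_F_is_associated_operad}, and left adjointability is reduced via \Cref{lem:lifting_of_adjunction_by_cocart_fibrations}, using that $x'$ lands in cocartesian edges, to the image in $\intspanhalf(\subuniverse^{\full},\subuniverse_{\subuniverse[I]})^{\co}$, which factors through $(\subuniverse^{\full})^{\op}$ via the pullback-preserving forgetful functor $\subuniverse^{\full}[A]\to\subuniverse^{\full}$.

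The identification of that image, which you flag as an obstacle, follows directly from $\subuniverse^{\full}$-continuity of the projection together with the universal property of $\subuniverse^{\full}[A]^{\op}$. Also, ``sent to a left adjoint'' should read ``sent to a morphism admitting a left adjoint'', which is what your later check actually verifies.
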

\begin{proof}
    The composition above is $\subuniverse$-monoidal by \Cref{prop:core_of_unst_F_is_associated_operad}.

    By \Cref{lem:lifting_of_adjunction_by_cocart_fibrations}, to check that the composition is $\subuniverse_{\subuniverse[I]}[A]$-left adjointable, it is enough to check that the composition
    \[
    \subuniverse^{\full}[A]^{\op}\xto{x'} \varintcat{C}^{\otimes}|_{\intspan(\subuniverse^{\full},\subuniverse_{\subuniverse[I]})} \into \inttwounst(F) \to \intspanhalf(\subuniverse^{\full}, \subuniverse_{\subuniverse[I]})^{\co}
    \]
    is $\subuniverse_{\subuniverse[I]}[A]$-left adjointable.

    Note that the projection $\varintcat{C}^{\otimes}|_{\intspan(\subuniverse^{\full},\subuniverse_{\subuniverse[I]})}\to \intspan(\subuniverse^{\full}, \subuniverse_{\subuniverse[I]})$ is $\subuniverse^{\full}$-continuous. Thus, by construction of $x'$, this composition is the $\subuniverse^{\full}$-continuous functor $\subuniverse^{\full}[A]^{\op}\to \intspan(\subuniverse^{\full}, \subuniverse_{\subuniverse[I]})$ corresponding to the constant functor $A\to \intspan(\subuniverse^{\full}, \subuniverse_{\subuniverse[I]})$  on $*$.

    Thus the functor $\subuniverse^{\full}[A]^{\op}\to \intspanhalf(\subuniverse^{\full}, \subuniverse_{\subuniverse[I]})^{\co}$ factors as 
    \[\subuniverse^{\full}[A]^{\op}\to (\subuniverse^{\full})^\op \into \intspanhalf(\subuniverse^{\full}, \subuniverse_{\subuniverse[I]})^{\co}.\]

    Note that the functor \[\subuniverse^{\full}[A]\to \subuniverse^{\full}\] preserves pullbacks and sends $\subuniverse_{\subuniverse[I]}[A]$ to $\subuniverse_{\subuniverse[I]}$. Thus we get that the composition $\subuniverse^{\full}[A]^{\op}\to (\subuniverse^{\full})^\op \into \intspanhalf(\subuniverse^{\full}, \subuniverse_{\subuniverse[I]})^{\co}$ is $\subuniverse_{\subuniverse[I]}[A]$-left adjointable.
\end{proof}

Thus, by \Cref{prop:univ_prop_of_E_monoidal_span_half}, we get an $\subuniverse$-monoidal functor of $\vartopos$-$2$-categories
\[\intspanhalf(\subuniverse^{\full}[A], \subuniverse_{\subuniverse[I]}[A])^{\co}\to \inttwounst(F).\]

Restricting to underlying $\vartopos$-categories, we get an $\subuniverse$-monoidal functor 
\[\intspan(\subuniverse^{\full}[A], \subuniverse_{\subuniverse[I]}[A])\to \varintcat{C}^{\otimes}|_{\intspan(\subuniverse^{\full},\subuniverse_{\subuniverse[I]})}.\]

By construction, the composition 
\[\intspan(\subuniverse^{\full}[A], \subuniverse_{\subuniverse[I]}[A])\to \varintcat{C}^{\otimes}|_{\intspan(\subuniverse^{\full},\subuniverse_{\subuniverse[I]})}\to \intspan(\subuniverse^{\full}, \subuniverse_{\subuniverse[I]})\]
forgets the map to $A$ and so the restriction to $\spanEPI[A]$ lands in $\spanEPI$.

Thus, further restricting to $\spanEPI[A]$, we get an $\subuniverse$-monoidal functor by \Cref{prop:E_monoidal_span_subcats}
\[\Phi_x: \spanEPI[A]\to \varintcat{C}^{\otimes}|_{\spanEPI}.\]

\subsection{Construction of \texorpdfstring{$\Psi$}{Psi}}\label{subsec:Psi}

We are left to construct the $\subuniverse$-monoidal functor 
\[\Psi:\varintcat{C}^{\otimes}|_{\spanEPI}\to \varintcat{C}\] 
as detailed in \Cref{step:construction_of_the_tensoring_functor}.

Let us first consider the case where $\varintcat{C}$ is $\subuniverse^{\full}$-complete and its monoidal structure is cartesian. In this case, we have constructed in \Cref{eq:construction_of_mu} a functor 
\[\mu:\varintcat{C}^{\otimes}\to \varintcat{C}\]
which is $\subuniverse^{\full}$-continuous.\footnote{We did not show that $\mu$ is $\subuniverse^{\full}$-continuous nor do we require this fact, but it is true.} In this case, $\Psi$ will be the restriction of $\mu$:
\[\varintcat{C}^{\otimes}|_{\spanEPI}\to \varintcat{C}^{\otimes} \xto{\mu} \varintcat{C}.\]

We proceed to construct $\Psi$ in general, using the $2$-categorical methods developed in \Cref{sec:2_cats}.

\subsubsection*{Reduction to a lax natural transformation}

Let 
\[F:\spanEfE\to \internalcatofcats\]
be the functor encoding the $\subuniverse$-monoidal structure on $\varintcat{C}$. 

Recall that we showed in \Cref{prop:lax_E_monoidal_unst} that given a lax $\subuniverse$-monoidal functor \[H:\varintcat{T}\to \internalcatofcats,\] the unstraightening $\intunst(H)$ is an $\subuniverse$-monoidal $\vartopos$-category. In \Cref{ex:lax_unstraightening_recovers_monoidal_structure}, we also showed that the $\subuniverse$-monoidal structure on the unstraightening of the composition
\[
\explicitset{*}\into \spanEfE \xto{F} \internalcatofcats
\]
is again $\varintcat{C}$ with its original $\subuniverse$-monoidal structure.

Let us denote 
\[F':\spanEPI\into \spanEfE\xto{F} \internalcatofcats. \]
Then $\intunst(F')\simeq \varintcat{C}^{\otimes}|_{\spanEPI}$ as an $\subuniverse$-monoidal $\vartopos$-category. 

Let us denote the composition \[G:\spanEPI\into \spanEfE\to \explicitset{*}\into \spanEfE \xto{F} \internalcatofcats.\] Then we have a canonical $\subuniverse$-monoidal equivalence $\intunst(G)\simeq \spanEPI\times \varintcat{C}$.
Our strategy to construct $\Psi$ is to construct a lax $\subuniverse$-monoidal lax natural transformation
\[\alpha:F'\Rightarrow G:\spanEPI\to \internalcatofcats.\]
We will then use \Cref{prop:unst_of_lax_E_monoidal_lax_natural_transformation}, which will show that \[\intunst(\alpha):\intunst(F')\to \intunst(G)\]
is lax $\subuniverse$-monoidal.

Lastly, we will show that $\intunst(\alpha)$ is $\subuniverse$-monoidal.

\begin{proposition}
    Suppose $\varintcat{C}$ is a $\subuniverse[P]$-cartesian $\subuniverse$-monoidal $\vartopos$-category. Then the composition of $\subuniverse$-monoidal functors
    \[\subuniverse\into \spanEfE \to \internaltwocatofcats\] is $\subuniverse_{\subuniverse[P]}$-left adjointable.
\end{proposition}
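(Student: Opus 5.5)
Unwinding the statement, the functor $\subuniverse\into\spanEfE\xto{F}\internalcatofcats\into\internaltwocatofcats$ sends $[B\to A]$ to $\varintcat{C}^B$ and a morphism $s:C\to B$ in $\subuniverse(A)$ to $s_\otimes:\varintcat{C}^C\to\varintcat{C}^B$, since $\subuniverse$ embeds into $\spanEfE$ via the right-pointing maps. Being $\subuniverse_{\subuniverse[P]}$-left adjointable means two things, contextwise and stably under change of context. First, for every $s$ in $\subuniverse_{\subuniverse[P]}(A)$, the functor $s_\otimes$ must admit an internal left adjoint. Second, every pullback square in $\subuniverse(A)$ of such an $s$ along a morphism of $\subuniverse$ must be sent to a horizontally left adjointable square. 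The plan is to get both from the $\subuniverse[P]$-cartesian hypothesis via the norm identification.

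Step 1 (adjoints). By \Cref{prop:inductive_amby_iff_ambi}, since $\varintcat{C}$ is $\subuniverse[P]$-cartesian, the inductive norm $N_s:s_\otimes\xto{\sim}s_*$ is an equivalence for every $s\in\subuniverse_{\subuniverse[P]}$, in every context. Here $s_*$ is right adjoint to the restriction $s^*:\varintcat{C}^B\to\varintcat{C}^C$. Hence $s^*\dashv s_*\simeq s_\otimes$, so $s^*$ is the desired internal left adjoint of $s_\otimes$. Internality, meaning compatibility with change of context, holds because the norm maps are defined contextwise and are compatible with restriction, and because internal adjunctions are local.

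Step 2 (Beck--Chevalley). Take a pullback square in $\subuniverse(A)$
\[
\begin{tikzcd}
C'\ar[r,"s'"]\ar[d,"g'"'] & B'\ar[d,"g"]\\
C\ar[r,"s"'] & B
\end{tikzcd}
\]
with $s\in\subuniverse_{\subuniverse[P]}$ and $g\in\subuniverse$. Its image is the square with horizontal maps $s_\otimes,s'_\otimes$ and vertical maps $g_\otimes,g'_\otimes$. We must show that the mate $s'^*g_\otimes\to g'_\otimes s^*$ is an equivalence. Through the norm equivalences this becomes a comparison between $g_\otimes$ and restriction along the square. I would argue as follows. The functor $F$ sends the span $C\xfrom{g'}C'=C'$ composed with $C'\xto{s'}B'$ to $g_\otimes s^*$-type composites, and functoriality of $F$ on $\spanEfE$ gives the base-change equivalence $s^*g_\otimes\simeq g'_\otimes s'^*$ for the pullback square. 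The Beck--Chevalley map should then be identified with this equivalence, or with its mate. Concretely, one checks that $g_\otimes$ commutes with $\subuniverse[P]$-indexed limits, using \Cref{prop:C_is_cartesian_iff_tensor_preserves_limits} (and \Cref{cor:critiria_for_amby}), applied contextwise after viewing $s$ in the context $B$.

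The main obstacle is making Step 2 precise. We must show that the Beck--Chevalley transformation built from the units and counits of $s^*\dashv s_\otimes$ (transported along $N_s$) agrees with the span-functoriality isomorphism. I would do this by induction on the truncation level of $s$, mirroring the definition of $N_s$. For an equivalence $s$ the statement is trivial. In the inductive step, naturality of $N$ under the monoidal maps $g_\otimes$ reduces the claim to $\Delta_s$, which is of lower truncation. If this becomes unwieldy, the fallback is the criterion of \Cref{prop:C_is_cartesian_iff_tensor_preserves_limits}: $g_\otimes$ is $\subuniverse[P]$-continuous, which is exactly the statement that the mate for the limit adjunction is invertible.
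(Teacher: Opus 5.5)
Your main line is the paper's argument. The left adjoint $s^*$ of $s_\otimes$ comes from the norm equivalence $s_\otimes\simeq s_*$, and Beck--Chevalley comes from identifying the mate $s^*g_\otimes\to g'_\otimes s'^*$ with the base-change isomorphism supplied by the span functoriality of $F$. (Your first formula for this mate has the primes misplaced.) The paper only asserts that identification; your induction on truncation level, or the appropriate dual of the uniqueness statement in \Cref{prop:cov_unfurling}, would make it precise.

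Your continuity route does not bypass that identification, for two reasons. First, \Cref{cor:critiria_for_amby} only gives $\subuniverse[P]$-continuity of $p_\otimes$ for $p\in\subuniverse_{\subuniverse[P]}$, not for an arbitrary $g\in\subuniverse$. Second, even granting continuity of $g_\otimes$, it inverts the mate $g_\otimes s'_*\to s_*g'_\otimes$ of the restriction square $s^*g_\otimes\simeq g'_\otimes s'^*$. That is a different $2$-cell from the one the statement requires, namely the mate of the composition isomorphism $g_\otimes s'_\otimes\simeq s_\otimes g'_\otimes$.
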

\begin{proof}
    Let $[p:B\to A]\in \subuniverse_{\subuniverse[P]}(A)$.
    The existence of the left adjoints $p^*$ to \[p_*\simeq p_\otimes: \varintcat{C}^{B}\to \varintcat{C}^A\] is immediate from the definition. The adjointability follows from the fact that the $\subuniverse$-monoidal structure of $\varintcat{C}$ is indexed by $\spanEfE$ and the declared base-change maps in that functoriality agree with the Beck-Chevalley transformations. 
\end{proof}

\begin{corollary}\label{cor:extending_the_alg_structure_of_C_to_backwards_facing_morphisms}
    The $\subuniverse$-monoidal composition 
    \[\subuniverse\into \spanEfE \to \internaltwocatofcats\]
    extends to an $\subuniverse$-monoidal functor of $\vartopos$-$2$-categories:
    \[\intspanhalf(\subuniverse,\subuniverse_{\subuniverse[P]},\subuniverse)\to \internaltwocatofcats.\]
\end{corollary}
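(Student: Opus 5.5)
The plan is to apply the $\subuniverse$-monoidal universal property of span $\vartopos$-$2$-categories, \Cref{prop:univ_prop_of_E_monoidal_span_half}, in its left-adjointable variant obtained by passing to opposites. The relevant span pair is $(\subuniverse,\subuniverse_{\subuniverse[P]})$, with $\subuniverse$ carrying its cocartesian-restricted $\subuniverse$-monoidal structure, so that it is an $\subuniverse$-monoidal span pair. Under the conventions fixed after \Cref{prop:univ_prop_of_span_half}, $\intspanhalf(\subuniverse,\subuniverse_{\subuniverse[P]},\subuniverse)=\intspanhalf(\subuniverse,\subuniverse_{\subuniverse[P]})^{\op}$ corepresents $\subuniverse_{\subuniverse[P]}$-left adjointable functors out of $\subuniverse$. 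It then suffices to exhibit the composite $\subuniverse\into\spanEfE\to\internaltwocatofcats$ as an $\subuniverse$-monoidal, $\subuniverse_{\subuniverse[P]}$-left adjointable functor in the monoidal sense defined before \Cref{prop:univ_prop_of_E_monoidal_span_half}. The extension is then the functor corresponding to it under that equivalence.

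The first thing to check is that $(\subuniverse,\subuniverse_{\subuniverse[P]})$ really is an $\subuniverse$-monoidal span pair. Stability of $\subuniverse_{\subuniverse[P]}$ under base change inside $\subuniverse$ follows from independence of context together with the base-change stability of the local class underlying $\subuniverse[P]$. Pullbacks along maps in $\subuniverse$ exist because they are computed in $\universe$. Finally, the tensor functors $s_\otimes=s_\sharp$ of $\equipcocart(\subuniverse^{\full})$ preserve these pullbacks and the marked class, since they are forgetful functors of slices and membership in $\subuniverse[P]$ is context-independent, exactly as in the proof that $\subuniverse_{\subuniverse[I]}[A]$ is an $\subuniverse$-monoidal subcategory.

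The second step is left adjointability. The monoidal notion requires adjointability not only in each context but also after applying $(-)^B$ for every $B\in\spanEfE(A)$. For this I use that $\varintcat{C}^B$ is again $\subuniverse[P]$-cartesian, since it is a limit of $\subuniverse[P]$-cartesian categories; this reduces the claim to the preceding proposition applied to $\pi_A^*\varintcat{C}$ and its cotensors. So for $p\in\subuniverse_{\subuniverse[P]}$, the right-pointing image $p_\otimes\simeq p_*$ admits the left adjoint $p^*$. The Beck--Chevalley condition along pullback squares holds because the span functoriality of $\varintcat{C}$ supplies base-change equivalences $q^*p_\otimes\simeq p'_\otimes q'^*$, and these agree with the mates.

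I expect the main obstacle to be that last identification: one has to verify that the equivalence coming from $\spanEfE$-functoriality coincides with the Beck--Chevalley transformation built from units and counits of $p^*\dashv p_*$. My approach would be to reduce to the cartesian case, using that on the $\subuniverse[P]$-part the structure is unfurled by \Cref{thm:internal_uniqueness_of_unfurling}. In the unfurled structure, this compatibility holds by construction of unfurling, as in \Cref{const:internal_unfurling}.
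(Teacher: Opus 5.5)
Your outer structure matches the paper's: the proposition just before the corollary supplies $\subuniverse_{\subuniverse[P]}$-left adjointability, and the left-adjointable variant of \Cref{prop:univ_prop_of_E_monoidal_span_half} then produces the extension. The gap is in the step you yourself flag as the main obstacle.

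The Beck--Chevalley squares required for a functor out of $\subuniverse$ to be $\subuniverse_{\subuniverse[P]}$-left adjointable are pullbacks of $p\colon C\to B$ in $\subuniverse_{\subuniverse[P]}$ along an \emph{arbitrary} $g\colon B'\to B$ in $\subuniverse$. The mate to be inverted is $p^*g_\otimes\Rightarrow g'_\otimes p'^*$, where $g_\otimes$ is a general, non-cartesian tensor product. Restricting to the $\subuniverse[P]$-part, i.e.\ to $\intspan(\subuniverse^{\full},\subuniverse_{\subuniverse[P]})$, and applying \Cref{thm:internal_uniqueness_of_unfurling} there only controls spans whose forward legs lie in $\subuniverse_{\subuniverse[P]}$. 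The equivalence $p^*g_\otimes\simeq g'_\otimes p'^*$ comes from composing spans in $\spanEfE$ with forward leg $g$, which that restriction does not see. So ``reducing to the cartesian case'' says nothing about these squares. Reading the compatibility off a covariant unfurling of $\subuniverse\to\internalcatofcats$ instead would be circular, since unfurling needs adjointable input.

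What works is the other half of the uniqueness theorem, applied to the right span category. Restrict $F$ along $\intspan(\subuniverse,\subuniverse_{\subuniverse[P]},\subuniverse)\into\spanEfE$, whose right-pointing maps are all of $\subuniverse$. This restriction is $\subuniverse_{\subuniverse[P]}$-adjointed in the covariant sense of \Cref{prop:cov_unfurling}, dualized by $(-)^{\op}$ because here $p^*$ is a \emph{left} adjoint of $p_\otimes$. Indeed, the norm maps $p_\otimes\to p_*$ are built from $\Delta$, $\pi_1$, $\pi_2$, which all lie in this span category, and they are equivalences because $\varintcat{C}$ is $\subuniverse[P]$-cartesian.

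\Cref{thm:uniqueness_of_unfurling} asserts that restriction sends adjointed functors to adjointable ones. Hence the restriction to $\subuniverse$ is $\subuniverse_{\subuniverse[P]}$-left adjointable, i.e.\ the mates $p^*g_\otimes\Rightarrow g'_\otimes p'^*$ are invertible for exactly the squares above.

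This needs $\subuniverse_{\subuniverse[P]}$ to be left cancellable, which follows from fullness of $\subuniverse[P]$. It also needs $\subuniverse_{\subuniverse[P]}$ to be truncated in the context $A$ rather than only over the target, and that is what \Cref{assumption:global_trunc_bound} provides (cf.\ \Cref{remark:colimit_of_n_truncated}). The paper makes this same application a little later, in identifying $s\circ T$ with $\widetilde{F}$.
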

\begin{proof}
    This is a direct application of \Cref{prop:univ_prop_of_E_monoidal_span_half}.
\end{proof}

\subsubsection*{The monoidal structure on the oplax slice}

Recall that we wish to construct a lax $\subuniverse$-monoidal lax natural transformation $\alpha:F'\to G:\spanEPI\to \internaltwocatofcats$. By \Cref{cor:extending_the_alg_structure_of_C_to_backwards_facing_morphisms}, it is enough to construct a lax $\subuniverse$-monoidal lax natural transformation $\alpha':F''\to G':\spanEPI\to \intspanhalf(\subuniverse,\subuniverse_{\subuniverse[P]},\subuniverse)$. 
Here $F''$ is the inclusion and $G'$ is the constant functor at $*$. Such a transformation is encoded by a functor
\[
\spanEPI \to \inttwofun^{\oplax}(\Delta^1, \intspanhalf(\subuniverse,\subuniverse_{\subuniverse[P]},\subuniverse)).
\]
We then compose with the functor induced by \Cref{cor:extending_the_alg_structure_of_C_to_backwards_facing_morphisms}:
\[
\inttwofun^{\oplax}(\Delta^1, \intspanhalf(\subuniverse,\subuniverse_{\subuniverse[P]},\subuniverse)) \to \inttwofun^{\oplax}(\Delta^1, \internaltwocatofcats).
\]

Consider the following pullback in $\twocatoftwocats(\vartopos)$:
\[\begin{tikzcd}
    \intspanhalf(\subuniverse,\subuniverse_{\subuniverse[P]},\subuniverse)_{//*}\ar[d]\ar[r]\pullbackdr
    &\inttwofun^{\oplax}(\Delta^1, \intspanhalf(\subuniverse,\subuniverse_{\subuniverse[P]},\subuniverse))\ar[d,"t"]\\
    \explicitset{*}\ar[r]
    & \intspanhalf(\subuniverse,\subuniverse_{\subuniverse[P]},\subuniverse).
\end{tikzcd}\]

We may apply $(-)^{\simeq2}$ to the diagram above:

\[\begin{tikzcd}
    (\intspanhalf(\subuniverse,\subuniverse_{\subuniverse[P]},\subuniverse)_{//*})^{\simeq 2}\ar[d]\ar[r]\pullbackdr
    &\inttwofun^{\oplax}(\Delta^1, \intspanhalf(\subuniverse,\subuniverse_{\subuniverse[P]},\subuniverse))^{\simeq2}\ar[d,"t"]\\
    \explicitset{*}\ar[r]
    & \intspan(\subuniverse,\subuniverse_{\subuniverse[P]},\subuniverse).
\end{tikzcd}\]

The functor $t$ is $\subuniverse$-monoidal. The lower horizontal arrow is lax $\subuniverse$-monoidal.

\begin{proposition}
    The upper morphism in the pullback above is lax $\subuniverse$-monoidal. 
\end{proposition}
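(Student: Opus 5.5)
The plan is to realize the upper morphism as a base change of a morphism of $\subuniverse$-operads, and then check that the base change is legitimate. Lax $\subuniverse$-monoidal functors are morphisms of associated operads (\Cref{def:lax_E_monoidal_functors}). Moreover, $(-)^\otimes$ is a right adjoint by \Cref{thm:env_associated_operad_adjunction}, so it preserves pullbacks. Applying $(-)^\otimes$ to the bottom row and right column of the square gives
\[
\explicitset{*}^\otimes\to\intspan(\subuniverse,\subuniverse_{\subuniverse[P]},\subuniverse)^\otimes
\quad\text{and}\quad
t^\otimes\colon\bigl(\inttwofun^{\oplax}(\Delta^1,\intspanhalf(\subuniverse,\subuniverse_{\subuniverse[P]},\subuniverse))^{\simeq2}\bigr)^\otimes\to\intspan(\subuniverse,\subuniverse_{\subuniverse[P]},\subuniverse)^\otimes .
\]
The bottom arrow is not $\explicitset{*}^\otimes$ itself. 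It is the composite $\spanEfE\xto{r}\subuniverse^\otimes\to\intspan(\subuniverse,\subuniverse_{\subuniverse[P]},\subuniverse)^\otimes$, which is the lax structure $\tau$ from the construction of the canonical lax monoidal functor.

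I would define the candidate operad $\varintcat{Q}$ as the pullback in $\intpreoperads$ of $t^\otimes$ along this lax structure map:
\[
\varintcat{Q}:=\spanEfE\times_{\intspan(\subuniverse,\subuniverse_{\subuniverse[P]},\subuniverse)^\otimes}\bigl(\inttwofun^{\oplax}(\Delta^1,\intspanhalf(\subuniverse,\subuniverse_{\subuniverse[P]},\subuniverse))^{\simeq2}\bigr)^\otimes.
\]
Its projection to the second factor is then tautologically a morphism of preoperads. To see that $\varintcat{Q}$ is a preoperad, I note that $t^\otimes$ preserves cocartesian lifts of left-pointing arrows, because $t$ is $\subuniverse$-monoidal. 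Pullbacks of such maps along morphisms of preoperads admit the needed cocartesian lifts, by the same argument as for the envelope.

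Two things remain. First, $\varintcat{Q}$ should be an $\subuniverse$-operad. Second, its underlying $\vartopos$-category $\varintcat{Q}\times_{\spanEfE}\explicitset{*}$ should be identified with $(\intspanhalf(\subuniverse,\subuniverse_{\subuniverse[P]},\subuniverse)_{//*})^{\simeq2}$. The second point follows from pasting pullbacks, since the fibre of $\tau$ over $*$ is the inclusion of $*$.

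For the first point I would follow \Cref{ex:lax_unstraightening_recovers_monoidal_structure} and \Cref{prop:lax_E_monoidal_unst}, and reduce to the universal case. By \Cref{thm:C_otimes_res_to_E_is_envelope}, pulling $(\varintcat{D}^{\otimes})^\times$ back along $\subuniverse^\otimes\to\spanEfE^\times$ gives $\intenv(\varintcat{D}^\otimes)^\otimes$. Pulling further along $r$ then recovers $\varintcat{D}^\otimes$ by \Cref{prop:eta_is_a_morphism_of_preoperads}. Here $\varintcat{D}$ denotes the fibre of $t$, regarded as the unstraightening of the $\subuniverse$-monoidal functor classified by $t$ restricted along $*$. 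So $\varintcat{Q}$ is the associated operad of an $\subuniverse$-monoidal category, and in particular an operad, and the upper morphism is the induced morphism of operads $\varintcat{Q}\to(\cdots)^\otimes$.

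The main obstacle is this identification. \Cref{thm:C_otimes_res_to_E_is_envelope} is phrased for the associated operad of an $\subuniverse$-monoidal category $\varintcat{C}$ with $\varintcat{C}^\otimes\to\spanEfE$. Here the base is $\intspan(\subuniverse,\subuniverse_{\subuniverse[P]},\subuniverse)$ rather than $\spanEfE$, and the total space is only known to be a cocartesian fibration, not the unstraightening of some $F$. I would first show that $t$, restricted to cores, is a cocartesian fibration by the lemma that cores of $1$-cocartesian fibrations are cocartesian fibrations. It is also $\subuniverse^{\full}$-continuous, using the completeness of oplax arrow categories and \Cref{thm:univ_unst_is_monoidal}. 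Together these make $t$ the unstraightening of an $\subuniverse$-monoidal functor $\intspan(\subuniverse,\subuniverse_{\subuniverse[P]},\subuniverse)\to\internalcatofcats$, where the target carries its cartesian structure. \Cref{cor:unst_of_monoidal_functor_is_monoidal} together with the argument of \Cref{prop:lax_E_monoidal_unst}, applied with $\intspan(\subuniverse,\subuniverse_{\subuniverse[P]},\subuniverse)$ in place of $\spanEfE$, then completes the argument.
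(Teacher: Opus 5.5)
There is a genuine gap in your last paragraph, which is where the real work happens. You assert that $t$ (on cores) is $\subuniverse^{\full}$-continuous, and hence the unstraightening of a \emph{strong} $\subuniverse$-monoidal functor $H\colon\intspan(\subuniverse,\subuniverse_{\subuniverse[P]},\subuniverse)\to\internalcatofcats$ into the cartesian structure, so that \Cref{cor:unst_of_monoidal_functor_is_monoidal} applies. This is false in general. The relevant $\subuniverse$-monoidal structure on $\intspan(\subuniverse,\subuniverse_{\subuniverse[P]},\subuniverse)$ forgets context and is not the cartesian one. With respect to it, the straightening of $t$ is only lax. Take $p\colon A'\to A$ in $\subuniverse(A)$ and $Y$ in context $A'$. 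The comparison from $p_*$ of the fibre over $Y$ to the fibre over $p_\otimes Y$ sends $X'\leftarrow F\to Y$ to $p_\otimes X'\leftarrow F\to Y$. So it misses every span $X\leftarrow F\to Y$ whose source $X\to A$ does not factor through $A'$. Concretely, for $\vartopos=\catofanima$ and $\subuniverse=\subuniverse[P]=\varintcat{Fin}$, the base is $\Span(\Fin)$ with $\sqcup$ as biproduct. A span $*\leftarrow F\to Y_1\sqcup Y_2$ with both preimages $F\times_{Y_1\sqcup Y_2}Y_i$ nonempty is not of the form $X_1\sqcup X_2\leftarrow F_1\sqcup F_2\to Y_1\sqcup Y_2$. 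Hence you cannot identify $t^\otimes$ with a base change of $(\internalcatofcats_{*//})^\times\to\internalcatofcats^\times$ along a monoidal functor. For a lax $H$ this would require a monoidal straightening comparison that the paper does not provide. Your preceding paragraph has the same problem in circular form: it presupposes an $\subuniverse$-monoidal structure on the fibre $\varintcat{D}$, which is exactly what you are constructing.

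The missing idea is that no comparison with the universal fibration is needed. Note that $t$ is a cocartesian fibration on cores and is $\subuniverse$-monoidal. Its cocartesian edges are preserved by the indexed tensor products, which act pointwise on oplax arrows. Hence the morphism of operads $t^\otimes$ is itself a cocartesian fibration. Its base change $\varintcat{Q}\to\spanEfE$ along the lax structure map $\spanEfE\to\intspan(\subuniverse,\subuniverse_{\subuniverse[P]},\subuniverse)^\otimes$ of the unit is therefore a cocartesian fibration. Its fibre over $B$ is the fibre of $t^B$ over the unit $*_B$, a power of the fibre of $t$ over $*$. Since $\varintcat{Q}$ is also a pullback of operads, it is the operad associated with an $\subuniverse$-monoidal structure on $(\intspanhalf(\subuniverse,\subuniverse_{\subuniverse[P]},\subuniverse)_{//*})^{\simeq2}$. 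The projection to $(\inttwofun^{\oplax}(\Delta^1,\intspanhalf(\subuniverse,\subuniverse_{\subuniverse[P]},\subuniverse))^{\simeq2})^\otimes$ is then the required morphism of operads. This is the paper's argument. Your first two paragraphs, forming $\varintcat{Q}$ as a pullback of preoperads and identifying its fibre by pasting, fit into it unchanged.
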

\begin{proof}
    Consider the following pullback in $\intoperads^{\subuniverse}$:
    \[\begin{tikzcd}
        \varintcat{P}\ar[d]\ar[r]\pullbackdr
        &(\inttwofun^{\oplax}(\Delta^1, \intspanhalf(\subuniverse,\subuniverse_{\subuniverse[P]},\subuniverse))^{\simeq2})^{\otimes}\ar[d,"t^{\otimes}"]\\
        \spanEfE\ar[r]
        & \intspan(\subuniverse,\subuniverse_{\subuniverse[P]},\subuniverse)^{\otimes},
    \end{tikzcd}\]
    where the right and bottom arrows classify the lax $\subuniverse$-monoidal functors on the right and bottom of the previous square.

    The functor $t$ in the previous square is a cocartesian fibration. Therefore, the right functor $t^\otimes$ of the diagram directly above is a cocartesian fibration. Therefore, the left functor in the diagram above is a cocartesian fibration. Thus, the operad $\varintcat{P}$ is associated to an $\subuniverse$-monoidal structure on $(\intspanhalf(\subuniverse,\subuniverse_{\subuniverse[P]},\subuniverse)_{//*})^{\simeq 2}$ and the upper morphism is a morphism of operads encoding the promised lax $\subuniverse$-monoidal structure.
\end{proof}

Let us describe the $\subuniverse$-monoidal structure of $(\intspanhalf(\subuniverse,\subuniverse_{\subuniverse[P]},\subuniverse)_{//*})^{\simeq 2}$.

An object of $(\intspanhalf(\subuniverse,\subuniverse_{\subuniverse[P]},\subuniverse)_{//*})^{\simeq 2}(A)$ is a span in $(\subuniverse,\subuniverse_{\subuniverse[P]},\subuniverse)(A)$:
\[B\from B' \to A.\]
A morphism is a diagram of the form

\[\begin{tikzcd}
	{B} && {F} && C \\
	&&& {F\times_CC'} \\
	{B'} &&&& C' \\
	& {B'} \\
	{A} && {A} && {A}.
	\arrow[from=1-3, to=1-1]
	\arrow[tail,from=1-3, to=1-5]
	\arrow[from=2-4, to=1-3]
	\arrow["\lrcorner"{anchor=center, pos=0.125, rotate=90}, draw=none, from=2-4, to=1-5]
	\arrow[tail, from=2-4, to=3-5]
	\arrow[from=3-1, to=1-1]
	\arrow[from=3-1, to=5-1]
	\arrow[from=3-5, to=1-5]
	\arrow[tail,from=3-5, to=5-5]
    \arrow[from=2-4, to=4-2]
	\arrow[from=4-2, to=3-1]
	\arrow["\lrcorner"{anchor=center, pos=0.125, rotate=-90}, draw=none, from=4-2, to=5-1]
	\arrow[tail,from=4-2, to=5-3]
	\arrow[equal,from=5-3, to=5-1]
	\arrow[equal,from=5-3, to=5-5]
\end{tikzcd}\]
Let $s:A'\to A\in \subuniverse(A)$. Then $s_\otimes$ takes the object $B\from B' \to A'$ to the object 
\[B\from B'\to A'\xto{s} A.\]
The action on morphisms is likewise described.

\begin{proposition}\label{prop:oplax_slice_source_is_monoidal}
    The functor 
    \[(\intspanhalf(\subuniverse,\subuniverse_{\subuniverse[P]},\subuniverse)_{//*})^{\simeq 2}\xto{s} \intspan(\subuniverse,\subuniverse_{\subuniverse[P]},\subuniverse)\]
    is $\subuniverse$-monoidal.
\end{proposition}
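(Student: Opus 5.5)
The plan is to model the argument on \Cref{prop:description_of_mu_for_span} and \Cref{cor:s_and_mu_compatibility_diagram}, where the analogous source functor on $\varintcat{S}$ was identified with the identity of $\spanEfE$. Throughout, write $\varintcat{O}:=(\intspanhalf(\subuniverse,\subuniverse_{\subuniverse[P]},\subuniverse)_{//*})^{\simeq 2}$. Since $\varintcat{O}$ is defined as a pullback of operads in which the left leg $\varintcat{P}\to\spanEfE$ is a cocartesian fibration, its $\subuniverse$-monoidal structure is computed by cocartesian lifts. As described just before the statement, these lifts are given by postcomposition: $s_\otimes$ sends $B\from B'\to A'$ to $B\from B'\to A'\xto{s}A$.

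First, I would write down the functor $s$ at the level of operads. The source functor $\inttwofun^{\oplax}(\Delta^1,-)\to(-)$ is already $\subuniverse$-monoidal for the structure induced by the $\subuniverse^{\full}$-continuous endofunctor $\inttwofun^{\oplax}(\Delta^1,-)$. Composing with $\varintcat{P}\to(\inttwofun^{\oplax}(\Delta^1,\intspanhalf)^{\simeq2})^\otimes$ therefore gives a morphism of $\subuniverse$-operads
\[
\varintcat{O}^\otimes\simeq\varintcat{P}\to \intspan(\subuniverse,\subuniverse_{\subuniverse[P]},\subuniverse)^\otimes.
\]
This shows that $s$ is at least lax $\subuniverse$-monoidal.

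Second, I would check that this operad morphism preserves cocartesian lifts of all morphisms of $\spanEfE$, and not only of the left-pointing ones. Because $\varintcat{P}\to\spanEfE$ is a cocartesian fibration, it is enough to inspect the explicit cocartesian lifts. Over a forward map $s\colon A'\to A$, the lift sends $(B\from B'\to A')$ to $(B\from B'\to A)$, and its image under the source map is the identity-type morphism $B\mapsto B$ regarded over $s$. This is exactly the cocartesian lift in $\intspan(\subuniverse,\subuniverse_{\subuniverse[P]},\subuniverse)^\otimes$ for the $\subuniverse$-monoidal structure given by change of context, in the sense of \Cref{prop:E_monoidal_span_cats}. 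Backward maps are handled the same way using pullbacks.

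The step I expect to be the real obstacle is making this comparison coherent rather than checking it pointwise. To handle it, I would follow the argument of \Cref{prop:span_operad_as_spans_of_arrows}: embed both operads into spans of arrows in $\varintcat{D}=\intpresh(\subuniverse^{\full})$. Under this embedding, $s$ becomes the source functor on arrows, and the cocartesian lifts on both sides are described by the same pullback squares. Preservation then reduces to the pasting lemma, checked contextwise on the anima of objects and of morphisms, with compatibility under change of context coming from the fact that restriction preserves pullbacks.
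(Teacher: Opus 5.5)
Your proposal is correct and is essentially the paper's argument. Lax monoidality comes from composing the lax functor into $\inttwofun^{\oplax}(\Delta^1,\intspanhalf(\subuniverse,\subuniverse_{\subuniverse[P]},\subuniverse))^{\simeq2}$ with the monoidal source functor, and strongness follows because, for $p\colon A'\to A$ in $\subuniverse(A)$, the comparison map $p_\otimes(s(x))\to s(p_\otimes(x))$ (equivalently, the image of the cocartesian lift over $p$) has the identity span $B=B=B$ as its source component.

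Your final paragraph is unnecessary. Preserving cocartesian lifts is a property of a morphism of $\subuniverse$-operads that is already coherent, and it can be checked edge by edge, so no coherence issue arises. The detour through spans of arrows in $\intpresh(\subuniverse^{\full})$ can simply be dropped; the paper's model of that kind describes $\intspantwo(\varintcat{D})_{*//}$, not this slice.
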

\begin{proof}
    We first show that the functor $s$ above is lax $\subuniverse$-monoidal.
    This functor is the composition of a lax $\subuniverse$-monoidal functor with an $\subuniverse$-monoidal functor:
    \[(\intspanhalf(\subuniverse,\subuniverse_{\subuniverse[P]},\subuniverse)_{//*})^{\simeq 2}\xto{a} \inttwofun^{\oplax}(\Delta^1, \intspanhalf(\subuniverse,\subuniverse_{\subuniverse[P]},\subuniverse))^{\simeq2}\xto{s} \intspan(\subuniverse,\subuniverse_{\subuniverse[P]},\subuniverse).\]

    Next, we need to check that the composition is actually $\subuniverse$-monoidal. Let 
    \[x = (B\from B' \to A')\]
    be an object in $(\intspanhalf(\subuniverse,\subuniverse_{\subuniverse[P]},\subuniverse)_{//*})^{\simeq 2}(A')$. Let $p:A'\to A\in \subuniverse(A)$. We must check that $p_\otimes(s(x)) \to  s(p_\otimes (x))$ is an equivalence. 
    
    We first write the comparison map for $p_\otimes(a(x)) \to  a(p_\otimes (x))$ in 
    $\inttwofun^{\oplax}(\Delta^1, \intspanhalf(\subuniverse,\subuniverse_{\subuniverse[P]},\subuniverse))^{\simeq2}(A)$:
    \[\begin{tikzcd}
    	{B} && {B} && B \\
    	&&& {B'} \\
    	{B'} &&&& B' \\
    	& {B'} \\
    	{A'} && {A'} && {A}.
    	\arrow[equal,from=1-3, to=1-1]
    	\arrow[equal,from=1-3, to=1-5]
    	\arrow[from=2-4, to=1-3]
    	\arrow["\lrcorner"{anchor=center, pos=0.125, rotate=90}, draw=none, from=2-4, to=1-5]
    	\arrow[equal,from=2-4, to=3-5]
    	\arrow[from=3-1, to=1-1]
    	\arrow[from=3-1, to=5-1]
    	\arrow[from=3-5, to=1-5]
    	\arrow[from=3-5, to=5-5]
        \arrow[equal,from=2-4, to=4-2]
    	\arrow[equal,from=4-2, to=3-1]
    	\arrow["\lrcorner"{anchor=center, pos=0.125, rotate=-90}, draw=none, from=4-2, to=5-1]
    	\arrow[from=4-2, to=5-3]
    	\arrow[equal,from=5-3, to=5-1]
    	\arrow[from=5-3, to=5-5]
    \end{tikzcd}\]
    This is sent by $s$ to the span in the upper edge, which is clearly an isomorphism. 
\end{proof}

We let $\varintcat{S}\subset (\intspanhalf(\subuniverse,\subuniverse_{\subuniverse[P]},\subuniverse)_{//*})^{\simeq 2}$ denote the full $\vartopos$-subcategory spanned by objects where the left-facing morphism is an isomorphism (this $\vartopos$-category has a similar description to the category appearing in \Cref{prop:E_cart_S_is_equivalent_to_span}).

\begin{proposition}\label{prop:oplax_slice_S_is_monoidal}
    $\varintcat{S}$ is an $\subuniverse$-monoidal $\vartopos$-subcategory.
\end{proposition}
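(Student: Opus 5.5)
The plan is to verify the defining condition of an $\subuniverse$-monoidal $\vartopos$-subcategory directly, using the explicit description of the $\subuniverse$-monoidal structure on $(\intspanhalf(\subuniverse,\subuniverse_{\subuniverse[P]},\subuniverse)_{//*})^{\simeq 2}$ given just before \Cref{prop:oplax_slice_source_is_monoidal}. Since $\varintcat{S}$ is a full $\vartopos$-subcategory, closure under the indexed tensor functors on morphisms reduces to closure on objects. The first step is therefore to show that for every $[p:A'\to A]\in\subuniverse(A)$ and every object $x=(B\from B'\to A')$ of $\varintcat{S}(A')$, i.e.\ with $B'\to B$ an isomorphism, the object $p_\otimes(x)$ again lies in $\varintcat{S}(A)$.

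The second step is to compute $p_\otimes(x)$. By the description above it is $B\from B'\to A'\xto{p}A$: the left-facing leg is unchanged and only the right-facing leg is postcomposed with $p$. So if $B'\to B$ is an isomorphism, the same holds for $p_\otimes(x)$. Since morphisms between objects of $\varintcat{S}$ are all morphisms of the ambient category and are sent by $p_\otimes$ to morphisms between objects of $\varintcat{S}$, the condition of being an $\subuniverse$-monoidal subcategory holds, and \Cref{prop:monoidal_subcategories_are_monoidal} then equips $\varintcat{S}$ with the induced structure. We also need stability under restriction along changes of context, but that is automatic because the condition that the left leg be invertible is preserved by pullback.

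The main obstacle is justifying that informal description of $p_\otimes$ on objects, since the structure on the oplax slice was produced abstractly from the cocartesian pullback of operads. To handle this, I would compute $p_\otimes(x)$ as the target of a cocartesian lift in $\varintcat{P}\to\spanEfE$. That lift is built from a cocartesian lift along $t^\otimes$ in $(\inttwofun^{\oplax}(\Delta^1,\intspanhalf(\subuniverse,\subuniverse_{\subuniverse[P]},\subuniverse))^{\simeq2})^\otimes$, where the tensor is computed pointwise in $\Delta^1$ by the structure on $\intspanhalf(\subuniverse,\subuniverse_{\subuniverse[P]},\subuniverse)$ (colimit along $p$, i.e.\ composition with $p$). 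This is the same computation as in the proof of \Cref{prop:oplax_slice_source_is_monoidal}, which already shows that the source leg $B\from B'$ is carried over unchanged up to the canonical equivalence. Granting that comparison, invertibility of the left leg is preserved, which completes the argument.
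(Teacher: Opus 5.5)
Your proposal is correct and follows the paper's argument. Because $\varintcat{S}$ is full, it suffices to check that each $s_\otimes$ sends objects of $\varintcat{S}$ to objects of $\varintcat{S}$, and this follows from the explicit description of $s_\otimes$: it postcomposes the right leg with $s$ and leaves the left leg $B\from B'$ unchanged. Your extra justification of that description via cocartesian lifts goes beyond the paper, which simply calls it clear, and it agrees with the comparison diagram in the proof of \Cref{prop:oplax_slice_source_is_monoidal}.
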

\begin{proof}
    We must show that for any $s:B\to A\in \subuniverse(A)$, the functor \[s_\otimes: (\intspanhalf(\subuniverse,\subuniverse_{\subuniverse[P]},\subuniverse)_{//*})^{\simeq 2}(B)\to (\intspanhalf(\subuniverse,\subuniverse_{\subuniverse[P]},\subuniverse)_{//*})^{\simeq 2}(A)\] sends objects in $\varintcat{S}(B)$ to objects in $\varintcat{S}(A)$.
    
    This is clear by the description of objects and of the $\subuniverse$-monoidal structure above.
\end{proof}

\begin{proposition}
    The composition 
    \[\varintcat{S}\subset (\intspanhalf(\subuniverse,\subuniverse_{\subuniverse[P]},\subuniverse)_{//*})^{\simeq 2}
    \xto{s} \intspan(\subuniverse,\subuniverse_{\subuniverse[P]},\subuniverse)\]
    is an equivalence of $\subuniverse$-monoidal $\vartopos$-categories.
\end{proposition}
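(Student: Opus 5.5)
The plan is to split the statement into two parts: first that the composite is an equivalence of underlying $\vartopos$-categories, and then that it is $\subuniverse$-monoidal. An $\subuniverse$-monoidal functor whose underlying functor is an equivalence is an equivalence in $\intsmon(\internalcatofcats)$. Indeed, the forgetful functor $\intsmon(\internalcatofcats)\to\internalcatofcats$ is conservative: an equivalence on $F(*)$ gives equivalences on all $F(B)\simeq\intfun_{\vartopos_{/A}}(B,F(*))$ by \Cref{prop:values_of_E_monoidal_categories}. So the argument reduces to these two points.

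For the monoidality, I would use that $\varintcat{S}$ is an $\subuniverse$-monoidal $\vartopos$-subcategory (\Cref{prop:oplax_slice_S_is_monoidal}), so the inclusion $\varintcat{S}\subset(\intspanhalf(\subuniverse,\subuniverse_{\subuniverse[P]},\subuniverse)_{//*})^{\simeq 2}$ is $\subuniverse$-monoidal by \Cref{prop:monoidal_subcategories_are_monoidal}. The functor $s$ is $\subuniverse$-monoidal by \Cref{prop:oplax_slice_source_is_monoidal}. Hence the composite is $\subuniverse$-monoidal, and no further coherence check is needed.

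For the underlying equivalence, I would argue exactly as in \Cref{prop:E_cart_S_is_equivalent_to_span}: check contextwise that the functor induces equivalences on the $\vartopos$-anima of objects and of morphisms. This suffices, since both sides are sheaves of $\infty$-categories and hence complete Segal objects. On objects, an object of $\varintcat{S}(A)$ is a span $B\xleftarrow{\sim}B'\to A$ whose left leg is invertible. This is determined by $[B'\to A]\in\subuniverse(A)$, which is exactly its image under $s$. On morphisms, I would unwind the general diagram describing a morphism of the oplax slice. When both endpoints lie in $\varintcat{S}$, the vertical maps $B'\to B$ and $C'\to C$ are equivalences. Then $F\times_C C'\simeq F$, the lower-left pullback square forces $F\to B'$ to be the given map, and the middle left-pointing arrow (the one in $\subuniverse_{\subuniverse[P]}$, where the $2$-cell data sit) becomes an identity. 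The whole diagram is therefore determined by its top row $B\leftarrow F\rightarrowtail C$, which is an arbitrary morphism of $\intspan(\subuniverse,\subuniverse_{\subuniverse[P]},\subuniverse)(A)$. I also need to check that this top row is precisely what $s$ extracts, and that the required markings match: the top right leg lies in $\subuniverse$, and the left leg is unconstrained within $\subuniverse$.

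The main obstacle I expect is this morphism-level identification. The description of the morphism anima of $(-)_{//*}$ with the core taken is given only informally. One has to make sure that the space of such diagrams, with the specified squares cartesian and the marked arrows invertible, is really equivalent to the space of top-row spans, and not just bijective on components. I would handle this by writing the diagram anima as an iterated pullback of anima of spans in $\subuniverse(A)$. Then, using that the invertible arrows contract the pieces, I would identify it with the space of its top row by a sequence of trivial-fibration/right-Kan-extension arguments, mirroring the ``visibly determined by the first row'' step of \Cref{prop:E_cart_S_is_equivalent_to_span}.
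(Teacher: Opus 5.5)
Your proposal is correct and follows the paper's own argument. The composite is $\subuniverse$-monoidal by \Cref{prop:oplax_slice_source_is_monoidal,prop:oplax_slice_S_is_monoidal}, so by conservativity it suffices to get an equivalence of underlying $\vartopos$-categories, which the paper checks on objects and morphisms exactly as in \Cref{prop:E_cart_S_is_equivalent_to_span}.

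Two small slips in your unwinding, neither of which affects the argument:
\begin{itemize}
\item The $2$-cell component $F\times_C C'\simeq F\to B'$ is not an identity. It is uniquely forced because it must lie over $B$ and $B'\to B$ is invertible.
\item The left leg of the top row is not unconstrained within $\subuniverse$. It must lie in $\subuniverse_{\subuniverse[P]}$, since it is a $1$-morphism of $\intspanhalf(\subuniverse,\subuniverse_{\subuniverse[P]},\subuniverse)$, and this is exactly the marking of the target.
\end{itemize}
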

\begin{proof}
    The composition is $\subuniverse$-monoidal by \Cref{prop:oplax_slice_source_is_monoidal,prop:oplax_slice_S_is_monoidal}. Thus it is enough to check that it is an equivalence of $\vartopos$-categories.
    
    To do that, the same proof as in \Cref{prop:E_cart_S_is_equivalent_to_span} works in this case. 
\end{proof}

\subsubsection*{Construction of \texorpdfstring{$\alpha$}{alpha}}

Thus, we have constructed an $\subuniverse$-monoidal inclusion of a $\vartopos$-subcategory: 
\[
\intspan(\subuniverse,\subuniverse_{\subuniverse[P]},\subuniverse)\into (\intspanhalf(\subuniverse,\subuniverse_{\subuniverse[P]},\subuniverse)_{//*})^{\simeq 2}.
\]

We consider the following composition of lax $\subuniverse$-monoidal functors:

\begin{align*}
    T:\intspan(\subuniverse,\subuniverse_{\subuniverse[P]},\subuniverse)
    &\into (\intspanhalf(\subuniverse,\subuniverse_{\subuniverse[P]},\subuniverse)_{//*})^{\simeq 2}\\
    &\to \inttwofun^{\oplax}(\Delta^1, \intspanhalf(\subuniverse,\subuniverse_{\subuniverse[P]},\subuniverse))^{\simeq2}\\
    &\to \inttwofun^{\oplax}(\Delta^1, \internaltwocatofcats)^{\simeq2}
\end{align*}

\begin{proposition}
    The lax $\subuniverse$-monoidal functor 
    \[s\circ T:\intspan(\subuniverse,\subuniverse_{\subuniverse[P]},\subuniverse)\to \internalcatofcats\]
    is equivalent to the $\subuniverse$-monoidal functor
    \[\widetilde{F}:\intspan(\subuniverse,\subuniverse_{\subuniverse[P]},\subuniverse)\into \spanEfE\xto{F}\internalcatofcats.\]
\end{proposition}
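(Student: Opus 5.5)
The plan is to reduce the comparison to the restriction of $s\circ T$ along the first two factors of $T$, and then to identify everything after that by the uniqueness statement for functors out of span categories.

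First, unwind $s\circ T$. The last step of $T$ is postcomposition with the functor $\intspanhalf(\subuniverse,\subuniverse_{\subuniverse[P]},\subuniverse)\to\internaltwocatofcats$ of \Cref{cor:extending_the_alg_structure_of_C_to_backwards_facing_morphisms}. The source functor $s$ commutes with postcomposition, so $s\circ T$ equals the composite
\[
\intspan(\subuniverse,\subuniverse_{\subuniverse[P]},\subuniverse)\into(\intspanhalf(\subuniverse,\subuniverse_{\subuniverse[P]},\subuniverse)_{//*})^{\simeq2}\xto{s}\intspan(\subuniverse,\subuniverse_{\subuniverse[P]},\subuniverse)\to\internaltwocatofcats,
\]
followed by restriction to underlying $\vartopos$-categories. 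These are lax $\subuniverse$-monoidal maps, and all the identifications are taken in $\intoperads$.

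Next, the first two arrows compose to the identity. The inclusion $\intspan(\subuniverse,\subuniverse_{\subuniverse[P]},\subuniverse)\simeq\varintcat{S}$ was defined as the inverse of the $\subuniverse$-monoidal equivalence $\varintcat{S}\xto{s}\intspan(\subuniverse,\subuniverse_{\subuniverse[P]},\subuniverse)$ from the preceding proposition. Hence $s\circ T$ is the restriction of the functor of \Cref{cor:extending_the_alg_structure_of_C_to_backwards_facing_morphisms} to the underlying $\vartopos$-category $\intspan(\subuniverse,\subuniverse_{\subuniverse[P]},\subuniverse)$. In particular it is genuinely $\subuniverse$-monoidal, because both remaining pieces are.

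It then remains to compare two $\subuniverse$-monoidal functors $\intspan(\subuniverse,\subuniverse_{\subuniverse[P]},\subuniverse)\to\internalcatofcats$: this restriction and $\widetilde F$. By construction, both restrict on $\subuniverse\subset\intspan(\subuniverse,\subuniverse_{\subuniverse[P]},\subuniverse)$ to the same $\subuniverse$-monoidal functor $\subuniverse\into\spanEfE\xto{F}\internalcatofcats$. On backward $\subuniverse_{\subuniverse[P]}$-arrows, the first functor sends $p$ to the right adjoint of $p_\otimes\simeq p_*$, as given by the universal property \Cref{prop:univ_prop_of_E_monoidal_span_half}. The second sends $p$ to $p^*$, and $p^*\dashv p_*$ because $\varintcat{C}$ is $\subuniverse[P]$-cartesian.

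To conclude I would invoke the covariant uniqueness statement \Cref{prop:cov_unfurling}. Each $\vartopos_{/A}$-level component is considered separately, through the $\subuniverse^{\full}$-continuous description $\varintcat{C}^B$ of \Cref{prop:values_of_E_monoidal_categories}. The relevant class $\subuniverse_{\subuniverse[P]}$ is left cancellable and truncated by \Cref{assumption:global_trunc_bound}. Therefore $L$-adjointed extensions of a fixed $L$-adjointable functor on $\subuniverse$ are unique, and both functors are such extensions. For $\widetilde F$ one must check the adjointedness condition; it follows from the norm description in \Cref{prop:inductive_amby_iff_ambi}, since $\varintcat{C}$ is $\subuniverse[P]$-cartesian. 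The equivalence is compatible with the monoidal structure because the uniqueness equivalence is natural and applies levelwise to $\varintcat{C}^B$.

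The main obstacle is the adjointedness check for $\widetilde F$ against $\subuniverse_{\subuniverse[P]}$-backward maps. The issue is matching the norm maps produced by the span functoriality with the Beck--Chevalley mates. I would first try to reduce this, via \Cref{lem:nat_of_norms}, to the universal cartesian case $\intspan(\subuniverse^{\full},\subuniverse)$, where it holds by \Cref{thm:uniqueness_of_cartesian_structures}.
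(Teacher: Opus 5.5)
Your proposal is correct and follows the paper's proof. You commute $s$ past the functor of \Cref{cor:extending_the_alg_structure_of_C_to_backwards_facing_morphisms} by naturality, use that the section through $\varintcat{S}$ composes with $s$ to the identity, and identify what remains with $\widetilde{F}$ by uniqueness of unfurling; this works since both functors agree on $\subuniverse$ and $\subuniverse_{\subuniverse[P]}$ is left cancellable (by inductibility of $\subuniverse[P]$) and truncated (by \Cref{assumption:global_trunc_bound}).

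Two small corrections. First, a backward map $p\in\subuniverse_{\subuniverse[P]}$ goes to the \emph{left} adjoint $p^*$ of $p_\otimes\simeq p_*$, not the right adjoint, so \Cref{prop:cov_unfurling} (where backward maps act by right adjoints) must be applied after passing to opposite categories. Second, the adjointedness of $\widetilde{F}$ that you call the main obstacle is literally the inductive $\subuniverse[P]$-cartesian condition of \Cref{def:cartesian_S_monoidal_category}, whose norms are built from the same span functoriality, so it follows directly from \Cref{prop:inductive_amby_iff_ambi}; no comparison with Beck--Chevalley mates and no reduction via \Cref{lem:nat_of_norms} is needed.
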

\begin{proof}
    Consider the following diagram:
    \[\begin{tikzcd}
        \inttwofun^{\oplax}(\Delta^1, \intspanhalf(\subuniverse,\subuniverse_{\subuniverse[P]},\subuniverse))^{\simeq2}& \inttwofun^{\oplax}(\Delta^1, \internaltwocatofcats)^{\simeq2}\\
        \intspan(\subuniverse,\subuniverse_{\subuniverse[P]},\subuniverse) & \internalcatofcats
        \arrow[from=1-1, to=1-2]
        \arrow["s", from=1-1, to=2-1]
        \arrow["s", from=1-2, to=2-2]
        \arrow[shift left=3, dashed, from=2-1, to=1-1]
        \arrow[from=2-1, to=2-2]
    \end{tikzcd}\]
    The solid arrows form a canonical commutative square of $\subuniverse$-monoidal $\vartopos$-categories. The left dashed arrow is the lax $\subuniverse$-monoidal section to $s$ factoring through $(\intspanhalf(\subuniverse,\subuniverse_{\subuniverse[P]},\subuniverse)_{//*})^{\simeq 2}$ constructed above. Its composite with $s$ is $\subuniverse$-monoidal and is equivalent to the identity as an $\subuniverse$-monoidal functor. 
    
    The functor $T$ is by definition the composition of the upper morphism with the section.

    Thus $s\circ T$ is equivalent to the bottom arrow in this diagram.

    The bottom arrow in this diagram is by definition the underlying $\vartopos$-functor of the $\vartopos$-$2$-functor defined in \Cref{cor:extending_the_alg_structure_of_C_to_backwards_facing_morphisms}.

    By \Cref{thm:internal_uniqueness_of_unfurling}, this functor agrees with the restriction of $F:\spanEfE\to \internalcatofcats$ because their restrictions to $\subuniverse$ agree by definition. Indeed, $\subuniverse_{\subuniverse[P]}$ is left cancellable because $\subuniverse[P]$ is inductible (\Cref{prop:from_local_class_to_subuniverse}), and every morphism in $\subuniverse_{\subuniverse[P]}$ is truncated by \Cref{assumption:global_trunc_bound}.
    \footnote{This is the only place in the construction where we use that $\subuniverse[P]$ satisfies \Cref{assumption:global_trunc_bound}.}
\end{proof}

\begin{proposition}
    The lax $\subuniverse$-monoidal functor 
    \[t\circ T:\intspan(\subuniverse,\subuniverse_{\subuniverse[P]},\subuniverse)\to \internalcatofcats\]
    is equivalent to the lax $\subuniverse$-monoidal functor
    \[\widetilde{G}:\intspan(\subuniverse,\subuniverse_{\subuniverse[P]},\subuniverse)\to \explicitset{*}\into \spanEfE\xto{F}\internalcatofcats.\]
\end{proposition}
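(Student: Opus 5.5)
We mirror the proof of the preceding proposition for $s\circ T$, replacing $s$ by the target functor $t$. The key observation is that the composite
\[
\intspan(\subuniverse,\subuniverse_{\subuniverse[P]},\subuniverse)\into(\intspanhalf(\subuniverse,\subuniverse_{\subuniverse[P]},\subuniverse)_{//*})^{\simeq 2}\to\inttwofun^{\oplax}(\Delta^1,\intspanhalf(\subuniverse,\subuniverse_{\subuniverse[P]},\subuniverse))^{\simeq2}\xto{t}\intspan(\subuniverse,\subuniverse_{\subuniverse[P]},\subuniverse)
\]
factors, as a lax $\subuniverse$-monoidal functor, through the defining pullback of the oplax slice. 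Indeed, the first two arrows are the upper horizontal arrow of that pullback, followed by the inclusion of $\varintcat{S}$. So $t$ applied to it equals the composite
\[
\intspan(\subuniverse,\subuniverse_{\subuniverse[P]},\subuniverse)\to(\intspanhalf(\ldots)_{//*})^{\simeq2}\to\explicitset{*}\into\intspan(\subuniverse,\subuniverse_{\subuniverse[P]},\subuniverse).
\]
Moreover, the lax $\subuniverse$-monoidal structure on the upper arrow was defined exactly so that this square is a pullback in $\intoperads^{\subuniverse}$. Hence this identification holds as morphisms of $\subuniverse$-operads, and not merely on underlying $\vartopos$-functors. The map to $\explicitset{*}$ is the terminal map, which is uniquely $\subuniverse$-monoidal. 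The inclusion $\explicitset{*}\into\intspan(\subuniverse,\subuniverse_{\subuniverse[P]},\subuniverse)$ carries the lax structure given by the operad map $r$ of \Cref{prop:source_target_identity_adjunctions}, as in the construction of $\tau$.

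Next, we use the commutative square formed by the $2$-functor $\intspanhalf(\subuniverse,\subuniverse_{\subuniverse[P]},\subuniverse)\to\internaltwocatofcats$ of \Cref{cor:extending_the_alg_structure_of_C_to_backwards_facing_morphisms}, together with the target functors $t$ on both oplax arrow categories. Since $t$ commutes with postcomposition by a $2$-functor, $t\circ T$ is the composite of the above functor into $\intspan(\subuniverse,\subuniverse_{\subuniverse[P]},\subuniverse)$ with $\widetilde{F}$. This gives
\[
t\circ T\simeq \widetilde F\circ(\explicitset{*}\into\intspan(\subuniverse,\subuniverse_{\subuniverse[P]},\subuniverse))\circ(\text{terminal map}).
\]
By the previous proposition, $\widetilde F$ is the restriction of $F$ along $\intspan(\subuniverse,\subuniverse_{\subuniverse[P]},\subuniverse)\into\spanEfE$. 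The composite $\explicitset{*}\into\intspan(\subuniverse,\subuniverse_{\subuniverse[P]},\subuniverse)\into\spanEfE$ is the lax inclusion $\explicitset{*}\into\spanEfE$. This yields exactly $\widetilde G$.

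The main obstacle is checking that the lax $\subuniverse$-monoidal structures agree. Underlying functors match trivially, since both send everything to $\varintcat{C}$. For the structures, we compare the two morphisms of operads $\intspan(\subuniverse,\subuniverse_{\subuniverse[P]},\subuniverse)^\otimes\to\spanEfE^\times$. The first is obtained from $t$ on the oplax slice. The second is $\tau$ precomposed with the projection to $\spanEfE$. For this comparison we use the pullback description of the lax structure on the slice, together with the identification of $\tau$ via $r$ and $\subuniverse^\otimes\into\spanEfE^\times$ from \Cref{cor:s_and_mu_compatibility_diagram}. We would first try to show that both operad maps are characterized by the same cocartesian lifts. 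In the first, the target of the lifted span is constant at $*$, and these are the same lifts used to define $\tau$.
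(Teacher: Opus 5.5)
Your argument is essentially the paper's: it pastes the same three squares (the left one commuting because $*$ is terminal, the middle one being the pullback of $\subuniverse$-operads defining the monoidal structure on the oplax slice, and the right one being naturality of $t$ under postcomposition with the $2$-functor of \Cref{cor:extending_the_alg_structure_of_C_to_backwards_facing_morphisms}), and then reads off $t\circ T$ along the top and $\widetilde{G}$ along the bottom. Since every square in that diagram is already a square of lax $\subuniverse$-monoidal functors, your final ``main obstacle'' paragraph is unnecessary: the operad-level identification you already noted in your first paragraph settles the comparison of lax structures.
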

\begin{proof}
    Consider the following diagram of $\subuniverse$-monoidal $\vartopos$-categories and lax $\subuniverse$-monoidal functors:
    \[\scalebox{0.75}{
    \begin{tikzcd}[ampersand replacement=\&]
        \intspan(\subuniverse,\subuniverse_{\subuniverse[P]},\subuniverse)\ar[d,equal]\ar[r]
        \&(\intspanhalf(\subuniverse,\subuniverse_{\subuniverse[P]},\subuniverse)_{//*})^{\simeq 2}\ar[r]\ar[d,"t"]\pullbackdr
        \&\inttwofun^{\oplax}(\Delta^1, \intspanhalf(\subuniverse,\subuniverse_{\subuniverse[P]},\subuniverse))^{\simeq2}\ar[r]\ar[d,"t"]
        \& \inttwofun^{\oplax}(\Delta^1, \internaltwocatofcats)^{\simeq2}\ar[d,"t"]\\
        \intspan(\subuniverse,\subuniverse_{\subuniverse[P]},\subuniverse)\ar[r]
        \&*\ar[r]
        \&\intspan(\subuniverse,\subuniverse_{\subuniverse[P]},\subuniverse)\ar[r]
        \&\internalcatofcats.
    \end{tikzcd}
    }\]
    The left square is commutative since $*$ is the terminal $\subuniverse$-monoidal $\vartopos$-category. The middle square is commutative since it is the pullback defining the $\subuniverse$-monoidal $\vartopos$-category $(\intspanhalf(\subuniverse,\subuniverse_{\subuniverse[P]},\subuniverse)_{//*})^{\simeq 2}$. The right square is a naturality square for $t$.
    
    The lower composition is $\widetilde{G}$ by definition. The upper composition followed by the rightmost arrow $t$ is $t\circ T$ by definition.
\end{proof}

Thus $T$ encodes a lax $\subuniverse$-monoidal lax natural transformation \[
    \widetilde{\alpha}:\widetilde{F}\Rightarrow\widetilde{G}:\intspan(\subuniverse,\subuniverse_{\subuniverse[P]},\subuniverse)\to \internaltwocatofcats.
\]
Restricting it to $\spanEPI$, we get the promised $\alpha:F'\Rightarrow G$, since $\widetilde{F}|_{\spanEPI}=F'$ and $\widetilde{G}|_{\spanEPI}=G$.

\subsubsection*{Monoidality of \texorpdfstring{$\Psi$}{Psi}}

Using \Cref{prop:unst_of_lax_E_monoidal_lax_natural_transformation}, we get a lax $\subuniverse$-monoidal functor
\[\Psi:\varintcat{C}^{\otimes}|_{\spanEPI}\simeq\intunst(F')\xto{\intunst(\alpha)} \intunst(G)\simeq \varintcat{C}\times \spanEPI\to \varintcat{C}.\]

Our last goal is to show that this functor is indeed $\subuniverse$-monoidal. To do so, we need to verify the following:

Let $p:B\to A\in \subuniverse(A)$. Then, using the universal property of cocartesian edges, we get a lax square in $\twocatofcats$:
\[\begin{tikzcd}
	\varintcat{C}^{\otimes}|_{\spanEPI}(B) & \varintcat{C}(B) \\
	\varintcat{C}^{\otimes}|_{\spanEPI}(A) & \varintcat{C}(A).
	\arrow[from=1-1, to=1-2,"\Psi"]
	\arrow[from=1-1, to=2-1,"p_\otimes"]
	\arrow[Rightarrow, nfold, from=1-2, to=2-1,"\psi"]
	\arrow[from=1-2, to=2-2,"p_\otimes"]
	\arrow[from=2-1, to=2-2,"\Psi"]
\end{tikzcd}\]
The functor $\Psi$ is $\subuniverse$-monoidal if and only if $\psi$ is an equivalence for every $p:B\to A$.

\begin{proposition}
    $\psi$ is an equivalence for every $p:B\to A$. 
\end{proposition}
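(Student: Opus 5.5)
The plan is to compute both composites in the lax square explicitly on objects, and then identify $\psi$ with a composite of equivalences supplied by the monoidal structure of $\varintcat{C}$.

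Fix a context $A$, an object $p:B\to A$ of $\subuniverse(A)$, and an object of $\varintcat{C}^{\otimes}|_{\spanEPI}(B)$. Such an object consists of some $[q:X\to B]\in\spanEPI(B)$ together with $c\in\varintcat{C}(X)$.

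First, the two composites around the square.
\begin{itemize}
\item By construction, $\intunst(\alpha)$ is computed by cocartesian transport along the component of $\alpha$ at $X$. That component is the image under the extension of \Cref{cor:extending_the_alg_structure_of_C_to_backwards_facing_morphisms} of the span $X=X\to *$, which is the tensor along the structure map. So $\Psi(q,c)\simeq q_\otimes c\in\varintcat{C}(B)$.
\item Hence the upper-right composite is $p_\otimes q_\otimes c$.
\item By \Cref{cor:unst_of_monoidal_functor_is_monoidal}, the structure of $\varintcat{C}^{\otimes}|_{\spanEPI}$ sends $(q,c)$ to $(pq:X\to A,\,c)$. Its image under $\Psi$ is therefore $(pq)_\otimes c$.
\end{itemize}

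Second, I would show that $\psi:(pq)_\otimes c\to p_\otimes q_\otimes c$ (or its reverse, depending on the lax direction) is the composition constraint $(pq)_\otimes\simeq p_\otimes q_\otimes$ of the functor $F:\spanEfE\to\internalcatofcats$. This constraint is invertible because $F$ is an honest functor out of spans. To identify $\psi$ with it, I would unwind how the lax monoidal structure of $\intunst(\alpha)$ arises in \Cref{prop:unst_of_lax_E_monoidal_lax_natural_transformation}: the lax structure maps of $T$ are $2$-cells in $\internaltwocatofcats$, and these are the images under \Cref{cor:extending_the_alg_structure_of_C_to_backwards_facing_morphisms} of $2$-cells in $\intspanhalf(\subuniverse,\subuniverse_{\subuniverse[P]},\subuniverse)$. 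By \Cref{prop:oplax_slice_source_is_monoidal} and the explicit comparison diagram in its proof, the comparison $p_\otimes(a(x))\to a(p_\otimes x)$ for $x=(X\xfrom{=}X\to B)\in\varintcat{S}$ is the diagram whose non-identity legs are the structure maps $X\to B$ and $B\to A$. This $2$-cell is built from identities and pullback squares along the target leg, with no backward $\subuniverse_{\subuniverse[P]}$-arrows involved. Its image is therefore assembled from $F$'s composition isomorphisms, and no unit or counit of a $p^*\dashv p_*$ adjunction appears.

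The main obstacle is this identification of the $2$-cell. The lax structure of $s$ and $a$ was produced abstractly, by pulling back operads, and not by writing down components. I would handle it by working universally: show that the relevant component of the lax structure on the inclusion $\varintcat{S}\subset(\intspanhalf_{//*})^{\simeq2}$ is invertible already in $\inttwofun^{\oplax}(\Delta^1,\intspanhalf(\subuniverse,\subuniverse_{\subuniverse[P]},\subuniverse))$. The point is that on $\varintcat{S}$ the lax transformation's $2$-cells have the shape displayed in \Cref{prop:E_cart_S_is_equivalent_to_span}, whose middle vertical leg is an identity, and such $2$-cells are invertible. Invertibility is preserved by postcomposing with the $2$-functor to $\internaltwocatofcats$ and by cocartesian transport in the unstraightening (\Cref{cor:extra_functoriality_of_unst}). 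So $\psi$ is an equivalence on the cocartesian fibre over each object.

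Finally, $\psi$ is natural and its components are all equivalences, so it is an equivalence of functors. This holds for all $p$ and in every context, so $\Psi$ is $\subuniverse$-monoidal.
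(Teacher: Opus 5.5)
Your proposal is correct and follows the paper's own argument. You compute the two composites on an object $(q,c)$ as $p_\otimes q_\otimes c$ and $(pq)_\otimes c$, and you identify $\psi$ with the composition isomorphism $(pq)_\otimes\simeq p_\otimes q_\otimes$ of $F$; the paper simply asserts that identification, while you justify it by tracing the lax structure back to the identity-shaped $2$-cell in the proof of \Cref{prop:oplax_slice_source_is_monoidal}.

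One small correction to your ``main obstacle'' paragraph: what is invertible is only the $2$-cell component of that lax structure map. The map itself, as a morphism of $\inttwofun^{\oplax}(\Delta^1,\intspanhalf(\subuniverse,\subuniverse_{\subuniverse[P]},\subuniverse))$, is not invertible, since its target leg is the span $B=B\xto{p}A$.
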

\begin{proof}
    An object of $\varintcat{C}^{\otimes}|_{\spanEPI}(B)$ is an object $q:C\to B\in \spanEPI(B)$, that is, an object $q:C\to B\in \subuniverse(B)$, and an object $x\in \varintcat{C}(C)$.

    The functor $\Psi:\varintcat{C}^{\otimes}|_{\spanEPI}(B)\to \varintcat{C}(B)$ by definition applies the component of $\alpha$ at $C$ in the context $B$. Thus, it sends $x$ to $q_\otimes(x)$.

    Thus, the composition of the upper morphism and the right morphism takes $x$ to $p_\otimes q_\otimes x$.

    The left morphism takes $(x,q:C\to B)\in \varintcat{C}^{\otimes}|_{\spanEPI}(B)$ to $(x,p\circ q:C\to A)\in \varintcat{C}^{\otimes}|_{\spanEPI}(A)$. Then $\Psi$ takes $(x,p\circ q:C\to A)$ to $(p\circ q)_\otimes x$.

    The natural transformation $\psi$ can be seen to be the natural isomorphism $(p\circ q)_\otimes x\simeq p_\otimes q_\otimes x$.
\end{proof}

\subsection{Universality of \texorpdfstring{$\Xi_x$}{Xi}.}\label{subsec:universality_of_Xi}

We now prove that $\Xi_x$ has the required universal property. More precisely, the span $\vartopos$-category $\spanEPI[A]$ of \Cref{def:span_EPI_A} is the free $(\subuniverse[P,I])$-ambidextrous $\subuniverse$-monoidal $\vartopos$-category on $A$ (\Cref{thm:generalized_main_C}). We first describe the universal $A$ object in $\spanEPI[A]$ and the action of $\Xi_x$ on objects and morphisms.

\subsubsection*{The universal object and the functor \texorpdfstring{$\Xi_x$}{Xi}}

Recall from \Cref{def:span_EPI_A} that an object of $\spanEPI[A](B)$ is the same as an object of $\subuniverse[E][A](B)$. That is, an object $C\in \vartopos$ with an arbitrary morphism $\pi_C:C\to A$ and a morphism $p_C:C\to B$ in $\subuniverse$. Thus, an object of $\spanEPI[A](B)$ is a span
\begin{equation}\label{diagram:spanEPI_object}\begin{tikzcd}
    &C\ar[dl, "\pi_C"']\ar[dr, "p_C"]\\
    A&&B
\end{tikzcd}\end{equation}
with $[p_C]\in \subuniverse(B)$. A morphism $C\to D$ in $\spanEPI[A](B)$ is an iterated span 
\[\begin{tikzcd}
     & C\ar[dl, "\pi_C"']\ar[dr, "p_C"]\\
    A & F\ar[u,"p"']\ar[d,"i"] & B\\
     & D\ar[ul, "\pi_D"]\ar[ur, "p_D"']\\
\end{tikzcd}\]
with $p\in \subuniverse[P](C)$ and $i\in \subuniverse[I](D)$.

Let $A\in \vartopos$, and let $x_{\univ}^A:A\to \spanEPI[A]$ be the following functor. 
We view $A$ as a $\vartopos$-groupoid, that is, in the context $B$, we have $A(B):= \operatorname{map}_{\vartopos}(B,A)$. We define $x_{\univ}^A:A\to \spanEPI[A]$ to send $[B\to A]\in A(B)$ to 
\[\begin{tikzcd}
    &B\ar[dl]\ar[dr, equal]\\
    A&&B.
\end{tikzcd}\]

Let $\varintcat{C}$ be a $(\subuniverse[P,I])$-ambidextrous $\subuniverse$-monoidal $\vartopos$-category and let $x:A\to \varintcat{C}$ be an object of $\varintcat{C}$ in the context $A$. In \Cref{subsec:Phi_x,subsec:Psi}, we constructed a natural $\subuniverse$-monoidal functor \[\Xi_x:=\Psi\circ\Phi_x:\spanEPI[A]\to \varintcat{C}.\]

Note that for $x:A\to \varintcat{C}$, we have a commutative triangle
\[\begin{tikzcd}
    A\ar[dr, "x"']\ar[rr,"x_{\univ}^A"] && \spanEPI[A]\ar[dl, "\Xi_x"]\\
    &\varintcat{C}.
\end{tikzcd}\]

Let us describe the action of the functor $\Xi_x:\spanEPI[A]\to \varintcat{C}$ on objects and morphisms.

For every object $C\in \spanEPI[A](B)$ as described in \Cref{diagram:spanEPI_object}, we have $(p_C)_{\otimes}\pi_C^*(x_{\univ}^A)\simeq C$.
Since $\Xi_x$ is $\subuniverse$-monoidal, we have a canonical isomorphism $\Xi_x(C)\simeq (p_C)_{\otimes}\pi_C^*(x)$.

A morphism in $\subuniverse_{\subuniverse[I]}[A](B)\subset \spanEPI[A](B)$ from $C$ to $D$ is a morphism $i:C\to D \in \subuniverse[I](D)$ over $A\times B$. 
Note that $i$ is a component of the counit of the adjunction \[i_{\otimes}:\spanEPI[A](C)\fromto \spanEPI[A](D):i^*.\]
So $\Xi_x$ sends $i$ to the composition \[(p_C)_{\otimes}\pi_C^*(x)\simeq (p_D)_{\otimes}i_{\otimes} i^*\pi_D^*(x) \simeq (p_D)_{\otimes}i_{\sharp}i^*\pi_D^*(x)\xto{\epsilon} (p_D)_{\otimes}\pi_D^*(x).\]

A morphism in $\subuniverse_{\subuniverse[P]}[A](B)\subset \spanEPI[A](B)^{\op}$ from $C$ to $D$ is a morphism $f:C\to D \in \subuniverse[P](D)$ over $A\times B$. 
Note that $f$ is a component of the unit of the adjunction \[f^*:\spanEPI[A](D)\fromto \spanEPI[A](C):f_{\otimes}.\]
Then, the opposite morphism of $f$ is sent by $\Xi_x$ to the unit
\[(p_D)_{\otimes}\pi_D^*(x)\xto{\eta} (p_D)_{\otimes}f_{*}f^*\pi_D^*(x) \simeq (p_D)_{\otimes}f_{\otimes}f^*\pi_D^*(x)\simeq (p_C)_{\otimes}\pi_C^*(x).\]



    
    

From this discussion, we get the following:

\begin{corollary} \label{cor:xi_x_univ_is_id}
    The functor $x_{\univ}^A:A\to \spanEPI[A]$ gives rise to an endofunctor \[\Xi_{x_{\univ}^A}:\spanEPI[A]\to \spanEPI[A].\]
    This functor is an equivalence.
\end{corollary}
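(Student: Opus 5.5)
The plan is to show that $\Xi_{x_{\univ}^A}$ is equivalent to the identity functor of $\spanEPI[A]$ by comparing their values on objects and on the three generating classes of morphisms, and then to use the span structure to conclude. Both functors are $\subuniverse$-monoidal, and both restrict along $x_{\univ}^A\colon A\to\spanEPI[A]$ to $x_{\univ}^A$ itself, by the commutative triangle displayed above. So we do not have to build a comparison by hand: we will produce a natural equivalence $\Xi_{x_{\univ}^A}\simeq\id$ by restricting to the two generating subcategories and gluing.

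First, on objects: for $C\in\spanEPI[A](B)$ as in \eqref{diagram:spanEPI_object}, the discussion above gives $\Xi_{x_{\univ}^A}(C)\simeq(p_C)_\otimes\pi_C^*(x_{\univ}^A)\simeq C$. This is natural in $B$ because both sides are computed by change of context and by $\subuniverse$-monoidality. Second, we restrict to the wide subcategory $\subuniverse_{\subuniverse[I]}[A]\subset\spanEPI[A]$. There $\Xi_{x_{\univ}^A}$ sends $i\colon C\to D$ to the counit $(p_D)_\otimes i_\sharp i^*\pi_D^*x_{\univ}^A\to(p_D)_\otimes\pi_D^*x_{\univ}^A$. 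In $\spanEPI[A]$, by \Cref{prop:span_EPI_A_is_ambidextrous}, $i_\sharp\simeq i_\otimes$ and this counit is exactly the forward span $C=C\xrightarrow{i}D$. Dually, on $\subuniverse_{\subuniverse[P]}[A]^{\op}$ the image of $f$ is the unit, which is the backward span $D\xleftarrow{f}C=C$. Hence on both subcategories $\Xi_{x_{\univ}^A}$ agrees with the inclusion.

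A cleaner route for this step is to avoid morphismwise checks and use universal properties. The restriction of $\Xi_{x_{\univ}^A}$ to $\subuniverse_{\subuniverse[I]}[A]\subset\subuniverse^{\full}[A]$ preserves $\subuniverse[I]$-colimits, since $\Xi$ is $\subuniverse$-monoidal and $\spanEPI[A]$ is $\subuniverse[I]$-cocartesian, with $\otimes$ agreeing with $\sharp$. By the free cocompletion property (\Cref{prop:free_on_A_with_E_colimits}, applied to the $\subuniverse[I]$-part), it is then determined by its restriction to $A$, which is $x_{\univ}^A$. The same argument applies dually for $\subuniverse[P]$.

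Third, we glue. By the universal property of spans (unfurling, \Cref{thm:internal_uniqueness_of_unfurling} together with its covariant form \Cref{prop:cov_unfurling}), a functor out of $\spanEPI[A]$ whose restriction to $\subuniverse^{\full}[A]^{\op}$ is $\subuniverse_{\subuniverse[I]}[A]$-left adjointed is determined by that restriction. We therefore compare the restrictions of $\Xi_{x_{\univ}^A}$ and $\id$ to $(\subuniverse_{\subuniverse[P]}[A])^{\op}$ and check that both send forward $\subuniverse[I]$-maps to the left adjoints. Since the triangle identities pin down the left adjoint up to a contractible choice, this yields $\Xi_{x_{\univ}^A}\simeq\id$, and in particular an equivalence.

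The main obstacle is this gluing step. The identification of the two functors on the $\subuniverse[P]$- and $\subuniverse[I]$-parts has to be coherent with the Beck--Chevalley isomorphisms encoding span composition. We plan to handle it through the adjointed uniqueness theorem rather than directly, because the morphisms of $\subuniverse_{\subuniverse[I]}$ are truncated by \Cref{assumption:global_trunc_bound}, which is precisely the hypothesis that theorem needs.
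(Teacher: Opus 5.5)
Your comparison on objects and your morphism-by-morphism analysis in step two are correct, and they are essentially the paper's whole argument. Forward $\subuniverse[I]$-spans are counits of $i_\otimes\dashv i^*$ and backward $\subuniverse[P]$-spans are units of $f^*\dashv f_\otimes$, so both are fixed by $\Xi_{x_{\univ}^A}$.

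\textbf{The gap.} Your conclusion rests on step three, and that step does not work.
\begin{itemize}
\item \Cref{thm:internal_uniqueness_of_unfurling} concerns functors from the span category of a span pair into $\internalcatofcats$. There the image of a forward map must be left adjoint to the image of the same map pointing backward.
\item It does not apply to an endofunctor of $\spanEPI[A]$. The target is a $\vartopos$-$(\infty,1)$-category, where only equivalences are adjoints. Moreover, $\spanEPI[A]$ comes from a triple whose backward maps lie only in $\subuniverse_{\subuniverse[P]}[A]$, so a forward $\subuniverse[I]$-map has no backward partner to be adjoint to.
\item The adjunctions $i_\otimes\dashv i^*$ you want to exploit live one level up, in the $\subuniverse$-monoidal structure. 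Turning them into a uniqueness statement for $\subuniverse$-monoidal functors out of $\spanEPI[A]$ is exactly the universal property in \Cref{thm:generalized_main_C}. That theorem's proof invokes this corollary, so the argument is circular. This is why the paper only remarks afterwards that $\Xi_{x_{\univ}^A}\simeq\id$.
\end{itemize}

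Your ``cleaner route'' has the same circularity, and it also misapplies \Cref{prop:free_on_A_with_E_colimits}. The category $\subuniverse_{\subuniverse[I]}[A]$ is not the free $\subuniverse[I]$-cocompletion of $A$. Its objects in context $B$ are maps $C\to A\times B$ with $C\to B$ in $\subuniverse$, not in $\subuniverse[I]$. For instance, when $\subuniverse[I]=\{*\}$ it is $\subuniverse^{\simeq}[A]$, not $A$, so its restriction to $A$ does not determine a functor out of it.

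\textbf{The fix.} Stop before the gluing step, because the statement only asks for an equivalence.
\begin{itemize}
\item Essential surjectivity: $\Xi_{x_{\univ}^A}$ is the identity on cores. The paper gets this coherently from the triangle under $\subuniverse^{\simeq}[A]$ together with \Cref{example:free_E_monoidal_cat}.
\item Full faithfulness: every morphism $C\xfrom{p}F\xto{i}D$ in $\spanEPI[A](B)$ is the composite of the backward span $C\xfrom{p}F=F$ and the forward span $F=F\xto{i}D$. Both are fixed by $\Xi_{x_{\univ}^A}$, so it acts as the identity on mapping anima.
\end{itemize}
This is the paper's proof. Invertibility is also all that \Cref{thm:generalized_main_C} later needs: there one only uses that the composite $\spanEPI[A]\to L_{\oplus}\subuniverse^{\simeq}[A]\to\spanEPI[A]$ is an equivalence.
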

\begin{proof} 
    $\Xi_{x_{\univ}^A}$ is an $\subuniverse$-monoidal functor under the canonical $\subuniverse$-monoidal functor from $\subuniverse[E][A]^{\simeq}$, that is, we have a commutative diagram:
    \[
    \begin{tikzcd}
        &\subuniverse[E][A]^{\simeq}\ar[dl]\ar[dr]\\
        \spanEPI[A]\ar[rr, "\Xi_{x_{\univ}^A}"'] && \spanEPI[A].
    \end{tikzcd}
    \]
    Thus $\Xi_{x_{\univ}^A}$ is homotopic to the identity on groupoid cores.

    The anima of morphisms $C\to D$ in $\spanEPI[A](B)$ is the anima of iterated spans
    \[\begin{tikzcd}
         & C\ar[dl, "\pi_C"']\ar[dr, "p_C"]\\
        A & F\ar[u,"p"']\ar[d,"i"] & B\\
         & D\ar[ul, "\pi_D"]\ar[ur, "p_D"']\\
    \end{tikzcd}\]
    with $p\in \subuniverse[P](C)$ and $i\in \subuniverse[I](D)$.

    The morphisms associated with $p$ and $i$ come from the unit and counit, respectively, of the corresponding adjunctions determined by the $(\subuniverse[P,I])$-ambidextrous $\subuniverse$-monoidal structure on $\spanEPI[A]$. Thus, $\Xi_{x_{\univ}^A}$ acts as the identity on this anima.

    We have shown that $\Xi_{x_{\univ}^A}$ restricts to the identity on groupoid cores and that it is homotopic to the identity on mapping spaces. Thus $\Xi_{x_{\univ}^A}$ is an equivalence. 
\end{proof}

\begin{remark}
    It will follow later from \Cref{thm:generalized_main_C} that this endofunctor is homotopic to the identity, as suggested by the proof.
\end{remark}

\subsubsection*{The universal property}

We next observe that the construction of $\Xi_x$ is functorial.

\begin{remark}
    The construction \[[x:A\to \varintcat{C}]\mapsto[\Xi_x:\spanEPI[A]\to \varintcat{C}]\] is functorial in the sense that it lifts to a functor \[\Xi:\intmon^{\subuniverse,(\subuniverse[P,I])-\oplus}(\internalcatofcats)\times_{\internalcatofcats}\internalcatofcats_{A/} \to \intmon^{\subuniverse,(\subuniverse[P,I])-\oplus}(\internalcatofcats)_{\spanEPI[A]/}.\] Moreover, this functor fits into a commutative triangle:
    \[
    \begin{tikzcd}
        \intmon^{\subuniverse,(\subuniverse[P,I])-\oplus}(\internalcatofcats)\times_{\internalcatofcats}\internalcatofcats_{A/}\ar[rr]\ar[rd]
        && \intmon^{\subuniverse,(\subuniverse[P,I])-\oplus}(\internalcatofcats)_{\spanEPI[A]/}\ar[dl]\\
        &\internalcatofcats.
    \end{tikzcd}
    \]

    Indeed, it is enough to verify that \[\Phi_x:\spanEPI[A]\to \varintcat{C}^{\otimes}|_{\spanEPI}\] is functorial, which is evident from the construction of $\Phi_x$.
\end{remark}

We are ready to prove the main theorem of this section. Note that \Cref{main_theorem:free_E_monoidal_semiadditive} is a particular case of the following.
\begin{theorem}\label{thm:generalized_main_C}
    Let $\subuniverse,\subuniverse[P],\subuniverse[I]$ satisfy the standing hypotheses of \Cref{sec:proof_of_C}, including \Cref{assumption:global_trunc_bound}.
    For every $A\in\vartopos$, we have an equivalence of $\subuniverse$-monoidal $\vartopos$-categories \[L_{\oplus}\subuniverse^{\simeq}[A]\simeq \spanEPI[A].\]
\end{theorem}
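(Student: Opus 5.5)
We will show that $\spanEPI[A]$, together with $x_{\univ}^A\colon A\to\spanEPI[A]$, has the universal property of $L_{\oplus}\subuniverse^{\simeq}[A]$ recorded in \Cref{cor:existance_of_free_ambi_monoidal_category}. Concretely, for every $(\subuniverse[P,I])$-ambidextrous $\subuniverse$-monoidal $\varintcat{C}$, we show that evaluation at $x_{\univ}^A$,
\[
\operatorname{ev}\colon\intfun^{\subuniverse\text{-}\otimes}(\spanEPI[A],\varintcat{C})\to\varintcat{C}^A,
\]
is an equivalence of $\vartopos$-categories. By \Cref{prop:span_EPI_A_is_ambidextrous}, $\spanEPI[A]$ is itself ambidextrous. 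Moreover, $x_{\univ}^A$ corresponds, via \Cref{example:free_E_monoidal_cat}, to an $\subuniverse$-monoidal functor $\subuniverse^{\simeq}[A]\to\spanEPI[A]$. Since $L_{\oplus}$ is a Bousfield localization (\Cref{prop:localization_to_partial_ambidexterity}), this functor factors through $L_{\oplus}\subuniverse^{\simeq}[A]\to\spanEPI[A]$. The Yoneda lemma in the ambidextrous subcategory then reduces the theorem to the equivalence of $\operatorname{ev}$, and working in every context reduces it further to equivalences of mapping anima.

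The candidate inverse to $\operatorname{ev}$ is the functorial construction $x\mapsto\Xi_x$ from the remark preceding the theorem. The commutative triangle $\Xi_x\circ x_{\univ}^A\simeq x$ gives $\operatorname{ev}\circ\Xi\simeq\id$ directly.

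For the other composite, fix an $\subuniverse$-monoidal $G\colon\spanEPI[A]\to\varintcat{C}$ and set $x=G\circ x_{\univ}^A$. We must produce a natural equivalence $\Xi_{x}\simeq G$. The plan is to use naturality of $\Xi$ along the morphism $G$, regarded as a morphism in $\intmon^{\subuniverse,(\subuniverse[P,I])\text{-}\oplus}(\internalcatofcats)\times_{\internalcatofcats}\internalcatofcats_{A/}$ from $(\spanEPI[A],x_{\univ}^A)$ to $(\varintcat{C},x)$. This yields
\[
G\circ\Xi_{x_{\univ}^A}\simeq\Xi_{G\circ x_{\univ}^A}=\Xi_x .
\]
By \Cref{cor:xi_x_univ_is_id}, $\Xi_{x_{\univ}^A}$ is an autoequivalence of $\spanEPI[A]$ fixing $x_{\univ}^A$. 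It follows that $\operatorname{ev}$ is essentially surjective and that $\Xi$ is a section. Combined with $\operatorname{ev}\circ\Xi\simeq\id$, it then suffices to show that $\Xi_{x_{\univ}^A}\simeq\id$ as $\subuniverse$-monoidal functors. Indeed, in that case $\Xi\circ\operatorname{ev}\simeq\id$, and both composites are identities.

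The main obstacle is upgrading \Cref{cor:xi_x_univ_is_id} from ``an equivalence that is the identity on cores and on mapping anima'' to a coherent $\subuniverse$-monoidal identification with the identity. I would first try to show that the $\subuniverse$-monoidal automorphisms of $\spanEPI[A]$ under $\subuniverse^{\simeq}[A]$ form a contractible space. The approach would be to restrict along the $\subuniverse$-monoidal inclusions $\subuniverse_{\subuniverse[I]}[A]\into\spanEPI[A]$ and $\subuniverse_{\subuniverse[P]}[A]^{\op}\into\spanEPI[A]$.

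On $\subuniverse_{\subuniverse[I]}[A]$, which is $\subuniverse[I]$-cocartesian by \Cref{prop:E_I_is_cocart} and is generated from $A$ under $\subuniverse$-tensors and $\subuniverse[I]$-colimits, monoidal functors that preserve the relevant colimits are determined by their restriction to $A$. This follows from \Cref{prop:free_on_A_with_E_colimits} together with the full faithfulness of \Cref{thm:uniqueness_of_cartesian_structures}. The dual argument applies to the $\subuniverse[P]$ part.

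A functor out of $\spanEPI[A]$ whose two restrictions are adjointed in the sense of \Cref{def:adjointed_functors} is then determined by these two restrictions. To see this, I would apply \Cref{thm:internal_uniqueness_of_unfurling} twice, once for backward and once for forward maps, in the manner of the uniqueness argument of \cite{uniqueness_of_6ff}. This is exactly where \Cref{assumption:global_trunc_bound} (truncatedness and left cancellability) is needed.

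If this direct argument becomes too heavy, a fallback is available. An $\subuniverse$-monoidal endofunctor that is an equivalence and is fixed on the universal object is automatically idempotent up to equivalence by the naturality of $\Xi$: one has $\Xi_{x_{\univ}^A}\circ\Xi_{x_{\univ}^A}\simeq\Xi_{x_{\univ}^A}$. Being invertible, it must then be equivalent to the identity.
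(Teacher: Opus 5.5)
Your argument is correct and is essentially the paper's argument in corepresented form. The paper constructs the two comparison functors between $L_{\oplus}\subuniverse^{\simeq}[A]$ and $\spanEPI[A]$. It shows that one composite is the identity using $\Xi_x\circ x_{\univ}^A\simeq x$, and that the other is $\Xi_{x_{\univ}^A}$ by naturality of $\Xi$; these are exactly your $\operatorname{ev}\circ\Xi\simeq\id$ and $\Xi\circ\operatorname{ev}\simeq(-)\circ\Xi_{x_{\univ}^A}$.

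Your ``main obstacle'' is not actually needed. Since $(-)\circ\Xi_{x_{\univ}^A}$ is already invertible by \Cref{cor:xi_x_univ_is_id}, $\operatorname{ev}$ has both the right inverse $\Xi$ and the left inverse $(\Xi\circ\operatorname{ev})^{-1}\circ\Xi$, so it is an equivalence without identifying $\Xi_{x_{\univ}^A}$ with the identity. Your idempotence fallback is valid nonetheless. The direct unfurling route should be dropped: its freeness claim for $\subuniverse_{\subuniverse[I]}[A]$ does not follow from \Cref{prop:free_on_A_with_E_colimits}, which concerns $\subuniverse^{\full}[A]$ with all morphisms.
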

\begin{proof}

    Applying the construction of $\Xi_x$ to the tautological object $A\to L_{\oplus}\subuniverse^{\simeq}[A]$ gives an $\subuniverse$-monoidal functor $\spanEPI[A]\to L_{\oplus}\subuniverse^{\simeq}[A]$.

    We have a canonical $\subuniverse$-monoidal functor $\subuniverse^{\simeq}[A]\to \spanEPI[A]$ from the universal property described in \Cref{prop:free_E_monoid_on_A} applied to $x_{\univ}^A$. By adjunction, we get an $\subuniverse$-monoidal functor $L_{\oplus}\subuniverse^{\simeq}[A]\to \spanEPI[A]$.

    The composition 
        \[
        L_{\oplus}\subuniverse^{\simeq}[A]\to \spanEPI[A]\to L_{\oplus}\subuniverse^{\simeq}[A]
        \]
    is canonically homotopic to the identity by the universal property of $L_{\oplus}\subuniverse^{\simeq}[A]$ (\Cref{cor:existance_of_free_ambi_monoidal_category}), which combines the free-monoid universal property with the localization adjunction. 

    The composition \[\spanEPI[A]\to L_{\oplus}\subuniverse^{\simeq}[A] \to \spanEPI[A]\] is canonically homotopic to $\Xi_{x_{\univ}^A}$ by the naturality of $\Xi$. Thus it is an equivalence by \Cref{cor:xi_x_univ_is_id}.
\end{proof}

\subsubsection*{The truncation assumption}

We end this section with a few remarks about \Cref{assumption:global_trunc_bound}.

\begin{remark}\label{remark:colimit_of_n_truncated}
    It is natural to ask whether \Cref{assumption:global_trunc_bound} is actually needed. Indeed, we used it to show that the $\vartopos$-categories $\subuniverse_{\subuniverse[I]}$ and $\subuniverse_{\subuniverse[P]}$ are truncated. 
    
    Without the assumption, $\subuniverse_{\subuniverse[I]}$ and $\subuniverse_{\subuniverse[P]}$ are not necessarily truncated. Indeed, take $\vartopos = \catofanima$, take $\subuniverse[I]$ to consist of morphisms with $\pi$-finite homotopy fibres, take $\subuniverse[P]$ to consist of isomorphisms, and take $\subuniverse$ to consist of morphisms whose homotopy fibres admit countable CW structures. Then in $\subuniverse_{\subuniverse[I]}(*)$ we have the morphism
    \[\coprod_{n\in\nats}B^nC_2 \to \nats.\]
    This morphism is locally truncated on its target and is in $\subuniverse[I]$ in the context of its target. However, this morphism is \emph{not} locally truncated in the context $*$.

    Nevertheless, we do not know if \Cref{assumption:global_trunc_bound} is needed for the correctness of \Cref{main_theorem:free_E_monoidal_semiadditive}. 
\end{remark}

\section{Proofs of the main theorems}\label{sec:proof_of_main}
In this section, we deduce \Cref{main_theorem:free_adjointed_2_cat} from \Cref{main_theorem:free_E_monoidal_semiadditive} and prove its lax symmetric monoidal version, \Cref{main_theorem:lax_monoidal_free_adjointed_2_cat}. Recall that, by \Cref{prop:reduction_of_main_theorem}, it suffices to prove that the functor
\[
b\mapsto\Span(\varcat{E}[a](b),\varcat{P},\varcat{I})
\]
is the free $(\varcat{P},\varcat{I})$-adjointed functor to $\catofcats$ on $\id_a$ (\Cref{thm:red_of_main}).

In \Cref{subsec:2_FF_are_just_E_monoidal}, we identify $2$-functor formalisms on $(\varcat{C},\varcat{E})$ with $\subuniverse$-monoidal $\vartopos$-categories for $\vartopos=\presh(\varcat{C})$ (\Cref{thm:equiv_between_2ff_and_E_monoidal}). Under this identification, $(\varcat{P},\varcat{I})$-adjointedness corresponds to $(\subuniverse[P,I])$-ambidexterity (\Cref{prop:adjointed_is_just_ambi}). We apply \Cref{thm:generalized_main_C} to establish the required universal property under a uniform truncation bound, and then remove this additional assumption (\Cref{thm:red_of_main_globally_trunc,prop:main_thm_follows_from_truncated_case}).

In \Cref{subsec:lax_monoidal}, we establish the universal property with respect to lax symmetric monoidal functors (\Cref{thm:lax_monoidal_free_adjointed_2_cat}), proving \Cref{main_theorem:lax_monoidal_free_adjointed_2_cat} and deducing Mann's conjecture (\Cref{cor:Mann_conj}). When $\varcat{C}$ carries its cartesian symmetric monoidal structure, we also describe adjointed $2$-functor formalisms as modules over an idempotent algebra, and adjointed $3$-functor formalisms as commutative algebras under it (\Cref{prop:adjointed_formalisms_as_modules}).

\subsection{The category of \texorpdfstring{$2$}{2}-functor formalisms.}\label{subsec:2_FF_are_just_E_monoidal}

In this subsection, we identify $2$-functor formalisms on a span pair $(\varcat{C},\varcat{E})$ with $\subuniverse$-monoidal $\vartopos$-categories for $\vartopos=\presh(\varcat{C})$, where $\subuniverse$ corresponds to morphisms representable in $\varcat{E}$ (\Cref{thm:equiv_between_2ff_and_E_monoidal}). Under this identification, $(\varcat{P},\varcat{I})$-adjointedness corresponds to $(\subuniverse[P,I])$-ambidexterity (\Cref{prop:adjointed_is_just_ambi}). We then apply \Cref{thm:generalized_main_C} to complete the proof of \Cref{main_theorem:free_adjointed_2_cat}, first under a uniform truncation bound and then without this additional assumption (\Cref{thm:red_of_main_globally_trunc,prop:main_thm_follows_from_truncated_case}).

\subsubsection*{Extension to the presheaf topos.}

Let $(\varcat{C},\varcat{E})$ be a span pair. A $2$-functor formalism on $(\varcat{C},\varcat{E})$ is a functor \[F:\Span(\varcat{C},\varcat{E})\to \catofcats.\]

We let $\vartopos := \presh(\varcat{C})$. Let $\vartopos[E] \subset \vartopos$ denote the wide sub-$\infty$-category of morphisms which are representable in $\varcat{E}$, that is, morphisms $X\to Y$ in $\vartopos$ such that for any morphism $C\to Y$ where $C$ is representable, the pullback $X\times_YC\to C$ is in $\varcat{E}$. By definition of representable morphisms, $(\vartopos,\vartopos[E])$ is a span pair.

Let $\vartopos[D]$ be a presentable $\infty$-category. Denote by \[\Fun^{\vartopos-R}(\Span(\vartopos,\vartopos[E]),\vartopos[D])\subset \Fun(\Span(\vartopos,\vartopos[E]),\vartopos[D]) \] the full sub-$\infty$-category spanned by functors $\Span(\vartopos,\vartopos[E])\to\vartopos[D]$ such that the composition \[\vartopos^{\op}\to \Span(\vartopos,\vartopos[E])\to\vartopos[D] \] preserves all limits.

Note that there is a canonical inclusion $\Span(\varcat{C},\varcat{E})\subset \Span(\vartopos,\vartopos[E])$.

\begin{lemma}\label{lem:extension_of_formalisms_to_presheaves}
    The inclusion $\Span(\varcat{C},\varcat{E})\subset \Span(\vartopos,\vartopos[E])$ induces an equivalence of $\infty$-categories:
    \[
    \Fun^{\vartopos-R}(\Span(\vartopos,\vartopos[E]), \vartopos[D]) \simeq \Fun(\Span(\varcat{C},\varcat{E}), \vartopos[D]).
    \]
\end{lemma}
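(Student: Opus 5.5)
The plan is to show that restriction is an equivalence by exhibiting right Kan extension as its inverse, and to check everything in the span picture using the factorization of spans into backward and forward morphisms. The argument mirrors the proof of \Cref{prop:extra_back_functoriality_for_E_monoidal_cats}, which relied on the good-inclusion machinery of \cite[Proposition 5.18]{CLLambi}. Write $j\colon\Span(\varcat{C},\varcat{E})\into\Span(\vartopos,\vartopos[E])$ for the inclusion.

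First, I would check that $j$ is fully faithful and compatible with the factorization systems. Fully faithfulness holds because a span $c\leftarrow X\to c'$ in $\vartopos$ whose forward leg is in $\vartopos[E]$ has $X\to c'$ representable in $\varcat{E}$. Since $c'$ is representable, $X$ is itself representable. Hence spans between representables are exactly spans in $(\varcat{C},\varcat{E})$. I would then verify the good-inclusion condition of \cite[Definition 5.10]{CLLambi} for $j^{\op}$, in the form used before: if $c\in\varcat{C}$ and $Y\to c$ is a forward-facing morphism in $\Span(\vartopos,\vartopos[E])$, i.e.\ a map in $\vartopos[E]$, then $Y$ is representable. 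Applying \cite[Proposition 5.18]{CLLambi} and \cite[Example 5.8]{CLLambi} then gives the following. Right Kan extension along $j$ is fully faithful, with image the functors whose restriction to $\vartopos^{\op}$ is right Kan extended from $\varcat{C}^{\op}$. In particular, right Kan extensions along $j$ exist, since $\vartopos[D]$ is complete, and they are computed on $\vartopos^{\op}$ by the ordinary formula.

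Second, I would identify the essential image. A functor $\vartopos^{\op}\to\vartopos[D]$ preserves all limits if and only if it is right Kan extended from $\varcat{C}^{\op}$; this is the universal property of $\presh(\varcat{C})$ as the free cocompletion, applied dually. Thus the essential image of right Kan extension is precisely $\Fun^{\vartopos-R}(\Span(\vartopos,\vartopos[E]),\vartopos[D])$. Conversely, every $G$ in that subcategory is the right Kan extension of $G\circ j$: the comparison map $G\to j_*j^*G$ restricts on $\vartopos^{\op}$ to an equivalence by the previous sentence, and the good-inclusion statement says this restriction detects the Kan extension. Together these give the claimed equivalence.

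The main obstacle is the good-inclusion step, namely checking that the right Kan extension along $j$ is computed by the formula on $\vartopos^{\op}$ alone. This requires that for $X\in\vartopos$ the relevant comma category $X\downarrow j$ in the span category has the backward part cofinal. I would reduce this to the factorization of a span $X\leftarrow Z\to c$ with $Z\to c$ in $\vartopos[E]$. Here $Z$ is representable, so each such morphism factors as a backward map $X\leftarrow Z$ with $Z\in\varcat{C}$ followed by a forward map inside $\Span(\varcat{C},\varcat{E})$. Exhibiting the inclusion $\varcat{C}^{\op}_{X/}\to X\downarrow j$ as right adjoint (initial) is the point I expect to need care.
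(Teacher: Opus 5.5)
Your proposal is correct and follows essentially the paper's route. In both, full faithfulness of $j$ makes right Kan extension fully faithful (the paper cites \cite[Proposition 4.3.2.17]{HTT}), and the essential image is identified by reducing right Kan extension along $j$ to right Kan extension along $\varcat{C}^{\op}\into\vartopos^{\op}$ via the backward/forward factorization of spans. The paper delegates that reduction to the argument of \cite[Proposition A.5.16]{Mann6Functors}, while you use \cite[Proposition 5.18]{CLLambi}, as the paper itself does for \Cref{prop:extra_back_functoriality_for_E_monoidal_cats}.

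One slip in your last paragraph: you have the adjunction backwards. The factorization $(c,\ X\leftarrow Z\to c)\mapsto(Z\to X)$ is the \emph{right} adjoint of the inclusion $\varcat{C}^{\op}_{X/}\to X\downarrow j$, so the inclusion is a \emph{left} adjoint. That is what makes it initial; a right adjoint would only be final, which is the wrong condition for computing the limit in the Kan extension formula.
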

\begin{proof}
    Note that $\Span(\varcat{C},\varcat{E})\subset \Span(\vartopos,\vartopos[E])$ is fully faithful. Thus, by \cite[Proposition 4.3.2.17]{HTT}, right Kan extension along it is fully faithful. It remains to show that its essential image consists precisely of those functors whose restriction to $\vartopos^{\op}$ preserves limits. This follows from a very similar argument to \cite[Proposition A.5.16]{Mann6Functors}. 
\end{proof}

\subsubsection*{Identification with internal monoidal categories.}

We now consider $\vartopos[E]$ as a local class of morphisms in the $\infty$-topos $\vartopos$. This local class corresponds to a context-free subuniverse $\subuniverse\subset \universe_{\vartopos}$ by \Cref{prop:from_local_class_to_subuniverse}. 

\begin{proposition}\label{prop:2FF_are_E_mon}
    There is an equivalence of $\infty$-categories \[\Fun^{\vartopos-R}(\Span(\vartopos,\vartopos[E]), \catofcats)\simeq \intsmon(\internalcatofcats)(*).\]
\end{proposition}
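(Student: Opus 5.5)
The plan is to identify both sides with limit-preserving functors out of a single span $\infty$-category, working in the global context $A=*$. By \Cref{def:E_monoids},
\[
\intsmon(\internalcatofcats)(*)=\Fun^{\subuniverse\text{-}\sqcap}(\spanEfE,\internalcatofcats),
\]
the $\infty$-category of $\subuniverse^{\full}$-continuous $\vartopos$-functors $\spanEfE\to\internalcatofcats$. Applying \Cref{prop:extra_back_functoriality_for_E_monoidal_cats} with the complete $\vartopos$-category $\varintcat{C}=\internalcatofcats$, I will replace this by $\universe$-continuous $\vartopos$-functors
\[
\intspan(\universe,\universe_{\subuniverse})\to\internalcatofcats .
\]
Evaluated at $*$, the source has underlying $\infty$-category $\universe(*)=\vartopos$. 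Its right-pointing class $\universe_{\subuniverse}(*)$ consists of the maps $C\to B$ with $[C\to B]\in\subuniverse(B)$, i.e.\ exactly $\vartopos[E]$. So the global sections of the source are $\Span(\vartopos,\vartopos[E])$.

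The key step is a general ``externalization'' principle, which I expect to be the main obstacle. For a span pair $(\vartopos,\vartopos[E])$ with $\vartopos[E]$ local and the internal span pair $(\universe,\universe_{\subuniverse})$ obtained by slicing, I want: $\universe$-continuous $\vartopos$-functors $\intspan(\universe,\universe_{\subuniverse})\to\internalcatofcats$ are equivalent to functors $\Span(\vartopos,\vartopos[E])\to\catofcats$ whose restriction to $\vartopos^{\op}$ preserves limits. This is the span analogue of the equivalence $\Fun^{\universe\text{-}\sqcap}(\universe^{\op},\internalcatofcats)\simeq\Fun^R(\vartopos^{\op},\catofcats)$. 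That analogue is the dual of \Cref{prop:S_is_free_with_S_colmitis}, or \cite[Theorem 7.1.13]{MWcocomplete} applied to $\intpresh(*)=\universe$, which identifies both sides with $\internalcatofcats(*)=\catofcats(\vartopos)$.

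I would prove the externalization principle in three steps.
\begin{enumerate}
\item Send an internal functor $G$ to its global-sections functor
\[
\Gamma\circ G(*)\colon \Span(\vartopos,\vartopos[E])\to\catofcats(\vartopos)\xto{\Gamma}\catofcats .
\]
\item Use \Cref{prop:groupoid_limits_span_universe_E}: $\universe$-continuity of $G$ is detected on $\universe^{\op}$. This reduces continuity to the known case, since a $\universe$-continuous functor from $\universe^{\op}$ is determined by its value at the point. That value is a sheaf $\vartopos^{\op}\to\catofcats$, which is the same as a limit-preserving functor $\vartopos^{\op}\to\catofcats$.
\item Handle the extra covariant functoriality along $\vartopos[E]$. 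In context $A$, the span category is $\Span(\vartopos_{/A},\vartopos[E]_{/A})$. Forgetting to $\vartopos$ is compatible with change of context, so the contextwise functors are all recovered from the global one by precomposition with $\vartopos_{/A}\to\vartopos$. Concretely, $G(A)(X\to A)$ should be the sheaf $U\mapsto \Gamma G(*)(X\times_A U)$, up to passing through the equivalence $\internalcatofcats(A)\simeq\Fun^R((\vartopos_{/A})^{\op},\catofcats)$.
\end{enumerate}

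Making step 3 coherent is where the work lies. I would phrase it via the equivalence $\intfun(\varintcat{X},\internalcatofcats)\simeq\Fun_{/\vartopos}$ (sheaves of functors), and check that the covariant maps agree on both sides by the Beck--Chevalley compatibility already used in \Cref{prop:limits_in_span_E}. Those maps are $p_\otimes$ for $p\in\vartopos[E]$, and $p_\sharp$ commutes with the slicing.

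Finally, composing the equivalences gives
\[
\intsmon(\internalcatofcats)(*)\simeq\Fun^{\universe\text{-}\sqcap}(\intspan(\universe,\universe_{\subuniverse}),\internalcatofcats)\simeq\Fun^{\vartopos\text{-}R}(\Span(\vartopos,\vartopos[E]),\catofcats),
\]
as claimed. Together with \Cref{lem:extension_of_formalisms_to_presheaves}, this will yield \Cref{thm:equiv_between_2ff_and_E_monoidal}.
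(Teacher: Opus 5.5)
Your proposal is correct and follows the paper's proof essentially step for step. You pass through \Cref{prop:extra_back_functoriality_for_E_monoidal_cats}, define the comparison by evaluating at the terminal context and taking global sections (with limit-preservation on $\vartopos^{\op}$ coming from the underlying $\vartopos$-category), and invert it via the pullback formula $U\mapsto\Gamma G(*)(X\times_A U)$, which is exactly the paper's bifunctor $(\vartopos_{/X})^{\op}\times\Span(\vartopos_{/X},\vartopos[E])\to\Span(\vartopos,\vartopos[E])$, $(A,B)\mapsto A\times_X B$; the paper likewise treats the coherence of this inverse as a routine check, so your step~3 is at the same level of detail as the published argument.
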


\begin{proof}
    The equivalence of \Cref{prop:extra_back_functoriality_for_E_monoidal_cats} induces an equivalence of $\infty$-categories on global sections
    \[
    \intsmon(\internalcatofcats)(*) \simeq \intfun^{\universe-\sqcap}(\intspan(\universe,\universe_{\subuniverse}), \internalcatofcats)(*).
    \]
    The right-hand side is a full sub-$\infty$-category of the $\infty$-category of $\vartopos$-functors from $\intspan(\universe,\universe_{\subuniverse})$ to $\internalcatofcats$. 
    
    We get a functor 
    \[\theta:\intfun^{\universe-\sqcap}(\intspan(\universe,\universe_{\subuniverse}), \internalcatofcats)(*)\to \Fun(\Span(\vartopos,\vartopos[E]), \catofcats)\]
    as follows. We evaluate a $\vartopos$-functor $F:\intspan(\universe,\universe_{\subuniverse})\to \internalcatofcats$ at the terminal object $*$ to get a functor $\Span(\vartopos,\vartopos[E])\to \catofcats(\vartopos)$ and compose it with the global sections functor $\catofcats(\vartopos)\to \catofcats$.

    We show this functor lands in $\Fun^{\vartopos-R}(\Span(\vartopos,\vartopos[E]),\catofcats)$. Since $F$ preserves $\universe$-limits, the restriction of $\theta(F)$ to $\vartopos^{\op}$ reconstructs the underlying $\vartopos$-category of $F$. This functor is continuous by definition. 

    There is an inverse of $\theta$ which we will now explain how to construct. 
    
    Let $X\in \vartopos$. Denote by $\Span(\vartopos_{/X},\vartopos[E])$ the span $\infty$-category of $\vartopos_{/X}$ with right-pointing maps that are in $\vartopos[E]$. There is a functor $(\vartopos_{/X})^{\op}\times \Span(\vartopos_{/X},\vartopos[E])\to \Span(\vartopos,\vartopos[E])$ taking $A\to X \in (\vartopos_{/X})^{\op}$ and $B\to X \in \Span(\vartopos_{/X},\vartopos[E])$ to the pullback $A\times_X B$. It is not hard to see that this is functorial in $X\in \vartopos^{\op}$ in the following sense:

    Given $G:\Span(\vartopos,\vartopos[E])\to \catofcats$ in $\Fun^{\vartopos-R}(\Span(\vartopos,\vartopos[E]),\catofcats)$, we get, functorially in $X\in \vartopos^{\op}$, a functor \[\intspan(\universe,\universe_{\subuniverse})(X)\simeq  \Span(\vartopos_{/X},\vartopos[E])\to \Fun^R((\vartopos_{/X})^{\op}, \catofcats)\simeq \internalcatofcats(X).\]

    Checking that this constructs an inverse to $\theta$ is straightforward.
\end{proof}

Overall, we get the following theorem saying that $2$-functor formalisms correspond uniquely to globally defined $\subuniverse$-monoidal $\vartopos$-categories. 

\begin{theorem}\label{thm:equiv_between_2ff_and_E_monoidal}
    There is an equivalence of $\infty$-categories 
    \[\Fun(\Span(\varcat{C},\varcat{E}), \catofcats)\simeq \intsmon(\internalcatofcats)(*).\]
\end{theorem}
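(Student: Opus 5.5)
The proof composes the two equivalences already established. By \Cref{lem:extension_of_formalisms_to_presheaves}, applied with $\vartopos[D]=\catofcats$ (which is presentable), restriction along the fully faithful inclusion $\Span(\varcat{C},\varcat{E})\subset\Span(\vartopos,\vartopos[E])$ gives an equivalence
\[
\Fun^{\vartopos-R}(\Span(\vartopos,\vartopos[E]),\catofcats)\simeq\Fun(\Span(\varcat{C},\varcat{E}),\catofcats).
\]
Its inverse is right Kan extension. By \Cref{prop:2FF_are_E_mon}, the left-hand side is in turn equivalent to $\intsmon(\internalcatofcats)(*)$, where $\subuniverse$ is the context-free subuniverse attached to the local class $\vartopos[E]$.

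Before composing, I will check that $\vartopos[E]$ really is a local class. Representable morphisms are stable under base change and local on the target, because presheaf pullbacks are computed objectwise and representability is tested on representables. The class contains equivalences and is closed under composition, since $\varcat{E}$ is wide and stable under base change. Hence \Cref{prop:from_local_class_to_subuniverse} applies, once smallness holds. Smallness may require a cardinal bound: either we restrict to a small presentation, or we note that relative representability bounds the fibres by objects of $\varcat{C}$. This is the same bookkeeping implicitly used in \Cref{prop:2FF_are_E_mon}.

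Composing the two equivalences gives
\[
\Fun(\Span(\varcat{C},\varcat{E}),\catofcats)\simeq\Fun^{\vartopos-R}(\Span(\vartopos,\vartopos[E]),\catofcats)\simeq\intsmon(\internalcatofcats)(*).
\]
Unwinding the construction, a $2$-functor formalism $D$ goes to the $\subuniverse$-monoidal $\vartopos[C]$-category whose underlying $\vartopos$-category is the limit-preserving extension of $D|_{\varcat{C}^{\op}}$. Its indexed tensor products are given by the extended $p_!$.

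The only substantive content lies in the two cited results. The step I expect to be delicate is the essential-image claim in \Cref{lem:extension_of_formalisms_to_presheaves}: a right Kan extended functor must restrict to a limit-preserving functor on $\vartopos^{\op}$, and conversely such functors are right Kan extended. I would argue this as in Mann's \cite[Proposition A.5.16]{Mann6Functors}. Write each presheaf as a colimit of representables, and each span with $\vartopos[E]$-leg as a colimit of spans with representable source, using that base change along $\vartopos[E]$-maps commutes with colimits in the topos. Since that lemma is already available, the theorem itself follows formally.
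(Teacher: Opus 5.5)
Your proposal is correct and matches the paper's proof, which obtains the theorem as an immediate corollary by composing the equivalence of \Cref{lem:extension_of_formalisms_to_presheaves} (taken with $\vartopos[D]=\catofcats$) with that of \Cref{prop:2FF_are_E_mon}. Your extra verification that the morphisms representable in $\varcat{E}$ form a local class that contains equivalences and is closed under composition, together with your smallness remark, is bookkeeping the paper leaves implicit, and it is sound. Locality on the target holds because effective epimorphisms in $\presh(\varcat{C})$ are objectwise $\pi_0$-surjective, so maps from representables lift.
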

\begin{proof}
    This is an immediate corollary of \Cref{lem:extension_of_formalisms_to_presheaves,prop:2FF_are_E_mon}.
\end{proof}

Let $\varcat{P},\varcat{I}\subset\varcat{E}$ satisfy the hypotheses of \Cref{main_theorem:free_adjointed_2_cat}. Let $\subuniverse[P],\subuniverse[I]\subset\subuniverse$ denote the subuniverses corresponding to the local classes of morphisms representable in $\varcat{P}$ and $\varcat{I}$, respectively.

\begin{proposition}\label{prop:adjointed_is_just_ambi}
    The equivalence of \Cref{thm:equiv_between_2ff_and_E_monoidal} restricts to an equivalence \[\Fun^{(\varcat{P},\varcat{I})\operatorname{-adjointed}}(\Span(\varcat{C},\varcat{E}), \catofcats)\simeq \intmon^{\subuniverse,(\subuniverse[P],\subuniverse[I])\text{-}\oplus}(\internalcatofcats)(*).\]
\end{proposition}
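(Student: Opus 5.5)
The plan is to compare, under the equivalence of \Cref{thm:equiv_between_2ff_and_E_monoidal}, the inductive norm maps on both sides and show that they match degree by degree. On the $\subuniverse$-monoidal side I use the recognition criterion \Cref{prop:inductive_amby_iff_ambi}: $(\subuniverse[P,I])$-ambidexterity is equivalent to the inductively defined norms $s_\otimes\to s_*$ (for $s\in\subuniverse_{\subuniverse[P]}$) and $s_\sharp\to s_\otimes$ (for $s\in\subuniverse_{\subuniverse[I]}$) being equivalences in every context. On the $2$-functor formalism side, \Cref{def:adjointed_functors} imposes the same inductive conditions, but only on morphisms of $\varcat{P},\varcat{I}$ between representables and in the global context. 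The proof therefore has two parts. First I identify the norms for representable morphisms. Second I show that the conditions for representables propagate to all of $\subuniverse_{\subuniverse[P]}$ and $\subuniverse_{\subuniverse[I]}$ in all contexts.

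For the first part, fix $F\colon\Span(\varcat{C},\varcat{E})\to\catofcats$ with associated $\subuniverse$-monoidal $\varintcat{D}$. Its extension $\widetilde F$ from \Cref{lem:extension_of_formalisms_to_presheaves} satisfies $\widetilde F(X)=\varintcat{D}(X)$, and $\widetilde F$ sends a right-pointing span along $s$ to $s_\otimes$. Here I view $s$ in the context of its target, as in the construction inverse to $\theta$ in the proof of \Cref{prop:2FF_are_E_mon}. I then check by induction on the truncation level that the norm $N^F_p$ of \Cref{def:adjointed_functors} coincides with the norm of \Cref{def:cartesian_S_monoidal_category} for $p$ viewed in context $B$. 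Both norms are mates of composites built from the same span-functoriality isomorphism $p^*p_!\simeq\pi_{1!}\pi_2^*$, the unit of $\Delta^*\dashv\Delta_*$, and the inverse of the inductively obtained $N_\Delta$. One subtlety is that $p_*$ in $\catofcats$ (the right adjoint of $p^*$ on global sections) must agree with the internal indexed limit functor. This follows because internal adjoints of restriction along a map in $\vartopos$ are computed contextwise, and the Beck--Chevalley conditions hold automatically since $\widetilde F|_{\vartopos^{\op}}$ is a sheaf. The $\varcat{I}$ case follows dually by passing to $(-)^{\co}$ and to opposite categories.

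For the second part, which I expect to be the main obstacle, let $s\colon X\to Y$ lie in $\subuniverse_{\subuniverse[P]}$, which consists of maps of presheaves representable in $\varcat{P}$. Whether $N_s$ is an equivalence is a condition on a natural transformation of functors into $\widetilde F(Y)=\lim_{C\to Y}F(C)$. By the sheaf property of $\widetilde F|_{\vartopos^{\op}}$, this condition can be checked after pulling back along all maps $C\to Y$ with $C$ representable. I therefore need the norm to be compatible with such base change. This comes from the Beck--Chevalley square for $s_\otimes$ (span functoriality) together with the Beck--Chevalley square for $s_*$ (adjointability at the previous truncation level), and it is proved inside the same induction. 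After base change, $X\times_Y C\to C$ lies in $\varcat{P}$, but $X\times_Y C$ is representable only because representable morphisms have representable fibres. This is exactly the definition of $\vartopos[E]$, so the reduction lands in the range of the first part. Changing the ambient context $A$ reduces the same way by localness. Hence adjointedness of $F$ implies inductive ambidexterity of $\varintcat{D}$, and the converse holds by restricting to representables.

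Finally, both sides are full subcategories, so the equivalence of \Cref{thm:equiv_between_2ff_and_E_monoidal} restricts as claimed. The truncatedness and left cancellability needed for the inductions to make sense follow from the hypotheses on $\varcat{P},\varcat{I}$ together with \Cref{prop:from_local_class_to_subuniverse}.
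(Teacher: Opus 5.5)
Your strategy is the paper's own: its proof just observes that both conditions are defined by the same inductive procedure. Your descent step to arbitrary maps representable in $\varcat{P},\varcat{I}$ and arbitrary contexts is a reasonable way to make that precise.

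The problem is the one point where the two definitions genuinely differ. You claim the Beck--Chevalley conditions ``hold automatically since $\widetilde F|_{\vartopos^{\op}}$ is a sheaf.'' This is false. The sheaf property $\widetilde F(Y)\simeq\lim_{C\to Y}F(C)$ only reduces Beck--Chevalley along arbitrary maps of presheaves to Beck--Chevalley along maps between representables. Take $p\colon x\to y$ in $\varcat{P}$ and its base change $p'$ along $c\colon y'\to y$ in $\varcat{C}$. Invertibility of the mate $c^*p_*\to p'_*(c')^*$ is then a real condition on $F$. \Cref{def:adjointed_functors} does not impose it; it only asks for adjoints in $\twocatofcats$.

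On the internal side this condition is built in. $\subuniverse[P]$-completeness is part of being (inductively) cartesian. The internal norm of \Cref{def:cartesian_S_monoidal_category} is not even defined without the internal right adjoint, and that right adjoint requires Beck--Chevalley.

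The same issue resurfaces in your second part. You build $s_*$ for a non-representable target as $\lim_{C\to Y}(s_C)_*$, and you identify $c^*N_s$ with $N_{s_C}(c')^*$. Both steps need Beck--Chevalley for the representable pieces at the \emph{current} truncation level. ``Adjointability at the previous truncation level'' does not supply this.

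The repair is to put Beck--Chevalley into the induction hypothesis and derive it from adjointedness. At level $n+1$, span functoriality gives $c^*p_!\simeq p'_!(c')^*$. The norm is compatible with base change: this isomorphism followed by $N_{p'}(c')^*$ equals $c^*N_p$ followed by the mate. That compatibility uses only span functoriality and the level-$n$ statements (Beck--Chevalley and base-change compatibility) for the diagonal $\Delta$. Since $N_p$ and $N_{p'}$ are invertible by hypothesis, the mate is invertible; the case of $\varcat{I}$ is dual.

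Only after this can you glue along $\widetilde F(Y)\simeq\lim_{C\to Y}F(C)$ as in your second part. This derivation is also what underlies the fact, implicit in \Cref{thm:uniqueness_of_unfurling}, that an adjointed functor restricts to an adjointable one. With it added, your argument is a correct expansion of the paper's one-line proof.
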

\begin{proof}
    Unraveling the definitions, we can observe that the conditions are defined by the same inductive procedure.
\end{proof}

\subsubsection*{Proof of \texorpdfstring{\Cref{main_theorem:free_adjointed_2_cat}}{Theorem A}.}

We are now ready to prove \Cref{thm:red_of_main} in the case where there exists $n\geq -2$ such that every morphism in $\varcat{P}$ and $\varcat{I}$ is $n$-truncated. 

\begin{theorem}\label{thm:red_of_main_globally_trunc}
    Let $\varcat{C}$ be an $\infty$-category, and let $\varcat{E},\varcat{I},\varcat{P}\subset\varcat{C}$ be wide sub-$\infty$-categories that are stable under base change in $\varcat{C}$. Assume that $\varcat{I},\varcat{P}\subset\varcat{E}$ are left cancellable and that every morphism in $\varcat{I}$ and $\varcat{P}$ is $n$-truncated for some fixed $n\geq -2$. Let $a\in \varcat{C}$. Let $F\colon\Span(\varcat{C},\varcat{E})\to\catofcats$ denote the functor given by
    \[
    F(b):=\Span(\varcat{E}[a](b),\varcat{P},\varcat{I}),
    \]
    obtained by composing the functor $b\mapsto(\varcat{E}[a](b),\varcat{P},\varcat{I})$ with $\Span$. Then $F$ is the free $(\varcat{P},\varcat{I})$-adjointed functor on $\id_a\in F(a)$.
\end{theorem}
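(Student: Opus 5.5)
The plan is to transport the statement across the equivalence of \Cref{thm:equiv_between_2ff_and_E_monoidal} and then invoke \Cref{thm:generalized_main_C}. Set $\vartopos=\presh(\varcat{C})$. Let $\subuniverse,\subuniverse[P],\subuniverse[I]$ be the context-free subuniverses attached to the local classes of morphisms representable in $\varcat{E},\varcat{P},\varcat{I}$.

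\emph{Step 1: the hypotheses.} By \Cref{prop:from_local_class_to_subuniverse}, $\subuniverse[P]$ and $\subuniverse[I]$ are full, because left cancellability passes to representable morphisms. They are also truncated, since a morphism representable in $n$-truncated maps is $n$-truncated after pullback to representables, and representables cover. The same argument gives \Cref{assumption:global_trunc_bound} with the same integer $n$. Smallness of the subuniverses is arranged by the usual choice of cardinal bound, since objects of $\presh(\varcat{C})$ are colimits of representables.

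\emph{Step 2: the objects.} \Cref{prop:adjointed_is_just_ambi} identifies $(\varcat{P},\varcat{I})$-adjointed functors $\Span(\varcat{C},\varcat{E})\to\catofcats$ with $(\subuniverse[P],\subuniverse[I])$-ambidextrous $\subuniverse$-monoidal $\vartopos$-categories in the global context. I will check that under \Cref{thm:equiv_between_2ff_and_E_monoidal}, with $a$ regarded as its Yoneda image in $\vartopos$, the functor $F$ corresponds to $\spanEPI[a]$. Concretely, the functor $\intspan(\universe,\universe_{\subuniverse})\to\internalcatofcats$ obtained by right Kan extension from $\spanEPI[a]$ (\Cref{prop:extra_back_functoriality_for_E_monoidal_cats}) has global-sections value at a representable $b$ equal to $\spanEPI[a](b)$. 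This is the span category of maps $C\to a\times b$ with $C\to b$ in $\subuniverse(b)$, and representability of $\varcat{E}$ identifies it with spans of objects of $\varcat{E}[a](b)$, with $\varcat{P}$- and $\varcat{I}$-marked maps. So this value is $\Span(\varcat{E}[a](b),\varcat{P},\varcat{I})$. Functoriality in spans $b\leftarrow b'\to b''$ matches too, because on both sides it is given by pullback and composition (the $\subuniverse$-monoidal functoriality of \Cref{prop:E_monoidal_span_cats}). Under this identification the universal object $x_{\univ}^a$ corresponds to $\id_a\in F(a)$.

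\emph{Step 3: freeness.} By \Cref{thm:generalized_main_C}, $\spanEPI[a]\simeq L_{\oplus}\subuniverse^{\simeq}[a]$. By \Cref{cor:existance_of_free_ambi_monoidal_category} it internally represents $\varintcat{D}\mapsto\intmap(a,\varintcat{D})$ on ambidextrous objects. Passing to mapping categories via the $\internalcatofcats$-enrichment of \Cref{cor:cmon_is_tensored_and_powered_by_cat}, we get $\Fun^{\subuniverse\text{-}\otimes}(\spanEPI[a],\varintcat{D})\simeq\varintcat{D}(a)$ by evaluation at $x_{\univ}^a$. On the formalism side, $\varintcat{D}(a)=G(a)$. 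To finish, the equivalence of \Cref{thm:equiv_between_2ff_and_E_monoidal} must be upgraded to identify mapping categories, i.e.\ to be $\catofcats$-enriched; this holds because both sides are cotensored over $\catofcats$ and the equivalence commutes with $(-)^{\varcat{T}}$. With that, the resulting map to $G(a)$ is evaluation at $\id_a$, which gives \Cref{def:free_adjointed_functor_on_an_object}.

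\emph{Main obstacle.} I expect Step 2 to be the hardest part: a coherent identification of $F$ with the functor corresponding to $\spanEPI[a]$, including the $2$-categorical span structure and naturality in $b$ rather than just objectwise agreement. I would first try to deduce it from the explicit inverse to $\theta$ in \Cref{prop:2FF_are_E_mon}, which reduces everything to an equivalence of adequate triplets natural in $b$.
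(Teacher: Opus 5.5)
Your proposal is correct and follows the paper's route. The paper sets $A=\yo(a)\in\presh(\varcat{C})$, invokes \Cref{thm:generalized_main_C} together with \Cref{prop:adjointed_is_just_ambi}, and identifies $\spanEPI[A]$ with $F$ by direct inspection, which is your Step 2; your Steps 1 and 3 make explicit what that one-line proof leaves implicit. Step 1 checks that the representable classes give inductible subuniverses satisfying \Cref{assumption:global_trunc_bound}, and Step 3 upgrades the equivalence of \Cref{thm:equiv_between_2ff_and_E_monoidal} to mapping $\infty$-categories via compatibility with cotensors.
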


\begin{proof}
    Let $A=\yo(a)\in\vartopos$ be the representable presheaf associated to $a$. By \Cref{thm:generalized_main_C,prop:adjointed_is_just_ambi}, it remains to verify that $\spanEPI[A]$ corresponds to the functor $F\colon\Span(\varcat{C},\varcat{E})\to\catofcats$ given by $F(b):=\Span(\varcat{E}[a](b),\varcat{P},\varcat{I})$. This follows by direct inspection, in the same way as in the proof of \cite[Theorem 4.20.]{CLLuniv}.
\end{proof}

Thus, \Cref{main_theorem:free_adjointed_2_cat} holds if every morphism in $\varcat{I}$ and $\varcat{P}$ is $n$-truncated for some fixed $n$.

\begin{proposition}\label{prop:main_thm_follows_from_truncated_case}
    Assume \Cref{main_theorem:free_adjointed_2_cat} in the case where every morphism in $\varcat{I}$ and $\varcat{P}$ is $n$-truncated for some fixed $n$. Then \Cref{main_theorem:free_adjointed_2_cat}   follows.
\end{proposition}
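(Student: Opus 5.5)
We reduce to the truncated case by writing $\varcat{P}$ and $\varcat{I}$ as filtered unions of the subcategories $\varcat{P}^{\leq n}\subset\varcat{P}$ and $\varcat{I}^{\leq n}\subset\varcat{I}$ of $n$-truncated morphisms. Each $\varcat{P}^{\leq n}$, $\varcat{I}^{\leq n}$ is wide, because equivalences are $(-2)$-truncated. It is closed under composition and stable under base change, since truncated maps have these properties. It is left cancellable: if $g\circ f$ and $g$ lie in $\varcat{P}^{\leq n}$, then $f\in\varcat{P}$ by left cancellability, and $f$ is $n$-truncated because it factors as the graph $X\to X\times_Z Y$ followed by a base change of $g\circ f$. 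The graph is a base change of the diagonal of $g$, which is $(n-1)$-truncated. Consequently the hypotheses of \Cref{main_theorem:free_adjointed_2_cat} hold for $(\varcat{P}^{\leq n},\varcat{I}^{\leq n})$. By assumption, \Cref{main_theorem:free_adjointed_2_cat} holds for each $n$ with target $\spantwo(\varcat{C},\varcat{E})^{\varcat{P}^{\leq n}}_{\varcat{I}^{\leq n}}$.

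Next we compare the adjointedness conditions. By \Cref{def:adjointed_functors}, a functor $\Span(\varcat{C},\varcat{E})\to\vartwocat{K}$ is $\varcat{P}$-right adjointed exactly when it is $\varcat{P}^{(n)}$-right adjointed for every $n$. The norm maps $N_p$ for $p$ an $(n+1)$-truncated map in $\varcat{P}$ are defined inductively using only diagonals, which are $n$-truncated maps in $\varcat{P}$. Hence being $\varcat{P}^{(m)}$-right adjointed for $m\leq n$ is the same condition whether the ambient class is $\varcat{P}$ or $\varcat{P}^{\leq n}$. It follows that
\[
\catoftwocats^{(\varcat{P},\varcat{I})\adjtd}_{\Span(\varcat{C},\varcat{E})/}=\bigcap_n \catoftwocats^{(\varcat{P}^{\leq n},\varcat{I}^{\leq n})\adjtd}_{\Span(\varcat{C},\varcat{E})/},
\]
a decreasing intersection of full subcategories. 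Each has a reflection $L_n$ by \Cref{prop:adjointed_reflection}, and $L_n(\Span(\varcat{C},\varcat{E}))\simeq\spantwo(\varcat{C},\varcat{E})^{\varcat{P}^{\leq n}}_{\varcat{I}^{\leq n}}$ by the truncated case. The intersection of a tower of reflective subcategories has as reflection of an object the colimit of its successive reflections: an object local for all $n$ maps out of the colimit exactly as out of the original object. We need the colimit to lie in the intersection, and \Cref{prop:adjtd_preserves_nonempty_colimits} provides this, since the colimit receives a map from each stage. Therefore the universal $(\varcat{P},\varcat{I})$-adjointed functor is
\[
\vartwocat{U}\simeq\operatorname{colim}_n\,\spantwo(\varcat{C},\varcat{E})^{\varcat{P}^{\leq n}}_{\varcat{I}^{\leq n}},
\]
with the colimit taken in $\catoftwocats$, or equivalently under $\Span(\varcat{C},\varcat{E})$, since the indexing category is filtered and hence weakly contractible.

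The main obstacle is to identify this sequential colimit with $\spantwo(\varcat{C},\varcat{E})^{\varcat{P}}_{\varcat{I}}$. All transition functors are the identity on objects and on $1$-morphisms. On each mapping category $\Span(\varcat{E}[a](b),\varcat{P}^{\leq n},\varcat{I}^{\leq n})$ they are wide inclusions whose union is $\Span(\varcat{E}[a](b),\varcat{P},\varcat{I})$. We would first show that, in each $\Span(\varcat{E}[a](b),-)$, the span category of the union is the colimit of the span categories. The morphism data are functorial in $(\varcat{P},\varcat{I})$. The spaces of $k$-simplices are indexed by diagrams of fixed finite shape whose legs are required to lie in $\varcat{P}$ or $\varcat{I}$, and such diagrams lie in some finite stage. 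Since filtered colimits of spaces commute with finite limits, the complete Segal spaces are colimits levelwise, and the levelwise colimits remain complete Segal. The remaining point, which is the delicate part, is that a filtered colimit of $(\infty,2)$-categories that is bijective on objects and $1$-morphisms is computed homwise. We would deduce this by presenting $(\infty,2)$-categories as Segal objects in $\catofcats$. Filtered colimits commute with the finite limits in the Segal and completeness conditions, because filtered colimits in $\catofcats$ are exact, so the colimit may be computed levelwise there. Alternatively, we would verify directly that $\spantwo(\varcat{C},\varcat{E})^{\varcat{P}}_{\varcat{I}}$ satisfies the recognition principle \Cref{recognition_principle}, using the colimit description of its mapping categories.
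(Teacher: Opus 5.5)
Your proposal is correct and follows the paper's proof. The paper likewise filters by the $n$-truncated parts $\varcat{P}_n,\varcat{I}_n$ and uses a comparison of universal properties, which you spell out as the reflection onto the intersection being the colimit of the reflections $L_n$. This reduces to showing $\operatorname{colim}_n\spantwo(\varcat{C},\varcat{E})^{\varcat{P}_n}_{\varcat{I}_n}\simeq\spantwo(\varcat{C},\varcat{E})^{\varcat{P}}_{\varcat{I}}$, which the paper checks levelwise on bisimplicial anima; this is the same finite-limits-commute-with-filtered-colimits argument you run through Segal objects in $\catofcats$.

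One minor correction: the $(-2)$-stage of \Cref{def:adjointed_functors} is naive adjointability for the whole ambient class, so $\varcat{P}^{(m)}$-adjointedness does depend on whether the ambient class is $\varcat{P}$ or $\varcat{P}_n$. Your intersection formula still holds, because every morphism of $\varcat{P}$ lies in some $\varcat{P}_n$.
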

\begin{proof}
    Let $\varcat{P}_n\subset\varcat{P},\varcat{I}_n\subset\varcat{I}$ denote the wide sub-$\infty$-categories of morphisms which are $n$-truncated.
    
    By comparison of universal properties, it is enough to show that the following is a sequential colimit of $(\infty,2)$-categories:
    \begin{align*}
    \Span(\varcat{C},\varcat{E})&=\spantwo(\varcat{C},\varcat{E})^{\varcat{P}_{-2}}_{\varcat{I}_{-2}}\\
    &\to  \spantwo(\varcat{C},\varcat{E})^{\varcat{P}_{-1}}_{\varcat{I}_{-1}}\\
    &\to \spantwo(\varcat{C},\varcat{E})^{\varcat{P}_{0}}_{\varcat{I}_{0}} \to
    \hdots \\
    &\to \spantwo(\varcat{C},\varcat{E})^{\varcat{P}_{n}}_{\varcat{I}_{n}}
    \to
    \hdots \\
    &\to \spantwo(\varcat{C},\varcat{E})^{\varcat{P}}_{\varcat{I}}.
    \end{align*}

    This is a colimit diagram on the level of bisimplicial anima. 
\end{proof}

Thus we have established \Cref{main_theorem:free_adjointed_2_cat}.




\subsection{Universality with respect to lax monoidal functors.}\label{subsec:lax_monoidal}

In this subsection, we prove \Cref{main_theorem:lax_monoidal_free_adjointed_2_cat} by establishing the universal property of $\spantwo(\varcat{C},\varcat{E})^{\varcat{P}}_{\varcat{I}}$ with respect to lax symmetric monoidal functors (\Cref{thm:lax_monoidal_free_adjointed_2_cat}). This also proves Mann's conjecture (\Cref{cor:Mann_conj}).

When $\varcat{C}$ carries its cartesian symmetric monoidal structure, we show that the equivalence of \Cref{thm:equiv_between_2ff_and_E_monoidal} is compatible with Day convolution (\Cref{prop:formalisms_equivalence_is_symmetric_monoidal}). We then identify $(\varcat{P},\varcat{I})$-adjointed $2$-functor formalisms with modules over the universal adjointed formalism $h_*$, and their lax symmetric monoidal counterparts with commutative algebras under $h_*$ (\Cref{prop:adjointed_formalisms_as_modules}). We conclude by observing that, in this presheaf setting, \Cref{main_theorem:free_E_monoidal_semiadditive} holds without the additional uniform truncation bound (\Cref{remark:presheaf_case_without_uniform_truncation}).

\subsubsection*{Symmetric monoidal span categories.}

Let $\vartwocat{C}$ be a symmetric monoidal $(\infty,2)$-category.
Then the symmetric monoidal structure on $\vartwocat{C}$ is classified by a finite-product-preserving functor 
\[
F_{\vartwocat{C}}:\Span(\Fin)\rightarrow \catoftwocats.
\]
We denote by $\vartwocat{C}^\otimes$ the cocartesian unstraightening of $F_{\vartwocat{C}}$ (see \cite[Definition 5.1. and subsequent discussions]{CLLuniv} for the definitions and basic results on $2$-cocartesian fibrations). 

Let $\operatorname{SpanQuad}$ denote the following sub-$\infty$-category of the $\infty$-category of diagrams in $\catofcats$ of the shape 
\[\begin{tikzcd}
    P\ar[d]\\ E\ar[r] & C\\
    I.\ar[u]
\end{tikzcd}\]
Objects of $\operatorname{SpanQuad}$ are those diagrams where $E,P,I$ are wide sub-$\infty$-categories of $C$ that are stable under base change in $C$, with $P,I\subset E$ left cancellable and every morphism in $P$ and $I$ truncated. Morphisms of $\operatorname{SpanQuad}$ are those morphisms that preserve pullbacks along all of $I,P,E$. We call an object $(C,E,P,I)\in \operatorname{SpanQuad}$ a span quadruple.

We have a natural transformation between finite-product-preserving functors 
\[\begin{tikzcd}
	\operatorname{SpanQuad} && \catoftwocats.
	\arrow[""{name=0, anchor=center, inner sep=0}, "{(C,E,P,I)\mapsto \spantwo(C,E)^P_I}"', shift right=2, from=1-1, to=1-3]
	\arrow[""{name=1, anchor=center, inner sep=0}, "{(C,E,P,I)\mapsto \Span(C,E)}", shift left=2, from=1-1, to=1-3]
	\arrow[between={0.2}{0.8}, Rightarrow, nfold, from=1, to=0]
\end{tikzcd}\]

Suppose that $C$ is a symmetric monoidal $\infty$-category and that $(C,E,P,I)\in \operatorname{SpanQuad}$ is a span quadruple. Suppose further that $E,P,I\subset C$ are symmetric monoidal sub-$\infty$-categories of $C$ and that the tensor product $C^2\to C$ induces a morphism of span quadruples $(C^2,E^2,P^2,I^2)\to (C,E,P,I)$. Let $F_C:\Span(\Fin)\to\catofcats$ classify the symmetric monoidal structure on $C$. Then $F_C$ lifts to a finite-product-preserving functor 
\[\widetilde{F}:\Span(\Fin)\to \operatorname{SpanQuad}.\]

Under these assumptions, $\Span(C,E)$ is a symmetric monoidal $\infty$-category and $\spantwo(C,E)^P_I$ is a symmetric monoidal $(\infty,2)$-category. The inclusion
\[\Span(C,E)\into \spantwo(C,E)^P_I\]
is canonically symmetric monoidal. 

\begin{definition}
    Let \[\wt{C}\to \Span(\Fin)^{\op}\] denote the cartesian unstraightening of the functor 
    \[\Span(\Fin)\xto{F_C}\catofcats\]
    above. Let $\wt{E},\wt{P},\wt{I}\subset\wt{C}$ denote the wide sub-$\infty$-categories spanned by morphisms over isomorphisms in $\Span(\Fin)^{\op}$ which are in $E,P,I$, respectively.
\end{definition}

\begin{lemma}
    $\wt{P},\wt{I}\subset \wt{E}\subset \wt{C}$ are wide sub-$\infty$-categories that are stable under base change in $\wt{C}$. Moreover, $\wt{P},\wt{I}$ are left cancellable, and every morphism in $\wt{P}$ and $\wt{I}$ is truncated.
\end{lemma}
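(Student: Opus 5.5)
We first make the objects and morphisms of $\wt{C}$ explicit. Since $\wt{C}\to\Span(\Fin)^{\op}$ is the cartesian unstraightening of $F_C$, an object is a pair $(S,(c_s)_{s\in S})$ with $S$ a finite set and $(c_s)\in C^S\simeq F_C(S)$. A morphism lying over a span $S\leftarrow T\to S'$ is a family of maps $c_s\to\bigotimes_{t\in T_{s'}} c'_t$ after the appropriate reindexing. Morphisms over isomorphisms of $\Span(\Fin)^{\op}$ are, up to relabelling along a bijection $S\simeq S'$, simply families of morphisms $(c_s\to c'_s)_s$ in $C^S$. Hence, over a fixed isomorphism, $\wt{E},\wt{P},\wt{I}$ are the products $E^S,P^S,I^S$. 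Wideness and closure under composition follow at once. Left cancellability is also immediate: if $g\circ f$ and $g$ lie in $\wt{P}$, both lie over isomorphisms, so $f$ does too. Each component then satisfies left cancellability in $P$.

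For base change, take a morphism $f\colon x\to y$ in $\wt{E}$ over an isomorphism and an arbitrary $g\colon z\to y$ over a span $\sigma$. We construct the pullback using the cartesian fibration. First take the cartesian lift $\bar g\colon \sigma^*y\to y$ of $\sigma$ (so $\sigma^*y=F_C(\sigma)(y)$). Then $g$ factors as $z\xrightarrow{g'}\sigma^*y\xrightarrow{\bar g} y$, where $g'$ lies in the fibre over $z$'s base. Pulling back $f$ along $\bar g$ gives the cartesian lift of $\sigma$ at $x$. The induced map $F_C(\sigma)(f)\colon\sigma^*x\to\sigma^*y$ is a fibrewise pullback square, hence a pullback in $\wt{C}$, by the standard fact that squares of cartesian edges over a commuting square are pullbacks. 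The main point is that $F_C(\sigma)(f)$ again lies in $E^{S''}$. This holds because $F_C(\sigma)$ is built from restrictions and tensor products, and tensor products preserve $E,P,I$ by the hypothesis that $\otimes$ induces a morphism of span quadruples $(C^2,E^2,P^2,I^2)\to(C,E,P,I)$. Here we use induction on $|T|$ for $n$-ary tensors, together with the unit tensor, since the empty tensor is a constant map. It remains to pull back $F_C(\sigma)(f)$ along $g'$ inside the fibre $C^{S''}$, which works componentwise using stability of $E$ (resp.\ $P,I$) under base change in $C$. We must also check that a fibrewise pullback square is a pullback in $\wt{C}$. This uses that $\wt{C}\to\Span(\Fin)^{\op}$ is a cartesian fibration and that the pulled-back edge lies over an identity; it follows from \cite[Lemma 4.3.1.15-type]{HTT} arguments, or directly from the mapping-space formula for cartesian fibrations.

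Truncatedness is the step I expect to require the most care. A morphism $f$ in $\wt{P}$ over an isomorphism is truncated in $\wt{C}$ if its components are truncated in $C$, but we need truncatedness in $\wt{C}$, not merely fibrewise. We check it through the diagonal. The diagonal of $f$ in $\wt{C}$ is computed from pullbacks along $f$, which by the previous paragraph are fibrewise, so the iterated diagonals of $f$ are componentwise iterated diagonals in $C$. Since $S$ is finite, a uniform bound is the maximum of the bounds of the components. Consequently, if every component is $n_s$-truncated, then $f$ is $\max_s n_s$-truncated, and we conclude that $\wt{P}$ and $\wt{I}$ consist of truncated morphisms.
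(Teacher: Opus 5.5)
Your route is the natural one: reduce to the fibres of the cartesian fibration $\wt{C}\to\Span(\Fin)^{\op}$ and compute componentwise. The paper gives no argument of its own and just defers to the analogous lemma of Cnossen--Lenz--Linskens. Your factorisation $g=\bar g\circ g'$, the claim that the square of two cartesian lifts of $\sigma$ is a pullback, and left cancellability are all fine. But one step is justified incorrectly, and both the base-change and truncatedness parts depend on it.

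\textbf{The gap.} You claim a pullback formed inside a fibre $C^{S''}$ is a pullback in $\wt{C}$ because the projection is a cartesian fibration and the edge lies over an identity. That is false for cartesian fibrations in general. For $w$ over $b'$, the map $\operatorname{Map}_{\wt{C}}(w,-)$ on the fibre over $b$ decomposes over $\gamma\in\operatorname{Map}(b',b)$, with fibres $\operatorname{Map}_{\wt{C}_{b'}}(w,\gamma^*(-))$. So a fibrewise square over $b$ is a pullback in $\wt{C}$ if and only if every transition functor $\gamma^*=F_C(\gamma)$ carries it to a pullback.

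\textbf{The fix.} You need each $F_C(\gamma)$ to preserve pullbacks of $E$-maps along arbitrary maps. This is the second half of the hypothesis that $\otimes$ is a morphism in $\operatorname{SpanQuad}$: such morphisms preserve pullbacks along $E$, $P$ and $I$. You only used the first half, that $\otimes$ sends $E^2$ into $E$. Restrictions and the constant unit components preserve all pullbacks. The $n$-fold tensor products preserve pullbacks along $E^n$ by induction from the binary case. With this, both your base-change step along $g'$ and the existence of the pulled-back map in $\wt{E}$ go through.

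\textbf{Why it matters for truncatedness.} Your identification of the iterated diagonals of $f$ in $\wt{C}$ with the componentwise ones rests on the same claim. The point is substantive. The map $\operatorname{Map}_{\wt{C}}(w,x)\to\operatorname{Map}_{\wt{C}}(w,y)$ sees all the morphisms $F_C(\gamma)(f)$, whose components are tensor products of the components of $f$ of unbounded arity. Each of these lies in $P$, hence is truncated, but a priori nothing bounds their truncation levels uniformly. What gives the bound is pullback preservation by $\otimes$, via $\Delta_{f\otimes f'}\simeq\Delta_f\otimes\Delta_{f'}$.

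You should also note that the iterated diagonals stay in $\wt{P}$, by left cancellability ($\pi_1\circ\Delta_f=\mathrm{id}$), so their self-pullbacks exist. With pullback preservation made explicit, the rest of your argument is correct.
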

\begin{proof}
    This follows by the same argument as in \cite[Lemma 5.5.]{CLLuniv}.
\end{proof}

\begin{proposition}\label{prop:span_commutes_with_tensor}
    We have an equivalence of $(\infty,2)$-categories 
    \[
    \spantwo(\wt{C},\wt{E})^{\wt{P}}_{\wt{I}}\simeq (\spantwo(C,E)^P_I)^\otimes.
    \]
\end{proposition}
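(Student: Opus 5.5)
Both sides will be exhibited as $2$-cocartesian fibrations over $\Span(\Fin)$, and we then show they classify the same functor $\Span(\Fin)\to\catoftwocats$. By definition, $(\spantwo(C,E)^P_I)^\otimes$ is the cocartesian unstraightening of the composite
\[
\Span(\Fin)\xto{\widetilde{F}}\operatorname{SpanQuad}\xto{\spantwo}\catoftwocats .
\]
On the other side, $\wt{C}\to\Span(\Fin)^{\op}$ is the cartesian unstraightening of $F_C$. We will follow the argument of \cite[Proposition 5.7]{CLLuniv} (the analogous statement for $\Span$ and $\spantwo$ with a suitable decomposition), checking that it only uses the closure properties recorded in the preceding lemma.

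\emph{Step one: constructing the comparison functor.} The projection $\wt{C}\to\Span(\Fin)^{\op}$ sends $\wt{P},\wt{I},\wt{E}$ to isomorphisms. It therefore induces a $2$-functor
\[
q\colon\spantwo(\wt{C},\wt{E})^{\wt{P}}_{\wt{I}}\to\spantwo(\Span(\Fin)^{\op},\simeq)\simeq\Span(\Fin),
\]
using functoriality of $\spantwo$ in span quadruples (\Cref{construction_of_span_2_as_a_functor_on_span_pairs}). The target is identified with $\Span(\Fin)$ because spans in $\Span(\Fin)^{\op}$ whose right leg is invertible are just morphisms of $\Span(\Fin)$.

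\emph{Step two: $q$ is a $2$-cocartesian fibration.} A $1$-morphism of $\Span(\Fin)$, viewed as a backward arrow $\phi$ in $\Span(\Fin)^{\op}$, lifts at an object $x\in\wt{C}$ to a cartesian edge $\phi^*x\to x$. The candidate cocartesian lift is the span $x\leftarrow\phi^*x=\phi^*x$. To verify the cocartesian property, we must show that mapping categories in $\spantwo(\wt{C},\wt{E})^{\wt{P}}_{\wt{I}}$ decompose fibrewise. A span $x\leftarrow f\to y$ has its right leg in $\wt{E}$, hence lying over an isomorphism, so it factors uniquely through the cartesian lift of the left leg's image. The $2$-morphisms have legs in $\wt{P},\wt{I}$, so they also lie over identities. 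This identifies
\[
\Fun(x,y)\simeq\Fun_{\spantwo(C_n,E_n)^{P_n}_{I_n}}(\phi^*x,y)
\]
over each component, which gives the pullback square in the definition of cocartesian edges. Homwise the map is a right fibration, since it is a coproduct of fibre inclusions over a discrete target.

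\emph{Step three: identifying the classified functor.} The fibre of $q$ over $\langle n\rangle$ is $\spantwo$ of the fibre quadruple $(C^n,E^n,P^n,I^n)$, and cocartesian transport along a span of finite sets is induced by $\widetilde{F}$. The aim is to produce this identification naturally. Restrict $\widetilde{F}$ along $\Fin^{\op}$ and $\Fin$ and unfurl, noting that both $q$ and the unstraightening are determined by their restrictions plus Beck–Chevalley data. Alternatively, since $\spantwo$ preserves limits, compare both fibrations after pulling back to $\Span(\Fin)\times_{\Span(\Fin)}\ast$ contextwise.

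\emph{Main obstacle.} The main obstacle is step three: making the fibrewise identification coherent in $\Span(\Fin)$ rather than pointwise. The first attempt will be to construct a map over $\Span(\Fin)$ from $(\spantwo(C,E)^P_I)^\otimes$ to the target by applying $\spantwo$ to the universal fibration of span quadruples, mirroring \cite[Lemma 5.5]{CLLuniv}. The map will then be checked to preserve cocartesian edges and to be a fibrewise equivalence, which suffices by unstraightening.
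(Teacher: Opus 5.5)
Your Steps one and two, even granting them, show at most that $q$ is a cocartesian fibration whose fibres and transport functors agree \emph{pointwise} with those of $(\spantwo(C,E)^P_I)^\otimes$. That does not give an equivalence of fibrations. For that you need an actual $2$-functor between the two total $2$-categories over $\Span(\Fin)$, which you then check preserves cocartesian $1$-morphisms and is a fibrewise equivalence. Step three is the whole content of the proposition, and it never produces such a functor. None of your suggested routes works:
\begin{itemize}
\item A functor $\Span(\Fin)\to\catoftwocats$ classifying a general symmetric monoidal structure is not an unfurling, since the tensor $C^2\to C$ is left adjoint to the diagonal only in the cocartesian case. So there is no adjointability mechanism making it determined by its restrictions to $\Fin^{\op}$ and $\Fin$ plus Beck--Chevalley data, and asserting this just restates the coherence problem.
\item Comparing after pulling back to points only recomputes fibres.
\item A map out of $(\spantwo(C,E)^P_I)^\otimes$ would need a natural transformation from $\spantwo\circ\widetilde{F}$ to the straightening of $q$, which is exactly what you are trying to compute. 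The lemma preceding the proposition (the analogue of \cite[Lemma 5.5]{CLLuniv}) only supplies closure properties of $\wt{P},\wt{I},\wt{E}$, not a comparison map.
\end{itemize}

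The missing idea is to get the comparison functor from the universal property just proved. The paper does this as follows.
\begin{itemize}
\item The $1$-categorical equivalence $\Span(\wt{C},\wt{E})\simeq\Span(C,E)^\otimes$ is known \cite[Theorem 3.9]{haugseng2023twovariablefibrationsfactorisationsystems}. Composing it with the unstraightening of $\Span\Rightarrow\spantwo$ gives $T\colon\Span(\wt{C},\wt{E})\to(\spantwo(C,E)^P_I)^\otimes$.
\item The preceding lemma says precisely that $(\wt{C},\wt{E},\wt{P},\wt{I})$ satisfies the hypotheses of \Cref{main_theorem:free_adjointed_2_cat}. Since $\wt{P}$ and $\wt{I}$ lie over equivalences of the base, $(\wt{P},\wt{I})$-adjointedness of $T$ can be checked on the fibrewise functors $\Span(C^n,E^n)\to\spantwo(C^n,E^n)^{P^n}_{I^n}$.
\item The theorem then extends $T$ to $T'\colon\spantwo(\wt{C},\wt{E})^{\wt{P}}_{\wt{I}}\to(\spantwo(C,E)^P_I)^\otimes$. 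Its full faithfulness part makes $T'$ compatible with the projections to $\Span(\Fin)$.
\end{itemize}
It then remains to show $T'$ is an equivalence, following \cite[Theorem 5.10]{CLLuniv}; this is where your fibrewise analysis from Step two becomes useful.

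Separately, the homwise claim in Step two is false. $\operatorname{Map}_{\Span(\Fin)}(\langle m\rangle,\langle n\rangle)$ is a groupoid of spans, not discrete. Moreover, the hom-categories of $\spantwo(\wt{C},\wt{E})^{\wt{P}}_{\wt{I}}$ have non-invertible $2$-cells, so they cannot be right fibrations over these groupoids. The relevant notion here is the $2$-cocartesian fibration of \cite{CLLuniv}, not the paper's $1$-cocartesian fibrations.
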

\begin{proof}
    First, we know that $\Span(\wt{C},\wt{E}) \simeq \Span(C,E)^\otimes$ by \cite[Theorem 3.9]{haugseng2023twovariablefibrationsfactorisationsystems}. Thus, we get a functor 
    \[T:\Span(\wt{C},\wt{E})\to (\spantwo(C,E)^P_I)^\otimes.\]

    The rest of the proof follows the proof of \cite[Theorem 5.10.]{CLLuniv} closely. Namely, one shows that the functor $T$ satisfies the requirements of \Cref{main_theorem:free_adjointed_2_cat} with respect to the quadruple $(\wt{C},\wt{E},\wt{P},
    \wt{I})$, gets a functor
    \[
        T':\spantwo(\wt{C},\wt{E})^{\wt{P}}_{\wt{I}}\to (\spantwo(C,E)^P_I)^\otimes
    \]
    and shows that $T'$ is an equivalence.
\end{proof}

\subsubsection*{Lax symmetric monoidal universality.}

We want to use this proposition in order to prove that $\spantwo(C,E)^P_I$ has a universal property with respect to lax symmetric monoidal functors. We first need the following lemma:

\begin{lemma}\label{lemma:characterization_of_P_tensor_I_tenso_adjointed}
    Let $\vartwocat{K}$ be a symmetric monoidal $(\infty,2)$-category. 
    Let
    \[f:\Span(C,E)\to \vartwocat{K}\]
    be a lax symmetric monoidal functor. Then $f$ is $(P,I)$-adjointed if and only if the composition
    \[
    g:\Span(\wt{C},\wt{E})\simeq \Span(C,E)^\otimes\xto{f^\otimes} \vartwocat{K}^\otimes
    \]
    is $(\wt{P},\wt{I})$-adjointed.
\end{lemma}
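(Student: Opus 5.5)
The plan is to compare the norm maps of $f$ and of $g$ fibrewise over $\Span(\Fin)$, following the analogous argument in \cite[Lemma 5.11]{CLLuniv}. The first step is to describe $\wt{P}$ and $\wt{I}$. Every morphism in either class lies over an isomorphism in $\Span(\Fin)^{\op}$. So an $n$-truncated morphism $\tilde p$ of $\wt{P}$ lies over a bijection $\sigma:S\to S'$ of finite sets, and up to the cartesian transport along $\sigma$ it is a tuple $(p_s)_{s\in S}$ of $n$-truncated morphisms of $P$. The diagonal of $\tilde p$ and the pullback square of \Cref{def:adjointed_functors} used to define $N_{\tilde p}$ live in a single fibre $C^S$ and are computed coordinatewise.

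Next I would identify the functor $g$ on such a fibre. Over a finite set $S$, the restriction of $g$ to $\Span(C^S,E^S)$ is the functor $\Span(C,E)^S\to\vartwocat{K}^S$ given by applying $f$ in each coordinate. Here $\vartwocat{K}^\otimes$ has fibre $\vartwocat{K}^S$ over $S$, because $\vartwocat{K}^\otimes$ is the unstraightening of a finite-product-preserving functor, and $f^\otimes$ is a morphism of $2$-cocartesian fibrations over the inert maps. The key point is that a $1$-morphism lying in a fibre of a $2$-cocartesian fibration has a left or right adjoint in the total $(\infty,2)$-category exactly when it has one in the fibre. This holds because the inclusion of the fibre is locally full and hom-wise a pullback, in the same way as \Cref{lem:lifting_of_adjunction_by_cocart_fibrations}. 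Consequently the adjoint $p_*$, the units, and the mates entering $N^g_{\tilde p}$ can all be computed in $\vartwocat{K}^S$ and hence coordinatewise.

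The induction on $n$ then runs as follows. First, $g$ is $\wt{P}$-naively right adjointable if and only if $f$ is $P$-naively right adjointable, since a tuple of morphisms in $\vartwocat{K}^S$ has an adjoint exactly when each entry does. Second, assuming the statement for level $n$, \Cref{lem:nat_of_norms} applied to the fibre inclusion, together with the coordinatewise description, gives $N^g_{(p_s)}\simeq (N^f_{p_s})_s$. A tuple of $2$-morphisms in $\vartwocat{K}^S$ is invertible exactly when each entry is. For the converse direction I take $S=*$: the fibre over the one-point set recovers $f$ itself, so if $g$ is adjointed then so is $f$. The $I$ case follows by applying the same argument to $(-)^{\co}$.

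The step I expect to be the main obstacle is verifying that adjunctions and mates in $\vartwocat{K}^\otimes$ restricted to a fibre agree with those of $\vartwocat{K}^S$. This requires knowing that the fibre inclusion is locally fully faithful on mapping $\infty$-categories, which I would deduce from the cocartesian-lift description of mapping categories in \cite[\S 5]{CLLuniv}. A second, smaller check is that the transport along the bijection $\sigma$ preserves the norm maps. This holds because transport along $\sigma$ is a $2$-equivalence, so \Cref{lem:nat_of_norms} applies.
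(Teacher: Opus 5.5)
Your proposal is correct and follows the paper's route. The paper notes that $\wt{P},\wt{I}$ live over $\Fin^{\simeq}$ and reduces to the claim, which it calls obvious, that $f$ is $(P,I)$-adjointed if and only if each $f^S\colon\Span(C,E)^S\to\vartwocat{K}^S$ is $(P^S,I^S)$-adjointed; your fibrewise argument is exactly this, with the details written out. The step you flag as the main obstacle is easier than your suggested route: the fibre inclusion $\vartwocat{K}^S\to\vartwocat{K}^\otimes$ is locally fully faithful simply because $\Span(\Fin)$ is an $(\infty,1)$-category in which the component of $\id_S$ in the mapping anima $\map_{\Span(\Fin)}(S,S)$ is contractible, so you do not need Lemma~\ref{lem:lifting_of_adjunction_by_cocart_fibrations} or the cocartesian-lift description.
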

\begin{proof}
    Note that $\wt{P},\wt{I}\subset \Span(C,E)^\otimes\to \Span(\Fin)$ live over the maximal subgroupoid $\Fin^\simeq\subset \Span(\Fin)$. Thus, the lemma reduces to the following obvious claim:

    $f$ is $(P,I)$-adjointed if and only if, for any finite set $A$, $f^A:\Span(C,E)^A\to \vartwocat{K}^A$ is $(P^A,I^A)$-adjointed.
\end{proof}

\begin{remark}
    In \cite{CLLuniv}, there is a requirement for the compatibility of the symmetric monoidal structure of $C$ with the adjointability condition (see \cite[Lemma 5.9.]{CLLuniv}). As the proof above suggests, in our case this additional condition is not needed.
\end{remark}

We are now ready to prove \Cref{main_theorem:lax_monoidal_free_adjointed_2_cat}.

\begin{theorem}\label{thm:lax_monoidal_free_adjointed_2_cat}
    Let $(C,E,P,I)$ be as above and let $\vartwocat{K}$ be a symmetric monoidal $(\infty,2)$-category. 
    We have a fully faithful embedding 
    \[\Fun^{\lax-\otimes}(\spantwo(C,E)^P_I,\vartwocat{K})\into \Fun^{\lax-\otimes}(\Span(C,E),\vartwocat{K}) 
    \]
    and the essential image is spanned by the lax symmetric monoidal functors whose underlying functors are $(P,I)$-adjointed.
\end{theorem}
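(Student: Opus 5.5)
The plan is to follow the deduction of \cite[Theorem B]{CLLuniv} from \cite[Theorem A]{CLLuniv}, translating lax symmetric monoidal functors into functors between cocartesian unstraightenings over $\Span(\Fin)$ and then invoking \Cref{main_theorem:free_adjointed_2_cat} for the quadruple $(\wt{C},\wt{E},\wt{P},\wt{I})$.

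First, identify lax symmetric monoidal functors $\vartwocat{X}\to\vartwocat{K}$, for $\vartwocat{X}$ either $\Span(C,E)$ or $\spantwo(C,E)^P_I$, with functors $\vartwocat{X}^\otimes\to\vartwocat{K}^\otimes$ over $\Span(\Fin)$ that preserve cocartesian lifts of inert (backward) morphisms. This is the $(\infty,2)$-categorical operadic description of lax monoidal functors; see \cite[\S 5]{CLLuniv}. By \Cref{prop:span_commutes_with_tensor} and \cite[Theorem 3.9]{haugseng2023twovariablefibrationsfactorisationsystems}, the relevant sources become $\Span(\wt{C},\wt{E})$ and $\spantwo(\wt{C},\wt{E})^{\wt{P}}_{\wt{I}}$, compatibly with the canonical inclusion. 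The restriction functor in the statement is then identified with restriction
\[
\Fun_{/\Span(\Fin)}^{\mathrm{inert}}(\spantwo(\wt{C},\wt{E})^{\wt{P}}_{\wt{I}},\vartwocat{K}^\otimes)\to\Fun_{/\Span(\Fin)}^{\mathrm{inert}}(\Span(\wt{C},\wt{E}),\vartwocat{K}^\otimes).
\]

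Second, apply \Cref{main_theorem:free_adjointed_2_cat} to $(\wt{C},\wt{E},\wt{P},\wt{I})$. Its hypotheses hold by the preceding lemma. This shows that restriction along $\Span(\wt{C},\wt{E})\to\spantwo(\wt{C},\wt{E})^{\wt{P}}_{\wt{I}}$ is fully faithful on $\Fun(-,\vartwocat{K}^\otimes)$, with essential image the $(\wt{P},\wt{I})$-adjointed functors. Because this holds for functors into $\Span(\Fin)$ as well, passing to slices over $\Span(\Fin)$ preserves full faithfulness: the relevant mapping spaces are fibres of the unsliced ones. Adjointedness of the composite with the projection is automatic. The reason is that $\wt{P},\wt{I}$ lie over $\Fin^\simeq$, so their images are identities, and \Cref{cor:upward_close_lemma} then gives adjointedness of the composite.

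Third, match the inert-preservation conditions. Every morphism of $\spantwo(\wt{C},\wt{E})^{\wt{P}}_{\wt{I}}$ has the same underlying $1$-morphisms as $\Span(\wt{C},\wt{E})$, since the functor is essentially surjective and is the identity on objects and $1$-morphisms. Consequently a $2$-functor out of it preserves inert cocartesian $1$-morphisms if and only if its restriction does. Combining this with \Cref{lemma:characterization_of_P_tensor_I_tenso_adjointed}, the essential image consists exactly of lax symmetric monoidal functors whose underlying functor is $(P,I)$-adjointed.

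I expect the main obstacle to be the first step. Specifically, one needs the identification of lax symmetric monoidal $2$-functors with inert-preserving functors of $2$-cocartesian unstraightenings, and one must check that this identification is natural in the source. Only then does restriction correspond to restriction. I would take this from \cite[\S 5]{CLLuniv}, which uses exactly this framework, rather than reprove it.
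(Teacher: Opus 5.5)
Your proposal is correct and follows essentially the same route as the paper. Both arguments pass to inert-preserving functors over $\Span(\Fin)$ via \Cref{prop:span_commutes_with_tensor}, apply \Cref{main_theorem:free_adjointed_2_cat} to $(\wt{C},\wt{E},\wt{P},\wt{I})$, restrict to functors preserving cocartesian lifts, and read off the essential image from \Cref{lemma:characterization_of_P_tensor_I_tenso_adjointed}; your extra remarks (full faithfulness on slices via fibres, and checking inert-preservation on the restriction because the canonical functor is the identity on objects and $1$-morphisms) just make explicit steps the paper leaves implicit.
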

\begin{proof}
    By \Cref{prop:span_commutes_with_tensor} and \Cref{main_theorem:free_adjointed_2_cat}, we have a fully faithful embedding
    \[\Fun_{/\Span(\Fin)}((\spantwo(C,E)^P_I)^\otimes,\vartwocat{K}^\otimes)\into \Fun_{/\Span(\Fin)}(\Span(C,E)^\otimes,\vartwocat{K}^\otimes) \]
    with essential image those functors that are $(\wt{P},\wt{I})$-adjointed.
    Restricting to functors that preserve $\Fin^{\op}$-cocartesian lifts, we get a fully faithful embedding
    \[\Fun^{\lax-\otimes}(\spantwo(C,E)^P_I,\vartwocat{K})\into \Fun^{\lax-\otimes}(\Span(C,E),\vartwocat{K}). 
    \]

    The characterization of the essential image follows from \Cref{lemma:characterization_of_P_tensor_I_tenso_adjointed}.
\end{proof}

Mann's conjecture is an immediate consequence:
\begin{corollary}\label{cor:Mann_conj}
    Let $(C,E)$ be a geometric context and let 
    \[D:\Span(C,E)\to \catofcats\]
    be a three-functor formalism. Let $P,I\subset E$ denote the wide sub-$\infty$-categories of $D$-proper and $D$-\'etale morphisms in $C$. Then $D$ extends uniquely to a lax symmetric monoidal $2$-functor
    \[D:\spantwo(C,E)^P_I\to \twocatofcats.\]
\end{corollary}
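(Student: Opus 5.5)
The plan is to apply \Cref{thm:lax_monoidal_free_adjointed_2_cat} with $\vartwocat{K}=\twocatofcats$, equipped with its cartesian symmetric monoidal structure. Mann's convention is that a geometric context $(C,E)$ has $C$ with finite products and cartesian monoidal structure, and $E$ stable under base change and under products. This places $(C,E,P,I)$ in the setting of the previous subsection once we know $(C,E,P,I)$ is a span quadruple compatible with the tensor product.

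The first step is to verify the hypotheses on $P$ and $I$. Following Heyer--Mann, $D$-\'etale and $D$-proper morphisms are defined inductively on truncation level, so every such morphism is truncated. They are closed under composition, stable under base change in $C$, and left cancellable; these are standard facts from \cite{just_6ff}, which I would cite rather than reprove. They are also stable under products: for cartesian $\otimes$, the map $f\times g$ factors as $(f\times\id)\circ(\id\times g)$, and each factor is a base change of $f$ or $g$. Hence the tensor product $C^2\to C$ induces a morphism of span quadruples $(C^2,E^2,P^2,I^2)\to(C,E,P,I)$, because it preserves the relevant pullbacks. This gives the lift $\widetilde F\colon\Span(\Fin)\to\operatorname{SpanQuad}$.

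The second step is to check that the underlying functor of $D$ is $(P,I)$-adjointed in the sense of \Cref{def:adjointed_functors}. This is essentially tautological, since both notions are defined by the same inductive norm-map procedure. The point to verify is that Heyer--Mann's norm maps $i_\sharp\to i_!$ and $p_!\to p_*$ agree with those of \Cref{def:adjointed_functors}: both are built as mates of the diagonal construction using the unit or counit at the diagonal, so I would compare them level by level by induction. \Cref{thm:lax_monoidal_free_adjointed_2_cat} then yields a unique lax symmetric monoidal extension $\spantwo(C,E)^P_I\to\twocatofcats$, where uniqueness comes from full faithfulness of the restriction functor.

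The main obstacle I expect is the comparison of the two inductive definitions of the norm maps. In particular, the adjunction $\Delta^*\dashv\Delta_!$ used at each inductive step must be the one produced at the previous level, and the definition of $D$-proper via $p^*\dashv p_!$ must match the "co" variant used for $I$. I would handle this by a straightforward induction on the truncation level, in parallel with \Cref{lem:nat_of_norms}.
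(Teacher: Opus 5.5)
Your proposal is correct and follows the paper's proof. The paper cites Heyer--Mann \cite{just_6ff} for base-change stability, left cancellability and truncatedness of $P$ and $I$, and then applies \Cref{thm:lax_monoidal_free_adjointed_2_cat}. Two points you spell out are left implicit in the paper: stability of $P,I$ under products (so that the cartesian tensor product gives a morphism of span quadruples), and the identification of Heyer--Mann's norm maps with those of \Cref{def:adjointed_functors}, which the paper treats as true by definition.
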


\begin{proof}
    The fact that $P,I$ are stable under base change in $C$ and left cancellable, and that every morphism in $P$ and $I$ is truncated, was proved by Mann and Heyer (see \cite[Remark 4.6.5.]{just_6ff}). Thus this is a direct application of \Cref{thm:lax_monoidal_free_adjointed_2_cat}.
\end{proof}

\subsubsection*{Adjointed formalisms as modules.}

For the rest of the subsection, we analyze the situation further when $\vartwocat{K}=\twocatofcats$ and the symmetric monoidal structure on $C$ is cartesian.

Let $(C,E)$ be a span pair such that $C$ has finite products. Set $\vartopos:=\presh(C)$ and let $\mathcal{E}\subset\vartopos$ be the wide sub-$\infty$-category of morphisms which are representable by morphisms in $E$. Let $\subuniverse\subset \universe_{\vartopos}$ be the context-free subuniverse in $\vartopos$ corresponding to the local class $\mathcal{E}$.  

\begin{proposition}\label{prop:formalisms_equivalence_is_symmetric_monoidal}
    The equivalence of $\infty$-categories
    \[\Fun(\Span(\varcat{C},\varcat{E}), \catofcats)\simeq \intsmon(\internalcatofcats)(*)\]
    established in \Cref{thm:equiv_between_2ff_and_E_monoidal} lifts to a symmetric monoidal equivalence where the left-hand side is equipped with Day convolution and the right-hand side is equipped with the symmetric monoidal structure described in \Cref{cor:E_monoidal_cats_are_symmetric_monoidal}.
\end{proposition}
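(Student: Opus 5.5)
The strategy is to compare the two symmetric monoidal structures by means of presentations that live in $\prl$. On the left-hand side, $\Fun(\Span(\varcat{C},\varcat{E}),\catofcats)\simeq\presh(\Span(\varcat{C},\varcat{E})^{\op})\otimes\catofcats$, and Day convolution is the structure induced from the cartesian symmetric monoidal structure on $\Span(\varcat{C},\varcat{E})$ (products in $\varcat{C}$). On the right-hand side, \Cref{cor:E_monoidal_cats_are_symmetric_monoidal} gives $\intsmon(\internalcatofcats)\simeq\intsmon\otimes\internalcatofcats$, and \Cref{cor:E_cmon_is_symmetric_monoidal} presents $\intsmon$ as a symmetric monoidal localization of internal Day convolution on $\intfun(\spanEfE,\universe)$. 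Both sides are therefore tensored with $\catofcats$ (respectively $\internalcatofcats$), and on global sections $\internalcatofcats(*)$-modules reduce to $\catofcats$-modules. This reduces the problem to the anima-valued statement: the equivalence $\Fun(\Span(\varcat{C},\varcat{E}),\catofanima)\simeq\intsmon(\universe)(*)$, obtained exactly as in \Cref{lem:extension_of_formalisms_to_presheaves,prop:2FF_are_E_mon}, should be symmetric monoidal.

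To prove the anima-valued statement, I would use the universal property of Day convolution. Both $\Fun(\Span(\varcat{C},\varcat{E}),\catofanima)$ and $\intsmon(\universe)(*)$ are presentably symmetric monoidal, and in each the tensor product preserves colimits separately in each variable. It therefore suffices to produce a symmetric monoidal functor $\Span(\varcat{C},\varcat{E})^{\op}\to\intsmon(\universe)(*)$ whose colimit-preserving extension is the equivalence of \Cref{thm:equiv_between_2ff_and_E_monoidal}. The candidate sends $c$ to the localized corepresentable $L\,\intmap_{\intspan(\universe,\universe_{\subuniverse})}(\yo(c),-)$.

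Under \Cref{prop:extra_back_functoriality_for_E_monoidal_cats} and the internal Yoneda lemma, this object corresponds on the left-hand side to the corepresentable $\map_{\Span(\varcat{C},\varcat{E})}(c,-)$, so the underlying functors agree and the extensions coincide. Symmetric monoidality of the candidate then follows from two facts. First, the internal Yoneda embedding $\spanEfE^{\op}\to\intfun(\spanEfE,\universe)$ is symmetric monoidal for Day convolution. Second, Yoneda images of $\yo(c)$ in context $*$, taken in the larger span category $\intspan(\universe,\universe_{\subuniverse})$, multiply by products in $\vartopos$. Since the Yoneda embedding $\varcat{C}\to\presh(\varcat{C})$ preserves finite products, this gives $\yo(c)\times\yo(c')\simeq\yo(c\times c')$. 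The localization $L$ is symmetric monoidal by \Cref{cor:E_cmon_is_symmetric_monoidal}, so it preserves this.

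The main obstacle is the second fact above. Day convolution on $\intfun(\spanEfE,\universe)$ is built from $\spanEfE$, but representables by general $\yo(c)$ live in $\intspan(\universe,\universe_{\subuniverse})$. One must check that the right Kan extension of \Cref{prop:extra_back_functoriality_for_E_monoidal_cats} intertwines the product-induced monoidal structures. My first attempt would be to note that the right Kan extension is computed by $\universe$-limits and that products in $\vartopos$ commute with these limits in each variable. From this, the tensor of two localized corepresentables in $\intsmon(\universe)$ is computed by mapping out of the product span, which reduces the claim to a contextwise check on $\Span(\vartopos_{/X},\vartopos[E])$.
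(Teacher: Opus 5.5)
Your proposal takes essentially the paper's route: the equivalence is the colimit-preserving extension of the Yoneda-type functor $\Span(\varcat{C},\varcat{E})^{\op}\to\intsmon(\internalcatofcats)(*)$, obtained by restricting corepresentables of $\intspan(\universe,\universe_{\subuniverse})$ to $\spanEfE$, so the universal property of Day convolution reduces everything to that functor being symmetric monoidal. The paper applies this directly to the $\catofcats$-module $\Fun(\Span(\varcat{C},\varcat{E}),\catofcats)$ rather than first passing to anima, and your $L$ is redundant because these restricted corepresentables are already $\subuniverse$-monoids, namely $\subuniverse^{\simeq}[\yo(c)]$. The step you single out as the main obstacle is exactly the one the paper calls clear, and an objectwise check will not give coherent monoidal data; instead, note that left Kan extension along the product-preserving inclusion $\spanEfE\into\intspan(\universe,\universe_{\subuniverse})$, followed by localization onto $\universe$-continuous functors, is symmetric monoidal for Day convolution and restricts on $\intsmon$ to the inverse of the restriction equivalence of \Cref{prop:extra_back_functoriality_for_E_monoidal_cats}, which makes the Yoneda embedding of the larger span category coherently symmetric monoidal into $\intsmon$.
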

\begin{proof}
    The equivalence
    \[\Fun(\Span(\varcat{C},\varcat{E}), \catofcats)\simeq \intsmon(\internalcatofcats)(*)\]
    is given by the universal property of $\Fun(\Span(\varcat{C},\varcat{E}), \catofcats)$ in the $\infty$-category of $\catofcats$-modules in $\prl$ applied to the functor 
    \[i:\Span(\varcat{C},\varcat{E})^{\op}\into \intsmon(\internalcatofcats)(*).\]
    The functor $i$ is clearly symmetric monoidal, since it identifies with a symmetric monoidal sub-$\infty$-category of the $\vartopos$-Yoneda image of 
    \[\intfun^{\universe-\sqcap}(\intspan(\universe,\universe_{\subuniverse}), \internalcatofcats)(*)\simeq \intsmon(\internalcatofcats)(*).\]
\end{proof}

Recall that in \Cref{cor:P_I_ambi_is_a_smashing_localization}, we showed that $L_{(\subuniverse[P,I])-\oplus}(\subuniverse^{\simeq})$ is an idempotent algebra in $\intsmon(\internalcatofcats)(*)$ and that there is an equivalence 
\[
\intmon^{\subuniverse,(\subuniverse[P],\subuniverse[I])\text{-}\oplus}(\internalcatofcats)(*) \simeq\operatorname{Mod}_{L_{(\subuniverse[P,I])-\oplus}(\subuniverse^{\simeq})}(\intsmon(\internalcatofcats)(*)).
\]
We can transfer this to the $2$-functor formalisms side:

Since $C$ has finite products, it in particular has a terminal object $*\in C$.
Let \[h_*:\Span(C,E)\to \catofcats\] be the universal $(P,I)$-adjointed functor on $*$, that is, for $x\in C$,
\[h_*(x):= \Fun_{\spantwo(C,E)^P_I}(*,x).\]

Since $h_*$ is the unit of the symmetric monoidal $\infty$-category $\Fun^{(P,I)\operatorname{-adjointed}}(\Span(\varcat{C},\varcat{E}), \catofcats)$, it corresponds to $L_{(\subuniverse[P,I])-\oplus}(\subuniverse^{\simeq})$ which is the unit in $\operatorname{Mod}_{L_{(\subuniverse[P,I])-\oplus}(\subuniverse^{\simeq})}(\intsmon(\internalcatofcats)(*))$.

We get the following:
\begin{proposition}\label{prop:adjointed_formalisms_as_modules}
    There are equivalences of $\infty$-categories
    \[\Fun^{(P,I)\operatorname{-adjointed}}(\Span(\varcat{C},\varcat{E}), \catofcats)\simeq \operatorname{Mod}_{h_*}(\Fun(\Span(\varcat{C},\varcat{E}), \catofcats))\]
    and
    \[\Fun^{\lax-\otimes,(P,I)\operatorname{-adjointed}}(\Span(\varcat{C},\varcat{E}), \catofcats)\simeq \operatorname{CAlg}_{h_*}(\Fun(\Span(\varcat{C},\varcat{E}), \catofcats)).\]
\end{proposition}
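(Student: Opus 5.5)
We transport everything along the symmetric monoidal equivalence of \Cref{prop:formalisms_equivalence_is_symmetric_monoidal},
\[
\Fun(\Span(\varcat{C},\varcat{E}),\catofcats)\simeq\intsmon(\internalcatofcats)(*),
\]
with Day convolution on the left. The plan has three steps.

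\textbf{Step 1: the adjointed subcategory.} By \Cref{prop:adjointed_is_just_ambi}, the full subcategory of $(P,I)$-adjointed functors on the left corresponds to $\intmon^{\subuniverse,(\subuniverse[P],\subuniverse[I])\text{-}\oplus}(\internalcatofcats)(*)$. By \Cref{cor:P_I_ambi_is_a_smashing_localization}, evaluated on global sections, the latter is the smashing localization given by tensoring with the idempotent algebra $L_{(\subuniverse[P,I])-\oplus}(\subuniverse^{\simeq})$. For an idempotent algebra $e$ in a presentably symmetric monoidal $\infty$-category, the forgetful functor $\operatorname{Mod}_e\to(\text{ambient category})$ is fully faithful, and its essential image is exactly the $e$-local objects. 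These are the objects $X$ with $X\simeq e\otimes X$, so the local objects are precisely the $(P,I)$-adjointed formalisms.

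\textbf{Step 2: identifying the idempotent algebra with $h_*$.} We must show that $L_{(\subuniverse[P,I])-\oplus}(\subuniverse^{\simeq})$ corresponds to $h_*$. The unit of Day convolution is the representable $\Span(\varcat{C},\varcat{E})(*,-)$. It corresponds to $\subuniverse^{\simeq}$, the unit of $\intsmon(\internalcatofcats)(*)$, which is the free $\subuniverse$-monoidal category on the point by \Cref{example:free_E_monoidal_cat} with $A=*$. So $L_{\oplus}(\subuniverse^{\simeq})$ corresponds to the reflection of the corepresentable functor at $*$ into adjointed formalisms. This is the free $(P,I)$-adjointed functor on $\id_*$.

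By \Cref{main_theorem:free_adjointed_2_cat} together with \Cref{lem:universal_adjointed_representables_are_free}, that free functor is $\yo_{\spantwo(C,E)^P_I}(*)\circ h=h_*$. This gives the first equivalence.

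There is a subtlety here. The universal property of \Cref{def:free_adjointed_functor_on_an_object} is stated with $\Fun$ into a $(P,I)$-adjointed target. We only need its underlying mapping-space version, and the local objects form exactly that full subcategory, so this is sufficient.

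\textbf{Step 3: the lax monoidal statement.} Commutative algebras for Day convolution are lax symmetric monoidal functors. So the left side of the second equivalence is $\operatorname{CAlg}$ of the full subcategory of adjointed objects. By Step 1, that subcategory is $\operatorname{Mod}_{h_*}$ with its induced symmetric monoidal structure. For a smashing localization $e$, the symmetric monoidal structure on $\operatorname{Mod}_e$ is the restriction of the ambient tensor product. Moreover, $\operatorname{CAlg}(\operatorname{Mod}_e)\simeq\operatorname{CAlg}(\mathcal{V})_{e/}$, and because $e$ is idempotent, the forgetful functor $\operatorname{CAlg}(\mathcal{V})_{e/}\to\operatorname{CAlg}(\mathcal{V})$ is fully faithful onto algebras whose underlying object is $e$-local (a standard fact from \cite{HA}). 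Reading $\operatorname{CAlg}_{h_*}$ as commutative algebras under $h_*$ gives the second equivalence.

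The main obstacle is Step 2. We need to match the unit objects and the localization functor, including the $\catofcats$-enriched versus mapping-anima universal properties. We also need to check that the symmetric monoidal equivalence of \Cref{prop:formalisms_equivalence_is_symmetric_monoidal} sends the representable at $*$ to $\subuniverse^{\simeq}$. We would verify the latter from the explicit formula in the proof of \Cref{prop:2FF_are_E_mon}, where the corepresentable at $*$ evaluates in context $X$ to $\Span(\vartopos_{/X},\vartopos[E])(X,-)^{\simeq}$.
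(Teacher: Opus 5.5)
Your proposal is correct and follows essentially the paper's route: transport along the symmetric monoidal equivalence of \Cref{prop:formalisms_equivalence_is_symmetric_monoidal}, match adjointedness with ambidexterity via \Cref{prop:adjointed_is_just_ambi}, identify $h_*$ with the idempotent algebra $L_{(\subuniverse[P,I])-\oplus}(\subuniverse^{\simeq})$ as the unit of the localized structure, apply \Cref{cor:P_I_ambi_is_a_smashing_localization}, and pass to commutative algebras for Day convolution. Your Step 2 spells out, via the freeness of $h_*$ on $\id_*$, the paper's unexplained claim that $h_*$ is the unit of the adjointed subcategory. The unit-matching check you flag there is automatic, since symmetric monoidal equivalences preserve units. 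Your Step 3 obtains the second equivalence formally from the smashing localization, without invoking \Cref{thm:lax_monoidal_free_adjointed_2_cat} as the paper does.
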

\begin{proof}
    The first equivalence follows since $h_*$ corresponds to $L_{(\subuniverse[P,I])-\oplus}(\subuniverse^{\simeq})$ which is an idempotent algebra in $\intsmon(\internalcatofcats)(*)$ together with the identification of \Cref{prop:adjointed_is_just_ambi}.

    The second equivalence follows from the first together with the identification of commutative algebras for Day convolution with lax symmetric monoidal functors, using \Cref{thm:lax_monoidal_free_adjointed_2_cat}.
\end{proof}

\begin{remark}\label{remark:presheaf_case_without_uniform_truncation}
    On the other hand, direct comparison shows that $h_*$ corresponds to $\spanEPI\in \intsmon(\internalcatofcats)(*)$. Thus, in this case, \Cref{main_theorem:free_E_monoidal_semiadditive} holds without the additional uniform truncation bound of \Cref{assumption:global_trunc_bound}.
\end{remark}

\printbibliography

\bigskip
\begingroup
\small
\noindent
Shachar Carmeli:
\href{mailto:shachar.carmeli@weizmann.ac.il}
{\texttt{shachar.carmeli@weizmann.ac.il}}\\
Guy Kapon:
\href{mailto:guy.kapon@weizmann.ac.il}
{\texttt{guy.kapon@weizmann.ac.il}}\\
Noam Nissan:
\href{mailto:noam.zimhoni@weizmann.ac.il}
{\texttt{noam.zimhoni@weizmann.ac.il}}
\par
\endgroup

\end{document}